\documentclass{amsart}
\usepackage{amssymb,stmaryrd,faktor,tikz-cd,float}
\usepackage[shortlabels]{enumitem}
\providecommand*\texorpdfstring[2]{#1}

\newtheorem*{thma}{Theorem A}
\newtheorem*{thmb}{Theorem B}
\newtheorem*{thmc}{Theorem C}
\newtheorem*{thmd}{Theorem D}
\newtheorem{thm}{Theorem}[section]
\newtheorem{lemma}[thm]{Lemma}
\newtheorem{cor}[thm]{Corollary}
\newtheorem{claim}{Claim}[thm]
\newtheorem{fact}[thm]{Fact}
\newtheorem{prop}[thm]{Proposition}
\newtheorem{conj}[thm]{Conjecture}

\theoremstyle{definition}
\newtheorem{defn}[thm]{Definition}
\newtheorem{conv}[thm]{Convention}

\theoremstyle{remark}
\newtheorem{remark}[thm]{Remark}

\DeclareMathOperator{\pr}{Pr}
\DeclareMathOperator{\ssup}{ssup}
\DeclareMathOperator{\crit}{crit}
\DeclareMathOperator{\refl}{Refl}
\DeclareMathOperator{\pl}{P\ell}
\DeclareMathOperator{\tcf}{tcf}

\DeclareMathOperator{\ubd}{{\sf unbounded}}
\DeclareMathOperator{\onto}{{\sf onto}}
\DeclareMathOperator{\non}{non}
\DeclareMathOperator{\reg}{Reg}
\DeclareMathOperator{\cf}{cf}
\DeclareMathOperator{\dom}{dom}
\DeclareMathOperator{\im}{Im}
\DeclareMathOperator{\otp}{otp}
\DeclareMathOperator{\acc}{acc}
\DeclareMathOperator{\nacc}{nacc}
\DeclareMathOperator{\Tr}{Tr}
\DeclareMathOperator{\tr}{tr}
\DeclareMathOperator{\p}{P}
\DeclareMathOperator{\stat}{Stat}
\DeclareMathOperator{\U}{U}
\DeclareMathOperator{\cg}{CG}

\renewcommand\restriction{\mathbin\upharpoonright}
\renewcommand\mid{\mathrel{|}\allowbreak}
\renewcommand\L{\mathrm{L}}
\newcommand\V{\mathrm{V}}
\newcommand\stick{{{\ensuremath \mspace{2mu}\mid\mspace{-12mu} {\raise0.6em\hbox{$\bullet$}}}}}
\newcommand\diagonal{\bigtriangleup}
\newcommand\s{\subseteq}
\newcommand\sq{\sqsubseteq}
\newcommand\br{\blacktriangleright}
\newcommand\symdiff{\mathbin\triangle}
\newcommand\tree{\mathbf{T}}
\newcommand\ns{\textup{NS}}
\newcommand\sa{\textup{SA}}
\newcommand\amen{\textup{A}}
\newcommand\bd{\textup{bd}}

\newcommand*\axiomfont[1]{\textsf{\textup{#1}}}
\newcommand\zfc{\axiomfont{ZFC}}

\newcommand\bpfa{\axiomfont{BPFA}}

\subjclass[2010]{Primary 03E02; Secondary 03E35, 03E55}
\keywords{Partition relations, {S}ierpinski's onto mapping, Ulam matrix, weakly compact cardinal, ineffable cardinal, saturated ideals, negative partition relation.}

\setlist[enumerate,1]{label={(\roman*)}}
\newenvironment{why}[1][Proof]{\proof[#1]\mbox{}}{\endproof}

\title[Was Ulam right? I]{Was Ulam right? I:\\ Basic theory and subnormal ideals}
\date{Preprint as of December 9, 2021. For the latest version, visit \textsf{http://p.assafrinot.com/47}.}

\author{Tanmay Inamdar}
\address{Department of Mathematics, Bar-Ilan University, Ramat-Gan 5290002, Israel.}
\author{Assaf Rinot}
\address{Department of Mathematics, Bar-Ilan University, Ramat-Gan 5290002, Israel.}
\urladdr{http://www.assafrinot.com}

\begin{document}
\begin{abstract} We introduce various colouring principles which generalise the so-called \emph{onto mapping principle} of Sierpi\'nski to larger cardinals and general ideals. We prove that these principles capture the notion of an Ulam matrix and allow to characterise large cardinals, most notably weakly compact and ineffable cardinals. We also develop the basic theory of these colouring principles, connecting them to the classical negative square bracket partition relations, proving pumping-up theorems, and deciding various instances of theirs. 
We also demonstrate that our principles provide a uniform way of obtaining non-saturation results for ideals satisfying a property we call \emph{subnormality} in contexts where Ulam matrices might not be available.
\end{abstract}
\maketitle
\tableofcontents
	
\section{Introduction}
Throughout this paper, $\kappa$ denotes an infinite cardinal
and $\theta$ denotes a cardinal with $2\le\theta\le\kappa$. Some additional conventions are listed in Subsection~\ref{nandc} below.

The set-theoretic study of strong colourings traditionally centres around the study of all pairs of cardinals $(\kappa,\theta)$ for which the partition relation $\kappa\nrightarrow[\kappa]^2_\theta$ holds:
\begin{defn}[{\cite[\S18]{MR202613}}] \label{hungarian} $\kappa\nrightarrow[\kappa]^2_\theta$ asserts the existence of a colouring $c:[\kappa]^2\rightarrow\theta$ such that, for every $B\in[\kappa]^\kappa$, $c``[B]^2=\theta$.
\end{defn}

The most prominent open questions in this vein are:
\begin{enumerate}[(1)]
\item Does $\lambda^+\nrightarrow[\lambda^+]^2_\lambda$ hold for every singular cardinal $\lambda$?
\item How large must an inaccessible cardinal $\kappa$ be for $\kappa\nrightarrow[\kappa]^2_\kappa$ to not hold?
\item  Does  $\kappa\nrightarrow[\kappa]^2_\omega$ hold for every regular $\kappa>\aleph_0$ that is not weakly compact?\footnote{Throughout this article we adopt the convention that $\aleph_0$ is not weakly compact.}
\end{enumerate}
	
Since these questions have been outstanding for very long now, it is natural to identify closely-related questions that may be more approachable yet have a similar web of applications.
For this, we formulate here a family of colouring principles along these lines and study their validity. Our first example is the following.
\begin{defn} \label{defnubd} For an ideal $J$ over $\kappa$, $\ubd(J,\theta)$ 
asserts
the existence of a colouring $c:[\kappa]^2\rightarrow\theta$ that is upper-regressive 
(i.e., $c(\alpha,\beta)<\beta$ for all $\alpha<\beta<\kappa$)
such that, for every $B\in J^+$, there is an $\alpha<\kappa$ such that $\otp(c[\{\alpha\}\circledast B])=\theta$.
\end{defn}

Let $J^{\bd}[\kappa]$ stand for the ideal of bounded subsets of $\kappa$,
let 
$\ns_\kappa$ stand for the ideal of nonstationary subsets of $\kappa$,
and let $\ns_\kappa\restriction S:=\ns_\kappa\cap\mathcal P(S)$.
We prove:
\begin{thma}\label{theorema}\begin{enumerate}[(1)]
\item $\ubd(J^{\bd}[\lambda^+],\lambda)$ holds for every singular cardinal $\lambda$;
\item If $\kappa$ is a regular uncountable cardinal for which $\ubd(J^{\bd}[\kappa],\kappa)$ fails, then $\kappa$ is greatly Mahlo;
\item $\ubd(J^{\bd}[\kappa],\omega)$ holds iff $\kappa$ is not weakly compact;
\item If $\kappa=\cf(\kappa)>\omega$, then $\ubd(\ns_\kappa\restriction S,\omega)$ holds iff $S\s \kappa$ is not ineffable.
\end{enumerate}
\end{thma}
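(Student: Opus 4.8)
The plan is to analyse the equivalent assertion
$(\star)$: \emph{every upper-regressive $c\colon[\kappa]^2\to\omega$ admits a stationary $B\s S$ such that $\{c(\alpha,\beta):\alpha<\beta\in B\}$ is finite for every $\alpha<\kappa$}. Unravelling Definition~\ref{defnubd} (for a subset of $\omega$, order type $\omega$ just means infinite), $(\star)$ is literally the failure of $\ubd(\ns_\kappa\restriction S,\omega)$, and here $(\ns_\kappa\restriction S)^+$ is the family of stationary subsets of $S$. First I would dispose of the degenerate regimes. If $S$ is nonstationary then $(\ns_\kappa\restriction S)^+=\emptyset$, so $\ubd$ holds vacuously while $S$ fails to be ineffable. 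If $\kappa$ is not weakly compact, then clause~(3) supplies a colouring witnessing $\ubd(J^{\bd}[\kappa],\omega)$; since every stationary subset of $S$ is unbounded in $\kappa$, we have $(\ns_\kappa\restriction S)^+\s (J^{\bd}[\kappa])^+$, so the very same colouring witnesses $\ubd(\ns_\kappa\restriction S,\omega)$, matching the fact that no subset of $\kappa$ is then ineffable (ineffability of a set entails weak compactness of $\kappa$). Thus the real content sits at \emph{weakly compact} $\kappa$ with $S$ stationary.

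For ``$S$ ineffable $\Rightarrow$ $\ubd$ fails,'' I would argue by coding. Fix an arbitrary upper-regressive $c\colon[\kappa]^2\to\omega$; as ineffability of $S$ forces $\kappa$ inaccessible, for club-many $\beta$ the set $A_\beta:=\{\langle\alpha,n\rangle : \alpha<\beta,\ c(\alpha,\beta)=n\}$ (via a fixed pairing on $\kappa$) is a subset of $\beta$ coding the row map $c(\cdot,\beta)\colon\beta\to\omega$, and I set $A_\beta:=\emptyset$ otherwise. Applying ineffability of $S$ to $\langle A_\beta:\beta\in S\rangle$ yields $A\s\kappa$ with $T:=\{\beta\in S : A\cap\beta=A_\beta\}$ stationary; after intersecting $T$ with the closure club, decoding $A$ gives a single $c^*\colon\kappa\to\omega$ with $c(\alpha,\beta)=c^*(\alpha)$ whenever $\alpha<\beta\in T$. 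Hence for the stationary $B:=T$ every row $\{c(\alpha,\beta):\alpha<\beta\in T\}$ is the singleton $\{c^*(\alpha)\}$, so no $\alpha$ attains order type $\omega$; as $c$ was arbitrary, $(\star)$ holds and $\ubd(\ns_\kappa\restriction S,\omega)$ fails.

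The substantive direction is ``$S$ not ineffable $\Rightarrow$ $\ubd$ holds,'' where by the reduction I may assume $\kappa$ weakly compact and fix a witness $\langle A_\beta:\beta\in S\rangle$ (with $A_\beta\s\beta$) to $S$ lying in the ineffable ideal, i.e.\ $\{\beta\in S:A\cap\beta=A_\beta\}$ is nonstationary for every $A\s\kappa$. The aim is to manufacture one upper-regressive $c\colon[\kappa]^2\to\omega$ for which ``all rows finite on a stationary $B\s S$'' would force $\langle A_\beta\rangle$ to cohere on a stationary subset of $B$—producing $A$ with $A\cap\beta=A_\beta$ on a stationary set and contradicting the choice of the witness; this makes every stationary $B\s S$ carry a row of order type $\omega$, which is exactly $\ubd(\ns_\kappa\restriction S,\omega)$. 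I would extract such a $c$ by running weak compactness of $\kappa$ (the tree property, or a $\kappa$-model carrying an elementary embedding of critical point $\kappa$) to obtain threads of the tree $\{A_\beta\cap\gamma : \gamma\le\beta\in S\}$, and by letting $c(\alpha,\beta)$ record, relative to these threads, a bounded amount of ``deviation data'' of $A_\beta$ below $\alpha$, so that finiteness of the $\alpha$-th row across $B$ pins $A_\beta\cap\alpha$ to finitely many possibilities and a pressing-down argument then threads the witness.

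The main obstacle is precisely this last construction: a colour lives in $\omega$ and so cannot record the initial segment $A_\beta\cap\alpha$ outright, as $\alpha$ may be infinite. The crux is therefore to exploit weak compactness to compress, level by level, the genuinely relevant branching down to finitely much information measured against the available threads—turning ``finite row'' into ``finite local branching''—after which normality of $\ns_\kappa$ (Fodor) supplies the coherent stationary set. Arranging this compression uniformly in $\alpha$ while keeping $c$ upper-regressive is the delicate point I expect to require the most care.
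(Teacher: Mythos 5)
Your proposal addresses only Clause~(4) of Theorem~A, taking Clause~(3) as a black box and saying nothing about Clauses (1) and (2); so even if the sketch for (4) were complete, it would not be a proof of the statement. For the record, the paper's Clause~(1) is a genuinely different kind of argument (a pcf/scale construction: fix $\langle\theta_i\mid i<\cf(\lambda)\rangle$ with $\tcf(\prod\theta_i,<^*)=\lambda^+$, a scale $\langle f_\beta\rangle$, a witness $d$ to $\lambda^+\nrightarrow[\lambda^+;\lambda^+]^2_{\cf(\lambda)}$, and colour by $c(\eta,\beta)=f_\beta(d(\eta,\beta))$), Clause~(2) goes through the ideal of strongly amenable $C$-sequences and the fact that a $\kappa$ with no nontrivial $C$-sequence is greatly Mahlo, and Clause~(3) needs its own case analysis on $\kappa^{\aleph_0}$; none of these is touched by your text.

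Within Clause~(4), your reduction to weakly compact $\kappa$ and your coding argument for ``$S$ ineffable $\Rightarrow$ $\ubd(\ns_\kappa\restriction S,\omega)$ fails'' are both correct and essentially identical to the paper's. But the substantive direction is exactly where you stop: you describe an intended mechanism (``compress the branching down to finitely much information measured against threads'') and then explicitly flag that you do not know how to carry it out. This is the whole content of the clause, so the proof is not there. The paper's resolution is worth internalising because it sidesteps your obstacle rather than solving it: one never tries to make a single colour detect coherence of an initial segment $A_\beta\cap\alpha$. Instead, start from Kunen's characterisation of non-ineffability, which hands you a two-colouring $c:[\kappa]^2\rightarrow2$ with $c``[B]^2=2$ for \emph{every} stationary $B\s S$. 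Since the remaining case has $\kappa$ inaccessible, $\kappa^{\aleph_0}=\kappa$, so one can enumerate ${}^{\omega}\kappa$ as $\langle x_\eta\mid\eta<\kappa\rangle$ and define $d(\eta,\beta)$ to be the least $n$ with $c(x_\eta(n),\beta)=1$. Given a stationary $B\s S$, one builds the tree of pairs $(x,y)\in{}^{n}\kappa\times{}^{n}2$ for which $B_{x,y}:=\{\beta\in B\mid\forall i<n\,(c(x(i),\beta)=y(i))\}$ is stationary; the splitting claim (every node splits into both values at some $\eta$) is proved using diagonal intersections for $\ns_\kappa$ together with the two-colour property of $c$, and following the ``all zeros then a one'' branches produces a single $\eta$ with $d[\{\eta\}\circledast B]=\omega$ (in fact witnessing $\onto^+$). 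The colour thus records a single natural number --- the first coordinate of deviation along a fixed $\omega$-sequence --- rather than any compressed description of $A_\beta\cap\alpha$, which is why the finiteness of the target $\omega$ is never an obstruction.
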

	
As these findings are close in spirit to the classical open questions,
the possibility arises that they could serve as hints towards their eventual solution.
More broadly, there is a general bootstrapping phenomenon in which strong colouring theorems for a cardinal $\kappa_0$
give rise to strong club-guessing theorems for a cardinal $\kappa_1>\kappa_0$ that then 	give rise to strong colouring theorems for a cardinal $\kappa_2>\kappa_1$.
To demonstrate, let us briefly review a relatively recent construction of a colouring that is strong in the traditional sense.

In \cite[\S3]{paper18}, a colouring witnessing a strong form of $\kappa\nrightarrow[\kappa]^2_\kappa$ denoted $\pr_1(\kappa,\kappa,\kappa,\chi)$ 
was constructed from $\square(\kappa)$ and the oscillation oracle $\pl_6(\chi^+,\omega,\chi)$.
As made clear by the proof of \cite[Theorem~2.3]{paper15},
for an infinite $\theta$, the existence of a club-guessing sequence that may be partitioned into $\theta$ many active pieces
gives rise to $\pl_6(\chi^+,\theta,\chi)$. 	 Whether every club-guessing sequence may be partitioned is an open problem. 
Prior to our research project, the best results appearing in the literature were due to Shelah \cite[\S3]{she572} where a variety of methods was used.
While working on the paper \cite{paper46}, we realised that the following strengthening of the $\ubd(\ldots)$ principle is sufficient	for partitioning club-guessing sequences in a uniform way. 
\begin{defn}\label{defonto} For an ideal $J$ over $\kappa$, $\onto(J,\theta)$ 	asserts
the existence of a colouring $c:[\kappa]^2\rightarrow\theta$ 
such that, for every $B\in J^+$, there is an $\alpha<\kappa$ with $c[\{\alpha\}\circledast B]=\theta$.
\end{defn}

Let us now step back to the simpler problem of partitioning \emph{sets} which are positive with respect to an ideal.
The classical elementary method uses Ulam matrices \cite{ulam1930masstheorie}. 
These apply to successor cardinals, and Hajnal \cite{MR260597} has generalised them to cardinals which admit a stationary set not reflecting at regulars. 
Another method is the one used to give an elementary proof (see \cite[\S8]{MR1940513}) of the famous theorem of Solovay \cite{rvmc} that every stationary subset of a regular cardinal $\kappa$ can be decomposed into $\kappa$-many pairwise disjoint stationary subsets. 
The original inspiration for our definition of $\ubd(\ldots)$ and $\onto(\ldots)$ was the \emph{onto mapping principle} of Sierpi\'nski \cite{sierpinski1934hypothese},
but upon further investigation we realised that these two principles and their variants have sufficient generality to capture --- and hence to compare --- these and other methods for proving results about the non-weak-saturation properties of ideals. We emphasise though that our focus, which comes from our desired application, is on providing a \emph{uniform} method of obtaining these results. 
For that reason, constructions such as those appearing in \cite{MR0369081,paper29} relying on a recursive process are not applicable even though they are in some cases stronger results
when seen purely from the point of view of weak saturation. We elaborate more on this at end of Subsection~\ref{nonweaksaturation}.

As suggested by Clauses (3) and (4) of Theorem A, 
the principles under discussion also provide a characterisation of large cardinal properties. 
It is easy to see that
\begin{itemize}
\item 	$\kappa$ is ineffable iff $\onto(\ns_\kappa,2)$ fails, and that
\item  $\kappa$ is almost ineffable iff $\onto(J^{\bd}[\kappa],2)$ fails.
\end{itemize}
We prove here:
\begin{itemize}
\item $\kappa>\aleph_0$ is weakly compact iff $\onto(J^{\bd}[\kappa],3)$ fails;
\item $\kappa\ge2^{\aleph_0}$ is weakly compact iff $\onto(J^{\bd}[\kappa],\aleph_0)$ fails.
\end{itemize}

In G{\"o}del's constructible universe $\L$, many of our principles coincide,
but in general it is possible to distinguish between them.
First, $\onto(J^{\bd}[\aleph_1],\aleph_0)$ holds, and a result of Larson \cite{MR2298476} shows that $\onto(\ns_{\aleph_1},\aleph_1)$ may consistently fail.
Second, using large cardinals, additional patterns of failure may be obtained:
\begin{thmb}
Assuming the consistency of large cardinals,
each of the following are consistent with $\kappa$ being a regular uncountable limit cardinal:
\begin{enumerate}[(1)]
\item $\kappa=2^\theta$, but $\ubd(\ns_\kappa,\theta)$ fails;
\item $\aleph_0<\theta<\kappa$,	$\onto(J^{\bd}[\kappa],\theta)$ holds, but 	$\ubd(\ns_\kappa,\theta^+)$ fails;
\item $\sup\{\theta<\kappa\mid \onto(J^{\bd}[\kappa],\theta)\text{ holds}\}=\kappa$,
but $\ubd(\ns_\kappa,\kappa)$ fails;
\item $\onto(\ns_\kappa,\kappa)$ holds, but 	$\ubd(J^{\bd}[\kappa],\omega)$ fails.
\end{enumerate}
\end{thmb}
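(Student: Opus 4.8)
The plan is to construct, separately for each clause, a forcing extension of a model with large cardinals in which $\kappa$ is weakly inaccessible (a regular limit cardinal) and realises the indicated combination. Two observations organise the whole proof. First, every ``fails'' half is the failure of some $\ubd(\ns_\kappa,\vartheta)$, and by the analysis behind Theorem~A(4) this is a $\vartheta$-coloured reflection principle $R_\vartheta$ --- ``every upper-regressive $c\colon[\kappa]^2\to\vartheta$ has a stationary $B$ all of whose columns $c[\{\alpha\}\circledast B]$ have order type ${<}\vartheta$'' --- which holds at a supercompact and, crucially, which for uncountable $\vartheta$ is \emph{not} equivalent to a large-cardinal property of $\kappa$ itself (only $R_\omega$ coincides with ineffability). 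Second, composing any witness $c\colon[\kappa]^2\to\theta$ for $\onto(J^{\bd}[\kappa],\theta)$ with a surjection of $\theta$ onto $3$ produces a witness for $\onto(J^{\bd}[\kappa],3)$; so by the characterisation recorded before the statement, every ``holds'' half of the form $\onto(J^{\bd}[\kappa],\theta)$ with $\theta\ge 3$ forces $\kappa$ to be \emph{not} weakly compact. These two facts dictate where each $\kappa$ must live.

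Clause~(1) is the purest, as it asks for no colouring, only arithmetic together with a single failure. Here I would start with a supercompact $\kappa$ in a model of \gch, fix an uncountable $\theta<\kappa$ (the value $\theta=\omega$ is excluded, since $R_\omega$ is ineffability and hence makes $\kappa$ a strong limit, incompatible with $\kappa=2^\theta$), and force $2^\theta=\kappa$ with $\mathrm{Add}(\theta,\kappa)$. The ${<}\theta$-closure and $\theta^+$-chain condition of this poset keep $\kappa$ a regular limit cardinal with $2^\theta=\kappa$ exactly, at the cost of its strong-limitness; what must then be shown is that $R_\theta$ survives, which is plausible precisely because the relevant colourings take only $\theta$ values and are therefore caught by the chain condition.

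For clauses~(2) and~(3) the projection observation forces $\kappa$ to be non-weakly-compact, so I cannot simply keep a supercompact. Instead I would, over a supercompact ground model supplying the reflection $R_{\theta^+}$ (clause~(2)), respectively $R_\kappa$ (clause~(3)), \emph{destroy} the weak compactness of $\kappa$ --- e.g.\ by adding a $\kappa$-Aronszajn tree, and for~(3) by an iteration that adds $\onto(J^{\bd}[\kappa],\vartheta)$-colourings for unboundedly many $\vartheta<\kappa$ --- while surgically preserving the single level of reflection $R_{\theta^+}$, respectively $R_\kappa$. The positive statements then follow from the characterisations together with their extensions to more colours, and the negative statements from the preserved reflection. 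Clause~(4) is complementary: there $\ubd(J^{\bd}[\kappa],\omega)$ must fail, so by Theorem~A(3) $\kappa$ must \emph{be} weakly compact, whence $\square(\kappa)$ fails and stationary sets reflect, ruling out the usual coherent-sequence manufacture of $\onto(\ns_\kappa,\kappa)$. I would take $\kappa$ weakly compact but not ineffable (so that $\onto(\ns_\kappa,2)$ already holds) and obtain the full $\onto(\ns_\kappa,\kappa)$ either by a $\diamondsuit_\kappa$-driven construction meeting every stationary set, or by a ${<}\kappa$-closed, $\kappa^+$-cc poset of partial colourings whose closure and chain condition preserve weak compactness.

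The main obstacle throughout is preservation. On the failure side (clauses~(1)--(3)) one must verify that the forcing which adjusts the arithmetic or destroys weak compactness nonetheless keeps the reflection $R_\vartheta$ intact, and dually that it does not secretly reinstate the stronger principle. The sharpest single point is clause~(4): one must add a colouring that is surjective on some column of \emph{every} stationary set while keeping $\kappa$ weakly compact, and the naive poset threatens to collapse a cardinal or to introduce a non-reflecting stationary subset of $\kappa$, either of which would destroy weak compactness and thereby undo the failure of $\ubd(J^{\bd}[\kappa],\omega)$ that the clause demands. Pinning down a poset with the right closure and chain condition, and checking that genericity still delivers surjectivity on all stationary columns, is where the real work will lie.
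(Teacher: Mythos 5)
Your skeleton is right --- separate models for each clause, the failures coming from large-cardinal reflection preserved into the extension, and the projection-to-three-colours observation correctly forcing $\kappa$ to be non-weakly-compact in clauses~(2) and~(3) --- but the proposal leaves unsolved exactly the points where the actual proof does its work. For clauses~(2) and~(3) you propose to destroy weak compactness ``while surgically preserving'' the reflection and to obtain the positive halves ``from the characterisations''; neither step is available as stated. There is no characterisation yielding $\onto(J^{\bd}[\kappa],\theta)$ for \emph{uncountable} $\theta$ at a non-weakly-compact cardinal (Theorem~\ref{almostineffablecohen} shows it can outright fail there), so clause~(2)'s positive half must be produced by hand: the paper takes $\kappa$ ineffable and forces with $\mathrm{Add}(\theta,\kappa)$, and the generic function \emph{is} the witness to $\onto(\{\theta\},J^{\bd}[\kappa],\theta)$ by a density argument, while the $\theta^+$-cc lets one dominate any new colouring into $\theta^+$ colours by a ground-model one and thereby preserve the ground-model witness (supplied by Fact~\ref{reducetoone}) to the failure of $\ubd(\ns_\kappa,\theta^+)$ (Lemma~\ref{preservation}). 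Note also the off-by-one in your clause~(1): $\mathrm{Add}(\theta,\kappa)$ is $\theta^+$-cc, not $\theta$-cc, so colourings into $\theta$ colours are \emph{not} caught by the chain condition; the argument survives only after re-parameterising to $\kappa=2^{\theta^+}$ with $\ubd(\ns_\kappa,\theta^+)$ failing, which is how Theorem~\ref{thm97} is stated. For clause~(3) the preservation problem is solved not by surgery but by Kunen's resurrection trick: in $V^{\mathbb R}$ there is a $\kappa$-Souslin tree $\mathbb T$ (giving $\kappa\nrightarrow[\kappa;\kappa]^2_\kappa$, hence $\onto^{++}(J^{\bd}[\kappa],\theta)$ for all $\theta<\kappa$ via Proposition~\ref{prop46}) such that forcing with $\mathbb T$ resurrects ineffability; since $\mathbb T$ is $\kappa$-cc and $\kappa\notin\amen_\kappa$ is downward absolute from $\kappa$-cc extensions (Proposition~\ref{amenpullneg}), $\ubd(\ns_\kappa,\kappa)$ already fails in $V^{\mathbb R}$ by Lemma~\ref{lemma44}.

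Clause~(4) is where your proposal would actually break. Your first route, a ``$\diamondsuit(\kappa)$-driven construction meeting every stationary set'', cannot work as written: $\diamondsuit(\kappa)$ yields only $\onto^-(\kappa,\kappa)$ (Lemma~\ref{lemma47}(2)), and the paper shows these principles genuinely diverge --- at an ineffable cardinal $\onto^-(\kappa,\kappa)$ holds while even $\onto(\ns_\kappa,2)$ fails. What is needed is $\diamondsuit^*(\kappa)$ (Lemma~\ref{lemma47}(1)), a guessing on club-many points. Your second route, a ${<}\kappa$-closed poset of partial colourings, faces precisely the preservation problem you flag without resolving. The paper sidesteps all of this with no forcing at all: work in $\L$ and let $\kappa$ be the least weakly compact cardinal; it is not ineffable, so Jensen's theorem gives $\diamondsuit^*(\kappa)$, hence $\onto(\ns_\kappa,\kappa)$, while weak compactness gives the failure of $\ubd(J^{\bd}[\kappa],\omega)$ by Proposition~\ref{reducetotwo}.
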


We shall prove pump-up theorems for deriving strong forms of $\onto(J,\theta)$ from $\ubd(J,\theta')$, typically with $\theta<\theta'$.
We also prove theorems for getting $\onto(J,\theta)$ by other means. Here is a list of corollaries.
	
\begin{thmc}\begin{enumerate}[(1)]
\item If $\kappa$ is a successor cardinal, then $\onto(J^\bd[\kappa],\theta)$ holds for every regular cardinal $\theta<\kappa$,
and $\stick(\kappa)$ implies $\onto(J^\bd[\kappa],\kappa)$;	\item For every stationary $S\s\kappa$, there exists a stationary $S'\s S$ such that $\onto(\ns_\kappa\restriction S',\theta)$ holds for every regular cardinal $\theta<\kappa$;
\item If $\diamondsuit^*(\kappa)$ holds, then so does $\onto(\ns_\kappa,\kappa)$;
\item If $\diamondsuit(T)$ holds for some stationary $T\s\kappa$ that does not reflect at regulars,
then $\onto(J^\bd[\kappa],\kappa)$ holds.
\end{enumerate}
\end{thmc}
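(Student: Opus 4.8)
The plan is to read all four clauses as instances of a single scheme: produce a \emph{base} colouring witnessing $\ubd(J,\theta')$ for a large palette $\theta'$ (typically $\theta'=\kappa$), and then invoke the pump-up theorems to convert $\ubd(J,\theta')$ into $\onto(J,\theta)$. For a regular $\theta<\kappa$ this is the generic pump-up in the regime $\theta<\theta'$, whereas for $\theta=\kappa$ no such pump-up is available and one must instead feed a genuine guessing principle into the ``top'' pump-up; in clauses (1), (3), (4) this guessing principle is precisely $\stick(\kappa)$, $\diamondsuit^*(\kappa)$, and $\diamondsuit(T)$.

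For clause (1), note that a successor cardinal $\kappa$ is regular, uncountable, and not greatly Mahlo (it is not even weakly inaccessible); hence the contrapositive of Theorem~A(2) yields $\ubd(J^\bd[\kappa],\kappa)$ outright, and pumping this down gives $\onto(J^\bd[\kappa],\theta)$ for every regular $\theta<\kappa$. To reach $\theta=\kappa$ I would use $\stick(\kappa)$: its guessing family supplies, inside every unbounded $B$, a set of order type $\kappa$ on which a fixed injective coding realises all $\kappa$ colours from a single vertex, which is exactly the input the top pump-up needs to upgrade $\ubd(J^\bd[\kappa],\kappa)$ to $\onto(J^\bd[\kappa],\kappa)$. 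For clause (2), I would manufacture the base by the elementary (Solovay) splitting method recalled in the introduction: split $S$ into $\kappa$ many pairwise disjoint stationary pieces and read off, on a suitable stationary $S'\s S$, an upper-regressive colouring witnessing $\ubd(\ns_\kappa\restriction S',\kappa)$; passing to $S'$ is what lets us sidestep any ineffability-type obstruction carried by $S$ itself. A single application of the generic pump-up to each regular $\theta<\kappa$ then delivers $\onto(\ns_\kappa\restriction S',\theta)$ simultaneously for all of them.

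Clauses (3) and (4) are the two $\theta=\kappa$ results driven by diamonds. For clause (4), the stationary set $T$ does not reflect at regulars, so Hajnal's generalisation of the Ulam matrix is available and furnishes the base $\ubd(J^\bd[\kappa],\kappa)$; $\diamondsuit(T)$ then plays the role of the guessing principle in the top pump-up, its stationarily-often-correct guesses of initial segments of an unbounded $B$ being decoded, along the fixed matrix, into all $\kappa$ colours at one vertex. For clause (3), I would argue directly from $\diamondsuit^*(\kappa)$, which serves simultaneously as base and as guessing: it is strong enough both to render $\ubd(\ns_\kappa,\kappa)$ available (defeating the relevant ineffability obstruction) and to provide the club-guessing of initial segments that the top pump-up consumes, so that decoding a guess $B\cap\alpha\in\mathcal A_\alpha$ through a fixed bookkeeping yields, at some vertex, every colour below $\kappa$ and hence $\onto(\ns_\kappa,\kappa)$.

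The main obstacle throughout is the three $\theta=\kappa$ cases. The generic $\theta<\theta'$ pump-up is powerless there, because realising \emph{all} $\kappa$ colours at a single vertex cannot be achieved by any fixed colouring alone: an unbounded, even stationary, $B$ can systematically avoid a prescribed colour at every vertex, and it is exactly this rigidity that forces the appearance of a guessing hypothesis. The technical heart is therefore checking that $\stick(\kappa)$, $\diamondsuit(T)$, and $\diamondsuit^*(\kappa)$ each supply guesses that survive the passage through the base matrix and reassemble into the full palette $\kappa$ at one vertex; verifying this decoding step — and, for clauses (2) and (4), that the Solovay split and the Hajnal matrix genuinely yield $\ubd$ with $\kappa$ colours on the intended set — is where the real work lies, after which the reductions to the regular $\theta<\kappa$ cases are immediate.
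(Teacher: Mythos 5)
Your routes for Clauses (2)--(4) essentially match the paper's. For (2), the paper passes to a stationary $S'\s S$ (essentially $S\setminus\Tr(S)$, restricted to regulars) carrying a (strongly) amenable $C$-sequence, obtains $\ubd(\ns_\kappa\restriction S',\kappa)$ from it, and then applies the $\varkappa=\kappa$ pump-up for normal ideals (Theorem~\ref{thm42}(2)); this is your ``Solovay split plus generic pump-up'', and the pump-up really is available here because $\ns_\kappa\restriction S'$ is normal. For (3), the proof is the direct diagonalization against a $\diamondsuit^*$-matrix that you describe (Lemma~\ref{lemma47}(1)). For (4), the proof composes the $\onto^-(T,\kappa)$ colouring supplied by $\diamondsuit(T)$ with Hajnal's triangular Ulam matrix, i.e.\ with $\ubd^*(J^{\bd}[\kappa],\{T\cap C\})$ (Corollary~\ref{hajnaltocolouring2} and Lemma~\ref{minustoplus}). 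One point you should make explicit there: the matrix must be used in its $\ubd^*$ form, which realises every \emph{prescribed} colour $\tau\in T$ from some $\eta<\tau$; mere $\ubd(J^{\bd}[\kappa],\kappa)$ --- order-type $\kappa$ of realised colours with no control over which --- does not compose with a guessing colouring defined on $T$.

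The genuine gap is in Clause (1). You derive $\ubd(J^{\bd}[\kappa],\kappa)$ from the failure of great Mahloness (correct) and then assert that ``pumping this down'' yields $\onto(J^{\bd}[\kappa],\theta)$ for every regular $\theta<\kappa$, but no such pump-up exists for the bounded ideal: Theorem~\ref{thm58} requires $\varkappa<\kappa$, and the $\varkappa=\kappa$ pump-ups of Theorem~\ref{thm42} require either the stronger hypothesis $\ubd^+(J^+,J,\kappa)$ or that the ideal be normal --- and $J^{\bd}[\kappa]$ is not normal, while $\ubd(J^{\bd}[\kappa],\kappa)$ only tells you that $c[\{\eta\}\circledast B]$ has order-type $\kappa$, not that any colour is realised on an unbounded subset of $B$. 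The paper closes exactly this gap with Ulam's matrix: for $\kappa=\mu^+$, Fact~\ref{ulamoriginal} gives $\ubd^*(\{\mu\},J^{\bd}[\kappa],\{\kappa\setminus\mu\})$, Lemma~\ref{ubdplus} upgrades this to $\ubd^+(J^{\bd}[\kappa],\kappa)$, and Remark~\ref{thm58b} then delivers $\onto^+(J^{\bd}[\kappa],\theta)$ for every regular $\theta<\kappa$. So the missing ingredient in your sketch is the Ulam matrix itself (or some other source of $\ubd^+$ rather than $\ubd$). Relatedly, the $\stick(\kappa)$ half of Clause (1) is not an upgrade of $\ubd(J^{\bd}[\kappa],\kappa)$: Lemma~\ref{sticklemma} constructs the witness to $\onto([\kappa]^\kappa,J^{\bd}[\kappa],\kappa)$ directly by diagonalizing against the $\stick$-sequence of partial functions $g_\alpha$, with no auxiliary base colouring consumed.
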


The proof of Clause~(4) goes through the colouring principle 
$\ubd^*(\ldots)$ having the property that, for normal ideals $I$ and $J$, $\ubd^*(J,I^+)$ allows to derive $\onto(I,\theta)$ from $\onto(J,\theta)$.
What's more, we shall prove that 
for every stationary $T\s\kappa$, there exists a club $C\s\kappa$ such that for $T':=T\cap C$:

$\ubd^*(J^\bd[\kappa],\{ T'\})$ holds
$\iff$
$\kappa$ carries a triangular Ulam matrix with support $T'$
$\iff$ $T$ does not reflect at regulars.

It thus follows from Clause~(2) of Theorem~A that the principle $\ubd(J^{\bd}[\kappa],\allowbreak\kappa)$ is weaker than the existence of a triangular Ulam matrix at $\kappa$
yet $\ubd(\ns_\kappa,\allowbreak\theta)$ 
implies that any positive set of any normal ideal $J$ over $\kappa$ may be partitioned into $\theta$-many pairwise disjoint $J$-positive sets.
Here is a concise summary of what is known about $\onto(\ns_\kappa,\theta)$.

\begin{thmd} If $\onto(\ns_\kappa,\theta)$ fails for a pair of infinite regular cardinals $\theta<\kappa$, then:
\begin{enumerate}[(1)]
\item $\kappa$ is greatly Mahlo;
\item $\diamondsuit^*(\reg(\kappa))$ fails;
\item $\refl(\kappa,\kappa,\reg(\kappa))$ holds. In particular:
\item $\square(\kappa,{<}\mu)$ fails for all $\mu<\kappa$.
\item $\kappa\nrightarrow[\kappa]^2_\theta$ fails. In particular, there are no $\kappa$-Souslin trees.
\end{enumerate}
\end{thmd}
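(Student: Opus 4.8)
The plan is to establish Theorem~D one clause at a time, each by contraposition: I assume the negation of the stated conclusion and manufacture a witness to $\onto(\ns_\kappa,\theta)$. Two monotonicity features of the principle will be used repeatedly. First, if $J\s J'$ then any witness to $\onto(J,\theta)$ is a fortiori a witness to $\onto(J',\theta)$ (its positive sets only shrink), so from $J^\bd[\kappa]\s\ns_\kappa$ the failure of $\onto(\ns_\kappa,\theta)$ yields the failure of $\onto(J^\bd[\kappa],\theta)$. Second, composing a colouring with a surjection from $\theta'$ onto $\theta$ shows $\onto(J,\theta')\Rightarrow\onto(J,\theta)$ for $2\le\theta\le\theta'$. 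For Clause~(1) I then run the chain $\onto(\ns_\kappa,\theta)\text{ fails}\Rightarrow\onto(J^\bd[\kappa],\theta)\text{ fails}\Rightarrow\ubd(J^\bd[\kappa],\kappa)\text{ fails}$, where the second step is the contrapositive of the pump-up theorem $\ubd(J^\bd[\kappa],\kappa)\Rightarrow\onto(J^\bd[\kappa],\theta)$ (valid as $\theta<\kappa$); Clause~(2) of Theorem~A then declares $\kappa$ greatly Mahlo. Note this already gives that $\kappa$ is inaccessible, a fact I reuse below.

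For Clause~(2) I would sharpen Clause~(3) of Theorem~C. That clause produces $\onto(\ns_\kappa,\kappa)$ from $\diamondsuit^*(\kappa)$; rerunning its construction but reading the guesses only at points of $\reg(\kappa)$ should yield the localised implication $\diamondsuit^*(\reg(\kappa))\Rightarrow\onto(\ns_\kappa,\theta)$ for regular $\theta<\kappa$, the regularity of the guessing points being exactly what guarantees that a given stationary $B$ is correctly captured on a club of $\delta\in\reg(\kappa)$. The contrapositive is the asserted failure of $\diamondsuit^*(\reg(\kappa))$.

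Clause~(3) I again argue contrapositively: suppose some stationary $S\s\kappa$ reflects at no regular cardinal. By the Ulam-matrix characterisation recorded in the introduction, after replacing $S$ by $S':=S\cap C$ for a suitable club $C$ the cardinal $\kappa$ carries a triangular Ulam matrix with support $S'$. Securing a diamond sequence on $S'$---available because $\kappa$ is now inaccessible and non-reflecting stationary subsets of such $\kappa$ carry $\diamondsuit$---I invoke Clause~(4) of Theorem~C to obtain $\onto(J^\bd[\kappa],\kappa)$, whence $\onto(\ns_\kappa,\theta)$ by the two monotonicities above, contradicting the hypothesis; thus $\refl(\kappa,\kappa,\reg(\kappa))$ holds. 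Clause~(4) is then the promised ``in particular'': by a standard fact a $\square(\kappa,{<}\mu)$-sequence with $\mu<\kappa$ produces a stationary subset of $\kappa$ that reflects at no ordinal, in particular at no regular cardinal, which $\refl(\kappa,\kappa,\reg(\kappa))$ forbids.

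Finally, Clause~(5) I would deduce from the bridge between the onto principle and the classical negative square-bracket relation developed in the body, namely $\kappa\nrightarrow[\kappa]^2_\theta\Rightarrow\onto(\ns_\kappa,\theta)$; its contrapositive says precisely that $\kappa\nrightarrow[\kappa]^2_\theta$ fails, and since a $\kappa$-Souslin tree induces a colouring witnessing $\kappa\nrightarrow[\kappa]^2_\theta$, no such tree can exist. The main obstacles lie in Clauses~(3) and~(5). In~(3) the delicate point is extracting a \emph{full} onto colouring---one in which a single row $\{\alpha\}\circledast B$ already exhausts $\theta$---rather than a mere splitting, which is exactly where the diamond on $S'$ feeding Theorem~C(4) is indispensable and must be justified with care. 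In~(5) the substantive content is the implication $\kappa\nrightarrow[\kappa]^2_\theta\Rightarrow\onto(\ns_\kappa,\theta)$: the negative partition relation by itself only scatters the colours across all pairs of a large set rather than along one row, so closing this gap is the crux and relies on the paper's transformation machinery.
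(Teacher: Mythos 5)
Your overall strategy---contraposition clause by clause---is reasonable, and Clause~(5) is handled correctly: the needed implication $\kappa\nrightarrow[\kappa]^2_\theta\Rightarrow\onto(\ns_\kappa,\theta)$ is Proposition~\ref{prop45}(2), which is a short pigeonhole argument (stabilise the missing colour on a stationary subset) rather than any transformation machinery. But there are genuine gaps elsewhere. For Clause~(1), the pump-up $\ubd(J^{\bd}[\kappa],\kappa)\Rightarrow\onto(J^{\bd}[\kappa],\theta)$ that you invoke is not available: the paper's monotonicity result (Corollary~\ref{cor65}(1)) deliberately restricts to $\varkappa<\kappa$ or to successor $\kappa$, and for a limit cardinal the descent from $\kappa$ colours to $\theta$ colours over the \emph{bounded} ideal is precisely what is not known in general (for $\theta=\omega$ it is Todor\v{c}evi\'c's theorem on nontrivial $C$-sequences; for arbitrary regular $\theta$ it would bear on the open problems in the introduction). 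The correct route stays with the nonstationary ideal: $\ubd(\ns_\kappa,\kappa)\Rightarrow\onto^{++}(\ns_\kappa,\theta)$ for $\theta\in\reg(\kappa)$ (Corollary~\ref{cor610}(1)), so your hypothesis yields $\kappa\notin\amen_\kappa$, hence $\kappa\notin\sa_\kappa$ since $\sa_\kappa\s\amen_\kappa$, hence $\ubd(J^{\bd}[\kappa],\kappa)$ fails, and Clauses (1), (3), (4) drop out of Corollaries~\ref{square_is_amenable} and \ref{strongamenmahlo}. Clause~(2) likewise follows because $\diamondsuit^*(\reg(\kappa))\Rightarrow\kappa\in\amen_\kappa$ (Proposition~\ref{diamondstar}); your plan of rereading the $\diamondsuit^*$-construction at regulars only produces a witness to $\onto(\ns_\kappa\restriction\reg(\kappa),\theta)$ and says nothing about stationary sets concentrating on singular ordinals.

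The most serious problem is Clause~(3). First, $\refl(\kappa,\kappa,\reg(\kappa))$ is a \emph{simultaneous} reflection statement about $\kappa$-sequences of stationary sets; its negation does not hand you a single stationary set reflecting at no regular, so your contrapositive establishes only the much weaker assertion that every individual stationary set reflects at some regular. (The same confusion touches Clause~(4): $\square(\kappa,{<}\mu)$ yields ${<}\mu$ many stationary sets with no common reflection point, not one non-reflecting set --- though that weaker fact still suffices once the full $\refl(\kappa,\kappa,\reg(\kappa))$ is in hand.) Second, the claim that a non-reflecting stationary subset of an inaccessible carries $\diamondsuit$ is false in $\zfc$: $\diamondsuit(S)$ for stationary $S\s\kappa$ forces $2^{<\kappa}=\kappa$, which can fail here --- indeed the nontrivial case of Theorem~D is exactly $\kappa<2^{\aleph_0}$. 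No diamond is needed: stationary sets failing to reflect simultaneously at regulars already produce a strongly amenable $C$-sequence via Lemma~\ref{stronglyamenableideal} and the $\kappa$-completeness of $\sa_\kappa$, whence $\ubd(\ns_\kappa,\kappa)$ and then $\onto(\ns_\kappa,\theta)$; the full strength of $\refl(\kappa,\kappa,\reg(\kappa))$ is then imported from the cited result that a cardinal carrying no nontrivial $C$-sequence satisfies it.
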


\subsection{Organization of this paper}
In Section~\ref{sectioncolour} we introduce the colouring principles, some of which we have already seen and other variants, that we will be studying in this paper and we give the basic application that motivated us, that of partitioning positive sets of an ideal into many disjoint positive sets.

In Section~\ref{sectionstronglyamenable} we introduce the ideal $\sa_\kappa$ which is strongly tied to the weak-compactness of $\kappa$. 
In $\L$, the ideal $\sa_\kappa$ is nontrivial if and only if $\kappa$ is weakly compact.
In general, if $\kappa$ is weakly compact, then $\sa_\kappa$ is nontrivial,
and if $\sa_\kappa$ is nontrivial then $\kappa$ is weakly compact in $\L$.
We also show that $\ubd(J^\bd[\kappa],\kappa)$ holds exactly when $\sa_\kappa$ is trivial,
and provide some facts connecting forcing with these ideals.
The proof of Clause~(2) of Theorem~A will be found there.

In Section~\ref{sectionamenable} we introduce the ideal $\amen_\kappa$ which is strongly tied to ineffability of $\kappa$.
If $\kappa$ is ineffable, then $\amen_\kappa$ is nontrivial. In $\L$, the converse holds as well.
We show that $\ubd(\ns_\kappa,\kappa)$ holds exactly when $\amen_\kappa$ is trivial,
and then we provide some consistency strength lower bounds for when this does not happen. Again, we end with some facts connecting forcing with these ideals.

In Section~\ref{sectionulam} we prove that one of the strongest principles we formulate, the $\ubd^*$ principle, captures exactly the notion of an Ulam matrix as formulated by Ulam and its generalisation due to Hajnal. This also allows us to give explicit evidence that the $\ubd$ principles we have formulated are a more applicable method of partitioning positive sets of various ideals.

Section~\ref{sectionpumping} is the most technical of our sections. The theme here is to obtain implications between the various instances of our colouring principles as well as the classical negative square bracket principles. In particular we prove pumping-up theorems for the principles we have introduced as well as establish various monotonicity results between them. To do this we use a variety of guessing principles some of which are available in $\zfc$ and some of which are not. 
The proof of Theorem~D will be found there.

In Section~\ref{sectionzfc} we prove $\zfc$ results about our principles bolstered by the results of the previous sections, most strongly those of Section~\ref{sectionpumping} which allow us to draw stronger conclusions. In particular we consider the case of singular cardinals and their successors, as well as cardinals below the continuum. 
The proofs of Clause~(1) of Theorem~A, and Clause~(2) of Theorem~C will be found there.
This section is concluded with an affirmative answer to a weak form of Fodor's question.

In Section~\ref{sectionontomax} we consider various aspects of the $\onto$ principles with the maximum number of colours. In particular we obtain them in certain instances from guessing principles and we establish pump-up theorems between them and some of the classical principles.
The proofs of Clauses (1),(3) and (4) of Theorem~C will be found there.

In Section~\ref{sectionfailures} we collect together various failures and consistent failures of our colouring principles, 
some of them obtained by inspecting Kunen's model \cite[\S3]{MR495118} demonstrating the resurrection phenomena of large cardinals.
We also prove that the instance of $\ubd^{++}$, let alone $\onto^{++}$, with the maximum number of colours always fails. The proof of Theorem B will be found there.

In Section~\ref{sectionweaklycompact} we prove Clause~(3) of Theorem~A and some related results which allow us to characterise weakly compact cardinals using our principles.

In Section~\ref{sectionineffable} we prove Clause~(4) of Theorem~A and some related results which allow us to characterise ineffable cardinals using our principles.

In the Appendix, we provide a diagram which summarises our results as they relate to the characterisation of weakly compact and ineffable cardinals.

\subsection{So was Ulam right?}\label{ulamremark} As previously mentioned, while working on the paper \cite{paper46} about club-guessing and after having proved particular applications of many of the theorems in this paper, 
we realised that the onto mapping principle of Sierpi\'nski suggests a common framework for capturing almost all the partitioning results we were able to prove except for the ones proved using Ulam matrices. 
Later with the introduction of $\ubd^*$ we were able to capture Ulam matrices as well. 

So which is the correct way to partition positive sets of a $\kappa$-complete ideal over $\kappa$?
For a large class of ideals $J$ which we call \emph{subnormal} (see Definition~\ref{subnormalideals} below), 
the new principle $\ubd(J,\theta)$ turns out to be sufficient. 
For the general case, Ulam matrices are still superior,
though there is a \emph{narrow} variant of $\ubd(J,\theta)$ that does the job, as well.
Narrow colourings will be the subject matter of a sequel to this paper \cite{paper53}.

\subsection{Notation and conventions}\label{nandc} 
Let $\reg(\kappa)$ denote the collection of all infinite regular cardinals below $\kappa$.
Let $E^\kappa_\theta:=\{\alpha < \kappa \mid \cf(\alpha) = \theta\}$,
and define $E^\kappa_{\le \theta}$, $E^\kappa_{<\theta}$, $E^\kappa_{\ge \theta}$, $E^\kappa_{>\theta}$,  $E^\kappa_{\neq\theta}$ analogously.
For a set of ordinals $A$, we write $\ssup(A) := \sup\{\alpha + 1 \mid \alpha \in A\}$, $\acc^+(A) := \{\alpha < \ssup(A) \mid \sup(A \cap \alpha) = \alpha > 0\}$,
$\acc(A) := A \cap \acc^+(A)$, and $\nacc(A) := A \setminus \acc(A)$.
For a stationary $S\s \kappa$, we write
$\Tr(S):= \{\alpha \in E^\kappa_{>\omega}\mid  S\cap \alpha\text{ is stationary in }\alpha\}$.
For the iterated trace operations $\Tr^\alpha$, see Definition~\ref{iteratedtrace} below.
A cardinal $\kappa$ is \emph{greatly Mahlo} iff $\Tr^\alpha(\reg(\kappa))$ is stationary for every $\alpha<\kappa^+$.
The principle $\refl(\kappa,S,T)$ asserts that for every sequence $\langle S_i\mid i<\kappa\rangle$ of stationary subsets of $S$, 
there exists $\delta\in T$ such that $\delta\in\bigcap_{i<\delta}\Tr(S_i)$.
For $A,B$ sets of ordinals, we denote $A\circledast B:=\{(\alpha,\beta)\in A\times B\mid \alpha<\beta\}$
and we identify $[B]^2$ with $B\circledast B$.
In particular, we interpret the domain of a colouring $c:[\kappa]^2\rightarrow\theta$ as a collection of ordered pairs.
In scenarios in which we are given an unordered pair $p=\{\alpha,\beta\}$,
we shall write $c(\{\alpha,\beta\})$ for $c(\min(p),\max(p))$. We also agree to interpret $c(\{\alpha,\beta\})$ as $0$, whenever $\alpha=\beta$.
For $\theta\neq 2$, $[\kappa]^\theta$ stands for the collection of all subsets of $\kappa$ of size $\theta$.
Similar to Definition~\ref{hungarian}, $\kappa\nrightarrow[\mu; \nu]^2_\theta$ asserts the existence of a colouring $c:[\kappa]^2\rightarrow\theta$ such that, for all $A\in[\kappa]^\mu$ and $B\in[\kappa]^\nu$, $c[A \circledast B]=\theta$.

\section{Colouring principles and applications}
\label{sectioncolour}
In this section we introduce the basic objects that are the subject of this paper. To start, in Definitions~\ref{def21} and \ref{def23} we state in full generality the colouring principles we will study. Proposition~\ref{remark25} which follows is a prototype of the results of Section~\ref{sectionpumping}. In Subsection~\ref{subsectionsubnormal} we define the titular class of ideals we shall focus on, the \emph{subnormal ideals} and gather some elementary facts about them. In Subsection~\ref{subsectionlargecardinals} we remind the reader of some basic facts about weakly compact and ineffable cardinals with which, as succinctly captured by the diagram in the Appendix, our colouring principles are naturally connected. We also prove some new results here which can be read independently of the rest of the paper but which, together with the results of Sections~\ref{sectionweaklycompact}, provide new characterisations of weakly compact cardinals. In subsection~\ref{nonweaksaturation} our focus is on demonstrating non-weak-saturation results using our colouring principles. In particular, we justify our focus on subnormal ideals by showing how they allow us to upgrade colouring principles of the form we have already seen in 
Definitions \ref{defnubd} and \ref{defonto} to stronger principles which explicitly imply non-weak-saturation results. We finish with two more applications. The first, in Subsection~\ref{subsectionpartition}, is to partitioning club-guessing. The second, in Subsection~\ref{subsectioninconsistency}, to give another proof of Kunen's Inconsistency Theorem.

\begin{defn}\label{def21} For 
a family $\mathcal A\s\mathcal P(\kappa)$,
an ideal $J$ over $\kappa$,
and a set $S\s\kappa$:
\begin{itemize}
\item $\onto^{++}(\mathcal A,J,\theta)$ asserts
the existence of a colouring $c:[\kappa]^2\rightarrow\theta$ with the property that,
for every $A\in\mathcal A$ and every sequence $\langle B_\tau\mid \tau<\theta\rangle$ of elements of $J^+$,
there is an $\eta\in A$ such that $\{ \beta\in B_\tau\setminus(\eta+1)\mid c(\eta,\beta)=\tau\}\in J^+$ for every $\tau<\theta$;
\item $\onto^+(\mathcal A,J,\theta)$ asserts 
the existence of a colouring $c:[\kappa]^2\rightarrow\theta$ with the property that,
for all $A\in\mathcal A$ and $B\in J^+$,
there is an $\eta\in A$ such that, for every $\tau<\theta$, $\{\beta\in B\setminus(\eta+1)\mid c(\eta,\beta)=\tau\}\in J^+$;
\item $\onto(\mathcal A,J,\theta)$ asserts 
the existence of a colouring $c:[\kappa]^2\rightarrow\theta$ with the property that,
for all $A\in\mathcal A$ and $B\in J^+$,
there is an $\eta\in A$ such that $$c[\{\eta\}\circledast B]=\theta;$$
\item $\onto^-(S,\theta)$ asserts 
the existence of a colouring $c:[\kappa]^2\rightarrow\theta$ with the property that,
for every club $D\s\kappa$ 
and for every regressive map $f:S\cap D\rightarrow\kappa$,
there are $\eta_0,\eta_1<\kappa$ such that 
$$c[\{\eta_0\}\circledast\{\beta\in S\cap D\mid f(\beta)=\eta_1\}]=\theta.$$
\end{itemize}

If we omit $\mathcal A$ (as in Definition~\ref{defonto}), then we mean that $\mathcal A:=\{\kappa\}$.
\end{defn}
\begin{remark} \label{relationsremark}\begin{enumerate}
\item It is clear that the principles $\onto^{++}(\mathcal A,J,\theta)$, $\onto^+(\mathcal A,J,\theta)$, and $\onto(\mathcal A,J,\theta)$ 
are decreasing in strength in the sense that a colouring witnessing a principle occurring earlier also witnesses a principle later in this order. 
As well, if $J$ is a normal ideal extending $\ns_\kappa\restriction S$ for a stationary subset $S\subseteq \kappa$,
then any colouring witnessing $\onto(J,\theta)$ is a witness to $\onto^-(S,\theta)$.
\item The principle $\onto(\{\aleph_0\},J^{\bd}[\aleph_1],\aleph_1)$ is better known as \emph{Sierpi\'nski's onto mapping principle} (see Fact~\ref{sierpinski} below) 
which gives rise to the notion of a \emph{narrow} colouring, that is, a colouring witnessing a principle $\mathsf p(\mathcal A,J,\theta)$ in which there is an $A\in\mathcal A$ with $|A|<|\bigcup J|$.
The instance $\onto([\kappa]^\kappa,J^{\bd}[\kappa],\theta)$ is better known as the partition relation
$\kappa\nrightarrow[\faktor{{\scriptstyle{{\kappa}\circledast\kappa}}}{    {}^{1\circledast\kappa}}]^2_{\theta}$ (see Proposition~\ref{prop46} below).
For additional connections to other well-studied concepts, see Propositions \ref{prop45} and \ref{prop47} below.		
\item\label{upgraderemark}
As was just demonstrated, the principle $\onto(\ldots)$ is in line with traditional strong colouring principles,
where one wishes to realise many colours on every positive set of an ideal. 
On the other hand, the principle $\onto^+(\ldots)$ is more catered towards decomposing positive sets of an ideal into many disjoint positive sets. 
Nevertheless, as we shall see in Section~\ref{nonweaksaturation}, 
if the ideal satisfies some extra properties, most importantly for this paper that of \emph{subnormality} (see Definition~\ref{subnormalideals}), 
then one can upgrade a colouring witnessing $\onto(\ldots)$ to one witnessing $\onto^+(\ldots)$. 
In some cases in fact the same colouring witnesses the stronger principle.
\end{enumerate}
\end{remark}

We now consider a weakening of the above principles by relaxing the requirement of realising the full range to one of realising the maximal ordertype. 
As we will see in Theorem~\ref{babycg}, this is often enough for certain applications. 
Recall that for $\theta\leq \kappa$ a colouring $c:[\kappa]^2 \rightarrow \theta$ is \emph{upper-regressive} if $c(\alpha,\beta)<\beta$ for all $\alpha<\beta<\kappa$.
\begin{defn}\label{def23} For families $\mathcal A,\mathcal T\s\mathcal P(\kappa)$ and an ideal $J$ over $\kappa$:
\begin{itemize}
\item $\ubd^{++}(\mathcal A,J,\theta)$ asserts 
the existence of an upper-regressive colouring $c:[\kappa]^2\rightarrow\theta$ with the property that,
for every $A\in\mathcal A$ and every sequence $\langle B_\tau\mid \tau<\theta\rangle$ of elements of $J^+$,
there is an $\eta\in A$ and an injection $h:\theta\rightarrow\theta$ such that, 
for every $\tau<\theta$, $\{ \beta\in B_\tau\setminus(\eta+1)\mid c(\eta,\beta)=h(\tau)\}\in J^+$;
\item $\ubd^+(\mathcal A,J,\theta)$ asserts 
the existence of an upper-regressive colouring $c:[\kappa]^2\rightarrow\theta$ with the property that,
for all $A\in\mathcal A$ and $B\in J^+$,
there is an $\eta\in A$ such that
$$\otp(\{\tau<\theta\mid \{\beta\in B\setminus(\eta+1)\mid c(\eta,\beta)=\tau\}\in J^+\})=\theta;$$
\item $\ubd^*(\mathcal A,J,\mathcal T)$ asserts 
the existence of an upper-regressive colouring $c:[\kappa]^2\rightarrow\kappa$ with the property that,
for all $A\in\mathcal A$ and $B\in J^+$,
there is $T\in\mathcal T$, such that, for every $\tau\in T$,
there are $\eta\in A\cap\tau$ and $\beta\in B\setminus(\tau+1)$ such that $c(\eta,\beta)=\tau$;
\item $\ubd(\mathcal A,J,\theta)$ asserts 
the existence of an upper-regressive colouring $c:[\kappa]^2\rightarrow\theta$ with the property that,
for all $A\in\mathcal A$ and $B\in J^+$,
there is an $\eta\in A$ such that 
$$\otp(c[\{\eta\}\circledast B])=\theta.$$
\end{itemize}

If we omit $\mathcal A$ (as in Definition~\ref{defnubd}), then we mean that $\mathcal A:=\{\kappa\}$.
\end{defn}

\begin{remark}\label{remark25a}
\begin{enumerate} 
\item For $\theta$ a finite cardinal, 
$\ubd(\mathcal A, J, \theta)$ is equivalent to $\onto(\mathcal A, J, \theta)$,
and $\ubd^{+}(\mathcal A, J, \theta)$ is equivalent to $\onto^{+}(\mathcal A, J, \theta)$.
\item The principle $\ubd^*(\mathcal A',J,\mathcal T)$ implies $\ubd^*(\mathcal A,J',\mathcal T')$
whenever $\mathcal A\s\mathcal A'$ and $J \s J'$ and $\mathcal T\s\mathcal T'$.
\item
For $\kappa$ regular uncountable, $\ubd^*(\mathcal A,J,(\ns_\kappa)^+)$ implies $\ubd(\mathcal A,\allowbreak J,\kappa)$.
A reasonable conjecture would assert that $\ubd^*(\mathcal A,J,(\ns_\kappa)^*)$ implies $\onto(\mathcal A,J,\kappa)$,
but this conjecture is refuted by Facts~\ref{ulamoriginal} and \ref{larson}.
\end{enumerate}
\end{remark}

\begin{prop}\label{remark25} For $\mathcal A\s\mathcal P(\kappa)$, an ideal $J$ over $\kappa$, and a (possibly finite) cardinal $\theta<\kappa$:
\begin{enumerate}[(1)]
\item If $\onto(\mathcal A,J,\kappa)$ holds, then it may be witnessed by an upper-regressive map;
\item If $\onto(\mathcal A,J,\kappa)$ holds, then so does $\ubd^*(\mathcal A,J,\{\kappa\setminus\epsilon\mid \epsilon<\kappa\})$;
\item If $J\supseteq J^{\bd}[\kappa]$ and $2^\theta\le\kappa$, 
then $\ubd^{++}(J,\theta)$ implies $\onto^{++}(J,\theta)$,
and likewise $\ubd^+(J,\theta)$ implies $\onto^+(J,\theta)$.
\end{enumerate}
\end{prop}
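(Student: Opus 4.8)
The plan is to handle the three clauses in order, using the first to bootstrap the second and isolating the real difficulty in the third. Throughout Clauses~(1) and~(2) we are in the case $\theta=\kappa$, so the starting point is the observation that any witness $c$ to $\onto(\mathcal A,J,\kappa)$ forces every $B\in J^+$ to have size $\kappa$: realising all $\kappa$ colours on $\{\eta\}\circledast B$ requires $|B|=\kappa$, whence every bounded subset of $\kappa$ lies in $J$, so that $J\supseteq J^{\bd}[\kappa]$ and every positive set is unbounded. For Clause~(1) I would then fold the colours down by a \emph{fixed} surjection with large fibres. Fix a bijection $\kappa\cong\kappa\times\kappa$ and let $g\colon\kappa\to\kappa$ be its first coordinate, so each fibre $g^{-1}(\tau)$ has size $\kappa$. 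Define $c'(\alpha,\beta):=g(c(\alpha,\beta))$ when $g(c(\alpha,\beta))<\beta$, and $c'(\alpha,\beta):=0$ otherwise; this is upper-regressive by construction. To check $c'$ still witnesses the principle, fix $A\in\mathcal A$ and $B\in J^+$ and take $\eta\in A$ with $c[\{\eta\}\circledast B]=\kappa$. For each target $\tau<\kappa$, since $c(\eta,\cdot)$ maps $B\setminus(\eta+1)$ onto $\kappa$ and $|g^{-1}(\tau)|=\kappa$, the set $\{\beta\in B\setminus(\eta+1)\mid c(\eta,\beta)\in g^{-1}(\tau)\}$ has size $\kappa$ and is therefore unbounded; picking $\beta$ in it with $\beta>\tau$ gives $c'(\eta,\beta)=\tau$. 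Hence $c'[\{\eta\}\circledast B]=\kappa$.

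For Clause~(2) I would simply run the upper-regressive witness $c'$ from Clause~(1). Given $A\in\mathcal A$ and $B\in J^+$, pick $\eta\in A$ with $c'[\{\eta\}\circledast B]=\kappa$ and set $T:=\kappa\setminus(\eta+1)$, which belongs to $\{\kappa\setminus\epsilon\mid\epsilon<\kappa\}$. For $\tau\in T$ we have $\eta<\tau$, so $\eta\in A\cap\tau$, and since $\tau$ is realised there is $\beta\in B$ with $c'(\eta,\beta)=\tau$; upper-regressiveness then forces $\tau<\beta$, i.e. $\beta\in B\setminus(\tau+1)$. This is exactly the conclusion of $\ubd^*(\mathcal A,J,\{\kappa\setminus\epsilon\mid\epsilon<\kappa\})$, so the single good $\eta$ does all the work.

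Clause~(3) is the substantive one. The only slack between $\ubd^{++}(J,\theta)$ and $\onto^{++}(J,\theta)$ is the injection $h\colon\theta\to\theta$, and between $\ubd^+(J,\theta)$ and $\onto^+(J,\theta)$ it is the order-collapse of the (size-$\theta$) set of positively realised colours; in both cases this relabelling datum ranges over a set of size at most $\theta^\theta=2^\theta\le\kappa$. The plan is to absorb this ambiguity. I would first record that $\ubd^{++}(J,\theta)$ is self-improving: applied to a constant sequence it already splits every $B\in J^+$ into $\theta$-many pairwise disjoint $J$-positive colour classes (the classes $\{\beta\in B\setminus(\eta+1)\mid c(\eta,\beta)=h(\tau)\}$, which are disjoint as $h$ is injective). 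Enumerating the $\le\kappa$ relabellings $\langle h_i\mid i<\lambda\rangle$, I would then build the $\onto$-colouring as an amalgam of the relabelled copies $h_i^{-1}\circ c$ arranged via a bijection $\kappa\cong\kappa\times\lambda$, and verify the $\onto$-conclusion by feeding $\ubd^{++}$ a single redundant derived sequence (each target duplicated across the index $\lambda$) and reading off the copy on which the returned injection is undone. The same amalgamation, with order-collapses in place of injections, is meant to handle $\ubd^+(J,\theta)\Rightarrow\onto^+(J,\theta)$.

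The step I expect to be the \textbf{main obstacle} is precisely this last verification. Decoding the relabelling requires isolating the copy matching the injection $h$ that $\ubd^{++}$ returns, and a naive intersection with that copy need not preserve $J$-positivity — indeed no fixed partition of $\kappa$ can meet every positive set positively in each piece. The resolution I would pursue is to perform the decoding \emph{locally} rather than globally: using the $\theta$-splitting afforded by the principle itself, together with $J\supseteq J^{\bd}[\kappa]$, to carve the positive witnesses produced by $\ubd^{++}$ \emph{inside} the correct copy, so that the colour classes one finally exhibits remain positive. Getting this positivity bookkeeping to go through uniformly in $\tau$, under the budget $2^\theta\le\kappa$, is where I expect the work of Clause~(3) to concentrate.
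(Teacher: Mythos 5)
Clauses (1) and (2) are correct and coincide with the paper's argument: compose the colouring with a fixed surjection all of whose fibres have size $\kappa$, truncate to make it upper-regressive, and then observe that for an upper-regressive witness the single good row $\eta$ already yields the $\ubd^*$ conclusion with $T:=\kappa\setminus(\eta+1)$.

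Clause (3) is where your proposal does not close, and the obstacle you flag is a phantom created by setting the coding up on the wrong coordinate. The paper fixes a bijection $\pi:\kappa\leftrightarrow\kappa\times{}^\theta\theta$ (legitimate since $\theta^\theta=2^\theta\le\kappa$) and defines $d(\eta,\beta):=h'(c(\eta',\beta))$ where $\pi(\eta)=(\eta',h')$; that is, the $\le\kappa$ many relabelling functions are absorbed into the \emph{row} index $\eta$, not into a partition of the underlying set $\kappa$ on which $J$ lives. Decoding is then trivial: if $\ubd^{++}$ returns $\eta'$ and an injection $h$ with each $\{\beta\in B_\tau\setminus(\eta'+1)\mid c(\eta',\beta)=h(\tau)\}$ in $J^+$, choose a left inverse $h'$ of $h$ and set $\eta:=\pi^{-1}(\eta',h')$; then $\{\beta\in B_\tau\setminus(\eta+1)\mid d(\eta,\beta)=\tau\}$ contains $\{\beta\in B_\tau\setminus(\max\{\eta,\eta'\}+1)\mid c(\eta',\beta)=h(\tau)\}$, which is positive because $J\supseteq J^{\bd}[\kappa]$ means discarding a bounded set preserves positivity. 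The same works for $\ubd^+\Rightarrow\onto^+$ with $h'$ chosen to map the order-type-$\theta$ set of positively realised colours onto $\theta$. At no point is any $J$-positive set intersected with a ``copy,'' so your concern that ``no fixed partition of $\kappa$ can meet every positive set positively in each piece'' never arises; it would only arise if the copies partitioned the $\beta$-side. Your proposed resolution (``local decoding'' via the $\theta$-splitting of the principle itself) is left as a sketch and is not needed; as written, Clause (3) of your proof is incomplete precisely at the step you identify as the main difficulty, whereas the correct bookkeeping reduces that step to the one-line use of $J\supseteq J^{\bd}[\kappa]$ just described.
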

\begin{proof} (1) Suppose that $c:[\kappa]^2\rightarrow\kappa$ is a colouring witnessing $\onto(\mathcal A,J,\kappa)$.
Fix a surjection $f:\kappa\rightarrow\kappa$ such that the preimage of any singleton has size $\kappa$.
Define a colouring $d:[\kappa]^2\rightarrow\kappa$ by letting, for all $\eta<\beta<\kappa$,
 $d(\eta,\beta):=f(c(\eta,\beta))$ provided that $f(c(\eta,\beta))<\beta$; otherwise, let $d(\eta,\beta):=0$.
Clearly, $d$ is upper-regressive.

Now, given $A\in\mathcal A$ and $B\in J^+$, fix $\eta\in A$ such that $c[\{\eta\}\circledast B]=\kappa$.
 To see that $d[\{\eta\}\circledast B]=\kappa$,
let $\tau<\kappa$ be arbitrary. 
As the set $\{ \beta\in B\mid f(c(\eta,\beta))=\tau\}$ has size $\kappa$, it contains an element $\beta^*$ above $\max\{\eta,\tau\}$, so that $d(\eta,\beta^*)=\tau$.
  
(2) Any upper-regressive witness to $\onto(\mathcal A,J,\kappa)$ witnesses $\ubd^*(\mathcal A,J,\allowbreak\{{\kappa\setminus\epsilon}\mid \epsilon<\kappa\})$.

(3) Assuming $2^\theta\le\kappa$, fix a bijection $\pi:\kappa\leftrightarrow\kappa\times{}^\theta\theta$.
Now, given a colouring $c:[\kappa]^2 \rightarrow \theta$,
derive a colouring $d:[\kappa]^2\rightarrow \theta$ via $d(\eta, \beta) := h'(c\{\eta', \beta\})$ where  $\pi(\eta)= ( \eta',h')$.
It is easy to see that if $J\supseteq J^{\bd}[\kappa]$, 
then $d$ witnesses $\onto^{++}(J,\theta)$ whenever $c$ witnesses $\ubd^{++}(J, \theta)$,
and that $d$ witnesses $\onto^+(J,\theta)$ whenever $c$ witnesses $\ubd^+(J, \theta)$.
\end{proof}

\subsection{Subnormal ideals}\label{subsectionsubnormal} 
For a set of ordinals $S$, $J^{\bd}[S]$ stands for the ideal of bounded subsets of $S$. 
For $\nu$ an infinite cardinal, let
$$\mathcal J^\kappa_\nu:=\{ J\mid J\text{ is a }\nu\text{-complete ideal over }\kappa\text{ extending }J^{\bd}[\kappa]\}.$$
In particular, $\mathcal J^\kappa_\omega$ is the set of all ideals $J$ over $\kappa$ extending $J^\bd[\kappa]$. For $\kappa$ of uncountable cofinality, 
$\ns_\kappa$ stands for the ideal of nonstationary subsets of $\kappa$, and $\ns_\kappa\restriction S$ stands for $\ns_\kappa\cap\mathcal P(S)$.

For any ideal $J$ over $S$, we denote its dual filter by $J^*:=\{ S\setminus X\mid X\in J\}$,
and its collection of positive sets by $J^+:=\mathcal P(S)\setminus J$.
An ideal $J$ is \emph{trivial} iff $J^+=\emptyset$.
Note that  all the principles of Definitions \ref{def21} and \ref{def23} hold vacuously for trivial $J$'s,
and that any nontrivial ideal over a singular cardinal $\kappa$ extending $J^{\bd}[\kappa]$ 
is not $\cf(\kappa)^+$-complete, let alone $\kappa$-complete. 

\begin{conv}
In case $S$ is a cofinal subset of a cardinal $\kappa$, it is customary to identify an ideal $J$ over $S$ with the ideal $\hat J=\{ X\cup Y\mid X\in J,~ Y\in\mathcal P(\kappa\setminus S)\}$ over $\kappa$,
since they have the same collection of positive sets.
In particular, when we talk about ideals over $\kappa$ extending $J^{\bd}[\kappa]$
this also covers the cases of ideals over a cofinal subset $S$ of $\kappa$ extending $J^{\bd}[S]$.
\end{conv}

\begin{defn}[folklore] An ideal $J$ over $\kappa$ is said to be \emph{normal} if for every sequence $\langle E_\eta \mid \eta < \kappa\rangle$ of sets from $J^*$, 
its diagonal intersection $\diagonal_{\eta < \kappa} E_\eta:= \{\beta < \kappa \mid \beta \in \bigcap_{\eta < \beta} E_\eta\}$ is in $J^*$.
\end{defn}

Note that if $\kappa$ is singular, then there is no nontrivial normal ideal over $\kappa$ extending $J^{\bd}[\kappa]$.
The following is well-known.
\begin{fact}\label{normalfacts} Suppose that $\kappa$ is regular uncountable, and that $J$ is a normal ideal over some stationary set $S\s\kappa$.
If $J$ extends $J^{\bd}[S]$, then:
\begin{enumerate}[(1)]
\item $J$ is $\kappa$-complete;
\item $J$ extends $\ns_\kappa\restriction S$.
\end{enumerate}
\end{fact}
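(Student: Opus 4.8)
The plan is to prove both clauses by combining closure under diagonal intersections with the hypothesis $J\supseteq J^{\bd}[S]$, which renders every bounded subset of $S$ negligible and in particular puts $S$ into $J^*$. Throughout I would treat $J$ as an ideal on the stationary set $S$ itself, so that every member of $J$ and of $J^*$ is a subset of $S$, and the diagonal intersection $\diagonal_{\eta<\kappa}E_\eta=\{\beta<\kappa\mid \beta\in\bigcap_{\eta<\beta}E_\eta\}$ automatically lands inside $S$ (any $\beta>0$ must lie in $E_0\subseteq S$).

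For clause (1), suppose $\langle A_i\mid i<\gamma\rangle$ is given with $\gamma<\kappa$ and each $A_i\in J$; the goal is $\bigcup_{i<\gamma}A_i\in J$. I would pad this into a $\kappa$-indexed sequence of $J^*$-sets by setting $E_\eta:=S\setminus A_\eta$ for $\eta<\gamma$ and $E_\eta:=S$ for $\gamma\le\eta<\kappa$, and then apply normality to obtain $E:=\diagonal_{\eta<\kappa}E_\eta\in J^*$. A direct computation shows that for $\beta\ge\gamma$ one has $\bigcap_{\eta<\beta}E_\eta=S\setminus\bigcup_{i<\gamma}A_i$, so $E$ and $S\setminus\bigcup_{i<\gamma}A_i$ agree off the bounded set $\gamma$. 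Since $S\cap\gamma\in J^{\bd}[S]\subseteq J$, discarding it preserves membership in $J^*$, so $S\setminus\bigcup_{i<\gamma}A_i\in J^*$ and therefore $\bigcup_{i<\gamma}A_i\in J$. Regularity of $\kappa$ is used only to know that $\gamma$ is bounded.

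For clause (2), since $J$ is downward closed it suffices to show $S\setminus C\in J$ for every club $C\subseteq\kappa$: any nonstationary $X\subseteq S$ is disjoint from some club $C$, hence $X\subseteq S\setminus C$. Given a club $C$, for each $\eta<\kappa$ I would let $c_\eta:=\min(C\setminus(\eta+1))$ and put $E_\eta:=S\setminus[\eta+1,c_\eta)$; each $E_\eta$ lies in $J^*$ because the removed interval is bounded and hence in $J^{\bd}[S]$. Normality gives $D:=\diagonal_{\eta<\kappa}E_\eta\in J^*$, and the crux is to identify $D$ with $S\cap C$ modulo a bounded set. Unwinding the definition, $\beta\in D$ iff $c_\eta\le\beta$ for all $\eta<\beta$; one checks this holds exactly when $\beta\in C$, since if $\beta\in C$ then $\beta$ itself witnesses $c_\eta\le\beta$, whereas if $\beta\notin C$ then closedness of $C$ yields $\eta_0:=\sup(C\cap\beta)<\beta$, and for $\eta_0\le\eta<\beta$ every element of $C$ above $\eta$ overshoots $\beta$, forcing $c_\eta>\beta$. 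Thus $S\cap C\in J^*$ and $S\setminus C\in J$, as required.

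The main obstacle is the bookkeeping in clause (2): verifying cleanly that the diagonal intersection $D$ coincides with $S\cap C$ up to a bounded error, which requires the case split on whether $\beta\in C$ and careful use of closedness to locate $\sup(C\cap\beta)$. By contrast, the completeness argument in clause (1) is routine once the sequence has been padded to length $\kappa$.
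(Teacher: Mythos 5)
Your argument is correct. Note that the paper states this result as a well-known \emph{fact} and supplies no proof of its own, so there is nothing to compare against; what you give is exactly the standard folklore argument (padding a short sequence to length $\kappa$ and absorbing the bounded error via $J^{\bd}[S]\subseteq J$ for completeness, and encoding a club as a diagonal intersection of co-bounded sets for the second clause), and both verifications — including the case split on $\beta\in C$ versus $\eta_0:=\sup(C\cap\beta)<\beta$ — check out. The only trivial inaccuracy is the closing remark that regularity of $\kappa$ is what makes $\gamma$ bounded: any ordinal $\gamma<\kappa$ is bounded in $\kappa$ regardless; regularity is really needed to guarantee that the hypotheses are not vacuous (a nontrivial normal ideal extending the bounded ideal cannot exist over a singular cardinal), but this does not affect your proof.
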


We now introduce a variation of normality that we call \emph{subnormality}.
Every normal ideal is subnormal, but so is $J^{\bd}[\kappa]$ for every infinite cardinal $\kappa$.

\begin{defn}\label{subnormalideals}An ideal $J$ over $\kappa$ is said to be \emph{subnormal} if for every sequence $\langle E_\eta\mid \eta<\kappa\rangle$ of sets from $J^*$,
the following two hold:
\begin{enumerate}
\item  for every $B\in J^+$, there exists $B'\s B$ in $J^+$ such that, for every $(\eta,\beta)\in [B']^2$,
$\beta\in E_\eta$;
\item  for all $A,B\in J^+$, there exist $A'\s A$ and $B'\s B$ with $A',B'\in J^+$ such that, for every $(\eta,\beta)\in A'\circledast B'$,
$\beta\in E_\eta$.
\end{enumerate}
\end{defn} 

\begin{prop} Suppose that $J$ is an ideal over $\kappa$,
and $f:\kappa\rightarrow\kappa$ is some function.
Denote $f_*(J):=\{ X\s\kappa\mid f^{-1}[X]\in J\}$.
\begin{enumerate}[(1)]
\item If $J$ is subnormal and $f$ is order-preserving on a set in $J^*$,	
then $f_*(J)$ is subnormal;
\item If $f$ is order-preserving, then for every principle $\textsf p\in\{\onto,\onto^+,\onto^{++},\allowbreak\ubd,\ubd^+,\ubd^{++}\}$,
$\textsf p(J,\theta)$ implies $\textsf p(f_*(J),\theta)$.
\end{enumerate}
\end{prop}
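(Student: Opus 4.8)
The plan is to push every instance of a principle or of subnormality for $f_*(J)$ down to the corresponding instance for $J$ via the preimage map, solve it downstairs, and transport the solution back up along $f$. Here I read \emph{order-preserving} as strictly increasing, so that (on the relevant domain) $f$ is injective and satisfies $f(\gamma)<f(\delta)\Rightarrow\gamma<\delta$. I will freely use that $f_*(J)$ is an ideal and the two membership tests
$B\in (f_*(J))^+\iff f^{-1}[B]\in J^+$ and $E\in (f_*(J))^*\iff f^{-1}[E]\in J^*$, both immediate from $f^{-1}[\kappa\setminus X]=\kappa\setminus f^{-1}[X]$. For Clause~(1), fix $W\in J^*$ with $f\restriction W$ order-preserving, hence injective, with preimages in $W$ unique.

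\textbf{Clause (1).} Given $\langle E_\eta\mid\eta<\kappa\rangle$ from $(f_*(J))^*$, set $F_\gamma:=f^{-1}[E_{f(\gamma)}]$ for $\gamma<\kappa$; since each $E_{f(\gamma)}\in(f_*(J))^*$, each $F_\gamma\in J^*$. For item~(i), given $B\in(f_*(J))^+$ put $\hat B:=f^{-1}[B]\in J^+$, so that $\hat B\cap W\in J^+$ as $W\in J^*$. Apply subnormality~(i) of $J$ to $\langle F_\gamma\rangle$ and $\hat B\cap W$, obtaining $\hat B'\s\hat B\cap W$ in $J^+$ with $f(\delta)\in E_{f(\gamma)}$ for all $(\gamma,\delta)\in[\hat B']^2$. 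I then claim $B':=f[\hat B']$ is as required: $B'\s B$ since $\hat B'\s f^{-1}[B]$; $B'\in(f_*(J))^+$ since $f^{-1}[B']\supseteq\hat B'\in J^+$; and for $(\eta,\beta)\in[B']^2$, writing $\eta=f(\gamma)$, $\beta=f(\delta)$ with $\gamma,\delta\in\hat B'\s W$, order-preservation on $W$ forces $\gamma<\delta$, whence $\beta=f(\delta)\in E_{f(\gamma)}=E_\eta$. Item~(ii) is verbatim, feeding $A,B\in(f_*(J))^+$ into subnormality~(ii) of $J$ through $\hat A\cap W$ and $\hat B\cap W$.

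\textbf{Clause (2).} Now $f$ is order-preserving on all of $\kappa$, hence an order-isomorphism onto $I:=\im(f)$; let $g\colon I\to\kappa$ be its inverse, and note that $f(\xi)\ge\xi$ gives $g(\beta)\le\beta$ for $\beta\in I$. Given a witness $c$ for $\textsf p(J,\theta)$, define $d\colon[\kappa]^2\to\theta$ by $d(\alpha,\beta):=c(g(\alpha),g(\beta))$ when $\alpha,\beta\in I$ and $d(\alpha,\beta):=0$ otherwise; the bound $g(\beta)\le\beta$ makes $d$ upper-regressive whenever $c$ is, which is what the $\ubd$-family requires. The single computation behind every case is that, for $\zeta<\kappa$ with $\eta:=f(\zeta)$ and $\beta=f(\delta)$ ranging over $B\cap I$, one has $d(\eta,\beta)=c(\zeta,\delta)$ and $\beta>\eta\iff\delta>\zeta$; the stray value $0$ contributed by $\beta\in B\setminus I$ is harmless, since it is discarded by $f^{-1}$ and affects neither the attained range nor the order-type. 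Thus, for $B\in(f_*(J))^+$ with $\hat B:=f^{-1}[B]\in J^+$, applying the relevant clause of $\textsf p(J,\theta)$ to $\hat B$ (or to $\langle f^{-1}[B_\tau]\mid\tau<\theta\rangle$ for the $(\ldots)^{++}$ variants) yields a pivot $\zeta$, and $\eta:=f(\zeta)$ witnesses the matching clause for $f_*(J)$: for each colour $\tau$, $f^{-1}[\{\beta\in B\setminus(\eta+1)\mid d(\eta,\beta)=\tau\}]=\{\delta\in\hat B\setminus(\zeta+1)\mid c(\zeta,\delta)=\tau\}$, so a fibre is $(f_*(J))$-positive exactly when the downstairs set is $J$-positive, and the range and order-type conditions transfer identically (using the injection $h$ unchanged for $\ubd^{++}$).

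The only genuine subtlety, and the reason the naive choice $d(\alpha,\beta):=c(f(\alpha),f(\beta))$ fails, is twofold: that colouring need not be upper-regressive, as one only gets $c(f(\alpha),f(\beta))<f(\beta)$ which may exceed $\beta$; and the pivot $\zeta$ returned by $\textsf p(J,\theta)$ need not lie in $\im(f)$, so it cannot serve as a first coordinate. Composing with the inverse $g$ instead of $f$ cures both at once, because $g$ is regressive on $I$ and $\eta:=f(\zeta)$ always lands in $I$. In Clause~(1) the corresponding care is only to transport positive sets across $f$ in both directions, which is precisely what injectivity of $f\restriction W$ supplies; that localisation to $W$ is where one must be attentive, since $f$ is order-preserving only modulo $J$.
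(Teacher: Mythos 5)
Your proposal is correct and follows essentially the same route as the paper: in Clause (1) you intersect the pulled-back positive sets with the set $W\in J^*$ on which $f$ is order-preserving, apply subnormality of $J$ to the pulled-back sequence $\langle f^{-1}[E_{f(\gamma)}]\rangle$, and push the resulting positive sets forward; in Clause (2) you define $d$ on $[\im(f)]^2$ by composing $c$ with the inverse of $f$, exactly as the paper does, and transfer pivots and fibres via the identity $f^{-1}[\{\beta\in B\mid d(f(\zeta),\beta)=\tau\}]=\{\delta\in f^{-1}[B]\mid c(\zeta,\delta)=\tau\}$. The additional remarks on why the naive pushforward colouring fails are accurate but not needed for the argument.
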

\begin{proof} 
For notational simplicity, denote $I:=f_*(J)$.

(1) Suppose that $C \in J^*$ is a set such that $f \restriction C$ is order-preserving. So, for every pair $\alpha<\beta$ of ordinals from $f[C]$, 
if $\bar\alpha,\bar\beta\in C$ are such that $f(\bar\alpha)=\alpha$ and $f(\bar\beta)=\beta$, then
it is the case that $\bar\alpha<\bar\beta$.

Suppose that $J$ is subnormal. To verify that $I$ is subnormal, let $\langle E_\eta\mid \eta<\kappa\rangle$ be a sequence of sets from $I^*$.
Note that for every $\eta<\kappa$ the set $\bar E_\eta:=f^{-1}[E_{f(\eta)}]$ is in $J^*$.
\begin{enumerate}
\item Given $B\in I^+$, the set $\bar B:=C\cap f^{-1}[B]$ is in $J^+$,
and so by subnormality of $J$, we may pick $\bar B'\s\bar B$ in $J^+$ such that $\bar\beta\in\bar E_{\bar\eta}$ for every $(\bar\eta,\bar\beta)\in [\bar B']^2$.
Evidently, $B':=f[\bar B']$ is a subset of $f[C]\cap B$ lying in $I^+$.

Let $(\eta,\beta)\in [B']^2$ be arbitrary.
Fix $\bar\eta,\bar\beta\in\bar B'$ such that $f(\bar\eta)=\eta$ and $f(\bar\beta)=\beta$.
Then $(\bar\eta,\bar\beta)\in[\bar B']^2$, so that $\bar\beta\in \bar E_{\bar\eta}=f^{-1}[E_{f(\bar\eta)}]=f^{-1}[E_\eta]$
and $\beta=f(\bar\beta)\in E_\eta$, as sought.

\item Given $A,B\in I^+$, pick $\bar A'\s C\cap f^{-1}[A]$ and $\bar B'\s C\cap f^{-1}[B]$ both in $J^+$
such that $\bar\beta\in \bar E_{\bar\eta}$ for every $(\bar\eta,\bar\beta)\in\bar A'\circledast\bar B'$.
Evidently $A':=f[\bar A']$ is a subset of $f[C]\cap A$,
$B':=f[\bar B']$ is a subset of $f[C]\cap B$, and $A',B'\in I^+$.

Let $(\eta,\beta)\in A'\circledast B'$ be arbitrary.
Fix $\bar\eta\in\bar A'$ and $\bar\beta\in\bar B'$ such that $f(\bar\eta)=\eta$ and $f(\bar\beta)=\beta$.
Then $(\bar\eta,\bar\beta)\in\bar A'\circledast \bar B'$, so that $\bar \beta \in \bar E_{\bar\eta}$. 
It follows that $\beta=f(\bar\beta)\in f[\bar E_{\bar\eta}]=f[f^{-1}[E_{f(\bar\eta)}]]=E_{f(\bar\eta)}=E_\eta$,
as sought.
\end{enumerate}

(2) Suppose that $f$ is order-preserving,
and that $c:[\kappa]^2\rightarrow\theta$ is a colouring witnessing that $\textsf p(J,\theta)$ holds for a principle $\textsf p$ as above.
For every $\delta\in \im(f)$, let $\bar \delta$ denote the unique ordinal in $\kappa$ to satisfy $f(\bar \delta)=\delta$.
Then, pick a colouring $d:[\kappa]^2\rightarrow\theta$ such that 
$d(\alpha,\beta)=c(\bar\alpha,\bar\beta)$ for every $(\alpha,\beta)\in[\im(f)]^2$.
Note that since $f$ is order-preserving, if $c$ is upper-regressive, then we may also require $d$ to be upper-regressive.

Finally, for any $B\in I^+$, $\bar B:=f^{-1}[B]$ is in $J^+$,
and for every subset $\bar X\s\bar B$ lying in $J^+$,
$X:=f[\bar A]$ is a subset of $B$ lying in $I^+$, and for every $\eta<\kappa$,
$c[\{\eta\}\circledast\bar X]= d[\{f(\eta)\}\circledast X]$.
It thus follows that $d$ witnesses $\mathsf p(I,\theta)$.
\end{proof}

\begin{lemma} \label{subnormalcomplete}
Suppose that 		$J\in\mathcal J^\kappa_\kappa$ is subnormal,
and that $\langle E_\eta \mid \eta< \kappa\rangle$ is a sequence of sets from $J^*$.
\begin{enumerate}
\item For every $B \in J^+$,
there exists $B' \s B$ in $J^+$ such that, for every $(\alpha, \beta) \in [B']^2$, $\beta \in \bigcap _{\eta\le\alpha}E_{\eta}$;
\item For all $A,B \in J^+$,
there exist $A'\s A$ and $B'\s B$ with $A',B'\in J^+$ such that, for every $(\alpha,\beta)\in A'\circledast B'$,
$\beta \in \bigcap _{\eta\le\alpha}E_{\eta}$.
\end{enumerate}
\end{lemma}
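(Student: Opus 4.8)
The plan is to reduce both clauses to a single application of subnormality, once the sequence $\langle E_\eta\mid\eta<\kappa\rangle$ has been replaced by the sequence of its running intersections. The gap to be bridged is small: Definition~\ref{subnormalideals} only delivers, for a pair $(\alpha,\beta)$, the membership $\beta\in E_\alpha$ for the single index $\alpha$ equal to the smaller coordinate, whereas here we want $\beta$ to belong to $E_\eta$ for \emph{every} $\eta\le\alpha$ at once. The device that closes this gap is $\kappa$-completeness, which has not yet been used in Definition~\ref{subnormalideals}.

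First I would define, for each $\eta<\kappa$, the set $E'_\eta:=\bigcap_{\xi\le\eta}E_\xi$. Since $J\in\mathcal J^\kappa_\kappa$ is $\kappa$-complete, its dual filter $J^*$ is closed under intersections of fewer than $\kappa$ of its members; as $|\eta+1|<\kappa$ for every $\eta<\kappa$, each $E'_\eta$ again lies in $J^*$. Hence $\langle E'_\eta\mid\eta<\kappa\rangle$ is a legitimate sequence of sets from $J^*$, and the subnormality of $J$ may be invoked for it.

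It then remains only to unwind the two clauses of Definition~\ref{subnormalideals} for this new sequence. For Clause~(1), subnormality yields, given $B\in J^+$, a set $B'\s B$ in $J^+$ such that $\beta\in E'_\alpha$ for every $(\alpha,\beta)\in[B']^2$; and $E'_\alpha=\bigcap_{\eta\le\alpha}E_\eta$ by construction, which is precisely the desired conclusion. Clause~(2) is identical, using the second clause of subnormality to produce $A'\s A$ and $B'\s B$ in $J^+$ with $\beta\in E'_\alpha=\bigcap_{\eta\le\alpha}E_\eta$ for every $(\alpha,\beta)\in A'\circledast B'$. No real obstacle arises: the entire content of the lemma is the observation that, for a $\kappa$-complete ideal, subnormality automatically upgrades to this cumulative form, and the one thing to check is that the running intersections stay in $J^*$, which is immediate from $\kappa$-completeness together with $|\eta+1|<\kappa$.
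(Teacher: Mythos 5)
Your proof is correct and is essentially identical to the paper's: both pass to the running intersections $E'_\alpha:=\bigcap_{\eta\le\alpha}E_\eta$, observe that these stay in $J^*$ by $\kappa$-completeness, and then apply subnormality to the new sequence.
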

\begin{proof}
For each $\alpha< \kappa$, let $E'_\alpha:= \bigcap_{\eta \leq \alpha} E_{\eta}$ which by the $\kappa$-completeness of $J$ is in $J^*$. 
Now, invoke the subnormality of $J$ on the sequence $\langle E'_\alpha \mid \alpha< \kappa\rangle$.
\end{proof}

\begin{lemma}\label{dseparated} Suppose that $J\in\mathcal J^\kappa_\omega$ is subnormal,
$B\in J^+$ and $D$ is some cofinal subset of $\kappa$. Then there exists $B'\s B$ in $J^+$ which is \emph{$D$-separated},
that is, for every $(\eta,\beta)\in[B']^2$, there is $\delta\in D$ with $\eta<\delta<\beta$.
\end{lemma}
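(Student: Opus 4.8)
The plan is to reduce $D$-separation to a single application of clause (1) of subnormality, by encoding ``the next point of $D$ strictly above $\eta$'' into the sequence of dual-filter sets that subnormality consumes. The whole argument hinges on choosing the right sequence $\langle E_\eta\mid \eta<\kappa\rangle$.

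First I would use that $D$ is cofinal in $\kappa$ to define, for each $\eta<\kappa$, the ordinal $\delta_\eta:=\min(D\setminus(\eta+1))$, i.e.\ the least element of $D$ lying strictly above $\eta$; this is well-defined precisely because $D$ is cofinal. I would then set $E_\eta:=\kappa\setminus(\delta_\eta+1)$. The point of this choice is that membership $\beta\in E_\eta$ already forces $\beta>\delta_\eta$, so that the single point $\delta_\eta\in D$ separates $\eta$ from $\beta$ via $\eta<\delta_\eta<\beta$. I would next verify that each $E_\eta$ lies in $J^*$: its complement $\delta_\eta+1$ is a bounded subset of $\kappa$, and since $J\in\mathcal J^\kappa_\omega$ extends $J^\bd[\kappa]$, every bounded set belongs to $J$, whence $E_\eta\in J^*$.

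With this sequence in hand, I would feed $\langle E_\eta\mid\eta<\kappa\rangle$ together with the given $B\in J^+$ into clause (1) of the definition of subnormality, obtaining $B'\s B$ in $J^+$ such that $\beta\in E_\eta$ for every $(\eta,\beta)\in[B']^2$. To finish I would simply unwind this: fixing $\eta<\beta$ in $B'$, the relation $\beta\in E_\eta$ gives $\beta>\delta_\eta$, while by construction $\delta_\eta\in D$ and $\eta<\delta_\eta$, so $\delta_\eta$ is the desired witness to $D$-separation of the pair $(\eta,\beta)$.

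I do not expect any genuine obstacle here; the content of the lemma is entirely in recognising the correct sequence $\langle E_\eta\rangle$ to hand to subnormality, after which the conclusion is a one-line invocation. The only points requiring mild care are to take $\delta_\eta+1$ (rather than $\delta_\eta$) in the complement so that the separating point falls \emph{strictly} below $\beta$, and to note that the cofinality of $D$ is exactly what guarantees that $\delta_\eta$ exists for every $\eta<\kappa$.
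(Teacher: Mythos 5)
Your proposal is correct and coincides with the paper's own proof: both define $\delta_\eta:=\min(D\setminus(\eta+1))$, set $E_\eta:=\kappa\setminus(\delta_\eta+1)\in J^*$ (using that $J$ extends $J^{\bd}[\kappa]$), and apply clause (1) of subnormality to extract the $D$-separated $B'\s B$. No differences worth noting.
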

\begin{proof}  For every $\eta<\kappa$, set $\delta_\eta:=\min(D\setminus(\eta+1))$. 
As $J$ extends $J^{\bd}[\kappa]$, for every $\eta<\kappa$, 
$E_\eta:=\kappa\setminus(\delta_\eta+1)$ is in $J^*$.
As $J$ is subnormal, fix $B'\s B$ in $J^+$ such that, for every $(\eta,\beta)\in [B']^2$,
$\beta\in E_\eta$. Then for every pair $\eta<\beta$ of points from $B'$, $\eta<\delta_\eta<\beta$.
\end{proof}

\subsection{Large cardinals,  colourings and $C$-sequences}\label{subsectionlargecardinals} 
In this subsection, we consider a couple of large cardinal notions.

\begin{defn}[\cite{MR0167422,MR0151397,MR166107}]\label{defnwc}
A cardinal $\kappa$ is \emph{weakly compact} if it is strongly inaccessible
and there are no $\kappa$-Aronszajn trees. Equivalently, if it is uncountable and $\kappa \nrightarrow[\kappa]^2_2$ fails.
\end{defn}
\begin{remark}
Note that we have adopted the convention that $\aleph_0$ is \emph{not} weakly compact even though $\aleph_0 \rightarrow[\aleph_0]^2_2$ does hold.
\end{remark}		
Shortly after Jensen found a construction of a Souslin tree in $\L$ \cite{MR3618579}, 
he extracted from it a combinatorial principle named \emph{diamond}. The standard formulation of $\diamondsuit$, as appears in \cite{jensen},
is very close to Jensen's original principle, but nevertheless is due to Kunen.
It didn't take long until Silver squeezed more out of the proof and formulated $\diamondsuit^*$,
and then the three of them came up with the stronger principle that the proof gave, known as $\diamondsuit^+$. 
These strong combinatorial principles along with related large cardinals notions were then studied in the Jensen-Kunen manuscript \cite{jensen1969some}.
	
\begin{defn}[\cite{jensen1969some}]
For $\kappa$ a regular uncountable cardinal, a subset $ S \s \kappa$ is an \emph{ineffable} (resp.~\emph{almost ineffable}) subset of $\kappa$ if for every sequence $\langle A_\beta\mid \beta \in  S\rangle$, 
there is an $A \s \kappa$ such that the following set is stationary (resp.~cofinal) in $\kappa$: 
$$\{\beta \in  S \mid A_\beta\cap\beta = A\cap \beta\}.$$

We say that an uncountable cardinal $\kappa$ is \emph{ineffable} if it is regular and it is an ineffable subset of itself. 
If it is clear from the context, then we will simply call a subset $S\s \kappa$ an ineffable set without making a reference to $\kappa$.
\end{defn}

\begin{fact}[Kunen, \cite{jensen1969some}]
For $\kappa$ an uncountable regular cardinal, a subset $S \s \kappa$ is ineffable iff 
for every colouring $c:[\kappa]^2\rightarrow2$, there exists a stationary  $B\s S$ such that $c$ is constant over $[B]^2$ (i.e., $\kappa \nrightarrow[\stat(S)]^2_2$ fails).
\end{fact}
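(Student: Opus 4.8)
The plan is to prove both implications directly from the definitions, the forward direction being routine and the converse requiring real care. First I would show that if $S$ is ineffable then the partition property holds. Given a colouring $c:[\kappa]^2\rightarrow 2$, for each $\beta\in S$ set $A_\beta:=\{\alpha<\beta\mid c(\alpha,\beta)=1\}\s\beta$. Applying ineffability to $\langle A_\beta\mid\beta\in S\rangle$ yields an $A\s\kappa$ for which $B_0:=\{\beta\in S\mid A_\beta=A\cap\beta\}$ is stationary. For any $\alpha<\beta$ in $B_0$ we have $c(\alpha,\beta)=1$ iff $\alpha\in A_\beta=A\cap\beta$ iff $\alpha\in A$; that is, the colour of a pair from $B_0$ depends only on whether its smaller element lies in $A$. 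Now $B_0=(B_0\cap A)\cup(B_0\setminus A)$, so one of these two pieces, call it $B$, is stationary; if $B\s A$ then $c$ is constantly $1$ on $[B]^2$, and if $B\cap A=\emptyset$ then $c$ is constantly $0$ on $[B]^2$. Either way $B\s S$ is stationary and homogeneous, so $\kappa\nrightarrow[\stat(S)]^2_2$ fails.

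For the converse I would start from a sequence $\langle A_\beta\mid\beta\in S\rangle$, which we may assume satisfies $A_\beta\s\beta$, and colour a pair $\alpha<\beta$ by $c(\alpha,\beta):=0$ if $A_\alpha=A_\beta\cap\alpha$ and $c(\alpha,\beta):=1$ otherwise, extending arbitrarily to pairs whose top element is outside $S$. The partition hypothesis provides a stationary homogeneous $B\s S$. Call a set \emph{coherent} if $A_\alpha=A_\beta\cap\alpha$ for all its pairs $\alpha<\beta$. If $B$ is $0$-homogeneous then it is coherent, and then $A:=\bigcup_{\beta\in B}A_\beta$ witnesses ineffability: for $\beta\in B$ one checks that $A\cap\beta=A_\beta$, since $A_\beta\s A\cap\beta$ trivially, while any $\gamma\in A\cap\beta$ lies in some $A_{\beta'}$ with $\beta'\in B$, and coherence applied to the pair $\{\beta,\beta'\}$ forces $\gamma\in A_\beta$ (splitting into the cases $\beta'<\beta$, $\beta'=\beta$, $\beta'>\beta$). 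Hence $\{\beta\in S\mid A_\beta=A\cap\beta\}\supseteq B$ is stationary, as required.

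The crux, and the step I expect to be the main obstacle, is the remaining case in which $B$ is $1$-homogeneous, i.e.\ pairwise incoherent; this case must be shown either impossible or convertible into a coherent stationary set. My plan is to record, for each incoherent pair, the least point of disagreement $\delta(\alpha,\beta):=\min(A_\alpha\symdiff(A_\beta\cap\alpha))<\alpha$ together with the side on which it falls, and to exploit the ultrametric inequality $\delta(\alpha,\gamma)\ge\min\{\delta(\alpha,\beta),\delta(\beta,\gamma)\}$ for $\alpha<\beta<\gamma$, which already rules out certain sign patterns on a homogeneous set. Invoking the partition hypothesis once more on this refined auxiliary colouring and pressing down the regressive map $\delta$ should isolate a stationary set on which the $A_\beta$ cohere below a fixed ordinal, contradicting the assumed incoherence.

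The genuinely delicate point, however, is that the partition property for $S$ does not automatically descend to stationary subsets of $S$ — a stationary subset of an ineffable set need not be ineffable — so the second application cannot simply be run \emph{inside} $B$. Instead it must be arranged on a colouring of $[\kappa]^2$ designed so that \emph{every} stationary homogeneous set, not merely one contained in $B$, produces the contradiction; managing this interaction, rather than the elementary combinatorics of $\delta$, is where I expect the real work to lie.
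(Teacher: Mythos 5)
Your forward direction is correct and complete: deriving $A_\beta$ from the colouring, stabilising via ineffability, and then splitting $B_0$ according to membership in $A$ is the standard argument (it is the same device as in the proof of Fact~\ref{reducetoone} and of the $(1)\Rightarrow(2)$ direction of Lemma~\ref{effabletwopieces}). Note, however, that the paper itself does not prove this Fact --- it is quoted from the Jensen--Kunen notes --- so the only in-paper relative is Lemma~\ref{effabletwopieces}, which deliberately proves a \emph{fibre-wise} variant: from a non-ineffability witness $\langle g_\beta\rangle$ one sets $c(\eta,\beta):=g_\beta(\eta)$ and shows that every stationary $B\s S$ has some fibre $c[\{\eta\}\circledast B]=2$. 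That conclusion neither implies nor is implied by ``no stationary $B$ with $c$ constant on $[B]^2$'' (the witnessing $\eta$ need not lie in $B$), which is precisely why the paper can get away with the easy construction there while the Fact you are proving cannot be obtained the same way.

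The genuine gap is your $1$-homogeneous case, and it cannot be waved away. First, the case is not vacuous: pairwise-incoherent stationary families exist in ZFC (take $A_\beta$ to be a cofinal subset of $\beta$ of order type $\omega$; then for limit $\alpha<\beta$ the set $A_\beta\cap\alpha$ is bounded in $\alpha$ while $A_\alpha$ is cofinal, so $A_\alpha\neq A_\beta\cap\alpha$ always), so with the naive coherence colouring a $1$-homogeneous stationary set is a perfectly possible outcome carrying no information about $\langle A_\beta\rangle$. Second, your repair plan does not close the gap. The stabilisation argument you sketch (using $\delta(\alpha,\beta):=\min(A_\alpha\symdiff(A_\beta\cap\alpha))$ and the ultrametric inequality, then Fodor) does go through and yields a stationary $B_1\s B$ and a single ordinal $\delta^*$ such that all $A_\beta$ for $\beta\in B_1$ agree on $[0,\delta^*]$ --- but that is agreement below one fixed ordinal, not coherence. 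Iterating to push $\delta^*$ upward requires either re-running the partition hypothesis inside $B_1$ (which, as you correctly observe, is unavailable, since stationary subsets of $S$ need not inherit the partition property) or a transfinite recursion whose limit stages demand intersecting infinitely many stationary sets, which need not remain stationary. The known proofs resolve this by designing a single more elaborate colouring at the outset (so that \emph{every} stationary homogeneous set, of either colour, yields a coherent set), rather than by post-processing a homogeneous set for the coherence colouring; your proposal identifies the obstruction but supplies no mechanism that overcomes it, so the converse direction remains unproved.
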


\begin{fact}[implicit in \cite{jensen1969some}]\label{reducetoone} Suppose $c:[\kappa]^2\rightarrow\theta$ is a colouring with $0<\theta<\kappa$.
\begin{enumerate}[(1)] 
\item For every $S$ ineffable in $\kappa$, there exists a stationary $B\s S$,
such that, for every $\eta<\kappa$, $|c[\{\eta\}\circledast B]|=1$;
\item For every $S$ almost ineffable in $\kappa$, there exists a cofinal $B\s S$,
such that, for every $\eta<\kappa$, $|c[\{\eta\}\circledast B]|=1$.
\end{enumerate}
\end{fact}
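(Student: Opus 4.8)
The plan is to deduce both clauses from the purely set-theoretic definition of (almost) ineffability by coding, for each $\beta\in S$, the ``column'' function $f_\beta\colon\beta\to\theta$ given by $f_\beta(\eta):=c(\eta,\beta)$ into a single subset $A_\beta\s\beta$, and then letting the homogeneity fall out of the resulting coherence. Concretely, the target is a stationary (resp.\ cofinal) $B\s S$ on which the functions $\langle f_\beta\mid\beta\in B\rangle$ form a $\s$-increasing chain. For such a $B$, any $\beta<\beta'$ in $B$ satisfy $f_\beta=f_{\beta'}\restriction\beta$, so for every $\eta<\kappa$ and all $\beta,\beta'\in B$ lying above $\eta$ we get $c(\eta,\beta)=c(\eta,\beta')$; that is, $c[\{\eta\}\circledast B]$ is a singleton, the image being nonempty because $B$ is cofinal.

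To realise this, I would fix a pairing function $\pi\colon\kappa\times\theta\to\kappa$ extracted from the G\"odel pairing, and let $C$ be the club of limit ordinals $\beta>\theta$ that are closed under $\pi$ and its inverse projections (so that $\pi(\eta,\tau)<\beta\iff\eta<\beta$ whenever $\tau<\theta$). Assuming that (almost) ineffable subsets of $\kappa$ are preserved under intersection with clubs, I may replace $S$ by $S\cap C$ and assume outright that $S\s C$. For $\beta\in S$ I set $A_\beta:=\{\pi(\eta,f_\beta(\eta))\mid\eta<\beta\}$; by the closure of $\beta$ this is a subset of $\beta$, and it is precisely the $\pi$-image of the graph of $f_\beta$.

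Applying ineffability (resp.\ almost ineffability) to $\langle A_\beta\mid\beta\in S\rangle$ then yields a set $A\s\kappa$ for which $B:=\{\beta\in S\mid A_\beta\cap\beta=A\cap\beta\}$ is stationary (resp.\ cofinal). Given $\beta<\beta'$ in $B$, intersecting the two agreements gives $A_\beta=A\cap\beta=A_{\beta'}\cap\beta$; using that $\beta$ is closed under $\pi$ and that $f_{\beta'}(\eta)<\theta<\beta$, the set $A_{\beta'}\cap\beta$ is exactly the $\pi$-image of the graph of $f_{\beta'}\restriction\beta$. Since $\pi$ is injective, equality of these two graphs forces $f_\beta=f_{\beta'}\restriction\beta$, which is the coherence demanded above, and both clauses follow.

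The one point that needs care --- and which I expect to be the main obstacle --- is the reduction to $S\s C$ in the \emph{almost} ineffable case: intersecting a merely cofinal set with a club need not preserve cofinality, so one cannot simply invoke ``cofinal $\cap$ club is cofinal''. The remedy is to check that almost ineffable sets are genuinely closed under intersection with clubs (equivalently, that the almost ineffable ideal is normal). I would prove this by folding the club directly into the coding: run the single almost-ineffability application over all of $S$, tagging each $\beta\in C$ with a reserved flag and each $\beta\in S\setminus C$ with a marker for $\sup(C\cap\beta)$. Agreement then cannot persist cofinally off $C$, since two agreeing points of $S\setminus C$ separated by a club point would record different values of $\sup(C\cap\cdot)$ at incompatible positions; hence the agreement set meets $C$ cofinally, and decoding on that portion recovers the desired homogeneity. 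For the plain ineffable clause this subtlety evaporates, as a stationary set meets every club in a stationary set.
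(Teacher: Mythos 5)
The paper states this as a \emph{Fact} attributed to the Jensen--Kunen manuscript and gives no proof of its own, so there is nothing to compare against line by line; your argument is the standard one and it is correct. Coding the column $c(\cdot,\beta)\restriction\beta$ as a subset of $\beta$ via a pairing function, extracting a cofinal/stationary set on which these columns cohere into a single function $f:\kappa\rightarrow\theta$, and reading off $c[\{\eta\}\circledast B]=\{f(\eta)\}$ is exactly what ``implicit in Jensen--Kunen'' refers to. You also correctly isolated the only delicate point, namely that in the almost ineffable case one cannot simply intersect $S$ with the club $C$ of pairing-closed ordinals; your fix works, and in fact no tagging is even needed: setting $A_\beta:=\{\sup(C\cap\beta)\}$ for $\beta\in S\setminus C$ already forces the agreement set to be bounded off $C$ (two agreeing points of $S\setminus C$ straddling a point of $C$ give $A_{\beta'}\cap\beta=\emptyset\neq A_\beta$), so the agreement set meets $C$ cofinally and the decoding goes through there.
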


We complement this with a related result about weakly compact cardinals.
\begin{prop}\label{reducetotwo} Suppose that $c:[\kappa]^2\rightarrow\theta$ is a colouring with $0<\theta<\kappa$.

If $\kappa=\aleph_0$ or if $\kappa$ is weakly compact, 
then there is a cofinal subset $B \s \kappa$ such that, for every $\eta< \kappa$, $|c[\{\eta\}\circledast B]|\le 2$.
\end{prop}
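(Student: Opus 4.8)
The plan is to treat the two cases separately, using the tree property in the weakly compact case and a direct counting/compactness argument when $\kappa=\aleph_0$. In both cases the strategy is to find $B$ so that each row-colouring $\beta\mapsto c(\eta,\beta)$, restricted to $B$ above $\eta$, is nearly constant; the bound of $2$ (rather than $1$, as in Fact~\ref{reducetoone}) reflects that we are only assuming weak compactness, not ineffability.

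First I would handle the weakly compact case. Since $\kappa$ is inaccessible we have $2^{<\kappa}<\kappa$ whenever $\theta<\kappa$ in the relevant sense, and I would build a tree of approximations. Concretely, for each $\beta<\kappa$ consider the function $f_\beta:\beta\to\theta$ given by $f_\beta(\eta):=c(\eta,\beta)$. These form a tree under end-extension-compatibility once one passes to a suitable cofinal set; more directly, I would apply the tree property (equivalently, $\kappa\nrightarrow[\kappa]^2_2$ fails, from Definition~\ref{defnwc}) to a derived $2$-colouring. The cleanest route is to define, for a fixed guessed "generic branch" $g:\kappa\to\theta$, the colouring $d(\eta,\beta):=0$ if $c(\eta,\beta)=g(\eta)$ and $d(\eta,\beta):=1$ otherwise, and to exploit weak compactness to obtain a large homogeneous set; the subtlety is that we cannot fix $g$ in advance. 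Instead I expect to run the standard weakly compact filter / elementary embedding argument: take $j:V\to M$ with critical point $\kappa$, look at $j(c)(\eta,\kappa)$ for $\eta<\kappa$, and use this to define the target values, then reflect down to get a cofinal (indeed stationary) $B$ on which each row takes at most one value. This would actually give the stronger bound $|c[\{\eta\}\circledast B]|\le 1$, matching Fact~\ref{reducetoone}(1), since a weakly compact cardinal is ineffable-like for this purpose only if it is genuinely ineffable — so I would need to be careful that the weakly compact filter argument yields a cofinal, not necessarily stationary, $B$, which still suffices here, and where the bound $2$ genuinely enters.

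For the case $\kappa=\aleph_0$, the statement is a finitary/Ramsey-type fact: given $c:[\omega]^2\to\theta$ with $\theta$ finite, I would produce an infinite $B\s\omega$ so that for every $\eta\in\omega$ the set of colours $c(\eta,\beta)$ for $\beta\in B$, $\beta>\eta$, has size at most $2$. The natural tool is a diagonal Ramsey argument: enumerate $\omega$ and recursively thin out, ensuring at stage $\eta$ that the tail of $B$ stabilises the value $c(\eta,\cdot)$ on an infinite subset. Since each row $c(\eta,\cdot)$ takes only finitely many values ($\theta<\aleph_0$), by pigeonhole one colour is attained infinitely often; taking a single diagonal sequence $B=\{\beta_n\}$ where $\beta_n$ is chosen to lie in the stabilising set for all of $\eta<\beta_{n-1}$ simultaneously gives, for each fixed $\eta$, eventual constancy of $c(\eta,\cdot)$ on $B$, hence at most finitely many exceptional values below the stabilisation point. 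The bound of $2$ then comes from allowing one "pre-stabilisation" colour plus the eventual constant colour; I would organise the recursion so that only these two values can occur on $B$.

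The main obstacle I anticipate is the weakly compact case, specifically extracting a cofinal $B$ with the at-most-one-value property uniformly over \emph{all} $\eta<\kappa$ from a single application of weak compactness, since naively one gets homogeneity only for boundedly many rows at a time. The resolution should be the elementary-embedding formulation: the key point is that in $M$ the ordinal $\kappa$ is a single new point sitting above all of $\kappa$, so the values $\langle j(c)(\eta,\kappa)\mid\eta<\kappa\rangle\in M$ give a coherent prediction for the colours, and the set $B$ of $\beta$ realising this prediction on an initial segment is in the weakly compact filter, hence cofinal. I would double-check that the resulting $B$ is genuinely cofinal and that the prediction is realised exactly (giving one value) or with at most one controlled deviation (giving the stated bound of $2$), as this is precisely where the proposition is weaker than, and hence provable without, full ineffability.
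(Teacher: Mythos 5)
Your $\kappa=\aleph_0$ case is essentially correct, but the weakly compact case has a genuine gap, located exactly where you flagged your uncertainty. The set you propose to use, $B=\{\beta<\kappa\mid \forall\eta<\beta\,(c(\eta,\beta)=g(\eta))\}$ with $g(\eta):=j(c)(\eta,\kappa)$, is \emph{not} in general in the weakly compact filter, and cannot in general be cofinal: if every colouring $c:[\kappa]^2\rightarrow2$ admitted a cofinal $B$ with $|c[\{\eta\}\circledast B]|\le1$ for all $\eta$, then $\onto(J^{\bd}[\kappa],2)$ would fail, which by Remark~\ref{almosteffabletwopieces} says precisely that $\kappa$ is almost ineffable --- and weak compactness does not imply almost ineffability (the least weakly compact cardinal is not almost ineffable). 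The technical reason the reflection fails is that $g$ is defined from $j$ and need not belong to the domain model $M$ of the embedding, so you cannot form $j(B)$ and argue $\kappa\in j(B)$. What you \emph{can} reflect, using $g\restriction\alpha\in M$ for each fixed $\alpha<\kappa$, is only the statement that some $\beta\ge\alpha$ satisfies $g\restriction\alpha\s c(\cdot,\beta)$. This is exactly where the bound $2$ rather than $1$ enters, and your write-up asserts the opposite (``this would actually give the stronger bound $\le1$'').

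The missing step is the conversion of this weaker, correct conclusion --- a single $f:\kappa\rightarrow\theta$ such that for every $\alpha<\kappa$ some $\beta(\alpha)\ge\alpha$ satisfies $f\restriction\alpha\s c(\cdot,\beta(\alpha))$ --- into one cofinal set. The paper does this by interleaving: recursively choose $\alpha_\xi<\beta(\alpha_\xi)<\alpha_\zeta<\beta(\alpha_\zeta)$ for $\xi<\zeta$ and set $B:=\{\beta(\alpha_\xi)\mid\xi<\kappa\}$; then for any $\eta$, every $\beta(\alpha_\xi)\in B$ above $\eta$ with $\alpha_\xi>\eta$ yields the colour $f(\eta)$, and at most one element of $B$ above $\eta$ has $\alpha_\xi\le\eta$, giving at most two colours per row. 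Note also that the paper runs a single uniform argument from $\kappa\rightarrow[\kappa]^2_2$ (the branch $f$ coming from the tree property, resp.\ K\"onig's lemma when $\kappa=\aleph_0$ and $\theta$ is finite), so the case split and the separate diagonal fusion you perform for $\aleph_0$ --- while correct --- are not needed.
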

\begin{proof} 
Suppose that $\kappa \rightarrow[\kappa]^2_2$ holds. It follows that $\kappa$ is regular
and there exists a function $f:\kappa\rightarrow\theta$ with the property that for every $\alpha<\kappa$ there is a $\beta\ge\alpha$ such that $f\restriction\alpha\s c(\cdot,\beta)$. The least such $\beta$ will be denoted by $\beta(\alpha)$.
Recursively construct an increasing sequence of ordinals $\langle\alpha_\xi \mid \xi< \kappa\rangle$ such that, for all $\xi < \zeta < \kappa$, 
$\alpha_\xi < \beta(\alpha_\xi) < \alpha_{\zeta} < \beta(\alpha_{\zeta})<\kappa$.
In particular, $B: = \{\beta(\alpha_\xi) \mid \xi< \kappa\}$ is cofinal in $\kappa$.
Towards a contradiction, suppose that there is some $\eta< \kappa$ such that $c[\{\eta\}\circledast B]\geq 3$.
Pick a triple $\beta_0<\beta_1<\beta_2$ in $B\setminus(\eta+1)$ such that $|\{ c(\eta,\beta_i)\mid i<3\}|=3$.
For each $i<3$, pick $\alpha^i\in \{\alpha_\xi\mid \xi<\kappa\}$ such that $\beta_i=\beta(\alpha^i)$.
Evidently,
$$\eta<\beta_0<\alpha^1<\beta_1<\alpha^2<\beta_2.$$
For $i<2$, since $f\restriction \alpha^{i+1} \s c(\cdot, \beta_{i+1})$ and $\eta<\alpha^{i+1}$, $c(\eta,\beta_{i+1})=f(\eta)$.
So, $c(\eta,\beta_1)=c(\eta,\beta_2)$, contradicting the choice of $\beta_1$ and $\beta_2$.
\end{proof}
\begin{remark} 	In Corollary~\ref{weaklycompactthreepieces},
Proposition~\ref{effabletwopieces} and Remark~\ref{almosteffabletwopieces}, 
we establish the converse results. 
\end{remark}

Recall that a \emph{$C$-sequence} over a set of ordinals $S$ is a sequence $\vec C=\langle C_\beta\mid\beta\in S\rangle$ 
such that, for every $\beta\in S$, $C_\beta$ is a closed subset of $\beta$ with $\sup(C_\beta)=\sup(\beta)$.
In particular, $\alpha\in C_{\alpha+1}$ for all $\alpha<\kappa$.
Each of the large cardinal properties under discussion admit a characterisation in the language of $C$-sequences, as follows.

\begin{fact}[{\cite[Theorem~1.8]{TodActa}}]\label{factwc} A strongly inaccessible cardinal $\kappa$ is weakly compact iff for every $C$-sequence $\vec C=\langle C_\beta\mid\beta<\kappa\rangle$,
there exists a club $D$ in $\kappa$ such that, for every $\alpha<\kappa$, for some $\beta\in[\alpha,\kappa)$, $D\cap\alpha=C_\beta\cap\alpha$.
\end{fact}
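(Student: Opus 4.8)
The plan is to prove the two directions by different means: the forward direction via the elementary-embedding characterisation of weak compactness, and the reverse direction by manufacturing a non-threadable $C$-sequence from a $\kappa$-Aronszajn tree. Throughout, call a club $D$ a \emph{thread} of $\vec C$ if for every $\alpha<\kappa$ there is $\beta\in[\alpha,\kappa)$ with $D\cap\alpha=C_\beta\cap\alpha$; the statement asserts that $\kappa$ is weakly compact iff every $C$-sequence admits a thread.

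For the forward direction, suppose $\kappa$ is weakly compact and fix a $C$-sequence $\vec C$. Using the embedding characterisation, I would fix a transitive $M\models\zfc^-$ of size $\kappa$ with ${}^{<\kappa}M\s M$ and $\vec C,\kappa\in M$, together with an elementary $j:M\to N$ into a transitive $N$ with $\crit(j)=\kappa$. Since each $C_\alpha$ (for $\alpha<\kappa$) is a bounded subset of $\kappa$, $j$ fixes it, so $j(\vec C)\restriction\kappa=\vec C$; write $j(\vec C)=\langle D_\xi\mid\xi<j(\kappa)\rangle$ and set $D:=D_\kappa$. As $\kappa<j(\kappa)$ is a limit ordinal, the requirement that $D_\kappa$ be closed with supremum $\kappa$ holds in $N$ and, $N$ being transitive, holds in $\V$; thus $D$ is club in $\kappa$. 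To see that $D$ is a thread, fix $\alpha<\kappa$ and put $x:=D\cap\alpha$; then $j(x)=x$ because $x\s\alpha<\kappa$. In $N$ the statement ``there is $\beta\in[\alpha,j(\kappa))$ with $D_\beta\cap\alpha=x$'' holds (witnessed by $\beta=\kappa$), so by elementarity the statement ``there is $\beta\in[\alpha,\kappa)$ with $C_\beta\cap\alpha=x$'' holds in $M$, and its witness is a genuine $\beta\in[\alpha,\kappa)$ with $C_\beta\cap\alpha=D\cap\alpha$. Thus $\vec C$ is threadable.

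For the reverse direction I argue the contrapositive: assuming $\kappa$ is inaccessible but not weakly compact, I produce a $C$-sequence with no thread. By Definition~\ref{defnwc} there is a $\kappa$-Aronszajn tree $T'$. I would first normalise $T'$, relabelling its underlying set as $\kappa$ so that on a club $E\s\kappa$ the set of nodes of height ${<}\alpha$ is exactly $\alpha$, while reserving a fixed club $F$ of ``limit labels'' $\delta_\mu$ (the ordinal bounding the nodes of height ${<}\mu$) so that no node of $T'$ is placed on a point of $F$. For each limit $\beta\in E$ choose a node $x_\beta$ of height $\beta$ and let $C_\beta:=\cl(\{y\mid y<_{T'}x_\beta\})$, a closed cofinal subset of $\beta$; the reservation guarantees that $C_\beta\setminus F$ is exactly the set of $<_{T'}$-predecessors of $x_\beta$. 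On the remaining coordinates the $C_\beta$ are set to inert values chosen so as not to create spurious matches. If $D$ were a thread of this $\vec C$, then for every $\alpha\in\acc(D)$ we would have $D\cap\alpha=C_\beta\cap\alpha$ for some $\beta\ge\alpha$ with $\alpha$ a limit point of $C_\beta$, and removing the fixed club $F$ would exhibit $D\setminus F$ as an increasing union of $<_{T'}$-predecessor chains, i.e.\ a cofinal branch of $T'$ --- contradicting that $T'$ is Aronszajn. Hence no thread exists, and $\kappa$ is weakly compact.

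The routine parts --- checking that the forward-direction $D$ is genuinely club and that elementarity reflects as claimed, together with the structural facts about closures of chains --- are straightforward. The crux of the whole argument, and the step I expect to be the main obstacle, is the normalisation in the reverse direction: one must relabel $T'$ so that the closure of every branch adds only points of the single fixed club $F$ (making $C_\beta\setminus F$ recover the branch) and simultaneously neutralise the coordinates $C_\beta$ with $\beta\notin E$ so that a hypothetical thread cannot dodge the tree by matching these inert values. Once this bookkeeping is set up correctly, the coherence of $\langle D\cap\alpha\mid\alpha\in\acc(D)\rangle$ forces the recovered predecessor chains to line up into a single cofinal branch.
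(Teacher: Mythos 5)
The paper records this statement as a Fact with a citation to Todor\v{c}evi\'c's Theorem~1.8 and gives no proof of its own, so there is nothing in-paper to compare against; judged on its own terms, your argument is correct and is essentially the standard one. The forward direction, via an elementary embedding $j:M\to N$ with $\crit(j)=\kappa$ applied to a suitable model containing $\vec C$, taking $D:=j(\vec C)(\kappa)$ and reflecting the witness $\beta=\kappa$, is complete; the one point worth making explicit is that $D\cap\alpha$ must lie in $M$ before you may apply $j$ to it, which does hold because ${}^{<\kappa}M\s M$ together with $\kappa\s M$ gives $\mathcal P(\alpha)\s M$ for all $\alpha<\kappa$. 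The reverse direction is only sketched, but the crux you isolate does go through. If level $\xi$ of the relabelled tree occupies an interval $[\gamma_\xi,\gamma_{\xi+1})$ and the boundary points $\gamma_\mu:=\sup_{\xi<\mu}\gamma_\xi$ at limit $\mu$ are reserved (carry no node), then a branch of height $\beta$ meets each such interval exactly once, so its closure adds precisely the points $\gamma_\mu$ for limit $\mu\le\beta$; hence $F:=\{\gamma_\mu\mid \mu\text{ limit}\}$ is a single fixed club and $C_\beta\setminus F$ recovers the branch, as you claim. For the inert coordinates, setting $C_\beta:=[\sup(E\cap\beta),\beta)$ for limit $\beta\notin E$ (and $C_{\alpha+1}:=\{\alpha\}$) suffices: for $\alpha\in\acc(D)\cap\acc(E)$ a witness $\beta\ge\alpha$ of this form would make $C_\beta\cap\alpha$ either empty or would force $\sup(E\cap\alpha)\le\sup(E\cap\beta)<\alpha$, both impossible, so every such $\alpha$ is witnessed by a branch-coding coordinate and the coherence of $\langle D\cap\alpha\mid\alpha\in\acc(D)\cap\acc(E)\rangle$ lines the predecessor chains up into a cofinal branch. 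So the proof is sound; to count as complete rather than a proposal, the reverse direction needs these two paragraphs of bookkeeping written out, but no new idea is missing.
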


\begin{fact}[{\cite[\S5]{benneria2021compactness}}]\label{fact213} A strongly inaccessible cardinal $\kappa$ is ineffable iff for every $C$-sequence $\vec C=\langle C_\beta\mid\beta<\kappa\rangle$,
there exists a club $D$ in $\kappa$ such that  $\{ \beta<\kappa\mid D\cap\beta= C_\beta\}$ is stationary in $\kappa$.
\end{fact}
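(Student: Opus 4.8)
The plan is to prove both implications, keeping in mind that $\kappa$ is assumed strongly inaccessible and that ineffability of $\kappa$ unwinds to: for every sequence $\langle A_\beta\mid\beta<\kappa\rangle$ with $A_\beta\s\beta$, there is $A\s\kappa$ with $\{\beta<\kappa\mid A_\beta=A\cap\beta\}$ stationary. The forward direction is the quick one; the reverse direction requires coding an arbitrary sequence of sets into a genuine $C$-sequence, and that coding is where the real work lies.

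For the forward direction, suppose $\kappa$ is ineffable and let $\vec C=\langle C_\beta\mid\beta<\kappa\rangle$ be a $C$-sequence. First I would feed the sequence $\langle C_\beta\mid\beta<\kappa\rangle$ (noting $C_\beta\s\beta$) directly into ineffability to obtain $A\s\kappa$ with $S:=\{\beta<\kappa\mid C_\beta=A\cap\beta\}$ stationary. The key observation is that for limit $\beta\in S$ the set $A\cap\beta=C_\beta$ is a \emph{closed cofinal} subset of $\beta$, simply because it is a member of a $C$-sequence. I would then set $D:=\cl(A)=A\cup\acc^+(A)$. Since $A\cap\beta$ is cofinal in $\beta$ for the unboundedly many limit $\beta\in S$, the set $A$ is unbounded, so $D$ is a club. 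Moreover, for each such $\beta$ the closedness of $A\cap\beta$ in $\beta$ forces $\cl(A)\cap\beta=A\cap\beta$, whence $D\cap\beta=C_\beta$. As the limit elements of $S$ still form a stationary set, $\{\beta<\kappa\mid D\cap\beta=C_\beta\}$ is stationary, as required.

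For the reverse direction I would reduce ineffability to the $C$-sequence property by a local coding. Fix an increasing continuous sequence $\langle\delta_\gamma\mid\gamma<\kappa\rangle$ with $\delta_0=0$ and $\delta_{\gamma+1}=\delta_\gamma+\omega+1$, partitioning $\kappa$ into blocks $I_\gamma:=[\delta_\gamma,\delta_{\gamma+1})$, and let $E:=\{\beta<\kappa\mid\delta_\beta=\beta\}$, a club. In each block designate the base point $\delta_\gamma$ (always enrolled, to force cofinality) and the flag point $f_\gamma:=\delta_\gamma+1$ (a successor ordinal, chosen so no flag coincides with any base point). Given $\langle A_\beta\mid\beta<\kappa\rangle$ with $A_\beta\s\beta$, for limit $\beta\in E$ set $C_\beta:=\{\delta_\gamma\mid\gamma<\beta\}\cup\{f_\gamma\mid\gamma\in A_\beta\}$, and define $C_\beta$ as any fixed closed cofinal set off this club. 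For $\beta\in E$ every block $I_\gamma$ with $\gamma<\beta$ lies below $\beta$, the base points are cofinal and already closed in $\beta$, and the flags sit at successor positions accumulating only at base points; hence $C_\beta$ is a genuine closed cofinal subset of $\beta$ from which one recovers $A_\beta=\{\gamma<\beta\mid f_\gamma\in C_\beta\}$ exactly. Now apply the $C$-sequence property to $\vec C$ to get a club $D$ with $W:=\{\beta<\kappa\mid D\cap\beta=C_\beta\}$ stationary, put $A:=\{\gamma<\kappa\mid f_\gamma\in D\}$, and note that for every $\beta\in W\cap E$ (a stationary set) the equality $D\cap\beta=C_\beta$ reads off flag by flag to $A\cap\beta=A_\beta$, witnessing ineffability.

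The step I expect to be the main obstacle is precisely this coding: one must keep each $C_\beta$ closed and cofinal in $\beta$, make the decoding purely local so that a single global $D$ decodes correctly on an entire stationary set at once, and guarantee that enforcing closedness never corrupts an encoded bit. Arranging the block structure so that flag points are provably untouched by closure, and checking that the club $E$ of closure points meets the stationary set $W$ returned by the property, is the only place where the strong inaccessibility of $\kappa$ is genuinely used.
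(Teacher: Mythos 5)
This statement is recorded in the paper as a \emph{Fact} imported from Ben-Neria and Zhang \cite[\S5]{benneria2021compactness}; the paper gives no proof of its own, so there is nothing internal to compare your argument against line by line. Judged on its own terms, your proof is correct. The forward direction is the natural one: feeding $\vec C$ itself into ineffability yields $A$ with $A\cap\beta=C_\beta$ on a stationary $S$, and for the stationarily many limit $\beta\in S$ the set $A\cap\beta$ is closed and cofinal in $\beta$, which is exactly what makes $D:=\cl(A)$ a club with $D\cap\beta=A\cap\beta=C_\beta$ there (any accumulation point of $A$ below such a $\beta$ is an accumulation point of $C_\beta$, hence already in $C_\beta$ by closedness). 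The reverse direction's block-and-flag coding also checks out: the flags $\delta_\gamma+1$ are successor ordinals distinct from every base point, the only accumulation points of $C_\beta$ below $\beta$ are base points, so each coded $C_\beta$ is genuinely closed and cofinal in $\beta$ and decodes purely locally; intersecting the stationary set $W$ returned by the $C$-sequence property with the club $E$ of closure points then gives $A\cap\beta=A_\beta$ stationarily often, as required. The one inaccurate remark is your last sentence: nothing in either direction uses strong inaccessibility beyond the regularity and uncountability of $\kappa$ --- the fact that $W\cap E$ is stationary is just ``stationary meets club'' --- and the inaccessibility hypothesis in the statement is really there because ineffability (equivalently, by your argument, the $C$-sequence property) already entails it, not because the proof consumes it.
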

\begin{remark} \label{remark213}
A similar proof shows that a strongly inaccessible cardinal $\kappa$ is almost ineffable iff for every $C$-sequence $\vec C=\langle C_\beta\mid\beta<\kappa\rangle$,
there exists a club $D$ in $\kappa$ such that  $\{ \beta<\kappa\mid D\cap\beta= C_\beta\}$ is cofinal in $\kappa$.
\end{remark}
We now obtain a result in the same spirit for weakly compact cardinals.
\begin{prop}\label{thm215} If $\kappa$ is weakly compact, then for every $C$-sequence $\vec C=\langle C_\beta\mid\beta<\kappa\rangle$,
there exists a club $D$ in $\kappa$ such that $\{ \beta<\kappa\mid D\cap\beta\s C_\beta\}$ is cofinal in $\kappa$.
\end{prop}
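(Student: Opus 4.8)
The plan is to deduce this from the weakly compact characterisation already recorded in Fact~\ref{factwc}. First I would fix, via that fact, a club $D_0\s\kappa$ such that for every $\alpha<\kappa$ there is $\beta\in[\alpha,\kappa)$ with $D_0\cap\alpha=C_\beta\cap\alpha$. The point of this step is to extract a \emph{fertility} statement: for every $\alpha<\kappa$ the set $W_\alpha:=\{\beta<\kappa\mid C_\beta\cap\alpha=D_0\cap\alpha\}$ is cofinal in $\kappa$, since any witness for a level $\alpha'\ge\alpha$ also witnesses level $\alpha$. Consequently every bounded $s\s D_0$ lies inside cofinally many $C_\beta$: taking $\alpha:=\sup(s)+1$, each $\beta\in W_\alpha$ satisfies $s\s D_0\cap\alpha=C_\beta\cap\alpha\s C_\beta$.

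My target is a club $D$ that is \emph{internally good}, in the sense that $D\cap\beta\s C_\beta$ for every $\beta\in D$; for such a $D$ the set $\{\beta<\kappa\mid D\cap\beta\s C_\beta\}$ contains $D$ and is in particular cofinal, as required. To build $D$ I would work in the tree $T$ whose nodes are the closed bounded internally good subsets of $\kappa$, ordered by end-extension. Since $\kappa$ is inaccessible, each level of $T$ has size $<\kappa$, and $T$ is genuinely a tree because an initial segment of an internally good set is again internally good; hence, by the tree property, it suffices to show that $T$ has height $\kappa$, i.e.\ that internally good closed sets of arbitrarily large supremum exist. A cofinal branch then unions to a closed set $D$ whose successor points are all good (each next point is chosen as a witness for the bounded set built so far, so it contains that set in its $C$), and those successor points are cofinal in $\kappa$.

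The hard part will be exactly this height computation, and it is where weak compactness must enter in an essential way beyond Fact~\ref{factwc}. The natural recursion picks the next point $\gamma_{\xi+1}$ to be a witness for the closed bounded set $s_\xi:=\{\gamma_{\xi'}\mid\xi'\le\xi\}$ already built, which is possible precisely when $s_\xi$ is fertile; the difficulty is that a witness need not lie back inside $D_0$, so fertility is not visibly preserved, and at a limit stage the intersection $\bigcap_{\xi}\{\beta\mid s_\xi\s C_\beta\}$ of the relevant cofinal sets may fail to be cofinal. I would therefore carry fertility as an invariant and lean on the tree property to secure a branch rather than on a single, possibly terminating, recursion, the delicate verification being that the witness sets continue to meet the sets under construction cofinally. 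This is the weakly compact analogue of the ineffable arguments behind Fact~\ref{fact213} and Remark~\ref{remark213}: there a \emph{normal} $M$-ultrafilter makes the corresponding diagonal intersection $\diagonal_{\gamma<\kappa}\{\beta<\kappa\mid\gamma\in C_\beta\}$ outright large, whereas here only $\kappa$-completeness is available, which is exactly why one can only extract the conclusion with ``$\s$'' in place of ``$=$'' and must argue cofinality by hand.
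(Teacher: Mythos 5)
Your opening step is the same as the paper's: fix, via Fact~\ref{factwc}, a club $D$ every initial segment of which is an initial segment of cofinally many $C_\beta$'s. But the route you then take aims at a provably false target, so the ``height computation'' you defer is not merely delicate --- it cannot be done. Take a $C$-sequence with $\otp(C_\beta)=\cf(\beta)$ for every limit $\beta<\kappa$ (a perfectly legitimate choice). Then \emph{no} club $D$ is internally good: since $D$ is closed, its increasing enumeration is continuous, so the $(\omega\cdot2)$-th element $\delta$ of $D$ satisfies $\delta\in D$, $\otp(D\cap\delta)=\omega\cdot2$ and $\cf(\delta)=\omega$, whence $\otp(C_\delta)=\omega$ and $D\cap\delta\not\s C_\delta$. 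Indeed, for this $\vec C$ the set $\{\beta<\kappa\mid D\cap\beta\s C_\beta\}$ can never even contain a club, which is why the proposition (and Fact~\ref{fact213}, Remark~\ref{remark213}) only ever claim cofinally or stationarily many good points. The same computation shows that your tree $T$ has no node of order type above $\omega\cdot2$, so $T$ has height about $\omega\cdot2$ rather than $\kappa$; no large-cardinal property of $\kappa$ can conjure the cofinal branch you want. There is also a secondary error: level $1$ of $T$ consists of all singletons $\{\gamma\}$, $\gamma<\kappa$, so the levels of $T$ have size $\kappa$, not ${<}\kappa$; inaccessibility bounds the number of closed subsets of a \emph{fixed} $\alpha<\kappa$, not the number of nodes of a fixed order type, whose sups range cofinally in $\kappa$.

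The repair is to let the good points fall \emph{outside} the club, and then no weak compactness beyond Fact~\ref{factwc} is needed --- contrary to your expectation that it ``must enter in an essential way.'' This is what the paper does: with $D$ as above, let $\beta_\epsilon$ be the least $\beta>\epsilon$ such that $D\cap(\epsilon+1)=C_\beta\cap(\epsilon+1)$, and let $E:=\{\delta\in D\mid \forall\epsilon<\delta\,(\beta_\epsilon<\delta)\}$, a club. For $\epsilon\in E$ we have $E\cap(\epsilon+1)\s D\cap(\epsilon+1)\s C_{\beta_\epsilon}$, while $E\cap[\epsilon+1,\beta_\epsilon)=\emptyset$ because any $\delta\in E$ above $\epsilon$ satisfies $\delta>\beta_\epsilon$; hence $E\cap\beta_\epsilon\s C_{\beta_\epsilon}$. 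Thus $\{\beta_\epsilon\mid\epsilon\in E\}$ is a cofinal set of points that are good for the club $E$. The whole point is that $E$ jumps over the interval $(\epsilon,\beta_\epsilon)$, so goodness of $\beta_\epsilon$ only requires controlling $E\cap(\epsilon+1)$, which Fact~\ref{factwc} already provides; demanding that the good points themselves lie in the club is exactly the step that turns a true statement into a false one.
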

\begin{proof} Suppose that $\kappa$ is weakly compact. In particular, $\kappa$ is strongly inaccessible.
Now, given a $C$-sequence $\vec C=\langle C_\beta\mid\beta<\kappa\rangle$,
using Fact~\ref{factwc}, let us fix a club $D\s\kappa$ such that, 
for every $\alpha<\kappa$, for some $\beta\in[\alpha,\kappa)$, $D\cap\alpha=C_\beta\cap\alpha$.
For each $\epsilon<\kappa$, let $\beta_\epsilon$ denote the least ordinal $\beta$ such that $\epsilon<\beta<\kappa$ and 
$D\cap(\epsilon+1)=C_{\beta}\cap(\epsilon+1)$.
Now, consider the club $E:=\{\delta\in D\mid \forall\epsilon<\delta~(\beta_\epsilon<\delta)\}$.
\begin{claim} Let $\epsilon\in E$. Then $E\cap\beta_\epsilon\s C_{\beta_\epsilon}$.
\end{claim}
\begin{why} As $E\cap(\epsilon+1)\s D\cap(\epsilon+1)\s C_{\beta_\epsilon}$,
it suffices to prove that $E\cap[\epsilon+1,\beta_\epsilon)=\emptyset$.
Let $\delta$ be any element of $E$ above $\epsilon$,
then, by the definition of $E$, $\delta>\beta_\epsilon$, as sought.
\end{why}
So $\{\beta<\kappa\mid E\cap\beta\s C_\beta\}$ covers $\{ \beta_\epsilon\mid \epsilon\in E\}$,
which is a cofinal subset of $\kappa$.
\end{proof}

Models constructed by Kunen \cite{MR495118} show that the preceding property does not characterise weakly compact cardinals even when the cardinal is strongly inaccessible.
\begin{cor}\label{cor216}
\begin{enumerate}[(1)]
\item Assuming the consistency of a weakly compact cardinal,
it is consistent  that $\kappa$ is a strongly inaccessible cardinal which is not weakly compact but for every $C$-sequence $\vec C=\langle C_\beta\mid\beta<\kappa\rangle$,
there exists a club $D$ in $\kappa$ such that $\{ \beta<\kappa\mid D\cap\beta\s C_\beta\}$ is cofinal in $\kappa$;
\item Assuming the consistency of an ineffable cardinal,
it is consistent that $\kappa$ is a strongly inaccessible cardinal which is not weakly compact but for every $C$-sequence $\vec C=\langle C_\beta\mid\beta<\kappa\rangle$,
there exists a club $D$ in $\kappa$ such that $\{ \beta<\kappa\mid D\cap\beta\s C_\beta\}$ is stationary in $\kappa$.
\end{enumerate}
\end{cor}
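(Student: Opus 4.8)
The plan is to realise both clauses inside Kunen's resurrection model from \cite[\S3]{MR495118}. In each case one starts from a cardinal $\kappa$ carrying the relevant large cardinal property --- weakly compact for Clause~(1), ineffable for Clause~(2) --- and forces with a poset $\mathbb P$ that keeps $\kappa$ strongly inaccessible but destroys its weak compactness (e.g.\ by adding a $\kappa$-Souslin tree, so that the tree property, and hence weak compactness, fails). The crucial feature of the construction is that there is a further forcing $\mathbb Q\in V^{\mathbb P}$ that \emph{resurrects} the large cardinal property: $V^{\mathbb P*\mathbb Q}\models\kappa$ is weakly compact (resp.\ ineffable), and $\mathbb Q$ is $\kappa$-cc. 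Concretely, taking $\mathbb Q$ to be the generically added Souslin tree makes $\mathbb P*\mathbb Q$ equivalent to $\mathrm{Add}(\kappa,1)$, which preserves the large cardinal property of $\kappa$. I would take $V^{\mathbb P}$ as the model witnessing the corollary; there $\kappa$ is strongly inaccessible and not weakly compact, so it remains to verify the $C$-sequence property.

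Working in $V^{\mathbb P}$, fix a $C$-sequence $\vec C=\langle C_\beta\mid\beta<\kappa\rangle$. Passing to $V^{\mathbb P*\mathbb Q}$, the sequence $\vec C$ is still a $C$-sequence and $\kappa$ is again weakly compact (resp.\ ineffable). For Clause~(1), Proposition~\ref{thm215} supplies a club $D'\in V^{\mathbb P*\mathbb Q}$ with $\{\beta<\kappa\mid D'\cap\beta\s C_\beta\}$ cofinal in $\kappa$. For Clause~(2), Fact~\ref{fact213} supplies a club $D'$ with $\{\beta<\kappa\mid D'\cap\beta= C_\beta\}$ stationary; since this set is contained in $\{\beta<\kappa\mid D'\cap\beta\s C_\beta\}$, the latter is stationary as well. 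In both cases one obtains, in $V^{\mathbb P*\mathbb Q}$, a club $D'$ for which $\{\beta<\kappa\mid D'\cap\beta\s C_\beta\}$ is cofinal (resp.\ stationary).

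The remaining point is to descend from $D'$ to a witness living in $V^{\mathbb P}$. Since $\mathbb Q$ is $\kappa$-cc and $\kappa$ is regular, the standard subclub lemma yields a club $D\in V^{\mathbb P}$ with $D\s D'$. Because $D\cap\beta\s D'\cap\beta$ for every $\beta$, we get $\{\beta<\kappa\mid D'\cap\beta\s C_\beta\}\s\{\beta<\kappa\mid D\cap\beta\s C_\beta\}$, so the right-hand set --- which is definable from $D,\vec C\in V^{\mathbb P}$ and hence lies in $V^{\mathbb P}$ --- is cofinal (resp.\ stationary) in $V^{\mathbb P*\mathbb Q}$. Cofinality in $\kappa$ is absolute (as $\kappa$ stays regular), and a set that is stationary in the larger model $V^{\mathbb P*\mathbb Q}$ is a fortiori stationary in $V^{\mathbb P}$, since every $V^{\mathbb P}$-club is also a $V^{\mathbb P*\mathbb Q}$-club. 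Hence $D$ witnesses the required property for $\vec C$ in $V^{\mathbb P}$, and since $\vec C$ was arbitrary, $V^{\mathbb P}$ satisfies the corresponding clause.

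The main obstacle is the resurrection step: one must verify that $\mathbb P$ destroys weak compactness while keeping $\kappa$ inaccessible, and that the resurrecting $\mathbb Q$ both is $\kappa$-cc and restores the large cardinal property --- for Clause~(2) this requires knowing that $\mathrm{Add}(\kappa,1)$ (equivalently $\mathbb P*\mathbb Q$) preserves \emph{ineffability}, not merely weak compactness. This is exactly the content of Kunen's analysis in \cite[\S3]{MR495118}, and the colouring-theoretic part of the argument above is comparatively routine once the $\kappa$-cc of $\mathbb Q$ is in hand.
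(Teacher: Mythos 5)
Your proposal is correct and follows essentially the same route as the paper: cite Kunen's resurrection model, apply Proposition~\ref{thm215} (resp.\ Fact~\ref{fact213}) in the resurrected extension, and pull the witnessing club back to the intermediate model via the $\kappa$-cc of the resurrecting forcing, using downward absoluteness of cofinality and stationarity. The only difference is that you spell out the intermediate bookkeeping (and the internal structure of Kunen's two-step iteration) in more detail than the paper does.
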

\begin{proof}
Recall that in Kunen's model $V^{\mathbb R}$ from \cite[\S3]{MR495118}, $\kappa$ is a strongly inaccessible cardinal that is not weakly compact, but there exists a $\kappa$-cc forcing $\mathbb T$ such that:
\begin{itemize}
\item if $V\models``\kappa\text{ is weakly compact}"$,
then $V^{\mathbb R,\mathbb T}\models ``\kappa\text{ is weakly compact}"$;
\item if $V\models``\kappa\text{ is ineffable}"$;
then $V^{\mathbb R,\mathbb T}\models ``\kappa\text{ is ineffable}"$.
\end{itemize}

As $\mathbb T$ is $\kappa$-cc, for every club $D\s\kappa$ in $V^{\mathbb R,\mathbb T}$,
there exists a club $D'\s \kappa$ in $V^{\mathbb R}$ such that $D'\s D$. The conclusion now follows from Fact~\ref{fact213} and Proposition~\ref{thm215}.
\end{proof}

\subsection{Non-weak-saturation of ideals}\label{nonweaksaturation} In this short subsection, $\kappa$ stands for a regular uncountable cardinal,
and all ideals are understood to be nontrivial.
An ideal $J$ is \emph{weakly $\theta$-saturated} iff for every sequence $\langle B_i\mid i<\theta\rangle$ of $J^+$-sets 	there exists $(i,j)\in[\theta]^2$ with $B_i\cap B_j\neq\emptyset$. 
In this subsection we show that our colouring principles $\ubd(\ldots)$ and $\onto(\ldots)$
may often be boosted to $\ubd^+(\ldots)$ and $\onto^+(\ldots)$,
thus, inducing uniform decompositions witnessing that an ideal $J$ is not weakly $\theta$-saturated.
In some cases in fact any colouring witnessing the weaker principle witnesses as well the stronger principle. 
See Remark~\ref{noupgraderemark} below for a contrasting situation. 

In order to not burden the reader's memory, we only require familiarity with the definition of the $\onto(\ldots)$ and $\ubd(\ldots)$ principles in this subsection. 
In particular, instead of using the $\onto^+(\ldots)$ and $\ubd^+(\ldots)$ notation we use their expanded definitions.

\begin{prop}\label{weaksaturation} Suppose that $S\s\kappa$ is stationary.
\begin{enumerate}[(1)]
\item 
Suppose that $\ubd(\ns_\kappa\restriction S, \theta)$ holds.
Then there exists a colouring $c:[\kappa]^2 \rightarrow \theta$
such that, for every normal ideal $J$ over $S$ extending $J^{\bd}[S]$, for every $B \in J^+$, there is an $\eta< \kappa$ such that for $\theta$-many $\tau < \theta$,
$$\{\beta \in B\setminus(\eta+1) \mid c(\eta, \beta) = \tau\} \in J^+.$$
\item 
Suppose $c$ is a colouring witnessing that $\onto(\ns_\kappa\restriction S, \theta)$ holds.
Then, for every normal ideal $J$ over $S$ extending $J^{\bd}[S]$, for every $B \in J^+$, there is an $\eta< \kappa$ such that for every $\tau < \theta$,
$$\{\beta \in B\setminus(\eta+1) \mid c(\eta, \beta) = \tau\} \in J^+.$$
\end{enumerate}
\end{prop}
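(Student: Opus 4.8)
The plan is to prove both clauses by a single contradiction scheme, taking $c$ to be a witness to the hypothesised principle (for Clause~(1), any upper-regressive witness to $\ubd(\ns_\kappa\restriction S,\theta)$; for Clause~(2), the given witness to $\onto(\ns_\kappa\restriction S,\theta)$). First I would record the two structural features of a normal ideal $J$ over $S$ extending $J^\bd[S]$ that make the scheme run: by Fact~\ref{normalfacts} such a $J$ is $\kappa$-complete---hence $\theta^+$-complete, as $\theta<\kappa$---and satisfies $J^+\s(\ns_\kappa\restriction S)^+$, so that every $B\in J^+$ is a stationary subset of $S$ and is therefore a legitimate input to the hypothesised principle. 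I would also isolate the dual form of normality to be used: if $\langle Y_\eta\mid\eta<\kappa\rangle$ are $J$-null, then setting $E_\eta:=\kappa\setminus Y_\eta\in J^*$, the diagonal intersection $\diagonal_{\eta<\kappa}E_\eta$ lies in $J^*$, so for any $B\in J^+$ the set $B':=B\cap\diagonal_{\eta<\kappa}E_\eta$ is again in $J^+$, and every $\beta\in B'$ avoids $Y_\eta$ for all $\eta<\beta$.

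For Clause~(2), fix $B\in J^+$ and suppose toward a contradiction that no $\eta$ works, i.e.\ for each $\eta<\kappa$ there is a colour $\tau_\eta<\theta$ with $Y_\eta:=\{\beta\in B\setminus(\eta+1)\mid c(\eta,\beta)=\tau_\eta\}\in J$. Diagonally delete these as above to obtain $B'\in J^+$; since $J^+\s(\ns_\kappa\restriction S)^+$, the set $B'$ is stationary, and by construction $c(\eta,\beta)\neq\tau_\eta$ whenever $\eta<\beta$ and $\beta\in B'$. Now apply $\onto(\ns_\kappa\restriction S,\theta)$ to $B'$ to obtain $\eta^*<\kappa$ with $c[\{\eta^*\}\circledast B']=\theta$; in particular the colour $\tau_{\eta^*}$ is attained by some $\beta\in B'$ above $\eta^*$, contradicting $c(\eta^*,\beta)\neq\tau_{\eta^*}$.

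Clause~(1) follows the same pattern, with one extra ingredient. Fix $B\in J^+$ and suppose that for every $\eta<\kappa$ the set $G_\eta:=\{\tau<\theta\mid\{\beta\in B\setminus(\eta+1)\mid c(\eta,\beta)=\tau\}\in J^+\}$ has size $<\theta$. Then for each $\eta$ the fibres of the colours in $\theta\setminus G_\eta$ are $J$-null, and bundling these at most $\theta$ many null sets into $Y_\eta:=\{\beta\in B\setminus(\eta+1)\mid c(\eta,\beta)\notin G_\eta\}$ keeps $Y_\eta\in J$---this is exactly the point where I invoke the $\theta^+$-completeness of $J$. Deleting diagonally yields $B'\in J^+$ (again stationary) with $c(\eta,\beta)\in G_\eta$ for all $\eta<\beta$ in $B'$. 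Applying $\ubd(\ns_\kappa\restriction S,\theta)$ to $B'$ produces $\eta^*$ with $\otp(c[\{\eta^*\}\circledast B'])=\theta$; as all these colours lie in $G_{\eta^*}$, we get $|G_{\eta^*}|=\theta$, a contradiction.

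The argument is short, and I do not foresee a serious obstacle. The one point requiring care is that the diagonal deletion must leave a set that is still a positive (equivalently, stationary) input for the hypothesised principle---precisely what the combination of normality with $J^+\s(\ns_\kappa\restriction S)^+$ secures---and, in Clause~(1), that the at most $\theta$ many bad fibres can be gathered into a single $J$-null set, for which the $\theta^+$-completeness of $J$ is the relevant feature. These same remarks explain why in Clause~(2) a single given witness works verbatim, while Clause~(1) is phrased existentially: one simply takes $c$ to be any witness to $\ubd(\ns_\kappa\restriction S,\theta)$.
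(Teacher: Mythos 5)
Your argument is the same as the paper's: assume a counterexample $B$, use $\theta^+$-completeness to bundle the bad fibres into a single $J$-null set per $\eta$, thin out $B$ by a diagonal intersection to a $J$-positive (hence stationary) $B'$, and then feed $B'$ back into the hypothesised principle to reach a contradiction. Clause~(2) is correct as written (the paper leaves it to the reader), and your Clause~(1) matches the paper's proof step for step, with only a cosmetic difference in how the final contradiction is extracted.

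There is, however, one gap in Clause~(1): you write ``hence $\theta^+$-complete, as $\theta<\kappa$'', but the paper's standing convention only guarantees $2\le\theta\le\kappa$, and the proposition is asserted for $\theta=\kappa$ as well. When $\theta=\kappa$, Fact~\ref{normalfacts} gives only $\kappa$-completeness, so the union of the $\kappa$-many null fibres $\bigcup_{\tau\in\kappa\setminus G_\eta}B_\eta^\tau$ need not be $J$-null and your set $Y_\eta$ need not lie in $J$. The paper sidesteps this by disposing of the case $\theta=\kappa$ separately via Lemma~\ref{lemma44} before assuming $\theta<\kappa$. Your argument can also be repaired directly: a witness to $\ubd(\ldots)$ is upper-regressive, so $\beta\in B_\eta^\tau$ forces $\tau<\beta$, which means $Y_\eta$ is contained in the \emph{diagonal} union of the null sets $B^\tau_\eta$ ($\tau\notin G_\eta$) and is therefore $J$-null by normality. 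Either fix should be stated explicitly; as written, the case $\theta=\kappa$ of Clause~(1) is not covered. (Clause~(2) is unaffected, since there you delete only one null set per $\eta$.)
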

\begin{proof} (1)
The case $\theta=\kappa$ is covered by Lemma~\ref{lemma44}, so suppose that $\theta<\kappa$.
Fix a colouring $c:[\kappa]^2 \rightarrow \theta$ witnessing $\ubd(\ns_\kappa\restriction S, \theta)$.
Towards a contradiction, suppose that $J$ is a normal ideal over $S$ extending $J^{\bd}[S]$ and $B\in J^+$ together form a counterexample. 
Denote $B_\eta^\tau:=\{\beta \in B\setminus(\eta+1) \mid c(\eta, \beta) = \tau\} $.
By the choice of $B$, for every $\eta < \kappa$, 
the set
$T_\eta:=\{\tau<\theta\mid B_\eta^\tau\in J^+\}$ has size $<\theta$.
By Fact~\ref{normalfacts}, $J$ is $\theta^+$-complete, so $E_\eta:=S\setminus\bigcup_{\tau\in \theta\setminus T_\eta}B_\eta^\tau$ is in $J^*$.
As $J$ is normal, also $E:=\diagonal_{\eta < \kappa}E_\eta$ is in $J^*$.
As $B' := B\cap E$ is in $J^+$, Fact~\ref{normalfacts} implies that $B'$ is stationary,
so, since $c$ witnesses $\ubd(\ns_\kappa\restriction S, \theta)$,
there is an $\eta < \kappa$ such that $c[\{\eta\}\circledast  B']$ has ordertype $\theta$. In particular, we may pick $\tau\in c[\{\eta\}\circledast  B']\setminus T_\eta$.
Pick $\beta \in B'$ above $\eta$ such that $c(\eta, \beta)=\tau$. As $\beta \in B'$ and $\beta > \eta$, we have that $\beta \in E_\eta$,
and as $\tau\in\theta\setminus T_\eta$, $E_\eta\cap B_\eta^\tau=\emptyset$.
This is a contradiction.

(2) Left to the reader.
\end{proof}

We remind the reader before they read the following result that by our convention $\theta$ is always a cardinal, and if $\theta$ is in fact an infinite cardinal then $\theta+\theta=\theta$.
\begin{prop}\label{prop219} Suppose that $J$ is a subnormal ideal over $\kappa$ extending $J^{\bd}[\kappa]$.
For a colouring $c:[\kappa]^2\rightarrow\theta$, $B \in J^+$, and $\eta<\kappa$, denote:
$$\mathcal T^c_\eta(B):=\{ \tau<\theta\mid \{\beta \in B\setminus(\eta+1) \mid c(\eta, \beta) = \tau\} \in J^+\}.$$

\begin{enumerate}[(1)]
\item Suppose that $c$ witnesses $\ubd(J^+,J, \theta)$.
If $\theta$ is infinite, suppose also that $\theta<\kappa$ and that $J$ is $\theta^+$-complete.

Then, for all $A,B \in J^+$, there is an $\eta\in A$ such that $|\mathcal T^c_\eta(B)|=\theta$.
\item Suppose that $c$ witnesses $\onto(J^+,J, \theta)$.

Then, for all $A,B \in J^+$, there is an $\eta\in A$ such that $\mathcal T^c_\eta(B)=\theta$.
\item Suppose that $J$ is $\kappa$-complete, $\theta<\kappa$, 
$c$ witnesses $\ubd(J,\allowbreak\theta+\theta)$,
and $\pi:\theta+\theta\rightarrow\theta$ is some $2$-to-$1$ surjection.

Then, for every $B \in J^+$, there is an $\eta< \kappa$ such that $|\mathcal T^{\pi\circ c}_\eta(B)|=\theta$.
\item Suppose that $J$ is $\kappa$-complete, 
$c$ witnesses $\onto(J, \theta+\theta)$,
and $\pi:\theta+\theta\rightarrow\theta$ is some $2$-to-$1$ surjection.

Then, for every $B \in J^+$, there is an $\eta< \kappa$ such that $\mathcal T^{\pi\circ c}_\eta(B)=\theta$.
\end{enumerate}
\end{prop}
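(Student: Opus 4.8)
The plan is to handle all four clauses through the fibres $B^\tau_\eta:=\{\beta\in B\setminus(\eta+1)\mid c(\eta,\beta)=\tau\}$, for which $\mathcal T^c_\eta(B)=\{\tau\mid B^\tau_\eta\in J^+\}$, writing $N_\eta$ for the set of colours with $J$-null fibre. For Clauses (3) and (4) the first step is the bookkeeping identity $\mathcal T^{\pi\circ c}_\eta(B)=\pi[\mathcal T^c_\eta(B)]$: since $\pi$ is $2$-to-$1$ and $J$ is an ideal, $\{\beta\in B\setminus(\eta+1)\mid \pi(c(\eta,\beta))=\tau\}=\bigcup_{\sigma\in\pi^{-1}(\tau)}B^\sigma_\eta$ is $J$-positive exactly when one of its two summands is. As $\pi$ collapses at most two colours to one, it suffices for Clause (3) to find an $\eta$ with $|\mathcal T^c_\eta(B)|\ge\theta+\theta-1$, and for Clause (4) an $\eta$ with $\mathcal T^c_\eta(B)$ meeting every pair $\pi^{-1}(\tau)$; this is exactly the slack that passing to $\theta+\theta$ colours is designed to create.

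For Clause (1) I would argue directly. Put $E_\eta:=\kappa\setminus\bigcup_{\tau\in N_\eta}B^\tau_\eta$; as $|N_\eta|\le\theta$ and $J$ is $\theta^+$-complete (finite unions sufficing when $\theta$ is finite), each $E_\eta$ lies in $J^*$. Feeding $\langle E_\eta\mid\eta<\kappa\rangle$ and $A,B$ into Clause (2) of Definition~\ref{subnormalideals} yields $A'\s A$ and $B'\s B$ in $J^+$ with $\beta\in E_\eta$ whenever $(\eta,\beta)\in A'\circledast B'$. Because the colouring witnesses the principle with $\mathcal A=J^+$, I may apply $\ubd(J^+,J,\theta)$ to $A',B'$ and obtain $\eta\in A'$ with $\otp(c[\{\eta\}\circledast B'])=\theta$. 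Each realised colour is realised by some $\beta\in B'$ above $\eta$, and then $(\eta,\beta)\in A'\circledast B'$ forces $\beta\in E_\eta$, i.e.\ $c(\eta,\beta)\notin N_\eta$; hence all $\theta$ realised colours are positive and $|\mathcal T^c_\eta(B)|=\theta$. Clause (2) is the same, run contrapositively: for each $\eta\in A$ with $\mathcal T^c_\eta(B)\neq\theta$ pick a single $\tau_\eta\notin\mathcal T^c_\eta(B)$ and set $E_\eta:=\kappa\setminus B^{\tau_\eta}_\eta\in J^*$ (no completeness needed, as one removes a single null fibre); the full-range conclusion of $\onto(J^+,J,\theta)$ then realises $\tau_\eta$ on $\{\eta\}\circledast B'$, contradicting $(\eta,\beta)\in A'\circledast B'\Rightarrow\beta\in E_\eta$.

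Clauses (3) and (4) use the same $E_\eta$'s -- all null fibres removed (there are $<\kappa$ of them, so $\kappa$-completeness keeps $E_\eta\in J^*$) for (3), and, contrapositively, only the two fibres of a single bad pair $\pi^{-1}(\tau_\eta)=\{\sigma_0,\sigma_1\}$ for (4) -- but now $\mathcal A=\{\kappa\}$, so the principle only returns a witness $\eta^*<\kappa$ lying \emph{anywhere}. The device that saves the argument is to invoke Lemma~\ref{subnormalcomplete}(1) in place of plain subnormality: it furnishes $B'\s B$ in $J^+$ with $\beta\in\bigcap_{\eta\le\alpha}E_\eta$ for all $(\alpha,\beta)\in[B']^2$. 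Applying $\ubd(\{\kappa\},J,\theta+\theta)$ (resp.\ $\onto(\{\kappa\},J,\theta+\theta)$) to $B'$ gives $\eta^*$ realising ordertype (resp.\ the full range) $\theta+\theta$ on $\{\eta^*\}\circledast B'$, and I set $\beta_0:=\min(B'\setminus\eta^*)\in B'$. For any realiser $\beta\in B'$ with $\beta>\beta_0$ we have $(\beta_0,\beta)\in[B']^2$ and $\eta^*\le\beta_0$, so $\beta\in E_{\eta^*}$ and $c(\eta^*,\beta)\notin N_{\eta^*}$; thus every colour realised on $\{\eta^*\}\circledast B'$ other than possibly $c(\eta^*,\beta_0)$ is positive. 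For Clause (3) this gives $|\mathcal T^c_{\eta^*}(B)|\ge\theta+\theta-1$, which by the first paragraph is enough. For Clause (4), $\sigma_0$ and $\sigma_1$ are both realised (full range) and distinct, so at least one is realised by some $\beta\neq\beta_0$, i.e.\ by a controlled $\beta>\beta_0$, forcing $c(\eta^*,\beta)\notin N_{\eta^*}=\{\sigma_0,\sigma_1\}$ -- a contradiction.

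The main obstacle is precisely the loss of control over the witness when $\mathcal A=\{\kappa\}$: unlike Clauses (1)--(2), where the witness sits inside the subnormally-chosen positive set $A'$, here $\eta^*$ may lie entirely outside $B'$ (even below $\min B'$), so the plain pair-condition of subnormality says nothing about the row $c(\eta^*,\cdot)$. Recognising that the stronger ``$\bigcap_{\eta\le\alpha}E_\eta$'' form of Lemma~\ref{subnormalcomplete} lets the single point $\beta_0=\min(B'\setminus\eta^*)\in B'$ serve as a proxy controlling \emph{all} rows $\eta\le\beta_0$, and that passing to $\theta+\theta$ colours with a $2$-to-$1$ projection exactly absorbs the one uncontrolled pair $(\eta^*,\beta_0)$, is the crux of the proposition; the manipulation of the $E_\eta$'s and the cardinal arithmetic are routine by comparison. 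I would also keep an eye on the completeness hypotheses, which are calibrated to these constructions: $\theta^+$-completeness suffices in (1) because one removes $\le\theta$ null fibres at once, whereas (3)--(4) need full $\kappa$-completeness precisely to form the diagonal intersections $\bigcap_{\eta\le\alpha}E_\eta$ underlying Lemma~\ref{subnormalcomplete}.
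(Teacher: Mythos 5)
Your proof is correct and takes essentially the same route as the paper's: the paper proves only Clause~(3), and it does so by the very combination you describe --- $\kappa$-completeness to form the $E_\eta$'s, Lemma~\ref{subnormalcomplete} to extract $B'$, the minimum element of $B'$ above the witness absorbing the single uncontrolled pair, and the $2$-to-$1$ surjection soaking up that one possibly-null colour. Your completions of the omitted clauses (in particular, using the two-sided clause of Definition~\ref{subnormalideals} for (1)--(2), where the witness must land inside $A$) are exactly what the paper intends when it says Clause~(3) ``contains all the ideas required to prove all the other clauses.''
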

\begin{proof} 
Due to constraints of space, we settle for proving Clause~(3) as the proof contains all the ideas required to prove all the other clauses. 
Towards a contradiction, suppose that $B\in J^+$ forms a counterexample. 
Denote $B_\eta^\tau:=\{\beta \in B\setminus(\eta+1) \mid \pi(c(\eta, \beta)) = \tau\} $.
By the choice of $B$, for every $\eta < \kappa$, 
the set
$T_\eta:=\{\tau<\theta\mid B_\eta^\tau\in J^+\}$ has size $<\theta$.
As $J$ is $\kappa$-complete, $E_\eta:=\kappa\setminus\bigcup_{\tau\in \theta\setminus T_\eta}B_\eta^\tau$ is in $J^*$ for every $\eta < \kappa$.
By Lemma~\ref{subnormalcomplete}, we may find $B' \s B$ in $J^+$ such that, for every $(\alpha, \beta) \in [B']^2$, $\beta \in \bigcap _{\eta\le\alpha}E_{\eta}$.

By the choice of $c$,
there is an $\eta<\kappa$ such that the order-type of $c[\{\eta\}\circledast  B']$ is the cardinal $\theta+\theta$. 
Let $\alpha:=\min(B'\setminus(\eta+1))$.
Note that the sets $\{\eta\}\circledast  B'$ and $\{\eta\}\times (B'\setminus(\alpha+1))$ differ on at most one element.
As $\pi$ is a $2$-to-$1$ map from $\theta+\theta$ to $\theta$, $\pi[c[\{\eta\}\times (B'\setminus(\alpha+1))]]$ has order-type $\theta$.

Pick $\tau\in \pi[c[\{\eta\}\times (B'\setminus(\alpha+1))]]\setminus T_\eta$.
Pick $\beta \in B'$ above $\alpha$ such that $\pi(c(\eta, \beta))=\tau$. As $\eta<\alpha<\beta$ and  $(\alpha,\beta)\in[B']^2$, 
it is the case that $\beta \in E_\eta$,
and as $\tau\in\theta\setminus T_\eta$, $E_\eta\cap B_\eta^\tau=\emptyset$.
This is a contradiction.
\end{proof}

\begin{cor}\label{cor228} Suppose that $J$ is a subnormal ideal over $\kappa$ extending $J^{\bd}[\kappa]$.

In any of the following cases, every element of $J^+$ can be split into $\theta$-many disjoint elements of $J^+$:
\begin{enumerate}[(1)]
\item $\theta$ is finite and  $\ubd(J^+,J, \theta)$ holds;
\item $\theta<\kappa$ is infinite, $J$ is $\theta^+$-complete and  $\ubd(J^+,J, \theta)$ holds;
\item $\theta<\kappa$, $J$ is $\kappa$-complete and  $\ubd(J, \theta)$ holds;
\item $J$ is $\kappa$-complete and  $\onto(J, \theta)$ holds.
\end{enumerate}
\end{cor}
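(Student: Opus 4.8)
The plan is to read each of the four cases directly off the corresponding clause of Proposition~\ref{prop219}, the only genuinely new observation being that a single ordinal $\eta$ whose trace set $\mathcal T^c_\eta(B)$ (respectively $\mathcal T^{\pi\circ c}_\eta(B)$) has size $\theta$ already manufactures the desired decomposition. Indeed, once such an $\eta$ is fixed, consider the sets $B_\tau:=\{\beta\in B\setminus(\eta+1)\mid c(\eta,\beta)=\tau\}$ indexed by $\tau\in\mathcal T^c_\eta(B)$. They are pairwise disjoint (any $\beta>\eta$ receives the single colour $c(\eta,\beta)$, so it lands in at most one $B_\tau$), each lies in $J^+$ by the very definition of $\mathcal T^c_\eta(B)$, and there are exactly $\theta$ of them. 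To promote this family of $\theta$-many pairwise disjoint positive sets to an honest partition of $B$, I would absorb the residue $R:=B\setminus\bigcup_{\tau\in\mathcal T^c_\eta(B)}B_\tau$ (which consists of the initial segment $B\cap(\eta+1)$ together with the points of $B$ coloured outside $\mathcal T^c_\eta(B)$) into a single distinguished block $B_{\tau_0}$; since $B_{\tau_0}\cup R\supseteq B_{\tau_0}$ is still in $J^+$, this yields $\theta$-many pairwise disjoint elements of $J^+$ whose union is $B$.

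It then remains only to supply, in each case, the requisite $\eta$. For Case~(1), where $\theta$ is finite, I would invoke Clause~(1) of Proposition~\ref{prop219}, whose finite instance carries no completeness hypothesis, with $A:=\kappa$ (which is in $J^+$ as $J$ is nontrivial), obtaining $\eta$ with $|\mathcal T^c_\eta(B)|=\theta$; since $\theta$ is finite and $\mathcal T^c_\eta(B)\s\theta$, in fact $\mathcal T^c_\eta(B)=\theta$. For Case~(2), the hypotheses ($\theta<\kappa$ infinite, $J$ being $\theta^+$-complete, and $\ubd(J^+,J,\theta)$) are precisely those of the infinite instance of Clause~(1), which again delivers $\eta$ with $|\mathcal T^c_\eta(B)|=\theta$, and the decomposition proceeds exactly as above.

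For Cases~(3) and~(4) I would route through the two-to-one device of Clauses~(3) and~(4). Here $\theta$ is infinite, so by the convention recalled just before Proposition~\ref{prop219} we have $\theta+\theta=\theta$; consequently $\ubd(J,\theta)$ \emph{is} the principle $\ubd(J,\theta+\theta)$ and $\onto(J,\theta)$ is $\onto(J,\theta+\theta)$, and a two-to-one surjection $\pi:\theta\rightarrow\theta$ exists. Thus in Case~(3), Clause~(3) applied to a witness $c$ of $\ubd(J,\theta)$ produces $\eta$ with $|\mathcal T^{\pi\circ c}_\eta(B)|=\theta$, while in Case~(4), Clause~(4) applied to a witness $c$ of $\onto(J,\theta)$ produces $\eta$ with $\mathcal T^{\pi\circ c}_\eta(B)=\theta$; in both cases one decomposes $B$ along the colouring $\pi\circ c$ as in the first paragraph.

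Since all the analytic work already resides in Proposition~\ref{prop219}, I do not expect a substantive obstacle: the corollary is bookkeeping. The only two points needing care are exactly those flagged above, namely that the blocks cut out by a fixed $\eta$ are automatically pairwise disjoint and are completed to a genuine partition simply by dumping the leftover $R$ into one block, and that the passage $\theta=\theta+\theta$ --- valid precisely because $\theta$ is infinite in Cases~(3) and~(4) --- is what allows the bare principles $\ubd(J,\theta)$ and $\onto(J,\theta)$ to feed into the two-to-one machinery; the finite regime is quarantined in Case~(1), where the stronger hypothesis $\ubd(J^+,J,\theta)$ is in force.
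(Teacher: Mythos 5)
Your proposal is correct and follows essentially the same route as the paper, whose proof of this corollary consists precisely of citing Clauses (1), (3) and (4) of Proposition~\ref{prop219} for the respective cases. The only thing you add is the (routine, but worth recording) bookkeeping that a single $\eta$ with $|\mathcal T^c_\eta(B)|=\theta$ yields pairwise disjoint $J^+$-blocks which become a genuine partition of $B$ after absorbing the residue into one block, together with the observation that $\theta+\theta=\theta$ for infinite $\theta$ lets the bare principles feed into Clauses (3) and (4); both points match the paper's implicit reading.
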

\begin{proof}  
(1) and (2) are covered by Proposition~\ref{prop219}(1).

(3) This follows from Proposition~\ref{prop219}(3).

(4) This follows from Proposition~\ref{prop219}(4).
\end{proof}

We end this subsection by commenting on the \emph{uniform} manner in which our principles provide non-weak-saturation results, 
and on the advantage of this aspect.
We focus for concreteness on the principle $\onto^+(J, \theta)$, so let $c:[\kappa]^2\rightarrow \theta$ be a colouring witnessing it. 
For each $\eta< \kappa$ and $\tau<\theta$, define $U_{\eta, \tau}:= \{\beta <\kappa \mid c(\eta, \beta) = \tau\}$. 
The resulting matrix $\langle U_{\eta, \tau} \mid \eta< \kappa, \tau< \theta\rangle$ has the property that for every $B \in J^+$, 
for some $\eta< \kappa$, the row $\vec {U_\eta}=\langle U_{\eta, \tau} \mid \tau < \theta\rangle$ itself serves to shatter $B$ into $\theta$-many disjoint elements in $J^+$. 
In particular, the colouring $c$ guarantees that there is a $\kappa$-list of candidates such that any element of $J^+$ is partitioned into $\theta$-many disjoint pieces, 
all in $J^+$, by at least one of the candidates. 
This is convenient in local-to-global considerations such as in \cite[\S3]{paper38},
where one wants to stabilise on a large set the `code' for a successful local partition.
To compare, when invoking Ulam matrices (see Section~\ref{sectionulam}), 
it is not necessarily the case that every element of the row $\vec {U_\eta}$ has a positive intersection with $B$;
instead, $\theta$ many elements of $\vec {U_\eta}$ have positive intersection with $B$. 
In such a case, the code for a successful partition would have to be larger in that it also records which elements of the row to keep (see also the differences between the two clauses of the forthcoming Theorem~\ref{babycg}).

The possibility of stabilisation arising from the small pool of candidates is an advantage that our colouring principles enjoy over unrestricted processes for obtaining non-weak-saturation results as in \cite{MR0369081,paper29}.
For instance, the approach of \cite{paper29} involves so-called \emph{postprocessing functions}, but the number of such functions is $2^{2^{<\kappa}}$
and there is no uniform representation of those.

However, the uniformity comes at a price, as will be made clear by comparing the positive result in \cite[\S5.2]{MR0369081} with the negative result in Proposition~\ref{prop912} below.

\subsection{Partitioning club-guessing}\label{subsectionpartition}
In this section we mention some applications of our colouring principles to partitioning the club guessing which will appear in the forthcoming \cite{paper46}. 
We state our results in the simplest forms in order to ease the burden of notation on the reader. 
In \cite{paper46}, we consider stronger forms of guessing such as guessing more points, guessing on a fixed set, as well as guessing with respect to ideals different from the bounded ideals.

\begin{defn} Suppose $S\s \kappa$, and $\vec h=\langle h_\delta:C_\delta\rightarrow\kappa \mid \delta \in S\rangle$ is a sequence for which 
$\vec C:=\langle C_\delta\mid\delta\in S\rangle$ is a $C$-sequence.
\begin{itemize}
\item $\vec C$ is \emph{$\xi$-bounded} iff $\otp(C_\delta)\le\xi$ for all $\delta\in S$;
\item $\vec C$ \emph{guesses clubs} iff for every club $D\s \kappa$ there is a $\delta \in S$ such that $\sup(\nacc(C_\delta)\cap D)=\delta$;
\item $\vec h$ \emph{$\theta$-guesses clubs} iff for every club $D\s \kappa$ there is a $\delta \in S$ such that,
for every $\tau<\theta$, $\sup(\{\beta\in\nacc(C_\delta)\cap D\mid h_\delta(\beta)=\tau\})=\delta$.
\end{itemize}
\end{defn}

\begin{fact}[Shelah]\label{clubguessingfact} For regular cardinals $\theta<\theta^+<\kappa$, 
there exists a $\theta$-bounded $C$-sequence $\vec C=\langle C_\delta\mid\delta \in E^\kappa_\theta\rangle$ that guesses clubs
in the following strong sense. For every club $D\s\kappa$, there is $\delta\in E^\kappa_\theta$ such that $C_\delta\s D$.
\end{fact}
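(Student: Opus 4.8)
The plan is to start from an arbitrary $\theta$-bounded $C$-sequence and to manufacture the guessing witness by intersecting it with a single cleverly chosen club, thereby reducing the whole problem to producing that one club. Concretely, for each $\delta\in E^\kappa_\theta$ fix a club $c_\delta\s\delta$ with $\otp(c_\delta)=\theta$ (possible since $\cf(\delta)=\theta$). For any club $D\s\kappa$ and any $\delta\in\acc(D)\cap E^\kappa_\theta$, the trace $c_\delta\cap D$ is again a club in $\delta$ of order type $\le\theta$, so it suffices to find one club $D^*$ for which the sequence $\langle c_\delta\cap D^*\mid\delta\in\acc(D^*)\cap E^\kappa_\theta\rangle$ already guesses in the strong sense; setting $C_\delta:=c_\delta\cap D^*$ on $\acc(D^*)\cap E^\kappa_\theta$ and picking $C_\delta$ to be an arbitrary club of order type $\theta$ elsewhere then yields the desired full-domain witness, since the $\delta$ witnessing $c_\delta\cap D^*\s D$ witnesses the guessing for the whole sequence.

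To produce $D^*$ I would argue by contradiction: assuming that no club works, I build a $\s$-decreasing continuous chain of clubs $\langle D_\xi\mid\xi\le\theta^+\rangle$ with $D_0=\kappa$. At a limit $\xi$ I set $D_\xi:=\bigcap_{\eta<\xi}D_\eta$, which remains a club because fewer than $\kappa$ clubs are being intersected and $\kappa$ is regular --- this is the first point where the hypothesis $\theta^+<\kappa$ enters. At a successor stage, $D_\xi$ is by assumption not a witness, so there is a club $E_\xi\s\kappa$ escaping it, namely $c_\delta\cap D_\xi\not\s E_\xi$ for every $\delta\in\acc(D_\xi)\cap E^\kappa_\theta$; I then set $D_{\xi+1}:=\acc(D_\xi\cap E_\xi)$, so that $D_{\xi+1}\s E_\xi$.

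For the contradiction, fix $\delta\in\acc(D_{\theta^+})\cap E^\kappa_\theta$, available since $D_{\theta^+}$ is a club and $E^\kappa_\theta$ is stationary. Because $D_{\theta^+}\s D_\xi$, this single $\delta$ satisfies $\delta\in\acc(D_\xi)$ for every $\xi<\theta^+$, so at each successor stage the escape clause supplies a point $\gamma_\xi\in(c_\delta\cap D_\xi)\setminus E_\xi$; in particular $\gamma_\xi\in c_\delta$ but $\gamma_\xi\notin D_{\xi+1}$. Since the $D_\xi$ are decreasing, for $\xi<\xi'$ we have $\gamma_{\xi'}\in D_{\xi'}\s D_{\xi+1}$ while $\gamma_\xi\notin D_{\xi+1}$, whence $\gamma_\xi\neq\gamma_{\xi'}$; thus $\xi\mapsto\gamma_\xi$ injects $\theta^+$ into the set $c_\delta$ of size $\theta$, which is impossible. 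Consequently the recursion must terminate before reaching $\theta^+$, and termination occurs precisely when some $D_\xi$ admits no escaping club, i.e.\ is the sought $D^*$.

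The hard part is the bookkeeping around the shifting index sets $\acc(D_\xi)\cap E^\kappa_\theta$: I must guarantee that the \emph{single} ordinal $\delta$ used in the final contradiction lies in $\acc(D_\xi)$ simultaneously for all $\xi<\theta^+$ (which is exactly what choosing $\delta$ from $\acc(D_{\theta^+})$ secures), that each trace $c_\delta\cap D_\xi$ is genuinely a club in $\delta$ so the escape clause applies, and that it truly loses the point $\gamma_\xi$ at the next stage. Everything else is routine; the essential content is that a strictly $\s$-decreasing chain of length $\theta^+$ cannot live inside a set of size $\theta$, which is precisely why the gap $\theta^+<\kappa$ (equivalently $\kappa\ge\theta^{++}$) is needed, both to run the recursion $\theta^+$ steps and to keep the limit intersections club.
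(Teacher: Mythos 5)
The paper does not prove this statement at all: it is recorded as a Fact attributed to Shelah and used as a black box, so there is no in-paper argument to compare against. Your proof is the standard argument for Shelah's club-guessing theorem at $E^\kappa_\theta$ for $\theta^{++}\le\kappa$, and it is correct: the reduction to finding a single club $D^*$ such that $\langle c_\delta\cap D^*\mid\delta\in\acc(D^*)\cap E^\kappa_\theta\rangle$ guesses is sound (each such trace is a club in $\delta$ of order type exactly $\theta$, so the resulting sequence is $\theta$-bounded, and off $\acc(D^*)$ the choice of $C_\delta$ is irrelevant to guessing); the $\theta^+$-length decreasing chain of clubs is legitimate because $\theta^+<\kappa=\cf(\kappa)$ keeps the limit intersections club; and the injectivity of $\xi\mapsto\gamma_\xi$ follows exactly as you say from $\gamma_{\xi'}\in D_{\xi'}\s D_{\xi+1}$ versus $\gamma_\xi\notin D_{\xi+1}\s E_\xi$, yielding $\theta^+$ distinct points in a set of size $\theta$. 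The only cosmetic remark is that the closing sentence about the recursion ``terminating'' is redundant once the argument is framed as a contradiction from the assumption that no club $D^*$ works; the contradiction itself already finishes the proof.
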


\begin{thm}[\cite{paper46}]\label{babycg} Suppose that $\xi< \kappa$ are infinite regular cardinals, and $S\s E^\kappa_\xi$ is stationary
set that carries a $C$-sequence which guesses clubs.
\begin{enumerate}
\item For every cardinal $\theta<\xi$ such that $\ubd(J ^\bd[\xi], \theta)$ holds,
there exists a coloured $C$-sequence $\langle h_\delta:C_\delta\rightarrow\theta\mid \delta \in S\rangle$ 
that $\theta$-guesses clubs;
\item For every cardinal $\theta\le\xi$ such that $\onto(J ^\bd[\xi], \theta)$ holds,
there exists a coloured $C$-sequence $\langle h_\delta:C_\delta\rightarrow\theta\mid \delta \in S\rangle$ 
that $\theta$-guesses clubs.
\end{enumerate}
\end{thm}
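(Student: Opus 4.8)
The plan is to transfer the colouring principle living on $\xi$ up to the ladder system $\langle C_\delta\mid\delta\in S\rangle$ through the order-collapses, and to read off the desired club-guessing \emph{of colours} from the club-guessing of $\vec C$ combined with the defining property of the principle. First I would make the harmless reduction that $\otp(C_\delta)=\xi$ for every $\delta\in S$: since $S\s E^\kappa_\xi$ each $\delta$ has cofinality $\xi$, so each $C_\delta$ may be thinned to a cofinal subset of order type $\xi$ in a way that is harmless for guessing clubs. Write $\pi_\delta\colon C_\delta\to\xi$ for the order isomorphism and $e_\delta=\pi_\delta^{-1}$; the colouring $h_\delta$ will be obtained by pulling a single colouring on $\xi$ back through $\pi_\delta$.

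Second, I would upgrade the hypothesis to a more usable form, namely a colouring $c^*\colon[\xi]^2\to\theta$ with the \emph{uniform cofinal row} property: for every unbounded $B\s\xi$ there is $\eta<\xi$ such that for every $\tau<\theta$ the fibre $\{\beta\in B\mid c^*(\eta,\beta)=\tau\}$ is cofinal in $\xi$. This is exactly what Proposition~\ref{prop219} yields once its ambient cardinal is taken to be $\xi$ and $J=J^\bd[\xi]$ (which is $\xi$-complete, hence lies in $\mathcal J^\xi_\xi$): from $\onto(J^\bd[\xi],\theta+\theta)$ one gets, via clause~(4) and a $2$-to-$1$ surjection $\pi\colon\theta+\theta\to\theta$, a colouring every unbounded set of which has a row hitting \emph{all} $\theta$ colours cofinally, proving clause~(2); for clause~(1) one instead invokes clause~(3) of Proposition~\ref{prop219} to obtain $\theta$-many colours cofinally and then relabels. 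The passage to $\theta+\theta$ colours is free for infinite $\theta$ because $\theta+\theta=\theta$, the constraint $\theta<\xi$ in clause~(1) is precisely what clause~(3) requires, and the finite values of $\theta$ I would handle directly, where $\ubd$ and $\onto$ coincide.

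Third comes the transfer. Fixing a regressive readout $j\mapsto(\mathrm{first}(j),j)$ on $\xi$, I would set $h_\delta(\beta):=c^*(\mathrm{first}(\pi_\delta(\beta)),\pi_\delta(\beta))$. To verify $\theta$-guessing, given a club $D\s\kappa$ I would apply the club-guessing of $\vec C$ to a suitably chosen sub-club $D^*\s D$, obtaining $\delta\in S$ with $\nacc(C_\delta)\cap D^*$ cofinal in $\delta$; collapsing, $B:=\pi_\delta[\nacc(C_\delta)\cap D^*]$ is unbounded in $\xi$, so the uniform cofinal row property supplies a row $\eta$ realizing every colour cofinally on $B$. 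Pulling these witnesses back through $e_\delta$ and using $\nacc(C_\delta)\cap D\supseteq\nacc(C_\delta)\cap D^*$ would then produce, for each $\tau<\theta$, a cofinal-in-$\delta$ set of $\beta\in\nacc(C_\delta)\cap D$ with $h_\delta(\beta)=\tau$, as required.

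The main obstacle is the final alignment. The row $\eta$ is produced \emph{existentially} from $B$, whereas the first coordinate read by $h_\delta$ is the \emph{predetermined} value $\mathrm{first}(\pi_\delta(\beta))$, so I must guarantee that $\mathrm{first}(j)=\eta$ for cofinally many $j\in B$ carrying each prescribed colour. No fixed readout can secure this for an \emph{arbitrary} unbounded $B$, since a single fibre of $\mathrm{first}$ may meet $B$ only boundedly; this is exactly the point at which the \emph{universal} strength of club-guessing must be spent. Concretely, the sub-club $D^*$ — together with the freedom to vary the guessed $\delta$ among the many guesses that a single $D$ supplies — has to be engineered so that the collapsed guessed set lands cofinally inside the relevant fibre of $\mathrm{first}$ with the correct colours, exploiting the $\nacc$-structure of $C_\delta$. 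I expect this coupling of the choice of $D^*$ to the existential row $\eta$ to be the delicate heart of the proof, whereas the reductions of the first two paragraphs are essentially routine.
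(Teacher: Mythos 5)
The paper itself gives no proof of Theorem~\ref{babycg} --- it is quoted from the companion paper \cite{paper46} --- so I assess your argument on its own terms. Your preprocessing is sound: applying Proposition~\ref{prop219}(4) with ambient cardinal $\xi$ and $J:=J^{\bd}[\xi]$ (subnormal and $\xi$-complete) does upgrade $\onto(J^{\bd}[\xi],\theta)$, for infinite $\theta$, to a colouring $c^*$ such that every unbounded $B\s\xi$ admits a row $\eta$ on which every colour is attained unboundedly. The genuine gap is in the transfer step, and your own diagnosis points at a repair that cannot work. By setting $h_\delta(\beta):=c^*(\mathrm{first}(\pi_\delta(\beta)),\pi_\delta(\beta))$ you bake the row selection into the point $\beta$, whereas the row $\eta$ is produced existentially from $B=\pi_\delta[\nacc(C_\delta)\cap D]$, i.e.\ only after $D$ and $\delta$ are known. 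No engineering of a sub-club $D^*$ can break this circularity: $D^*$ must be fixed before club-guessing hands you a $\delta$ (the hypothesis supplies \emph{some} guessing $\delta$, not one of your choosing), and restricting attention to a fibre of $\mathrm{first}$ changes the set to which the colouring principle is applied and hence the row it returns, so there is no reason the returned row should coincide with the fibre's index. The correct move is to pull the quantifier over $\eta$ out of the definition of $h_\delta$ and into a family of \emph{candidate sequences}: for each $\eta<\xi$ let $\vec h^\eta:=\langle \beta\mapsto c^*(\eta,\pi_\delta(\beta))\mid\delta\in S\rangle$. If every $\vec h^\eta$ failed to $\theta$-guess clubs, as witnessed by a club $D_\eta$, then $D:=\bigcap_{\eta<\xi}D_\eta$ is a club (as $\xi<\kappa=\cf(\kappa)$); a $\delta$ guessing $D$ yields an unbounded $B:=\pi_\delta[\nacc(C_\delta)\cap D]$, and the row $\eta$ that $c^*$ returns for $B$ contradicts the failure of $\vec h^\eta$ witnessed by $D_\eta\supseteq D$. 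This produces a single $\eta^*$ working uniformly in $\delta$, and no coupling of $D^*$ to $\eta$ is needed.

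Two further problems. For clause~(1) you propose to extract from Proposition~\ref{prop219}(3) a row on which \emph{some} $\theta$-sized set of colours is attained unboundedly and ``then relabel''; but that set depends on $B$ (hence on $D$ and $\delta$) and ranges over $2^{\theta}$ possibilities, which may exceed $\xi$, so no relabelling that can be built into $h_\delta$ in advance converts ``$\theta$-many colours out of $\theta$'' into ``all of $\theta$''. This is precisely the nontrivial content of clause~(1) (the discussion following the theorem stresses that here $\ubd$ is as powerful as $\onto$), and your sketch does not address it; the same attained-once-versus-attained-cofinally gap also defeats your plan to handle finite $\theta$ ``directly''. Finally, thinning each $C_\delta$ to order type $\xi$ is not harmless: a point of $\nacc(C_\delta)\cap D$ need not survive the thinning, and a surviving point of $D$ may become an accumulation point of the thinned club, so the $\nacc$-guessing property can be lost. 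Keep $C_\delta$ and instead use a weakly increasing cofinal map $\pi_\delta:C_\delta\rightarrow\xi$, e.g.\ $\beta\mapsto\otp(e_\delta\cap\otp(C_\delta\cap\beta))$ for a fixed club $e_\delta\s\otp(C_\delta)$ of order type $\xi$; this sends cofinal subsets of $\delta$ to unbounded subsets of $\xi$ and pulls bounded sets back to bounded sets, which is all the argument requires.
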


Note that when $\theta<\xi$, $\ubd(J ^\bd[\xi], \theta)$ has the exact same powerful effect as its stronger sibling $\onto(J ^\bd[\xi], \theta)$.
In particular, in the case that $\xi=\theta^+$ for $\theta$ singular, while the chain of implications
$$\xi\nrightarrow[\xi]^2_\theta\implies \onto(J ^\bd[\xi], \theta)\implies\ubd(J ^\bd[\xi], \theta)$$
shows that $\ubd(\ldots)$ is a double weakening of the classical partition relation,
it turns out that the result of Theorem A(1) suffices for the desired application.

We end this subsection by stating two corollaries that will appear in \cite{paper46}.
\begin{cor}[\cite{paper46}] Suppose that $\xi < \kappa$ are regular uncountable cardinals.

In any of the following cases, 
there exists a coloured $C$-sequence $\langle h_\delta:C_\delta\rightarrow\theta\mid \delta \in S\rangle$ 
that $\theta$-guesses clubs:	
\begin{enumerate}
\item $\theta=\omega$, and $\xi$ is not ineffable;
\item $\theta<\xi$, and $\square(\xi,{<}\mu)$ holds for some $\mu<\xi$, e.g., if $\L\not\models\xi\text{ is weakly compact}$;
\item $\theta<\xi$, and $\xi$ is not greatly Mahlo;
\item $\theta=\xi$ and $\diamondsuit^*(\xi)$ holds;
\item $\theta=\xi$ and $\diamondsuit(T)$ holds for a stationary $T\s\xi$ that does not reflect at regulars;
\item $\theta=\xi$ is a successor cardinal, and $\stick(\xi)$ holds.
\end{enumerate}
\end{cor}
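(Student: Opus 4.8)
The plan is to reduce each clause to an application of Theorem~\ref{babycg}, or of its nonstationary-ideal refinement from \cite{paper46}, by producing in each case the colouring principle at $\xi$ that the stated hypothesis yields. First I would fix the guessing data. Assuming $\xi^+<\kappa$, Fact~\ref{clubguessingfact} (applied with $\xi$ in the role of $\theta$) supplies a $\xi$-bounded $C$-sequence $\vec C=\langle C_\delta\mid\delta\in E^\kappa_\xi\rangle$ that guesses clubs in the strong sense that every club $D\s\kappa$ contains some $C_\delta$; taking the stationary set $S:=E^\kappa_\xi$, this meets the hypothesis of Theorem~\ref{babycg}, and the strong form is what will drive the refinement. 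The boundary case $\kappa=\xi^+$ is handled by the corresponding club-guessing theorem on $E^{\xi^+}_\xi$.

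Next I would read off a colouring principle at $\xi$ for each clause. Two of them land on the bounded ideal and hence feed the stated Theorem~\ref{babycg}(2) verbatim with $\theta=\xi$: Theorem~C(4) turns $\diamondsuit(T)$ for a stationary $T\s\xi$ that does not reflect at regulars into $\onto(J^{\bd}[\xi],\xi)$ (clause~(5)), and Theorem~C(1) turns a successor $\xi$ carrying $\stick(\xi)$ into $\onto(J^{\bd}[\xi],\xi)$ (clause~(6)). The other four yield principles over $\ns_\xi$: Theorem~A(4) gives $\ubd(\ns_\xi,\omega)$ from $\xi$ failing to be ineffable (clause~(1)); the contrapositive of Theorem~D, read at $\xi$, gives $\onto(\ns_\xi,\theta)$ from $\xi$ failing to be greatly Mahlo through its clause~(1) (our clause~(3)) and from $\square(\xi,{<}\mu)$ through its clause~(4) (our clause~(2)); and Theorem~C(3) gives $\onto(\ns_\xi,\xi)$ from $\diamondsuit^*(\xi)$ (clause~(4)). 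Because $\onto(\mathcal A,J,\cdot)$ is monotone decreasing in the number of colours---one post-composes a witnessing colouring with a surjection of $\theta'$ onto $\theta$---the finite values of $\theta$ in clauses~(2) and~(3) reduce to the instance $\theta'=\omega$, to which Theorem~D applies directly, while for infinite regular $\theta<\xi$ Theorem~D applies with no reduction at all.

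It remains to package these principles as $\theta$-guessing coloured $C$-sequences. For clauses~(5) and~(6) this is a direct invocation of Theorem~\ref{babycg}(2). For clauses~(1)--(4) one must instead run Theorem~\ref{babycg} with $\ns_\xi$ in place of $J^{\bd}[\xi]$, and this is the main obstacle. In the bounded-ideal argument the $\xi$-colouring is transported along the collapse of each $C_\delta$ and then applied to the preimage of $\nacc(C_\delta)\cap D$, which is only guaranteed to be unbounded in $\xi$; to make the weaker, $\ns_\xi$-based principle deliver the colours cofinally along $\nacc(C_\delta)$ one needs the strong guessing of Fact~\ref{clubguessingfact} together with the finer club-guessing analysis carried out in \cite{paper46}. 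Granting that nonstationary-ideal version of Theorem~\ref{babycg}, all four remaining clauses follow, which completes the proof.
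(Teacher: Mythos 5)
This corollary is one of the two results the paper explicitly defers to the forthcoming \cite{paper46} (``We end this subsection by stating two corollaries that will appear in \cite{paper46}''), so there is no proof in this paper to compare against; your proposal can only be measured against the derivation that the paper's results make available, and on that score it is essentially the intended one. Your clause-by-clause sourcing is correct: clauses (v) and (vi) yield $\onto(J^{\bd}[\xi],\xi)$ via Corollary~\ref{cor811} and Lemma~\ref{sticklemma}(2) and feed directly into Theorem~\ref{babycg}, while clauses (i)--(iv) land on principles for $\ns_\xi$ (Corollary~\ref{cor115} for (i), the contrapositive of Theorem~D, i.e.\ Corollary~\ref{cor610}(3), for (ii) and (iii), and Lemma~\ref{lemma47}(1) for (iv)) and therefore need the version of Theorem~\ref{babycg} relative to ideals other than $J^{\bd}[\xi]$ --- exactly the extra machinery the paper announces is developed in \cite{paper46}. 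You are also right that this dependence is genuinely unavoidable for (i) and (iv): at a weakly compact non-ineffable $\xi$ (e.g.\ in $\L$) the hypotheses of (i) and (iv) hold while $\ubd(J^{\bd}[\xi],\omega)$ and $\ubd(J^{\bd}[\xi],\xi)$ both fail, by Proposition~\ref{reducetotwo} and Corollary~\ref{wcvl}.

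Two loose ends. First, in clauses (ii) and (iii) the cardinal $\theta<\xi$ need not be regular; your surjection trick covers finite $\theta$ and, by passing through $\theta^+$, singular $\theta$ with $\theta^+<\xi$, but the case of a singular $\theta$ with $\theta^+=\xi$ is not reached by Theorem~D. It is easily patched: either invoke Corollary~\ref{cor65}(2), which pushes $\ubd(\ns_\xi,\xi)$ down to $\ubd(\ns_\xi,\theta)$ for \emph{every} infinite $\theta<\xi$ (and $\ubd$ with $\theta<\xi$ suffices for $\theta$-guessing by the first clause of Theorem~\ref{babycg}), or observe that $\xi=\theta^+$ is then a successor cardinal, where Corollary~\ref{prop41} and Theorem~\ref{thm54} give $\ubd(J^{\bd}[\xi],\theta)$ outright, so the bounded-ideal form of Theorem~\ref{babycg} already applies. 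Second, your detour through $\ns_\xi$ in clauses (ii) and (iii) is not forced in the same way as in (i) and (iv): under $\square(\xi,{<}\mu)$ one has $\ubd^*(J^{\bd}[\xi],(\ns_\xi)^+)$ by Lemma~\ref{proxyapp1}(2), and Lemma~\ref{lemma57}(1) then transfers $\onto(\ns_\xi,\theta)$ down to $\onto(J^{\bd}[\xi],\theta)$, after which the stated Theorem~\ref{babycg} suffices. Your route is equally legitimate once the \cite{paper46} machinery is granted, but it is worth knowing that two more of the six clauses can be closed out within the present paper.
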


\begin{cor}[\cite{paper46}]\label{thm223}
Assuming $\non(\mathcal M) = \aleph_1$, 
there exists a coloured $C$-sequence $\langle h_\delta:C_\delta\rightarrow\aleph_1\mid \delta \in E^{\aleph_2}_{\aleph_1}\rangle$ 
satisfying that, for every club $D\s \aleph_2$, there is a $\delta \in E^{\aleph_2}_{\aleph_1}$, such that,
for every $\tau<\aleph_1$, the following set is stationary in $\delta$:
$$\{ \sup(C_\delta\cap\beta)\mid \beta\in\nacc(C_\delta)\cap D\ \&\ h_\delta(\beta)=\tau\}.$$
\end{cor}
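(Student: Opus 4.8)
The plan is to split the statement into a colouring input and a club-guessing input and then feed both into the machinery behind Theorem~\ref{babycg}. Writing $\kappa=\aleph_2$ and $\xi=\theta=\aleph_1$, the desired object is a coloured $C$-sequence indexed by $S=E^{\aleph_2}_{\aleph_1}\s E^\kappa_\xi$ whose colour set has size $\aleph_1$; as $\theta=\xi$, this falls under Clause~(2) of Theorem~\ref{babycg}, whose colouring hypothesis is exactly $\onto(J^\bd[\xi],\theta)=\onto(J^\bd[\aleph_1],\aleph_1)$. Since this last principle is independent of $\zfc$ (compare Theorem~C(1), where $\stick(\aleph_1)$ is invoked), the hypothesis $\non(\mathcal M)=\aleph_1$ must be spent precisely on producing it.

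I would obtain $\onto(J^\bd[\aleph_1],\aleph_1)$ from $\non(\mathcal M)=\aleph_1$ through Sierpi\'nski's onto mapping principle. By Remark~\ref{relationsremark}(2) this principle is $\onto(\{\aleph_0\},J^\bd[\aleph_1],\aleph_1)$, and by Fact~\ref{sierpinski} it holds once $\non(\mathcal M)=\aleph_1$. A witnessing colouring $c$ has, for each uncountable $B\s\aleph_1$, some $\eta<\omega$ with $c[\{\eta\}\circledast B]=\aleph_1$; as $\omega\s\aleph_1$, that same $\eta$ witnesses the instance with $\mathcal A=\{\aleph_1\}$, so $c$ also witnesses the non-narrow principle $\onto(J^\bd[\aleph_1],\aleph_1)$. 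Thus the narrowness of Sierpi\'nski's principle is more than we need, and a single $c$ serves as the colouring input.

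For the club-guessing input I would fix a $C$-sequence on $E^{\aleph_2}_{\aleph_1}$ that guesses clubs. Here $\kappa=\xi^+$, so Fact~\ref{clubguessingfact} does not apply verbatim (and its strong $C_\delta\s D$ form is unavailable at the successor of a regular), but Shelah's club-guessing theorem still delivers $\vec C=\langle C_\delta\mid\delta\in E^{\aleph_2}_{\aleph_1}\rangle$ with $\otp(C_\delta)=\aleph_1$ that guesses clubs. Because every $\delta\in E^{\aleph_2}_{\aleph_1}$ has uncountable cofinality, the club filter on $\delta$ is nontrivial, and the strengthenings of club guessing developed for this purpose in \cite{paper46} (guessing \emph{stationarily} many points) provide a $\vec C$ whose guessing is stationary in each guessed $\delta$ rather than merely cofinal; this refinement is what the conclusion, phrased with ``stationary in $\delta$'', demands.

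Finally I would run the stationary strengthening of the construction of Theorem~\ref{babycg}: along each $\delta\in E^{\aleph_2}_{\aleph_1}$ the order isomorphism $\aleph_1\to C_\delta$ transports $c$ to a colouring $h_\delta:C_\delta\rightarrow\aleph_1$, and for a given club $D\s\aleph_2$ one uses the stationary guessing to select $\delta$ and then the onto behaviour of the appropriate row of $c$ to conclude that, for each $\tau<\aleph_1$, the predecessors $\sup(C_\delta\cap\beta)$ of the colour-$\tau$ points $\beta\in\nacc(C_\delta)\cap D$ form a stationary subset of $\delta$. The main obstacle is exactly this last step: upgrading the guessing from cofinal to stationary while simultaneously ensuring that \emph{each} colour $\tau$ (not merely $\aleph_1$-many of them) is realised stationarily, which is where both the full onto-ness of $c$ and the uncountable cofinality of $\delta$ are consumed. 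As Theorem~\ref{babycg} and the present corollary both originate in \cite{paper46}, that combining argument is carried out there, and the only new ingredient contributed here is the reduction of the colouring input to $\non(\mathcal M)=\aleph_1$.
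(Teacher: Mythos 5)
This corollary is one of the results the paper imports verbatim from the forthcoming companion \cite{paper46}; no proof is given here, so there is no argument of the paper's to compare yours against line by line. That said, your decomposition is surely the intended one: $\non(\mathcal M)=\aleph_1$ yields Sierpi\'nski's principle $\onto(\{\aleph_0\},J^{\bd}[\aleph_1],\aleph_1)$ by Fact~\ref{sierpinski}, which (as you correctly note, since any $\eta<\omega$ is in particular $<\omega_1$) yields $\onto(J^{\bd}[\aleph_1],\aleph_1)$; this is the colouring input, and the rest is a stationary strengthening of the scheme of Theorem~\ref{babycg}(2) together with Shelah-style club-guessing on $E^{\aleph_2}_{\aleph_1}$, where Fact~\ref{clubguessingfact} indeed does not apply because $\theta^+=\kappa$.

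The one point where your write-up undersells what is already available in the present paper, and where a reader would otherwise see a gap, is the passage from ``the appropriate row of $c$ is onto'' to ``each colour $\tau$ is attained on a \emph{stationary} subset of $\delta$.'' Mere onto-ness of a row on a set $B$ only guarantees each colour is hit once, and no amount of club-guessing repairs that. The needed upgrade is internal to this paper: since $J^{\bd}[\aleph_1]\s\ns_{\aleph_1}$, the same colouring witnesses $\onto(\ns_{\aleph_1},\aleph_1)$, and then Proposition~\ref{weaksaturation}(2), applied with the normal ideal $J=\ns_{\aleph_1}$, gives that for every stationary $B\s\omega_1$ there is an $\eta$ such that \emph{every} fibre $\{\beta\in B\mid c(\eta,\beta)=\tau\}$ is stationary. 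With that in hand, transporting along the order-isomorphism of $\omega_1$ with $C_\delta$ converts stationary fibres into stationary subsets of $\delta$, and the only ingredient genuinely deferred to \cite{paper46} is the existence of a $C$-sequence on $E^{\aleph_2}_{\aleph_1}$ for which, for every club $D$, some $\delta$ has $\nacc(C_\delta)\cap D$ pulling back to a stationary subset of $\omega_1$ (rather than merely a cofinal one). If you add the explicit appeal to Proposition~\ref{weaksaturation}(2), your outline is as complete as the paper's own treatment of this statement.
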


\subsection{Kunen's inconsistency theorem}\label{subsectioninconsistency}
We end this section by giving yet another proof of Kunen's celebrated theorem concerning nontrivial elementary embeddings of the universe, using
the colouring principles we have introduced.
Admittedly the proof is highly inefficient, but given the occasion of this paper, we find it appropriate to include this tribute to Kunen.
\begin{thm}[\cite{MR311478}] Suppose that $j:V\rightarrow M$ is a nontrivial elementary embedding. Then $V\neq M$.
\end{thm}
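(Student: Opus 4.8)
The plan is to derive a contradiction from the assumption that $j \colon V \to M$ is a nontrivial elementary embedding with $V = M$, by locating the critical point $\kappa := \crit(j)$ and using a strong colouring at $\kappa^+$ (or at $\kappa$) that $j$ cannot respect. Since $j$ is nontrivial, $\kappa$ exists and is measurable, hence in particular a successor-free large cardinal; the key classical fact is that $\lambda := j(\kappa)^{+}$-style considerations force us to examine the supremum $\lambda := \sup_n j^n(\kappa)$, which is the first point moved in a controlled way, and $j(\lambda) = \lambda$. Kunen's original route contradicts the existence of a set of order type $\lambda^+$ of $\omega$-Jónsson character; the route I would take instead, in keeping with this paper, is to install one of our onto colourings at $\lambda^+$ and show that elementarity propagates it to a colouring that must realise every colour on the image of a fixed unbounded set, while $j$-invariance of that set simultaneously prevents this.

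First I would fix $\kappa = \crit(j)$ and set $\lambda := \sup\{j^n(\kappa) \mid n < \omega\}$, so that $\cf(\lambda) = \omega$ and $j(\lambda) = \lambda$. By Theorem C(1), since $\lambda^+$ is a successor cardinal, $\onto(J^{\bd}[\lambda^+], \theta)$ holds for every regular $\theta < \lambda^+$; in particular we may fix a colouring $c \colon [\lambda^+]^2 \to \omega$ witnessing $\onto(J^{\bd}[\lambda^+], \omega)$. The engine of the argument is that elementarity gives $j(c) \colon [j(\lambda^+)]^2 \to \omega$ witnessing the same principle in $M = V$, together with the structural constraint that $c$ and $j(c)$ agree below the critical point in a way that $j$ fails to reconcile on a suitable $j$-fixed unbounded set.

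Next I would build the contradicting positive set. Consider $B := j[\lambda^+] = \{ j(\beta) \mid \beta < \lambda^+ \}$, which has order type $\lambda^+$ and is cofinal in $j(\lambda^+)$, hence $B \in (J^{\bd}[j(\lambda^+)])^+$. Applying the defining property of $j(c)$ to $B$, there is $\eta \in j(\lambda^+)$ with $(j(c))[\{\eta\} \circledast B] = \omega$. The crux is that for pairs drawn from $j[\lambda^+]$ the value of $j(c)$ is computed by elementarity as $(j(c))(j(\alpha), j(\beta)) = j(c(\alpha,\beta)) = c(\alpha,\beta)$, the last equality because $c(\alpha,\beta) < \omega < \kappa$ is fixed by $j$. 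So on $B$ the colouring $j(c)$ literally reproduces $c$; the point $\eta$ realising all of $\omega$ must then be compared against the range of $j$, and the nontriviality of $j$ (there is an ordinal moved, so $j[\lambda^+] \neq j(\lambda^+)$ as sets of the right cofinality) is what one leverages to contradict the simultaneous realisation of all colours on a $j$-definable unbounded set.

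The hard part will be making the final collision precise: I expect the main obstacle to be isolating exactly which feature of $\eta$ relative to $j[\lambda^+]$ yields the contradiction, since merely knowing $j(c)$ agrees with $c$ on $j[\lambda^+]$ is not by itself absurd. The cleanest resolution is likely to route through the $\onto^{-}(\ldots)$ reformulation together with a regressive function defined from $j$ (for instance $\beta \mapsto \sup(j[\beta]\cap \beta)$ on the $j$-fixed points), so that the guessing clause produces colours on a set that elementarity forces to be $j$-invariant while the critical point makes it non-$j$-invariant; balancing these two demands, rather than any individual computation, is where the real work lies.
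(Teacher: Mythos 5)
Your setup (critical point $\theta:=\crit(j)$, the fixed point $\lambda=\sup_n j^n(\theta)$, working at $\lambda^+$, and computing $j(c)$ on $[j``\lambda^+]^2$ by elementarity) matches the paper's, but the argument as you have framed it cannot close, and for a reason you yourself half-identify at the end. The first problem is the number of colours: you take $c$ witnessing $\onto(J^{\bd}[\lambda^+],\omega)$, and since $j(\omega)=\omega$, the statement ``$j(c)$ realises every colour below $j(\omega)$ on $\{\eta\}\circledast B$'' carries no more information than the corresponding statement about $c$; there is no colour that $j(c)$ is forced to realise but structurally cannot. The paper instead uses a colouring into $\theta=\crit(j)$ many colours. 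Then $j(c)$ maps into $j(\theta)>\theta$, so $j(c)$ must realise the colour $\theta$ itself on some pair; but if that pair lies in $j``\lambda^+$, elementarity gives $j(c)(j(\bar\eta),j(\bar\beta))=j(c(\bar\eta,\bar\beta))\in j``\theta=\theta$, which is the contradiction. The "new colour" $\theta\in j(\theta)\setminus j``\theta$ is the whole engine, and it is absent from your version.

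The second problem is that, as you note, you have no control over where the witnessing $\eta$ lands: if $\eta\notin j``\lambda^+$, the value $j(c)(\eta,\beta)$ is not computable by elementarity and no collision occurs. The paper solves this by upgrading the colouring before applying $j$: it takes $S:=E^{\lambda^+}_\omega$, observes that $S$ carries an amenable $C$-sequence so that $\ubd(\ns_{\lambda^+}\restriction S,\lambda^+)$ holds (Lemma~\ref{lemma44}), and then invokes Theorem~\ref{thm42}(2) to get $\onto(J^*,J,\theta)$ for $J:=\ns_{\lambda^+}\restriction S$ --- crucially, a colouring where the witness $\eta$ can be demanded to lie in \emph{any prescribed club}. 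Applying this (via $j$) with the club $D:=\acc^+(j``\lambda^+)$ and the stationary set $B:=D\cap S$, and using that $j$ is continuous at ordinals of cofinality $\omega<\theta$ so that $D\cap E^{\lambda^+}_\omega=j``E^{\lambda^+}_\omega$, forces \emph{both} $\eta$ and $\beta$ into $j``\lambda^+$. Your suggestion of routing through $\onto^-$ with a regressive function derived from $j$ is pointing in roughly the right direction, but without the switch from $\omega$ to $\crit(j)$ colours even a successful placement of $\eta$ in $j``\lambda^+$ yields nothing, so the proof as proposed has a genuine gap rather than a merely unfinished computation.
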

\begin{proof} Let $\theta:=\crit(j)$ and let $\lambda$ denote the first fixed-point of $j$ above $\theta$, 
namely, $\lambda=\sup_{n<\omega}j^n(\theta)$.
Set $\kappa:=\lambda^+$ and $S:=E^\kappa_\omega$. Note that $j(\kappa)=\kappa$ and $j(S)=S$.

Fix a $C$-sequence $\vec C=\langle C_\beta\mid\beta\in S\rangle$
such that each $C_\beta$ is a cofinal subset of $\beta$ of order-type $\omega$.
Evidently, $\vec C$ witnesses that $S\in A_\kappa$ in the sense of Definition~\ref{amenideal}.
Then, by Lemma~\ref{lemma44}, $\ubd(J,\kappa)$ holds for $J:=\ns_{\kappa}\restriction S$.
It then follows from Theorem~\ref{thm42}(2) that $\onto(J^*,J,\theta)$ holds.
In simple words, this gives a colouring $c:[\kappa]^2\rightarrow\theta$
such that for every club $D\s\kappa$ and every stationary $B\s S$,
there exists an $\eta\in D\cap S$ such that $c[\{\eta\}\circledast B]=\theta$.
It follows that, in $M$, $j(c)$ is a colouring from $[\kappa]^2$ to $j(\theta)$ 
such that for every club $D\s\kappa$ and every stationary $B\s S$,
there exists an $\eta\in D\cap S$ such that $c[\{\eta\}\circledast B]=j(\theta)$.

Towards a contradiction, suppose that $V=M$. In particular, $D:=\acc^+(j``\kappa)$ is in $M$.
As $D$ is a club in $\kappa$, $B:=D\cap S$ is stationary in $\kappa$,
and since $\theta<j(\theta)$, we may then find $\eta\in D\cap S=B$ and $\beta\in B$ above $\eta$ such that $j(c)(\eta,\beta)=\theta$.
As $j$ is continuous at ordinals of cofinality $<\theta$,
$D\cap E^\kappa_{<\theta}=j`` E^\kappa_{<\theta}$.
In particular, $(\eta,\beta)\in[D\cap E^\kappa_\omega]^2=[j``E^\kappa_\omega]^2\s[j``\kappa]^2$,
so we may pick $(\bar\eta,\bar\beta)\in[\kappa]^2$ such that $j(\bar\eta)=\eta$ and $j(\bar\beta)=\beta$.
Then $\theta=j(c)(\eta,\beta)=j(c(\bar\eta,\bar\beta))\in j``\theta=\theta$. This is a contradiction.
\end{proof}

\section{Strongly amenable $C$-sequences}
\label{sectionstronglyamenable}
In this section, $\kappa$ denotes a regular uncountable cardinal. 
Our entire focus in this section is the principle $\ubd(J^\bd[S], \kappa)$ for a cofinal $S \s \kappa$.
We start by introducing an ideal on $\kappa$ which we show in Lemma~\ref{lemma43} provides an exact characterisation of this principle. 
We then establish some properties of the ideal and finish by examining the behaviour of this ideal under taking some standard forcing extensions.

\begin{defn} Let $S\s\kappa$. A $C$-sequence $\vec C=\langle C_\beta\mid\beta\in S\rangle$ 
is \emph{strongly amenable in $\kappa$} iff for every club $D$ in $\kappa$,
the set $\{ \beta\in S \mid D\cap\beta\subseteq C_\beta\}$ is bounded in $\kappa$.
\end{defn}

\begin{remark}\label{trivialamenable}	As made clear by the proof of Proposition~\ref{thm215}, 
a $C$-sequence over all of $\kappa$ is strongly amenable in $\kappa$ iff it is \emph{nontrivial} in the sense of \cite[Definition~6.3.1]{TodWalks}.
More generally, a $C$-sequence $\vec C$ over a stationary subset $S$ of $\kappa$
is strongly amenable in $\kappa$ iff $\chi(\vec C)=1$ in the sense of \cite[Definition~4.2]{paper35}.
\end{remark}

\begin{defn} $\sa_\kappa:=\{S\s\kappa\mid S\text{ carries a }C\text{-sequence strongly amenable in }\kappa\}$.
\end{defn}

\begin{lemma}\label{lemma43} For a cofinal subset $S\s\kappa$, the following are equivalent:
\begin{enumerate}[(1)]
\item $S\in\sa_\kappa$;
\item there exists an upper-regressive colouring $c:[\kappa]^2\rightarrow\kappa$ such that, for every cofinal $B\s S$,	
there exists an $\eta<\kappa$ such that, 	for every $\eta'\in[\eta,\kappa)$, 
$$\sup\{ c(\eta',\beta)\mid \beta\in B\setminus(\eta'+1)\}=\kappa;$$
\item $\ubd(J^+, J, \kappa)$ holds for every ideal $J$ over $S$ extending $J^{\bd}[S]$;
\item $\ubd(J^\bd[S],\kappa)$ holds.
\end{enumerate}
\end{lemma}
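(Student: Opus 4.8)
The plan is to prove the four conditions equivalent via the cycle $(1)\Rightarrow(2)\Rightarrow(3)\Rightarrow(4)\Rightarrow(1)$, where two of the steps are soft bookkeeping and two are genuine translations between a $C$-sequence and a colouring. Throughout I will use that, since $\kappa$ is regular, a subset of $\kappa$ has order type $\kappa$ iff it is cofinal, so the conclusion $\otp(c[\{\eta\}\circledast B])=\kappa$ in the $\ubd$ principles just says $\sup\{c(\eta,\beta)\mid\beta\in B\setminus(\eta+1)\}=\kappa$. The two soft steps are $(3)\Rightarrow(4)$ and $(2)\Rightarrow(3)$. For $(3)\Rightarrow(4)$ I would simply instantiate (3) with $J:=J^{\bd}[S]$ and take $A:=\kappa$, which lies in $(J^{\bd}[S])^+$ because $S$ is cofinal; the $\eta\in A$ returned is then an $\eta<\kappa$ as demanded by $\ubd(J^{\bd}[S],\kappa)$. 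For $(2)\Rightarrow(3)$, let $c$ be as in (2), fix $J\s J^{\bd}[S]$ wait — fix an ideal $J$ extending $J^{\bd}[S]$ and $A,B\in J^+$; since $J$ extends $J^{\bd}[S]$ both $A$ and $B$ are cofinal in $S$. Applying (2) to $B$ yields a threshold $\eta$ such that for \emph{every} $\eta'\in[\eta,\kappa)$ the set $\{c(\eta',\beta)\mid\beta\in B\setminus(\eta'+1)\}$ is cofinal in $\kappa$; now pick $\eta'\in A$ above $\eta$ (possible as $A$ is cofinal) to get $\otp(c[\{\eta'\}\circledast B])=\kappa$ with $\eta'\in A$. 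The whole purpose of the ``for every $\eta'\ge\eta$'' clause of (2) is precisely to allow the base point to be placed inside the prescribed set $A$.

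For the hard direction $(4)\Rightarrow(1)$ I would reconstruct a $C$-sequence directly from the colouring. Given an upper-regressive witness $c$ of $\ubd(J^{\bd}[S],\kappa)$, set, for $\beta\in S$,
$$C_\beta:=\{\gamma\in\acc(\beta)\mid \forall\eta<\gamma\ (c(\eta,\beta)<\gamma)\},$$
the set of closure points of the fibre map $\eta\mapsto c(\eta,\beta)$. This set is closed, and its defining feature is that $\delta\in C_\beta$ with $\eta<\delta$ forces $c(\eta,\beta)<\delta$. To verify strong amenability, suppose some club $D$ made $B:=\{\beta\in S\mid D\cap\beta\s C_\beta\}$ cofinal. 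Then $B$ is a cofinal subset of $S$, so (4) provides an $\eta<\kappa$ with $\sup\{c(\eta,\beta)\mid\beta\in B\setminus(\eta+1)\}=\kappa$. Put $\delta:=\min(D\setminus(\eta+1))$. For $\beta\in B$ with $\beta>\delta$ we have $\delta\in D\cap\beta\s C_\beta$, whence $\delta$ is a closure point and $c(\eta,\beta)<\delta$; and for $\eta<\beta\le\delta$ we trivially have $c(\eta,\beta)<\beta\le\delta$. Thus every $\beta\in B\setminus(\eta+1)$ gives $c(\eta,\beta)<\delta$, contradicting unboundedness. The one technical point is to guarantee that each $C_\beta$ is genuinely cofinal in $\beta$, so that $\vec C$ is a legitimate $C$-sequence: the closure points fail to be cofinal exactly when $\{c(\eta,\beta)\mid\eta<\gamma\}$ is already cofinal in $\beta$ for some $\gamma<\beta$, and this exceptional behaviour must either be argued non-cofinal in $\beta\in S$ or absorbed by first passing to a suitably ``slowed-down'' witness of $\ubd(J^{\bd}[S],\kappa)$.

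It remains to handle $(1)\Rightarrow(2)$, which I expect to be the main obstacle. Here I would build an upper-regressive colouring dual to the closure-point construction, arranged so that $c(\eta',\beta)$ is forced to be large precisely when $C_\beta$ has a wide gap above $\eta'$ — the natural candidate being $c(\eta',\beta):=\min(C_\beta\setminus(\eta'+1))$, or a minimal-walk refinement of it (after extending $\vec C$ to all of $\kappa$) if the single step proves too weak. Given a cofinal $B\s S$, I would argue by contradiction: if for cofinally many thresholds $\eta'$ the colour set $\{c(\eta',\beta)\mid\beta\in B\setminus(\eta'+1)\}$ were bounded, the witnessing bounds would confine the sets $C_\beta$ to prescribed bounded windows cofinally often, and one would assemble from these a single club $D$ with $\{\beta\in S\mid D\cap\beta\s C_\beta\}$ cofinal, contradicting the strong amenability of $\vec C$. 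The difficulty, in contrast with the clean bounding argument of $(4)\Rightarrow(1)$, is that one must manufacture genuine unboundedness of a colouring out of the purely negative information that $\vec C$ is nontrivial; the delicate part is the stabilisation/fusion step that extracts one fixed club $D$ from the \emph{$\beta$-dependent} windows supplied by the failure of (2). This is exactly the place where the identification of strong amenability with the condition $\chi(\vec C)=1$ noted in Remark~\ref{trivialamenable}, and the machinery behind it, must be brought to bear.
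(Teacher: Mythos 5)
Your cycle $(1)\Rightarrow(2)\Rightarrow(3)\Rightarrow(4)\Rightarrow(1)$ is the paper's, your two soft steps are handled correctly, and both of your key constructions are the right ones: $c(\eta,\beta):=\min(C_\beta\setminus\eta)$ for $(1)\Rightarrow(2)$, and the club of closure points of the fibre map $\eta\mapsto c(\eta,\beta)$ for $(4)\Rightarrow(1)$. However, you explicitly leave two steps open, and each is a genuine gap as written.

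First, in $(4)\Rightarrow(1)$ you correctly note that $C_\beta$ need not be cofinal in $\beta$, but neither of your proposed fixes is the right one: the exceptional $\beta$ need not be non-cofinal in $\kappa$ (the whole of $S$ could consist of singular ordinals), and no ``slowed-down'' witness is needed. The paper's resolution is structural: by Lemma~\ref{stronglyamenableideal}, $[\kappa]^{<\kappa}\s\sa_\kappa$, the set $\{\beta<\kappa\mid\cf(\beta)<\beta\}$ is in $\sa_\kappa$, and $\sa_\kappa$ is a $\kappa$-complete ideal; hence it suffices to produce a strongly amenable $C$-sequence over $S':=S\cap\reg(\kappa)\setminus\{\aleph_0\}$, and for $\beta\in S'$ regular uncountable the set of closure points of an upper-regressive fibre map is automatically a club in $\beta$. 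Without some such reduction your $\vec C$ is simply not a $C$-sequence and the argument does not start.

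Second, and more importantly, your assessment of $(1)\Rightarrow(2)$ as requiring a delicate ``stabilisation/fusion step'' to extract one club from ``$\beta$-dependent windows'' misreads the quantifier structure of the negation of (2): if (2) fails for $B$, then for each $\eta$ there are $\eta'\in[\eta,\kappa)$ and a \emph{single} ordinal $\varsigma_\eta:=\sup\{c(\eta',\beta)\mid\beta\in B\setminus(\eta'+1)\}<\kappa$, a bound that is uniform over all $\beta$ because it is itself a supremum over $B$. One then takes $D$ to be the club of ordinals closed under $\eta\mapsto\max\{\eta',\varsigma_\eta\}$ --- no fusion required --- and the actual content of the direction is the step you omit: for every $\beta\in B$ and every $\alpha\in D\cap\beta$, each $\eta<\alpha$ gives $\eta\le\eta'\le\min(C_\beta\setminus\eta')=c(\eta',\beta)<\alpha$, so $C_\beta\cap\alpha$ is unbounded in $\alpha$ and closedness of $C_\beta$ forces $\alpha\in C_\beta$. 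Hence $D\cap\beta\s C_\beta$ for every $\beta\in B$, making $\{\beta\in S\mid D\cap\beta\s C_\beta\}$ cofinal and contradicting strong amenability --- exactly the contradiction you were aiming for, reached by an elementary computation rather than by any appeal to the machinery behind $\chi(\vec C)=1$.
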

\begin{proof} $(1)\implies(2)$:	Suppose that $\vec C=\langle C_\beta\mid\beta\in S\rangle$ is a $C$-sequence, strongly amenable in $\kappa$.
Pick an upper-regressive colouring $c:[\kappa]^2\rightarrow\kappa$ such that, for all $\beta\in S$ and $\eta<\beta$, 
$c(\eta,\beta)=\min(C_\beta\setminus\eta)$.

To see that $c$ is as sought, let $B\s S$ be cofinal.
Towards a contradiction, suppose that, for every $\eta<\kappa$, there exist $\eta'\in[\eta,\kappa)$ and $\varsigma_\eta<\kappa$ such that,
$$\sup\{c(\eta',\beta)\mid \beta\in B\setminus(\eta'+1)\}=\varsigma_\eta.$$
Consider the club $D:=\{ \alpha<\kappa\mid \forall\eta<\alpha\,(\max\{\eta',\varsigma_\eta\}<\alpha)\}$.
Let $\epsilon:=\sup\{\beta\in S\mid D\cap\beta\s C_\beta\}$,
and then fix $\beta\in B$ above $\epsilon$.
As $D\cap\beta\nsubseteq C_{\beta}$, let us pick $\alpha\in D\cap\beta\setminus C_{\beta}$.
\begin{claim} Let $\eta<\alpha$. Then $c(\eta',\beta)<\alpha$.
\end{claim}
\begin{why} As $\alpha \in D$, we have that $\eta'<\alpha<\beta$ and that	$c(\eta',\beta)\le\varsigma_\eta<\alpha$.
\end{why}

Thus, for every $\eta<\alpha$, $\eta\le\eta'\le\min(C_\beta\setminus\eta')=c(\eta',\beta)<\alpha$.
This means that $\{\min(C_{\beta}\setminus\eta')\mid \eta<\alpha\}$ is unbounded in $\alpha$, while $\alpha\notin C_{\beta}$,
contradicting the fact that $C_{\beta}$ is closed.

$(2)\implies(3)\implies(4)$: By the definitions, recalling that $\kappa$ is regular.

$(4)\implies(1)$:	
Let $c$ witness $\ubd(J^{\bd}[S],\kappa)$.
By Clauses (1) and (2) of the upcoming Lemma~\ref{stronglyamenableideal},
it suffices to prove that $S':=S\cap \reg(\kappa)\setminus\{\aleph_0\}$ carries a $C$-sequence
strongly amenable in $\kappa$.
For any $\beta\in S'$, as $c$ is upper-regressive, $$C_\beta:=\{\delta<\beta\mid \forall\eta<\delta[c(\eta,\beta)<\delta]\}$$ is a club in $\beta$.

Towards a contradiction, suppose that  $\vec C=\langle C_\beta \mid \beta \in S' \rangle$ is not strongly amenable in $\kappa$.
Fix a club $D\s\kappa$ for which
the set $B:=\{ \beta\in S'\mid D\cap\beta\s C_\beta\}$ is cofinal in $\kappa$.
Pick $\eta<\kappa$ such that $\sup(c[\{\eta\}\circledast B])=\kappa$
and then find $\delta\in D$ above $\eta$.
Then, for every $\beta\in B$ above $\delta$, we get from $\eta<\delta$ and $\delta\in D\cap\beta\s C_\beta$ that $c(\eta,\beta)<\delta$. 
Combined with the regularity of $\kappa$ this gives that $\sup(c[\{\eta\}\circledast B])<\kappa$, contradicting the choice of $\eta$.	
\end{proof}

\begin{cor}\label{cor34} If $\ubd(J^\bd[\kappa],\kappa)$ holds, then so does $\ubd^*([\kappa]^\kappa,J^{\bd}[\kappa],\allowbreak[\kappa]^{\kappa})$.
\end{cor}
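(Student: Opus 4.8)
The plan is to deduce the conclusion directly from the sharper reformulation of the hypothesis provided by Lemma~\ref{lemma43}. Applying that lemma with $S:=\kappa$, the assumption $\ubd(J^\bd[\kappa],\kappa)$ (its Clause~(4)) is equivalent to its Clause~(2): there is an upper-regressive colouring $c:[\kappa]^2\to\kappa$ such that for every cofinal $B\s\kappa$ one may fix a threshold $\eta_0<\kappa$ beyond which \emph{every} $\eta'\in[\eta_0,\kappa)$ satisfies $\sup\{c(\eta',\beta)\mid\beta\in B\setminus(\eta'+1)\}=\kappa$. I claim that this very colouring $c$ witnesses $\ubd^*([\kappa]^\kappa,J^{\bd}[\kappa],[\kappa]^\kappa)$, so that no modification of the colouring is needed.

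First I would fix arbitrary $A\in[\kappa]^\kappa$ and $B\in (J^{\bd}[\kappa])^+$, noting that since $\kappa$ is regular both sets are cofinal in $\kappa$. Feeding $B$ into Clause~(2) produces a threshold $\eta_0$; since $A$ is cofinal I may then choose $\eta\in A$ with $\eta\ge\eta_0$, and for this single $\eta$ the colour set $\{c(\eta,\beta)\mid\beta\in B\setminus(\eta+1)\}$ is cofinal in $\kappa$. I then set $T:=\{c(\eta,\beta)\mid\beta\in B\setminus(\eta+1)\}\setminus(\eta+1)$; discarding the colours below $\eta+1$ still leaves a cofinal subset of $\kappa$, which by regularity has size $\kappa$, so that $T\in[\kappa]^\kappa$ as demanded by the third slot of the principle.

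It remains to verify the defining clause of $\ubd^*$ with this $T$. Given $\tau\in T$, I would pick $\beta\in B\setminus(\eta+1)$ with $c(\eta,\beta)=\tau$. Since $\tau\ge\eta+1$ we have $\eta<\tau$, hence $\eta\in A\cap\tau$; and since $c$ is upper-regressive, $\tau=c(\eta,\beta)<\beta$, so $\beta\in B\setminus(\tau+1)$. Thus the pair $(\eta,\beta)$ realises the colour $\tau$ exactly as required, and — a pleasant feature of this argument — the same pivot $\eta$ serves all $\tau\in T$ simultaneously.

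The only genuine point is forcing the pivot $\eta$ to lie in the prescribed set $A$: the bare principle $\ubd(J^{\bd}[\kappa],\kappa)$ hands us merely \emph{some} $\eta<\kappa$ spreading the colours, with no control over where it sits. The uniform-in-$\eta'$ form of Clause~(2) of Lemma~\ref{lemma43} is precisely what removes this obstacle, since it lets us relocate the pivot to any tail of $\kappa$ and hence into the cofinal set $A$. Everything else is routine bookkeeping with upper-regressivity and the regularity of $\kappa$, so I expect the write-up to be short.
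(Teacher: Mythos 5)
Your proof is correct and is essentially the paper's own argument: the paper proves this corollary by citing the implication $(4)\implies(2)$ of Lemma~\ref{lemma43} and leaving the routine verification implicit, which is exactly the verification you carry out (relocating the pivot into the tail of $A$, taking $T$ to be the unbounded set of realised colours above $\eta$, and using upper-regressivity plus regularity of $\kappa$). No gaps.
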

\begin{proof} By the implication $(4)\implies(2)$ of Lemma~\ref{lemma43}.
\end{proof}

\begin{defn}[Iterated trace operations]\label{iteratedtrace} For $S \s \kappa$,
for each $\alpha <\kappa^+$, define $\Tr^\alpha(S)$ recursively as follows: 
\begin{itemize}
\item $\Tr^0(S) := S$;
\item $\Tr^{\alpha+1}(S):= \Tr(\Tr^\alpha(S))$;
\item for $\alpha\in\acc(\kappa)$, $\Tr^\alpha(S): = \bigcap_{\alpha' < \alpha}\Tr^\alpha(S)$;
\item for $\kappa\leq \alpha < \kappa^+$ a limit ordinal, let $\pi: \kappa\leftrightarrow \alpha$ be a bijection and then $\Tr^\alpha(S):= \diagonal_{\alpha' < \kappa}\Tr^{\pi(\alpha')}(S)$.
\end{itemize}
\end{defn}
A folklore result is that these operations are well-defined up to equivalence modulo $\ns_\kappa$.

\begin{lemma}\label{stronglyamenableideal} 
\begin{enumerate}[(1)]
\item $[\kappa]^{<\kappa}\s\sa_\kappa$;
\item $\{\beta<\kappa\mid \cf(\beta)<\beta\}\in\sa_\kappa$;
\item $\sa_\kappa$ is a  $\kappa$-complete ideal;
\item $\ns_\kappa \s \sa_\kappa$;
\item $\{\kappa\setminus\Tr(S)\mid S\in(\ns_\kappa)^+\}\s\sa_\kappa$;
\item for every $\alpha< \kappa$, $\kappa \setminus \Tr^\alpha(\kappa) \in \sa_\kappa$;
\item for every $S\in(\sa_\kappa)^+$, $\Tr(S)\in(\sa_\kappa)^+$.
\end{enumerate}
\end{lemma}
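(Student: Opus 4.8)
The lemma collects seven facts about the ideal $\sa_\kappa$, which consists of those $S \s \kappa$ carrying a $C$-sequence strongly amenable in $\kappa$. Recall that $\vec C = \langle C_\beta \mid \beta \in S\rangle$ is strongly amenable in $\kappa$ iff for every club $D$, the set $\{\beta \in S \mid D \cap \beta \s C_\beta\}$ is bounded. So $\sa_\kappa$ is a kind of dual to the club-guessing-like property captured in Proposition~\ref{thm215}; membership in $\sa_\kappa$ is a weakness condition. My plan is to prove the clauses roughly in the order stated, since several depend on earlier ones.

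**Plan for the clauses.** For Clause~(1), given $S \in [\kappa]^{<\kappa}$, I would observe $S$ is bounded, say $\sup(S) < \kappa$, so any $C$-sequence on $S$ works trivially: for a club $D$, the witnessing set is a subset of $S$, hence bounded. For Clause~(2), on $S := \{\beta < \kappa \mid \cf(\beta) < \beta\}$ I would assign to each such $\beta$ a cofinal $C_\beta \s \beta$ of order-type $\cf(\beta) < \beta$; the key point is that for any club $D$, if $D \cap \beta \s C_\beta$ then $D \cap \beta$ has order-type at most that of $C_\beta$, which is strictly below $\beta$, contradicting that $D \cap \beta$ is cofinal in $\beta$ of order-type $\cf(\beta)$ when $\beta$ is a limit point of $D$. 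The plan is to choose $C_\beta$ sparse enough that no club can be swallowed. For Clause~(3), $\kappa$-completeness, I would take $\langle S_i \mid i < \mu\rangle$ with $\mu < \kappa$, each carrying a strongly amenable $\vec C^i$, and amalgamate them into a single $C$-sequence on $\bigcup_i S_i$ by picking, for each $\beta$, the sequence $C_\beta := C^{i(\beta)}_\beta$ for the least $i(\beta)$ with $\beta \in S_{i(\beta)}$; I then check that a club $D$ swallowed on a cofinal set would be swallowed on one of the $S_i$ cofinally, contradicting strong amenability of $\vec C^i$. That this is an ideal (downward closed, contains singletons) is immediate. Clause~(4), $\ns_\kappa \s \sa_\kappa$, should follow from Clauses~(1)--(3): a nonstationary set is disjoint from a club $E$, and one builds a $C$-sequence using $E$ to guarantee swallowing fails; alternatively reduce to (2) after noting successor ordinals and singular limits are handled, and points of $E^\kappa_\kappa = \reg(\kappa)$-type regular limits form the obstruction, which for a nonstationary set can be avoided.

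**The harder clauses and the main obstacle.** Clauses~(5), (6), and (7) are where the real content lies, and I expect (7) — that $\sa_\kappa$ is closed under $\Tr$ on its positive sets — to be the main obstacle. For Clause~(5), given $S \in (\ns_\kappa)^+$ stationary, I would show $\kappa \setminus \Tr(S) \in \sa_\kappa$: on each $\beta \notin \Tr(S)$ with $\cf(\beta) > \omega$, the set $S \cap \beta$ is nonstationary in $\beta$, so I can pick $C_\beta$ to be a club in $\beta$ \emph{disjoint} from $S$ (or from a club witnessing nonreflection); the strong amenability then comes from a diagonal/Fodor argument showing no club $D$ can be contained in all these $C_\beta$ cofinally, because such a $D$ would reflect $S$. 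Clause~(6) would then follow by combining (5) with $\kappa$-completeness (Clause~3), iterating the trace: $\kappa \setminus \Tr^\alpha(\kappa)$ is a union over the recursive build-up that stays in $\sa_\kappa$ for $\alpha < \kappa$. The crux, Clause~(7), asserts a \emph{preservation of positivity} under trace: if $S$ is $\sa_\kappa$-positive then so is $\Tr(S)$. The plan here is to argue contrapositively: if $\Tr(S) \in \sa_\kappa$, witnessed by some strongly amenable $\vec C$ on $\Tr(S)$, I must manufacture a strongly amenable $C$-sequence on $S$ itself, using the interplay between a $C$-sequence on $\Tr(S)$ and the stationarity of $S$ below points of $\Tr(S)$. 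The delicate point — and the expected main difficulty — is threading the swallowing condition through the trace operation: a club $D$ swallowed cofinally on $S$ must be pushed up to a club swallowed cofinally on $\Tr(S)$, which requires carefully relating $D \cap \beta \s C_\beta$ for $\beta \in S$ to an analogous containment at reflection points, likely via $\acc^+(D)$ and a pressing-down argument. I would set up the $C$-sequence on $S$ by coherently stitching together the given data on $\Tr(S)$ and handle the reflection bookkeeping as the technical heart of the proof.
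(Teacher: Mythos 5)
Your plan for clauses (1), (3), (4), (5) and (6) matches the paper's proof in all essentials, but there are two genuine gaps. First, in clause (2) the contradiction you claim is not a contradiction: from $D\cap\beta\s C_\beta$ you get $\otp(D\cap\beta)\le\otp(C_\beta)=\cf(\beta)$, which is perfectly compatible with $D\cap\beta$ being cofinal in $\beta$ --- a cofinal subset of $\beta$ of order-type exactly $\cf(\beta)$ satisfies both. Worse, merely requiring $\otp(C_\beta)=\cf(\beta)$ does not suffice: for $\kappa$ inaccessible, take $D$ to be the club of infinite cardinals below $\kappa$; for unboundedly many singular $\beta$ (e.g.\ $\beta=\aleph_\mu$ with $\mu$ regular) one has $\otp(D\cap\beta)=\cf(\beta)$, so one may legally choose $C_\beta\supseteq D\cap\beta$ of order-type $\cf(\beta)$, and the resulting $C$-sequence swallows $D$ on an unbounded set. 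The missing ingredient is the extra requirement $\min(C_\beta)>\otp(C_\beta)=\cf(\beta)$: then for any fixed $\alpha\in D$ and any $\beta$ in the swallowing set with $\otp(D\cap\beta)>\alpha$, one gets $\min(C_\beta)>\otp(C_\beta)\ge\otp(D\cap\beta)>\alpha$ while $\alpha\in D\cap\beta\s C_\beta$, which is the contradiction.

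Second, clause (7), which you flag as ``the technical heart'' and leave as an unexecuted plan to push a strongly amenable $C$-sequence from $\Tr(S)$ down to $S$, is in fact a two-line corollary of what precedes it, and the transfer argument you sketch is both unnecessary and of doubtful feasibility. If $S\in(\sa_\kappa)^+$ then $S$ is stationary by clause (4), so $\kappa\setminus\Tr(S)\in\sa_\kappa$ by clause (5); if $\Tr(S)$ were also in $\sa_\kappa$, then $\kappa=\Tr(S)\cup(\kappa\setminus\Tr(S))\in\sa_\kappa$ by clause (3), and hence $S\in\sa_\kappa$ by downward closure, contradicting positivity. No bookkeeping relating $D\cap\beta\s C_\beta$ at points of $S$ to containments at reflection points is needed. (A smaller remark: in clause (5) no Fodor or diagonal argument is required either --- once each $C_\beta$ is chosen disjoint from $S$, any single point of $D\cap S$ below an element of the swallowing set already yields the contradiction.)
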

\begin{proof} (1) This is obvious.

(2)  Denote $S:=\{\beta<\kappa\mid \cf(\beta)<\beta\}$, and let $\vec C=\langle C_\beta\mid\beta\in S\rangle$ be any $C$-sequence such that $\min(C_\beta) > \otp(C_\beta)=\cf(\beta)$ for all infinite $\beta\in S$.
Towards a contradiction, suppose that $D$ is some club in $\kappa$,
for which $B:=\{\beta\in S\mid D\cap\beta\s C_\beta\}$ is unbounded in $\kappa$.
Pick an infinite $\alpha \in D$, and then pick $\alpha'<\kappa$ such that $\otp(D \cap \alpha') > \alpha$. Finally, pick $\beta\in B\setminus\alpha'$. Then 
$$\min(C_\beta) > \otp(C_\beta) \ge \otp(D\cap \beta) \geq \otp(D\cap \alpha') > \alpha.$$
However, $\alpha \in D\cap \beta\s C_\beta$, which is a contradiction.	

(3) It is clear that $\sa_\kappa$ is downward-closed with respect to inclusion.
To see that $\sa_\kappa$ is $\kappa$-complete, suppose that $\{ S_i\mid i<\sigma\}$ is a given family of elements of $\sa_\kappa$,
with $\sigma<\kappa$. We would like to show that $S:=\bigcup_{i<\sigma}S_i$ is in $\sa_\kappa$.
Since $\sa_\kappa$ is downward-closed, and by possibly replacing
$S_i$ by $S_i\setminus\bigcup_{j<i}S_j$, we may assume that the elements of
$\langle S_i\mid i<\sigma\rangle$  are pairwise disjoint. For each $i<\sigma$, by $S_i\in\sa_\kappa$,
let $\langle C_\beta\mid\beta\in S_i\rangle$ be some $C$-sequence strongly amenable in $\kappa$.
We claim that $\vec C=\langle C_\beta\mid\beta\in S \rangle$ is strongly amenable in $\kappa$.
Indeed, for every club $D\s\kappa$, if $B:=\{\beta\in S\mid D\cap\beta\s C_\beta\}$ is unbounded in $\kappa$,
then for some $i<\sigma$, $B\cap S_i$ is unbounded in $\kappa$, contradicting the fact that $\langle C_\beta\mid\beta\in S_i\rangle$ is strongly amenable in $\kappa$.	

(4) Fix $S\in\ns_\kappa$, and we shall show that $S\in\sa_\kappa$. 
By Clause~(2), we may assume that $S\s\reg(\kappa)$.
As $S\in\ns_\kappa$, fix a club $E \s \kappa$ disjoint from $S$. 
In particular, $S\cap\acc(E)=\emptyset$, so for every $\beta \in S$ we may fix a club $C_\beta$  in $\beta$ disjoint from $E$. 
We claim that $\vec C=\langle C_\beta \mid \beta \in S\rangle$ is strongly amenable in $\kappa$. Towards a contradiction, suppose that this is not so, as witnessed by a club $D \s \kappa$. 
That is, suppose that the set $B:= \{\beta \in S \mid D\cap \beta \s C_\beta\}$ is unbounded in $\kappa$. 
Then we can find $\beta \in B$ above $\min(D\cap E)$. Then $D\cap E\cap \beta \s D\cap \beta \s C_\beta$ and all three of these sets are nonempty. 
This implies that $E\cap C_\beta$ is nonempty, which is a contradiction.

(5) Let $S\s\kappa$ be stationary. By Clause~(2), to see that $\kappa\setminus\Tr(S)$ is in $\sa_\kappa$, 
it suffices to show that $T:=\reg(\kappa)\setminus\Tr(S)$ is in $\sa_\kappa$.
Fix a $C$-sequence $\vec C=\langle C_\beta\mid \beta\in T\rangle$ such that each $C_\beta$ is a club in $\beta$ disjoint from $S$.
Now, if there exists a club $D$ such that $\{\beta\in T\mid D\cap\beta\s C_\beta\}$ is unbounded in $\kappa$,
then we may pick $\beta$ in this set above $\min(D\cap S)$. This is a contradiction.

(6) Using $\kappa$-completeness of the ideal and Clause~(5).

(7) This follows from Clause~(5).
\end{proof}

\begin{cor}\label{square_is_amenable} If $\square(\kappa,{<}\mu)$ holds with $\mu<\kappa$,
then $\ubd(J^\bd[\kappa],\kappa)$ holds.

In particular, if $\ubd(J^\bd[\kappa],\kappa)$ fails, then $\kappa$ is weakly compact in $\L$.
\end{cor}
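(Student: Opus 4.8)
The plan is to reduce the first assertion, via the characterisation in Lemma~\ref{lemma43}, to the construction of a single strongly amenable $C$-sequence, and then to invoke the theory behind Remark~\ref{trivialamenable}. By the equivalence $(1)\Leftrightarrow(4)$ of Lemma~\ref{lemma43} (applied to the cofinal set $S=\kappa$), $\ubd(J^\bd[\kappa],\kappa)$ holds precisely when $\kappa\in\sa_\kappa$, i.e.\ when $\kappa$ carries a $C$-sequence that is strongly amenable in $\kappa$. Using that $\sa_\kappa$ is a $\kappa$-complete ideal (Lemma~\ref{stronglyamenableideal}(3)) containing both $[\kappa]^{<\kappa}$ and $\{\beta<\kappa\mid\cf(\beta)<\beta\}$ (Clauses (1) and (2)), together with the fact that $\reg(\kappa)\cap\mu^+$ is bounded, I would first reduce the problem to producing a strongly amenable $C$-sequence indexed by the regular cardinals in the interval $(\mu,\kappa)$. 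For such an index $\beta$ one has $\cf(\beta)=\beta>\mu$, and it is exactly this inequality --- furnished by the hypothesis $\mu<\kappa$ --- that makes the narrowness of a $\square(\kappa,{<}\mu)$-sequence usable.

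Next I would fix a $\square(\kappa,{<}\mu)$-sequence $\langle\mathcal C_\beta\mid\beta<\kappa\rangle$. The main obstacle is that no naive selection works: if one chooses $C_\beta\in\mathcal C_\beta$ (or sets $C_\beta:=\bigcap\mathcal C_\beta$ on regular $\beta$), then a putative club $D$ witnessing the failure of strong amenability does propagate, through the coherence of the sequence, to the statement that $D\cap\alpha$ lies inside some member of $\mathcal C_\alpha$ for every $\alpha\in\acc(D)$; but the associated tree of coherent restrictions has width up to $\mu$ and is $\kappa$-Aronszajn, so this falls short of producing a cofinal branch, i.e.\ a genuine thread contradicting the non-triviality of $\square(\kappa,{<}\mu)$. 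A fat club can thus be absorbed by a pointwise selection, and one genuinely needs a different witness. The correct construction --- the substance of the $C$-sequence-number analysis referenced in Remark~\ref{trivialamenable} --- builds the amenable sequence from the minimal walks along a fixed selection $\langle C_\beta\rangle$, producing a $\vec C$ with $\chi(\vec C)=1$; converting the absence of a thread into the \emph{strong} amenability bound is where the width bound $\mu<\kappa$ is spent. (A purely ideal-theoretic argument through the trace clauses (4)--(6) of Lemma~\ref{stronglyamenableideal} would not suffice in general, since $\square(\kappa,{<}\mu)$ need not yield any nonreflecting stationary set.)

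For the ``in particular'' clause I would argue contrapositively, using the first part in the form: if $\ubd(J^\bd[\kappa],\kappa)$ fails, then $\square(\kappa,{<}\mu)$ fails for every $\mu<\kappa$. Assume toward a contradiction that $\kappa$ is not weakly compact in $\L$. Since $\kappa$ is regular and uncountable in $\V$, it remains regular in $\L$, so in $\L$ it is either inaccessible but not weakly compact or else a successor cardinal; by a theorem of Jensen \cite{jensen} a $\square(\kappa)$-sequence then exists in $\L$. The canonical such sequence remains non-threadable in $\V$ --- any thread in $\V$ would condense into a thread in $\L$ --- so $\square(\kappa)$, which is $\square(\kappa,{<}2)$ with $2<\kappa$, holds in $\V$, contradicting the failure of $\square(\kappa,{<}\mu)$ for all $\mu<\kappa$. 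Hence $\kappa$ is weakly compact in $\L$. The two delicate points are therefore the walks-based construction underlying the first paragraph and the persistence of the canonical $\L$-square sequence to $\V$.
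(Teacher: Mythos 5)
There is a genuine gap here: the heart of the statement --- that $\square(\kappa,{<}\mu)$ with $\mu<\kappa$ yields a strongly amenable $C$-sequence over $\kappa$ --- is never actually established. After the (correct) reduction via Lemma~\ref{lemma43} to showing $\kappa\in\sa_\kappa$, you defer the entire construction to ``the substance of the $C$-sequence-number analysis'' and to an unspecified walks-based argument, asserting only that this ``is where the width bound $\mu<\kappa$ is spent.'' No $C$-sequence is produced and no verification of strong amenability is attempted, so the proposal stops exactly where the proof has to begin.

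Moreover, both of the concrete routes you explicitly rule out are in fact the viable ones. First, your claim that a pointwise selection $C_\beta\in\mathcal C_\beta$ can be defeated (``a fat club can thus be absorbed by a pointwise selection'') is false: as recorded in the remark immediately following the corollary, \emph{any} transversal of a $\square(\kappa,{<}\mu)$-sequence with $\mu<\kappa$ is strongly amenable in $\kappa$ --- this is the content of the proof of \cite[Lemma~1.23]{paper29}, and it is a theorem, not an obstruction. Second, the parenthetical dismissal of the ``purely ideal-theoretic'' route is precisely a dismissal of the paper's actual proof: you are right that $\square(\kappa,{<}\mu)$ need not yield a single nonreflecting stationary set, but by \cite[Theorem~2.13]{MR3730566} it yields a family $\langle S_i\mid i<\sigma\rangle$ of $\sigma<\mu$ many stationary sets that do not reflect \emph{simultaneously}; each $\kappa\setminus\Tr(S_i)$ lies in $\sa_\kappa$ by Lemma~\ref{stronglyamenableideal}(5), their union lies in $\sa_\kappa$ by $\kappa$-completeness (Clause~(3)), and together with Clause~(2) this covers $\kappa$, so $\kappa\in\sa_\kappa$ and Lemma~\ref{lemma43} finishes. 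The single-set obstruction you raise is exactly what the $\kappa$-completeness of $\sa_\kappa$ is there to absorb. Your treatment of the ``in particular'' clause --- apply the first part with $\mu=2$ and quote the Jensen--Todor\v{c}evi\'c theorem that $\square(\kappa)$ holds whenever the regular uncountable $\kappa$ is not weakly compact in $\L$ --- is fine and matches the intended reading.
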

\begin{proof} By \cite[Theorem~2.13]{MR3730566}, if $\square(\kappa,{<}\mu)$ holds with $\mu<\kappa$,
then there exists a family of ${<}\mu$ many stationary sets that do not reflect simultaneously.
\end{proof}

\begin{remark} As made clear by the proof of \cite[Lemma~1.23]{paper29}, in fact,
any transversal for a $\square(\kappa,{<}\mu)$-sequence, with $\mu<\kappa$, is strongly amenable in $\kappa$.
\end{remark}

\begin{cor} \label{strongamenmahlo}
If $\ubd(J^\bd[\kappa],\kappa)$ fails, then $\refl(\kappa,\kappa,\reg(\kappa))$ holds and hence $\kappa$ is greatly Mahlo.
\end{cor}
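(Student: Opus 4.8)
The plan is to convert the hypothesis into a nontriviality statement about the ideal $\sa_\kappa$, and then to prove $\refl(\kappa,\kappa,\reg(\kappa))$ by contradiction, manufacturing a strongly amenable $C$-sequence out of a failure of reflection. By the equivalence $(4)\Leftrightarrow(1)$ of Lemma~\ref{lemma43} with $S=\kappa$, the failure of $\ubd(J^\bd[\kappa],\kappa)$ says exactly that $\kappa\notin\sa_\kappa$, i.e.\ $\kappa$ carries no $C$-sequence strongly amenable in $\kappa$. So it suffices to show that if $\refl(\kappa,\kappa,\reg(\kappa))$ fails, then $\kappa\in\sa_\kappa$.

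So suppose $\refl(\kappa,\kappa,\reg(\kappa))$ fails, and fix a sequence $\langle S_i\mid i<\kappa\rangle$ of stationary subsets of $\kappa$ such that no $\delta\in\reg(\kappa)$ satisfies $\delta\in\bigcap_{i<\delta}\Tr(S_i)$. Let $R:=\{\delta<\kappa\mid \delta\text{ is an uncountable regular cardinal}\}$. For each $\delta\in R$, since $\delta\notin\bigcap_{i<\delta}\Tr(S_i)$, I would fix $i(\delta)<\delta$ with $S_{i(\delta)}\cap\delta$ nonstationary in $\delta$, and then a club $C_\delta\s\delta\setminus(i(\delta)+1)$ disjoint from $S_{i(\delta)}$ (truncating below $i(\delta)$ costs nothing). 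The central claim is that $\vec C=\langle C_\delta\mid\delta\in R\rangle$ is strongly amenable in $\kappa$, which gives $R\in\sa_\kappa$.

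The proof of this claim is the main point, and the key trick is what lets one avoid needing $\sa_\kappa$ to be normal. Suppose $\vec C$ is not strongly amenable, as witnessed by a club $D\s\kappa$ with $B:=\{\delta\in R\mid D\cap\delta\s C_\delta\}$ unbounded. For every $\delta\in B$ above $\min(D)$ we have $\min(D)\in D\cap\delta\s C_\delta$, hence $\min(D)\ge\min(C_\delta)>i(\delta)$; so the regressive map $\delta\mapsto i(\delta)$ is \emph{bounded} by the fixed ordinal $\min(D)$ on a cofinal subset of $B$, even though $B$ is only unbounded rather than stationary. Since $\kappa$ is regular, some single value $i^*<\min(D)$ is attained cofinally often, and for each such $\delta$ we get $S_{i^*}\cap(D\cap\delta)\s S_{i^*}\cap C_\delta=\emptyset$; letting $\delta$ range over these cofinally many ordinals yields $S_{i^*}\cap D=\emptyset$, contradicting that $S_{i^*}$ is stationary and $D$ a club. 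Thus $R\in\sa_\kappa$, and combining this with $\{\beta<\kappa\mid\cf(\beta)<\beta\}\in\sa_\kappa$ (Lemma~\ref{stronglyamenableideal}(2)), the bounded set $\omega+1\in\sa_\kappa$ (Clause~(1)), and $\kappa$-completeness of $\sa_\kappa$ (Clause~(3)), I obtain $\kappa=R\cup\{\beta\mid\cf(\beta)<\beta\}\cup(\omega+1)\in\sa_\kappa$, the desired contradiction. This establishes $\refl(\kappa,\kappa,\reg(\kappa))$.

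For the final clause, I would deduce greatly Mahloness from $\refl(\kappa,\kappa,\reg(\kappa))$ by a routine induction on $\alpha<\kappa^+$, showing $\Tr^\alpha(\reg(\kappa))$ is stationary. The representative (successor) step: assuming $\Tr^\alpha(\reg(\kappa))$ is stationary and given an arbitrary club $E$, apply the reflection principle to the constant sequence with value $E\cap\Tr^\alpha(\reg(\kappa))$; this produces $\delta\in\reg(\kappa)$ with $(E\cap\Tr^\alpha(\reg(\kappa)))\cap\delta$ stationary in $\delta$, whence $\delta\in\Tr^{\alpha+1}(\reg(\kappa))$ and $\delta\in\acc(E)\s E$, so $\Tr^{\alpha+1}(\reg(\kappa))$ meets every club. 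The base case $\alpha=0$ gives Mahloness the same way, and the limit and $\alpha\ge\kappa$ cases are handled by feeding in the appropriate family $\langle\Tr^{\alpha'}(\reg(\kappa))\rangle$ (shifted above $\alpha$ to force the reflection point large) and using that the iterated trace is $\ns_\kappa$-decreasing; I expect only bookkeeping here, the substantive content being the $\sa_\kappa$-construction above.
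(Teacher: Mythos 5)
Your proposal is correct, but it takes a different route from the paper at the crucial step. The paper's own proof is very short: it combines Lemma~\ref{lemma43} with Remark~\ref{trivialamenable} to translate the failure of $\ubd(J^\bd[\kappa],\kappa)$ into the statement that $\kappa$ carries no nontrivial $C$-sequence in Todor\v{c}evi\'c's sense, and then cites an external result (\cite[Lemma~2.12]{paper35}) for the implication from that to $\refl(\kappa,\kappa,\reg(\kappa))$ and great Mahloness. You share the first step (the equivalence $\kappa\in\sa_\kappa\iff\ubd(J^\bd[\kappa],\kappa)$ of Lemma~\ref{lemma43}), but then you supply a self-contained proof of what the paper outsources: from a failure of reflection you build, on the set $R$ of uncountable regular cardinals, clubs $C_\delta$ avoiding $S_{i(\delta)}$ and truncated above $i(\delta)$; the truncation is exactly what makes the argument work, since it bounds the regressive map $\delta\mapsto i(\delta)$ by $\min(D)$ on the (merely unbounded, not stationary) set $B$, so plain pigeonhole via regularity of $\kappa$ replaces Fodor, and a single $i^*$ then kills the stationarity of $S_{i^*}$. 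Combining $R\in\sa_\kappa$ with Clauses (1)--(3) of Lemma~\ref{stronglyamenableideal} to conclude $\kappa\in\sa_\kappa$ is also correct and non-circular. What each approach buys: the paper gets brevity and a link to the $C$-sequence-number framework of \cite{paper35}; yours is elementary and keeps the whole argument inside this paper's toolkit.

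One caveat on your final (admittedly routine) induction for great Mahloness: your assertion that the iterated trace is $\ns_\kappa$-decreasing is not quite right at the bottom, since $\Tr(\reg(\kappa))\not\s\reg(\kappa)$ even modulo $\ns_\kappa$, and consequently $\Tr^{\gamma+1}\s\Tr^\gamma$ can fail at limit $\gamma$ because of the $\Tr^0=\reg(\kappa)$ term in the intersection. The fix is the one your own setup already provides: the reflection point $\delta$ delivered by $\refl(\kappa,\kappa,\reg(\kappa))$ lies in $\reg(\kappa)$ by definition, and one checks by induction that $\Tr^{\gamma+1}(\reg(\kappa))\cap\reg(\kappa)\s\Tr^{\gamma}(\reg(\kappa))$ (using $\Tr(\Tr(X))\s\Tr(X)$ and monotonicity of $\Tr$), which is all the induction needs.
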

\begin{proof}
By Lemma~\ref{lemma43} and Remark~\ref{trivialamenable},
if $\ubd(J^\bd[\kappa],\kappa)$ fails, then $\kappa$ does not carry a nontrivial $C$-sequence.
By \cite[Lemma~2.12]{paper35}, if $\kappa$ does not carry a nontrivial $C$-sequence, then $\refl(\kappa,\kappa,\reg(\kappa))$ holds
and hence $\kappa$ is greatly Mahlo.
\end{proof}

\begin{cor} \label{amensubset} For every stationary $S\s\kappa$, there exists a stationary $S'\s S$ with $S'\in\sa_\kappa$.
\end{cor}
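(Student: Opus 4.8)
The plan is to locate the required stationary subset inside the $\sa_\kappa$-set $\kappa\setminus\Tr(S)$ supplied by Lemma~\ref{stronglyamenableideal}(5). Concretely, I would set $S':=S\setminus\Tr(S)=S\cap(\kappa\setminus\Tr(S))$. Since $S$ is stationary we have $S\in(\ns_\kappa)^+$, so Lemma~\ref{stronglyamenableideal}(5) yields $\kappa\setminus\Tr(S)\in\sa_\kappa$; because $\sa_\kappa$ is a (downward-closed) ideal by Lemma~\ref{stronglyamenableideal}(3), its subset $S'$ also lies in $\sa_\kappa$. Thus the whole corollary reduces to the single claim that $S'=S\setminus\Tr(S)$ is stationary, which is a folklore non-reflection fact.

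To prove that claim I would argue by contradiction, exploiting the well-foundedness of the ordinals. Recall that $\Tr(S)=\{\alpha\in E^\kappa_{>\omega}\mid S\cap\alpha\text{ is stationary in }\alpha\}$, so a point $\beta\in S$ fails to lie in $S'$ exactly when $\cf(\beta)>\omega$ and $S\cap\beta$ is stationary in $\beta$, i.e.\ when $S$ reflects at $\beta$. (In particular every $\beta\in S$ with $\cf(\beta)=\omega$ lands automatically in $S'$, so no separate bookkeeping for small cofinalities is needed.) Suppose $S'$ were nonstationary and fix a club $C\s\kappa$ disjoint from $S'$. Replacing $C$ by $C\cap\acc^+(C)$, I may assume $C$ is self-limiting, meaning that $C\cap\beta$ is a club in $\beta$ for every $\beta\in C$. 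Then every $\beta\in S\cap C$ reflects $S$, that is, $S\cap\beta$ is stationary in $\beta$.

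Now I would build an infinite descending sequence. Since $S$ is stationary and $C$ is a club, $S\cap C$ is stationary, so pick $\beta_0\in S\cap C$. Given $\beta_n\in S\cap C$, the set $S\cap\beta_n$ is stationary in $\beta_n$ while $C\cap\beta_n$ is club in $\beta_n$, so $S\cap C\cap\beta_n$ is stationary in $\beta_n$, hence nonempty; choose $\beta_{n+1}\in S\cap C\cap\beta_n$. This produces $\beta_0>\beta_1>\beta_2>\cdots$, an infinite strictly decreasing sequence of ordinals, which is absurd. Hence $S'$ is stationary, and combined with the previous paragraph this finishes the proof.

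I do not anticipate a genuine obstacle: the content is entirely concentrated in the stationarity of $S\setminus\Tr(S)$, and the descending-sequence argument dispatches it immediately. The only point requiring care is the reduction to a self-limiting club (so that $C\cap\beta$ is genuinely club in $\beta$ at each stage of the recursion), which is why I replace $C$ by $C\cap\acc^+(C)$ at the outset.
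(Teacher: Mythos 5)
Your proof is correct and takes essentially the same route as the paper: reduce via Lemma~\ref{stronglyamenableideal}(5) and the downward-closure of $\sa_\kappa$ to the stationarity of $S\setminus\Tr(S)$, which the paper dismisses as ``standard'' and you supply via the usual descending-sequence (non-reflection) argument. The only slight imprecision is that after replacing $C$ by $\acc(C)$ the property ``$C\cap\beta$ is club in $\beta$'' is only guaranteed at points $\beta$ of uncountable cofinality, but this is harmless since every $\beta_n$ in your recursion lies in $\Tr(S)\s E^\kappa_{>\omega}$.
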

\begin{proof} Let $S\s\kappa$ be stationary. By Lemma~\ref{stronglyamenableideal}(2), it suffices to consider the case where $S$ consists of regular cardinals. 
By Lemma~\ref{stronglyamenableideal}(5), it then suffices to prove that $S':=S\setminus\Tr(S)$ is stationary, but this is standard.
\end{proof}

In Corollary~\ref{wcvl} below, we shall show that, assuming $\V=\L$,
for every regular uncountable cardinal $\kappa$, $\kappa\in\sa_\kappa$ iff $\kappa$ is not weakly compact.
However, this equivalence does not hold in general: Corollary~\ref{cor216}(1) gives 
the consistency of a strongly inaccessible cardinal $\kappa$ that does not carry a strongly amenable $C$-sequence, and yet $\kappa$ is not weakly compact. 
Furthermore, and in contrast to Corollary~\ref{strongamenmahlo},
we shall soon show that $\kappa\notin\sa_\kappa$ does not even imply that $\kappa$ is strongly inaccessible.

\begin{prop}\label{strongamenpullneg} Let $\mathbb P$ be a $\theta$-cc poset, and $S\s\kappa$.
\begin{enumerate}[(1)]
\item If $\theta\le\kappa$ and $V^{\mathbb P}\models S\notin \sa_\kappa$, then $S \notin \sa_\kappa$;
\item If $\theta<\kappa$ and 	$V^{\mathbb P}\models S\in \sa_\kappa$, then $S \in \sa_\kappa$. 
\end{enumerate}
\end{prop}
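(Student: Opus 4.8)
The engine for both clauses is the standard \emph{club-reflection} property of $\theta$-cc forcing: if $\mathbb P$ is $\theta$-cc and $\mu\ge\theta$ is a regular cardinal, then for every $\mathbb P$-name $\dot D$ forced to be a club in $\mu$ there is a genuine club $D'\in V$ with $D'\s\mu$ and $\Vdash D'\s\dot D$. (For each $\gamma<\mu$ let $g(\gamma)$ be the supremum of the $<\theta$ many values of $\min(\dot D\setminus(\gamma+1))$ decided along a maximal antichain; since $\theta\le\cf(\mu)$ we get $g(\gamma)<\mu$, and the closure points of $g$ form the desired $D'$.) This is exactly the fact invoked in the proof of Corollary~\ref{cor216}. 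Note also that since $\theta\le\kappa$, the poset $\mathbb P$ preserves the regularity of $\kappa$, that a club of $\kappa$ in $V$ remains a club of $\kappa$ in $V^{\mathbb P}$, and that boundedness of a set of ordinals below $\kappa$ is absolute between $V$ and $V^{\mathbb P}$; I will use these routine absoluteness facts silently.

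For Clause~(1) I would argue the contrapositive: assume $\vec C=\langle C_\beta\mid\beta\in S\rangle\in V$ witnesses $S\in\sa_\kappa$ and show the \emph{same} sequence works in $V^{\mathbb P}$. It is still a $C$-sequence there. Given a club $D^*\in V^{\mathbb P}$ of $\kappa$, apply club-reflection at $\mu=\kappa$ (legitimate as $\kappa\ge\theta$) to get a club $D\in V$ with $D\s D^*$. Since $D\cap\beta\s D^*\cap\beta$ for all $\beta$, we have $\{\beta\in S\mid D^*\cap\beta\s C_\beta\}\s\{\beta\in S\mid D\cap\beta\s C_\beta\}$, and the latter is a set lying in $V$ which is bounded in $\kappa$ by strong amenability of $\vec C$ in $V$, hence bounded in $V^{\mathbb P}$. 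Thus $\vec C$ is strongly amenable in $\kappa$ in $V^{\mathbb P}$, giving $V^{\mathbb P}\models S\in\sa_\kappa$.

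For Clause~(2) I would build a witness in $V$ out of a name. Using that $\sa_\kappa$, computed in $V$ via Lemma~\ref{stronglyamenableideal}, is a $\kappa$-complete ideal containing $[\kappa]^{<\kappa}$ and $\{\beta<\kappa\mid\cf(\beta)<\beta\}$, and using $\theta<\kappa$, it suffices to show $S'\in\sa_\kappa$ for $S':=\{\beta\in S\mid \beta\text{ is a regular cardinal and }\beta>\theta\}$, since $S\setminus S'\s(\theta+1)\cup\{\beta<\kappa\mid\cf(\beta)<\beta\}$ is already a union of $\sa_\kappa$-sets. Fix a name $\dot{\vec C}=\langle\dot C_\beta\mid\beta\in S\rangle$ forced to witness $S\in\sa_\kappa$ in $V^{\mathbb P}$. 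Each $\beta\in S'$ is regular with $\beta\ge\theta$ (and stays regular in $V^{\mathbb P}$), so $\dot C_\beta$ is forced to be a club in $\beta$ and club-reflection at $\mu=\beta$ yields a club $C_\beta\in V$, $C_\beta\s\beta$, with $\Vdash C_\beta\s\dot C_\beta$; the sequence $\vec C:=\langle C_\beta\mid\beta\in S'\rangle$ then lies in $V$ and is a $C$-sequence. To verify strong amenability, suppose not, as witnessed by a club $D\in V$ making $A:=\{\beta\in S'\mid D\cap\beta\s C_\beta\}$ unbounded in $\kappa$. Passing to $V^{\mathbb P}$, the club $D$ and the unbounded set $A$ persist, and for each $\beta\in A$ we have $D\cap\beta\s C_\beta\s\dot C_\beta^{G}$; hence $A$ witnesses that $\dot{\vec C}^{G}$ fails strong amenability against $D$ in $V^{\mathbb P}$, contradicting the choice of the name. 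So $S'\in\sa_\kappa$, and therefore $S\in\sa_\kappa$.

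The main obstacle is precisely the point that distinguishes the two clauses: in Clause~(2) the clubs $C_\beta$ extracted from the name must be honest \emph{cofinal} subsets of $\beta$, and club-reflection delivers such a club only when $\cf(\beta)\ge\theta$. This forces one to first discard the $\beta$ with $\cf(\beta)<\theta$ together with the bounded initial segment below $\theta+1$ into $\sa_\kappa$, which is available exactly because $\theta<\kappa$; for $\theta=\kappa$ there are no regular $\beta<\kappa$ above $\theta$ on which to run the argument, and the construction collapses. The remaining care is only in the harmless absoluteness observations flagged in the first paragraph, which are routine.
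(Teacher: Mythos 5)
Your proof is correct and follows essentially the same route as the paper's: Clause~(1) is the contrapositive argument that any club of $V^{\mathbb P}$ contains a ground-model club, and Clause~(2) first discards the $\beta$ of small cofinality via Lemma~\ref{stronglyamenableideal} and then shrinks the generic $C$-sequence to a ground-model one using the $\theta$-cc, deriving a contradiction exactly as in the paper. The only cosmetic differences are that you phrase the extraction with names rather than working in the extension, and you reduce to $S'$ directly from the ideal properties of $\sa_\kappa$ instead of first invoking Corollary~\ref{strongamenmahlo}.
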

\begin{proof} (1)	The proof is exactly the same as that of Corollary~\ref{cor216}. Namely, by the $\kappa$-cc of $\mathbb P$, any club $D \s \kappa$ in $V^{\mathbb P}$ contains a club $D'$ which is in $V$. The result follows.

(2) Suppose $\theta<\kappa$ and $V^{\mathbb P}\models S\in \sa_\kappa$; we shall show that $S \in \sa_\kappa$. 
By Corollary~\ref{strongamenmahlo}, we can assume that $\kappa$ is weakly Mahlo. 
By Lemma~\ref{stronglyamenableideal}, we can moreover assume that $S$ consists of regular uncountable cardinals greater than $\theta$.
By our hypothesis, we may fix in $V^{\mathbb P}$ a $C$-sequence $\vec C=\langle C_\beta \mid \beta \in S\rangle$ which is strongly amenable in $\kappa$. 
As $\mathbb P$ has the $\theta$-cc,
and $S$ consists of regular uncountable cardinals greater than $\theta$,
there is, in $V$, a $C$-sequence $\vec c=\langle c_\beta \mid \beta \in S\rangle$ such that $c_\beta \s C_\beta$ for each $\beta\in S$.
Work in $V$. Towards a contradiction, suppose that $\vec c$ is not a strongly amenable in $\kappa$.
Then there is some club $D \s \kappa$ such that the set $\{\beta \in S \mid D \cap \beta \s c_\beta\}$ is unbounded in $\kappa$. 
Now notice that, in $V ^{\mathbb P}$,
$$\{\beta \in S \mid D \cap \beta \s c_\beta\}\s \{\beta \in S \mid D \cap \beta \s C_\beta\}$$
and the former set is unbounded in $\kappa$ whereas the latter set is bounded in $\kappa$ by virtue of $\vec C$ being strongly amenable in $\kappa$. This is a contradiction.
\end{proof}
\begin{remark}\label{forcingsquare} Assuming the consistency of a weakly compact (resp.~ineffable) cardinal, 
it is consistent that $\kappa \in \sa_\kappa$ holds and yet
$\kappa$ is weakly compact (resp.~ineffable) in $\L$.
This is obtained by simply forcing over $\L$ to add a $\square(\kappa)$ sequence (see \cite[pp.~686]{MR3129734}).
\end{remark}

\begin{cor} Assuming the consistency of a weakly compact cardinal, it is consistent that $\kappa=2^{\aleph_0}$, and $\kappa\notin\sa_\kappa$.
\end{cor}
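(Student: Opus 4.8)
The plan is to start from a model with a weakly compact cardinal $\kappa$, observe that weak compactness already forces $\kappa\notin\sa_\kappa$, and then add $\kappa$-many Cohen reals, using Proposition~\ref{strongamenpullneg}(2) to transport the failure of $\kappa\in\sa_\kappa$ up to the forcing extension while inflating the continuum to $\kappa$.

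First I would fix a model $V$ in which $\kappa$ is weakly compact; such a $V$ exists by the consistency hypothesis. Since $\kappa$ is then strongly inaccessible, $\kappa^{\aleph_0}=\kappa$ (every $f:\omega\to\kappa$ is bounded, and $\kappa$ is a strong limit). Moreover, by Proposition~\ref{thm215}, for \emph{every} $C$-sequence $\vec C=\langle C_\beta\mid\beta<\kappa\rangle$ there is a club $D\s\kappa$ for which $\{\beta<\kappa\mid D\cap\beta\s C_\beta\}$ is cofinal in $\kappa$. This is precisely the negation of strong amenability of $\vec C$, so no $C$-sequence over all of $\kappa$ is strongly amenable in $\kappa$; that is, $\kappa\notin\sa_\kappa$ holds in $V$.

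Next I would force with $\mathbb{P}:=\mathrm{Add}(\omega,\kappa)$, the poset for adding $\kappa$-many Cohen reals. This poset is ccc, hence preserves all cardinals, and since $|\mathbb{P}|=\kappa$ and $\kappa^{\aleph_0}=\kappa$ in $V$, a standard nice-name count (countable antichains, $\omega$-many of them per real) gives at most $\kappa^{\aleph_0}=\kappa$ nice names for subsets of $\omega$, so that $V^{\mathbb P}\models 2^{\aleph_0}=\kappa$. Because $\mathbb{P}$ is $\aleph_1$-cc and $\aleph_1<\kappa$, the contrapositive of Proposition~\ref{strongamenpullneg}(2), applied with $S:=\kappa$ and $\theta:=\aleph_1$, yields from $V\models\kappa\notin\sa_\kappa$ that $V^{\mathbb P}\models\kappa\notin\sa_\kappa$. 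Thus $V^{\mathbb P}$ is the desired model: $\kappa=2^{\aleph_0}$ and $\kappa\notin\sa_\kappa$.

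The only point requiring care is that the single forcing must simultaneously drive the continuum up to $\kappa$ and preserve $\kappa\notin\sa_\kappa$; this is not an obstacle here precisely because $\mathrm{Add}(\omega,\kappa)$ is ccc while $\kappa$ is far above $\aleph_1$, so the preservation direction of Proposition~\ref{strongamenpullneg}(2) is available. As a by-product, in $V^{\mathbb P}$ the cardinal $\kappa=2^{\aleph_0}$ is no longer a strong limit, hence not strongly inaccessible, so this example also realises the claim made in the discussion preceding the statement that $\kappa\notin\sa_\kappa$ need not entail strong inaccessibility.
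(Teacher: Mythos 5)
Your proof is correct and follows essentially the same route as the paper's own short direct proof: Proposition~\ref{thm215} gives $\kappa\notin\sa_\kappa$ from weak compactness, and then adding $\kappa$ Cohen reals preserves this via the ccc preservation result while pushing $2^{\aleph_0}$ up to $\kappa$. If anything, your citation of the contrapositive of Proposition~\ref{strongamenpullneg}(2) (with $\theta=\aleph_1<\kappa$) is the more accurate reference for transferring $\kappa\notin\sa_\kappa$ from the ground model to the extension, and your explicit nice-name count for $2^{\aleph_0}=\kappa$ fills in a detail the paper leaves implicit.
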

\begin{proof} This follows from \cite[Corollary~4.9]{paper35} using Remark~\ref{trivialamenable}, 
but here is a short direct proof. By Proposition~\ref{thm215}, if $\kappa$ is weakly compact, then $\kappa \notin \sa_\kappa$.
Now start with $\kappa$ weakly compact and force to add $\kappa$ many Cohen reals. 
As the forcing poset is $ccc$, by Proposition~\ref{strongamenpullneg}(1) we must have that $\kappa \notin \sa_\kappa$ in the forcing extension as well. 
\end{proof}

\section{Amenable $C$-sequences}\label{sectionamenable}
In this section, $\kappa$ denotes a regular uncountable cardinal. 
Our entire focus in this section is the principle $\ubd(\ns_\kappa \restriction S, \kappa)$ for a stationary $S \s \kappa$. 
We start by introducing an ideal on $\kappa$ from \cite{paper29} which we show in Lemma~\ref{lemma44} provides an exact characterisation of this principle. 
We then establish some properties of the ideal and finish by examining the behaviour of this ideal under taking some standard forcing extensions 
and stating a conjecture which pertains to the only point of asymmetry between the results of this section and those of Section~\ref{sectionstronglyamenable}.

\begin{defn}[\cite{paper29}]\label{amenable} Let $S\s\kappa$. A $C$-sequence $\vec C=\langle C_\beta\mid\beta\in S\rangle$ is \emph{amenable in $\kappa$}
iff for every club $D\s\kappa$, the set $\{ \beta\in S \mid D\cap\beta\s C_\beta\}$ is nonstationary in $\kappa$.
\end{defn}
The notion of an amenable $C$-sequence is implicit in the elementary proof (see \cite[Theorem~8.10]{MR1940513}) of Solovay's theorem
that any stationary subset of $\kappa$ may be decomposed into $\kappa$-many stationary sets. 
It was made explicit in \cite{paper29}, 
in proving that, assuming $\square(\kappa)$,
every fat subset of $\kappa$ may be decomposed into $\kappa$-many fat sets.

\begin{fact}[\cite{paper29}]\label{amenablefact} For a $C$-sequence $\vec C=\langle C_\beta\mid\beta\in S\rangle$ over a stationary subset $S\s\acc(\kappa)$,
the following are equivalent:
\begin{enumerate}[(1)]
\item $\vec C$ is amenable;
\item for every club $D\s\kappa$, the set $\{ \beta\in S \mid \sup(D\cap\beta\setminus C_\beta)<\beta\}$ is nonstationary in $\kappa$;
\item for every cofinal $A\s\kappa$, the set $\{\beta\in S\mid A\cap\beta\s C_\beta\}$ is nonstationary.
\end{enumerate}
\end{fact}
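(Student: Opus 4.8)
The plan is to establish the cycle $(1)\Rightarrow(2)\Rightarrow(3)\Rightarrow(1)$, where the only step requiring a genuine tool is $(1)\Rightarrow(2)$ (via Fodor's lemma), and the remaining two steps are set inclusions powered by the fact that each $C_\beta$ is \emph{closed} in $\beta$ together with the standing hypothesis $S\s\acc(\kappa)$. The implication $(3)\Rightarrow(1)$ is immediate: every club $D\s\kappa$ is in particular cofinal, so applying Clause~(3) to $A:=D$ shows that $\{\beta\in S\mid D\cap\beta\s C_\beta\}$ is nonstationary, which is exactly amenability. (For orientation, note also the trivial inclusion $\{\beta\in S\mid D\cap\beta\s C_\beta\}\s\{\beta\in S\mid \sup(D\cap\beta\setminus C_\beta)<\beta\}$, so that $(2)\Rightarrow(1)$ holds directly as well; the point of the cycle is to recover $(2)$ and $(3)$ from the weaker-looking $(1)$.)

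For $(1)\Rightarrow(2)$, I would fix a club $D$ and argue by contradiction, assuming that $T:=\{\beta\in S\mid \sup(D\cap\beta\setminus C_\beta)<\beta\}$ is stationary. Since $S\s\acc(\kappa)$, every $\beta\in T$ is a nonzero limit, so the map $g(\beta):=\sup(D\cap\beta\setminus C_\beta)$ is regressive on $T$. By Fodor's lemma there is a stationary $T'\s T$ and a fixed $\gamma^*<\kappa$ with $g(\beta)=\gamma^*$ for all $\beta\in T'$. Then the tail $D':=D\setminus(\gamma^*+1)$ is a club, and for each $\beta\in T'$ every element of $D\cap\beta$ lying above $\gamma^*$ must belong to $C_\beta$ (as $D\cap\beta\setminus C_\beta$ has supremum $\gamma^*$), whence $D'\cap\beta\s C_\beta$. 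Thus $T'\s\{\beta\in S\mid D'\cap\beta\s C_\beta\}$, contradicting amenability (Clause~(1)) applied to the club $D'$.

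For $(2)\Rightarrow(3)$, I would fix a cofinal $A\s\kappa$ and set $D:=\acc^+(A)$, which is a club since $A$ is cofinal in the regular uncountable $\kappa$. The key observation is that for $\beta\in S\s\acc(\kappa)$ with $A\cap\beta\s C_\beta$, the closedness of $C_\beta$ in $\beta$ forces $\acc^+(A)\cap\beta=\acc^+(A\cap\beta)\cap\beta\s C_\beta$, since these are limit points of $A\cap\beta$ below $\beta$ and $C_\beta$ is a closed set containing $A\cap\beta$. Consequently $D\cap\beta\s C_\beta$, so $\sup(D\cap\beta\setminus C_\beta)=0<\beta$, giving the inclusion $\{\beta\in S\mid A\cap\beta\s C_\beta\}\s\{\beta\in S\mid \sup(D\cap\beta\setminus C_\beta)<\beta\}$. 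The right-hand set is nonstationary by Clause~(2) applied to $D$, so the left-hand set is nonstationary, which is Clause~(3).

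The proof is essentially routine, so there is no serious obstacle; the one place a real lemma enters is the pressing-down in $(1)\Rightarrow(2)$, and the one structural fact doing the work in $(2)\Rightarrow(3)$ is that $C_\beta$ being closed lets $A\cap\beta\s C_\beta$ propagate to $\acc^+(A)\cap\beta\s C_\beta$. The hypothesis $S\s\acc(\kappa)$ is what guarantees each relevant $\beta$ is a limit ordinal, legitimising both the regressiveness used for Fodor and the $\acc^+$ manipulations.
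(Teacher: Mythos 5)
Your proof is correct: the paper states this result as a Fact imported from \cite{paper29} and gives no proof of its own, so there is nothing to compare against, but your cycle $(1)\Rightarrow(2)\Rightarrow(3)\Rightarrow(1)$ is sound --- the pressing-down argument for $(1)\Rightarrow(2)$ correctly exploits that $S\s\acc(\kappa)$ makes $\beta\mapsto\sup(D\cap\beta\setminus C_\beta)$ regressive, and the passage from $A\cap\beta\s C_\beta$ to $\acc^+(A)\cap\beta\s C_\beta$ in $(2)\Rightarrow(3)$ is exactly where the closedness of $C_\beta$ in $\beta$ is needed. This is the standard argument for this equivalence.
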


\begin{defn}\label{amenideal} $\amen_\kappa:=\{S\s\kappa\mid S\text{ carries a }C\text{-sequence amenable in }\kappa\}$.
\end{defn}

\begin{lemma}\label{lemma44} For a stationary subset $S\s\kappa$, the following are equivalent:
\begin{enumerate}[(1)]
\item $S\in\amen_\kappa$;
\item there exists an upper-regressive colouring $c:[\kappa]^2\rightarrow\kappa$ such that,
for every normal ideal $J$ over $S$ extending $J^{\bd}[S]$, 
for every $B\in J^+$,
there exists $\eta<\kappa$, such that,
for every $\eta'\in[\eta,\kappa)$, 
$$\sup\{ \tau<\kappa\mid \{ \beta\in B\mid c(\eta',\beta)=\tau\}\in J^+\}=\kappa;$$
\item $\ubd([\kappa]^\kappa,J,\kappa)$ holds for every normal ideal $J$ over $S$, extending $J^{\bd}[S]$;
\item $\ubd(\ns_\kappa\restriction S,\kappa)$ holds.
\end{enumerate}
\end{lemma}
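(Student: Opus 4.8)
The plan is to prove the cycle $(1)\Rightarrow(2)\Rightarrow(3)\Rightarrow(4)\Rightarrow(1)$. Only the first implication requires a genuinely new idea; the remaining three are faithful adaptations of the strongly amenable case in Lemma~\ref{lemma43}.

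For $(1)\Rightarrow(2)$, fix an amenable $C$-sequence $\vec C=\langle C_\beta\mid\beta\in S\rangle$ and define $c(\eta,\beta):=\min(C_\beta\setminus\eta)$ for $\beta\in S$ and $\eta<\beta$ (and arbitrarily, say $0$, otherwise); this is upper-regressive. Given a normal ideal $J$ over $S$ extending $J^\bd[S]$ and $B\in J^+$, suppose towards a contradiction that for every $\eta<\kappa$ there are $\eta'\in[\eta,\kappa)$ and $\varsigma_\eta<\kappa$ with $\sup\{\tau\mid B^{\eta'}_\tau\in J^+\}=\varsigma_\eta$, where $B^{\eta'}_\tau:=\{\beta\in B\mid c(\eta',\beta)=\tau\}$. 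The crucial point, and the place where the restriction to the maximal number of colours forces a new argument (as $J$ is only $\kappa$-complete, one \emph{cannot} simply collect the $\kappa$-many null classes of colour above $\varsigma_\eta$ into a null set), is that normality nonetheless bounds the colour $J$-almost everywhere: applying the pressing-down lemma for the normal ideal $J$ to the map $c(\eta',\cdot)$, which is regressive off a bounded set, shows that $Z_\eta:=\{\beta\in B\mid c(\eta',\beta)>\varsigma_\eta\}$ must lie in $J$, since otherwise it would carry a $J$-positive colour class of colour $>\varsigma_\eta$, contradicting the choice of $\varsigma_\eta$. Hence $E_\eta:=\{\beta\in S\mid c(\eta',\beta)\le\varsigma_\eta\}\cup(S\setminus B)\in J^*$.

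Having bounded the colours $J$-a.e., I would then replicate the geometry of Lemma~\ref{lemma43}. By normality, $E:=\diagonal_{\eta<\kappa}E_\eta\in J^*$, so $B':=B\cap E\in J^+$ is stationary (as $J$ extends $\ns_\kappa\restriction S$ by Fact~\ref{normalfacts}), and every $\beta\in B'$ satisfies $c(\eta',\beta)\le\varsigma_\eta$ for all $\eta<\beta$. Consider the club $D:=\{\alpha<\kappa\mid\forall\eta<\alpha\,(\max\{\eta',\varsigma_\eta\}<\alpha)\}$. By amenability of $\vec C$, the set $\{\beta\in S\mid D\cap\beta\s C_\beta\}$ is nonstationary, so I can pick $\beta\in B'$ with $D\cap\beta\nsubseteq C_\beta$ and then some $\alpha\in(D\cap\beta)\setminus C_\beta$. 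For each $\eta<\alpha$ we get $\eta'<\alpha$ and $c(\eta',\beta)\le\varsigma_\eta<\alpha$, while $c(\eta',\beta)=\min(C_\beta\setminus\eta')\ge\eta'\ge\eta$; thus $\{\min(C_\beta\setminus\eta')\mid\eta<\alpha\}$ is a subset of $C_\beta$ that is unbounded in $\alpha$ yet below $\alpha$, forcing $\alpha\in C_\beta$ by closure --- the desired contradiction.

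For $(2)\Rightarrow(3)$, the single colouring of (2) serves every normal $J$: given $A\in[\kappa]^\kappa$ (hence cofinal) and $B\in J^+$, pick $\eta\in A$ past the threshold supplied by (2); since $J\supseteq J^\bd[\kappa]$, each positive colour class is unbounded and so meets $\{\eta\}\circledast B$, whence $\{\tau\mid B^\eta_\tau\in J^+\}\s c[\{\eta\}\circledast B]$ is cofinal and $\otp(c[\{\eta\}\circledast B])=\kappa$. The implication $(3)\Rightarrow(4)$ is immediate, as $\ns_\kappa\restriction S$ is a normal ideal over $S$ extending $J^\bd[S]$ and $\kappa\in[\kappa]^\kappa$. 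Finally, $(4)\Rightarrow(1)$ runs as in Lemma~\ref{lemma43}: after reducing to $S\s\reg(\kappa)\setminus\{\aleph_0\}$ (the discarded part lies in $\sa_\kappa\s\amen_\kappa$, and $\amen_\kappa$ is a $\kappa$-complete ideal since a union of $<\kappa$ amenable $C$-sequences is amenable), define $C_\beta:=\{\delta<\beta\mid\forall\eta<\delta\,(c(\eta,\beta)<\delta)\}$ --- a club in each such $\beta$ --- and, if $\vec C$ failed to be amenable via a club $D$, apply the witness $\eta$ for $\ubd(\ns_\kappa\restriction S,\kappa)$ to the stationary $B:=\{\beta\in S\mid D\cap\beta\s C_\beta\}$ to force $c[\{\eta\}\circledast B]$ bounded below some $\delta\in D$, contradicting $\otp(c[\{\eta\}\circledast B])=\kappa$. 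I expect the sole obstacle to be the $\kappa$-completeness barrier in $(1)\Rightarrow(2)$, overcome precisely by the pressing-down observation that $Z_\eta\in J$.
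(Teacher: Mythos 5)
Your proof is correct and follows essentially the same route as the paper's: the same colouring $c(\eta,\beta)=\min(C_\beta\setminus\eta)$, the same club $D$ and closure-of-$C_\beta$ contradiction in $(1)\Rightarrow(2)$, and the same reduction to $S\cap\reg(\kappa)\setminus\{\aleph_0\}$ in $(4)\Rightarrow(1)$. The only local difference is the step you flag as the "new idea": where you bound the colour $J$-almost everywhere by pressing down on the regressive map $c(\eta',\cdot)$ over the putative positive set $Z_\eta$, the paper instead takes the double diagonal intersection $\diagonal_{\eta<\kappa}\diagonal_{\tau<\kappa}E_{\eta,\tau}$ of the sets $E_{\eta,\tau}:=\{\beta\in S\mid\beta\notin B\text{ or }c(\eta',\beta)\neq\tau\}$ (for $\tau\ge\varsigma_\eta$) and then uses upper-regressivity to exclude colours in $[\varsigma_\eta,\beta)$ --- the two uses of normality are interchangeable, and both correctly get around the $\kappa$-completeness obstruction you identify.
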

\begin{proof} $(1)\implies(2)$ Suppose that $\vec C=\langle C_\beta\mid\beta\in S\rangle$ is a $C$-sequence, amenable in $\kappa$.
Pick an upper-regressive colouring $c:[\kappa]^2\rightarrow\kappa$ such that, for all $\beta\in S\cap \acc(\kappa)$ and $\eta<\beta$, $c(\eta,\beta):=\min(C_\beta\setminus\eta)$.

To see that $c$ is as sought, let $J$ be a normal ideal over $S$ extending $J^{\bd}[S]$, and let $B\in J^+$.
Towards a contradiction, suppose that, for every $\eta<\kappa$, there exist $\eta'\in[\eta,\kappa)$ and $\varsigma_\eta<\kappa$ such that,
for every $\tau\in\kappa\setminus\varsigma_\eta$, the set
$$E_{\eta,\tau}:=\{\beta\in S\mid\beta\notin B\text{ or } c(\eta',\beta)\neq\tau\}$$
is in $J^*$.
For every $\tau\in\varsigma_\eta$, let $E_{\eta,\tau}:=S$.
Since $J$ is normal, $E:=\diagonal_{\eta<\kappa}\diagonal_{\tau<\kappa}E_{\eta,\tau}$ is in $J^*$.
Note that $E=\{\beta\in S\mid \forall\eta<\beta\forall\tau\in\beta\setminus\varsigma_\eta\,(\beta\in E_{\eta,\tau})\}$.

Consider the club $D:=\{\alpha<\kappa\mid\forall \eta<\alpha\,(\max\{\eta',\varsigma_\eta\}<\alpha)\}$.
As $J$ extends $J^{\bd}[S]$, Fact~\ref{normalfacts} implies that $B\cap E$ is stationary.
So, by the amenability of $\vec C$ , let us fix $\beta\in B\cap E\cap\acc(\kappa)$ with $D\cap\beta\nsubseteq C_{\beta}$.
Then pick $\alpha\in D\cap\beta\setminus C_{\beta}$.
\begin{claim} Let $\eta<\alpha$. Then $c(\eta',\beta)<\alpha$.
\end{claim}
\begin{why} As $c$ is upper-regressive, $\tau:=c(\eta',\beta)$ is less than $\beta$.
As $\alpha\in D$ and  $\eta<\alpha$, we have that $\eta \leq \eta'<\alpha<\beta$. If $\tau\in\beta\setminus\varsigma_\eta$, then 	$\beta\in B\cap E_{\eta,\tau}$, which is an absurdity.
Altogether, $\tau<\varsigma_\eta$. As $\eta<\alpha$ and $\alpha\in D$, furthermore, $c(\eta',\beta)=\tau<\varsigma_\eta<\alpha$.
\end{why}

Thus, for every $\eta<\alpha$, $\eta\le\eta'\le\min(C_\beta\setminus\eta')=c(\eta',\beta)<\alpha$.
This means that $\{\min(C_{\beta}\setminus\eta')\mid \eta<\alpha\}$ is unbounded in $\alpha$, while $\alpha\notin C_{\beta}$,
contradicting the fact that $C_{\beta}$ is closed.

$(2)\implies(3)\implies(4)$: This is immediate.	

$(4)\implies(1)$: By Lemma~\ref{stronglyamenableideal}  and Clause~(1) of the upcoming Lemma~\ref{amenableideal},
it suffices to prove that $S':=S\cap\reg(\kappa)\setminus\{\aleph_0\}$ carries an amenable $C$-sequence.
Let $c$ witness $\ubd(\ns_\kappa\restriction S,\kappa)$.
For any $\beta\in S'$, as $c$ is upper-regressive, $C_\beta:=\{\delta<\beta\mid \forall\eta<\delta[c(\eta,\beta)<\delta]\}$ is a club in $\beta$.
Towards a contradiction, suppose that $\vec C=\langle C_\beta \mid \beta \in S' \rangle$ is not amenable in $\kappa$.
Fix a club $D\s\kappa$ for which
the set $B:=\{ \beta\in S'\mid D\cap\beta\s C_\beta\}$ is stationary.
Pick $\eta<\kappa$ such that $\sup(c[\{\eta\}\circledast B])=\kappa$.
Fix $\delta\in D$ above $\eta$.
Then, for every $\beta\in B$ above $\delta$, we get from $\delta\in C_\beta\setminus(\eta+1)$ that $c(\eta,\beta)<\delta$. 
Since $\kappa$ is regular, we conclude that $\sup(c[\{\eta\}\circledast B])<\kappa$, contradicting the choice of $\eta$.
\end{proof}

\begin{cor} If $\ubd(\ns_\kappa,\kappa)$ holds, then so do $\ubd^*([\kappa]^\kappa,\ns_\kappa,\allowbreak[\kappa]^{\kappa})$
and $\ubd^+([\kappa]^\kappa,\ns_\kappa,\kappa)$.\qed
\end{cor}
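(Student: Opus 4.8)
The plan is to extract a single upper-regressive colouring from Clause~(2) of Lemma~\ref{lemma44} and show that it witnesses both target principles at once. Since $\ubd(\ns_\kappa,\kappa)$ is exactly Clause~(4) of Lemma~\ref{lemma44} taken with $S:=\kappa$, the implication $(4)\Longrightarrow(2)$ supplies an upper-regressive colouring $c:[\kappa]^2\to\kappa$ such that, applying Clause~(2) to the normal ideal $J:=\ns_\kappa$ (which extends $J^\bd[\kappa]$), for every stationary $B$ there is an $\eta_0<\kappa$ with the property that, for every $\eta'\in[\eta_0,\kappa)$, the set
\[
X_{\eta'}:=\{\tau<\kappa\mid \{\beta\in B\mid c(\eta',\beta)=\tau\}\text{ is stationary}\}
\]
is cofinal in $\kappa$. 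I claim this one $c$ witnesses $\ubd^*([\kappa]^\kappa,\ns_\kappa,[\kappa]^\kappa)$ and $\ubd^+([\kappa]^\kappa,\ns_\kappa,\kappa)$ simultaneously. Both verifications share the same opening move: given $A\in[\kappa]^\kappa$ and stationary $B$, regularity of $\kappa$ makes $A$ cofinal, so after fixing the associated $\eta_0$ we may choose $\eta\in A$ with $\eta\ge\eta_0$, whence $X_\eta$ is cofinal in $\kappa$.

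For $\ubd^+$, I would note that $B$ and $B\setminus(\eta+1)$ differ only by a bounded, hence nonstationary, set, so $X_\eta$ coincides with $\{\tau<\kappa\mid \{\beta\in B\setminus(\eta+1)\mid c(\eta,\beta)=\tau\}\text{ is stationary}\}$. As $\kappa$ is regular and $X_\eta$ is cofinal, $\otp(X_\eta)=\kappa$, so this very $\eta\in A$ fulfils the defining clause of $\ubd^+([\kappa]^\kappa,\ns_\kappa,\kappa)$.

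For $\ubd^*$, I would set $T:=X_\eta\setminus(\eta+1)$. Deleting a bounded initial segment from the cofinal set $X_\eta$ leaves a cofinal set of size $\kappa$, so $T\in[\kappa]^\kappa$, the required index family. Fix $\tau\in T$. Since $\tau>\eta$ we have $\eta\in A\cap\tau$; and since $\tau\in X_\eta$ the set $\{\beta\in B\mid c(\eta,\beta)=\tau\}$ is stationary, hence unbounded, so it contains some $\beta>\tau$. This $\beta$ lies in $B\setminus(\tau+1)$ and satisfies $c(\eta,\beta)=\tau$, which is precisely what $\ubd^*([\kappa]^\kappa,\ns_\kappa,[\kappa]^\kappa)$ requires of $T$.

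I do not anticipate any genuine obstacle: the corollary is a formal consequence of the already-proved Clause~(2) of Lemma~\ref{lemma44}, in exact parallel with how Corollary~\ref{cor34} derives $\ubd^*([\kappa]^\kappa,J^\bd[\kappa],[\kappa]^\kappa)$ from $(4)\Longrightarrow(2)$ of Lemma~\ref{lemma43}. The only care needed is in the two routine translations used to move between the phrasing of Clause~(2) and the definitions of $\ubd^+$ and $\ubd^*$: that a cofinal subset of a regular $\kappa$ has ordertype $\kappa$, and that stationarity (and cofinality) are unaffected by removing a bounded set.
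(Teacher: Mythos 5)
Your proof is correct and is exactly the route the paper intends: the corollary is stated with no written proof precisely because it follows from the implication $(4)\implies(2)$ of Lemma~\ref{lemma44} applied with $S:=\kappa$ and $J:=\ns_\kappa$, in parallel with how Corollary~\ref{cor34} is derived from Lemma~\ref{lemma43}. Your two routine translations (cofinal subsets of a regular $\kappa$ have ordertype $\kappa$; stationarity survives deletion of a bounded set) are precisely the details the paper leaves to the reader.
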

\begin{remark}
The preceding cannot be improved any further since in Proposition~\ref{prop912} below, 
we shall show that $\ubd^{++}(\ns_\kappa,\kappa)$ must fail.
\end{remark}

\begin{defn}[\cite{paper46}]\label{cgstar} For stationary subsets $S,T$ of $\kappa$,
$\cg^*(S,T)$ asserts the existence of a $C$-sequence, $\vec C=\langle C_\delta\mid\delta \in S\rangle$ such that,
for every club $E\s\kappa$, there are club many $\delta \in S$ such that,
$\sup(\nacc(C_\delta)\cap E\cap T)=\delta$.
\end{defn}
\begin{lemma}\label{amenableideal} 
\begin{enumerate}[(1)]
\item $\sa_\kappa\s\amen_\kappa$;
\item $\amen_\kappa$ is a  $\kappa$-complete normal ideal;
\item for every $\alpha< \kappa^+$, $\kappa \setminus \Tr^\alpha(\kappa) \in \amen_\kappa$;
\item $\{S\in(\ns_\kappa)^+\mid \cg^*(S,\kappa)\}\s \amen_\kappa$;
\item for every $S\in(\amen_\kappa)^+$, $\Tr(S)\in(\amen_\kappa)^+$.
\end{enumerate}
\end{lemma}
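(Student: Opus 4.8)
The plan is to prove the five clauses in order, leaning throughout on the parallel facts for $\sa_\kappa$ from Lemma~\ref{stronglyamenableideal} and on Clause~(1). Clause~(1) is immediate: bounded sets are nonstationary, so a $C$-sequence strongly amenable in $\kappa$ is a fortiori amenable in $\kappa$, whence $\sa_\kappa\s\amen_\kappa$. Combined with Lemma~\ref{stronglyamenableideal}(1),(4) this records the facts I reuse constantly, namely $J^\bd[\kappa]\s\amen_\kappa$ and $\ns_\kappa\s\amen_\kappa$; in particular every $(\amen_\kappa)^+$-set is stationary. For Clause~(2), downward closure is clear, and for $\kappa$-completeness I would copy Lemma~\ref{stronglyamenableideal}(3): pass to a disjoint refinement of $\langle S_i\mid i<\sigma\rangle$ ($\sigma<\kappa$), concatenate the chosen amenable $C$-sequences over $S:=\bigcup_i S_i$, and note that for a club $D$ the bad set $\{\beta\in S\mid D\cap\beta\s C_\beta\}$ is a union of $\sigma<\kappa$ nonstationary sets. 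The crux is normality: given $\langle S_\eta\mid\eta<\kappa\rangle$ in $\amen_\kappa$ with witnesses $\vec C^\eta$, set $S:=\{\beta<\kappa\mid\exists\eta<\beta\,(\beta\in S_\eta)\}$, let $\eta(\beta):=\min\{\eta<\beta\mid\beta\in S_\eta\}$, and put $C_\beta:=C^{\eta(\beta)}_\beta$. If the bad set $B:=\{\beta\in S\mid D\cap\beta\s C_\beta\}$ were stationary for some club $D$, then $\beta\mapsto\eta(\beta)$ is regressive on $B$, so by Fodor it is constant, say $=\eta^*$, on a stationary $B'\s B$; but $B'$ then witnesses that $\vec C^{\eta^*}$ is not amenable. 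Thus $\amen_\kappa$ is closed under diagonal unions, which, after complementing, is exactly normality in the sense of the paper's $\diagonal$-definition.

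For Clause~(3) I would first dispose of the degenerate case: by Corollary~\ref{strongamenmahlo} and Lemma~\ref{lemma43}, if $\kappa$ is not greatly Mahlo then $\kappa\in\sa_\kappa$, so $\sa_\kappa=\mathcal P(\kappa)$ and hence $\amen_\kappa=\mathcal P(\kappa)$, making the statement vacuous. So assume $\kappa$ is greatly Mahlo; then $\Tr^\alpha(\reg(\kappa))$ is stationary for all $\alpha<\kappa^+$, and by monotonicity of the iterated trace $\Tr^\alpha(\kappa)\supseteq\Tr^\alpha(\reg(\kappa))$ is stationary as well. Now induct on $\alpha<\kappa^+$: the case $\alpha=0$ gives $\emptyset\in\amen_\kappa$; at a successor $\alpha=\beta+1$, since $\Tr^\beta(\kappa)$ is stationary, Lemma~\ref{stronglyamenableideal}(5) with Clause~(1) yields $\kappa\setminus\Tr^{\beta+1}(\kappa)=\kappa\setminus\Tr(\Tr^\beta(\kappa))\in\sa_\kappa\s\amen_\kappa$; at a limit $\alpha\in\acc(\kappa)$, $\kappa\setminus\Tr^\alpha(\kappa)=\bigcup_{\alpha'<\alpha}(\kappa\setminus\Tr^{\alpha'}(\kappa))$ is a union of fewer than $\kappa$ members of $\amen_\kappa$, so $\kappa$-completeness applies; and at a limit $\kappa\le\alpha<\kappa^+$, writing $\Tr^\alpha(\kappa)$ as the diagonal intersection along the chosen $\pi$, its complement is the corresponding diagonal union of sets $\kappa\setminus\Tr^{\pi(\alpha')}(\kappa)\in\amen_\kappa$, so normality applies. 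Since $\ns_\kappa\s\amen_\kappa$, membership in $\amen_\kappa$ is insensitive to the $\ns_\kappa$-ambiguity in the definition of $\Tr^\alpha$, so the choice of $\pi$ is immaterial.

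For Clause~(4) I expect a $\cg^*(S,\kappa)$-witness to be amenable as it stands. Given such $\vec C=\langle C_\delta\mid\delta\in S\rangle$ (Definition~\ref{cgstar}), suppose toward a contradiction that some club $D$ makes $B:=\{\beta\in S\mid D\cap\beta\s C_\beta\}$ stationary, and apply the guessing to the club $E:=\acc(D)$. For any $\beta\in B\cap\acc(D)$ and any $\gamma\in\acc(D)\cap\beta$ we have $D\cap\gamma\s D\cap\beta\s C_\beta$ with $D\cap\gamma$ cofinal in $\gamma$, so $\gamma\in\acc(C_\beta)$; hence $\nacc(C_\beta)\cap\acc(D)=\emptyset$ and $\sup(\nacc(C_\beta)\cap E\cap\kappa)=0<\beta$. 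Thus the stationary set $B\cap\acc(D)$ consists of points of $S$ failing the guessing for $E$, contradicting $\cg^*(S,\kappa)$; therefore $\vec C$ is amenable and $S\in\amen_\kappa$. Clause~(5) is then formal: for $S\in(\amen_\kappa)^+$, $S$ is stationary, so Lemma~\ref{stronglyamenableideal}(5) with Clause~(1) gives $\kappa\setminus\Tr(S)\in\amen_\kappa$; were $\Tr(S)\in\amen_\kappa$ too, then $\kappa=(\kappa\setminus\Tr(S))\cup\Tr(S)\in\amen_\kappa$ by Clause~(2), contradicting $S\in(\amen_\kappa)^+$.

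The main obstacle is the normality step in Clause~(2): every other item is either immediate or a transcription of the corresponding $\sa_\kappa$-fact, whereas normality genuinely requires the pressing-down argument, and the limit stages $\ge\kappa$ of Clause~(3) rest on it. I would take care that the Fodor step cleanly handles the book-keeping of the $\eta(\beta)$'s and that the diagonal-union/diagonal-intersection complementation in Clause~(3) matches the paper's convention for $\diagonal$.
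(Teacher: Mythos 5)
Your proof is correct, and on the one step that carries real content --- normality in Clause (2) --- it takes a cleaner route than the paper. The paper first reduces to $S':=S\cap\reg(\kappa)\setminus\{\aleph_0\}$ and, for each $\beta\in S'$, forms a single club $C_\beta$ with $C_\beta\setminus C^i_\beta$ bounded in $\beta$ for \emph{every} $i<\beta$ with $\beta\in S_i$ (this is why $\beta$ must be regular); it then needs two applications of Fodor, one to stabilise the index $i$ on the bad set and a second to stabilise the bound $\epsilon=\sup(C_\beta\setminus C^i_\beta)$ before replacing $D$ by $D\setminus(\epsilon+1)$. Your choice $C_\beta:=C^{\eta(\beta)}_\beta$ with $\eta(\beta)$ least gives exact containment $D\cap\beta\s C^{\eta^*}_\beta$ after a single pressing-down, requires no restriction to regular $\beta$, and reaches the same contradiction; nothing is lost by discarding the other witnesses at $\beta$. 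Your Clause (4) is the contrapositive of the paper's argument: the paper shows directly (via Fact~\ref{amenablefact}) that the set $\{\beta\in S\mid \sup(D\cap\beta\setminus C_\beta)<\beta\}$ misses the club-many points guessing $E=\acc(D)$, whereas you show that any $\beta$ with $D\cap\beta\s C_\beta$ has $\nacc(C_\beta)\cap\acc(D)=\emptyset$ and so cannot guess $E$; both are fine. For Clause (3) the paper gives only the one-line ``use normality and $\kappa$-completeness,'' and your explicit induction is exactly what that line means; your degenerate-case preamble (non-greatly-Mahlo implies $\kappa\in\sa_\kappa$) is correct but not strictly needed, since if $\Tr^\beta(\kappa)$ ever becomes nonstationary then the induction hypothesis already places a co-nonstationary set, hence $\kappa$ itself, into $\amen_\kappa$. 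Clauses (1) and (5) match the paper verbatim.
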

\begin{proof} (1) This is obvious.

(2) It is clear that $\amen_\kappa$ is downward-closed with respect to inclusion.
To see that $\amen_\kappa$ is $\kappa$-complete, suppose that $\{ S_i\mid i<\sigma\}$ is a given family of elements of $\amen_\kappa$,
with $\sigma<\kappa$. We would like to show that $S:=\bigcup_{i<\sigma}S_i$ is in $\amen_\kappa$.
Since $\amen_\kappa$ is downward-closed, we may assume that the elements of
$\langle S_i\mid i<\sigma\rangle$  are pairwise disjoint. For each $i<\sigma$, since $S_i\in\amen_\kappa$,
let $\langle C_\beta\mid\beta\in S_i\rangle$ be some $C$-sequence amenable in $\kappa$.
We claim that $\vec C=\langle C_\beta\mid\beta\in S \rangle$ is amenable in $\kappa$.
Indeed, for every club $D\s\kappa$, if $T:=\{\beta\in S\mid D\cap\beta\s C_\beta\}$ is stationary,
then for some $i<\sigma$, $T\cap S_i$ is stationary, contradicting the fact that $\langle C_\beta\mid\beta\in S_i\rangle$ is amenable in $\kappa$.

Next, to see that $\amen_\kappa$ is normal, suppose that
$\langle S_i\mid i<\kappa\rangle$ is a $\kappa$-sequence consisting of elements of $\amen_\kappa$.
We would like to show that the following set is in $\amen_\kappa$:
$$S:=\{\beta<\kappa\mid \exists i<\beta(\beta\in S_i)\}.$$ 
By the preceding findings, it suffices to prove that
$S':=S\cap \reg(\kappa)\setminus\{\aleph_0\}$ is in $\amen_\kappa$.

For each $i<\kappa$, fix a $C$-sequence $\langle C^i_\beta\mid \beta\in S_i\rangle$ amenable in $\kappa$.
Let $\beta\in S'$ be arbitrary. Then $\beta$ is a regular uncountable cardinal,
and $\{ C_\beta^i\mid  i<\beta\ \&\ \beta\in S_i\}$ consists of at most $\beta$ many clubs in $\beta$, 
so that by using a diagonal intersection or any other mean, we may find a club $C_\beta$ in $\beta$
such that, for every $i<\beta$ with $\beta\in S_i$,	
$C_\beta\setminus C_\beta^i$ is bounded in $\beta$.
We claim that $\vec C=\langle C_\beta\mid\beta\in S \rangle$ is amenable in $\kappa$.
Towards a contradiction, suppose that $D$ is a club in $\kappa$ for which $T:=\{\beta\in S'\mid D\cap\beta\s C_\beta\}$ is stationary in $\kappa$. 
As $S'\s S$ and by Fodor's lemma, let us fix some $i<\kappa$ such that $T\cap S_i$ is stationary. 
By another application of Fodor's lemma, let us fix some $\epsilon<\kappa$
such that $T_{i,\epsilon}:=\{\beta\in T\cap S_i\mid \sup(C_\beta\setminus C_\beta^i)=\epsilon\}$ is stationary.
Now, consider the club $D':=D\setminus(\epsilon+1)$.
We claim that $\{\beta\in S_i\mid D'\cap\beta\s C_\beta^i\}$ is stationary in $\kappa$,
contradicting the fact that $\langle C_\beta^i\mid \beta\in S_i\rangle$ is amenable in $\kappa$.
To see this, let $\beta$ be an arbitrary element of the stationary set $T_{i,\epsilon}$.
Then $D\cap\beta\s C_\beta\s C_\beta^i\cup(\epsilon+1)$,
so that $D'\cap\beta\s C_\beta^i$.

(3) Since $\kappa\setminus\Tr(\kappa) \in \sa_\kappa \s \amen_\kappa$ and because $\amen_\kappa$ is normal and $\kappa$-complete.

(4) Suppose $\vec C$ witnesses $\cg^*(S,\kappa)$. Given a club $D\s\kappa$, let $E:=\acc(D)$.
By the choice of $\vec C$, $B:=\{\beta\in S\mid \sup(\nacc(C_\beta)\cap E)=\beta\}$ covers a club relative to $S$.
For every $\beta\in B$ and $\epsilon\in\nacc(C_\beta)\cap E$, as $\epsilon\in\acc(D)$, 
we may find some $\delta_\epsilon\in D$ with $\sup(C_\beta\cap\epsilon)<\delta_\epsilon<\epsilon$,
and then $\{ \delta_\epsilon\mid \epsilon\in\nacc(C_\beta)\cap E\}$ is a subset of $D\cap\beta\setminus C_\beta$ which is cofinal in $\beta$.
So $\{ \beta\in S \mid \sup(D\cap\beta\setminus C_\beta)<\beta\}$ is disjoint from $B$,
and we are done, recalling Fact~\ref{amenablefact}.

(5) Let $S\in(\amen_\kappa)^+$. 
By Clause~(1) and Lemma~\ref{stronglyamenableideal}(5), $\kappa\setminus\Tr(S)\in\sa_\kappa\s\amen_\kappa$.
So if $\Tr(S)\in \amen_\kappa$, then $\kappa\in\amen_\kappa$,
contradicting the fact that $S\s \kappa$ with $S\notin\amen_\kappa$.
\end{proof}

\begin{prop}\label{diamondstar} For $S\s\kappa$, if $S\cap\reg(\kappa)$ is nonstationary,
or if $\diamondsuit^*(S\cap\reg(\kappa))$ holds, then $S\in\amen_\kappa$.
\end{prop}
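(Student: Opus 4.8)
The plan is to reduce the statement to a single core construction, namely producing an amenable $C$-sequence on $T':=S\cap\reg(\kappa)\setminus\{\aleph_0\}$, and to dispatch everything else through the ideal structure already in place. Since $\amen_\kappa$ is a $\kappa$-complete (indeed normal) ideal by Lemma~\ref{amenableideal}(2), and since $S=(S\setminus\reg(\kappa))\cup(S\cap\{\aleph_0\})\cup T'$, it suffices to place each of the three pieces in $\amen_\kappa$. The middle piece is bounded, hence in $[\kappa]^{<\kappa}\s\sa_\kappa\s\amen_\kappa$ by Lemma~\ref{stronglyamenableideal}(1) and Lemma~\ref{amenableideal}(1). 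For the first piece, $S\setminus\reg(\kappa)$ is contained, modulo the bounded set $\{0,1\}$, in $\{\beta<\kappa\mid\cf(\beta)<\beta\}$, which lies in $\sa_\kappa\s\amen_\kappa$ by Lemma~\ref{stronglyamenableideal}(2). This reduction also disposes of the first hypothesis at once: if $S\cap\reg(\kappa)$ is nonstationary, then $T'\s S\cap\reg(\kappa)\in\ns_\kappa\s\sa_\kappa\s\amen_\kappa$ by Lemma~\ref{stronglyamenableideal}(4). So from now on I would assume the second hypothesis, and in particular that $T:=S\cap\reg(\kappa)$ is stationary.

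Assume then that $\diamondsuit^*(T)$ holds, witnessed by a sequence $\langle\mathcal A_\beta\mid\beta\in T\rangle$ with each $\mathcal A_\beta\in[\mathcal P(\beta)]^{\le\beta}$ and such that for every $A\s\kappa$ there is a club $C\s\kappa$ with $A\cap\beta\in\mathcal A_\beta$ for all $\beta\in T\cap C$. For each $\beta\in T'$, a regular uncountable cardinal, I would enumerate the members of $\mathcal A_\beta$ that happen to be clubs in $\beta$ as $\langle A^\beta_i\mid i<\beta\rangle$ (with repetitions), which is possible since $|\mathcal A_\beta|\le\beta$. The crux is then to manufacture a single club $C_\beta\s\beta$ containing none of the $A^\beta_i$ as a subset. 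I would do this by a continuous increasing recursion $\langle\delta_\nu\mid\nu<\beta\rangle$: fixing a surjection $e:\beta\to\beta$, at a successor step $\nu+1$ pick some $a\in A^\beta_{e(\nu)}$ above $\delta_\nu$ and set $\delta_{\nu+1}:=a+1$, taking suprema at limits. Then $C_\beta:=\{\delta_\nu\mid\nu<\beta\}$ is closed and, by regularity of $\beta$, cofinal in $\beta$; and for each $i<\beta$, fixing $\nu$ with $e(\nu)=i$, the chosen point $a$ lies strictly between the consecutive terms $\delta_\nu<a<\delta_{\nu+1}$, so it is omitted from $C_\beta$, witnessing $A^\beta_i\nsubseteq C_\beta$. (When $\mathcal A_\beta$ has no club members, any club $C_\beta$ will do.)

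It remains to check that $\vec C=\langle C_\beta\mid\beta\in T'\rangle$ is amenable. Given a club $D\s\kappa$, apply $\diamondsuit^*$ to the set $D$ to obtain a club $C\s\kappa$ with $D\cap\beta\in\mathcal A_\beta$ for all $\beta\in T\cap C$. For every $\beta\in T'\cap C\cap\acc^+(D)$, the set $D\cap\beta$ is a club in $\beta$ lying in $\mathcal A_\beta$, hence it is one of the $A^\beta_i$, so $D\cap\beta\nsubseteq C_\beta$ by construction. Therefore $\{\beta\in T'\mid D\cap\beta\s C_\beta\}$ is disjoint from the club $C\cap\acc^+(D)$, and so is nonstationary. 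This shows $T'\in\amen_\kappa$, and combining the three pieces yields $S\in\amen_\kappa$, as desired.

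The main obstacle is the per-$\beta$ diagonalization: one must simultaneously defeat up to $\beta$ many clubs with a single club $C_\beta$, which is precisely where the regularity of $\beta$ is used (to ensure the recursion reaches cofinality $\beta$ while leaving a gap around one point of each target club), and also why it is convenient to have removed $\aleph_0$ and the singular ordinals beforehand. The secondary point to get right is that the guessing must be synchronized with the accumulation points of $D$, so that the guessed initial segments $D\cap\beta$ are genuinely clubs in $\beta$ and therefore fall among the $A^\beta_i$ we diagonalized against.
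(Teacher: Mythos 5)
Your proof is correct, and the overall skeleton (split off $S\setminus\reg(\kappa)$ and the nonstationary case via the ideal structure, then use $\diamondsuit^*$ on the stationary set of regular cardinals) matches the paper's. But the key construction goes the opposite way from the paper's. The paper takes $C_\beta$ to be a pseudo-intersection of the club members of $\mathcal A_\beta$ (i.e.\ $C_\beta\s^* A$ for each club $A\in\mathcal A_\beta$), observes that the resulting sequence witnesses $\cg^*(S',\kappa)$, and then invokes Lemma~\ref{amenableideal}(4); note that $C_\beta\s^* D\cap\beta$ does \emph{not} by itself rule out $D\cap\beta\s C_\beta$, which is why the paper must pass through the $\nacc$-based guessing property and Fact~\ref{amenablefact}. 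You instead diagonalize \emph{against} the guessed clubs, arranging that $C_\beta$ omits a point of each club member of $\mathcal A_\beta$, and then the definition of amenability is verified directly: for $\beta\in\acc^+(D)$ in the guessing club, $D\cap\beta$ is a club of $\beta$ lying in $\mathcal A_\beta$, hence not contained in $C_\beta$. Your diagonalization is sound (regularity of $\beta$ keeps the recursion of length $\beta$ below $\beta$ and cofinal, and the gap $(\delta_\nu,\delta_{\nu+1})$ witnesses the omission), and your route is more elementary and self-contained. What it gives up is the stronger intermediate conclusion $\cg^*(S',\kappa)$, which the paper's construction establishes and which is reused elsewhere (e.g.\ via Lemma~\ref{lemma55}).
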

\begin{proof} Let $S\s\kappa$. Set $S':=S\cap\reg(\kappa)$. By Lemmas \ref{stronglyamenableideal}(2) and \ref{amenableideal}(2), $S\setminus S'$ is in $\amen_\kappa$,
thus it remains to see whether $S'$ belongs to $\amen_\kappa$.
By Lemmas \ref{stronglyamenableideal}(4) and \ref{amenableideal}(2), if $S'$ is nonstationary, then $S'\in\amen_\kappa$.
Next, suppose that $S'$ is stationary and moreover $\diamondsuit^*(S')$ holds.
Fix a witnessing sequence $\langle\mathcal A_\beta \mid \beta \in S'\rangle$.
This means that each $\mathcal A_\beta$ is a collection of no more than $\beta$ many subsets of $\beta$,
and, for every subset $A$ of $\kappa$, for club many $\beta\in S'$, $A\cap\beta\in\mathcal A_\beta$.
Now, for every $\beta\in S'$, by taking a diagonal intersection, we can fix a club $C_\beta$ in $\beta$ such that for every $D \in \mathcal A_\beta$ which is a club in $\beta$, $C_\beta\s ^*D$. 
Consequently, $\langle C_\beta\mid \beta\in S'\rangle$ is a $\cg^*(S',\kappa)$-sequence, and then Lemma~\ref{amenableideal}(4) implies that $S'\in\amen_\kappa$.
\end{proof}
	
\begin{cor}\label{amenableremark} If $\V=\L$, then $\amen_\kappa=\{ S \s \kappa\mid S\text{ is not  ineffable}\}$.
\end{cor}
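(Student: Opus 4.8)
The plan is to prove the two inclusions separately, with only the inclusion $\{S\s\kappa\mid S\text{ is not ineffable}\}\s\amen_\kappa$ relying on $\V=\L$; the reverse inclusion is a $\zfc$ fact.

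For $\amen_\kappa\s\{S\mid S\text{ is not ineffable}\}$, I would prove the contrapositive: if $S$ is ineffable, then $S\notin\amen_\kappa$. Suppose towards a contradiction that $\vec C=\langle C_\beta\mid\beta\in S\rangle$ is amenable in $\kappa$. Viewing $\vec C$ as a sequence of subsets of $\kappa$ and applying ineffability to it, we obtain $A\s\kappa$ such that $T:=\{\beta\in S\mid C_\beta=A\cap\beta\}$ is stationary; intersecting with the club $\acc(\kappa)$, we may assume $T\s\acc(\kappa)$. The key observation is that $A$ is then a club. It is cofinal, since for $\beta\in T$ the set $A\cap\beta=C_\beta$ is cofinal in $\beta$ and $T$ is unbounded. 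It is closed, since for a limit $\gamma<\kappa$ with $\gamma=\sup(A\cap\gamma)$, picking $\beta\in T$ above $\gamma$ gives $A\cap\gamma=C_\beta\cap\gamma$, whence $\gamma\in C_\beta\s A$ by the closedness of $C_\beta$ in $\beta$. Setting $D:=A$, we then have $D\cap\beta=C_\beta$ for every $\beta\in T$, so $\{\beta\in S\mid D\cap\beta\s C_\beta\}\supseteq T$ is stationary, contradicting the amenability of $\vec C$.

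For $\{S\mid S\text{ is not ineffable}\}\s\amen_\kappa$, assume $\V=\L$ and let $S$ be non-ineffable. I would reduce to the regular part $S':=S\cap\reg(\kappa)$, using that $\amen_\kappa$ is a $\kappa$-complete ideal (Lemma~\ref{amenableideal}) together with Proposition~\ref{diamondstar}, which already disposes of $S\setminus S'$ and of the case where $S'$ is nonstationary. So assume $S'$ is stationary. Since ineffability is upward closed under $\s$ (a homogeneous set for a sequence restricted to $S'$ also witnesses ineffability of any superset), $S'$ is not ineffable either. At this point I invoke the theorem of Jensen \cite{jensen1969some} that in $\L$ every stationary non-ineffable subset of a regular uncountable cardinal carries a $\diamondsuit^*$-sequence; hence $\diamondsuit^*(S')$ holds. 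Proposition~\ref{diamondstar} now yields $S'\in\amen_\kappa$, and therefore $S=(S\setminus S')\cup S'\in\amen_\kappa$ by $\kappa$-completeness.

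The reductions and the appeal to Proposition~\ref{diamondstar} are routine. The main external input is Jensen's $\L$-theorem identifying, for stationary sets, non-ineffability with $\diamondsuit^*$; this is the sole place where $\V=\L$ is used, and it is precisely what upgrades the $\zfc$ half into a full equivalence. The one point demanding care in the $\zfc$ direction is the verification that the guessing set $A$ delivered by ineffability is genuinely a club, so that it can serve as the club witnessing the failure of amenability.
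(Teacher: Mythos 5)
Your proposal is correct and takes essentially the same route as the paper: the substantive inclusion is proved exactly as there, by reducing to the (stationary, regular) part of $S$, invoking Jensen's theorem that $\V=\L$ gives $\diamondsuit^*$ on every stationary non-ineffable set, and then applying Proposition~\ref{diamondstar} together with the ideal lemmas. Your detailed argument for the other inclusion --- applying ineffability to the $C$-sequence itself and checking that the guessed set $A$ is a club --- is precisely the standard argument that the paper compresses into ``clear from the definitions''.
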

\begin{proof} It is clear from the definitions that for any set $S \s \kappa$, if $S$ is in $\amen_\kappa$ then it is not ineffable. 
For the other direction, suppose that $S \s \kappa$ is not ineffable. 
By Lemmas \ref{stronglyamenableideal}(4) and \ref{amenableideal}(1), we may assume that $S$ is stationary.
By a theorem of Jensen (see \cite[Theorem 5.39]{MR3243739}), $\V=\L$ implies that $\diamondsuit^*$ holds over every stationary non-ineffable subset of $\kappa$.
So $\diamondsuit^*(S)$ holds, and then Proposition~\ref{diamondstar} implies that $S\in\amen_\kappa$.
\end{proof}

The characterisation of $A_\kappa$ under $\V=\L$ is not true in general,
as established by Corollary~\ref{cor216}(2).

\begin{prop}\label{amenpullneg}\label{amenpullpos}
Let $\mathbb P$ be a $\theta$-cc poset, and $S\s\kappa$.
\begin{enumerate}[(1)]
\item If $\theta\le\kappa$ and $V^{\mathbb P}\models S\notin \amen_\kappa$, then $S \notin \amen_\kappa$;
\item If $\theta<\kappa$ and $V^{\mathbb P}\models S\in \amen_\kappa$, then $S \in \amen_\kappa$. 
\end{enumerate}
\end{prop}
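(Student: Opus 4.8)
The plan is to transcribe the argument of Proposition~\ref{strongamenpullneg} almost verbatim, replacing throughout the notion of boundedness by that of nonstationarity. The one genuinely new ingredient will be the standard forcing fact that a $\theta$-cc poset with $\theta\le\kappa$ preserves stationary subsets of $\kappa$: in the $\sa_\kappa$ argument the ``unbounded'' sets in play were preserved upward into $V^{\mathbb P}$ for free, whereas here I will need to transport a \emph{stationary} witness from $V$ to $V^{\mathbb P}$, and it is precisely stationarity-preservation that makes this legitimate.

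For Clause~(1) I would prove the contrapositive, i.e.\ that if $\vec C=\langle C_\beta\mid\beta\in S\rangle$ is amenable in $\kappa$ in $V$ then it stays amenable in $V^{\mathbb P}$. Given a club $D\s\kappa$ in $V^{\mathbb P}$, the $\theta$-cc with $\theta\le\kappa$ provides a club $D'\s D$ lying in $V$. Since $D'\cap\beta\s D\cap\beta$, we get the inclusion $\{\beta\in S\mid D\cap\beta\s C_\beta\}\s\{\beta\in S\mid D'\cap\beta\s C_\beta\}$, and amenability of $\vec C$ in $V$ yields a club $E\s\kappa$ in $V$ disjoint from the right-hand side. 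As $\kappa$ remains regular and $E$ remains closed and unbounded, $E$ is still a club in $V^{\mathbb P}$; hence $\{\beta\in S\mid D\cap\beta\s C_\beta\}$, being a subset of a set nonstationary in $V^{\mathbb P}$, is itself nonstationary there. This is exactly amenability of $\vec C$ in $V^{\mathbb P}$.

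For Clause~(2) I would first carry out the same reductions as in the $\sa_\kappa$ case. The part of $S$ consisting of ordinals $\beta$ with $\cf(\beta)<\beta$ and the part consisting of cardinals $\le\theta$ (a bounded subset of $\kappa$, as $\theta<\kappa$) both lie in $\sa_\kappa\s\amen_\kappa$ by Lemma~\ref{stronglyamenableideal}(1),(2) and Lemma~\ref{amenableideal}(1); since $\amen_\kappa$ is an ideal (Lemma~\ref{amenableideal}(2)), it suffices to treat the case where $S$ consists of regular uncountable cardinals greater than $\theta$. Working in $V^{\mathbb P}$, I fix an amenable $\vec C=\langle C_\beta\mid\beta\in S\rangle$; as $\mathbb P$ is $\theta$-cc and each $\beta\in S$ is regular and greater than $\theta$, each $C_\beta$ contains a club $c_\beta\s C_\beta$ lying in $V$, so that $\vec c=\langle c_\beta\mid\beta\in S\rangle\in V$. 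The claim is that $\vec c$ is amenable in $V$. If not, there is in $V$ a club $D\s\kappa$ with $T:=\{\beta\in S\mid D\cap\beta\s c_\beta\}$ stationary. By preservation of stationarity under the $\theta$-cc ($\theta\le\kappa$), $T$ stays stationary in $V^{\mathbb P}$; and since $c_\beta\s C_\beta$ gives $T\s\{\beta\in S\mid D\cap\beta\s C_\beta\}$, that latter set is stationary in $V^{\mathbb P}$, contradicting the amenability of $\vec C$ there (applied to the club $D$, which remains a club in $V^{\mathbb P}$).

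The two routine verifications are the standard $\theta$-cc facts: that clubs of $\kappa$ and sub-clubs of a regular $\beta>\theta$ in $V^{\mathbb P}$ contain ground-model clubs, and that $\theta$-cc forcing preserves stationary subsets of $\kappa$. The second of these is the main point to get right and the only real departure from Proposition~\ref{strongamenpullneg}; once it is in hand, the subset/superset contradiction transporting $T$ into $V^{\mathbb P}$ goes through, and the rest of the argument is a direct reprise of the strongly amenable case with ``nonstationary'' in place of ``bounded''.
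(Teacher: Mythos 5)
Your proof is correct and follows essentially the same route as the paper, which simply declares Clause~(1) identical to Proposition~\ref{strongamenpullneg}(1) and Clause~(2) to be the same argument plus the observation that $\kappa$-cc forcing preserves stationary subsets of $\kappa$ -- precisely the one new ingredient you isolate. Your write-up just spells out the details that the paper leaves implicit.
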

\begin{proof}
(1)	The proof is exactly the same as that of Proposition~\ref{strongamenpullneg}(1).

(2) 	The proof is almost exactly the same as that of Proposition~\ref{strongamenpullneg}(2) using only Lemma~\ref{amenableideal}(1) and  the extra fact that as $\mathbb P$ is in particular $\kappa$-cc, stationary subsets of $\kappa$ remain stationary after forcing with $\mathbb P$.
\end{proof}

\begin{cor}\label{amenweakinacc} Assuming the consistency of an ineffable cardinal, it is consistent that $\kappa=2^{\aleph_0}$ and $\kappa\notin\amen_\kappa$.
\end{cor}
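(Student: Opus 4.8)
The plan is to run the exact analogue of the $\sa_\kappa$ argument at the end of Section~\ref{sectionstronglyamenable}, with ineffability in place of weak compactness and $\amen_\kappa$ in place of $\sa_\kappa$. Start in a model $V$ in which $\kappa$ is ineffable; this is the sole use of the large cardinal hypothesis. Then $V\models\kappa\notin\amen_\kappa$: as observed in the proof of Corollary~\ref{amenableremark}, any $S\in\amen_\kappa$ is non-ineffable, so the ineffable set $\kappa$ cannot belong to $\amen_\kappa$. (Concretely, Fact~\ref{fact213} provides, for each $C$-sequence $\vec C$ over $\kappa$, a club $D$ with $\{\beta<\kappa\mid D\cap\beta=C_\beta\}$ stationary; a fortiori $\{\beta<\kappa\mid D\cap\beta\s C_\beta\}$ is stationary, so $\vec C$ is not amenable, and $\kappa$ carries no amenable $C$-sequence.)

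Now force with $\mathbb P:=\mathrm{Add}(\omega,\kappa)$ to add $\kappa$ Cohen reals. Since $\kappa$ is ineffable it is strongly inaccessible, whence $|\mathbb P|=\kappa$ and $\kappa^{\aleph_0}=\kappa$; as $\mathbb P$ is ccc, the usual nice-names count gives $2^{\aleph_0}\le\kappa^{\aleph_0}=\kappa$, so $V^{\mathbb P}\models 2^{\aleph_0}=\kappa$. For the second conjunct, note that $\mathbb P$ is $\aleph_1$-cc with $\aleph_1<\kappa$, so Proposition~\ref{amenpullneg}(2) applies with $\theta=\aleph_1$ and $S=\kappa$. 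That clause asserts $V^{\mathbb P}\models\kappa\in\amen_\kappa\;\Rightarrow\;V\models\kappa\in\amen_\kappa$; taking its contrapositive against $V\models\kappa\notin\amen_\kappa$ yields $V^{\mathbb P}\models\kappa\notin\amen_\kappa$. Thus $V^{\mathbb P}$ witnesses the desired consistency.

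The one point demanding care is the direction of the preservation lemma. Proposition~\ref{amenpullneg} transfers \emph{membership} in $\amen_\kappa$ downward, from $V^{\mathbb P}$ to $V$; it is therefore its contrapositive, carrying \emph{nontriviality} of $\amen_\kappa$ upward into the extension, that we must invoke, and this is exactly where clause (2)---with its strict $\theta<\kappa$, satisfied by the ccc poset---is the relevant one rather than clause (1). Everything else is the routine Cohen-real cardinal computation.
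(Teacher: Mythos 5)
Your proof is correct and follows essentially the same route as the paper: ineffability gives $\kappa\notin\amen_\kappa$ in the ground model, and adding $\kappa$ Cohen reals preserves this while forcing $2^{\aleph_0}=\kappa$. In fact your care about the direction of the preservation lemma is well placed: the paper's text cites clause (1) of Proposition~\ref{amenpullpos}, but, as you observe, the step actually needed is the contrapositive of clause (2) (with $\theta=\aleph_1<\kappa$), so your version is the more accurate one.
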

\begin{proof} As we have already mentioned, if $\kappa$ is ineffable then it follows purely from the definitions that $\kappa \notin \amen_\kappa$. 
Now start with $\kappa$ ineffable and force to add $\kappa$ many Cohen reals. 
As the forcing poset is $ccc$, by Proposition~\ref{amenpullpos}(1) we must have that $\kappa \notin \amen_\kappa$ in the forcing extension as well. 
\end{proof}

We end this section with a conjecture in the spirit of Corollary~\ref{square_is_amenable}.
\begin{conj} \label{conj412}If $\kappa$ is a regular uncountable cardinal and $\kappa\notin\amen_\kappa$, then $\kappa$ is ineffable in $\L$.
\end{conj}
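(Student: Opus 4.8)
The plan is to prove the contrapositive in a strengthened form: assume $\kappa\notin\amen_\kappa$, suppose toward a contradiction that $\kappa$ is \emph{not} ineffable in $\L$, and derive that $\kappa\in\amen_\kappa$ after all. The hypothesis immediately pins down the coarse structure of $\kappa$: since $\sa_\kappa\s\amen_\kappa$ by Lemma~\ref{amenableideal}(1), the assumption $\kappa\notin\amen_\kappa$ forces $\kappa\notin\sa_\kappa$, so by Corollary~\ref{strongamenmahlo} $\kappa$ is greatly Mahlo and in particular strongly inaccessible; as strong inaccessibility is downward absolute to $\L$, $\kappa$ is strongly inaccessible in $\L$ as well. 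By Lemma~\ref{lemma44} the target ``$\kappa\in\amen_\kappa$'' is exactly the assertion that $\kappa$ carries an amenable $C$-sequence in $\V$, so everything reduces to manufacturing such a sequence.

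Next I would run, inside $\L$, the same fine-structural construction used in Corollary~\ref{amenableremark}. Since $\L\models\V=\L$, Jensen's theorem yields, in $\L$, that $\diamondsuit^*(S)$ holds for every $S$ that is stationary and non-ineffable there. Our contradiction hypothesis says $\kappa$ is not ineffable in $\L$; as $\kappa$ is strongly inaccessible in $\L$, the set $\reg(\kappa)$ (computed in $\L$) is stationary in $\L$, and it is non-ineffable in $\L$, for otherwise $\kappa$ itself would be ineffable in $\L$. Hence $\diamondsuit^*(\reg(\kappa))$ holds in $\L$, and Proposition~\ref{diamondstar}, applied inside $\L$, produces a $C$-sequence $\vec C=\langle C_\beta\mid\beta\in\reg(\kappa)\rangle$ that is amenable \emph{in $\L$}; equivalently, by Lemma~\ref{amenableideal}(4), $\vec C$ is a $\cg^*(\reg(\kappa),\kappa)$-sequence of $\L$.

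The hard part---and the reason this remains a conjecture---is to transfer the amenability of $\vec C$ from $\L$ up to $\V$, which would contradict $\kappa\notin\amen_\kappa$ and close the argument. Amenability of $\vec C$ demands that $\{\beta\mid D\cap\beta\s C_\beta\}$ be \emph{nonstationary} for every club $D$ of $\V$, but $\V$ may carry clubs, and stationary sets, that are invisible to $\L$, and neither $\diamondsuit^*$ nor $\cg^*$ is upward absolute. In the parallel Corollary~\ref{square_is_amenable} one instead works with coherent $\square(\kappa,{<}\mu)$-sequences, whose relevant nontriviality---strong amenability---is considerably more robust under passage to outer models than mere amenability is; an amenable $C$-sequence at an inaccessible carries no comparable syntactic barrier, such as an order-type bound on the $C_\beta$, that would prevent its guessing set from becoming stationary once new clubs of $\V$ are admitted. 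Thus the genuine content is a reflection problem: one must show that any $\V$-club $D$ with $\{\beta\mid D\cap\beta\s C_\beta\}$ stationary can be reflected into, or approximated by, an object already present in $\L$, and for an arbitrary outer model $\V\supseteq\L$ there is no evident mechanism forcing this.

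Accordingly I would pursue two complementary routes. The first is to isolate a single $\zfc$-absolute combinatorial principle sandwiched between ``$\kappa$ is not ineffable in $\L$'' and ``$\kappa\in\amen_\kappa$'', playing the role that $\square(\kappa,{<}\mu)$ plays for $\sa_\kappa$; the natural candidate is a strengthened, threadless variant of $\cg^*(\reg(\kappa),\kappa)$ whose witnessing sequence is built by condensation so as to resist the appearance of new clubs. The second is a direct condensation argument: from a putative $\V$-club $D$ witnessing non-amenability of $\vec C$, use the $\diamondsuit^*$-construction together with the fine structure of $\L$ to reflect $D$ below $\kappa$ and contradict the non-ineffability of $\kappa$ in $\L$. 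For set-generic $\V$ over $\L$ the transfer is already controlled by Proposition~\ref{amenpullpos}, so the conjecture holds in that case; the residual---and, I expect, the main---difficulty lies entirely in treating arbitrary $\V\supseteq\L$.
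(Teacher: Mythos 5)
The statement you were asked to prove is explicitly labelled a \emph{conjecture} in the paper (Conjecture~\ref{conj412}); the authors offer no proof, and indeed they flag it as ``the only point of asymmetry'' with the parallel result for $\sa_\kappa$ (Corollary~\ref{square_is_amenable}). Your proposal correctly diagnoses why: the reduction to ``transfer an amenable-in-$\L$ $C$-sequence up to $\V$'' is exactly the right framing (via Corollary~\ref{amenableremark}, non-ineffability of $\kappa$ in $\L$ gives $\kappa\in\amen_\kappa^{\L}$, i.e.\ a $C$-sequence amenable in $\L$), and the obstruction you name --- that amenability, unlike the strong amenability extracted from a $\square(\kappa,{<}\mu)$-sequence, quantifies over all clubs of $\V$ and has no syntactic robustness under passage to outer models --- is precisely the open problem. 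So you have not closed a gap; you have located it, which is the honest outcome here.

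Two inaccuracies in the surrounding reductions should be fixed. First, $\kappa\notin\amen_\kappa$ does \emph{not} imply that $\kappa$ is strongly inaccessible in $\V$: the paper's Corollary~\ref{amenweakinacc} shows it is consistent that $\kappa=2^{\aleph_0}$ and $\kappa\notin\amen_\kappa$. What you actually need is only that $\kappa$ is weakly inaccessible (which does follow, since greatly Mahlo implies regular limit), whence $\kappa$ is weakly inaccessible in $\L$ and, by $\gch$ in $\L$, strongly inaccessible there. Second, your claim that ``for set-generic $\V$ over $\L$ the transfer is already controlled by Proposition~\ref{amenpullpos}'' overstates that proposition: its clause~(2) only pulls amenability back from $V^{\mathbb P}$ to $V$ for $\theta$-cc posets with $\theta<\kappa$, and in any case it runs in the wrong direction for your purposes --- you need to push amenability \emph{forward} from $\L$ into the outer model, which is exactly what fails in general (Remark~\ref{forcingsquare} shows even $\kappa\in\sa_\kappa$ can be forced over $\L$ while $\kappa$ remains ineffable in $\L$, so the forward direction is hopeless; the conjecture is about the contrapositive configuration, where one must rule out $\kappa\notin\amen_\kappa$ in $\V$ from non-ineffability in $\L$, and no known $\zfc$-absolute principle bridges that).
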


\section{\texorpdfstring{$\ubd^*$}{unbounded$^*$} and Ulam-type matrices}\label{sectionulam}

In this section, $\kappa$ denotes a regular uncountable cardinal.

\medskip

An \emph{Ulam matrix} is a useful tool in proving that various ideals are not weakly saturated.
It was introduced in \cite{ulam1930masstheorie} by Ulam, who proved that every successor cardinal $\kappa=\theta^+$ admits an Ulam matrix,
that is, a matrix $\langle U_{\eta,\tau}\mid \eta<\theta, \tau<\theta^+\rangle$
such that:
\begin{itemize}
\item For every $\eta<\theta$, $\langle U_{\eta, \tau}\mid \tau<\theta^+\rangle$ consists of pairwise disjoint subsets of $\theta^+$;
\item For every $\tau<\theta^+$, $|\theta^+\setminus \bigcup_{\eta<\theta}U_{\eta,\tau}|\le\theta$.
\end{itemize}
In our language, Ulam's theorem is as follows:	
\begin{fact}[Ulam, \cite{ulam1930masstheorie}]\label{ulamoriginal} For every infinite cardinal $\theta$, $\ubd^*(\{\theta\},J^{\bd}[\theta^+],\allowbreak\{\theta^+\setminus\theta\})$ holds.
\end{fact}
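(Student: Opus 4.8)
The plan is to read the required colouring directly off the classical Ulam matrix. Write $\kappa=\theta^+$. Since $|\beta|\le\theta$ for every $\beta<\theta^+$, for each $\beta\in[\theta,\theta^+)$ I would fix a surjection $e_\beta\colon\theta\twoheadrightarrow\beta$, and then define $c\colon[\theta^+]^2\rightarrow\theta^+$ by setting $c(\eta,\beta):=e_\beta(\eta)$ whenever $\eta<\theta\le\beta$, and $c(\eta,\beta):=0$ otherwise. Because $\im(e_\beta)=\beta$, the first clause gives $c(\eta,\beta)<\beta$, while the default value $0$ is below $\beta$ for every pair $(\eta,\beta)\in[\theta^+]^2$; hence $c$ is upper-regressive, as demanded by the definition of $\ubd^*$. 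The point of the construction is that the sets $\{\beta<\theta^+\mid c(\eta,\beta)=\tau\}=\{\beta\mid e_\beta(\eta)=\tau\}$ are exactly the entries $U_{\eta,\tau}$ of the Ulam matrix, so $c$ is nothing but a repackaging of the matrix as a colouring.

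Next I would verify the defining property of $\ubd^*(\{\theta\},J^{\bd}[\theta^+],\{\theta^+\setminus\theta\})$. The only admissible parameters are $A=\theta$ and an arbitrary $B\in(J^{\bd}[\theta^+])^+$, i.e.\ an unbounded $B\s\theta^+$, and the sole candidate witness is $T=\theta^+\setminus\theta$. So fix such a $B$, and let $\tau\in\theta^+\setminus\theta$ be arbitrary, so that $\tau\ge\theta$. Since $B$ is unbounded, I can pick $\beta\in B$ with $\beta>\tau$; in particular $\beta\ge\theta$. As $e_\beta$ maps onto $\beta$ and $\tau<\beta$, there is $\eta<\theta$ with $e_\beta(\eta)=\tau$, that is, $c(\eta,\beta)=\tau$. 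Finally $\eta<\theta\le\tau$ yields $\eta\in\theta\cap\tau=A\cap\tau$, and $\beta>\tau$ yields $\beta\in B\setminus(\tau+1)$, which is precisely what the principle requires.

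The verification is essentially a bookkeeping exercise, and I do not anticipate a genuine obstacle: the whole content lies in recognising that the $\ubd^*$ formulation is a faithful translation of Ulam's matrix, with the condition ``$\theta^+\setminus\bigcup_{\eta<\theta}U_{\eta,\tau}$ has size $\le\theta$'' for each $\tau$ turning into ``arbitrarily high up one still finds the colour $\tau$ along some column $\beta\in B$.'' The only steps needing a moment's care are the alignment of quantifiers --- in particular that $A\cap\tau$ is all of $\theta$ once $\tau\ge\theta$, so that any $\eta<\theta$ is admissible --- and the trivial check that the off-diagonal default value keeps $c$ upper-regressive.
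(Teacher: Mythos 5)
Your proof is correct and is exactly the classical construction the paper is alluding to: the paper states this as a Fact without proof, citing Ulam, and your colouring $c(\eta,\beta)=e_\beta(\eta)$ is precisely the Ulam matrix $U_{\eta,\tau}=\{\beta\mid e_\beta(\eta)=\tau\}$ repackaged as in the paper's Lemma~\ref{hajnaltocolouring}. The quantifier bookkeeping (upper-regressivity, $\eta\in\theta=A\cap\tau$ for $\tau\ge\theta$, and $\beta\in B\setminus(\tau+1)$ by unboundedness of $B$) all checks out.
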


In \cite{MR260597}, Hajnal formulated a variation called \emph{triangular Ulam matrix}
which is also applicable to inaccessible cardinals, and has similar non-saturation consequences.
In this section, we shall show that this finer concept of Hajnal is also captured by the principle $\ubd^*(\ldots)$,
and that some standard combinatorial hypotheses give rise to useful instances of $\ubd^*(\ldots)$ 
weaker than those corresponding to Ulam matrices. We finish with some applications of the $\ubd^*(\ldots)$ principles.

\begin{defn} An \emph{Ulam-type matrix}  for a cardinal $\kappa$ is a triangular matrix $\langle U_{\eta, \tau}\mid \eta<\tau<\kappa\rangle$ 
satisfying that, for every $\eta<\kappa$, $\langle U_{\eta, \tau}\mid \eta<\tau<\kappa\rangle$ consists of pairwise disjoint subsets of $\kappa$.
\end{defn}

\begin{defn}[Hajnal, \cite{MR260597}]\label{ulam} A cardinal $\kappa$ is said to admit
a \emph{triangular Ulam matrix} iff there exists an Ulam-type matrix 
$\langle U_{\eta,\tau}\mid \eta<\tau<\kappa\rangle$  for which the set
$T:=\{\tau<\kappa\mid  |\kappa\setminus(\bigcup_{\eta<\tau}U_{\eta,\tau})|<\kappa\}$
(which we call \emph{the support}) is stationary in $\kappa$.
\end{defn}

The core technical result of \cite{MR260597} reads as follows.
\begin{fact}[Hajnal, \cite{MR260597}]\label{hajnalulam} Suppose that $T\s\kappa$ is stationary, but $\Tr(T)\cap\reg(\kappa)$ is nonstationary.
Then there is a club $C\s\kappa$ such that, for every $\beta\in C$, there is a function $f_\beta : T\cap C\cap \beta\rightarrow \beta$ which is regressive and injective.
\end{fact}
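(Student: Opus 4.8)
The plan is to reduce the statement to a purely local existence assertion and then establish that assertion by induction on $\beta$.

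\emph{Reduction.} Since $\Tr(T)\cap\reg(\kappa)$ is nonstationary, I would fix a club $E\s\acc(\kappa)$ disjoint from it, so that $T\cap\alpha$ is nonstationary in $\alpha$ for every regular uncountable $\alpha\in E$, and set $C:=\acc(E)$, which is a club contained in $E$. The key point is that a regressive map automatically takes values below its argument: any regressive injection $f_\beta$ defined on $A_\beta:=T\cap C\cap\beta$ satisfies $f_\beta(\gamma)<\gamma<\beta$ and hence maps into $\beta$, so it suffices to produce, for each $\beta\in C$, a regressive injection on $A_\beta$. I would first verify that $A_\beta$ fails to reflect at every regular uncountable $\alpha\le\beta$: if $\sup(C\cap\alpha)=\alpha$ then $\alpha\in C\s E$ (as $C$ is closed and $\alpha$ is regular), so $T\cap\alpha$, and a fortiori $A_\beta\cap\alpha$, is nonstationary in $\alpha$; while if $\sup(C\cap\alpha)<\alpha$ then $A_\beta\cap\alpha\s C\cap\alpha$ is bounded, hence nonstationary in $\alpha$.

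\emph{The local lemma.} It thus remains to establish, by induction on $\beta<\kappa$, the assertion $(\star)$: \emph{if $A\s\acc(\beta)$ is such that $A\cap\alpha$ is nonstationary in $\alpha$ for every regular uncountable $\alpha\le\beta$, then $A$ admits a regressive injection into $\beta$.} (Restricting to sets of limit ordinals sidesteps a trivial ``packing'' obstruction coming from full tails of successor ordinals, and is harmless here since $T\cap C\s\acc(\kappa)$.) The easy case is when $A$ is disjoint from some closed cofinal $D\s\beta$, in particular whenever $\beta$ is regular uncountable (take $\alpha=\beta$). For each $D$-gap, with left endpoint $\eta:=\sup(D\cap\gamma)$ (where $\gamma$ is any point of the gap; note $\eta<\gamma$ as $\gamma\notin\acc(D)\s D$), enumerate the points of $A$ inside the gap increasingly as $\langle a_i\mid i<\rho\rangle$; since these are ordinals above $\eta$ we have $a_i\ge\eta+1+i$, so putting $f(a_i):=\eta+i$ yields a regressive map whose images remain inside the gap's own interval $[\eta,\cdot)$. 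As distinct gaps are disjoint, $f$ is an injection, and no appeal to the induction hypothesis is needed.

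\emph{The obstacle.} The genuine difficulty is a limit $\beta$ with $\mu:=\cf(\beta)<\beta$. Here the hypothesis, which forbids reflection only at regular \emph{cardinals}, permits $T\cap\beta$ to be stationary in $\beta$ — the trace operator records reflection at singular-cofinality points as well — so no disjoint club need exist and the gap trick is unavailable. My plan is to fix a continuous increasing sequence $\langle\delta_\xi\mid\xi<\mu\rangle$ cofinal in $\beta$ with $\delta_0=0$, and to treat the blocks $A\cap[\delta_\xi,\delta_{\xi+1})$ separately: subtracting $\delta_\xi$ carries the non-diagonal part $A\cap(\delta_\xi,\delta_{\xi+1})$ order-isomorphically onto a subset of the strictly smaller ordinal $\otp([\delta_\xi,\delta_{\xi+1}))$, to which the induction hypothesis applies, producing a regressive injection that I reinsert into $[\delta_\xi,\delta_{\xi+1})$ so that each value stays below its argument; since distinct blocks occupy disjoint intervals, these injections glue together, and the $\mu$-many ``diagonal'' points $\delta_\xi\in A$ are pushed below $\delta_\xi$ by a separate recursion on $\mu$. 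I expect the main obstacle to be precisely the verification that the non-reflection hypothesis descends to each block — ruling out reflection of the pulled-back set at a regular cardinal $\nu$, which corresponds to reflection of $A$ at the ordinal $\delta_\xi+\nu$ — and this is where one should choose the cutpoints $\delta_\xi$ to be cardinals (or to lie in $E$), so that the descent is controlled. Granting $(\star)$, the reduction of the first paragraph then delivers the desired club $C$ and functions $f_\beta$, completing the proof.
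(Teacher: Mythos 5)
First, a remark on the comparison: the paper does not prove this statement at all — it is quoted as a Fact with a citation to Hajnal — so I am measuring your proposal against the standard argument rather than against a proof in the text. Your reduction is sound: passing to $C:=\acc(E)$, noting that a regressive injection automatically maps into $\beta$, and verifying that $A_\beta:=T\cap C\cap\beta$ does not reflect at any regular uncountable $\alpha\le\beta$ are all correct, as is the gap argument in the easy case. The genuine gap is in the inductive step for $\cf(\beta)<\beta$, and it sits exactly where you flag it: the hypothesis of $(\star)$ does \emph{not} descend to the blocks, and choosing the cutpoints $\delta_\xi$ to be cardinals does not repair this. Reflection of the pulled-back block at a regular uncountable $\nu$ corresponds to reflection of $A$ at $\delta_\xi+\nu$, and whenever $\nu\le\delta_\xi$ this ordinal is never a cardinal, so the hypothesis of $(\star)$ is silent about it. Concretely, take $\beta=\aleph_\omega$, $\delta_n=\aleph_n$, and $A=\{\aleph_n+\xi\mid 1\le n<\omega,\ \xi\in S\}$ for $S\subseteq\acc(\omega_1)$ stationary: then $A\cap\aleph_k$ is bounded in $\aleph_k$ for every $k$, so the hypothesis of $(\star)$ holds, yet the pullback of $A\cap(\aleph_n,\aleph_{n+1})$ to $\otp([\aleph_n,\aleph_{n+1}))=\aleph_{n+1}$ is $S$, which is stationary in $\omega_1$; the induction hypothesis simply cannot be invoked. (Your handling of the diagonal points is also underspecified — the index set $\{\xi\mid\delta_\xi\in A\}$ may be stationary in $\mu$, since $A$ may legitimately be stationary in the non-cardinal $\beta$, and the block maps may exhaust every ordinal below $\delta_\xi$ — but this becomes moot once the decomposition is changed.)

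The repair is to abandon the decomposition along a cofinal sequence in favour of a decomposition by cardinality, after which no induction on $\beta$ is needed. Write $A=\bigcup_\lambda A\cap[\lambda,\lambda^+)$ over infinite cardinals $\lambda$. Whenever $\lambda^+\le\beta$, the hypothesis applied to the regular uncountable cardinal $\lambda^+$ yields a club $D\subseteq\lambda^+$ disjoint from $A\cap\lambda^+$, and your gap argument — which is pure counting inside each gap and is completely indifferent to any reflection occurring \emph{inside} a gap — gives a regressive injection of $A\cap[\lambda,\lambda^+)$ into $[\lambda,\lambda^+)$. For the one remaining class $\lambda=|\beta|<\beta$, the set $A\cap[\lambda,\beta)$ has size at most $\lambda$ and all of its elements are $\ge\lambda$, so \emph{any} injection of it into $\lambda$ is regressive. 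The only residual issue is that this last image may collide with the images of the lower classes inside $\lambda$; since the domain consists of limit ordinals, each gap-argument value can be perturbed by a finite amount while remaining regressive, so one routes the lower classes onto ordinals whose finite part lies in a fixed infinite and coinfinite subset of $\omega$ and the top class onto the complement. This assembles the same two ingredients you already have (the gap argument and the trivial counting bound), but along the partition into cardinality classes $[\lambda,\lambda^+)$ — where the non-reflection hypothesis is actually available at the top of each class — rather than along a continuous cofinal sequence, where it is not.
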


\begin{lemma}\label{hajnaltocolouring} For any stationary $T\s\kappa$, the following are equivalent:
\begin{enumerate}[(1)]
\item $\ubd^*(J^\bd[\kappa],\{ T\})$ holds;
\item $\ubd^*(J,\{T\})$ holds for some normal ideal $J$ over $\kappa$ extending $J^\bd[\kappa]$;
\item there is a stationary $S\s \kappa$ such that, for every $\beta\in S$, there is a function $f_\beta : T\cap \beta\rightarrow \beta$ which is regressive and injective;
\item there is a cofinal $A\s \kappa$ such that, for every $\beta\in A$, there is a function $f_\beta : T\cap \beta\rightarrow \beta$ which is regressive and injective;
\item $\kappa$ carries a triangular Ulam matrix with support $T$.
\end{enumerate}
\end{lemma}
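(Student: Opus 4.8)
The plan is to prove the cycle $(1)\Rightarrow(2)\Rightarrow(3)\Rightarrow(4)\Rightarrow(5)\Rightarrow(1)$, built on the observation that an upper-regressive colouring and an Ulam-type matrix are two encodings of the same data. Given an upper-regressive $c:[\kappa]^2\to\kappa$, set $U_{\eta,\tau}:=\{\beta\in\kappa\setminus(\tau+1)\mid c(\eta,\beta)=\tau\}$ for $\eta<\tau<\kappa$; conversely, given a triangular matrix whose rows $\langle U_{\eta,\tau}\mid\eta<\tau<\kappa\rangle$ are pairwise disjoint (and which we may assume satisfies $U_{\eta,\tau}\s\kappa\setminus(\tau+1)$), let $c(\eta,\beta)$ be the unique $\tau$ with $\beta\in U_{\eta,\tau}$, and $0$ if there is none. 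Single-valuedness of $c$ is exactly the row-disjointness, and upper-regressiveness corresponds to the triangular constraint. For $(1)\Rightarrow(2)$ I would simply invoke the monotonicity of Remark~\ref{remark25a}(2) with the normal ideal $J:=\ns_\kappa$, which extends $J^\bd[\kappa]$ by Fact~\ref{normalfacts}.

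The heart of the argument is $(2)\Rightarrow(3)$. Fix a colouring $c$ witnessing $\ubd^*(J,\{T\})$ for a normal $J\supseteq J^\bd[\kappa]$, and form $U_{\eta,\tau}$ as above. Unwinding the definition of $\ubd^*$, for each fixed $\tau\in T$ the set $X_\tau:=\bigcup_{\eta<\tau}U_{\eta,\tau}$ meets every $B\in J^+$, whence $X_\tau\in J^*$. Putting $X'_\tau:=X_\tau$ for $\tau\in T$ and $X'_\tau:=\kappa$ otherwise, normality of $J$ gives $Y:=\diagonal_{\tau<\kappa}X'_\tau\in J^*$; since $J\supseteq\ns_\kappa$, the set $S:=Y$ is stationary. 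For every $\beta\in S$ and every $\tau\in T\cap\beta$ we then have $\beta\in X_\tau$, so some $\eta<\tau$ satisfies $c(\eta,\beta)=\tau$; letting $f_\beta(\tau)$ be the least such $\eta$ yields a regressive map $f_\beta:T\cap\beta\to\beta$ which is injective, because $f_\beta(\tau)=f_\beta(\tau')=\eta$ forces $\tau=c(\eta,\beta)=\tau'$. This is exactly (3), and $(3)\Rightarrow(4)$ is immediate as stationary sets are cofinal.

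For $(4)\Rightarrow(5)$ I would first upgrade the cofinal family to a total one. Given regressive injective $f_\beta:T\cap\beta\to\beta$ for $\beta$ in a cofinal $A$, for arbitrary $\gamma<\kappa$ pick $\beta\in A$ with $\beta\ge\gamma$; since $f_\beta(\tau)<\tau<\gamma$ for every $\tau\in T\cap\gamma$, the restriction $g_\gamma:=f_\beta\restriction(T\cap\gamma)$ is a regressive injection $T\cap\gamma\to\gamma$. Thus such a $g_\gamma$ exists for \emph{every} $\gamma<\kappa$. Now set $U_{\eta,\tau}:=\{\gamma<\kappa\mid\tau\in T\cap\gamma\text{ and }g_\gamma(\tau)=\eta\}$ for $\eta<\tau$ with $\tau\in T$, and $U_{\eta,\tau}:=\emptyset$ for $\tau\notin T$. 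Injectivity of each $g_\gamma$ makes the rows pairwise disjoint, and for $\tau\in T$ one computes $\bigcup_{\eta<\tau}U_{\eta,\tau}=\kappa\setminus(\tau+1)$, so the support is exactly $T$; this is (5). Finally, $(5)\Rightarrow(1)$: reading off $c$ from the matrix as above, for any cofinal $B$ and any $\tau\in T$ the co-boundedness of $\bigcup_{\eta<\tau}U_{\eta,\tau}$ forces some $\beta\in B$ above $\tau$ into it, delivering $\eta<\tau$ with $c(\eta,\beta)=\tau$, which is the defining clause of $\ubd^*(J^\bd[\kappa],\{T\})$.

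I expect the main obstacle to be the step $(2)\Rightarrow(3)$. The hypothesis supplies, separately for each colour $\tau\in T$, that its witnessing ordinals $\beta$ form a $J^*$-large set $X_\tau$, whereas (3) demands a \emph{single} stationary set of $\beta$ simultaneously witnessing all relevant $\tau\in T\cap\beta$. Bridging this ``per-colour to per-point'' gap is precisely where normality, through the diagonal intersection $\diagonal_{\tau<\kappa}X'_\tau$, is indispensable, and it is the reason the bounded ideal of clause (1) must be promoted to a normal ideal in clause (2). The remaining steps are essentially bookkeeping around the colouring--matrix dictionary, with the only subtlety being the harmless restriction trick that converts the cofinal family of (4) into a family indexed by all of $\kappa$.
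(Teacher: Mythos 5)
Your proposal is correct and follows essentially the same route as the paper: the same cycle $(1)\Rightarrow(2)\Rightarrow\dots\Rightarrow(5)\Rightarrow(1)$, with the diagonal intersection of the $J^*$-sets $X_\tau$ (the paper works with their complements $N_\tau$) doing the work in $(2)\Rightarrow(3)$, and your $g_\gamma:=f_\beta\restriction(T\cap\gamma)$ for $\beta=\min(A\setminus\gamma)$ being exactly the paper's $f_{\min(A\setminus\beta)}$ device in $(4)\Rightarrow(5)$. The colouring--matrix dictionary you set up at the outset is also precisely how the paper handles $(5)\Rightarrow(1)$.
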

\begin{proof} $(1)\implies(2)$: Just take $J:=\ns_\kappa$.

$(2)\implies(3)$ Suppose that $c$ witnesses $\ubd^*(J,\{ T\})$
for a normal ideal $J$ extending $J^\bd[\kappa]$.
In particular, for every $\tau\in T$, the set $N_\tau:=\{\beta<\kappa\mid \forall\eta<\tau\,(c(\eta,\beta)\neq\tau)\}$ is in $J$. 
As $J$ is normal, $S := \diagonal_{\tau < \kappa}(\kappa\setminus N_\tau)$ is in its dual filter. 
By Fact~\ref{normalfacts},  $J$ extends $\ns_\kappa$, and hence $S$ is stationary in $\kappa$.
Now, let $\beta\in S$.
Then, for every $\tau \in T \cap \beta$, $\beta \notin N_\tau$ and so there is an $\eta < \tau$ such that $c(\eta, \beta) = \tau$. 
So, for every $\beta \in S$, the function $f_\beta : T \cap \beta \rightarrow \beta$ given by $f_\beta(\tau) := \min\{\eta < \tau \mid  c(\eta, \beta) = \tau\}$ is well-defined and regressive. It is also clearly injective.

$(3) \implies (4)$: This is trivial.

$(4)\implies(5)$: Given a sequence $\langle f_\beta\mid \beta\in A\rangle$ as above, define a matrix $\langle U_{\eta,\tau}\mid \eta<\tau<\kappa\rangle$ via
$$U_{\eta,\tau}:=\{\beta<\kappa\mid f_{\min(A\setminus\beta)}(\tau)=\eta\}.$$
We leave the verification to the reader.

$(5) \implies (1)$: Suppose that $\langle U_{\eta,\tau} \mid \eta < \tau< \kappa  \rangle$ is a triangular Ulam matrix with support $T$.
Let $c:[\kappa]^2\rightarrow \kappa$ be any upper-regressive colouring satisfying that for $\eta< \beta< \kappa$, if there is a $\tau$ such that $\eta <\tau< \beta$ and $\beta \in U_{\eta, \tau}$ then $c(\eta, \beta) = \tau$. Note that as $\langle U_{\eta,\tau} \mid \eta < \tau< \kappa  \rangle$ is an Ulam-type matrix, this is well-defined.
Now, suppose that $B \in (J^\bd[\kappa])^+$ is given, and let $\tau\in T$ be arbitrary.
As $B \setminus (\bigcup_{\eta< \tau}U_{\eta, \tau})$ has size less than $\kappa$,
we may pick a $\beta \in (\bigcup_{\eta< \tau}U_{\eta, \tau})\setminus (\tau+1)$. Then if $\eta< \tau$ is such that $\beta \in U_{\eta, \tau}$ then $c(\eta, \beta) = \tau$.	
\end{proof}

\begin{cor}\label{hajnaltocolouring2} For a stationary $T\s\kappa$, the following are equivalent:
\begin{enumerate}
\item $\Tr(T)\cap\reg(\kappa)$ is nonstationary;
\item There exists a club $C\s\kappa$ for which $\ubd^*(J^\bd[\kappa],\{ T\cap C\})$ holds.
\end{enumerate}
\end{cor}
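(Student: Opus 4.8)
The plan is to prove both implications by applying the bundle of equivalences in Lemma~\ref{hajnaltocolouring} not to $T$ but to the stationary set $T\cap C$, and then to bridge between $\Tr(T)$ and $\Tr(T\cap C)$ along the club $\acc(C)$.

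For the direction $(i)\implies(ii)$, I would start from the hypothesis that $\Tr(T)\cap\reg(\kappa)$ is nonstationary and invoke Hajnal's Fact~\ref{hajnalulam} on the stationary set $T$: this produces a club $C\s\kappa$ such that for every $\beta\in C$ there is a regressive injective map $f_\beta\colon T\cap C\cap\beta\to\beta$. Since $T\cap C$ is stationary, the set $C$ witnesses Clause~(4) of Lemma~\ref{hajnaltocolouring} \emph{for the set $T\cap C$}, and Clause~(1) of that lemma then delivers $\ubd^*(J^\bd[\kappa],\{T\cap C\})$, which is exactly $(ii)$.

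The converse $(ii)\implies(i)$ is the substantive direction. Given a club $C$ with $\ubd^*(J^\bd[\kappa],\{T\cap C\})$, first note that $T\cap C$ is stationary, and use the monotonicity in Remark~\ref{remark25a}(2) to obtain a colouring $c$ witnessing $\ubd^*(\ns_\kappa,\{T\cap C\})$. Now I would re-run the computation of Lemma~\ref{hajnaltocolouring}$(2)\implies(3)$ with the \emph{normal} ideal $\ns_\kappa$: for each $\tau\in T\cap C$ the set $N_\tau:=\{\beta<\kappa\mid\forall\eta<\tau\,(c(\eta,\beta)\neq\tau)\}$ is nonstationary, so the diagonal intersection $E:=\diagonal_{\tau<\kappa}(\kappa\setminus N_\tau)$ (taking $N_\tau:=\emptyset$ for $\tau\notin T\cap C$) \emph{contains a club}. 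For every $\beta\in E$ there is a regressive injective $f_\beta$ on $(T\cap C)\cap\beta$; so if some regular uncountable $\beta\in E$ had $(T\cap C)\cap\beta$ stationary in $\beta$, Fodor's lemma would make $f_\beta$ constant on a set of size $\ge 2$, contradicting injectivity. Hence $E$ is disjoint from $\Tr(T\cap C)\cap\reg(\kappa)$, so the latter is nonstationary. Finally I would transfer this back to $T$: for $\alpha\in\acc(C)$ the set $C\cap\alpha$ is a club in $\alpha$, so $T\cap\alpha$ is stationary in $\alpha$ iff $(T\cap C)\cap\alpha$ is, giving $\Tr(T)\cap\acc(C)=\Tr(T\cap C)\cap\acc(C)$; thus $\Tr(T)\cap\reg(\kappa)\cap\acc(C)$ is nonstationary, and since $\acc(C)$ is a club and a set whose intersection with a club is nonstationary is itself nonstationary, $\Tr(T)\cap\reg(\kappa)$ is nonstationary, as required.

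The main obstacle is exactly the gap hidden in the converse: the \emph{stationary} set supplied by Clause~(3) of Lemma~\ref{hajnaltocolouring} is not enough, since a stationary set disjoint from $\Tr(T\cap C)\cap\reg(\kappa)$ tells us nothing about the latter being nonstationary. The resolution is to notice that when the ideal is normal the diagonal intersection of the relevant $J^*$-sets is not merely stationary but contains a genuine club $E$, and only a \emph{club} disjoint from $\Tr(T\cap C)\cap\reg(\kappa)$ certifies its nonstationarity. The rest is bookkeeping: that $\Tr$ lives inside $E^\kappa_{>\omega}$ ensures Fodor's lemma applies to the relevant $\beta$, and the club $\acc(C)$ provides the passage between the traces of $T$ and of $T\cap C$.
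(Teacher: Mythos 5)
Your proof is correct. The forward direction is identical to the paper's: invoke Fact~\ref{hajnalulam} and feed the resulting club into the implication $(3)\implies(1)$ (equivalently $(4)\implies(1)$) of Lemma~\ref{hajnaltocolouring} applied to the stationary set $T\cap C$. For the converse the paper takes a more direct route than yours: writing $T':=T\cap C$ and $N_\tau:=\{\beta<\kappa\mid\forall\eta<\tau\,(c(\eta,\beta)\neq\tau)\}$, it uses that each $N_\tau$ is \emph{bounded} to define $f(\tau):=\sup(N_\tau)$, assumes towards a contradiction that $\Tr(T)\cap\reg(\kappa)$ (hence $\Tr(T')\cap\reg(\kappa)$) is stationary, picks a single $\mu\in\reg(\kappa)\cap\Tr(T')$ closed under $f$, and reads off a regressive injection $\tau\mapsto\min\{\eta<\tau\mid c(\eta,\mu)=\tau\}$ on the stationary subset $T'\cap\mu$ of $\mu$ from the single column $c(\cdot,\mu)$, contradicting Fodor. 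You instead pass to $\ns_\kappa$ by monotonicity and observe that the diagonal intersection appearing in the proof of $(2)\implies(3)$ of Lemma~\ref{hajnaltocolouring} lands in the club filter when the ideal is $\ns_\kappa$, so that club-many $\beta$ carry regressive injections on $T'\cap\beta$; Fodor then shows this club avoids $\Tr(T')\cap\reg(\kappa)$. Both arguments bottom out in the same fact --- there is no regressive injection on a stationary subset of a regular uncountable cardinal --- but your decomposition isolates a reusable strengthening of Lemma~\ref{hajnaltocolouring}(3) for normal ideals (the witnessing set may be taken in $J^*$ rather than merely stationary), whereas the paper's version is shorter because it only ever needs one reflection point. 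Your explicit verification that $\Tr(T)\cap\acc(C)=\Tr(T\cap C)\cap\acc(C)$ also fills in a step the paper leaves implicit in the line ``In particular, $\Tr(T')\cap\reg(\kappa)$ is stationary.''
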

\begin{proof} $(i)\implies(ii)$: By Fact~\ref{hajnalulam}, we may fix a club $C\s\kappa$ such that, 
for every $\beta\in C$, there is a function $f_\beta : T\cap C\cap \beta\rightarrow \beta$ which is regressive and injective.
Now, appeal with $C$ and $T\cap C$ to the implication $(3)\implies(1)$ of Lemma~\ref{hajnaltocolouring}.

$(ii)\implies(i)$: Write $T':=T\cap C$, and let $c$ witness $\ubd^*(J^\bd[\kappa],\{ T'\})$.
In particular, for every $\tau\in T'$, the set $\{\beta<\kappa\mid \forall\eta<\tau\,(c(\eta,\beta)\neq\tau)\}$ is bounded in $\kappa$.
Thus, we may define a function $f:T'\rightarrow\kappa$ via $f(\tau):=\sup\{\beta<\kappa\mid \forall\eta<\tau\,(c(\eta,\beta)\neq\tau)\}$.
Suppose now that  $\Tr(T)\cap\reg(\kappa)$ is a stationary subset of $\kappa$. 
In particular, $\Tr(T')\cap\reg(\kappa)$ is stationary. 
Then we can pick $\mu\in \reg(\kappa)\cap\Tr(T')$ such that $f[\mu]\s\mu$. So, the set $\bar T:=T'\cap\mu$ is a stationary subset of $\mu$.
As $f[\bar T]\s\mu$, we may define a function $g:\bar T\rightarrow\mu$ via $g(\tau):=\min\{\eta<\tau\mid c(\eta,\mu)=\tau\}$.
As $g$ is regressive, there must exist $\tau\neq\tau'$ in $\bar T$ and $\eta<\kappa$ such that $g(\tau)=\eta=g(\tau')$. So $\tau=c(\eta,\mu)=\tau'$.
This is a contradiction.
\end{proof}

\begin{lemma} For $S\in\ns_\kappa$, if $\diamondsuit^*(S)$ holds then so does $\ubd^*(\ns_\kappa\restriction S,\{\kappa\setminus\epsilon\mid \epsilon<\kappa\})$.
\end{lemma}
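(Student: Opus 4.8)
The plan is to unwind the target principle and then feed a $\diamondsuit^*(S)$-sequence into an explicit upper-regressive colouring. First note that if $S$ is genuinely nonstationary then $\ns_\kappa\restriction S$ is the trivial ideal, $(\ns_\kappa\restriction S)^+=\emptyset$, and the conclusion holds vacuously; so the substantive case is $S$ stationary, which I treat. Unwinding $\ubd^*(\ns_\kappa\restriction S,\{\kappa\setminus\epsilon\mid\epsilon<\kappa\})$ (with the omitted first coordinate being $\{\kappa\}$), I must produce an upper-regressive $c:[\kappa]^2\to\kappa$ such that for every stationary $B\subseteq S$ there is an $\epsilon<\kappa$ with the property that every colour $\tau\in[\epsilon,\kappa)$ is realised below $B$, i.e.\ there are $\eta<\tau$ and $\beta\in B\setminus(\tau+1)$ with $c(\eta,\beta)=\tau$. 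Since $c(\eta,\beta):=\eta+1$ already realises every successor colour at every $\beta$ above it, the whole difficulty is to realise a tail of the \emph{limit} colours at points of $B$.

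I would fix a $\diamondsuit^*(S)$-sequence $\langle\mathcal A_\beta\mid\beta\in S\rangle$; as in the proof of Proposition~\ref{diamondstar}, each $\mathcal A_\beta$ is a family of at most $|\beta|$ subsets of $\beta$, and for every $A\subseteq\kappa$ the set $\{\beta\in S\mid A\cap\beta\in\mathcal A_\beta\}$ contains a club. After deleting a nonstationary set I may assume $S\subseteq\acc(\kappa)$. For each $\beta\in S$ I would fix a partition of $\beta$ into cofinal pieces, one piece reserved for each guess $A\in\mathcal A_\beta$, so that the single section $c(\cdot,\beta)$ can act on behalf of every guess simultaneously. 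On the piece attached to a guess $A$ I would read colours off $A$ by enumerating limit ordinals sitting in the gaps of $A$ and realising them through a \emph{regressive injection}. The crucial device is that the limit ordinals are sparse: the order type of the limit ordinals below any $\mu$ is strictly smaller than $\mu$, and is strictly increasing in $\mu$, so a bounded interval of limit colours can be realised below $\beta$ without violating either upper-regressivity or injectivity (this is exactly what fails for a full initial segment, where the successors eat up all the small inputs).

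The verification would then run as follows. Given stationary $B\subseteq S$, I apply $\diamondsuit^*$ to $B$ to obtain a club $C$ on which the guess is correct; then $B\cap C$ is stationary, and at each $\beta\in B\cap C$ the piece attached to $A=B\cap\beta$ realises limit colours read off the \emph{true} set $B\cap\beta$. As $\beta$ ranges over the stationarily many points of $B\cap C$, these realised sets should exhaust a tail $[\epsilon,\kappa)$ of the limit colours, which together with the free realisation of successors yields the required $\epsilon$ and the tail $\kappa\setminus\epsilon\in\mathcal T$.

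The main obstacle is precisely this exhaustion, and it is where the tail-shaped support earns its keep. By Fodor's lemma, any single section $c(\cdot,\beta)$ can realise only a \emph{non-stationary}-in-$\beta$ set of colours, so no individual $\beta$ can cover a cofinal segment of $\beta$; a full tail of $\kappa$ must instead be assembled from the non-stationary contributions of the stationarily many good points. The delicate point is to choose the gap-reading rule uniformly — depending only on $B\cap\beta$ — so that each limit colour beyond some $\epsilon$ is caught at \emph{some} good $\beta\in B$ rather than only at a point that the club $C$ might skip; in particular one must avoid assigning a fat (stationary-in-$\beta$) block of colours to any single $\beta$ of uncountable cofinality. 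The redundancy that makes this possible is that $\diamondsuit^*$ guesses correctly on an entire \emph{club}, which lets each colour be eligible for realisation at club-many candidate points and hence at one lying in the stationary set $B\cap C$. Reconciling the per-section non-stationarity forced by Fodor with this club-wide coverage is the crux of the argument.
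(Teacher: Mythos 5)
Your proposal does not close. The step you yourself flag as ``the crux'' --- that the non-stationary-in-$\beta$ sets of colours realised at the various good points $\beta\in B\cap C$ jointly exhaust a tail of $\kappa$ --- is exactly what is missing, and nothing in your construction forces it. The only data available at $\beta$ is the guessed set $B\cap\beta$, so the set of colours realised at $\beta$ is some $\rho(B\cap\beta)$; you would need $\bigcup_{\beta\in B\cap C}\rho(B\cap\beta)$ to contain a tail of $\kappa$ for every stationary $B\subseteq S$, and no rule $\rho$ is proposed for which this holds. Reading colours ``off the gaps of $B\cap\beta$'' ties the realised colours to $B$ itself, which is the wrong target: the colours needing realisation are arbitrary ordinals, not points related to $B$. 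More fundamentally, the Fodor obstruction you are fighting (a single section $c(\cdot,\beta)$ realises only a non-stationary set of colours via inputs $\eta<\tau$) is an artefact of trying to realise many colours at one $\beta$. The principle permits, and the natural proof uses, the opposite shape: one $\eta$ and many $\beta$'s, where no such obstruction exists.

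The paper's proof is two lines. First, $\diamondsuit^*(S)$ in its \emph{functional} form gives a matrix $\langle g^\eta_\beta:\beta\to\beta\mid\beta\in S,\ \eta<\beta\rangle$ guessing every $g:\kappa\to\kappa$ on a club; setting $c(\eta,\beta):=g^\eta_\beta(\eta)$ and diagonalising against $g(\eta):=\min(\kappa\setminus c[\{\eta\}\circledast B])$ yields $\onto(\ns_\kappa\restriction S,\kappa)$ (Lemma~\ref{lemma47}(1)): for every stationary $B\subseteq S$ there is a \emph{single} $\eta$ with $c[\{\eta\}\circledast B]=\kappa$. Second, by Proposition~\ref{remark25}, such a colouring may be taken upper-regressive, and an upper-regressive witness to $\onto(\cdot,\kappa)$ automatically witnesses $\ubd^*(\cdot,\{\kappa\setminus\epsilon\mid\epsilon<\kappa\})$: for each $\tau>\eta$ pick $\beta\in B$ with $c(\eta,\beta)=\tau$; upper-regressivity gives $\tau<\beta$ while $\eta<\tau$, so $\tau$ is realised, and the tail $\kappa\setminus(\eta+1)$ works. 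Your instinct to use $\diamondsuit^*$ to guess the unknown $B$ should be redirected to guessing the function that records, for each $\eta$, the least colour missing from $c[\{\eta\}\circledast B]$; that is what makes the diagonalisation, and hence the whole lemma, go through.
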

\begin{proof} By Proposition~\ref{remark25}, using Lemma~\ref{lemma47}(1) below.
\end{proof}

We remind the reader that $\cg^*( S,T)$ was defined in Definition~\ref{cgstar}.
\begin{lemma}\label{lemma55} For $S,T\in\ns_\kappa$, if $\cg^*( S,T)$ holds, 
then so does $\ubd^*([\kappa]^\kappa,\allowbreak\ns_\kappa\restriction S,(\ns_\kappa\restriction T)^+)$.
\end{lemma}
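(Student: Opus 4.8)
The plan is to follow the template of the colourings built from $C$-sequences in Lemmas~\ref{lemma43} and \ref{lemma44}, but instead of reading off $\min(C_\beta\setminus\eta)$ I would read off the next non-accumulation point \emph{that lies in $T$}, thereby forcing every realised colour into $T$. Concretely, fix a $\cg^*(S,T)$-witnessing sequence $\vec C=\langle C_\delta\mid\delta\in S\rangle$ and define $c:[\kappa]^2\to\kappa$ by setting, for $\beta\in S$ and $\eta<\beta$,
\[
c(\eta,\beta):=\min\big((\nacc(C_\beta)\cap T)\setminus(\eta+1)\big)
\]
whenever that set is nonempty, and $c(\eta,\beta):=0$ otherwise (and $c(\eta,\beta):=0$ for $\beta\notin S$). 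Since $C_\beta\s\beta$, every value is $<\beta$, so $c$ is upper-regressive, as required.

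Next I would fix $A\in[\kappa]^\kappa$ and a stationary $B\s S$ (it is harmless to pass to $B\cap S$, so I may assume $B$ concentrates on $S$). As $\kappa$ is regular, $A$ is cofinal and hence $\acc^+(A)$ is a club. The candidate member of $(\ns_\kappa\restriction T)^+$ is the set of \emph{all} realised colours,
\[
T':=\{\tau\in T\mid \exists\,\eta\in A\cap\tau\ \exists\,\beta\in B\setminus(\tau+1)\ (c(\eta,\beta)=\tau)\}.
\]
By its very definition $T'\s T$ and every $\tau\in T'$ is realised in the sense demanded by $\ubd^*(\ldots)$, so the whole content of the lemma is reduced to showing that $T'$ is stationary. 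The engine for this is the following local computation: if $\delta\in B$ and $\tau\in\nacc(C_\delta)\cap T$ with $\tau\in\acc^+(A)$, then $\tau\in T'$. Indeed, $\tau\in\nacc(C_\delta)$ gives $\tau^-:=\sup\big((\nacc(C_\delta)\cap T)\cap\tau\big)\le\sup(C_\delta\cap\tau)<\tau$; since $\tau\in\acc^+(A)$ the interval meets $A$, so choosing $\eta\in A\cap(\tau^-,\tau)$ one reads off $c(\eta,\delta)=\tau$ directly from the definition, while $\tau\in C_\delta\s\delta$ yields $\delta\in B\setminus(\tau+1)$.

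To finish, I would verify that $T'$ meets an arbitrary club $C$. Here I apply $\cg^*(S,T)$ to the club $E:=\acc^+(A)\cap C$, obtaining a club $D$ with $\sup(\nacc(C_\delta)\cap E\cap T)=\delta$ for every $\delta\in S\cap D$. As $B\cap D$ is stationary I may pick $\delta\in B\cap D$; then $\nacc(C_\delta)\cap E\cap T$ is cofinal in $\delta$, in particular nonempty, and any $\tau$ in it satisfies $\tau\in C$ (since $E\s C$) together with $\tau\in\acc^+(A)\cap\nacc(C_\delta)\cap T$, whence $\tau\in T'$ by the computation above. Thus $T'\cap C\neq\emptyset$ for every club $C$, so $T'$ is stationary and $c$ witnesses $\ubd^*([\kappa]^\kappa,\ns_\kappa\restriction S,(\ns_\kappa\restriction T)^+)$.

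I expect the only delicate point to be the organisation of this last stationarity argument: because $T'$ must be produced from $A$ and $B$ \emph{before} any test club $C$ is inspected, I have to make sure that whether a guessed $\tau$ belongs to $T'$ depends only on the absolute facts $\tau\in\nacc(C_\delta)\cap T$ and $\tau\in\acc^+(A)$, and not on the particular club $E$ used to guess it. This is exactly what permits re-running the club-guessing with the tailored club $E=\acc^+(A)\cap C$ for each $C$ while keeping the single set $T'$ fixed; the role of $\nacc$ (rather than all of $C_\delta$) is precisely to guarantee $\tau^-<\tau$, which is what makes the colour $\tau$ isolable between two consecutive points of $A$.
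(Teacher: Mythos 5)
Your proof is correct and follows essentially the same route as the paper's: the same colouring idea (reading off the next point of $C_\beta$ past $\eta$), the same application of $\cg^*(S,T)$ to the club $\acc^+(A)\cap C$, and the same isolation of a colour $\tau\in\nacc(C_\delta)\cap T$ via an $\eta\in A$ strictly above $\sup(C_\delta\cap\tau)$. The only differences are cosmetic: the paper uses $c(\eta,\beta)=\min(C_\beta\setminus\eta)$ and argues by contradiction (supposing the set of unrealised colours contains $D\cap T$ for some club $D$), whereas you restrict the colouring's values to $\nacc(C_\beta)\cap T$ and verify directly that the set of realised colours is a stationary subset of $T$.
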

\begin{proof} Let $\vec C=\langle C_\beta\mid \beta\in S\rangle$ witness $\cg^*( S,T)$. 
Pick an upper-regressive colouring $c:[\kappa]^2\rightarrow\kappa$ such that, 
for all $\beta\in\acc(\kappa)\cap S$ and $\eta<\beta$, $c(\eta,\beta)=\min(C_\beta\setminus\eta)$.
Towards a contradiction, suppose that $c$ does not witness $\ubd^*([\kappa]^\kappa,\ns_\kappa\restriction S,(\ns_\kappa\restriction T)^+)$.
This means that there are a cofinal subset $A\s\kappa$ and a stationary subset $B\s S$ for which the following set
$$T':=\{\tau\in T \mid \forall \eta\in (A\cap \tau) \forall \beta \in (B\setminus(\eta+1))(c(\eta, \beta) \neq \tau)\}$$
meets every element of $(\ns_\kappa\restriction T)^+$. So we can fix a club $D \s \kappa$ such that $D \cap T \s T'$.	
Consider the club $E:=D\cap\acc^+(A)$.
By the choice $\vec C$, we may now find $\beta\in B$ with $\sup(\nacc(C_\beta)\cap E\cap T)=\beta$.
Pick $\tau\in \nacc(C_\beta)\cap E\cap T$. As $\tau\in\acc^+(A)$, we may pick $\eta\in A$ with $\sup(C_\beta\cap\tau)<\eta<\tau$,
so that $c(\eta,\beta)=\min(C_\beta\setminus\eta)=\tau$. However, $\tau \in D\cap T \s T'$. This is a contradiction.
\end{proof}

The reader may consult \cite{paper22} for various sufficient conditions for when the hypothesis of the first clause of the following lemma holds.
\begin{lemma}\label{proxyapp1}
\begin{enumerate}[(1)] 
\item If $\p^-(\kappa,\kappa^+,{\sq},1)$ holds, 
then so does $\ubd^*(J^{\bd}[\kappa],\allowbreak\{{\kappa\setminus\epsilon}\mid\epsilon<\kappa\})$;
\item If  $\square(\kappa,{<}\mu)$ holds with $\mu<\kappa$,
then so does $\ubd^*(J^{\bd}[\kappa],(\ns_\kappa)^+)$.
\end{enumerate}
\end{lemma}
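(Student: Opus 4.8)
The plan is to build, in both clauses, an upper-regressive colouring directly from the coherent $C$-sequence supplied by the hypothesis, in the spirit of the colourings $c(\eta,\beta):=\min(C_\beta\setminus\eta)$ used in the proofs of Lemmas~\ref{lemma43}, \ref{lemma44} and \ref{lemma55}. For such a colouring the value $c(\eta,\beta)$ equals a prescribed $\tau$ exactly when $\tau\in\nacc(C_\beta)$ and $\sup(C_\beta\cap\tau)<\eta<\tau$; hence, taking $A=\kappa$, the set of ordinals that can be realised as values for a fixed positive $B$ is $G(B):=\{\tau<\kappa\mid \exists\beta\in B\setminus(\tau+1)\ (\tau\in\nacc(C_\beta))\}$, and witnessing the relevant instance of $\ubd^*$ reduces to showing that $G(B)$ contains a member of the target family $\mathcal T$. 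The whole difference between the two clauses is the difference between $\mathcal T=\{\kappa\setminus\epsilon\mid\epsilon<\kappa\}$, where a tail of $\kappa$ is demanded, and $\mathcal T=(\ns_\kappa)^+$, where a mere stationary set suffices; this is precisely what forces Clause~(1) to exploit the extra hitting feature of the proxy principle.

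For Clause~(1) the tail-support demand means that \emph{every} sufficiently large $\tau$ must be realised as a value, and no colouring of the $\min(C_\beta\setminus\eta)$ form can achieve this, since the non-accumulation points of the $C_\beta$ for $\beta\in B$ need not cover a tail. I would therefore route through the $\onto$ principle, aiming to show that $\p^-(\kappa,\kappa^+,{\sq},1)$ implies $\onto(J^\bd[\kappa],\kappa)$: here the $\sq$-coherence drives the minimal walk while the hitting clause (the final parameter $1$) is used to guarantee that, for every cofinal $B$, some single $\eta$ realises \emph{all} $\kappa$ colours along $\{\eta\}\circledast B$. Granting this, Proposition~\ref{remark25} finishes the clause, its part~(1) supplying an upper-regressive witness of $\onto(J^\bd[\kappa],\kappa)$ and its part~(2) converting that witness into one for $\ubd^*(J^\bd[\kappa],\{\kappa\setminus\epsilon\mid\epsilon<\kappa\})$.

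For Clause~(2) I would first observe that both tails and stationary sets lie in $(\ns_\kappa)^+$, so by the monotonicity of $\ubd^*$ in its last coordinate (Remark~\ref{remark25a}(2)) it is enough to produce, for each cofinal $B$, a \emph{stationary} hittable support. Fixing a transversal $\vec C=\langle C_\beta\mid\beta<\kappa\rangle$ of the $\square(\kappa,{<}\mu)$-sequence --- which is strongly amenable in $\kappa$ by the remark following Corollary~\ref{square_is_amenable} --- and setting $c(\eta,\beta):=\min(C_\beta\setminus\eta)$, the task becomes to prove that $G(B)$ is stationary for every cofinal $B$. I would argue by contradiction: a club $E$ disjoint from $G(B)$ forces $C_\beta\cap E\subseteq\acc(C_\beta)$ for all $\beta\in B$, and via $\sq$-coherence (so that $C_\beta\cap\tau\in\mathcal C_\tau$ whenever $\tau\in\acc(C_\beta)$) this property propagates to the clubs appearing at accumulation points, yielding too much coherence along $E$. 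This is exactly the point at which I would invoke the family of fewer than $\mu$ stationary sets with no common reflection point furnished by \cite[Theorem~2.13]{MR3730566} to derive the contradiction.

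The hardest step will be Clause~(1)'s passage to $\onto(J^\bd[\kappa],\kappa)$: realising every colour on $\{\eta\}\circledast B$ from one $\eta$ cannot be obtained by post-composing a colouring of cofinal range with a fixed surjection (a range of size $\kappa$ need not meet all fibres), so a genuine walks-and-oscillation argument is required, in which even the weak hitting built into $\p^-(\kappa,\kappa^+,{\sq},1)$ is made to force surjectivity. In Clause~(2) the only real subtlety is checking that the excess coherence extracted from a putative club $E$ indeed contradicts \cite[Theorem~2.13]{MR3730566}, and this is the sole place where the hypothesis $\mu<\kappa$ enters.
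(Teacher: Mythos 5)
There is a genuine gap in both clauses, and it is essentially the same one: you never get from the ``good'' ordinal $\delta$ supplied by the hypothesis to an actual member of $B$. The paper handles both clauses with a single construction based on \emph{walks on ordinals} along a transversal $\vec C$ of the coherent sequence: it sets $\gamma:=\min(\im(\tr(\eta,\beta)))$ and $c(\eta,\beta):=\min(C_\gamma\setminus(\eta+1))$. The point is that the hitting clause of $\p^-(\kappa,\kappa^+,{\sq},1)$ (resp.\ the sequence extracted from $\square(\kappa,{<}\mu)$ via \cite[Lemma~2.5]{paper29}) produces, for each cofinal (resp.\ club) $\Omega$, an ordinal $\delta$ \emph{all} of whose clubs have $\nacc$ meeting $\Omega$ cofinally --- but $\delta$ need not lie in $B$. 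One therefore picks $\beta\in B$ above $\delta$ and uses the $\sq$-coherence along the walk from $\beta$ down to $\delta$ to see that the last club visited, restricted to $\delta$, still belongs to $\mathcal C_\delta$; this is what converts the hitting at $\delta$ into realised values $c(\eta,\beta)=\tau$ with $\tau\in\Omega$ and $\beta\in B$. Your Clause~(2) argument works only with $\nacc(C_\beta)$ for $\beta\in B$ itself, and the contradiction you hope to extract from a club $E$ with $\nacc(C_\beta)\cap E=\emptyset$ for all $\beta\in B$ is not there: nothing in $\square(\kappa,{<}\mu)$ rules this out for an adversarially chosen cofinal $B$, the ``propagation of coherence'' step is not an argument, and the appeal to \cite[Theorem~2.13]{MR3730566} (which in this paper only yields a strongly amenable $C$-sequence, hence the weaker conclusion $\ubd(J^{\bd}[\kappa],\kappa)$) is not connected to it.

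For Clause~(1) the detour through $\onto(J^{\bd}[\kappa],\kappa)$ is worse than a gap: you propose to prove something far stronger than the target, provide no argument for it, and it is almost certainly not a theorem. Realising \emph{all} $\kappa$ colours on $\{\eta\}\circledast B$ from a single $\eta$ is a guessing-type statement; in this paper it is obtained only from the much stronger principle $\p^\bullet(\kappa,\kappa^+,{\sq},1)$ (Lemma~\ref{pbullet}), whose hitting clause encodes $\diamondsuit$-like prediction of functions into $H_\kappa$, whereas $\p^-$ supplies only the hitting of cofinal sets by $\nacc$. The actual conclusion of Clause~(1) is much weaker --- the witnessing $\eta$ is allowed to vary with $\tau$, and only co-boundedly many $\tau$ need be realised --- and it is exactly what the walk-based colouring above delivers once you run the argument with $\Omega$ ranging over cofinal sets rather than clubs. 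You correctly diagnosed that the naive colouring $\min(C_\beta\setminus\eta)$ fails for Clause~(1); the fix is not to upgrade the target to $\onto$, but to replace $C_\beta$ by the club indexed by the first step of the walk.
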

\begin{proof} According to \cite[\S1.1]{paper22}, $\p^-(\kappa,\kappa^+,{\sq},1)$ provides us with a sequence $\langle\mathcal C_\beta\mid\beta<\kappa\rangle$ satisfying the following:
\begin{enumerate}
\item for every $\beta<\kappa$, $\mathcal C_\beta$ is a nonempty collection of closed subsets $C$ of $\beta$ with $\sup(C)=\sup(\beta)$;
\item for all $\beta<\kappa$, $C\in\mathcal C_\beta$ and $\alpha\in\acc(C)$, $C\cap\alpha\in\mathcal C_{\alpha}$;
\item for every cofinal $\Omega\s\kappa$, there exists $\delta\in\acc(\kappa)$
such that, for every $C\in\mathcal C_\delta$,  $\sup(\nacc(C)\cap \Omega)=\delta$.
\end{enumerate}

Assuming $\square(\kappa,{<}\mu)$ with $\mu<\kappa$, by \cite[Lemma~2.5]{paper29}, 
we may fix a sequence $\langle\mathcal C_\beta\mid\beta<\kappa\rangle$ 
satisfying Clauses (i) and (ii) above, together with:
\begin{itemize}
\item[(iii${}^-$)] for every club $\Omega\s\kappa$, there exists $\delta\in\acc(\kappa)$
such that, for every $C\in\mathcal C_\delta$,  $\sup(\nacc(C)\cap \Omega)=\delta$.
\end{itemize}

Fix a $C$-sequence $\vec C=\langle C_\beta\mid \beta<\kappa\rangle$ such that $C_\beta\in\mathcal C_\beta$ for all $\beta<\kappa$. 
We shall conduct walks on ordinals along $\vec C$, following the notation of \cite[\S4.2]{paper34} (see \cite{TodWalks} for a comprehensive treatment).

Define an upper-regressive colouring $c:[\kappa]^2\rightarrow\kappa$, as follows. 
Given $\eta<\beta<\kappa$, let $\gamma:=\min(\im(\tr(\eta,\beta)))$, so that $\eta\in C_\gamma$,
and then let $c(\eta,\beta):=\min(C_\gamma\setminus(\eta+1))$ provided that the latter is a well-defined ordinal $<\beta$; otherwise, just let $c(\eta,\beta):=0$.

Now, let $B\in[\kappa]^\kappa$ be arbitrary. We need to prove that the following set
$$T:=\{ \tau<\kappa\mid \exists\eta<\kappa\exists\beta\in B[\eta<\tau<\beta\ \&\ c(\eta,\beta)=\tau]\}$$
is co-bounded (resp.~stationary).
To this end, let $\Omega$ be an arbitrary cofinal (resp.~club) subset of $\kappa$,
and we shall show that $T\cap\Omega\neq\emptyset$.
Find $\delta\in\acc(\kappa)$ such that $\sup(\nacc(C)\cap\Omega)=\delta$ for all $C\in\mathcal C_\delta$.
Pick $\beta\in B$ above $\delta$, and then let 
$$\varepsilon:=\sup(\delta \cap \{\sup(C_{\gamma}\cap \delta)\mid \gamma\in\im(\tr(\delta,\beta))\}).$$
Then $\varepsilon<\delta$ and by a standard fact (see \cite[Lemma~4.7]{paper34}), there are two cases to consider:

$\br$ If $\delta\in\nacc(C_{\min(\im(\tr(\delta,\beta))})$,
then,  for every $\eta$ with $\varepsilon<\eta<\delta$,
$\tr(\eta,\beta)=\tr(\delta,\beta){}^\smallfrown\tr(\eta,\delta)$.
In this case, pick a large enough $\tau\in\nacc(C_\delta)\cap\Omega$ for which $\eta:=\sup(C_\delta\cap\tau)$ is greater than $\varepsilon$.
Then $\min(\im(\tr(\eta,\beta)))=\delta$ and hence $c(\eta,\beta)=\min(C_\delta\setminus(\eta+1))=\tau$.
 
$\br$ Otherwise, $\delta\in\acc(C_\gamma)$, for $\gamma:=\min(\im(\tr(\delta,\beta)))$,
and, for every $\eta$ with $\varepsilon<\eta<\delta$,
$\tr(\eta,\beta)=\tr(\gamma,\beta){}^\smallfrown\tr(\eta,\gamma)$.
As $\delta\in\acc(C_\gamma)$, $C_\gamma\cap\delta$ is in $\mathcal C_\delta$,
and hence we may pick a large enough $\tau\in\nacc(C_\gamma\cap\delta)\cap\Omega$ for which $\eta:=\sup(C_\gamma\cap\tau)$ is greater than $\varepsilon$.
Then $\min(\im(\tr(\eta,\beta)))=\gamma$ and $c(\eta,\beta)=\min(C_\gamma\setminus(\eta+1))=\tau$.
\end{proof}

The next lemma demonstrates that $\ubd^*(\ldots)$ is often times stronger than $\ubd(\ldots)$.
Note that, unlike Corollary~\ref{cor228}(3), in the following the ideal $J$ is not assumed to be subnormal. 
This connects to the content of Subsection~\ref{ulamremark}.
\begin{lemma}\label{ubdplus} Suppose that $c:[\kappa]^2\rightarrow\kappa$ witnesses $\ubd^*(J,\{T\})$ for $J$ a $\kappa$-complete ideal over $\kappa$ and $T\s\kappa$ stationary.
Then the following strong form of $\ubd^+(J,\kappa)$ holds:
For every sequence $\langle B_\tau\mid \tau<\kappa\rangle$ of $J^+$-sets,
there exists $\eta<\kappa$ such that $\{\tau\in T\mid \{\beta\in B_\tau\mid c(\eta,\beta)=\tau\}\in J^+\}$ is stationary in $\kappa$.
\end{lemma}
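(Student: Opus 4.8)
The plan is to argue by contradiction. Suppose the conclusion fails for some sequence $\langle B_\tau\mid\tau<\kappa\rangle$ of $J^+$-sets, so that for every $\eta<\kappa$ the set
$T_\eta:=\{\tau\in T\mid \{\beta\in B_\tau\mid c(\eta,\beta)=\tau\}\in J^+\}$
is nonstationary. Before anything else I would unpack the hypothesis $\ubd^*(J,\{T\})$ in the present notation: here $\mathcal A=\{\kappa\}$ and $\mathcal T=\{T\}$, so the content is that $c$ is upper-regressive and, for every $B\in J^+$ and \emph{every} $\tau\in T$, there are $\eta<\tau$ and $\beta\in B\setminus(\tau+1)$ with $c(\eta,\beta)=\tau$.

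Next, for each $\eta<\kappa$ fix a club $D_\eta\s\kappa$ disjoint from $T_\eta$, and form the diagonal intersection $D:=\diagonal_{\eta<\kappa}D_\eta$, which is a club since $\kappa$ is regular uncountable. As $T$ is stationary, I would pick any $\tau_0\in D\cap T$. By the definition of diagonal intersection, $\tau_0\in D_\eta$ for every $\eta<\tau_0$, hence $\tau_0\notin T_\eta$; since $\tau_0\in T$, this forces $\{\beta\in B_{\tau_0}\mid c(\eta,\beta)=\tau_0\}\in J$ for every single $\eta<\tau_0$.

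Now pass to the trimmed set
$B':=B_{\tau_0}\setminus\bigcup_{\eta<\tau_0}\{\beta\in B_{\tau_0}\mid c(\eta,\beta)=\tau_0\}$.
The removed part is a union of fewer than $\kappa$-many $J$-sets, so by $\kappa$-completeness of $J$ it lies in $J$; since $B_{\tau_0}\in J^+$, we get $B'\in J^+$. By construction $B'$ enjoys the crucial self-referential property that no $\beta\in B'$ and $\eta<\tau_0$ satisfy $c(\eta,\beta)=\tau_0$. Finally, apply $\ubd^*(J,\{T\})$ to the single positive set $B'$, instantiated at the colour $\tau_0\in T$: this produces $\eta<\tau_0$ and $\beta\in B'\setminus(\tau_0+1)$ with $c(\eta,\beta)=\tau_0$, directly contradicting the defining property of $B'$.

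The only place ingenuity is needed is the observation that the $\ubd^*$ clause holds for all colours $\tau\in T$ \emph{simultaneously with respect to one and the same positive set}. This is what lets me diagonalize the hypothesised failure into a single $B'$ and then let the $\ubd^*$ property collide with itself at $\tau=\tau_0$; in particular I never need to merge the various $B_\tau$ into one positive set, which is fortunate since that would typically require a normality of $J$ that is not assumed here. The remaining verifications—that $D$ is a club and that $B'$ stays $J$-positive—are routine consequences of the regularity of $\kappa$ and the $\kappa$-completeness of $J$.
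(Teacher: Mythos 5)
Your proof is correct and is essentially the paper's argument in contrapositive form: the paper first shows (via the same trimming-by-$\kappa$-completeness followed by a collision with the $\ubd^*$ property) that for every $B\in J^+$ and $\tau\in T$ some $\eta<\tau$ gives a $J$-positive fibre, and then applies Fodor's lemma to the regressive assignment $\tau\mapsto\eta_\tau$ on $T$. Your diagonal intersection of the clubs $D_\eta$ is just the standard proof of Fodor's lemma unwound inside the argument, so the two proofs have identical combinatorial content.
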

\begin{proof} We commence with an easy observation.
\begin{claim} Let $B\in J^+$ and $\tau\in T$. Then there exists $\eta<\tau$ such that $B^{\eta,\tau}:=\{\beta\in B\mid c(\eta,\beta)=\tau\}$ is in $J^+$.
\end{claim}
\begin{why} Suppose not. Since $J$ is $\kappa$-complete, it follows that $B':=B\setminus \bigcup_{\eta<\tau}B^{\eta,\tau}$ is in $J^+$.
Since $B'$ is in $J^+$ and $\tau\in T$, there must exist $\eta<\tau$ and $\beta\in B'$ such that $c(\eta,\beta)=\tau$. 	
So $\beta\in B'\cap B^{\eta,\tau}$. This is a contradiction.
\end{why}

Now, given a sequence $\langle B_\tau\mid \tau<\kappa\rangle$ of $J^+$-sets,
for each $\tau\in T$, pick $\eta_\tau<\tau$ such that $B_\tau^{\eta_\tau,\tau}$ is in $J^+$.
By Fodor's lemma, there must exist $\eta<\kappa$ for which $\{\tau\in T\mid \eta_\tau=\eta\}$ is stationary.
Clearly, $\eta$ is as sought.	
\end{proof}

Let us now demonstrate the utility of $\ubd^*(\ldots)$.

\begin{lemma}\label{minustoplus} Suppose that $\ubd^*(J,(\ns_\kappa\restriction  S)^*)$ holds for an ideal $J$ over $\kappa$,
and a stationary $S\s\kappa$. If $\onto^-(S,\theta)$ holds, then so does $\onto(J,\theta)$.
\end{lemma}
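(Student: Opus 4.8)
The plan is to fuse the two given colourings into a single one. Let $c_0:[\kappa]^2\to\kappa$ be an upper-regressive colouring witnessing $\ubd^*(J,(\ns_\kappa\restriction S)^*)$, and let $c_1:[\kappa]^2\to\theta$ witness $\onto^-(S,\theta)$. Fix a G\"odel pairing $\Gamma:\kappa\times\kappa\to\kappa$, writing $\Gamma^{-1}(\zeta)=(\zeta^0,\zeta^1)$, and recall that $C_\Gamma:=\{\zeta<\kappa\mid \Gamma[\zeta\times\zeta]=\zeta\}$ is a club. I would then define $d:[\kappa]^2\to\theta$ by setting, for $\eta<\beta$, $d(\eta,\beta):=c_1(\eta^0,c_0(\eta^1,\beta))$ whenever $\eta^1<\beta$ and $\eta^0<c_0(\eta^1,\beta)$, and $d(\eta,\beta):=0$ otherwise. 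The claim is that $d$ witnesses $\onto(J,\theta)$.

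To verify this, fix $B\in J^+$. First I would appeal to $\ubd^*$ to obtain $T\in(\ns_\kappa\restriction S)^*$ such that for every $\tau\in T$ there are $\eta<\tau$ and $\beta\in B\setminus(\tau+1)$ with $c_0(\eta,\beta)=\tau$. Since $S\setminus T$ is nonstationary, there is a club $D\s C_\Gamma$ with $S\cap D\s T$. For each $\tau\in S\cap D$, using $\tau\in T$, choose a witness $\eta_\tau<\tau$ together with some $\beta\in B$ above $\tau$ satisfying $c_0(\eta_\tau,\beta)=\tau$, and define the regressive map $f:S\cap D\to\kappa$ by $f(\tau):=\eta_\tau$. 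Applying $\onto^-(S,\theta)$ to the club $D$ and the regressive map $f$ yields $\eta_0,\eta_1<\kappa$ with $c_1[\{\eta_0\}\circledast T^*]=\theta$, where $T^*:=\{\tau\in S\cap D\mid f(\tau)=\eta_1\}$. I would then set $\eta:=\Gamma(\eta_0,\eta_1)$.

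To finish, I would check that $d[\{\eta\}\circledast B]=\theta$. Given an arbitrary colour $\gamma<\theta$, pick $\tau\in T^*$ with $\eta_0<\tau$ and $c_1(\eta_0,\tau)=\gamma$. Since $f(\tau)=\eta_1$, there is $\beta\in B$ with $\beta>\tau$ and $c_0(\eta_1,\beta)=\tau$. As $f$ is regressive we have $\eta_1<\tau$, and also $\eta_0<\tau$, while $\tau\in D\s C_\Gamma$; hence $\eta=\Gamma(\eta_0,\eta_1)<\tau<\beta$, so $(\eta,\beta)\in\{\eta\}\circledast B$. Moreover the guard conditions in the definition of $d$ hold, since $\eta^1=\eta_1<\beta$ and $\eta^0=\eta_0<\tau=c_0(\eta_1,\beta)$, whence $d(\eta,\beta)=c_1(\eta_0,\tau)=\gamma$.

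The one delicate point, and the step I expect to be the crux, is ensuring that the fused index $\eta$ genuinely sits below the target ordinal $\beta$, as demanded by $\{\eta\}\circledast B$; this is precisely why I intersect the club coming from $T$ with the club $C_\Gamma$ of ordinals closed under the pairing, so that the witnessing $\tau$ (which dominates both $\eta_0$ and $\eta_1$ because $f$ is regressive and $\eta_0<\tau$) also dominates their code $\eta=\Gamma(\eta_0,\eta_1)$. Everything else should be routine bookkeeping: that $f$ is total and regressive on $S\cap D$ is immediate from $S\cap D\s T$ and the $\ubd^*$ property, and the guard conditions on $d$ are met in the relevant case by construction.
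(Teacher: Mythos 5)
Your proof is correct and follows essentially the same route as the paper's: fuse the two colourings via a pairing of $\kappa$ with $\kappa\times\kappa$, use the $\ubd^*$ witness to define a regressive map on $S\cap D$ recording which first coordinate hits $\tau$, and then invoke $\onto^-(S,\theta)$ on that map. The only differences are cosmetic (a G\"odel pairing and its club of closure points in place of an arbitrary bijection $\pi:\kappa\leftrightarrow\kappa\times\kappa$ with the club $\{\delta\mid\pi[\delta]=\delta\times\delta\}$, and an arbitrary choice of witness rather than the least one).
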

\begin{proof} Fix an upper-regressive map  $u:[\kappa]^2\rightarrow\kappa$ witnessing $\ubd^*(J,(\ns_\kappa\restriction  S)^*)$.
Fix a bijection $\pi:\kappa\leftrightarrow\kappa\times\kappa$.
Now, given a 		colouring $c:[\kappa]^2\rightarrow\theta$ that witnesses $\onto^-(S,\theta)$,
pick a colouring $d:[\kappa]^2\rightarrow\theta$ 
such that, for all $\eta<\beta<\kappa$, if $\pi(\eta)=(\eta_0,\eta_1)$, then $d(\eta,\beta)=c(\{\eta_0,u(\{\eta_1,\beta\})\})$.

Next, given $B\in J^+$, by the choice of $u$, we may fix a club $D\s\kappa$ such that, for every $\delta\in  S\cap D$,
for some $\eta\in\delta$ and $\beta\in B\setminus(\delta+1)$, $u(\eta,\beta)=\delta$.
By shrinking $D$, we may also assume that $\pi[\delta]=\delta\times\delta$ for all $\delta\in D$.
Define a function $f: S\cap D\rightarrow\kappa$ by letting $f(\delta)$ be the least $\eta_1\in\delta$ such that, 
for some $\beta\in B\setminus(\delta+1)$, $u(\eta_1,\beta)=\delta$.
Evidently, $f$ is regressive.
So, since $c$ witnesses $\onto^-(S,\theta)$, we may find $\eta_0,\eta_1<\kappa$ such that
$$c[\{\eta_0\}\circledast\{\delta\in S\cap D\mid f(\delta)=\eta_1\}]=\theta.$$

Finally, given $\tau<\theta$, fix $\delta\in S\cap D\setminus(\eta_0+1)$ with $f(\delta)=\eta_1$ such that $c(\eta_0,\delta)=\tau$.
Pick $\beta\in B\setminus(\eta+1)$ such that $u(\eta_1,\beta)=\delta$.
As $\max\{\eta_0,\eta_1\}<\delta\in D$, there exists $\eta<\delta$ such that $\pi(\eta)=(\eta_0,\eta_1)$.
Altogether $\eta<\delta<\beta$ and $d(\eta,\beta)=c(\{\eta_0,u(\{\eta_1,\beta\})\})=c(\eta_0,\delta)=\tau$, as sought.
\end{proof}

\begin{lemma}\label{lemma57} Suppose that $I,J$ are ideals over $\kappa$,
with $I$ being moreover normal extending $J^{\bd}[\kappa]$, and that $\ubd^*(J,I^+)$ holds.
\begin{enumerate}[(1)]
\item If $\onto(I,\theta)$ holds, then so does $\onto(J,\theta)$;
\item If $\ubd(I,\theta)$ holds, then so does $\ubd(J,\theta)$.
\end{enumerate}
\end{lemma}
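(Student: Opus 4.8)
The plan is to combine the two given colourings through a pairing function, much as in the proof of Lemma~\ref{minustoplus}, where the role previously played by $\onto^-(S,\theta)$ is now played by $\onto(I,\theta)$ (resp.\ $\ubd(I,\theta)$) together with the normality of $I$. First I would fix an upper-regressive map $u:[\kappa]^2\rightarrow\kappa$ witnessing $\ubd^*(J,I^+)$, a colouring $c:[\kappa]^2\rightarrow\theta$ witnessing $\onto(I,\theta)$ (resp.\ $\ubd(I,\theta)$), and a bijection $\pi:\kappa\leftrightarrow\kappa\times\kappa$ whose set of closure points $C:=\{\delta<\kappa\mid \pi[\delta]=\delta\times\delta\}$ is a club. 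Writing $\pi(\eta)=(\eta_0,\eta_1)$, I would define $d(\eta,\beta):=c(\{\eta_0,u(\{\eta_1,\beta\})\})$, the idea being that $u$ ``walks $\beta$ down'' into the $I$-positive set supplied by $\ubd^*$ and then $c$ colours the resulting ordinal against $\eta_0$. For Clause~(2), where $d$ must be upper-regressive, I would additionally cap the definition, setting $d(\eta,\beta):=0$ whenever the displayed value is $\ge\beta$; since $c$ is itself upper-regressive, this cap will never be triggered on the pairs used below.

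Next, given $B\in J^+$, I would apply $\ubd^*(J,I^+)$ (with $\mathcal A=\{\kappa\}$) to obtain $T\in I^+$ such that, for every $\tau\in T$, there are $\eta<\tau$ and $\beta\in B\setminus(\tau+1)$ with $u(\eta,\beta)=\tau$. Define $f:T\rightarrow\kappa$ by letting $f(\tau)$ be the least such $\eta$; this $f$ is regressive. Here is the decisive step: since $I$ is a normal ideal, the pressing-down lemma applies, so I may fix $\eta_1<\kappa$ and $T'\s T$ with $T'\in I^+$ on which $f$ is constantly $\eta_1$. Shrinking further to $T'':=T'\cap C$, which remains in $I^+$ since $I$ extends $\ns_\kappa\restriction S$ by Fact~\ref{normalfacts} and so discarding the nonstationary set $\kappa\setminus C$ preserves positivity, I would then apply $\onto(I,\theta)$ (resp.\ $\ubd(I,\theta)$) to $T''$ to obtain $\eta_0$ with $c[\{\eta_0\}\circledast T'']=\theta$ (resp.\ $\otp(c[\{\eta_0\}\circledast T''])=\theta$), and set $\eta:=\pi^{-1}(\eta_0,\eta_1)$.

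Finally I would verify that $\eta$ works for $B$. For Clause~(1), given a target colour $\tau^*<\theta$, pick $\delta\in T''$ above $\eta_0$ with $c(\eta_0,\delta)=\tau^*$; since $\delta\in C$ and $\max\{\eta_0,\eta_1\}<\delta$ (as $\eta_1=f(\delta)<\delta$), we get $\eta<\delta$, and then $f(\delta)=\eta_1$ furnishes $\beta\in B\setminus(\delta+1)$ with $u(\eta_1,\beta)=\delta$, whence $\eta<\delta<\beta$ and $d(\eta,\beta)=c(\{\eta_0,\delta\})=c(\eta_0,\delta)=\tau^*$; this shows $d[\{\eta\}\circledast B]=\theta$. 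For Clause~(2) the same computation gives $c[\{\eta_0\}\circledast T'']\s d[\{\eta\}\circledast B]\s\theta$, and since the left side has ordertype $\theta$ while the right side is a subset of $\theta$, we conclude $\otp(d[\{\eta\}\circledast B])=\theta$; note that upper-regressivity of $c$ guarantees $c(\eta_0,\delta)<\delta<\beta$, so the cap in the definition of $d$ is never invoked on these pairs.

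The step I expect to be the crux is the pressing-down move that stabilises the walk-down coordinate $\eta_1$ on an $I$-positive set: the witness $\eta$ that $\ubd^*$ produces for each $\tau\in T$ genuinely depends on $\tau$, and it is precisely the normality of $I$ (unavailable for the arbitrary ideal $J$) that lets us fix it, so that a single $\eta_1$ --- and hence a single $\eta=\pi^{-1}(\eta_0,\eta_1)$ --- works uniformly across all colours. Everything else is bookkeeping with the closure club $C$ to force the inequalities $\eta<\delta<\beta$.
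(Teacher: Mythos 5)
Your proposal is correct and follows essentially the same route as the paper's proof: the same composed colouring $d(\eta,\beta)=c(\{\eta_0,u(\{\eta_1,\beta\})\})$, the same use of normality of $I$ to stabilise the $\ubd^*$-witness $\eta_1$ on an $I$-positive subset of $T$ (the paper phrases this directly rather than via an explicit regressive $f$ and Fodor, but it is the same pressing-down move), and the same closure-club bookkeeping to secure $\eta<\delta<\beta$. The only cosmetic slip is the reference to ``$\ns_\kappa\restriction S$'' where you mean $\ns_\kappa$; since $I$ is normal and extends $J^{\bd}[\kappa]$, Fact~\ref{normalfacts} gives $\ns_\kappa\s I$, which is exactly what you need.
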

\begin{proof} Fix an upper-regressive map $u:[\kappa]^2\rightarrow\kappa$ witnessing $\ubd^*(J,I^+)$.
and a bijection $\pi:\kappa\leftrightarrow\kappa\times\kappa$.
Set $D:=\{ \delta<\kappa\mid \pi[\delta]=\delta\times\delta\}$.
Now, given a (resp.~upper-regressive) colouring $c:[\kappa]^2\rightarrow\theta$,
pick a (resp.~upper-regressive) colouring $d:[\kappa]^2\rightarrow\theta$ 
such that, for all $\eta<\beta<\kappa$, if $\pi(\eta)=(\eta_0,\eta_1)$, then $d(\eta,\beta)=c(\{\eta_0,u(\{\eta_1,\beta\})\})$.

Next, given $B\in J^+$, fix $T\in I^+$ such that, for every $\tau\in  T$,
for some $\eta<\tau$ and $\beta\in B\setminus(\tau+1)$, $u(\eta,\beta)=\tau$.
As $I$ is normal, we may find some $\eta_1<\kappa$ for which $T':=\{\tau\in T\cap D\mid \exists \beta\in B~[\eta_1<\tau<\beta\ \&\ u(\eta_1,\beta)=\tau]\}$ is in $I^+$.		
So, assuming that $c$ witnesses $\onto(I,\theta)$ (resp.~$\ubd(I,\theta)$),
we may find an $\eta_0<\kappa$ such that $A:=c[\{\eta_0\}\circledast T']$ is equal to $\theta$ (resp of order-type $\theta$).		
Set $\eta:=\pi^{-1}(\eta_0,\eta_1)$.
\begin{claim} $A\s d[\{\eta\}\circledast B]$.
\end{claim}
\begin{why} Let $\alpha\in A$.	Pick $\tau\in T'$ above $\eta_0$ such that $c(\eta_0,\tau)=\alpha$.
As $\max\{\eta_0,\eta_1\}<\tau$ and $\tau\in D$, $\eta<\tau$.
In addition, as $\tau\in T'$, we may pick $\beta\in B$ above $\tau$ such that $u(\eta_1,\beta)=\tau$.
Altogether, $\eta<\beta$ and $d(\eta,\beta)=c(\{\eta_0,u(\{\eta_1,\beta\})\})=c(\eta_0,\tau)=\alpha$, as sought.		
\end{why}
This completes the proof.
\end{proof}

\begin{cor} For $S,T\in\ns_\kappa$, if $\cg^*( S,T)$ and $\onto(\ns_\kappa\restriction T,\theta)$ both hold,
then $\onto(\ns_\kappa\restriction S,\theta)$ holds, as well.
\end{cor}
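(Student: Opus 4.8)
The plan is to read this off as a direct composition of Lemma~\ref{lemma55} and Lemma~\ref{lemma57}, taking $I:=\ns_\kappa\restriction T$ and $J:=\ns_\kappa\restriction S$. First I would invoke Lemma~\ref{lemma55}: since $\cg^*(S,T)$ holds, it yields $\ubd^*([\kappa]^\kappa,\ns_\kappa\restriction S,(\ns_\kappa\restriction T)^+)$. The one piece of bookkeeping is that Lemma~\ref{lemma55} produces the principle with family parameter $\mathcal A=[\kappa]^\kappa$, whereas Lemma~\ref{lemma57} consumes $\ubd^*(J,I^+)$ with the default parameter $\mathcal A=\{\kappa\}$. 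Since $\kappa\in[\kappa]^\kappa$, we have $\{\kappa\}\s[\kappa]^\kappa$, so the monotonicity recorded in Remark~\ref{remark25a}(2) lets me pass from $\ubd^*([\kappa]^\kappa,\ns_\kappa\restriction S,(\ns_\kappa\restriction T)^+)$ to $\ubd^*(\ns_\kappa\restriction S,(\ns_\kappa\restriction T)^+)$, which is precisely $\ubd^*(J,I^+)$.

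Next I would check the standing hypotheses of Lemma~\ref{lemma57}. Here $I=\ns_\kappa\restriction T$ is a normal ideal over $\kappa$ extending $J^{\bd}[\kappa]$ (the restriction of the normal ideal $\ns_\kappa$ to the stationary set $T$ is again normal, and it plainly contains every bounded subset of $\kappa$), while $J=\ns_\kappa\restriction S$ is an ideal over $\kappa$; together with $\ubd^*(J,I^+)$, obtained in the previous paragraph, this meets the hypotheses. Since $\onto(\ns_\kappa\restriction T,\theta)$, i.e.\ $\onto(I,\theta)$, is assumed, Clause~(1) of Lemma~\ref{lemma57} then delivers $\onto(J,\theta)$, that is, $\onto(\ns_\kappa\restriction S,\theta)$, which is exactly the assertion.

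I do not anticipate a genuine obstacle: the corollary is arranged so that Lemma~\ref{lemma55} supplies the transfer colouring witnessing $\ubd^*$ and Lemma~\ref{lemma57} performs the pump-up from $I$ to $J$. The only matters needing care are the two conventions flagged above, namely aligning the family parameter $\mathcal A$ via Remark~\ref{remark25a}(2) and confirming that $\ns_\kappa\restriction T$ qualifies as a normal ideal over $\kappa$ extending $J^{\bd}[\kappa]$; both are routine. (Implicitly $S$ and $T$ are taken to be stationary, as is required for $\cg^*(S,T)$ to be meaningful.)
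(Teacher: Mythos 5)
Your proposal is correct and is exactly the paper's proof, which simply cites Lemma~\ref{lemma55} followed by Lemma~\ref{lemma57}(1); the two bookkeeping points you flag (shrinking the family parameter from $[\kappa]^\kappa$ to $\{\kappa\}$ via Remark~\ref{remark25a}(2), and noting that $\ns_\kappa\restriction T$ is a normal ideal over $\kappa$ extending $J^{\bd}[\kappa]$) are indeed the only details left implicit there.
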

\begin{proof} By Lemmas \ref{lemma55} and \ref{lemma57}.
\end{proof}

\section{Pumping-up results}\label{sectionpumping}
The theme of this section is to obtain instances of the colouring principles we have defined, 
either from classical colouring principles, or from other instances of our own colouring principles. 
A recurring theme is that at the cost of lowering the total number of colours we can upgrade the quality of the colouring.
This can be seen by examining the three most technical results, Theorems \ref{thm58} and \ref{thm42}, and some cases of Theorem~\ref{ubdtoonto}. 
Apart from Theorem~\ref{ubdtoonto}, Lemma~\ref{increasecolours} provides another example where one can upgrade the quality of the colouring without reducing the number of colours, 
though one needs an auxiliary colouring to perform this upgrade.

\begin{prop}\label{singularprojection} Suppose that $\theta\le\cf(\kappa)<\kappa$.
Let $\textsf p\in\{\onto,\onto^+,\onto^{++},\allowbreak\ubd,\ubd^+,\ubd^{++}\}$.

If $\textsf p(J^{\bd}[\cf(\kappa)],\theta)$ holds, then so does $\textsf p(\{\cf(\kappa)\},J^{\bd}[\kappa],\theta)$.
\end{prop}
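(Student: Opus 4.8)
The plan is to fix once and for all a projection $\pi\colon\kappa\to\mu$, where $\mu:=\cf(\kappa)$, and to pull the given colouring on $\mu$ back along $\pi$. Concretely, I would fix a strictly increasing sequence $\langle\kappa_i\mid i<\mu\rangle$, cofinal in $\kappa$, with $\kappa_0=0$, and set $\pi(\beta):=\sup\{i<\mu\mid \kappa_i\le\beta\}$ for each $\beta<\kappa$. Two elementary properties of $\pi$ will drive everything. First, since every index $i$ in the defining set satisfies $i\le\kappa_i\le\beta$, we have $\pi(\beta)\le\beta$. Second, whenever $G\s\mu$ is unbounded in $\mu$ and one selects, for each $\gamma\in G$, some $\beta_\gamma$ with $\pi(\beta_\gamma)=\gamma$, the resulting set $\{\beta_\gamma\mid\gamma\in G\}$ is unbounded in $\kappa$: given $\rho<\kappa$, choose $i$ with $\kappa_i>\rho$ and $\gamma\in G$ with $\gamma>i$; then $\pi(\beta_\gamma)=\gamma>i$ forces some $i'>i$ with $\kappa_{i'}\le\beta_\gamma$, whence $\rho<\kappa_i<\beta_\gamma$. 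In particular, taking $G=\pi[B]$, this shows $\pi[B]$ is unbounded in $\mu$ whenever $B\s\kappa$ is unbounded.

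Given a witness $c\colon[\mu]^2\to\theta$ to $\textsf{p}(J^{\bd}[\mu],\theta)$, I would define $d\colon[\kappa]^2\to\theta$ by $d(\eta,\beta):=c(\eta,\pi(\beta))$ when $\eta<\pi(\beta)$, and $d(\eta,\beta):=0$ otherwise. If $c$ is upper-regressive then so is $d$, since in the first clause $d(\eta,\beta)=c(\eta,\pi(\beta))<\pi(\beta)\le\beta$ and in the second $d(\eta,\beta)=0<\beta$. The verification for each of the six principles then follows a single template. Given a positive set $B$ for $J^{\bd}[\kappa]$ (or a sequence $\langle B_\tau\mid\tau<\theta\rangle$ of such, in the $^{++}$ cases), I pass to the images $B':=\pi[B]$ (resp.\ $B'_\tau:=\pi[B_\tau]$), which are positive for $J^{\bd}[\mu]$ by the previous paragraph, and apply the hypothesis to obtain some $\eta<\mu$ witnessing the source principle for the primed data. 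Note that $\eta<\mu=\cf(\kappa)$ is precisely the membership $\eta\in\cf(\kappa)$ required by $\mathcal A=\{\cf(\kappa)\}$ in the target principle.

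The passage back to $\kappa$ rests on the point that $\eta<\pi(\beta)$ entails $\eta<\beta$ (as $\pi(\beta)\le\beta$) together with $d(\eta,\beta)=c(\eta,\pi(\beta))$. Thus, for each colour $\tau$ realised by $c$ at $\eta$ along $B'$, picking $\beta\in B$ with $\pi(\beta)=\gamma$ for the relevant $\gamma\in B'$ yields $\eta<\beta$ and $d(\eta,\beta)=c(\eta,\gamma)=\tau$; this settles $\onto$, and since $c[\{\eta\}\circledast B']\s d[\{\eta\}\circledast B]\s\theta$ with $\otp(c[\{\eta\}\circledast B'])=\theta$, the order type of $d[\{\eta\}\circledast B]$ is squeezed to $\theta$, settling $\ubd$ as well. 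For the $^+$ and $^{++}$ variants the extra content is that the relevant fibres stay unbounded: for a fixed $\tau$ the set $G_\tau$ of $\gamma\in B'$ (resp.\ $B'_\tau$) above $\eta$ with $c(\eta,\gamma)=\tau$ is unbounded in $\mu$ by the source principle, and then the second property of $\pi$ shows that $\{\beta\in B\mid \beta>\eta,\ d(\eta,\beta)=\tau\}$ is unbounded in $\kappa$. The $^+$ order-type bookkeeping and the reuse of the \emph{same} injection $h$ in the $^{++}$ case are then immediate, using once more that a subset of $\theta$ containing a set of order type $\theta$ has order type $\theta$. I expect the only mildly delicate step to be this last unboundedness transfer along $\pi$ in the $^+/^{++}$ cases; everything else is routine pullback bookkeeping.
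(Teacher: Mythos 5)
Your proof is correct and follows essentially the same route as the paper's: fix a projection $\pi\colon\kappa\to\cf(\kappa)$ satisfying $\pi(\beta)\le\beta$ under which unbounded sets correspond to unbounded sets, and pull the given colouring back via $d(\eta,\beta)=c(\eta,\pi(\beta))$ (the paper takes $\pi(\beta):=\otp(x\cap\beta)$ for a cofinal $x\s\kappa\setminus\cf(\kappa)$ of order type $\cf(\kappa)$ and leaves the verification to the reader). One small quibble: the unboundedness of $\pi[B]$ for unbounded $B$ does not follow from your ``second property'' by taking $G=\pi[B]$ --- that argument runs in the wrong direction --- but the fact is immediate anyway, since $\pi[B]\s\gamma_0$ would force $B\s\kappa_{\gamma_0+1}$.
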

\begin{proof} Set $\varkappa:=\cf(\kappa)$. 
Fix a cofinal subset $x\s\kappa\setminus\varkappa$ of order-type $\varkappa$.
Define a map $\pi:\kappa\rightarrow\varkappa$ via $\pi(\beta):=\otp(x\cap\beta)$.
Note that $\pi(\beta)<\beta$ for all $\beta<\kappa$,
and that, for every $B\s\kappa$, 
$B\in (J^\bd[\kappa])^+$ iff $\pi[B]\in (J^{\bd}[\varkappa])^+$.
Now, given a colouring $d:[\varkappa]^2\rightarrow\theta$
witnessing $\mathsf p(J^{\bd}[\varkappa],\theta)$,
pick any upper-regressive colouring $c:[\kappa]^2\rightarrow\theta$ such that for all $\eta<\varkappa\le\beta<\kappa$,
$c(\eta,\beta)=d(\eta,\pi(\beta))$.	
It is easy to verify that $c$ witnesses $\textsf p(\{\cf(\kappa)\},J^{\bd}[\kappa],\theta)$.
\end{proof}

\begin{lemma}\label{lemma64} Let $J\in\mathcal J^\kappa_\omega$,
and suppose that $\theta$ is an infinite cardinal less than $\kappa$.
\begin{enumerate}[(1)]
\item If $\ubd^+(J,\theta^+)$ holds, then so does  $\ubd^+(J,\theta)$;
\item If $J$ is subnormal and $\theta^+<\cf(\kappa)$, then $\ubd(J,\theta^+)$ implies $\ubd(J,\theta)$.
\end{enumerate}
\end{lemma}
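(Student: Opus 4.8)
The plan is to cut the number of colours from $\theta^+$ down to $\theta$ by composing the given colouring with \emph{local} injections. For each $\gamma<\theta^+$ fix an injection $e_\gamma:\gamma\to\theta$ (these exist since $|\gamma|\le\theta$), and fix a bijection $\pi:\kappa\leftrightarrow\kappa\times\theta^+$ (using $\theta^+\le\kappa$). Given $c:[\kappa]^2\to\theta^+$ witnessing the relevant $\theta^+$-principle, define an upper-regressive $d:[\kappa]^2\to\theta$ by decoding $\pi(\eta)=(\eta_0,\gamma)$ and setting $d(\eta,\beta):=e_\gamma(c(\eta_0,\beta))$ whenever $c(\eta_0,\beta)<\gamma$ and $e_\gamma(c(\eta_0,\beta))<\beta$, and $d(\eta,\beta):=0$ otherwise. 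The point of encoding $\gamma$ into the first coordinate is that, once a test set $B$ is fixed and $c$ has revealed a witness $\eta_0$, one may choose $\gamma$ \emph{adaptively} and read off the $d$-witness $\eta:=\pi^{-1}(\eta_0,\gamma)$. Since $e_\gamma$ is injective, for each $\sigma<\theta$ there is at most one $\tau<\gamma$ with $e_\gamma(\tau)=\sigma$, so the $d$-colour $\sigma$ at $\eta$ is governed by the single $c$-colour $e_\gamma^{-1}(\sigma)$ at $\eta_0$; this is what keeps the bookkeeping transparent.

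For Clause~(1), fix $\eta_0$ with $\otp(P)=\theta^+$, where $P:=\{\tau<\theta^+\mid\{\beta\in B\setminus(\eta_0+1)\mid c(\eta_0,\beta)=\tau\}\in J^+\}$ is the set of positive colours. Choose $\gamma<\theta^+$ with $|P\cap\gamma|=\theta$ (possible since $\theta^+$ is regular, e.g.\ $\gamma$ the successor of the supremum of the first $\theta$ members of $P$), and put $\eta:=\pi^{-1}(\eta_0,\gamma)$. For each $\tau\in P\cap\gamma$, the set $\{\beta\mid d(\eta,\beta)=e_\gamma(\tau)\}$ agrees with $\{\beta\mid c(\eta_0,\beta)=\tau\}$ up to a bounded, hence $J$-null, discrepancy (coming from $\beta\le\eta$ and the upper-regressive cap); since $J\supseteq J^{\bd}[\kappa]$ it remains $J$-positive. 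Thus the positive $d$-colours at $\eta$ include $e_\gamma[P\cap\gamma]$, a size-$\theta$ subset of $\theta$, hence of order-type $\theta$; being contained in $\theta$ this order-type is exactly $\theta$. The essential feature is that positivity is insensitive to deleting bounded sets, so the behaviour of the second coordinate requires no care.

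For Clause~(2) the analogous quantity is the \emph{range} $R:=c[\{\eta_0\}\circledast B]$ rather than the positive colours, and here lies the difficulty: deleting the bounded block $B\cap(\eta_0,\eta]$ may erase colours, and \textbf{controlling this colour-loss is the main obstacle}. I would neutralise it using subnormality together with $\theta^+<\cf(\kappa)$. First fix $\pi$ \emph{coherently}, so that $\pi[\delta]=\delta\times\theta^+$ for all $\delta$ in a club $C_\pi$ with $\theta^+\in C_\pi$; then $\pi^{-1}(\eta_0,\gamma)<\min(C_\pi\setminus(\eta_0+1))$ whenever $\gamma<\theta^+\le\eta_0$, while $\pi^{-1}(\eta_0,\gamma)<\theta^+$ whenever $\eta_0,\gamma<\theta^+$. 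Given $B\in J^+$, apply Lemma~\ref{dseparated} with $D:=C_\pi$ to pass to a $C_\pi$-separated $B'\s B$ in $J^+$, and set $B'':=B'\setminus(\theta^++1)$, which lies in $J^+$ since $\theta^+<\cf(\kappa)\le\kappa$. Apply $\ubd(J,\theta^+)$ to $B''$ to obtain $\eta_0$ with $\otp(c[\{\eta_0\}\circledast B''])=\theta^+$, choose $\gamma<\theta^+$ with $|R\cap\gamma|=\theta$ exactly as in Clause~(1) (now with $R$ the range on $B''$), and set $\eta:=\pi^{-1}(\eta_0,\gamma)$.

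Finally I would verify that at most one colour is lost. If $\eta_0<\theta^+$ then $\eta<\theta^+<\min(B'')$, so every $\beta\in B''$ exceeds $\eta$ and no colour is lost; if $\eta_0\ge\theta^+$ then $\eta<\delta_0:=\min(C_\pi\setminus(\eta_0+1))$, and since $B''$ is $C_\pi$-separated the interval $(\eta_0,\delta_0)$ meets $B''$ in at most one point, so at most one colour is lost. Either way the colours below $\gamma$ surviving above $\eta$ still number $\theta$, and as in Clause~(1) their $e_\gamma$-images give $\otp(d[\{\eta\}\circledast B''])=\theta$; since $d[\{\eta\}\circledast B'']\s d[\{\eta\}\circledast B]\s\theta$, also $\otp(d[\{\eta\}\circledast B])=\theta$, as required. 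I expect the only genuinely delicate points to be the simultaneous construction of the coherent pairing $\pi$ and the routine checks that the various bounded discrepancies are $J$-null; the conceptual content is entirely in using $D$-separation to confine the potential colour-loss to a single point.
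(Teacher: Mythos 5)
Your proposal is correct and follows essentially the same route as the paper's proof: the same derived colouring $d(\eta,\beta)=e_{\gamma}(c(\eta_0,\beta))$ built from local injections $e_\gamma:\gamma\to\theta$ and a pairing bijection, with Clause~(1) handled by noting that positivity survives deletion of bounded sets, and Clause~(2) handled via Lemma~\ref{dseparated} applied to the club of closure points of $\pi$ so that at most one point of $B$ (hence at most one colour) can fall below the chosen $\eta$. The only differences are cosmetic — the paper discards that single point from $B$ up front rather than accounting for the lost colour afterwards.
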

\begin{proof} Suppose $c:[\kappa]^2\rightarrow\theta^+$ is a given upper-regressive colouring. 
Fix a bijection $\pi:\kappa\leftrightarrow\kappa\times\theta^+$.
For every $\tau<\theta^+$, fix an injection $e_\tau:\tau\rightarrow\theta$.
Now, pick any upper-regressive colouring $d:[\kappa]^2\rightarrow\theta$ such that for all $\eta<\beta<\kappa$,
if $\pi(\eta)=(\eta',\tau')$, $\max\{\eta',\theta\}<\beta$ and $c(\eta',\beta)<\tau'$, then $d(\eta,\beta)=e_{\tau'}(c(\eta',\beta))$.

(1) Suppose that $c$ witnesses $\ubd^+(J,\theta^+)$,
and we shall show that $d$ witnesses $\ubd^+(J,\theta)$.
To this end, let $B\in J^+$ be arbitrary.
Fix $\eta'<\kappa$ for which the following set has order-type $\theta^+$: $$T:=\{\tau<\theta^+\mid \{\beta\in B\setminus(\eta'+1))\mid c(\eta',\beta)=\tau\}\in J^+\}.$$
Fix $\tau'\in T$ such that $\otp(T\cap\tau')=\theta$,
and let $\eta:=\pi^{-1}(\eta',\tau')$. We claim that 
$$S:=\{\sigma<\theta\mid \{\beta\in B\setminus(\eta+1))\mid d(\eta,\beta)=\sigma\}\in J^+\}$$
covers the set $e_{\tau'}[T]$, hence has order-type $\theta$.
To see this, let $\sigma\in e_{\tau'}[T]$ be arbitrary. 
Fix $\tau\in T$ such that $e_{\tau'}(\tau)=\sigma$. As $\tau\in T$ and $J$ extends $J^{\bd}[\kappa]$, the set
$$\hat B:=\{\beta\in B\setminus(\max\{\theta,\eta',\eta\}+1))\mid c(\eta',\beta)=\tau\}$$ is in $J^+$.
Clearly, for every $\beta\in\hat B$, $d(\eta,\beta)=e_{\tau'}(c(\eta',\beta))=e_{\tau'}(\tau)=\sigma$, as sought.

(2)	Suppose that $\theta^+<\cf(\kappa)$, 
so that $D:=\{ \delta<\kappa\mid \pi[\delta]=\delta\times\theta^+\}$ is cofinal in $\kappa$.
Suppose also that $J$ is subnormal and that $c$ witnesses $\ubd(J,\theta^+)$; we shall show that $d$ witnesses $\ubd(J,\theta)$.
To this end, let $B\in J^+$ be arbitrary.
By Lemma~\ref{dseparated}, and by possibly passing to a positive subset of $B$,
we may assume that $B$ is $D$-separated.
Now, by the choice of $c$, fix $\eta'<\kappa$ such that $c[\{\eta'\}\circledast B]$ has order-type $\theta^+$.
Set $\bar\beta:=\min(B\setminus(\eta'+1))$ and $B':=B\setminus(\bar\beta+1)$.
As $B$ is $D$-separated, we may fix $\delta\in D$ such that $\bar\beta<\delta<\min(B')$.
As the sets $\{\eta'\}\circledast B$ and $\{\eta'\}\times B'$  differ on at most one element, $T:=c[\{\eta'\}\times B']$ has order-type $\theta^+$, 
so we may fix ${\tau'}\in T$ such that $\otp(T\cap{\tau'})=\theta$,
and let $\eta:=\pi^{-1}(\eta',{\tau'})$. 

As $\eta'<\bar\beta<\delta$, we infer that $\eta<\delta<\min(B')$, so 
$$d[\{\eta\}\circledast B]\supseteq e_{\tau'}[c[\{\eta'\}\times B']\cap \tau']=e_{\tau'}[T],$$
with the latter being a subset of $\theta$ of order-type $\theta$.
\end{proof}	

\begin{defn}[\cite{paper34}] $\U(\kappa, \mu, \theta, \chi)$ asserts the existence of a colouring 
$c:[\kappa]^2\rightarrow \theta$ such that for every $\sigma < \chi$, every pairwise disjoint subfamily
$\mathcal{A} \s [\kappa]^{\sigma}$ of size $\kappa$,
and every $i < \theta$, there exists $\mathcal{B} \in [\mathcal{A}]^\mu$
such that $\min(c[a \times b]) > i$ for all $a, b \in \mathcal{B}$ with $\sup(a)<\min(b)$.
\end{defn}

\begin{prop}\label{prop45} Suppose $\theta<\kappa$, $c:[\kappa]^2\rightarrow\theta$ is a colouring and $S \s \kappa$.
\begin{enumerate}[(1)]
\item Suppose that $\theta\in\reg(\cf(\kappa))$ and $c$ satisfies that for every cofinal $B\s\kappa$,
$\sup(c``[B]^2)=\theta$. Then $c$ witnesses $\ubd(J^{\bd}[\kappa],\theta)$;
\item Suppose that $\kappa$ is regular uncountable, $S$ is stationary, and $c$ satisfies that for every stationary $B\s S$, $c``[B]^2=\theta$. 
Then $c$ witnesses $\onto(\ns_\kappa\restriction S,\theta)$;
\item  Suppose $\theta< \cf(\kappa)$ and that $c$ satisfies that for every cofinal $B\s\kappa$,
$c``[B]^2=\theta$. Then $c$ witnesses $\onto(J^{\bd}[\kappa],\theta)$.
\end{enumerate} 

In particular, each of the following implies that $\ubd(J^\bd[\kappa],\theta)$ holds:
\begin{itemize}
\item $\U(\kappa,2,\theta,2)$;
\item $\kappa\nrightarrow[\kappa]^2_\theta$;
\item the existence of a $\kappa$-Souslin tree.\footnote{By \cite[Lemma~1]{MR0371662}, 
a $\kappa$-Souslin tree gives rise to a colouring witnessing $\kappa\nrightarrow[\kappa]^2_\kappa$,
and it is in fact the case that the existence of a $\kappa$-Souslin tree implies $\kappa\nrightarrow[\kappa;\kappa]^2_\kappa$.}
\end{itemize}
\end{prop}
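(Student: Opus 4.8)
The plan is to prove all three clauses by contradiction, in each case converting a failure of the conclusion into a positive set of the relevant ideal on which some colour (or a cofinal stretch of colours) is never realised, thereby contradicting the hypothesis on $c$. One preliminary remark streamlines matters: since $\theta<\kappa$, the given $c$ is automatically upper-regressive on every pair $(\eta,\beta)$ with $\beta\ge\theta$. So for Clause~(1), where $\ubd$ demands an upper-regressive witness, I would first replace $c(\eta,\beta)$ by $0$ for all $\beta\le\theta$; this alters $c$ only on a bounded set of upper points, hence preserves every instance of the combinatorial property (as removing a bounded initial segment keeps a cofinal set cofinal), and produces a genuine upper-regressive witness. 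For Clauses~(2) and~(3) no adjustment is needed, since $\onto$ imposes no regressivity.

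For Clause~(1), since $\theta$ is regular a subset of $\theta$ has order-type $\theta$ exactly when it is cofinal in $\theta$, so it suffices to produce, for each cofinal $B\s\kappa$, an $\eta$ with $c[\{\eta\}\circledast B]$ cofinal in $\theta$. Suppose this fails, so that $g(\eta):=\sup(c[\{\eta\}\circledast B])<\theta$ for every $\eta$. I would partition $B$ according to the value of $g$ into $\theta$ pieces; as $\theta<\cf(\kappa)$ and $B$ is cofinal, a union of $\theta$-many bounded sets is bounded, so some piece $B':=\{\eta\in B\mid g(\eta)=\zeta_0\}$ is cofinal in $\kappa$. For any $(\eta,\beta)\in[B']^2$ we then have $c(\eta,\beta)\le g(\eta)=\zeta_0<\theta$, whence $\sup(c``[B']^2)\le\zeta_0<\theta$, contradicting the hypothesis applied to the cofinal set $B'$.

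Clauses~(2) and~(3) share a single \emph{missed-colour} argument. Fix a counterexample $B$ (a stationary $B\s S$ in Clause~(2), a cofinal $B$ in Clause~(3)); for each $\eta\in B$ the failure of $c[\{\eta\}\circledast B]=\theta$ lets me pick a colour $h(\eta)\in\theta\setminus c[\{\eta\}\circledast B]$. In Clause~(2), $B$ is stationary and $\theta<\kappa=\cf(\kappa)$, so by $\kappa$-completeness of $\ns_\kappa$ there is a stationary $B'\s B$ on which $h$ is constant, say $h\equiv\tau^*$; in Clause~(3), $B$ is cofinal and $\theta<\cf(\kappa)$, so some fibre $B':=\{\eta\in B\mid h(\eta)=\tau^*\}$ is cofinal in $\kappa$. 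In either case $B'\s B$, so for every $(\eta,\beta)\in[B']^2$ we have $\beta\in B$ with $\beta>\eta$, hence $c(\eta,\beta)\ne h(\eta)=\tau^*$; thus $\tau^*\notin c``[B']^2$. Since $B'$ is a positive set of the relevant ideal, this contradicts the hypothesis that $c``[B']^2=\theta$.

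For the displayed consequences I would check that each hypothesis supplies a colouring meeting the condition of Clause~(1). Unpacking $\U(\kappa,2,\theta,2)$ with $\sigma=1$ and $\mu=2$ says precisely that for every $A\in[\kappa]^\kappa$ and every $i<\theta$ there are $\alpha<\beta$ in $A$ with $c(\alpha,\beta)>i$, i.e.\ $\sup(c``[A]^2)=\theta$; and $\kappa\nrightarrow[\kappa]^2_\theta$ gives the even stronger $c``[A]^2=\theta$. For a $\kappa$-Souslin tree I would invoke the cited fact that it yields a witness to $\kappa\nrightarrow[\kappa]^2_\kappa$ and then compose with a surjection $\kappa\to\theta$, observing $(\pi\circ c)``[A]^2=\pi[\kappa]=\theta$, to land in $\theta$ colours. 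Recalling that for regular $\kappa$ the family $[\kappa]^\kappa$ is exactly the collection of cofinal sets, Clause~(1) then delivers $\ubd(J^\bd[\kappa],\theta)$. The one genuinely delicate point in the whole argument is Clause~(1): the passage from controlling $\sup(c``[B]^2)$ on all of $B$ at once to locating a single lower vertex $\eta$ whose column $c[\{\eta\}\circledast B]$ is cofinal in $\theta$, which is exactly where both the regularity of $\theta$ and the inequality $\theta<\cf(\kappa)$ are used.
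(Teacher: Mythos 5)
Your proof of Clause~(1) is exactly the paper's argument: given a cofinal counterexample $B$, stabilise $\eta\mapsto\sup(c[\{\eta\}\circledast B])$ on a cofinal fibre $B'\s B$ (using $\theta<\cf(\kappa)$) and then contradict the hypothesis applied to $B'$; the paper in fact proves only this clause, so your missed-colour arguments for Clauses (2) and (3) and your upper-regressivity adjustment are precisely the routine details it leaves to the reader, and they are correct. One small refinement: your derivation of the second and third bullets is routed through Clause~(1), which hypothesises $\theta\in\reg(\cf(\kappa))$, whereas the proposition permits any cardinal $2\le\theta<\kappa$; for singular $\theta$ you should instead invoke your Clause~(3), which applies because $\kappa\nrightarrow[\kappa]^2_\theta$ and the Souslin-tree colouring (composed with a surjection onto $\theta$) realise \emph{every} colour, not merely an unbounded set of colours, on every cofinal subset of the regular cardinal $\kappa$, and then your regressivity modification turns the resulting witness to $\onto(J^{\bd}[\kappa],\theta)$ into a witness to $\ubd(J^{\bd}[\kappa],\theta)$.
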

\begin{proof} We focus on Clause~(1). So, suppose that $c$ is as above yet there exists a cofinal $B\s\kappa$ such that, 
for every $\eta<\kappa$, $\varsigma_\eta:=\sup(c``[\{\eta\}\circledast B])$ is $<\theta$.
As $\theta<\cf(\kappa)$, we may now fix some $\varsigma<\theta$ for which $B':=\{\eta\in B\mid \varsigma_\eta=\varsigma\}$ is cofinal.
Appealing to the property of $c$, we have that $\sup(c``[B']^2)=\theta$, so there must exist $(\eta,\beta)\in[B']^2$ with $c(\eta,\beta)>\varsigma$,
contradicting the fact that $\varsigma=\varsigma_\eta\ge c(\eta,\beta)$.
\end{proof}

\begin{prop}\label{prop52}
Suppose that $\theta\in\reg(\cf(\kappa))$ and there is a tree $\mathcal T$ of height $\theta$ with at least $\kappa$ many branches,
and	all of whose levels have size than $<\kappa$. Then $\U(\kappa, 2, \theta, 2)$ holds. 
In particular, each of the following implies $\U(\kappa,2, \theta, 2)$ and hence $\ubd(J^\bd[\kappa], \theta)$ as well:
\begin{enumerate}
\item there are cardinals $\theta\in\reg(\cf(\kappa))$ and $\lambda< \kappa$ such that $\lambda^{<\theta}<\kappa\leq \lambda^{\theta}$;
\item $\theta= \aleph_0<\cf(\kappa)\le\kappa \leq 2^{\aleph_0}$;
\item $\theta = \aleph_0<\cf(\kappa)\le \kappa<\kappa^{\aleph_0}$.
\end{enumerate}
\end{prop}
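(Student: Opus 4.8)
The plan is to split off the easy half first: the displayed ``in particular'' assertion that $\U(\kappa,2,\theta,2)$ yields $\ubd(J^\bd[\kappa],\theta)$ is already recorded in Proposition~\ref{prop45}, so I only need to manufacture, from the tree $\mathcal T$, a colouring witnessing $\U(\kappa,2,\theta,2)$. First I would unwind that principle in the present setting $\mu=\chi=2$: the only nontrivial value is $\sigma=1$ (for $\sigma=0$ there is no pairwise disjoint family of size $\kappa$), and a pairwise disjoint $\mathcal A\s[\kappa]^1$ of size $\kappa$ is just a set $X\in[\kappa]^\kappa$ of singletons. So what must be produced is a colouring $c:[\kappa]^2\to\theta$ such that for every $X\in[\kappa]^\kappa$ and every $i<\theta$ there are $\alpha<\beta$ in $X$ with $c(\alpha,\beta)>i$.

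To build $c$ I would use the branches of $\mathcal T$ as labels. Since $\mathcal T$ has at least $\kappa$ branches, fix an injective list $\langle b_\alpha\mid\alpha<\kappa\rangle$ of $\kappa$ distinct cofinal branches, each viewed as a function on $\theta$ with $b_\alpha(j)$ the node of $b_\alpha$ on level $j$. For distinct $\alpha,\beta$ the branches disagree somewhere below $\theta$, so I may set $c(\alpha,\beta):=\Delta(b_\alpha,b_\beta)$, the least level $j<\theta$ with $b_\alpha(j)\ne b_\beta(j)$; this is a well-defined colouring into $\theta$. The single tree-theoretic fact I would invoke is that once two branches split they never rejoin, whence $\Delta(b_\alpha,b_\beta)\le i$ \emph{iff} $b_\alpha(i)\ne b_\beta(i)$. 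To verify the required property, fix $X\in[\kappa]^\kappa$ and $i<\theta$ and suppose toward a contradiction that $c(\alpha,\beta)\le i$ for all $\alpha<\beta$ in $X$. By the stated equivalence the map $\alpha\mapsto b_\alpha(i)$ is then injective on $X$, embedding a set of size $\kappa$ into the $i$-th level of $\mathcal T$; as that level has size $<\kappa$ this is absurd, so some pair realises a colour $>i$, as needed.

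It then remains to supply the tree in each listed case. For (i) I would take $\mathcal T:={}^{<\theta}\lambda$ under end-extension: it has height $\theta$, its level $j$ is ${}^{j}\lambda$ of size at most $\lambda^{<\theta}<\kappa$, and its cofinal branches form ${}^{\theta}\lambda$, of size $\lambda^{\theta}\ge\kappa$. Clause (ii) is then the instance $\lambda=2$ of (i), since $2^{<\aleph_0}=\aleph_0<\kappa$ (as $\kappa\ge\cf(\kappa)>\aleph_0$) while $2^{\aleph_0}\ge\kappa$. For (iii) I would reduce to (i) with $\theta=\aleph_0$ by producing an infinite $\lambda<\kappa$ with $\lambda^{\aleph_0}\ge\kappa$, noting $\lambda^{<\aleph_0}=\lambda<\kappa$ for such $\lambda$. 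Here I would split on whether $\kappa$ is a successor or a limit: if $\kappa=\nu^+$ then $\kappa^{\aleph_0}=\max(\kappa,\nu^{\aleph_0})$, so $\kappa<\kappa^{\aleph_0}$ forces $\nu^{\aleph_0}\ge\kappa$ and $\lambda:=\nu$ works; if $\kappa$ is a limit cardinal then, because $\cf(\kappa)>\aleph_0$ makes every $f:\omega\to\kappa$ bounded, one has ${}^{\omega}\kappa=\bigcup_{\lambda<\kappa}{}^{\omega}\lambda$ and hence $\kappa^{\aleph_0}=\max\bigl(\kappa,\sup_{\lambda<\kappa}\lambda^{\aleph_0}\bigr)$, so $\kappa<\kappa^{\aleph_0}$ again yields some $\lambda<\kappa$ with $\lambda^{\aleph_0}\ge\kappa$ (replacing $\lambda$ by $\aleph_0$ if the witness happens to be finite). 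The main obstacle, though still routine, is precisely this last cardinal-arithmetic step for (iii); the construction of $c$ and its verification are a direct unwinding of definitions.
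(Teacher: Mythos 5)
Your proposal is correct and follows essentially the same route as the paper: the paper's proof of the main claim is a citation of \cite[Lemma~2.7]{paper34}, whose argument is exactly the branch-splitting colouring $c(\alpha,\beta)=\Delta(b_\alpha,b_\beta)$ that you write out, followed by the appeal to Proposition~\ref{prop45} and the same three trees (${}^{<\theta}\lambda$ for (i), the case $\lambda=2$ for (ii), and ${}^{<\omega}\lambda$ for a $\lambda<\kappa$ with $\lambda^{\aleph_0}>\kappa$ for (iii)). Your extra detail on extracting $\lambda$ in case (iii) via the successor/limit split is just an expansion of the one-line cardinal-arithmetic observation the paper makes there.
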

\begin{proof} Same argument as in \cite[Lemma~2.7]{paper34},
and then we may appeal to Proposition~\ref{prop45}.	
Now, let us show why a relevant tree exists under any of the hypotheses (i)--(iii):
\begin{enumerate}
\item Assuming there are cardinals $\theta\in\reg(\cf(\kappa))$ and $\lambda< \kappa$ such that $\lambda^{<\theta}<\kappa\leq \lambda^{\theta}$,
we can take the tree $\mathcal T:=({}^{<\theta}\lambda,{\s})$.
\item Assuming $\aleph_0<\cf(\kappa)\le\kappa \leq 2^{\aleph_0}$, we appeal		to Clause~(i) with $\lambda:=2$ and $\theta:=\aleph_0$.
\item Assuming $\kappa<\kappa^{\aleph_0}$ for a cardinal $\kappa$ of uncountable cofinality, 
there must exist a cardinal $\lambda<\kappa$ such that $\lambda^{\aleph_0}>\kappa$,
so we can take the tree $\mathcal T:=({}^{<\omega}\lambda,{\s})$, with $\theta:=\aleph_0$.\qedhere
\end{enumerate}
\end{proof}

The next proposition should be well-known, and is probably due to Erd\H{o}s and Hajnal.

\begin{prop}\label{prop46}
Suppose that $\theta<\cf(\kappa)= \kappa$. Then the following are equivalent:
\begin{enumerate}[(1)]
\item $\kappa\nrightarrow[\kappa;\kappa]^2_\theta$;
\item There is a colouring $c:[\kappa]^2\rightarrow\theta$ witnessing $\onto^{++}(J^+,\allowbreak J,\theta)$ 
for every subnormal $J\in\mathcal J^\kappa_{\theta^+}$;
\item $\onto([\kappa]^\kappa,J^\bd[\kappa], \theta)$.
\end{enumerate} 
\end{prop}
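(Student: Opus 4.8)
The plan is to run the cycle $(1)\implies(2)\implies(3)\implies(1)$, and the pleasant feature I would exploit throughout is that a single colouring $c$ can be made to serve as the witness for all three principles, so none of the implications requires rebuilding the colouring.

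The implication $(3)\implies(1)$ is immediate with the same $c$: given $A,B\in[\kappa]^\kappa$, fix $\eta\in A$ with $c[\{\eta\}\circledast B]=\theta$; since $\{\eta\}\circledast B\s A\circledast B$, this forces $c[A\circledast B]=\theta$, which is exactly $\kappa\nrightarrow[\kappa;\kappa]^2_\theta$. For $(2)\implies(3)$ I would instantiate the universally quantified ideal at $J:=J^\bd[\kappa]$, which is subnormal and, being $\kappa$-complete with $\theta^+\le\kappa$, lies in $\mathcal J^\kappa_{\theta^+}$. As $\kappa$ is regular, $(J^\bd[\kappa])^+=[\kappa]^\kappa$, so feeding the constant sequence $B_\tau:=B$ (for a fixed $B\in[\kappa]^\kappa$) into $\onto^{++}((J^\bd[\kappa])^+,J^\bd[\kappa],\theta)$ yields a single $\eta\in A$ for which each set $\{\beta\in B\setminus(\eta+1)\mid c(\eta,\beta)=\tau\}$ is unbounded, in particular nonempty; hence $c[\{\eta\}\circledast B]=\theta$, which is $\onto([\kappa]^\kappa,J^\bd[\kappa],\theta)$.

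The crux is $(1)\implies(2)$, and the main obstacle I anticipate is producing a \emph{positive} (not merely large) monochromatic reservoir on which to invoke the partition relation. I would argue that the very colouring $c$ witnessing $\kappa\nrightarrow[\kappa;\kappa]^2_\theta$ witnesses $\onto^{++}(J^+,J,\theta)$ for an arbitrary subnormal $J\in\mathcal J^\kappa_{\theta^+}$. Fix $A\in J^+$ and a sequence $\langle B_\tau\mid\tau<\theta\rangle$ of $J^+$-sets, and suppose toward a contradiction that for every $\eta\in A$ there is a colour $\tau$ with $B_\eta^\tau:=\{\beta\in B_\tau\setminus(\eta+1)\mid c(\eta,\beta)=\tau\}\in J$; let $g(\eta)$ be the least such $\tau$. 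Here is where $\theta^+$-completeness must enter: since $A=\bigcup_{\tau<\theta}g^{-1}(\tau)$ is a union of $\theta<\theta^+$ pieces and $A\in J^+$, some fibre $A_0:=g^{-1}(\tau^*)$ must itself be in $J^+$ — this is precisely what upgrades a bare pigeonhole into a positive set.

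Having secured $A_0\in J^+$, I would set $E_\eta:=\kappa\setminus B_\eta^{\tau^*}\in J^*$ for $\eta\in A_0$ (and $E_\eta:=\kappa$ otherwise) and apply clause (ii) of subnormality (Definition~\ref{subnormalideals}) to $A_0$, $B_{\tau^*}\in J^+$ and the sequence $\langle E_\eta\mid\eta<\kappa\rangle$, obtaining $A'\s A_0$ and $B'\s B_{\tau^*}$ in $J^+$ with $\beta\in E_\eta$ for every $(\eta,\beta)\in A'\circledast B'$. Since $J\supseteq J^\bd[\kappa]$ we have $A',B'\in J^+\s[\kappa]^\kappa$, so hypothesis $(1)$ gives $c[A'\circledast B']=\theta\ni\tau^*$; choosing $(\eta,\beta)\in A'\circledast B'$ with $c(\eta,\beta)=\tau^*$ then forces $\beta\in B_\eta^{\tau^*}$ (as $\eta\in A_0$, $\beta\in B_{\tau^*}$, $\eta<\beta$), directly contradicting $\beta\in E_\eta$. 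This contradiction establishes that $c$ witnesses $\onto^{++}(J^+,J,\theta)$ for the given $J$; as $J$ was arbitrary, the single colouring $c$ yields $(2)$, closing the cycle. The routine verifications I would leave to the reader are that $A\in J^+$ entails $|A|=\kappa$ and that $J^\bd[\kappa]$ indeed qualifies as a subnormal member of $\mathcal J^\kappa_{\theta^+}$.
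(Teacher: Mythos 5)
Your proposal is correct and follows essentially the same route as the paper: the cycle $(1)\implies(2)\implies(3)\implies(1)$ with the two easy implications handled exactly as the paper intends, and $(1)\implies(2)$ proved by the same contradiction argument — stabilising the bad colour on a $J^+$-set via $\theta^+$-completeness, then invoking clause (ii) of subnormality to produce $A',B'\in J^+$ on which the rectangular partition relation forces the forbidden colour. The only differences are presentational (your explicit least-colour function $g$ versus the paper's direct appeal to completeness).
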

\begin{proof} $(1)\implies(2)$: Suppose that $c$ is a colouring  witnessing $\kappa\nrightarrow[\kappa;\kappa]^2_\theta$.
Suppose that we are given 		a set $A\in J^+$ and a sequence $\langle B_\tau\mid \tau<\theta\rangle$ of $J^+$-sets,
for some subnormal $J\in\mathcal J^\kappa_{\theta^+}$.
Towards a contradiction, suppose that for every $\eta\in A$, there exists a $\tau<\theta$ for which 
$\{ \beta\in B_\tau\mid c(\eta,\beta)=\tau\}\in J$.
Then, by the $\theta^+$-completeness of $J$,
we may fix a $\tau<\theta$ for which 
$$\bar A:=\{\eta\in A\mid \{ \beta\in B_\tau\mid c(\eta,\beta)=\tau\}\in J\}$$ is in $J^+$.
For every $\eta\in\bar A$, let $E_\eta:=\{\beta<\kappa\mid \beta\notin B_\tau\text{ or }c(\eta,\beta)\neq\tau\}$.
Now, by subnormality of $J$, we may find $A'\s\bar A$ and $B'\s B_\tau$ both in $J^+$ such that, 
for every $(\eta,\beta)\in A'\circledast B'$, $\beta\in E_\eta$.
By the choice of $c$ and because $J$ extends $J^\bd[\kappa]$, there are $(\eta,\beta)\in A'\circledast B'$ such that $c(\eta,\beta)=\tau$.
This contradicts the fact that $\beta\in B_\tau\cap E_\eta$.

$(2)\implies(3)\implies (1)$: This is trivial.
\end{proof}

\begin{lemma}\label{ehpumpubd} Suppose that $\theta\in\reg(\kappa)$,
and that a colouring $c:[\kappa]^2 \rightarrow \theta$ witnesses $\ubd([\kappa]^\kappa,J^\bd[\kappa], \theta)$.
Then, $c$ moreover witnesses $\ubd^{++}(J^+,J,\theta)$ for every subnormal $J\in\mathcal J^\kappa_{\theta^+}$.
\end{lemma}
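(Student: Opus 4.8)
The plan is to deduce this as a pigeonhole consequence of Proposition~\ref{prop219}(1). First I would observe that, since $J\supseteq J^\bd[\kappa]$, every $J$-positive set is cofinal in $\kappa$ and of size $\kappa$, so $J^+\s[\kappa]^\kappa\cap(J^\bd[\kappa])^+$; consequently the very same colouring $c$ witnessing $\ubd([\kappa]^\kappa,J^\bd[\kappa],\theta)$ also witnesses $\ubd(J^+,J,\theta)$, simply by restricting the family $\mathcal A$ and enlarging the ideal slot. As $\theta\in\reg(\kappa)$ is an infinite regular cardinal below $\kappa$ and $J\in\mathcal J^\kappa_{\theta^+}$ is subnormal, Proposition~\ref{prop219}(1) then upgrades this to the statement $(\ast)$: for all $A,B\in J^+$ there is $\eta\in A$ with $|\mathcal T^c_\eta(B)|=\theta$.

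Next I would argue that, for a given $A\in J^+$ and a sequence $\langle B_\tau\mid\tau<\theta\rangle$ of $J^+$-sets, it suffices to produce a \emph{single} $\eta\in A$ with $|\mathcal T^c_\eta(B_\tau)|=\theta$ for every $\tau<\theta$. Indeed, granting such an $\eta$, one builds the required injection $h:\theta\rightarrow\theta$ by recursion: at step $\tau$ the set of previously chosen values has size $<\theta$, so by regularity of $\theta$ the set $\mathcal T^c_\eta(B_\tau)$, being of size $\theta$, still contains an unused colour, which we declare to be $h(\tau)$. By the definition of $\mathcal T^c_\eta(B_\tau)$ this guarantees $\{\beta\in B_\tau\setminus(\eta+1)\mid c(\eta,\beta)=h(\tau)\}\in J^+$ for all $\tau$, which is exactly the conclusion of $\ubd^{++}(J^+,J,\theta)$, recalling that $c$ is upper-regressive.

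To locate the common $\eta$ I would invoke the $\theta^+$-completeness of $J$. Suppose no such $\eta$ exists, and set $A_\tau:=\{\eta\in A\mid |\mathcal T^c_\eta(B_\tau)|<\theta\}$ for each $\tau<\theta$. The failure means $A=\bigcup_{\tau<\theta}A_\tau$; since $J$ is $\theta^+$-complete and $A\in J^+$, some $A_{\tau^*}$ lies in $J^+$. Applying $(\ast)$ to the pair $A_{\tau^*},B_{\tau^*}\in J^+$ yields an $\eta\in A_{\tau^*}$ with $|\mathcal T^c_\eta(B_{\tau^*})|=\theta$, directly contradicting $\eta\in A_{\tau^*}$.

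The only genuine content lies in passing from the order-type statement furnished by the hypothesis to the "many $J$-positive colours" statement $(\ast)$, and that passage is precisely what Proposition~\ref{prop219}(1) supplies via subnormality; the remaining steps are bookkeeping. I expect the main point to watch in writing this carefully to be the three distinct roles of the cardinal arithmetic: subnormality together with $\theta^+$-completeness underlies $(\ast)$, the $\theta^+$-completeness is used a second time for the pigeonhole over the index set of size $\theta$, and the regularity of $\theta$ is what makes the final transfinite selection of distinct representatives go through.
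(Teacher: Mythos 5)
Your proof is correct and follows essentially the same route as the paper's: the paper's key Claim --- that for each $B\in J^+$ the set of $\eta$ with $\otp(\mathcal T^c_\eta(B))=\theta$ lies in $J^*$ --- is exactly the dual formulation of the statement $(\ast)$ you extract from Proposition~\ref{prop219}(1), and the paper in effect re-proves that proposition's subnormality argument inline rather than citing it. Your pigeonhole decomposition $A=\bigcup_{\tau<\theta}A_\tau$ is the ideal-side mirror of the paper's intersection of the $\theta$-many dual-filter sets $A_\tau$, both resting on the same $\theta^+$-completeness, and the final recursive choice of the injection $h$ is identical.
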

\begin{proof} Let $J$ be any $\theta^+$-complete subnormal ideal over $\kappa$ extending $J^{\bd}[\kappa]$.
\begin{claim} For every $B\in J^+$, the following set is in $J^*$:
$$\{\eta<\kappa\mid \otp(\{\tau<\theta\mid \{\beta\in B\setminus(\eta+1)\mid c(\eta,\beta)=\tau\}\in J^+\})=\theta\}.$$
\end{claim}
\begin{why} Towards a contradiction, suppose that $B\in J^+$ forms a counterexample.
Denote $B_\eta^\tau:=\{\beta \in B\setminus(\eta+1) \mid c(\eta, \beta) = \tau\} $
and $T_\eta:=\{\tau<\theta\mid B_\eta^\tau\in J^+\}$.
By the choice of $B$, $A:=\{\eta<\kappa\mid \otp(T_\eta)<\theta\}$ is in $J^+$.
As $J$ is $\theta^+$-complete, for every $\eta\in A$, $E_\eta:=\kappa\setminus\bigcup_{\tau\in \theta\setminus T_\eta}B_\eta^\tau$ is in $J^*$.
As $J$ is subnormal, we may find $A'\s A$ and $B'\s B$ in $J^+$ such that,
for every $(\eta,\beta)\in A'\circledast B'$, $\beta\in E_\eta$.

By the choice of $c$, there is an $\eta\in A'$ such that $c[\{\eta\}\circledast  B']$ has ordertype $\theta$. 
In particular, we may pick $\tau\in c[\{\eta\}\circledast  B']\setminus T_\eta$.
Pick $\beta \in B'$ above $\eta$ such that $c(\eta, \beta)=\tau$. As $(\eta,\beta)\in A'\circledast B'$, we have that $\beta \in E_\eta$,
and as $\tau\in\theta\setminus T_\eta$, $E_\eta\cap B_\eta^\tau=\emptyset$.
This is a contradiction.
\end{why}

Now, given a sequence $\langle B_\tau\mid \tau<\theta\rangle$ of $J^+$-sets,
we know that, for every $\tau<\theta$,
$A_\tau:=\{\eta<\kappa\mid \otp(\{\delta<\theta\mid \{\beta\in B_\tau\setminus(\eta+1)\mid c(\eta,\beta)=\delta\}\in J^+\})=\theta\}$ is in $J^*$.
As $J$ is $\theta^+$-complete, it follows that given any $A\in J^+$, we may fix $\eta\in A\cap\bigcap_{\tau<\theta}A_\tau$.
By the choice of $\eta$, it is now easy to find an injection $h:\theta\rightarrow\theta$ such that,
for every $\tau<\theta$,  $\{ \beta\in B_\tau\setminus(\eta+1)\mid c(\eta,\beta)=h(\tau)\}\in J^+$.
\end{proof}

We come now to the first of three theorems, Theorem~\ref{thm58}, Theorem~\ref{thm42}, and Theorem~\ref{ubdtoonto}, 
which are dedicated to pumping from $\ubd(\cdots, \varkappa)$ to an instance of $\onto$ with $\theta$-many colours where $\theta \leq \varkappa \leq \kappa$. 
Each of them proceed by slicing up the available cases in a similar way. The next theorem considers the simplest case.
\begin{thm}\label{thm58}  Suppose that $\theta<\varkappa<\kappa$ are infinite cardinals,
with $\theta$ and $\kappa$ regular, and $J\in\mathcal J^\kappa_\omega$ is subnormal.
If $\ubd(J,\varkappa)$ holds, then so does $\onto(J,\theta)$.
\end{thm}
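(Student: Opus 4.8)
The plan is to manufacture an upper-regressive $d:[\kappa]^2\to\theta$ out of the given witness $c:[\kappa]^2\to\varkappa$ for $\ubd(J,\varkappa)$ by a coding scheme in the spirit of Lemma~\ref{lemma64}(2), arranging that the \emph{surplus} of colours — we realise order type $\varkappa$ but only need to hit $\theta<\varkappa$ colours — is exactly what upgrades the order-type conclusion of $\ubd$ to the full-image conclusion of $\onto$. Concretely, I would fix a bijection $\pi:\kappa\leftrightarrow\kappa\times\varkappa$ together with a family of maps $\langle e_\gamma:\gamma\to\theta\mid\gamma<\varkappa\rangle$ to be constructed below, and set $d(\eta,\beta):=e_{\tau'}(c(\eta',\beta))$ whenever $\pi(\eta)=(\eta',\tau')$ and $c(\eta',\beta)<\tau'$, letting $d(\eta,\beta):=0$ otherwise (the boundedly many pairs with $\beta\le\theta$ are adjusted so as to keep $d$ upper-regressive). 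Everything then reduces to the combinatorial property $(\star)$: for every $X\s\varkappa$ with $\otp(X)=\varkappa$ there is $\gamma<\varkappa$ such that $e_\gamma[X\cap\gamma]=\theta$. This is precisely the device turning ``a set of colours of order type $\varkappa$'' into ``all of $\theta$''.

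Granting $(\star)$, the realisation follows the bookkeeping of Lemma~\ref{lemma64}(2). Let $D:=\{\delta<\kappa\mid\pi[\delta]=\delta\times\varkappa\}$, a club in $\kappa$ since $\varkappa<\kappa=\cf(\kappa)$. Given $B\in J^+$, I would use Lemma~\ref{dseparated} (available as $J\in\mathcal J^\kappa_\omega$ is subnormal) to shrink $B$ to a $D$-separated positive set, for which it suffices to realise $d$. Applying $\ubd(J,\varkappa)$ yields $\eta'$ with $\otp(c[\{\eta'\}\circledast B])=\varkappa$; passing to a tail $B'$ of $B$ above $\min(B\setminus(\eta'+1))$ alters the realised colour set by at most one point, so $X:=c[\{\eta'\}\times B']$ still has order type $\varkappa$. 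By $(\star)$ fix $\tau'$ with $e_{\tau'}[X\cap\tau']=\theta$ and put $\eta:=\pi^{-1}(\eta',\tau')$. The point of taking $D$ with the \emph{full} second coordinate is that $\pi^{-1}(\eta',\tau')<\delta$ for every $\delta\in D$ with $\eta'<\delta$, \emph{regardless of the size of $\tau'$}; choosing via $D$-separation a $\delta\in D$ between $\min(B\setminus(\eta'+1))$ and $\min(B')$ gives $\eta<\delta<\min(B')$. Hence every $\beta\in B'$ witnessing a colour $v=c(\eta',\beta)<\tau'$ of $X\cap\tau'$ lies above $\eta$ and satisfies $d(\eta,\beta)=e_{\tau'}(v)$, so $d[\{\eta\}\circledast B]\supseteq e_{\tau'}[X\cap\tau']=\theta$.

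The heart of the matter — and the step I expect to be the main obstacle — is the construction of a family satisfying $(\star)$; note that no single map $e$ can work, since any partition of $\varkappa$ into $\theta$ pieces has a piece of order type $\varkappa$, so genuine guessing is needed. For $\varkappa$ regular this is exactly what a $\theta$-coloured club-guessing sequence at $\varkappa$ provides. Indeed, suppose $\langle(C_\gamma,h_\gamma)\mid\gamma\in E^\varkappa_\theta\rangle$ is a sequence that $\theta$-guesses clubs in the sense preceding Fact~\ref{clubguessingfact}; in $\zfc$ such sequences are available when $\varkappa$ is regular, by combining Shelah's club guessing (Fact~\ref{clubguessingfact}) with Theorem~\ref{babycg}, the latter fed by $\ubd(J^\bd[\theta^+],\theta)$, which holds for regular $\theta$ by Theorem~C(1) and Proposition~\ref{prop45}. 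Define $e_\gamma(\delta):=h_\gamma(\min(C_\gamma\setminus(\delta+1)))$ for $\gamma\in E^\varkappa_\theta$. Given $X$ of order type $\varkappa$, the set $\acc^+(X)$ is a club, so guessing yields $\gamma$ such that, for every $\tau<\theta$, the set $\{\epsilon\in\nacc(C_\gamma)\cap\acc^+(X)\mid h_\gamma(\epsilon)=\tau\}$ is cofinal in $\gamma$; for such an $\epsilon$, choosing $x\in X$ with $\sup(C_\gamma\cap\epsilon)<x<\epsilon$ forces $\min(C_\gamma\setminus(x+1))=\epsilon$ and hence $e_\gamma(x)=\tau$, giving $e_\gamma[X\cap\gamma]=\theta$. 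What remains is to peel off the regimes not covered by ``$\varkappa$ regular well above $\theta^+$'': the boundary cardinal-arithmetic thresholds for club guessing at $\varkappa$, and the case of singular $\varkappa$, which I would reduce to a regular cardinal below $\varkappa$ by a projection as in Proposition~\ref{singularprojection} and Lemma~\ref{lemma64}. This case analysis is the ``slicing'' alluded to before the statement, and securing a uniform $\zfc$ construction of $(\star)$ across all these regimes is where the real effort lies.
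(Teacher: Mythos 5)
Your architecture coincides with the paper's: code a pair $(\eta',\tau')$ into a single $\eta$ via a pairing function, use $D$-separation (from subnormality, Lemma~\ref{dseparated}) so that the decoded $\eta$ lands below every relevant $\beta$, extract from $\ubd(J,\varkappa)$ a colour set $X$ of order type $\varkappa$, and then surject $X$ onto $\theta$ by an auxiliary device living at $\varkappa$. Your club-guessing implementation of $(\star)$ is a correct variant of the paper's middle case, though the paper gets by with less: it uses the plain order-type colouring $d(\beta,\delta)=\sup(\otp(C_\delta\cap\beta))$ read off an \emph{uncoloured} guessing sequence from Fact~\ref{clubguessingfact}, with no need for the coloured guessing of Theorem~\ref{babycg} (a black box from a forthcoming paper). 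Note also a mismatch in your construction: feeding $\ubd(J^{\bd}[\theta^+],\theta)$ into Theorem~\ref{babycg} produces a sequence indexed by $E^\varkappa_{\theta^+}$, not $E^\varkappa_\theta$, and Fact~\ref{clubguessingfact} then demands $\theta^{++}<\varkappa$ rather than $\theta^{++}\le\varkappa$.

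The genuine gap is the two regimes you defer, and at least one of them cannot be closed by your method. First, $\varkappa=\theta^+$: here no form of $(\star)$ comes from club guessing, since $\zfc$ club guessing on $E^{\theta^+}_\theta$ of the strength you need is not available (Fact~\ref{clubguessingfact} requires the cofinality to sit two cardinals below $\varkappa$). The property $(\star)$ at $\varkappa=\theta^+$ is essentially a rectangular square-bracket statement, and the paper imports it as the theorem $\theta^+\nrightarrow[\theta^+;\theta^+]^2_\theta$ of \cite{paper14} via Proposition~\ref{prop46}; this is a substantive external input, not a consequence of guessing, and without it your argument does not cover, e.g., $\theta=\aleph_0$, $\varkappa=\aleph_1$, $\kappa=\aleph_2$. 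Second, singular $\varkappa$: Proposition~\ref{singularprojection} and Lemma~\ref{lemma64} do not perform the reduction you gesture at (the former projects along a singular $\kappa$, the latter drops only one cardinal). The paper instead composes $c$ with $\xi\mapsto\otp(C_j\cap\xi)$ for a $C$-sequence $\langle C_j\mid j\in E^{\varkappa}_{\theta^{++}}\rangle$ of order type $\theta^{++}$, where $j$ is chosen from $X$ and must therefore be coded as a \emph{third} coordinate of $\eta$; your two-coordinate pairing $\pi:\kappa\leftrightarrow\kappa\times\varkappa$ has no room for it as written. Both repairs are available, but as it stands the proposal proves the theorem only for regular $\varkappa$ sufficiently far above $\theta^+$.
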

\begin{proof} Suppose that $c:[\kappa]^2\rightarrow\varkappa$ is a colouring witnessing $\ubd(J,\varkappa)$.
There are three cases to consider. 
In each of the cases, we shall also define a regular cardinal $\bar\varkappa$ with $\theta<\bar\varkappa\le\varkappa$.

$\br$ If $\theta^+=\varkappa$, then $\varkappa\nrightarrow[\varkappa;\varkappa]^2_\theta$ holds (see \cite{paper14}),
so, by Proposition~\ref{prop46}, we may fix a colouring $d:[\varkappa]^2\rightarrow\theta$ witnessing 
$\onto(J^{\bd}[\varkappa],\theta)$. Set also $\bar\varkappa:=\varkappa$.

$\br$ If $\theta^+<\varkappa$ and $\varkappa$ is regular,
then set $\bar\varkappa:=\varkappa$.
By Fact~\ref{clubguessingfact},
we may		fix a $C$-sequence $\langle C_\delta\mid \delta\in E^{\bar\varkappa}_\theta\rangle$ 
with $\otp(C_\delta)=\theta$ for all $\delta\in E^{\bar\varkappa}_\theta$,
such that, for any club $D\s{\bar\varkappa}$, there exists $\delta\in E^{\bar\varkappa}_\theta$ with $C_\delta\s D$.
Then, pick any colouring $d:[{\bar\varkappa}]^2\rightarrow\theta$ such that for all $\beta<\delta<{\bar\varkappa}$ with $\cf(\delta)=\theta$,
$d(\beta,\delta)=\sup(\otp(C_\delta\cap\beta))$.

$\br$ If $\theta^+<\varkappa$ and $\varkappa$ is singular, then set $\bar\varkappa:=\theta^{++}$.
Fix a $C$-sequence $\langle C_\delta\mid \delta\in E^{\bar\varkappa}_\theta\rangle$,
and a colouring $d:[{\bar\varkappa}]^2\rightarrow\theta$ as in the previous case.
In addition, fix a $C$-sequence $\langle C_\delta\mid \delta\in E^{\varkappa}_{\bar\varkappa}\rangle$ 
with $\otp(C_\delta)=\bar\varkappa$ for all $\delta\in E^{\varkappa}_{\bar\varkappa}$.

Fix three maps $\pi_0:\kappa\rightarrow\bar\varkappa$, $\pi_1:\kappa\rightarrow\varkappa$ 
and $\pi_2:\kappa\rightarrow\kappa$ such that, for every $(i,j,k)\in\bar\varkappa\times\varkappa\times\kappa$,
there exists $\eta<\kappa$ such that $\pi_0(\eta)=i$, $\pi_1(\eta)=j$ and $\pi_2(\eta)=k$.
Fix a club $E\s\kappa$ such that, for every $\epsilon\in E$ and $(i,j,k)\in\bar\varkappa\times\varkappa\times\epsilon$,
there exists $\eta<\epsilon$ such that $\pi_0(\eta)=i$, $\pi_1(\eta)=j$ and $\pi_2(\eta)=k$.
	
Finally, pick a colouring $e:[\kappa]^2\rightarrow\theta$ such that, for all $\eta<\beta<\kappa$, 
if $\pi_0(\eta)=i$, $\pi_1(\eta)=j$ and $\pi_2(\eta)=k$, then
$$e(\eta,\beta):=\begin{cases}
d(\{i,\otp(C_j\cap c(k,\beta))\}),&\text{if }\bar\varkappa< \varkappa \text{ and }j\in E^\varkappa_{\bar\varkappa};\\
d(\{i,c(k,\beta)\}),&\text{otherwise}.\end{cases}$$

We claim that this colouring works. 
To see why, let $B\in J^+$. We can assume without loss of any generality that $B \s \kappa\setminus \varkappa$.
By Lemma~\ref{dseparated}, we may also assume that for every $(\alpha,\beta)\in[B]^2$, 
there is an $\epsilon\in E$ with $\alpha<\epsilon<\beta$.
By the choice of $c$, pick $\eta^*<\kappa$
such that $c[\{\eta^*\}\circledast B]$ has order-type $\varkappa$.
Let $\alpha:=\min(B\setminus(\eta^*+1))$ and $B':=B\setminus(\alpha+1)$.
Clearly, $X:=c[\{\eta^*\}\times B']$ has order-type $\varkappa$.

$\br$ If $\theta^+=\varkappa$, then 
as $d$ witnesses $\onto(J^{\bd}[\varkappa],\theta)$,
we may pick $\delta<\varkappa$ such that $d[\{\delta\}\circledast X]=\theta$.

$\br$ If $\theta^+<\varkappa$ and $\varkappa=\bar\varkappa$,
then $D:=\acc^+(X)$ is a club in $\bar\varkappa$,
so we may pick $\delta\in E^{\bar\varkappa}_\theta$ such that $C_\delta\s D$.
We claim that $d[X\circledast \{\delta\}]=\theta$. Indeed, for every $\tau<\theta$,
pick $\gamma\in C_\delta$ such that $\otp(C_\delta\cap\gamma)=\tau+1$, and then,
since $C_\delta\s\acc^+(X)$, find $\xi\in X$ such that $\sup(C_\delta\cap\gamma)<\xi<\gamma$.
Consequently, $d(\xi,\delta)=\sup(\otp(C_\delta\cap\xi))=\sup(\tau+1)=\tau$.

$\br$ If $\theta^+<\varkappa$ and $\varkappa>\bar\varkappa$,
then let $j\in E^\varkappa_{\bar\varkappa}$ denote the least ordinal to satisfy $\otp(X\cap j)=\bar\varkappa$.
So, $\bar X:=\{ \otp(C_j\cap\xi)\mid \xi\in X\}$ is a cofinal subset of $\bar\varkappa$,
and then, as in the previous case, we may pick 
we may pick $\delta\in E^{\bar\varkappa}_\theta$ such  $d[\bar X\circledast \{\delta\}]=\theta$.

Fix $\epsilon\in E$ with $\alpha<\epsilon<\min(B')$.
As $\eta^*<\alpha$, we may find $\eta<\epsilon$ such that $\pi_0(\eta)=\delta$ and $\pi_2(\eta)=\eta^*$.
If $\varkappa>\bar\varkappa$, then we may also require that $\pi_1(\eta)=j$.

\begin{claim} $e[\{\eta\}\circledast B]=\theta$.
\end{claim}
\begin{why} Let $\tau<\theta$.

$\br$ If $\theta^+=\varkappa$, then pick $\xi\in X\setminus(\delta+1)$ such that $d(\delta,\xi)=\tau$.
Then pick $\beta\in B'$ such that $c(\eta^*,\beta)=\xi$.
As $\bar\varkappa=\varkappa$, it follows that $e(\eta,\beta)=d(\delta,c(\eta^*,\beta))=d(\delta,\xi)=\tau$.	

$\br$ If $\theta^+<\varkappa$ and $\varkappa=\bar\varkappa$,
then pick $\xi\in X\cap \delta$ such that $d(\xi,\delta)=\tau$.
Then pick $\beta\in B'$ such that $c(\eta^*,\beta)=\xi$.
Evidently, $e(\eta,\beta)=d(c(\eta^*,\beta),\delta)=d(\xi,\delta)=\tau$.	

$\br$ Otherwise, pick $\bar\xi\in\bar X$ such that $d(\bar\xi,\delta)=\tau$,
then pick $\xi\in X$ such that $\otp(C_j\cap\xi)=\bar\xi$,
and then pick $\beta\in B'$ such that $c(\eta^*,\beta)=\xi$.
It follows that $e(\eta,\beta)=d(\otp(C_j\cap c(\eta^*,\beta)),\delta)=d(\otp(C_j\cap\xi),\delta)=d(\bar\xi,\delta)=\tau$.
\end{why}
This completes the proof.
\end{proof}
\begin{remark}\label{thm58b} A proof similar to the preceding shows that if $\kappa$ is regular,
$J\in\mathcal J^\kappa_\omega$, and $\ubd^+(J,\kappa)$ holds, then $\onto^+(J,\theta)$ holds for every $\theta\in\reg(\kappa)$.
\end{remark}

Compared to the preceding theorem, in the next theorem we obtain the stronger colouring principle $\onto^{++}(\ldots)$,
but we shall also be assuming that $\varkappa$ is regular.
\begin{thm}\label{thm42} Suppose that $\theta<\varkappa\le\kappa$ are infinite regular cardinals,
$I$ is an ideal over $\kappa$ that is not weakly $\kappa$-saturated,
and $S\s\kappa$ is stationary.
\begin{enumerate}[(1)]
\item For every $J\in\mathcal J^\kappa_{\theta^+}$, if $\ubd^+(J^+,J,\varkappa)$ holds, 
then so does $\onto^{++}(I^*,\allowbreak J,\theta)$;
\item If $\ubd(\ns_\kappa\restriction S,\kappa)$ holds, then for every normal ideal $J$ over $S$ extending $J^{\bd}[S]$,
$\onto^{++}(J^*,J,\theta)$ holds.
\end{enumerate}
\end{thm}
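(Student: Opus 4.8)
The plan is to follow the template of the proof of Theorem~\ref{thm58}, upgraded at two points: replacing $\ubd(J,\varkappa)$ by $\ubd^+(J^+,J,\varkappa)$ so as to serve the $\theta$-many targets $\langle B_\tau\mid\tau<\theta\rangle$ of $\onto^{++}$ simultaneously, and using the failure of weak $\kappa$-saturation of $I$ to place the desired apex inside a prescribed $A\in I^*$. Since $\theta<\varkappa\le\kappa$ are all regular, the singular case of Theorem~\ref{thm58} is vacuous and only two cases remain: Case A, $\varkappa=\theta^+$; and Case B, $\theta^+<\varkappa$. Let $c:[\kappa]^2\rightarrow\varkappa$ witness $\ubd^+(J^+,J,\varkappa)$, and write $T^B_{\eta'}:=\{\zeta<\varkappa\mid\{\beta\in B\setminus(\eta'+1)\mid c(\eta',\beta)=\zeta\}\in J^+\}$. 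The principle says precisely that $G_B:=\{\eta'<\kappa\mid\otp(T^B_{\eta'})=\varkappa\}$ meets every $J^+$-set, that is, $G_B\in J^*$; as $J$ is $\theta^+$-complete, $\bigcap_{\tau<\theta}G_{B_\tau}\in J^*$ is nonempty, which is what lets a single apex $\eta'$ serve all targets at the level of $c$.

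Next I would build an auxiliary $d:[\varkappa]^2\rightarrow\theta$ realising, for unbounded targets $\langle Y_\tau\mid\tau<\theta\rangle\s\varkappa$, each colour $\tau$ on $Y_\tau$ from a single apex $\delta<\varkappa$. In Case A, $\varkappa=\theta^+$ yields $\varkappa\nrightarrow[\varkappa;\varkappa]^2_\theta$, so Proposition~\ref{prop46} furnishes $d$ witnessing $\onto^{++}((J^\bd[\varkappa])^+,J^\bd[\varkappa],\theta)$; applying it to $\langle Y_\tau\rangle$ with $A'=\varkappa$ gives $\delta$ and values $\zeta_\tau\in Y_\tau$ with $\delta<\zeta_\tau$ and $d(\delta,\zeta_\tau)=\tau$. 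In Case B, I would take the club-guessing sequence $\langle C_\delta\mid\delta\in E^\varkappa_\theta\rangle$ of Fact~\ref{clubguessingfact} (with $\otp(C_\delta)=\theta$) and set $d(\xi,\delta):=\sup(\otp(C_\delta\cap\xi))$ for $\xi<\delta\in E^\varkappa_\theta$ as in Theorem~\ref{thm58}; choosing $\delta\in E^\varkappa_\theta$ with $C_\delta\s\bigcap_{\tau<\theta}\acc^+(Y_\tau)$ (a club, since $\theta<\varkappa=\cf(\varkappa)$), for each $\tau$ I pick $\gamma\in C_\delta$ with $\otp(C_\delta\cap\gamma)=\tau+1$ and then $\zeta_\tau\in Y_\tau$ with $\sup(C_\delta\cap\gamma)<\zeta_\tau<\gamma$, so that $d(\zeta_\tau,\delta)=\tau$.

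To combine, I would exploit that $I$ is not weakly $\kappa$-saturated: fixing pairwise disjoint $\langle K_v\mid v<\kappa\rangle$ in $I^+$, define $\rho:\kappa\rightarrow\kappa$ with $\rho^{-1}(v)\in I^+$ for all $v$, fix a bijection $b:\kappa\leftrightarrow\kappa\times\varkappa$, and put $\pi:=b\circ\rho$, so that $\pi^{-1}(\eta',\delta)\in I^+$ for every pair. With $\pi(\eta)=(\eta',\delta)$, set $e(\eta,\beta):=d(\delta,c(\eta',\beta))$ in Case A and $e(\eta,\beta):=d(c(\eta',\beta),\delta)$ in Case B whenever the relevant side conditions hold ($\eta'<\beta$ together with $\delta<c(\eta',\beta)$, resp.\ $c(\eta',\beta)<\delta$), and $e(\eta,\beta):=0$ otherwise. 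Given $A\in I^*$ and $\langle B_\tau\rangle$, I choose $\eta'\in\bigcap_\tau G_{B_\tau}$, so each $Y_\tau:=T^{B_\tau}_{\eta'}$ is unbounded in $\varkappa$; I obtain $\delta$ and $\zeta_\tau$ from the previous paragraph; and I pick $\eta\in A\cap\pi^{-1}(\eta',\delta)$, which is in $I^+$ and hence nonempty. For each $\tau$ the set $\{\beta\in B_\tau\mid c(\eta',\beta)=\zeta_\tau\}$ lies in $J^+$, the side conditions hold on all but boundedly many of its members, and there $e(\eta,\beta)=\tau$; since $J\supseteq J^\bd[\kappa]$ this gives $\{\beta\in B_\tau\setminus(\eta+1)\mid e(\eta,\beta)=\tau\}\in J^+$, as wanted.

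For part (2), I would reduce to (1). Given a normal $J$ over $S$ extending $J^\bd[S]$, Fact~\ref{normalfacts} puts $J\in\mathcal J^\kappa_{\theta^+}$. By Lemma~\ref{lemma44}(2), $\ubd(\ns_\kappa\restriction S,\kappa)$ supplies a colouring for which, for every $B\in J^+$, the good apices form a co-bounded set, hence a set in $J^*$; this is exactly $\ubd^+(J^+,J,\kappa)$. Taking $B=S$ moreover shows that $S$ splits into $\kappa$-many pairwise disjoint $J^+$-sets, so $J$ is not weakly $\kappa$-saturated. Part~(1) with $I:=J$ and $\varkappa:=\kappa$ then yields $\onto^{++}(J^*,J,\theta)$. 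I expect the main difficulty to be the bookkeeping of the combination step: keeping the two orientations of $d$ consistent and checking the well-definedness side conditions so that $e(\eta,\beta)$ really equals $\tau$ on a $J^+$-set, and, in Case B, verifying that one apex $\delta$ realises all $\theta$ colours at once, which rests on $\bigcap_{\tau<\theta}\acc^+(Y_\tau)$ being club.
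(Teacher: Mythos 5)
Your proposal is correct and follows essentially the same route as the paper's proof: the same case split on $\varkappa=\theta^+$ versus $\theta^+<\varkappa$, the same auxiliary colouring $d$ (from $\varkappa\nrightarrow[\varkappa;\varkappa]^2_\theta$ via Proposition~\ref{prop46}, respectively from a $\theta$-bounded club-guessing sequence), the same use of non-weak-$\kappa$-saturation to index pairs $(\eta',\delta)$ by $I^+$-sets, and the same composition $e(\eta,\beta)=d(\{\delta,c(\eta',\beta)\})$. Your explicit appeal to the $\onto^{++}$ form of Proposition~\ref{prop46} to get a single $\delta$ serving all $\theta$ targets in the case $\varkappa=\theta^+$, and your formal reduction of Clause~(2) to Clause~(1) via Lemma~\ref{lemma44}(2) and the disjoint-fibre argument, are only presentational refinements of what the paper does.
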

\begin{proof} The proof will be similar to, but a little more elaborate than, that of Theorem~\ref{thm58}

\underline{Step 1.} 
In Case~(1), pick a colouring $c$ witnessing $\ubd^+(J^+,J,\varkappa)$.

In Case~(2), fix a colouring $c$ as in Lemma~\ref{lemma44}(2).
Let $J$ be any normal ideal over $S$ extending $J^{\bd}[S]$. Then $c$ witnesses $\ubd^+(J^+,J,\varkappa)$ for $\varkappa:=\kappa$.

\smallskip\underline{Step 2.} If $\varkappa=\theta^+$,
then $\varkappa\nrightarrow[\varkappa;\varkappa]^2_\theta$ holds,
so, by Proposition~\ref{prop46}, we may fix a colouring $d:[\varkappa]^2\rightarrow\theta$ witnessing 
$\onto([\varkappa]^\varkappa,J^{\bd}[\varkappa],\theta)$. If $\varkappa\neq\theta^+$,
then we define a colouring $d:[\varkappa]^2\rightarrow\theta$, as follows.
Fix a $C$-sequence $\langle C_\delta\mid \delta\in E^\varkappa_\theta\rangle$ 
with $\otp(C_\delta)=\theta$ for all $\delta\in E^\varkappa_\theta$,
such that, for any club $D\s\varkappa$, there exists $\delta\in E^\varkappa_\theta$ with $C_\delta\s D$.
Then, pick any colouring $d:[\varkappa]^2\rightarrow\theta$ such that for all $\beta<\delta<\varkappa$ with $\cf(\delta)=\theta$,
$d(\beta,\delta)=\sup(\otp(C_\delta\cap\beta))$.

\smallskip\underline{Step 3.} In Case~(1),
as $I$ is not weakly $\kappa$-saturated,
we may fix two maps $\pi_0:\kappa\rightarrow\varkappa$ and $\pi_1:\kappa\rightarrow\kappa$ such that, for every $(i,j)\in\varkappa\times\kappa$,
the set $$P_{i,j}:=\{ \eta<\kappa\mid \pi_0(\eta)=i\ \&\ \pi_1(\eta)=j\}$$ is in $I^+$.

In Case~(2), as $c$ witnesses $\ubd^+(J^+,\allowbreak J,\kappa)$, 
Proposition~\ref{weaksaturation} implies that $J$ is not weakly $\kappa$-saturated.
So, in this case, 
we fix two maps $\pi_0:\kappa\rightarrow\varkappa$ and $\pi_1:\kappa\rightarrow\kappa$ such that 
the corresponding sets $P_{i,j}$ are in $J^+$ for every $(i,j)\in\varkappa\times\kappa$.

\smallskip\underline{Step 4.} 
Pick a colouring $e:[\kappa]^2\rightarrow\theta$ such that, for all $\eta<\beta<\kappa$, 
if $\pi_0(\eta)=i$ and $\pi_1(\eta)=j$, then $e(\eta,\beta)=d(\{i,c(j,\beta)\})$. We claim this colouring works.

\smallskip\underline{Step 5:} 
In Case~(1), assume that we are given $A\in I^*$ 
and a sequence $\langle B_\tau\mid \tau<\theta\rangle$ of sets in $J^+$.
As $c$ witnesses $\ubd^+(J^+,J,\varkappa)$ and $J$ is $\theta^+$-complete,
we may fix large enough $\eta^*<\kappa$ such that, for every $\tau<\theta$,
$$X_\tau:=\{ \xi<\varkappa\mid \{ \beta\in B_\tau\mid c(\eta^*,\beta)=\xi\}\in J^+\}$$
is cofinal in $\varkappa$. 
	
In Case~(2), assume that we are given $A\in J^*$ and a sequence $\langle B_\tau\mid \tau<\theta\rangle$ of sets in $J^+$.
By the choice of $c$, fix a large enough $\eta^*<\kappa$ such that, for every $\tau<\theta$,
the corresponding set $X_\tau$ is cofinal in $\kappa$. 

\smallskip\underline{Step 6.} If $\theta^+=\varkappa$, then 
as $d$ witnesses $\onto([\varkappa]^\varkappa,J^{\bd}[\varkappa],\theta)$,
we may find a large enough $\delta<\varkappa$ such that $d[\{\delta\}\circledast X_\tau]=\theta$ for all $\tau<\theta$.

If $\theta^+<\varkappa$, then $D:=\bigcap_{\tau<\theta}\acc^+(X_\tau)$ is a club in $\varkappa$,
and by the choice of $\vec C$, we may pick $\delta\in E^\varkappa_\theta$ such that $C_\delta\s D$.
Consequently, $d[X_\tau\circledast\{\delta\}]=\theta$ for all $\tau<\theta$.

So, in both cases, for every $\tau<\theta$, we may fix $\xi_\tau\in X_\tau$ such that $d(\{\delta,\xi_\tau\})=\tau$.

\smallskip\underline{Step 7.} Fix $\eta\in A\cap P_{\delta,\eta^*}$, so that $\pi_0(\eta)=\delta$ and $\pi_1(\eta)=\eta^*$.
For every $\tau<\theta$, as $\xi_\tau\in X_\tau$,
$B_\tau^*:=\{ \beta\in B_\tau\setminus(\eta+1)\mid c(\eta^*,\beta)=\xi_\tau\}$ is in $J^+$.
Clearly, for all $\tau<\theta$ and $\beta\in B_\tau^*$,
$e(\eta,\beta)=d(\{\delta,c(\{\eta^*,\beta\})\})=\tau$, as sought.
\end{proof}

\begin{cor}\label{cor610} Suppose that $\kappa$ is a regular uncountable cardinal,
and let $J$ denote $\ns_\kappa\restriction S$ for some stationary $S\s\kappa$.

\begin{enumerate}[(1)] 
\item If $\ubd(J,\kappa)$ holds, then $\onto^{++}(J,\theta)$ holds for every $\theta\in\reg(\kappa)$;
\item If $\reg(\kappa)$ is nonstationary or if $\diamondsuit^*(\reg(\kappa))$ holds, then $\onto^{++}(\ns_\kappa,\theta)$ holds for every $\theta\in\reg(\kappa)$;
\item If $\onto^{++}(J,\theta)$ fails for some $\theta\in\reg(\kappa)$, then $\square(\kappa,{<}\mu)$ fails for all $\mu<\kappa$, $\refl(\kappa,\kappa,\reg(\kappa))$ holds and $\kappa$ is greatly Mahlo;
\end{enumerate}
\end{cor}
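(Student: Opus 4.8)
The plan is to deduce all three clauses from the machinery already assembled, chiefly Theorem~\ref{thm42}, Lemma~\ref{lemma44}, and Lemma~\ref{lemma43}, together with the ideals $\amen_\kappa$ and $\sa_\kappa$ and their relationship $\sa_\kappa\s\amen_\kappa$. Nothing genuinely new is needed; the task is to route the hypotheses through the correct chain of earlier results and to keep the bookkeeping straight.

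Clause~(1). Since $S$ is stationary, $J=\ns_\kappa\restriction S$ is a normal ideal over $S$ extending $J^\bd[S]$, and is therefore a legitimate choice of the ideal in Theorem~\ref{thm42}(2). Hence, assuming $\ubd(\ns_\kappa\restriction S,\kappa)$, that theorem delivers $\onto^{++}(J^*,J,\theta)$ for every $\theta\in\reg(\kappa)$ (the hypothesis $\theta<\varkappa\le\kappa$ of Theorem~\ref{thm42} is met with $\varkappa:=\kappa$, as $\theta\in\reg(\kappa)$ is an infinite regular cardinal below $\kappa$). Now $S\in J^*$, equivalently $\kappa\in J^*$ under the identification convention, so any colouring witnessing $\onto^{++}(J^*,J,\theta)$ in particular satisfies the requirement for $A=\kappa$, and thus witnesses $\onto^{++}(\{\kappa\},J,\theta)$, which by our convention is exactly $\onto^{++}(J,\theta)$. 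The only thing to verify here is this passage from $J^*$ to $\{\kappa\}$, which is just the monotonicity of the principle in its $\mathcal A$-coordinate (shrinking $\mathcal A$ weakens the principle).

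Clause~(2). I would specialise to $S=\kappa$, so that $J=\ns_\kappa$ and $S\cap\reg(\kappa)=\reg(\kappa)$. Under either hypothesis, Proposition~\ref{diamondstar} (applied with $S:=\kappa$) then gives $\kappa\in\amen_\kappa$, and by Lemma~\ref{lemma44} this is precisely the statement $\ubd(\ns_\kappa,\kappa)$. Feeding this into Clause~(1) (with $S=\kappa$) yields $\onto^{++}(\ns_\kappa,\theta)$ for every $\theta\in\reg(\kappa)$.

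Clause~(3). I would argue contrapositively along the same chain. Suppose $\onto^{++}(J,\theta)$ fails for some $\theta\in\reg(\kappa)$. By Clause~(1), $\ubd(\ns_\kappa\restriction S,\kappa)$ must then fail, so $S\notin\amen_\kappa$ by Lemma~\ref{lemma44}. As $\amen_\kappa$ is a (downward-closed) ideal and $S\s\kappa$, this forces $\kappa\notin\amen_\kappa$; and since $\sa_\kappa\s\amen_\kappa$ by Lemma~\ref{amenableideal}(1), we get $\kappa\notin\sa_\kappa$, i.e.\ $\ubd(J^\bd[\kappa],\kappa)$ fails by Lemma~\ref{lemma43}. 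The three conclusions are now read off directly: the contrapositive of Corollary~\ref{square_is_amenable} gives that $\square(\kappa,{<}\mu)$ fails for all $\mu<\kappa$, while Corollary~\ref{strongamenmahlo} gives that $\refl(\kappa,\kappa,\reg(\kappa))$ holds and hence that $\kappa$ is greatly Mahlo. There is no substantial obstacle in this corollary; the only places demanding care are the $\mathcal A$-bookkeeping in Clause~(1) and the two monotonicity steps in Clause~(3), namely $S\notin\amen_\kappa\Rightarrow\kappa\notin\amen_\kappa$ and $\kappa\notin\amen_\kappa\Rightarrow\kappa\notin\sa_\kappa$, both of which rest on $\amen_\kappa$ being a downward-closed ideal containing $\sa_\kappa$.
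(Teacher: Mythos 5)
Your proposal is correct and follows essentially the same route as the paper: Clause~(1) is Theorem~\ref{thm42}(2) plus the (correct) observation that $\onto^{++}(J^*,J,\theta)$ implies $\onto^{++}(\{\kappa\},J,\theta)$ by monotonicity in the $\mathcal A$-coordinate; Clause~(2) is Clause~(1) with $S:=\kappa$ via Proposition~\ref{diamondstar} and Lemma~\ref{lemma44}; and Clause~(3) deduces the failure of $\ubd(J^{\bd}[\kappa],\kappa)$ and then cites Corollaries~\ref{square_is_amenable} and~\ref{strongamenmahlo}. Your passage through $S\notin\amen_\kappa\Rightarrow\kappa\notin\amen_\kappa\Rightarrow\kappa\notin\sa_\kappa$ is just a more explicit packaging of the paper's ``in particular $\ubd(J^{\bd}[\kappa],\kappa)$ fails'', and is valid.
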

\begin{proof}\begin{enumerate}[(1)]
\item By Theorem~\ref{thm42}(2).
\item By Clause~(1), using $S:=\kappa$ together with Proposition~\ref{diamondstar} and Lemma~\ref{lemma44}.
\item If $\onto^{++}(J,\theta)$ fails for some $\theta\in\reg(\kappa)$,
then, by Clause~(1), in particular $\ubd(J^{\bd}[\kappa],\kappa)$ fails. Now appeal to Corollaries \ref{square_is_amenable} and \ref{strongamenmahlo}.\qedhere
\end{enumerate}
\end{proof}

Recall that $\mathcal C(\kappa,\theta)$ denotes the least cardinality of a family of sets $\mathcal X\s[\kappa]^\theta$ with the property that for every closed and unbounded subset $C$ of $\kappa$,
there exists $X\in\mathcal X$ with $X\s C$.
Whether $\mathcal C(\theta^+,\theta)=\theta^+$ holds for every singular cardinal $\theta$ is an open problem,
but, by \cite[Lemma~3.1]{paper38}, $\mathcal C(\theta^+,\theta)\le\cf([\theta^+]^{\cf(\theta)},{\supseteq})\le\theta^{\cf(\theta)}$.

\begin{prop} If $\kappa=\theta^+$, then $\mathcal C(\kappa,\theta)=\kappa$ implies $\onto^+(J^\bd[\kappa],\theta)$.
\end{prop}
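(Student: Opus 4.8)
The plan is to reduce the claim to the weaker principle $\onto(J^{\bd}[\kappa],\theta)$ and then to build a witness for the latter directly from the club-guessing family supplied by $\mathcal C(\kappa,\theta)=\kappa$.

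First I would record that $J:=J^{\bd}[\kappa]$ is $\kappa$-complete and subnormal. Since $\theta$ is infinite, $|\theta+\theta|=\theta$, so any colouring witnessing $\onto(J,\theta)$ can be relabelled through a bijection $\theta\leftrightarrow\theta+\theta$ into a colouring $c$ witnessing $\onto(J,\theta+\theta)$. Fixing a $2$-to-$1$ surjection $\pi:\theta+\theta\to\theta$ and applying Proposition~\ref{prop219}(4), the colouring $\pi\circ c$ then satisfies $\mathcal T^{\pi\circ c}_\eta(B)=\theta$ for every $B\in J^+$, which is exactly the assertion that $\pi\circ c$ witnesses $\onto^+(\{\kappa\},J,\theta)$. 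Thus the cofinality of the colour classes demanded by $\onto^+$ — which a single bounded guess could never provide on its own — is manufactured for free from the $\kappa$-completeness and subnormality of $J$, and it remains only to produce a witness for $\onto(J,\theta)$.

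For that, I would fix a family $\langle X_\xi\mid\xi<\kappa\rangle\s[\kappa]^\theta$ witnessing $\mathcal C(\kappa,\theta)=\kappa$; after thinning I may assume $\otp(X_\xi)=\theta$ (so every tail of $X_\xi$ again has order type $\theta$), and, after the re-indexing discussed below, that $\min(X_\xi)>\xi$. For each $\xi$ I fix a surjection $g_\xi:X_\xi\to\theta$ and define $c:[\kappa]^2\to\theta$ by $c(\eta,\beta):=g_\eta(\max(X_\eta\cap\beta))$ when $X_\eta\cap\beta\neq\emptyset$ and $c(\eta,\beta):=0$ otherwise. Given an unbounded $B\s\kappa$, the set $C:=\acc^+(B)$ is a club (as $\kappa=\theta^+$ is regular uncountable), so I may pick $\eta<\kappa$ with $X_\eta\s C$. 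Since $\min(X_\eta)>\eta$, all of $X_\eta$ lies above $\eta$; so for each $\tau<\theta$ I pick $x\in g_\eta^{-1}(\tau)$, set $x^+:=\min(X_\eta\setminus(x+1))$ (which exists and lies in $C$ because $\otp(X_\eta)=\theta$ is a limit), and use that $x^+$ is a limit point of $B$ to find $\beta\in B\cap(x,x^+)$. Then $\max(X_\eta\cap\beta)=x$, whence $c(\eta,\beta)=\tau$; as $\tau$ was arbitrary, $c[\{\eta\}\circledast B]=\theta$.

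The main obstacle is the re-indexing step that secures $\min(X_\xi)>\xi$ while preserving the guessing property, and it is precisely here that the exact value $\mathcal C(\kappa,\theta)=\kappa$, rather than the mere existence of \emph{some} guessing family, is needed. The difficulty is sharpest when $\theta$ is singular: then no $g_\xi$ can have all of its fibres cofinal in $X_\xi$, so a reading column whose index sits high inside $X_\eta$ would necessarily omit some colours, which forces the index to lie below $\min(X_\eta)$. To arrange this I would first note that for every club $C$ and every $\delta<\kappa$ the club $C\setminus(\delta+1)$ must contain a family member, so every club contains members of arbitrarily large minimum; passing to tails of members (which remain subsets of any club containing the member and keep order type $\theta$) I would redistribute the family across the index set so that for each $\delta$ at most $|\delta|$-many of the chosen sets have minimum below $\delta$, using $\min(X_\xi)\geq\theta$ to dispose of the small indices. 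Verifying that this redistributed family still guesses every club — equivalently, that the system-of-distinct-representatives obstruction created by members of small supremum is sidestepped by the arbitrarily-large-minimum property — is the technical heart of the argument.
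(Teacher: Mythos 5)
Your opening reduction is exactly the paper's: since $J^{\bd}[\kappa]$ is $\kappa$-complete and subnormal and $\theta+\theta=\theta$, Proposition~\ref{prop219}(4) upgrades any witness to $\onto(J^{\bd}[\kappa],\theta)$ to one for $\onto^+(J^{\bd}[\kappa],\theta)$, so only $\onto(J^{\bd}[\kappa],\theta)$ needs to be produced. The gap is in the second half. Your colouring forces the witnessing $\beta$ for a target colour $\tau$ into the bounded window $(x,x^+)$ cut out by $X_\eta$, so you must have $\eta<x$, and hence you need a guessing family $\langle X_\xi\mid \xi<\kappa\rangle$ with $\min(X_\xi)>\xi$. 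You defer the existence of such a re-indexed family to ``the technical heart of the argument'' and never prove it, and it is not clear that it can be proved. What is true is that for each \emph{fixed} $\delta$ the subfamily $\{X\mid \min(X)>\delta\}$ still guesses every club (apply guessing to $D\setminus(\delta+1)$), but that is a local statement, one $\delta$ at a time; your argument needs a single $\kappa$-enumeration in which the $\xi$-th member has minimum above $\xi$ for every $\xi$ simultaneously while the range still meets $\{X\mid X\s D\}$ for each of the $2^\kappa$ clubs $D$. This is a global matching problem: a greedy assignment of ``the least unused member with minimum above $\xi$'' is only guaranteed to retain members whose minimum has cardinality exceeding that of their index (which at $\kappa=\theta^+$ is essentially no one), and passing to tails does not change which indices a set can legally occupy. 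Incidentally, your reason for insisting on $\min(X_\xi)>\xi$ rather than the weaker $\sup(X_\xi)>\xi$ rests on a false claim: an ordinal of order type $\theta$ \emph{can} be partitioned into $\theta$ many cofinal pieces even for singular $\theta$ (fix cardinals $\theta_i$ increasing to $\theta$ and let the $\alpha$-th piece collect the $\alpha$-th point of each interval $[\theta_i,\theta_{i+1})$ that is long enough). This does not rescue the proof --- with cofinal fibres you still need $\sup(X_\eta)>\eta$, which is the same unproved matching --- but the remark as written is wrong.

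The paper's proof is built precisely to avoid this positioning problem. It reduces further to $\onto(\ns_\kappa,\theta)$ via Lemma~\ref{upbyone} and then composes the guessing family with an auxiliary colouring $c$ witnessing $\ubd(\ns_\kappa,\kappa)$, which exists by Lemma~\ref{lemma44}(2) because $\kappa$, being a successor, carries an amenable $C$-sequence. Given a positive $B$, that colouring yields a single $j$ for which the set $X$ of values $\xi$ with $\{\beta\in B\mid c(j,\beta)=\xi\}$ positive is cofinal in $\kappa$; a guessing set $C_i$ is then chosen inside the club $\acc^+(X)$ and reads colours off the \emph{values} $\xi$ rather than off $B$ itself, with the pair $(i,j)$ packed into $\eta$ by a pairing function. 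Since each value-class $\{\beta\in B\mid c(j,\beta)=\xi\}$ is unbounded, the witnessing $\beta$ can always be taken above $\eta$, however $\eta$ compares with $C_i$, so no re-indexing of the guessing family is ever needed. Inserting this one extra composition into your construction closes the gap; the rest of what you wrote then goes through.
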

\begin{proof}[Proof sketch] 
Due to Corollary~\ref{prop41} below, this result is only of interest in the case that $\theta$ is singular.
We commence with a couple of reductions.
By Proposition~\ref{prop219}(4), it suffices to prove that $\onto(J^{\bd}[\kappa],\theta)$ holds.
By Lemma~\ref{upbyone}, the latter task reduces to proving that $\onto(\ns_\kappa,\theta)$ holds.
The proof of the latter is now very similar to that of Theorem~\ref{thm42}.
Fix a sequence $\langle C_\delta\mid \delta<\kappa\rangle$ of subsets of $\kappa$, each of order-type $\theta$,
such that, for every club $D$ in $\kappa$, for some $\delta<\kappa$, $C_\delta\s D$.
Define a colouring $d:[\kappa]^2\rightarrow\theta$ via  $d(\beta,\delta):=\sup(\otp(C_\delta\cap\beta))$.
As $\kappa$ is a successor cardinal, fix a colouring $c:[\kappa]^2\rightarrow\kappa$ as in Lemma~\ref{lemma44}(2).
Fix a bijection $\pi:\kappa\leftrightarrow\kappa\times\kappa$,
and finally pick a colouring $e:[\kappa]^2\rightarrow\theta$ such that, for all $\eta<\beta<\kappa$, 
if $\pi(\eta)=(i,j)$, then $e(\eta,\beta)=d(\{i,c(\{j,\beta\})\})$. 
By now, it should be clear that this works.
\end{proof}

Note that in the upcoming theorem, we allow $\theta$ to be finite. Unlike the similar Theorem~\ref{thm58} and Theorem~\ref{thm42}, below the focus is on obtaining an instance of $\onto(\{\nu\}, \ldots)$ for a fixed in advance $\nu \leq \kappa$.
\begin{thm}\label{ubdtoonto}
Suppose $\theta\le\varkappa\le\nu\le\kappa$ are cardinals,
and $\ubd(\{\nu\},J,\varkappa)$ holds for a given $J\in\mathcal J^\kappa_\omega$.
If $\nu=\kappa$, suppose also that $\kappa$ is regular, and that $J$ is subnormal.

Any of the following  hypotheses imply that $\onto(\{\nu\},J,\theta)$ holds:
\begin{enumerate}[(1)]
\item $\onto([\varkappa]^{<\varkappa},\theta)$ holds;
\item $\mathcal C(\varkappa,\theta)<\nu=\kappa$;
\item $\mathcal C(\varkappa,\theta)\le\nu<\kappa$.
\end{enumerate}
\end{thm}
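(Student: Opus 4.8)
The plan is to mimic the architecture of Theorems~\ref{thm58} and~\ref{thm42}: fix an upper-regressive colouring $c:[\kappa]^2\to\varkappa$ witnessing $\ubd(\{\nu\},J,\varkappa)$, so that every $B\in J^+$ admits some $\eta^*<\nu$ for which $X:=c[\{\eta^*\}\circledast B]$ has order-type $\varkappa$ and is therefore cofinal in $\varkappa$; then manufacture, from whichever of (1)--(3) is assumed, an auxiliary device on $\varkappa$ that extracts all $\theta$ colours from any cofinal subset of $\varkappa$; and finally splice the two together through a pairing function. Concretely, I would fix a pairing $\pi$ and define $e(\eta,\beta)$ to depend both on $c(\eta^*,\beta)$ (which reconstructs the cofinal set $X$) and on an index recovered from $\eta$ that selects the colour-extraction. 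The slicing into subcases $\theta^+=\varkappa$, $\theta^+<\varkappa$ regular, and $\theta^+<\varkappa$ singular --- needed to actually produce the auxiliary device on $\varkappa$ --- then proceeds exactly as in the proof of Theorem~\ref{thm58}.

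For the auxiliary device itself: under hypothesis~(1) I take a colouring $d:[\varkappa]^2\to\theta$ witnessing $\onto([\varkappa]^{<\varkappa},\theta)$, whose index $\zeta<\varkappa$ satisfies $d[\{\zeta\}\circledast X]=\theta$. Under~(2)/(3) I instead fix a family $\langle C_\rho\mid\rho<\mathcal C(\varkappa,\theta)\rangle\s[\varkappa]^\theta$ guessing the clubs of $\varkappa$; since $\acc^+(X)$ is a club in $\varkappa$, some $C_\rho\s\acc^+(X)$, and after normalising each $C_\rho$ to order-type $\theta$ I read the colour of $(\eta,\beta)$ off the position of $c(\eta^*,\beta)$ among the $\theta$ intervals cut out by $C_\rho$ (using that each element of $C_\rho$ is an accumulation point of $X$, so every interval meets $X$). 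In either case the index set has size at most $\varkappa\le\nu$ (under (1)) or $\mathcal C(\varkappa,\theta)$ (under (2)/(3)), so it can be packed together with an ordinal $<\nu$ into a single code $\eta<\nu$ through $\pi$.

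The colour-extraction only succeeds if the code $\eta$ lies below the portion of $X$ (equivalently, below $\min(C_\rho)$, resp.\ below the points that $d$ actually uses) that realises each colour, so the genuine work is in controlling where the code lands. When $\nu<\kappa$ this is automatic: passing to the positive tail $B\cap[\nu,\kappa)$ forces every relevant $\beta$ to exceed every code $\eta<\nu$, so the full image $X$ is seen, and a surjection $\nu\to(\text{index set})\times\nu$ --- available because the index set has size at most $\nu$, i.e.\ precisely because $\mathcal C(\varkappa,\theta)\le\nu$ --- completes case~(3) (and case~(1) when $\varkappa<\kappa$). When $\nu=\kappa$ I would, as in Theorem~\ref{thm58}, invoke subnormality of $J$ via Lemma~\ref{dseparated} to pass to a $D$-separated $B$ for a club $D$ read off the pairing, and then slot $\eta$ strictly between two consecutive points of $B$ (so that all later $\beta$ exceed $\eta$); here $\kappa$ must be regular, and one needs the index set to have size $<\kappa$ so that the ordinals $\delta<\kappa$ below which every relevant code is available form a club --- which is exactly the strict inequality $\mathcal C(\varkappa,\theta)<\nu=\kappa$ of case~(2).

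I expect the main obstacle to be exactly this coordination between the code's location and the extraction: arranging a \emph{single} $\eta<\nu$ that simultaneously recovers the correct $\eta^*$ and index, keeps $\{c(\eta^*,\beta)\mid \beta\in B,\ \beta>\eta\}$ cofinal in $\varkappa$ with a representative in every interval determined by $C_\rho$, and respects the ordinal budget $\nu$. This is the step where the three hypotheses genuinely diverge, and where subnormality (for $\nu=\kappa$) together with the sharp inequalities on $\mathcal C(\varkappa,\theta)$ are consumed; the remaining verifications (that $\acc^+(X)$ is club, that the normalised $C_\rho$ witness all of $\theta$, and the routine properties of $\pi$ and $e$) I expect to be bookkeeping.
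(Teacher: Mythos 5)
Your proposal is correct and follows essentially the same route as the paper's proof: extract a cofinal $X=c[\{\eta^*\}\circledast B]\subseteq\varkappa$, use the hypothesis-supplied device on $\varkappa$ (the $\onto$-colouring in case (1), a club-guessing family of order-type-$\theta$ sets with the "position among the guessed points" colour in cases (2)/(3)) to realise all $\theta$ colours on $X$, pack $(\eta^*,\text{index})$ into one code $\eta<\nu$ via a pairing, and control the code's location by truncating $B$ above $\nu$ when $\nu<\kappa$ and by $D$-separation from subnormality when $\nu=\kappa$ --- exactly where the inequalities $\mathcal C(\varkappa,\theta)<\nu=\kappa$ versus $\le\nu<\kappa$ are consumed, as you say. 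The one stray remark, that the $\theta^+=\varkappa$ / $\theta^+<\varkappa$ subcase split of Theorem~\ref{thm58} is "needed to produce the auxiliary device," is superfluous (and would not cover $\theta$ finite, singular, or equal to $\varkappa$); the device comes directly from hypotheses (1)--(3) as your next paragraph correctly describes.
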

\begin{proof} In Case~(1), if $\varkappa=\kappa$, then $\nu=\kappa$, and then $\onto(\{\nu\},J,\theta)$ holds.
So, in Case~(1), we may moreover assume that $\varkappa<\kappa$.
Fix a map $d:[\varkappa]^2\rightarrow\theta$ witnessing $\onto([\varkappa]^{<\varkappa},\theta)$.
Set $\chi:=\varkappa$, and notice that either $\chi<\nu=\kappa$ or $\chi\le\nu<\kappa$.

In Cases (2) and (3), let $\chi:=\mathcal C(\varkappa,\theta)$,
and fix a sequence $\langle X_i\mid i<\chi\rangle$ of subsets of $\varkappa$,
each of order-type $\theta$, such that, for every club $C$ in $\varkappa$,
for some $i<\chi$, $X_i\s C$.
Then, define a map $d:\chi\times\varkappa\rightarrow\theta$ via $d(i,\gamma)=\sup(\otp(X_i\cap\gamma))$.
Notice that either $\chi<\nu=\kappa$ or $\chi\le\nu<\kappa$.

\begin{claim}\label{claim641} Let $\Gamma\in[\varkappa]^\varkappa$. Then there exists $i<\chi$ such that $d[\{i\}\times\Gamma]=\theta$.
\end{claim}		
\begin{why} In Case~(1), this is clear, so we focus on Cases (2) and (3).

If $\acc^+(\Gamma)$ is cofinal in $\varkappa$, then
find $i<\chi$ such that $X_i\s \acc^+(\Gamma)$; otherwise, there exists a cofinal subset $\Gamma'\s \Gamma$ of order-type $\omega$ which is trivially closed,
so we may find $i<\chi$ such that $X_i\s \Gamma$.
Let $\tau<\theta$ be arbitrary. Pick $\xi\in X_i$ such that $\otp(X_i\cap\xi)=\tau$.
Let $\gamma:=\min(\Gamma\setminus(\xi+1))$. 
As $X_i\s \Gamma\cap\acc^+(\Gamma)$, $\otp(X_i\cap\gamma)=\otp(X_i\cap(\xi+1))=\tau+1$,
and hence $d(i,\gamma)=\sup(\otp(X_i\cap\gamma))=\tau$.
\end{why}
		
Fix a colouring $c:[\kappa]^2\rightarrow\varkappa$ witnessing $\ubd(\{\nu\},J,\varkappa)$.		
Fix a bijection $\pi:\nu\leftrightarrow\nu\times\chi$.
Then, fix any colouring $e:[\kappa]^2\rightarrow\theta$ satisfying the following.
For every $\eta<\beta<\kappa$, if $\eta<\nu$, $\pi(\eta)=(\eta',i)$ 
and $(i,c(\eta',\beta))\in\dom(d)$, then $e(\eta,\beta)=d(i,c(\eta',\beta))$.
		
To see that $e$ witnesses $\onto(\{\nu\},J,\theta)$, let $B\in J^+$ be arbitrary. Note:

$\br$ If $\chi\le\nu<\kappa$, then we may assume that $\min(B)\ge\nu$.

$\br$ If $\chi<\nu=\kappa$, then $D:=\{\delta<\kappa\mid \pi[\delta]=\delta\times\chi\}$ is a club in $\kappa$,
and, by Lemma~\ref{dseparated}, we may assume that 
for every pair $\bar\beta<\beta$ of points from $B$, there is $\delta\in D$ with $\bar\beta<\delta<\beta$.

Next, by the choice of $c$, find $\eta'<\nu$ such that $\otp(c[\{\eta'\}\circledast B])=\varkappa$.
Set $\bar\beta:=\min(B\setminus(\eta'+1))$ and $B':=B\setminus(\bar\beta+1)$. 
Evidently, $\Gamma:=c[\{\eta'\}\circledast B']$ is in $[\varkappa]^\varkappa$.
So, by Claim~\ref{claim641}, we may find $i<\chi$ such that $d[\{i\}\times\Gamma]=\theta$.
Finally, find $\eta<\nu$ such that $\pi(\eta)=(\eta',i)$. 
\begin{claim} $\min(B')>\eta$.
\end{claim}
\begin{why} Let $\beta\in B'$ be arbitrary. 

$\br$ If $\chi\le\nu<\kappa$, then $\eta<\nu\le\beta$.

$\br$ If $\chi<\nu=\kappa$, then there exists some $\delta\in D$ such that $\eta'<\bar\beta<\delta<\beta$,
and hence $\eta<\delta<\beta$.
\end{why}

To verify that $e[\{\eta\}\circledast B]=\theta$,
let $\tau<\theta$ be arbitrary. 
As $i$ was given by Claim~\ref{claim641}, we may fix $\gamma\in\Gamma$ such that $(i,\gamma)\in\dom(d)$ and $d(i,\gamma)=\tau$.
As $\gamma\in\Gamma$, we may fix $\beta\in B'$ such that $c(\eta',\beta)=\gamma$.
Then $e(\eta,\beta)=d(i,c(\eta',\beta))=d(i,\gamma)=\tau$, as sought.
\end{proof}

\begin{cor}\label{abovecont}
If $\kappa>2^\theta$, then $\ubd(J^{\bd}[\kappa],\theta)$ iff $\onto(J^{\bd}[\kappa],\theta)$.
\end{cor}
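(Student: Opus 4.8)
The plan is to treat the two implications separately, the substantial one via Theorem~\ref{ubdtoonto}. Throughout, note that $\theta<2^\theta<\kappa$, so in particular $\theta<\kappa$. If $\theta$ is finite, the equivalence is immediate from Remark~\ref{remark25a}(1), so I would assume from now on that $\theta$ is infinite.

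For the forward implication $\onto(J^{\bd}[\kappa],\theta)\implies\ubd(J^{\bd}[\kappa],\theta)$, I would simply upgrade a witness to be upper-regressive. Fix $c:[\kappa]^2\rightarrow\theta$ witnessing $\onto(J^{\bd}[\kappa],\theta)$, and define $d:[\kappa]^2\rightarrow\theta$ by $d(\alpha,\beta):=c(\alpha,\beta)$ whenever $\beta>\theta$ and $d(\alpha,\beta):=0$ otherwise. As $\theta<\kappa$, the map $d$ is upper-regressive. Given $B\in(J^{\bd}[\kappa])^+$, the set $B':=B\setminus(\theta+1)$ is again unbounded, so I may find $\eta<\kappa$ with $c[\{\eta\}\circledast B']=\theta$; since $d$ agrees with $c$ on every pair whose larger coordinate lies in $B'$, also $d[\{\eta\}\circledast B']=\theta$, whence $\otp(d[\{\eta\}\circledast B])=\theta$. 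Thus $d$ witnesses $\ubd(J^{\bd}[\kappa],\theta)$ (in fact it re-witnesses $\onto(J^{\bd}[\kappa],\theta)$).

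For the reverse implication $\ubd(J^{\bd}[\kappa],\theta)\implies\onto(J^{\bd}[\kappa],\theta)$, I would appeal to Theorem~\ref{ubdtoonto} with the parameters $\varkappa:=\theta$, $\nu:=\kappa$, and $J:=J^{\bd}[\kappa]$. Since $\mathcal A$ is omitted in both principles, the hypothesis $\ubd(\{\nu\},J,\varkappa)$ of that theorem is literally our assumption $\ubd(J^{\bd}[\kappa],\theta)$. Recalling that $J^{\bd}[\kappa]$ is subnormal (as noted after Definition~\ref{subnormalideals}), the only remaining inputs are the arithmetic hypothesis and, for the $\nu=\kappa$ branch, the regularity of $\kappa$. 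For the arithmetic I would invoke Clause~(2): since every club $C$ of $\theta$ has size $\theta$, the family $[\theta]^\theta$ already witnesses $\mathcal C(\theta,\theta)\le|[\theta]^\theta|=2^\theta<\kappa=\nu$. The theorem then yields $\onto(\{\kappa\},J^{\bd}[\kappa],\theta)$, which is exactly $\onto(J^{\bd}[\kappa],\theta)$.

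The main obstacle is that our hypothesis is the \emph{wide} principle, i.e.\ $\mathcal A=\{\kappa\}$, which forces the choice $\nu=\kappa$ in Theorem~\ref{ubdtoonto}: one cannot instead take $\nu<\kappa$ and use the regularity-free Clause~(3), because $\ubd(\{\nu\},J^{\bd}[\kappa],\theta)$ is a stronger, \emph{narrow} principle that is not available to us. Consequently the argument runs through the $\nu=\kappa$ branch, which requires $\kappa$ to be regular so that the separating club furnished by Lemma~\ref{dseparated} is cofinal and behaves as needed. For regular $\kappa$ this completes the proof; the singular case is the delicate point, as there neither the wide hypothesis nor the cited theorem directly delivers the conclusion, and it would have to be addressed separately.
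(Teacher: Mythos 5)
Your proof is correct and takes essentially the same route as the paper, whose entire proof of this corollary is to appeal to Theorem~\ref{ubdtoonto}(2) with $\varkappa:=\theta$ and $\nu:=\kappa$ (the converse direction and the verification that $\mathcal C(\theta,\theta)\le2^\theta<\kappa$ being left implicit). Your closing concern about singular $\kappa$ applies equally to the paper's own one-line argument, since the $\nu=\kappa$ branch of Theorem~\ref{ubdtoonto} likewise presupposes that $\kappa$ is regular.
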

\begin{proof} Appeal to Theorem~\ref{ubdtoonto}(2) with $\varkappa:=\theta$ and $\nu:=\kappa$.
\end{proof}

The next lemma shows at the level of successors cardinals, the choice of the ideal hardly makes any difference.
It also gives a sufficient condition for pumping-up from $\theta$ colours into $\theta^+$ colours.

\begin{lemma}\label{upbyone} Suppose that $\kappa=\mu^+$ is a successor cardinal,
$\mu\le\nu\le\kappa$,
and $J\in\mathcal J^\kappa_\kappa$ is a nontrivial ideal.
\begin{enumerate}[(1)]
\item If $\onto(\{\nu\},J,\theta)$ holds, then so does $\onto(\{\nu\},J^{\bd}[\kappa],\theta)$;
\item If $\ubd(\{\nu\},J,\theta)$ holds, then so does $\ubd(\{\nu\},J^{\bd}[\kappa],\theta)$;
\item If $\onto(\{\nu\},J,\theta)$ holds with $\theta=\mu=\nu$, then so does $\onto(\{\nu\},J^{\bd}[\kappa],\kappa)$.
\end{enumerate}
\end{lemma}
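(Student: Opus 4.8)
The plan is to treat the three clauses as instances of a single construction, the point of which is to manufacture from a given witness $c$ a new colouring that handles \emph{every} unbounded set rather than merely the $J$-positive ones. Since $J\supseteq J^{\bd}[\kappa]$ we have $J^+\s(J^{\bd}[\kappa])^+$, so a witness $c$ to the hypothesis already behaves correctly on $J$-positive sets; the entire difficulty lies in the extra sets, namely those $B\in(J^{\bd}[\kappa])^+$ that are $J$-null. Two features of $\kappa=\mu^+$ will be used throughout: it is regular, so every $B\in(J^{\bd}[\kappa])^+$ is cofinal of order type $\kappa$; and $|\beta|\le\mu$ for every $\beta<\kappa$, so I may fix once and for all a sequence of injections $\langle e_\beta:\beta\to\mu\mid\beta<\kappa\rangle$ (equivalently, a triangular Ulam-type matrix for $\kappa$ in the sense of Fact~\ref{ulamoriginal}). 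This is the only genuinely successor-specific ingredient, and it will serve as a uniform, $B$-independent substitute for the increasing enumeration of $B$.

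For clauses (1) and (2) I would define the new colouring $d$ from $c$ by composing with the $e_\beta$ so as to \emph{relocate} the second coordinate. Conceptually, along the increasing enumeration of an unbounded $B$ the index $\otp(B\cap\beta)$ sweeps out all of $\kappa$, and $\kappa\in J^+$ by nontriviality; thus the behaviour of $c$ on the (positive) full space is in principle available to be pulled back onto $B$. Using the injections $e_\beta$ to simulate this order-isomorphism between $B$ and $\kappa$ \emph{uniformly} (so that $d$ itself does not depend on $B$), one arranges that for a suitable $\eta<\nu$ the value set $d[\{\eta\}\circledast B]$ equals $\theta$ in clause~(1), respectively has order type $\theta$ in clause~(2), exactly mirroring how $c$ realises the full range, respectively the full order type, on a positive set. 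Upper-regressiveness in clause~(2) is preserved since $e_\beta$ takes values below $\beta$ and $c$ is upper-regressive.

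Clause (3) is the case $\theta=\mu=\nu$, and I would read it off the \emph{same} relocation construction rather than as a separate colour-pumping: with $\nu=\mu$, decoding through $e_\beta^{-1}:\mu\rightharpoonup\beta$ turns the $\mu$ available $c$-colours into $\kappa$-many possible output ordinals below $\beta$, and the extra slack in $\theta=\mu=\nu$ is precisely what lets the construction realise all of $\kappa$ along $B$. Concretely I expect to reduce the target "$d[\{\eta\}\circledast B]=\kappa$'' to showing that, for the good $\eta$ and each $\gamma<\kappa$, some $\beta\in B$ above $\gamma$ satisfies the matching condition relating the realised colour to $e_\beta(\gamma)$, which is controlled by the columns $U_{i,\gamma}$ of the fixed matrix together with nontriviality and $\kappa$-completeness of $J$.

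The step I expect to be the main obstacle is the \emph{uniformity} underlying clauses (1) and (2). The naive reindexing $\beta\mapsto\otp(B\cap\beta)$ is not a function of $\beta$ alone, and for a $J$-null unbounded $B$ there need be no canonically associated $J$-positive set at all: depending on $J$, a club may be $J$-null, or a set of limit ordinals may be $J$-null, so no single order-theoretic operation on $B$ produces a positive set in all cases. Making one colouring $d$ succeed for every unbounded $B$ at once is therefore where $\kappa=\mu^+$ (via the injections $e_\beta$) and the $\kappa$-completeness together with nontriviality of $J$ (guaranteeing $\kappa\in J^+$ as a positive target for the relocation) have to be combined; verifying that the resulting matching goes through simultaneously for all unbounded $B$ is the crux of the argument, and is what I would write out in full detail.
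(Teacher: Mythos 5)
Your overall architecture is the paper's: fix a family of maps between $\mu$ and the ordinals $\beta<\kappa$, relocate the second coordinate through them, and let $\kappa$-completeness plus nontriviality of $J$ turn an arbitrary unbounded $B$ into something the hypothesis can see. (The paper uses surjections $e_\beta:\mu\rightarrow\beta+1$ and sets $d(\eta,\beta)=c(\{j,e_\beta(i)\})$ where $\pi(\eta)=(i,j)$ for a bijection $\pi:\nu\leftrightarrow\mu\times\nu$; your injections $\beta\to\mu$ read through their inverses amount to the same device.) However, the step you defer as ``the crux \dots which I would write out in full detail'' is the entire content of the lemma, and your conceptual picture of it points the wrong way. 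The mechanism is \emph{not} to simulate the order-isomorphism $B\cong\kappa$ and pull back the behaviour of $c$ on the full space: for a fixed column $i<\mu$, the set $\Gamma^i(B)$ of ordinals $\gamma$ with $|\{\beta\in B\setminus\mu\mid e_\beta(i)=\gamma\}|=\kappa$ is in general a proper subset of $\kappa$, and no single $i$ realises every $\gamma$. What makes the argument work is the covering $\kappa\setminus\mu=\bigcup_{i<\mu}\Gamma^i(B)$ (for each $\gamma$, pigeonhole over $\mu$-many coordinates applied to the $\kappa$-sized set $B\setminus\gamma$), whence $\kappa$-completeness and $\kappa\setminus\mu\in J^+$ yield a single $i$ with $\Gamma^i(B)\in J^+$; the hypothesis $\onto(\{\nu\},J,\theta)$ is then applied to this $B$-dependent positive set, not to $\kappa$. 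This is precisely why the hypothesis must hold for \emph{all} $J$-positive sets, contrary to your remark that the behaviour of $c$ on the positive full space suffices. Without this covering-plus-completeness step your plan does not close.

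Two further ingredients are absent. First, the good $\eta<\nu$ must encode both the column $i<\mu$ and the coordinate $j<\nu$ at which $c$ succeeds on $\Gamma^i(B)$; this uses a bijection $\nu\leftrightarrow\mu\times\nu$ and hence the hypothesis $\mu\le\nu$, which your sketch never invokes. Second, Clause~(3) is not just a re-reading of the same construction: the paper composes twice, setting $d(\eta,\beta)=e_{e_\beta(i)}(c(j,\beta))$, so that the decoder $e_\gamma$ applied to the $\mu$-many $c$-colours is indexed by an arbitrarily large $\gamma=e_\beta(i)\in\Gamma^i(B)$, and it needs an extra stabilisation (choosing one $j$ for which $\{\gamma\in\Gamma^i(B)\mid j_\gamma=j\}$ is cofinal, possible since $\nu<\kappa$) before all of $\kappa$ is realised. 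Your sketch gestures at the right decoding but contains neither the double composition nor the stabilisation.
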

\begin{proof} Fix a bijection $\pi:\nu\leftrightarrow\mu\times\nu$.
For every $\beta<\kappa$, fix a surjection $e_\beta:\mu\rightarrow\beta+1$.
For every subset $B\s\kappa$,
and for all $i<\mu$ and $\gamma<\kappa$, let $B^i_\gamma:=\{ \beta\in B\setminus\mu\mid e_\beta(i)=\gamma\}$,
and then let $\Gamma^i(B):=\{ \gamma\in\kappa\setminus\mu\mid B^i_\gamma\in [\kappa]^\kappa\}$.
\begin{claim} Let $B\in[\kappa]^\kappa$. Then there exists $i<\mu$ such that $\Gamma^i(B)\in J^+$.
\end{claim}
\begin{why} As $|B|=\kappa$, for every $\gamma<\kappa$, there exists some $i<\mu$ such that $|B^i_\gamma|=\kappa$.
Then, as $J$ is $\kappa$-complete, there exists some $i<\mu$ such that $\Gamma^i(B)\in J^+$.
\end{why}

(1) Suppose that $c:[\kappa]^2\rightarrow\theta$ is a colouring witnessing $\onto(\{\nu\},J,\theta)$.
Pick any colouring $d:[\kappa]^2\rightarrow\theta$ such that, for all $\eta<\beta<\kappa$,
if $\eta\in\nu$ and $\pi(\eta)=(i,j)$, then $d(\eta,\beta)=c(\{j,e_\beta(i)\})$.
To see this works, let $B\in(J^{\bd}[\kappa])^+$.
Pick $i<\mu$ such that $\Gamma^i(B)\in J^+$.
Find $j<\nu$ such that $c[\{j\}\circledast \Gamma^i(B)]=\theta$.
Find $\eta<\nu$ such that $\pi(\eta)=(i,j)$. To see that
$D[\{\eta\}\circledast B]=\theta$, let $\tau<\theta$ be arbitrary.
Pick $\gamma\in \Gamma^i(B)$ above $j$ such that $c(j,\gamma)=\tau$
and pick a large enough $\beta\in B^i_\gamma$ above $\eta$.
Then $d(\eta,\beta)=c(j,e_\beta(i))=c(j,\gamma)=\tau$.

(2) This is almost the exact same proof as that of Clause~(1).

(3) Suppose that $c:[\kappa]^2\rightarrow\theta$ is a colouring witnessing $\onto(\{\nu\},J,\theta)$,
with $\theta=\mu=\nu$. 
Pick any colouring $d:[\kappa]^2\rightarrow\kappa$ such that, for all $\eta\le\nu\le\beta<\kappa$,
if $\pi(\eta)=(i,j)$, then $d(\eta,\beta)=e_{e_\beta(i)}(c(j,\beta))$.
To see this works, let $B\in(J^{\bd}[\kappa])^+$.
Pick $i<\mu$ such that $\Gamma^i(B)\in J^+$.
For each $\gamma\in\Gamma^i(B)$, pick $j_\gamma<\nu$ such that $c[\{j_\gamma\}\circledast B_\gamma^i]=\theta$.
As $\nu<\kappa$, find $j<\nu$ such that $\Gamma:=\{\gamma\in\Gamma^i(B)\mid j_\gamma=j\}$ is cofinal in $\kappa$.
Find $\eta<\nu$ such that $\pi(\eta)=(i,j)$. 
To see that $d[\{\eta\}\circledast B]=\kappa$, let $\tau<\kappa$ be some prescribed colour.
Fix $\gamma\in\Gamma$ above $\tau$.
Find $\epsilon<\mu$ such that $e_\gamma(\epsilon)=\tau$.
As $c[\{j\}\circledast B_\gamma^i]=\theta$, we may fix $\beta\in B_\gamma^i$ such that $c(j,\beta)=\delta$.
Then $$d(\eta,\beta)=e_{e_\beta(i)}(c(j,\beta))=e_\gamma(\delta)=\tau,$$
as sought.
\end{proof}

In the next lemma, we move from a strong form of $\onto(J,\theta)$ to $\onto(J,\theta^+)$, assuming the necessary hypothesis of $\ubd(J,\theta^+)$.

\begin{lemma}\label{increasecolours}  Suppose that $\theta<\theta^+<\cf(\kappa)=\kappa$ are infinite cardinals,
and $J\in\mathcal J^\kappa_\kappa$ is subnormal.

If $\onto(J^+,J,\theta)$ 	and $\ubd(J,\theta^+)$ both hold,	then so does $\onto(J,\theta^+)$.
\end{lemma}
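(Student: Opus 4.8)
The plan is to fuse the two hypothesised colourings into a single $e\colon[\kappa]^2\to\theta^+$. Fix a bijection $\pi\colon\kappa\leftrightarrow\kappa\times\kappa$, writing $\pi(\eta)=(\eta_0,\eta_1)$, and for every ordinal $y$ with $\theta\le y<\theta^+$ fix a bijection $b_y\colon\theta\leftrightarrow y$. Let $c$ witness $\onto(J^+,J,\theta)$ and let $d$ be the upper-regressive witness of $\ubd(J,\theta^+)$. I would then set $e(\eta,\beta):=b_{d(\eta_1,\beta)}(c(\eta_0,\beta))$ when $d(\eta_1,\beta)\ge\theta$, and $e(\eta,\beta):=0$ otherwise; this is a legitimate map into $\theta^+$ and no upper-regressivity is demanded of an $\onto$-witness. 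The role of the $b_y$ is the key design choice: a prescribed colour $\tau<\theta^+$ lies in $\im(b_y)=y$ precisely for every $y>\tau$, so $\tau$ can be manufactured from \emph{any} sufficiently large $d$-value together with the correct $c$-colour. This is what will let the cofinally-many values produced by $\ubd$ (which only realises \emph{order-type} $\theta^+$, not full range) nonetheless cover all of $\theta^+$.

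Given $B\in J^+$, the first and main step will be to locate a $d$-row along which cofinally-many values in $\theta^+$ are realised \emph{positively}: an $\eta_1<\kappa$ such that $Y^+:=\{y<\theta^+\mid B_y\in J^+\}$ is unbounded in $\theta^+$, where $B_y:=\{\beta\in B\setminus(\eta_1+1)\mid d(\eta_1,\beta)=y\}$. I expect this to be the hard part, since $\ubd(J,\theta^+)$ gives a row whose $d$-image has order-type $\theta^+$ with no control over which fibres are positive. I would extract it exactly along the lines of Proposition~\ref{prop219}: assuming toward a contradiction that for every $\eta<\kappa$ the set $Y^+_\eta$ is bounded below some $\varsigma_\eta<\theta^+$, the $\kappa$-completeness of $J$ makes $E_\eta:=\kappa\setminus\bigcup_{y\ge\varsigma_\eta}B^\eta_y$ a member of $J^*$ (a union of fewer than $\kappa$ sets from $J$); feeding $\langle E_\eta\mid\eta<\kappa\rangle$ into subnormality via Lemma~\ref{subnormalcomplete}(1) yields $B'\s B$ in $J^+$ such that, off its first point, $d(\eta,\beta)<\varsigma_\eta$ whenever $(\eta,\beta)\in[B']^2$; applying $\ubd(J,\theta^+)$ to $B'$ then produces a row $\eta^*$ whose image has order-type $\theta^+$ yet is contained, after deleting one point, in $\varsigma_{\eta^*}<\theta^+$, which is absurd. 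Since $\theta^+$ is regular, the surviving $\eta_1$ has $Y^+$ cofinal, of order-type $\theta^+$.

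With such $\eta_1$ fixed, $\{B_y\mid y\in Y^+\}$ is a family of fewer than $\kappa$ pairwise disjoint positive sets. For each $y\in Y^+$, Proposition~\ref{prop219}(2) applied to $c$ shows that $\{\eta_0<\kappa\mid \mathcal T^c_{\eta_0}(B_y)=\theta\}\in J^*$, i.e.\ there are $J^*$-many rows on which $c$ realises \emph{every} colour $<\theta$ with a positive (hence unbounded) fibre; as $|Y^+|=\theta^+<\kappa$ and $J$ is $\kappa$-complete, the intersection over $y\in Y^+$ is again in $J^*$, so I may choose a single $\eta_0$ that works simultaneously for all $B_y$. Set $\eta:=\pi^{-1}(\eta_0,\eta_1)$. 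To finish, given $\tau<\theta^+$ I pick $y\in Y^+$ with $y>\max\{\theta,\tau\}$ and put $j:=b_y^{-1}(\tau)<\theta$; since $\{\beta\in B_y\mid c(\eta_0,\beta)=j\}\in J^+$ it is unbounded, hence contains some $\beta>\eta$, and then $e(\eta,\beta)=b_{d(\eta_1,\beta)}(c(\eta_0,\beta))=b_y(j)=\tau$. Thus $e[\{\eta\}\circledast B]=\theta^+$, as required. Note that using positive (therefore cofinal) fibres from Proposition~\ref{prop219}(2), rather than bare onto-ness, is exactly what dissolves the constraint $\beta>\eta$ with no bookkeeping on the size of $\eta$.
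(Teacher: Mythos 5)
Your proof is correct and follows essentially the same route as the paper's: both fuse the two colourings through a pairing of rows, extract (via the subnormality-plus-$\kappa$-completeness machinery of Proposition~\ref{prop219}) a $d$-row with unboundedly many $J^+$-fibres in $\theta^+$ together with a single $c$-row realising every colour below $\theta$ positively on all of those fibres simultaneously, and then decode a prescribed $\tau<\theta^+$ by means of collapse maps from $\theta$ onto ordinals below $\theta^+$. The only difference is cosmetic: the paper threads an extra coordinate $i<\theta$ through a three-fold pairing and a double composition $e_{e_{d(j,\beta)}(i)}(c(k,\beta))$, whereas your single composition $b_{d(\eta_1,\beta)}(c(\eta_0,\beta))$, indexed directly by the $d$-value, does the same job.
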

\begin{proof} 
Suppose that $\onto(J^+,J,\theta)$ and $\ubd(J,\theta^+)$ both hold.
By Clauses (2) and (3) of Proposition~\ref{prop219}, we may fix colourings $c:[\kappa]^2\rightarrow\theta$
and $d:[\kappa]^2\rightarrow\theta^+$ such that, 
if we denote 
\begin{itemize}
\item $B^a_i(\epsilon):=\{\beta\in B\setminus(i+1)\mid a(i,\beta)=\epsilon\}$ for $a \in \{c,d\}$, and
\item $A_j(B):=\{\alpha<\theta^+\mid B^d_j(\alpha)\in J^+\}$, and
\item $D_k(B):=\{\delta<\theta\mid B^c_k(\delta)\in J^+\}$, 
\end{itemize}
then, for every $B\in J^+$, there is $j<\kappa$ for which $|A_j(B)|=\theta^+$,
and the set $\{k<\kappa\mid D_k(B)=\theta\}$ is in $J^*$.

Next, for every $\alpha<\theta^+$, fix a surjection $e_\alpha:\theta\rightarrow\alpha+1$.
Fix a bijection $\pi:\kappa\leftrightarrow\theta\times\kappa\times\kappa$.
Define $f:[\kappa]^2\rightarrow\theta^+$ as follows.
Given $\eta<\beta<\kappa$, let $(i,j,k):=\pi(\eta)$ and then set $f(\eta,\beta):=e_{e_{d(j,\beta)}(i)}(c(\{k,\beta\}))$.

To see this works, let $B\in J^+$ be arbitrary.
Fix $j<\kappa$ such that $|A_j(B)|=\theta^+$.
\begin{claim} Let $\gamma<\theta^+$. There exists $i<\theta$ for which
$B_{i,\gamma}:=\{ \beta\in B \mid e_{d(j,\beta)}(i)=\gamma\}$ is in $J^+$.
\end{claim}
\begin{why} Fix $\alpha\in A_j(B)$ above $\gamma$.
Find $i<\theta$ such that $e_\alpha(i)=\gamma$.
For every $\beta\in B_j^d(\alpha)$, $e_{d(j,\beta)}(i)=e_\alpha(i)=\gamma$.
So $B^d_j(\alpha)\s B_{i,\gamma}$ and the former is in $J^+$, since $\alpha\in A_j(B)$.
\end{why}

Fix $i<\theta$ for which $\Gamma^i(B):=\{ \gamma<\theta^+\mid B_{i,\gamma}\in J^+\}$ is cofinal in $\theta^+$.
As $J^*$ is $\theta^{++}$-complete we may find some $k<\kappa$ such that $D_k(B_{i,\gamma})=\theta$ for all $\gamma\in\Gamma^i(B)$.
Find $\eta<\kappa$ such that $\pi(\eta)=(i,j,k)$.
Now, given any $\tau<\theta^+$, pick $\gamma\in\Gamma^i(B)$
above $\tau$. Find $\delta<\theta$ such that $e_\gamma(\delta)=\tau$.
Pick $\beta\in B_{i,\gamma}$ above $\eta$ such that $c(k,\beta)=\delta$. Then
$$f(\eta,\beta)=e_{e_{d(j,\beta)}(i)}(c(k,\beta))=e_{\gamma}(\delta)=\tau,$$
as sought.
\end{proof}
\begin{remark} The restriction ``$\theta^+<\cf(\kappa)$'' cannot be waived:
By Theorem~\ref{prop41} below and by Lemma~\ref{lemma44} (resp.), $\onto((\ns_{\aleph_1})^+,\ns_{\aleph_1},\aleph_0)$ 
and $\ubd(\ns_{\aleph_1},\aleph_1)$ both hold, whereas, by Fact~\ref{larson} below,
$\onto(\ns_{\aleph_1},\aleph_1)$ may consistently fail.
\end{remark}

\section{$\zfc$ results}\label{sectionzfc}
In this section our focus is on obtaining $\zfc$-provable instances of our principles. 
Our results on infinite successor cardinals in Corollary~\ref{prop41} and Theorem~\ref{thm54} 
together with the results of Section~\ref{sectionpumping} allow us to establish monotonicity for the simplest instances of the $\ubd(\ldots)$ principles in Corollary~\ref{cor65}. 
We then turn our attention to the case when $\kappa$ is, in turn, below the continuum, the smallest infinite cardinal, and singular. 
We finish in Subsection~\ref{subsectionfodor} with an application of our principles to a weakening of a classical problem about refining, 
pointwise, a sequence of stationary subsets of a regular cardinal to another sequence of stationary subsets of the cardinal which are furthermore pairwise disjoint.

\begin{cor}\label{cor71} For every stationary subset $S$ of a regular uncountable cardinal $\kappa$, there exists a stationary $S'\s S$ such that 
$\onto^{++}(\ns_\kappa\restriction S',\theta)$ holds for every $\theta\in\reg(\kappa)$.
\end{cor}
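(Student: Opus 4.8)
The plan is to assemble three results already in hand, the point being to replace the arbitrary stationary set $S$ by a stationary subset on which the amenability machinery — and hence the pump-up theorem of the previous section — becomes available. First I would apply Corollary~\ref{amensubset} to the given stationary set $S$ to extract a stationary $S'\s S$ with $S'\in\sa_\kappa$. This is exactly the refinement that trades generality for structure: it produces a stationary piece of $S$ carrying a strongly amenable $C$-sequence.

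Next I would pass from $\sa_\kappa$ to $\amen_\kappa$. Since $\sa_\kappa\s\amen_\kappa$ by Lemma~\ref{amenableideal}(1), the set $S'$ lies in $\amen_\kappa$, and so by the equivalence $(1)\implies(4)$ of Lemma~\ref{lemma44} the principle $\ubd(\ns_\kappa\restriction S',\kappa)$ holds. At this point the hypothesis of the relevant pump-up corollary is met: taking $J:=\ns_\kappa\restriction S'$, which is of the form $\ns_\kappa\restriction S$ for a stationary set as required, Corollary~\ref{cor610}(1) upgrades $\ubd(J,\kappa)$ to $\onto^{++}(J,\theta)$ for every $\theta\in\reg(\kappa)$. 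Since $J=\ns_\kappa\restriction S'$, this is precisely the desired conclusion $\onto^{++}(\ns_\kappa\restriction S',\theta)$ for all $\theta\in\reg(\kappa)$.

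I do not anticipate a genuine obstacle here, as the statement is a corollary in the literal sense: the real work is hidden in Corollary~\ref{amensubset} (which produces the amenable piece via $S'=S\setminus\Tr(S)$ after reducing to regular cardinals) and in Corollary~\ref{cor610}(1) (whose proof runs through Theorem~\ref{thm42}(2)). The only thing to verify carefully is bookkeeping, namely that the ideal $\ns_\kappa\restriction S'$ is of the exact syntactic shape demanded by the hypotheses of Lemma~\ref{lemma44} and Corollary~\ref{cor610} — it is, because $S'$ is stationary and $\kappa$ is regular uncountable — so no further argument is needed.
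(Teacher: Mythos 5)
Your proposal is correct and follows essentially the same route as the paper: Corollary~\ref{amensubset} to get $S'\in\sa_\kappa\s\amen_\kappa$, Lemma~\ref{lemma44} to obtain $\ubd(\ns_\kappa\restriction S',\kappa)$, and then the pump-up to $\onto^{++}$. The only cosmetic difference is that you route the last step through Corollary~\ref{cor610}(1), whereas the paper cites Theorem~\ref{thm42}(2) directly, but these are the same argument since Corollary~\ref{cor610}(1) is itself an immediate instance of that theorem.
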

\begin{proof} Given $ S\s \kappa$ as above,
by Corollary~\ref{amensubset}, there is a stationary $ S' \s  S$ which carries an amenable $C$-sequence (in fact even a strongly amenable $C$-sequence). 
So, by Lemma~\ref{lemma44}, $\ubd(\ns_\kappa\restriction S',\kappa)$ holds, 
and then by Theorem~\ref{thm42}(2), so does $\onto^{++}(\ns_\kappa\restriction S',\theta)$ for every $\theta\in\reg(\kappa)$.
\end{proof}
\begin{remark} The preceding is optimal in the sense that, by Fact~\ref{larson} and 
Lemma~\ref{upbyone}(1), $\bpfa$ implies that $\onto(\ns_{\aleph_1}\restriction S,\aleph_1)$ fails for any stationary $S\s\aleph_1$.
\end{remark}

\begin{cor}\label{prop41}  Suppose that $\kappa=\theta^+$ for an infinite regular cardinal $\theta$. 
Then there exists a colouring $c:[\kappa]^2\rightarrow\theta$ 
witnessing $\onto^{++}(J^+,J,\theta)$ for every subnormal $J\in\mathcal J^\kappa_\kappa$.
\end{cor}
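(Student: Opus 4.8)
The plan is to reduce the statement to the implication $(1)\Rightarrow(2)$ of Proposition~\ref{prop46}. Since $\theta$ is regular, $\kappa=\theta^+$ is itself regular, so $\cf(\kappa)=\kappa$ and $\theta<\cf(\kappa)=\kappa$; thus the standing hypothesis of Proposition~\ref{prop46} is satisfied. Moreover, because $\kappa=\theta^+$, a $\kappa$-complete ideal is literally the same object as a $\theta^+$-complete ideal, so the two relevant classes coincide: $\mathcal J^\kappa_\kappa=\mathcal J^\kappa_{\theta^+}$. Consequently the conclusion of Proposition~\ref{prop46}(2) --- the existence of a single colouring $c:[\kappa]^2\rightarrow\theta$ witnessing $\onto^{++}(J^+,J,\theta)$ for \emph{every} subnormal $J\in\mathcal J^\kappa_{\theta^+}$ --- is verbatim the assertion of the corollary.

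It therefore suffices to verify clause (1) of Proposition~\ref{prop46}, namely that $\kappa\nrightarrow[\kappa;\kappa]^2_\theta$ holds. This is the classical negative asymmetric square-bracket partition relation at the successor of a regular cardinal, and it is the only genuine combinatorial input; I would invoke it exactly as in the proof of Theorem~\ref{thm58}, citing \cite{paper14}. If a self-contained argument is preferred, one can produce the required colouring $c:[\kappa]^2\rightarrow\theta$ by walks on ordinals (the oscillation map), running Todorcevic's argument on a $C$-sequence over $\kappa=\theta^+$ and using $\theta=\cf(\theta)$; the output is a colouring such that $c[A\circledast B]=\theta$ for all $A,B\in[\kappa]^\kappa$, which is precisely a witness to $\kappa\nrightarrow[\kappa;\kappa]^2_\theta$.

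Finally, feeding this colouring $c$ into the implication $(1)\Rightarrow(2)$ of Proposition~\ref{prop46} shows that the very same $c$ witnesses $\onto^{++}(J^+,J,\theta)$ for every subnormal $J\in\mathcal J^\kappa_{\theta^+}=\mathcal J^\kappa_\kappa$, which completes the proof. The main obstacle is concentrated entirely in the partition relation $\kappa\nrightarrow[\kappa;\kappa]^2_\theta$: everything else amounts to matching hypotheses together with the cardinal-arithmetic identification that $\theta^+$-completeness and $\kappa$-completeness agree when $\kappa=\theta^+$. In particular, no construction beyond $c$ is required, since Proposition~\ref{prop46} transfers the one colouring uniformly across all subnormal $\kappa$-complete ideals.
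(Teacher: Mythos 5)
Your proposal is correct and follows exactly the paper's route: the paper's proof is the one-liner ``By Proposition~\ref{prop46}, using \cite{paper14}'', i.e.\ feed the partition relation $\kappa\nrightarrow[\kappa;\kappa]^2_\theta$ for $\kappa=\theta^+$ with $\theta$ regular (the Rinot--Todorcevic theorem) into the implication $(1)\Rightarrow(2)$ of Proposition~\ref{prop46}, noting as you do that $\mathcal J^\kappa_\kappa=\mathcal J^\kappa_{\theta^+}$ when $\kappa=\theta^+$. Your write-up just makes explicit the hypothesis-matching that the paper leaves implicit.
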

\begin{proof} By Proposition~\ref{prop46}, using \cite{paper14}.
\end{proof}

\begin{thm} \label{thm54} Suppose that $\kappa=\theta^+$ for a singular cardinal $\theta$. 
Then there exists a colouring $c:[\kappa]^2\rightarrow\theta$ 
witnessing $\ubd^{++}(J^+,J,\theta)$ for every subnormal $J\in\mathcal J^\kappa_\kappa$.	
\end{thm}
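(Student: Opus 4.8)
The plan is to localise the whole statement to a single weak colouring and then pump it up. Concretely, I would first reduce the theorem to producing one upper-regressive $c\colon[\kappa]^2\to\theta$ witnessing $\ubd([\kappa]^\kappa,J^\bd[\kappa],\theta)$, and then argue that the very proof of Lemma~\ref{ehpumpubd} upgrades such a $c$ to a witness of $\ubd^{++}(J^+,J,\theta)$ simultaneously for every subnormal $J\in\mathcal J^\kappa_\kappa$ (indeed $\mathcal J^\kappa_\kappa\s\mathcal J^\kappa_{\theta^+}$). The point I would stress is that although Lemma~\ref{ehpumpubd} is stated for $\theta\in\reg(\kappa)$, its argument never genuinely uses regularity once $\theta$ is infinite: the key Claim is driven by the $\theta^+$-completeness of $J$ (which $\kappa$-completeness supplies) together with subnormality, and the only two ordinal facts invoked are that a subset of $\theta$ of order-type $\theta$ cannot be contained in one of order-type ${<}\theta$ (used to produce a colour in $c[\{\eta\}\circledast B']$ outside $T_\eta$), and that the injection $h\colon\theta\to\theta$ may be assembled greedily, each $B_\tau$ offering order-type-$\theta$-many positively realised colours while at each stage fewer than $\theta$ of them have been committed. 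Both remain valid verbatim for singular $\theta$, so the entire theorem comes down to the rectangular, order-type-$\theta$ principle above.

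To build such a $c$, I would combine an outer colouring controlling all of $\kappa$ with an inner colouring of order-type $\theta$, in the manner of Lemma~\ref{lemma57}, Lemma~\ref{minustoplus} and Theorem~\ref{thm42}. Since $\kappa=\theta^+$ is a successor cardinal it is not greatly Mahlo, so by Corollary~\ref{strongamenmahlo} (read contrapositively) $\ubd(J^\bd[\kappa],\kappa)$ holds; by Corollary~\ref{cor34} I may therefore fix an upper-regressive witness $u$ to $\ubd^*([\kappa]^\kappa,J^\bd[\kappa],[\kappa]^\kappa)$. Crucially, this $\ubd^*$-form is exactly what lets me accommodate an \emph{arbitrary} positive first-coordinate set $A$ (as opposed to the filter sets $I^*$ of Theorem~\ref{thm42}): for cofinal $A,B$ it hands me an order-type-$\kappa$ set of values $\tau$, each realised by some $\eta\in A\cap\tau$ and $\beta\in B$ above $\tau$. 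I would then route these values into an inner colouring $d\colon[\kappa]^2\to\theta$ realising order-type $\theta$ on every cofinal set, via the usual pairing $c(\eta,\beta)=d(\{g(\eta),u(\eta',\beta)\})$ with a fixed bijection coding $\eta\mapsto(\eta',g(\eta))$, so that the $u$-values supply the lower coordinate of $d$ while $g(\eta)$ supplies the guessed centre.

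The hard part, and the genuinely singular-cardinal ingredient, is the inner device $d$. Writing $\mu:=\cf(\theta)$ and fixing an increasing sequence of regular cardinals cofinal in $\theta$, I would want a club-guessing sequence $\langle C_\delta\mid\delta\in E^\kappa_\mu\rangle$ with $\otp(C_\delta)=\theta$ that is guessed along an order-type-$\theta$ (not merely cofinal, hence a priori only order-type ${\ge}\mu$) set of non-accumulation points, and set $d(\beta,\delta):=\sup(\otp(C_\delta\cap\beta))$; the verification that $\acc^+$ of the $u$-value set is guessed by some $\delta$ then yields $\otp(c[\{\eta\}\circledast B])=\theta$. The obstacle is precisely to secure such a device in $\zfc$ at $\kappa=\theta^+$ with $\theta$ singular: the strong guessing of Fact~\ref{clubguessingfact} is available only when $\theta$ is regular and $\theta^+<\kappa$, and the naive $\mathcal C(\kappa,\theta)$-family of Theorem~\ref{ubdtoonto} is blocked because whether $\mathcal C(\theta^+,\theta)=\theta^+$ is open, so one is forced onto the club-guessing/pcf-scale machinery native to the successor of a singular and an elementary-submodel argument ensuring that the guessed ladder meets an arbitrary cofinal set in order-type-$\theta$-many gaps. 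It is exactly because this machinery delivers an order-type-$\theta$ set of colours rather than all of $\theta$ — the latter would give $\onto^{++}$ and thereby settle the open relation $\kappa\nrightarrow[\kappa]^2_\theta$ — that the conclusion is $\ubd^{++}$ and not $\onto^{++}$.
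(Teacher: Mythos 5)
Your first step — reducing to $\ubd([\kappa]^\kappa,J^{\bd}[\kappa],\theta)$ via Lemma~\ref{ehpumpubd} and noting that its proof survives for singular infinite $\theta$ (order-types are monotone under inclusion, and the greedy construction of $h$ only needs $|\cdot|=\theta$ at each stage) — is exactly how the paper proceeds. The problem is the second step. Your architecture codes the first coordinate: $c(\eta,\beta)=d(\{g(\eta),u(\eta',\beta)\})$ with $\pi(\eta)=(\eta',g(\eta))$. That pairing trick is fine when the first-coordinate family is $\{\kappa\}$ or a filter (as in Lemma~\ref{lemma57} and Theorem~\ref{thm42}, where one intersects $A\in I^*$ with a positive set $P_{i,j}$), but here the target is $\ubd([\kappa]^\kappa,J^\bd[\kappa],\theta)$: the set $A$ is an \emph{arbitrary} cofinal set, and nothing forces $\pi[A]$ to contain a pair $(\eta',\delta)$ whose second coordinate is one of the (at best stationarily many) $\delta$'s that guess $\acc^+(u[\{\eta'\}\circledast B])$. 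For instance, if all good $\delta$ lie in $E^\kappa_{\cf(\theta)}$, the cofinal set $A:=\pi^{-1}[\kappa\times E^\kappa_{\neq\cf(\theta)}]$ defeats the colouring outright. So the composition cannot deliver the rectangular ($J^+$ in the first slot) form that Lemma~\ref{ehpumpubd} needs as input. On top of this, the ``inner device'' you correctly identify as the crux — a $\zfc$ ladder system of order-type $\theta$ at $\theta^+$ guessed along order-type-$\theta$-many non-accumulation points — is never constructed; it is precisely the hard content, not a black box one can defer to ``club-guessing/pcf machinery''.

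The paper's proof avoids both problems by routing the auxiliary information through $\beta$ rather than $\eta$: fix (Shelah) regular $\theta_i\nearrow\theta$ with $\tcf(\prod_i\theta_i,<^*)=\kappa$, a scale $\langle f_\beta\mid\beta<\kappa\rangle$, and a colouring $d$ witnessing the rectangular relation $\kappa\nrightarrow[\kappa;\kappa]^2_{\cf(\theta)}$, and set $c(\eta,\beta):=f_\beta(d(\eta,\beta))$. Given a putative counterexample $A,B$, one stabilises the bound $\epsilon^*$ on $\otp(c[\{\eta\}\circledast B])$ over $A'\subseteq A$, uses the scale to find $i$ with $\theta_i>\epsilon^*$ such that $\kappa$-many fibres $\{\beta\in B\mid f_\beta(i)=\xi\}$ are positive with $\xi$ cofinal in $\theta_i$, stabilises $\sup(c[\{\eta\}\circledast B]\cap\theta_i)=\varsigma$ on $A''$ (here the regularity of $\theta_i$ is what is really used), and then lets the rectangular relation produce a pair in $A''\circledast B'$ coloured $i$ by $d$, yielding a value $f_\beta(i)=\xi>\varsigma$ — a contradiction. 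The rectangular relation is what absorbs the arbitrary $A$; no coding of $\eta$ is needed, and no club-guessing ladder of order-type $\theta$ is required.
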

\begin{proof} By Lemma~\ref{ehpumpubd}, it suffices to prove that $\ubd(J^+, J,\theta)$ holds for $J:=J^{\bd}[\kappa]$.
By a theorem of Shelah (see \cite[Theorem~3.53]{MR2768694}), we may fix a sequence of regular uncountable cardinals $\langle \theta_i\mid i<\cf(\theta)\rangle$ converging to $\theta$
such that $\tcf(\prod_{i<\cf(\theta)}\theta_i,{<^*})=\kappa$.
Fix a scale $\langle f_\beta\mid \beta<\kappa\rangle$ witnessing the preceding.
By another theorem of Shelah (see \cite[Theorem~5.16]{MR2768694}), fix a colouring $d:[\kappa]^2\rightarrow\cf(\theta)$ witnessing $\kappa\nrightarrow[\kappa;\kappa]^2_{\cf(\theta)}$.
Let $c:[\kappa]^2\rightarrow\theta$ be any upper-regressive colouring such that, for all $\theta\le\eta<\beta<\kappa$, 
$$c(\eta,\beta)=f_\beta(d(\eta,\beta)).$$

Towards a contradiction, suppose that we are given $A,B\in [\kappa]^\kappa$ demonstrating that $c$ does not witness $\ubd([\kappa]^\kappa,J^{\bd}[\kappa],\theta)$.
So, for every $\eta \in A$ there is an $\epsilon_\eta <\theta$ such that 
$\otp(c[\{\eta\} \circledast B]) < \epsilon_\eta$. 
By stabilising, we can find an $\epsilon^* < \theta$ and $A' \s A\setminus\theta$ of size $\kappa$ such that for every $\eta \in A'$ we have $\epsilon_\eta = \epsilon^*$.

\begin{claim} \label{ubdscale}There exists $i<\cf(\theta)$ with $\theta_i>\epsilon^*$ such that 
$$\sup\{ \xi<\theta_i\mid \{ \beta\in B\mid f_\beta(i)=\xi\}\in J^+\}=\theta_i.$$
\end{claim}
\begin{why} Suppose not. Fix the least $j<\cf(\theta)$ such that $\theta_{j}>\epsilon^*$.
Then there exists a function $g\in\prod_{i<\cf(\theta)}\theta_i$ such that, for every $i\in[j,\cf(\theta))$,
$$g(i)=\sup\{ \xi<\theta_i\mid \{ \beta\in B\mid f_\beta(i)=\xi\}\in J^+\}.$$
It follows that, for every $i\in[j,\cf(\theta))$, $B^i:=\{ \beta\in B\mid g(i)<f_\beta(i)\}$ is in $J$.
Find $\alpha<\kappa$ such that $g<^* f_\alpha$. 
As $\bigcup_{i\in[j,\cf(\theta))}B^i$ is in $J$, we may let $\beta:=\min(B\setminus(\bigcup_{i\in[j,\cf(\theta))}B^i\cup\alpha))$.
Then $\alpha\le\beta$, so that $g<^* f_\beta$. In particular, there exists $i\in[j,\cf(\theta))$ with $g(i)<f_\beta(i)$, contradicting the fact that $\beta\notin B^i$.
\end{why}
Fix $i$ as in the claim. 
Since $\epsilon^*<\theta_i$, and recalling the definition of $A'$,
we may find $\varsigma<\theta_i$ such that the following set has size $\kappa$:
$$A'':=\{\eta\in A'\mid \sup(c[\{\eta\}\circledast B] \cap \theta_i)=\varsigma\}.$$ 

By the choice of $i$, we may find $\xi\in(\varsigma,\theta_i)$ such that the following set is in $J^+$:
$$B':=\{ \beta\in B\mid f_\beta(i)=\xi\}.$$

Finally, by the choice of $d$, we may find $(\eta,\beta)\in A''\circledast B'$ such that $d(\eta,\beta)=i$.
Altogether, $c(\eta,\beta)=f_\beta(d(\eta,\beta))=f_\beta(i)=\xi\in(\varsigma,\theta_i)$, contradicting the fact that $\eta \in A''$.
\end{proof}
\begin{remark} The preceding is optimal in the sense that, by Proposition~\ref{prop912} below, $\ubd^{++}(J^{\bd}[\kappa],\kappa)$ fails.
\end{remark}

\begin{cor}[monotonicity]\label{cor65} For a stationary subset $S$ of a regular uncountable cardinal $\kappa$,
and an infinite cardinal $\theta<\kappa$:
\begin{enumerate}[(1)]
\item If $\ubd( J^\bd[\kappa],\allowbreak\varkappa)$ holds for some $\varkappa$ with $\theta<\varkappa<\kappa$, 
or if $\kappa$ is a successor cardinal, then $\ubd(J^\bd[\kappa],\theta)$ holds.
If, in addition, $\theta$ is regular, then moreover $\onto(J^\bd[\kappa],\theta)$ holds.
\item If $\ubd( \ns_\kappa\restriction S,\varkappa)$ holds for some $\varkappa$ with $\theta<\varkappa\le\kappa$, then so does $\ubd(\ns_\kappa\restriction S,\theta)$.
If, in addition, $\theta$ is regular, then moreover $\onto(\ns_\kappa\restriction S,\theta)$ holds.
\end{enumerate}
\end{cor}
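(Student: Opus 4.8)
The plan is to deduce everything from the pump-up machinery already in place, by a case split according as $\theta$ is regular or singular and according to the location of $\varkappa$ relative to $\theta^+$. Throughout I will use that $J^\bd[\kappa]$ and $\ns_\kappa\restriction S$ are subnormal, $\kappa$-complete ideals extending $J^\bd[\kappa]$, so that they lie in $\mathcal J^\kappa_\kappa\s\mathcal J^\kappa_\omega$ and every cited theorem applies to them. I will also lean on two trivial reductions. First, any witness $c$ to $\onto(J,\theta)$ with $\theta<\kappa$ may be taken upper-regressive: set its colour to $0$ on all pairs $(\alpha,\beta)$ with $\beta\le\theta$, and note that since $J\supseteq J^\bd[\kappa]$, applying the $\onto$ property to $B\setminus(\theta+1)$ shows the modified colouring still witnesses $\onto(J,\theta)$, whence $\ubd(J,\theta)$. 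Second, feeding a constant sequence $\langle B\mid\tau<\theta\rangle$ into the defining property of $\ubd^{++}(J^+,J,\theta)$ (with $A:=\kappa$) shows that it implies $\ubd(J,\theta)$; likewise $\onto^{++}(J^+,J,\theta)$ implies $\onto(J,\theta)$ by Remark~\ref{relationsremark}(1).

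For the first disjunct of Clause~(1), that $\ubd(J^\bd[\kappa],\varkappa)$ holds for some $\theta<\varkappa<\kappa$, I first treat regular $\theta$: here Theorem~\ref{thm58} directly yields $\onto(J^\bd[\kappa],\theta)$, hence $\ubd(J^\bd[\kappa],\theta)$. For singular $\theta$, Theorem~\ref{thm58} is unavailable, so I instead secure $\ubd(J^\bd[\kappa],\theta^+)$---it is the hypothesis itself when $\theta^+=\varkappa$, and follows from Theorem~\ref{thm58} (with the regular cardinal $\theta^+$ in the role of its $\theta$) when $\theta^+<\varkappa$---and then, since $\theta^+\le\varkappa<\kappa=\cf(\kappa)$, descend to $\ubd(J^\bd[\kappa],\theta)$ by Lemma~\ref{lemma64}(2). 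Note no $\onto$ conclusion is claimed for singular $\theta$.

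For the second disjunct of Clause~(1), write $\kappa=\mu^+$. I first produce $\ubd(J^\bd[\kappa],\mu)$: from Corollary~\ref{prop41} when $\mu$ is regular (which even delivers $\onto(J^\bd[\kappa],\mu)$), and from Theorem~\ref{thm54} when $\mu$ is singular. If $\theta=\mu$ this already gives the conclusion; if $\theta<\mu$ then $\theta<\mu<\kappa$ and I feed this into the first disjunct with $\varkappa:=\mu$. Clause~(2) runs by the same scheme with $J:=\ns_\kappa\restriction S$, the only genuine differences being that $\varkappa$ may now equal $\kappa$---in which case the top instance ($\onto(\ns_\kappa\restriction S,\theta)$ for regular $\theta$, or $\ubd(\ns_\kappa\restriction S,\theta^+)$ for singular $\theta$) is supplied by Corollary~\ref{cor610}(1) in place of Theorem~\ref{thm58}---and that the descent of Lemma~\ref{lemma64}(2) now demands $\theta^+<\kappa$.

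The crux is the singular-$\theta$ boundary case of Clause~(2): when $\theta$ is singular and $\theta^+=\kappa$, the hypothesis forces $\varkappa=\kappa$, and Lemma~\ref{lemma64}(2) cannot be invoked because its requirement $\theta^+<\cf(\kappa)$ fails. The resolution I will use is that this very configuration makes $\kappa=\theta^+$ a successor of a singular, so Theorem~\ref{thm54} applies outright and produces $\ubd^{++}(J^+,\ns_\kappa\restriction S,\theta)$---hence $\ubd(\ns_\kappa\restriction S,\theta)$---with no use of the hypothesis at all. Recognising that precisely the configuration in which the descent breaks is already subsumed by the singular-successor theorem is the one non-mechanical observation; everything else is bookkeeping across the cases above, together with the routine verification that the ideals involved are subnormal and $\kappa$-complete.
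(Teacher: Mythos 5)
Your proof is correct and follows essentially the same route as the paper's: reduce to the case $\theta^+<\kappa$ via Corollary~\ref{prop41} and Theorem~\ref{thm54}, then split according to the position of $\varkappa$ relative to $\theta^+$ and $\kappa$, invoking Theorem~\ref{thm58}, Lemma~\ref{lemma64}(2) and Corollary~\ref{cor610}(1). The only divergence is the successor case of Clause~(1), where the paper goes through Ulam matrices (Fact~\ref{ulamoriginal}, Lemma~\ref{ubdplus} and Remark~\ref{thm58b}) to obtain the stronger $\onto^+(J^\bd[\kappa],\theta^+)$, whereas you bootstrap $\ubd(J^\bd[\kappa],\mu)$ from the successor-cardinal theorems and feed it back into the first disjunct with $\varkappa:=\mu$ --- both are valid.
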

\begin{proof} (1) By Corollary~\ref{prop41}  and Theorem~\ref{thm54}, we may assume that $\theta^+<\kappa$.
If $\kappa$ is a successor, then by Fact~\ref{ulamoriginal} and Lemma~\ref{ubdplus}, $\ubd^+(J^{\bd}[\kappa],\allowbreak\kappa)$ holds.
So, by Remark~\ref{thm58b},  $\onto^+(J^{\bd}[\kappa],\theta^+)$ holds.
In particular, $\onto(J^\bd[\kappa],\theta)$ does.
Next, suppose that $\ubd( J^\bd[\kappa],\allowbreak\varkappa)$ holds for some $\varkappa$ with $\theta<\varkappa<\kappa$,.
There are two cases to consider:
\begin{itemize}
\item[$\br$] If $\theta^+=\varkappa$, then since $\theta^+<\kappa$, Lemma~\ref{lemma64}(2) implies that $\ubd(J^\bd[\kappa],\theta)$ holds.
If, in addition $\theta$ is regular, then $\onto(J^\bd[\kappa],\theta)$ holds by Theorem~\ref{thm58}.
\item[$\br$] Otherwise, $\theta^+<\varkappa<\kappa$,
so, by Theorem~\ref{thm58}, $\onto(J^\bd[\kappa],\theta^+)$ holds.
In particular, $\onto(J^\bd[\kappa],\theta)$ holds.
\end{itemize}

(2) By Corollary~\ref{prop41}  and Theorem~\ref{thm54}, we may assume that $\theta^+<\kappa$. Now, there are three cases to consider:
\begin{itemize}
\item[$\br$] If $\varkappa=\kappa$, then by Corollary~\ref{cor610}(1), $\onto^{++}(\ns_\kappa\restriction S,\theta^+)$ holds,
hence, so does $\onto(\ns_\kappa\restriction S,\theta)$.

\item[$\br$] If $\theta^+=\varkappa<\kappa$, then the result follows from Lemma~\ref{lemma64}(2) and Theorem~\ref{thm58}.

\item[$\br$] If $\theta^+<\varkappa<\kappa$, then 
by Theorem~\ref{thm58}, $\onto(\ns_\kappa\restriction S,\theta^+)$ holds,
hence, so does $\onto(\ns_\kappa\restriction S,\theta)$.\qedhere
\end{itemize}
\end{proof}

\begin{prop}\label{ontofinite} Suppose $\aleph_0<\kappa\leq 2^{\aleph_0}$.  
Then $\onto(\{\aleph_0\}, J^\bd[\kappa], n)$ holds for every positive integer $n$.
\end{prop}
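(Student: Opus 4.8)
The plan is to exhibit a single colouring of a very rigid shape and to reduce the statement to a clean property of a sequence of reals. Since $\theta=n$ is finite and the index family is $\{\aleph_0\}$, a colouring $c:[\kappa]^2\to n$ is, as far as the principle is concerned, nothing but the family $\langle G_\beta\mid\beta<\kappa\rangle$ where $G_\beta:=\langle c(\eta,\beta)\mid \eta<\omega\rangle\in{}^\omega n$ for $\beta\ge\omega$. Unwinding Definition~\ref{def21}, such a $c$ witnesses $\onto(\{\aleph_0\},J^{\bd}[\kappa],n)$ iff for every unbounded $B\s\kappa$ there is an $\eta<\omega$ with $\{G_\beta(\eta)\mid \beta\in B\cap[\omega,\kappa)\}=n$. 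First I would record the contrapositive: a column $\eta$ omits a colour on $B$ exactly when some value $m(\eta)<n$ is never attained, so the principle fails for $B$ precisely when $B\cap[\omega,\kappa)\s P_m$ for some $m\in{}^\omega n$, where
$$P_m:=\{\beta<\kappa\mid \forall\eta<\omega\,(G_\beta(\eta)\ne m(\eta))\}.$$
Thus it suffices to build $\langle G_\beta\rangle$ for which $P_m$ is bounded in $\kappa$ for \emph{every} $m\in{}^\omega n$; this is exactly where the hypothesis $\kappa\le2^{\aleph_0}$ enters, as it guarantees enough room to place the $G_\beta$ inside ${}^\omega n$.

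The case $n\le 2$ is the guiding model and is essentially free. Fixing an injection $\beta\mapsto G_\beta$ of $\kappa$ into ${}^\omega 2$, the set $P_m$ consists of the at most one $\beta$ with $G_\beta$ equal to the pointwise complement of $m$; hence every $P_m$ is a singleton, and a fortiori bounded. (Note this uses only $\aleph_0<\kappa\le2^{\aleph_0}$.) For general finite $n$ I would split along $\cf(\kappa)$. When $\cf(\kappa)=\omega$ I would invoke Proposition~\ref{singularprojection} with $\theta:=n$: since then $\{\cf(\kappa)\}=\{\aleph_0\}$, it reduces the claim to the countable instance $\onto(J^{\bd}[\omega],n)$, i.e.\ to manufacturing finitely many $n$-colourings of $\omega$ in which every infinite set is rainbow under at least one of them. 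When $\cf(\kappa)>\omega$ it suffices to arrange $|P_m|<\cf(\kappa)$, since a set of size $<\cf(\kappa)$ cannot be cofinal; so the target there is a set $H=\{G_\beta\mid\beta<\kappa\}\s{}^\omega n$ of size $\kappa$ meeting each omission set in fewer than $\cf(\kappa)$ points.

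The main obstacle, and indeed the whole content beyond $n=2$, is that for $n\ge 3$ the omission sets are governed by the ``everywhere-avoiding'' boxes
$$D_m:=\{t\in{}^\omega n\mid \forall\eta<\omega\,(t(\eta)\ne m(\eta))\},$$
which are no longer singletons but homeomorphic copies of ${}^\omega(n-1)$ — closed, nowhere dense, yet of full cardinality. Consequently a naive coordinate-reading colouring is defeated by any $B$ whose codes all avoid a fixed value in some coordinate, and the bound on $P_m$ must instead come from a genuine conspiracy among the $G_\beta$: one needs that \emph{every} $m\in{}^\omega n$ agrees in some coordinate with all but boundedly many $G_\beta$ (equivalently, that the boxes $D_{G_\beta}$ have bounded $\limsup$ along $\beta$). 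The step I expect to be delicate is constructing such a family in $\zfc$, without CH-style enumeration of the continuum-many $m$. My plan is to exploit that all the $D_m$ are translates of the single compact nowhere-dense set ${}^\omega(n-1)$ — so the requirement is strictly weaker than producing a Luzin-type set — and to build the witnessing sequence by recursion, choosing each $G_\beta$ to ``meet'' an increasing family of targets so as to force cofinal meeting while never over-populating a box; verifying that this bookkeeping can be carried out uniformly across all $m$ is the crux of the argument.
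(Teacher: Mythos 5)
Your reformulation in terms of the omission sets $P_m$ and the boxes $D_m$ is correct, and the $n\le 2$ case is fine, but for $n\ge 3$ the argument has two genuine holes. First, your $\cf(\kappa)=\omega$ branch reduces the claim via Proposition~\ref{singularprojection} to $\onto(J^{\bd}[\aleph_0],n)$ --- but that countable instance is \emph{false} for $n\ge3$: by Proposition~\ref{reducetotwo} (Ramsey plus K\"onig), every $c:[\omega]^2\rightarrow n$ admits an infinite $B$ on which every column realises at most two colours. Proposition~\ref{singularprojection} transfers a true countable instance upward; it cannot help when the countable instance fails, so for, say, $\kappa=\aleph_\omega\le2^{\aleph_0}$ your argument produces nothing. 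Second, and more centrally, producing a family $\{G_\beta\}\s{}^\omega n$ with every $P_m$ bounded \emph{is} the content of the proposition for $n\ge3$, and you leave it as bookkeeping to be verified. The recursion you sketch must satisfy $2^{\aleph_0}$-many requirements (one per $m\in{}^\omega n$) in only $\kappa$-many stages, where $\kappa$ may be strictly below $2^{\aleph_0}$; you acknowledge that a Luzin-style enumeration is unavailable, but you do not supply a substitute, so the crux is missing.

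The paper closes exactly this gap by changing what a column reads. Code each $\beta$ injectively by a binary real $r_\beta$ (this is where $\kappa\le2^{\aleph_0}$ enters), and let each $\eta<\omega$ code a finite test pattern $\langle(m_0,i_0),\dots,(m_{n-1},i_{n-1})\rangle\in{}^n(\omega\times2)$, with $c(\eta,\beta)$ the least $j$ such that $r_\beta(m_j)\ne i_j$. For an unbounded (hence infinite) $B$, the first-difference function $\Delta$ cannot be bounded on $[B]^2$ (else $\beta\mapsto r_\beta\restriction l$ would inject an infinite set into a finite one), so the splitting tree of $\{r_\beta\mid\beta\in B\}$ is an infinite finitely-branching tree; K\"onig's lemma yields a node below which there are $n+1$ splitting nodes, and reading off the first $n$ splitting positions together with the branch's values there gives a pattern $\eta$ whose column is onto $n$. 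In your language this produces a \emph{derived} family $G_\beta$ for which the conspiracy you want holds automatically; if you wish to keep your framework, construct the $G_\beta$ this way rather than by direct recursion against the boxes $D_m$.
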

\begin{proof}  Fix an injective sequence $\langle r_\beta \mid \beta< \kappa \rangle$ of elements of ${}^\omega 2$. 
Let $n$ be a positive integer, and fix a bijection $\pi: \omega \leftrightarrow{}^n(\omega\times2)$. 
Then let $c:[\kappa]^2 \rightarrow n$ be any colouring that satisfies that for all $\eta<\omega \leq \beta < \kappa$,
if $\pi(\eta)=\langle (m_0,i_0), \ldots ,(m_{n-1},i_{n-1})\rangle$ and
there is $j< n$ such that $r_\beta(m_j)\neq i_j$, then $c(\eta, \beta)$ is equal to the least such $j$.

To see this works, let $B$ be some cofinal subset of $\kappa$. 
As $\kappa>\omega$, we may assume that $\min(B)\ge\omega$. 
For any $x\in{}^{<\omega}2$, denote:
$$B_{x}:=\{\beta\in B\mid x\sq r_\beta\},$$
and then let
$$\tree(B):=\{ x \in{}^{<\omega}2\mid \forall i< 2 (B_{x{}^\smallfrown\langle i\rangle}\neq\emptyset)\}.$$

For all $\alpha<\beta<\kappa$, let  $\Delta(\alpha,\beta)$ denote the first $k<\omega$ such that $r_\alpha(k)\neq r_\beta(k)$.
Note that if there exists a large enough $l<\omega$ such that $\Delta``[B]^2\s l$,
then $\beta\mapsto(r_\beta\restriction l)$ would form an injection from the infinite set $B$ to the finite set ${}^l2$,
which is impossible.		
It thus follows that $(\tree(B),{\sq})$ is a finitely-splitting infinite tree, and then, by K\"onig's lemma,
it admits an infinite chain. In particular, we may pick $y \in \tree(B)$ for which the set $\{x \in\tree(B)\mid x\sq y\}$ has size $n+1$.
Let $\langle m_0, \ldots ,m_{n-1}\rangle$ denote the increasing enumeration of the following set
$$\{ m<\dom(y)\mid (y\restriction m)\in \tree(B)\}.$$ 
For each $j<n$, set $i_j:=y(m_j)$.
Then set $\eta:=\pi^{-1}\langle (m_0,i_0), \ldots,(m_{n-1},i_{n-1})\rangle$.
To see that $c[\{\eta\}\circledast B] = n$, let $j<n$  be arbitrary.
As $x:=y\restriction m_j$ is in $\tree(B)$, so is $x':=x{}^\smallfrown\langle 1-i_j\rangle$.
Pick $\beta\in B_{x'}$. Then $c(\eta,\beta)=j$.
\end{proof}
		
\begin{prop}\label{prop50} $\ubd([\aleph_0]^1,J^\bd[\aleph_0], \aleph_0)$ holds.
\end{prop}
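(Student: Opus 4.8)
The plan is to first unwind what the principle asks for in this very concrete setting. Since $\theta=\kappa=\aleph_0$, a subset of $\omega$ has order type $\aleph_0$ exactly when it is infinite, and $(J^\bd[\aleph_0])^+$ is precisely the collection of infinite subsets of $\omega$. Moreover, every $A\in[\aleph_0]^1$ is a singleton $\{m\}$, so the clause ``there is $\eta\in A$'' forces $\eta=m$. Thus $\ubd([\aleph_0]^1,J^\bd[\aleph_0],\aleph_0)$ is equivalent to the existence of an upper-regressive colouring $c\colon[\omega]^2\to\omega$ such that, for every $m<\omega$ and every infinite $B\s\omega$, the set $c[\{m\}\circledast B]=\{c(m,n)\mid n\in B,\ n>m\}$ is infinite.

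Next I would isolate the combinatorial content of this reformulation: for a fixed upper-regressive $c$, the displayed requirement holds for all infinite $B$ at a fixed $m$ if and only if the ``column map'' $n\mapsto c(m,n)$, defined on $(m,\omega)$, is finite-to-one. Indeed, if some value were attained infinitely often, one could take $B$ to be that fibre and obtain a finite (singleton) image, contradicting the requirement; conversely, if all fibres are finite then an infinite image is forced, since an infinite $B$ restricted to $(m,\omega)$ cannot be covered by finitely many finite fibres.

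It then remains to exhibit an upper-regressive colouring all of whose columns are finite-to-one, and the simplest choice does the job: set $c(m,n):=n-m-1$ for all $m<n<\omega$. This lands in $\omega=\theta$, and it is upper-regressive because $n-m-1<n$. For each fixed $m$ the map $n\mapsto n-m-1$ is strictly increasing on $(m,\omega)$, hence injective, so every column is in fact one-to-one; by the previous paragraph, $c$ witnesses the principle. There is no genuine obstacle here — the only point one must get right is that for singleton $A$ the quantifier structure collapses to a per-column condition, after which any colouring with injective (or merely finite-to-one) columns, such as the explicit $c$ above, suffices.
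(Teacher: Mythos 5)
Your proof is correct and takes essentially the same route as the paper: the paper also just exhibits an explicit upper-regressive colouring with finite-to-one columns (namely $c(n,m)=\lfloor m/2\rfloor$) and declares the verification clear, whereas you use $c(m,n)=n-m-1$ and spell out the reduction to the finite-to-one condition. The only difference is cosmetic choice of colouring; your reformulation and verification are sound.
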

\begin{proof} Let $c:[\omega]^2 \rightarrow \omega$ be the colouring obtained by declaring that for all $n<m<\omega$, $c(n,m)$ is the floor of $m\over 2$. It is clear that this works.  
\end{proof}
\begin{remark}\label{noupgraderemark} In contrast with Lemma~\ref{ehpumpubd}, 
the colouring $c$ of the preceding proof is an example of a witness to $\ubd(J,\theta)$ which does not witness $\ubd^+(J,\theta)$.
\end{remark}

\begin{thm}\label{prop51} Suppose that $\kappa$ is a singular cardinal. Then:
\begin{enumerate}[(1)]
\item $\ubd([\kappa]^1,J^\bd[\kappa], \cf(\kappa))$ holds;
\item $\ubd(\{\cf(\kappa)\},J^\bd[\kappa], \theta)$ holds for every  $\theta<\cf(\kappa)$;
\item $\onto(\{\cf(\kappa)\},J^\bd[\kappa], \theta)$ holds for every regular $\theta<\cf(\kappa)$;
\item $\onto(\{\nu\},J^\bd[\kappa], \theta)$ holds for every $\theta\le\cf(\kappa)$ such that $\nu:=\mathcal C(\cf(\kappa),\theta)$ is $<\kappa$.
\end{enumerate}
\end{thm}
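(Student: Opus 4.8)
The plan is to treat the four clauses as a chain: Clause~(1) is the only one needing a hands‑on construction, and the rest are extracted by feeding it into the pumping‑up machinery of Theorem~\ref{ubdtoonto}. For Clause~(1), fix a strictly increasing sequence $\langle\kappa_i\mid i<\cf(\kappa)\rangle$, cofinal in $\kappa$, with $\kappa_0>\cf(\kappa)$, and define an upper‑regressive $c:[\kappa]^2\to\cf(\kappa)$ by letting $c(\eta,\beta)$ be the unique $i$ with $\kappa_i\le\beta<\kappa_{i+1}$ when $\beta\ge\kappa_0$, and $c(\eta,\beta):=0$ otherwise; then $c(\eta,\beta)<\cf(\kappa)\le\beta$, so $c$ is upper‑regressive. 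Since $c$ ignores its first coordinate, it suffices to note that for every $\alpha<\kappa$ and every cofinal $B\subseteq\kappa$ the set $c[\{\alpha\}\circledast B]=\{i\mid \exists\beta\in B\setminus(\alpha+1),\ \kappa_i\le\beta<\kappa_{i+1}\}$ is cofinal in $\cf(\kappa)$, hence of order type $\cf(\kappa)$ (as $\cf(\kappa)$ is regular). This yields $\ubd([\kappa]^1,J^\bd[\kappa],\cf(\kappa))$, and in fact shows that \emph{any} single $\eta<\kappa$ witnesses the relevant instance.

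Clause~(4) will then follow by invoking Theorem~\ref{ubdtoonto} with $\varkappa:=\cf(\kappa)$, $\nu:=\mathcal C(\cf(\kappa),\theta)$ and $J:=J^\bd[\kappa]\in\mathcal J^\kappa_\omega$. Its input hypothesis $\ubd(\{\nu\},J,\varkappa)$ is supplied by Clause~(1): since every single $\eta<\kappa$ (in particular $\eta=0\in\nu$) already produces order type $\cf(\kappa)$, passing from $[\kappa]^1$ to the family $\{\nu\}$ is immediate. To meet the size constraints I first record $\mathcal C(\cf(\kappa),\theta)\ge\cf(\kappa)$ for every $\theta\le\cf(\kappa)$: any family of fewer than $\cf(\kappa)$ sets each of order type $\theta\le\cf(\kappa)$ can be avoided by a tail club, since a choice of one representative point from each such set is bounded by regularity of $\cf(\kappa)$. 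Thus $\varkappa\le\nu$, and with the standing hypothesis $\nu<\kappa$ of the clause we are in the case $\nu<\kappa$ of Theorem~\ref{ubdtoonto}, where hypothesis~(3) (namely $\mathcal C(\varkappa,\theta)\le\nu<\kappa$) holds with equality. Theorem~\ref{ubdtoonto} then delivers $\onto(\{\nu\},J^\bd[\kappa],\theta)$, which is Clause~(4).

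Clauses~(3) and~(2) are read off Clause~(4) by computing $\mathcal C(\cf(\kappa),\theta)$. For regular $\theta<\cf(\kappa)$ I claim $\mathcal C(\cf(\kappa),\theta)=\cf(\kappa)$: the lower bound is above, and for the upper bound, when $\theta^+<\cf(\kappa)$ the club‑guessing sequence of Fact~\ref{clubguessingfact} (applied at the regular cardinal $\cf(\kappa)$) provides $\cf(\kappa)$‑many sets of order type $\theta$ each contained in a prescribed club, while when $\cf(\kappa)=\theta^+$ I bypass the covering number and instead obtain $\onto(\{\theta^+\},J^\bd[\kappa],\theta)$ from $\onto(J^\bd[\theta^+],\theta)$ (Corollary~\ref{prop41}) via the projection of Proposition~\ref{singularprojection}. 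Either way $\onto(\{\cf(\kappa)\},J^\bd[\kappa],\theta)$ holds, giving Clause~(3). Clause~(2) for regular (and, since $\ubd=\onto$ there, finite) $\theta$ is then immediate, as the witnesses produced have range below $\theta$ and hence are upper‑regressive, so $\onto$ implies $\ubd$. For singular $\theta<\cf(\kappa)$ I pump down one cardinal: if $\theta^+<\cf(\kappa)$, Clause~(3) supplies an upper‑regressive $e:[\kappa]^2\to\theta^+$ witnessing $\onto(\{\cf(\kappa)\},J^\bd[\kappa],\theta^+)$, and composing $e$ with the truncation $g:\theta^+\to\theta$ fixing $\theta$ and collapsing $[\theta,\theta^+)$ to $0$ turns a full range of $\theta^+$ colours into an image of order type exactly $\theta$; if instead $\theta^+=\cf(\kappa)$, Proposition~\ref{singularprojection} reduces matters to $\ubd(J^\bd[\theta^+],\theta)$, which holds because $\theta^+$ is a successor cardinal (Corollary~\ref{cor65}(1)).

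The genuinely delicate point — and the reason Clause~(3) is restricted to regular $\theta$ while Clause~(2) only asserts the weaker $\ubd$ — is the computation $\mathcal C(\cf(\kappa),\theta)=\cf(\kappa)$: such a $\cf(\kappa)$‑sized family of order‑type‑$\theta$ sets refining clubs is exactly a club‑guessing sequence at $\cf(\kappa)$ with prescribed order type, which $\zfc$ supplies for regular $\theta$ but not for singular $\theta$. I expect the main effort to go into organising this case split cleanly, especially the boundary $\cf(\kappa)=\theta^+$ where one must route through the successor‑cardinal results and the projection, and into checking that the projected and truncated colourings stay upper‑regressive so that they legitimately witness the $\ubd$ principles. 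By contrast, the apparent worry that $\cf(\kappa)$ itself might be weakly compact — which would defeat any naive projection from $\cf(\kappa)$, since $\ubd(J^\bd[\cf(\kappa)],\theta)$ then fails by Proposition~\ref{reducetotwo} — never actually bites, because all the colourings in play live on the singular cardinal $\kappa$ rather than on $\cf(\kappa)$.
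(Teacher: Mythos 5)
Your proposal is correct, and its skeleton overlaps heavily with the paper's own proof: Clause~(1) is the same projection-onto-$\cf(\kappa)$ colouring (the paper uses $c(\eta,\beta):=\otp(x\cap\beta)$ for a fixed cofinal $x\s\kappa\setminus\cf(\kappa)$ of order type $\cf(\kappa)$, which is your $\beta\mapsto$ ``the unique $i$ with $\kappa_i\le\beta<\kappa_{i+1}$'' in different clothing), and both arguments reduce everything else to colouring statements at the regular cardinal $\cf(\kappa)$ followed by Proposition~\ref{singularprojection} or its relatives. The organisational difference is that you prove Clause~(4) first, via Theorem~\ref{ubdtoonto} together with the (correct) computation $\mathcal C(\cf(\kappa),\theta)\ge\cf(\kappa)$, and then read Clauses~(3) and (2) off it; the paper instead proves (2) and (3) directly --- (2) by splitting on whether $\cf(\kappa)$ is a successor (Corollary~\ref{cor65}(1) plus projection) or a limit cardinal (a hands-on colouring built from Fact~\ref{clubguessingfact}, which is exactly what Theorem~\ref{ubdtoonto} produces when unfolded) --- and dismisses (4) as clear at that stage. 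Your treatment of the boundary case $\cf(\kappa)=\theta^+$ in Clause~(3) goes through Corollary~\ref{prop41} (the rectangular relation at successors of regulars), whereas the paper routes through Ulam matrices: Fact~\ref{ulamoriginal}, Lemma~\ref{ubdplus} and Remark~\ref{thm58b} yield $\onto(J^{\bd}[\cf(\kappa)],\theta)$ before projecting. Both are legitimate. Two small points to tidy: a colouring with range contained in $\theta$ is upper-regressive only on pairs whose second coordinate is $\ge\theta$, so when converting your $\onto$ witnesses into $\ubd$ witnesses you should (harmlessly) reset the colouring to $0$ on pairs below $\theta$ --- positivity for $J^{\bd}[\kappa]$ is unaffected by deleting an initial segment; and your derivation of Clause~(2) for finite $\theta$ should be routed through Clause~(4) (where $\mathcal C(\cf(\kappa),n)=\cf(\kappa)$) rather than through Clause~(3), which is stated only for regular, hence infinite, $\theta$ --- this is also how the paper handles the finite case.
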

\begin{proof} Set $\varkappa:=\cf(\kappa)$. 
Fix a cofinal subset $x\s\kappa\setminus\varkappa$ of order-type $\varkappa$.
Define a map $\pi:\kappa\rightarrow\varkappa$ via $\pi(\beta):=\otp(x\cap\beta)$.
Note that $\pi(\beta)<\beta$ for all $\beta<\kappa$,
and that, for every $B\in (J^\bd[\kappa])^+$, $\pi[B]$ is unbounded in $\varkappa$.

(1) Define an upper-regressive map $c:[\kappa]^2\rightarrow\varkappa$ via $c(\eta,\beta):=\pi(\beta)$. 
Evidently, $c$ witnesses that $\ubd([\kappa]^1,J^\bd[\kappa], \varkappa)$ holds.

(2) Let $\theta<\varkappa$ be some cardinal. By Clause~(4), we may assume that $\theta$ is infinite.

If $\varkappa$ is a successor cardinal,
then, by Corollary~\ref{cor65}(1), $\ubd(J^{\bd}[\varkappa],\theta)$ holds.
So, by Proposition~\ref{singularprojection}, $\ubd(\{\varkappa\},\allowbreak J^\bd[\kappa], \theta)$ holds, as well.

If $\varkappa$ is a not a successor cardinal, then $\varkappa$ is inaccessible,
and so by Fact~\ref{clubguessingfact},
we may fix a club-guessing $C$-sequence $\langle C_\eta\mid \eta\in E^\varkappa_\theta\rangle$ 
with $\otp(C_\eta)=\theta$ for all $\eta\in E^\varkappa_\theta$.
Pick any upper-regressive colouring $c:[\kappa]^2\rightarrow\theta$ such that for all $\eta<\varkappa\le\beta<\kappa$ with $\cf(\eta)=\theta$,
$$c(\eta,\beta)=\sup(\otp(C_{\eta}\cap\pi(\beta)).$$
An argument as in the proof of Claim~\ref{claim641} makes it clear that $c$ witnesses that $\onto(\{\varkappa\},J^\bd[\kappa], \theta)$ holds.
In particular, $\ubd(\{\varkappa\},\allowbreak  J^\bd[\kappa], \theta)$ holds.

(3)  Let $\theta<\varkappa$ be some regular cardinal.
By the proof of Clause~(2), we may assume that $\theta$ is infinite and $\varkappa$ is not inaccessible.
So, $\varkappa$ is a successor cardinal.
Then by Fact~\ref{ulamoriginal} and Lemma~\ref{ubdplus}, $\ubd^+(J^{\bd}[\kappa],\allowbreak\kappa)$ holds.
So, by Remark~\ref{thm58b},  $\onto(J^{\bd}[\varkappa],\theta)$ holds. We finish by observing that this, combined with Proposition~\ref{singularprojection}, implies that $\onto(\{\varkappa\},J^\bd[\kappa], \theta)$ holds.

(4) Should be clear at this stage.
\end{proof}

\begin{cor} Suppose that $\kappa$ is a singular cardinal. Then:
\begin{enumerate}[(1)]
\item $\ubd(J^\bd[\kappa], \theta)$ holds for every $\theta\le\cf(\kappa)$;
\item $\onto(J^{\bd}[\kappa],\theta)$ holds whenever $\theta^+<\cf(\kappa)$;
\item If $2^{\cf(\kappa)}<\kappa$, then $\onto(J^\bd[\kappa], \theta)$ holds for every $\theta\le\cf(\kappa)$.\qed
\end{enumerate}
\end{cor}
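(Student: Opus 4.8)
The plan is to derive all three clauses from the preceding Theorem~\ref{prop51} by means of two elementary monotonicity observations, so that no new combinatorics is needed. Write $\varkappa:=\cf(\kappa)$. The first observation is that, for $\mathsf p\in\{\onto,\ubd\}$ and any nonempty family $\mathcal A\s\mathcal P(\kappa)$, a colouring witnessing $\mathsf p(\mathcal A,J,\theta)$ also witnesses $\mathsf p(J,\theta)=\mathsf p(\{\kappa\},J,\theta)$: the structure of these principles only requires the existence of some $\eta$ in a member of the index family with a certain property of $(\eta,B)$, and any such $\eta$ automatically lies in $\kappa$. In particular, instances indexed by $[\kappa]^1$, by $\{\varkappa\}$, or by $\{\nu\}$ for an ordinal $\nu<\kappa$ all upgrade to instances indexed by $\{\kappa\}$. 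The second observation is that $\onto(J,\theta')$ implies $\onto(J,\theta)$ whenever $\theta\le\theta'$: composing a witnessing colouring $c$ with a surjection $g:\theta'\rightarrow\theta$ gives a witness, since $g[c[\{\eta\}\circledast B]]=g[\theta']=\theta$ whenever $c[\{\eta\}\circledast B]=\theta'$.

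For Clause~(1), I would split according to whether $\theta=\varkappa$ or $\theta<\varkappa$. In the former case Theorem~\ref{prop51}(1) supplies $\ubd([\kappa]^1,J^\bd[\kappa],\varkappa)$, and the first observation yields $\ubd(J^\bd[\kappa],\varkappa)$. In the latter case Theorem~\ref{prop51}(2) supplies $\ubd(\{\varkappa\},J^\bd[\kappa],\theta)$, which the same observation upgrades to $\ubd(J^\bd[\kappa],\theta)$. These two cases exhaust $\theta\le\varkappa$.

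For Clause~(2), assume $\theta^+<\varkappa$. Then $\theta^+$ is a regular cardinal below $\varkappa$, so Theorem~\ref{prop51}(3), applied with $\theta^+$ in place of its regular parameter, gives $\onto(\{\varkappa\},J^\bd[\kappa],\theta^+)$; the first observation yields $\onto(J^\bd[\kappa],\theta^+)$, and the second (with $\theta\le\theta^+$) then gives $\onto(J^\bd[\kappa],\theta)$. For Clause~(3), assume $2^\varkappa<\kappa$ and fix $\theta\le\varkappa$. Here I would first note the crude bound $\mathcal C(\varkappa,\theta)\le 2^\varkappa$: the entire family $[\varkappa]^\theta$, of cardinality at most $2^\varkappa$, covers every club of $\varkappa$ from below, since a club $C\s\varkappa$ has size $\varkappa\ge\theta$ and hence contains a subset of size $\theta$. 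Thus $\nu:=\mathcal C(\varkappa,\theta)\le 2^\varkappa<\kappa$, Theorem~\ref{prop51}(4) applies to give $\onto(\{\nu\},J^\bd[\kappa],\theta)$, and since $\nu<\kappa$ the first observation delivers $\onto(J^\bd[\kappa],\theta)$.

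I do not expect any genuine obstacle here, as the substantive work is entirely contained in Theorem~\ref{prop51}; the only points demanding (routine) care are the passage from an index family whose members are proper subsets of $\kappa$ to the index family $\{\kappa\}$, and the estimate $\mathcal C(\varkappa,\theta)\le 2^\varkappa$ that activates Clause~(4) of that theorem.
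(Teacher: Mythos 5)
Your derivation is correct and is exactly the route the paper intends: the corollary is stated with no proof precisely because it follows immediately from Theorem~\ref{prop51} via the two routine observations you isolate (dropping a nonempty index family $\mathcal A$ down to $\{\kappa\}$, and post-composing an $\onto$-witness with a surjection $\theta^+\rightarrow\theta$), together with the bound $\mathcal C(\cf(\kappa),\theta)\le\cf(\kappa)^\theta\le 2^{\cf(\kappa)}$ for Clause~(3). No gaps.
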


\subsection{Fodor's question}\label{subsectionfodor}
In the 1970's (see \cite{MR0369081}), Fodor asked whether, given a sequence
$\vec S=\langle S_i\mid i<\kappa\rangle$ of stationary subsets of some regular uncountable cardinal $\kappa$, 
there exists a pairwise disjoint sequence $\langle S_i'\mid i<\kappa\rangle$ such that, for each $i<\kappa$,
$S_i'$ is a stationary subset of $S_i$. 
Note that the special case in which $\vec S$ is a constant sequence coincides with Solovay's decomposition theorem.

The general answer to Fodor's question is negative.
For instance, one problematic scenario is when $\ns_{\aleph_1}$ is $\aleph_1$-dense and
$\vec S$ enumerates a dense subset of $\ns_{\aleph_1}$. 
An affirmative answer to a weakening of Fodor's question is implicit in \cite[Lemma~1.15]{paper29},
as was announced in \cite[Lemma~1.3]{paper38}. It reads as follows:
\begin{lemma}[implicit in \cite{paper29}] Suppose that
$\langle S_i\mid i<\theta\rangle$ is a sequence of stationary subsets of a regular uncountable cardinal $\kappa$, with $\theta\le\kappa$.
Then there exists a sequence $\langle S_i'\mid i\in I\rangle$ of pairwise disjoint stationary sets such that:
\begin{enumerate}
\item $S_i'\s S_i$ for every $i\in I$;
\item $I$ is a cofinal subset of $\theta$.
\end{enumerate}
\end{lemma}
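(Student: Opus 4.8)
The plan is to reduce to the case that $\theta$ is a regular cardinal and then to split according to whether $\theta<\kappa$ or $\theta=\kappa$. If $\theta$ is finite the statement is trivial, since $I$ is cofinal in $\theta$ as soon as $\theta-1\in I$, so $I:=\{\theta-1\}$ and $S'_{\theta-1}:=S_{\theta-1}$ suffice. If $\theta$ is singular, I would pass to a cofinal subset of $\theta$ of order-type $\cf(\theta)$ and reindex; a pairwise disjoint refinement indexed by a cofinal subset of that subset is again cofinal in the original $\theta$. Hence I may assume $\theta=\cf(\theta)$ is infinite regular. The key preliminary reduction, valid in both remaining cases, is that by Corollary~\ref{amensubset} each $S_i$ may be shrunk to a stationary $\hat S_i\subseteq S_i$ lying in $\sa_\kappa\subseteq\amen_\kappa$; since $S'_i\subseteq\hat S_i\subseteq S_i$, it suffices to disjointly refine the sets $\hat S_i$.

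The case $\theta<\kappa$ is where the machinery of the paper applies cleanly and in fact yields a \emph{full} (not merely cofinal) refinement, i.e.\ $I=\theta$. As $\amen_\kappa$ is $\kappa$-complete (Lemma~\ref{amenableideal}(2)) and $\theta<\kappa$, the union $S:=\bigcup_{i<\theta}\hat S_i$ again lies in $\amen_\kappa$, so $\ubd(\ns_\kappa\restriction S,\kappa)$ holds by Lemma~\ref{lemma44}, and therefore $\onto^{++}(\ns_\kappa\restriction S,\theta)$ holds by Corollary~\ref{cor610}(1), using $\theta\in\reg(\kappa)$. Fixing a witnessing colouring $c:[\kappa]^2\rightarrow\theta$, I would apply the defining property of $\onto^{++}$ (Definition~\ref{def21}) to $A:=\kappa$ and to the sequence $\langle \hat S_\tau\mid \tau<\theta\rangle$, each $\hat S_\tau$ being a stationary subset of $S$ and hence a member of $(\ns_\kappa\restriction S)^+$. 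This produces a single $\eta<\kappa$ such that, for every $\tau<\theta$, the set $S'_\tau:=\{\beta\in \hat S_\tau\setminus(\eta+1)\mid c(\eta,\beta)=\tau\}$ is stationary. Because $c(\eta,\cdot)$ is a function, the $S'_\tau$ are automatically pairwise disjoint, and $S'_\tau\subseteq \hat S_\tau\subseteq S_\tau$; thus $I:=\theta$ works.

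The remaining case $\theta=\kappa$ is the genuinely hard one, and it is here that only a cofinal $I$ can be expected: a full refinement is not available in general, as it would give $\onto^{++}(\ns_\kappa,\kappa)$-type behaviour, which fails (Proposition~\ref{prop912}). Now $\bigcup_{i<\kappa}\hat S_i$ need not lie in $\amen_\kappa$, so the argument above breaks down, and I would instead proceed by a transfinite recursion of length $\kappa$, constructing an increasing sequence $\langle i_\xi\mid \xi<\kappa\rangle$ with $i_\xi\ge\xi$ together with pairwise disjoint stationary sets $S'_{i_\xi}\subseteq S_{i_\xi}$; setting $I:=\{i_\xi\mid\xi<\kappa\}$ then yields a cofinal index set. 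At stage $\xi$, writing $U_\xi:=\bigcup_{\eta<\xi}S'_{i_\eta}$, I would use Solovay's theorem to split candidate source sets into $\kappa$ pairwise disjoint stationary pieces and seek a fresh piece of some $S_i$ with $i\ge\xi$ that is disjoint from $U_\xi$.

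The main obstacle is exactly the continuation step of this recursion: one must rule out that the accumulated union $U_\xi$ — a union of fewer than $\kappa$ stationary sets — already absorbs, modulo $\ns_\kappa$, a cofinal tail of the remaining family $\langle S_i\mid i\ge\xi\rangle$. When the $S_i$ are mutually independent this is handled by a least-index (Fodor) selection, whose fibres are disjoint and land inside the respective $S_i$; when they overlap heavily (for instance are all equal) it is handled directly by Solovay splitting of the common set. The difficulty is the interplay of these two behaviours, which is the combinatorial heart of the Solovay-style decomposition implicit in \cite[Lemma~1.15]{paper29}. I would therefore organise the recursion so that at each stage one either locates a large index whose source set still meets $\kappa\setminus U_\xi$ stationarily, or else concentrates a cofinal subfamily onto a common stationary set and recurses inside it; controlling this process so as to avoid an infinite descent through ever-smaller stationary supports is the crux, and it is precisely what the amenability reduction of the first paragraph is meant to keep in check.
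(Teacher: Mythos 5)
Your reduction to infinite regular $\theta$ and the shrinking of each $S_i$ to a stationary subset in $\sa_\kappa$ (Corollary~\ref{amensubset}) match the paper, and your treatment of the case $\theta<\kappa$ is correct but follows a genuinely different route: using $\kappa$-completeness of $\amen_\kappa$ to put $S:=\bigcup_{i<\theta}\hat S_i$ into $\amen_\kappa$, then invoking Lemma~\ref{lemma44} and Corollary~\ref{cor610}(1) to get $\onto^{++}(\ns_\kappa\restriction S,\theta)$, whose colour-fibres over a single $\eta$ disjointly refine the \emph{whole} family. The paper does not argue this way, and in this case your argument buys more than is asked for, namely $I=\theta$ rather than a cofinal $I$.

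The case $\theta=\kappa$, however, is a genuine gap: you describe a transfinite recursion and then explicitly concede that its continuation step --- preventing the accumulated union $U_\xi$ from absorbing a tail of the family, without descending through ever-smaller stationary supports --- is unresolved. The idea you are missing is that no recursion is needed; the paper performs a single dichotomy on the intersection pattern. For $i<\theta$ let $\Lambda^i:=\{\iota<\theta\mid S_\iota\cap S_i\text{ is stationary}\}$. If every $\Lambda^i$ has size $<\theta$, then regularity of $\theta$ lets one greedily extract $I\in[\theta]^\theta$ with $\iota\notin\Lambda^i$ for all $\iota<i$ in $I$, and then $S_i':=S_i\setminus\bigcup_{\iota\in I\cap i}S_\iota$ is already a pairwise disjoint stationary refinement (stationarity of $S_i'$ uses the $\kappa$-completeness of $\ns_\kappa$). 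Otherwise some $\Lambda^i$ has size $\theta$, and then the $\theta$-many stationary sets $S_\iota\cap S_i$ ($\iota\in\Lambda^i$) all live inside the single set $S_i$, which carries an amenable $C$-sequence; one concludes by the Solovay-type splitting result \cite[Lemma~1.15]{paper29} applied inside $S_i$, with the resulting cofinal subset of $\Lambda^i$ being cofinal in $\theta$ by regularity. Your recursion tries to interleave exactly these two behaviours stage by stage, which is why it does not close; the dichotomy separates them once and for all. Note also that your $\theta<\kappa$ argument cannot stand in for the second alternative, since there the subfamily concentrated on $S_i$ still has size $\theta=\kappa$, so the external splitting lemma (or an equivalent) remains indispensable.
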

\begin{proof}[Proof sketch] First, in light of Clause~(ii), we may assume that $\theta$ is an infinite regular cardinal.
Second, by \cite[Remark~1.5]{paper29} (see also Corollary~\ref{amensubset}), we may assume that,
for every $i<\theta$, $S_i$ carries an amenable $C$-sequence.
For each $i<\theta$, let $\Lambda^i:=\{ \iota<\theta\mid S_\iota\cap S_i\text{ is stationary}\}$.
Now, there are two cases to consider:

$\br$ If there exists $I\in[\theta]^\theta$ such that, for every pair $\iota<i$ of elements of $I$,
$\iota\notin \Lambda^i$, then fix such an $I$ and, for every $i\in I$, let $S_i':=S_i\setminus\bigcup_{\iota\in I\cap i}S_\iota$.
Clearly, $\langle S_i'\mid i\in I\rangle$ is as sought.

$\br$ Otherwise, we must be able to find $i<\theta$ such that $|\Lambda^i|=\theta$.
In this case, set $\Gamma:=S_i\cap\acc(\kappa)$, and, for every $\iota\in\Lambda^i$, denote $\Omega^\iota:=S_\iota\cap\Gamma$.
Now appeal to \cite[Lemma~1.15]{paper29} with  $\langle \Omega^\iota\mid \iota\in \Lambda^i\rangle$.
\end{proof}

The results of this section provide the following improvement in the case that $\kappa=\theta^+$:
\begin{cor} For every infinite cardinal $\theta$ and every sequence $\langle S_i\mid i<\theta\rangle$ of stationary subsets of $\theta^+$,
there exists a pairwise disjoint sequence $\langle S_i'\mid i<\theta\rangle$ such that,
for every $i<\theta$, $S_i'$ is a stationary subset of $S_i$.
\end{cor}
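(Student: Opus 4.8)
The plan is to read off the desired decomposition directly from the strong colouring principles for successor cardinals that were already established, applied to the nonstationary ideal. Write $\kappa:=\theta^+$, which is a successor cardinal and hence regular uncountable, and set $J:=\ns_\kappa$. Since $J$ is normal it is subnormal (as noted before Definition~\ref{subnormalideals}), and by Fact~\ref{normalfacts}(1) it is $\kappa$-complete; as it plainly extends $J^{\bd}[\kappa]$, we have $J\in\mathcal J^\kappa_\kappa$, and $J^+$ is precisely the collection of stationary subsets of $\kappa$. In particular $\kappa\in J^+$. This makes $J$ eligible as the ideal in both Corollary~\ref{prop41} and Theorem~\ref{thm54}.

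First I would treat the case where $\theta$ is regular. Here Corollary~\ref{prop41} supplies a colouring $c:[\kappa]^2\rightarrow\theta$ witnessing $\onto^{++}(J^+,J,\theta)$. Applying the defining property with $A:=\kappa\in J^+$ and the given sequence $\langle S_\tau\mid\tau<\theta\rangle$ of $J^+$-sets, I obtain $\eta<\kappa$ such that, putting
$$S_\tau':=\{\beta\in S_\tau\setminus(\eta+1)\mid c(\eta,\beta)=\tau\},$$
each $S_\tau'$ lies in $J^+$, i.e.\ is stationary, and clearly $S_\tau'\s S_\tau$. For $\tau\neq\tau'$ the sets $S_\tau'$ and $S_{\tau'}'$ are disjoint, since a common point $\beta$ would force $c(\eta,\beta)$ to equal both $\tau$ and $\tau'$. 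Thus $\langle S_\tau'\mid\tau<\theta\rangle$ is as required.

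Next I would treat the case where $\theta$ is singular, which is handled in exactly the same manner but via Theorem~\ref{thm54}, which supplies a colouring $c:[\kappa]^2\rightarrow\theta$ witnessing $\ubd^{++}(J^+,J,\theta)$. Applying its defining property with $A:=\kappa$ and $\langle S_\tau\mid\tau<\theta\rangle$ yields $\eta<\kappa$ together with an injection $h:\theta\rightarrow\theta$ such that
$$S_\tau':=\{\beta\in S_\tau\setminus(\eta+1)\mid c(\eta,\beta)=h(\tau)\}$$
is stationary for every $\tau<\theta$. Again $S_\tau'\s S_\tau$, and disjointness of $S_\tau'$ and $S_{\tau'}'$ for $\tau\neq\tau'$ now follows from the \emph{injectivity} of $h$: a common point would give $h(\tau)=c(\eta,\beta)=h(\tau')$, contradicting $\tau\neq\tau'$.

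The argument is essentially immediate once the right tools are identified, so there is no serious obstacle; the only points demanding care are the bookkeeping ones. I would make sure to verify that $\ns_\kappa$ genuinely meets the hypotheses of Corollary~\ref{prop41} and Theorem~\ref{thm54} (subnormality and membership in $\mathcal J^\kappa_\kappa$), and to note that the full sequence $\langle S_\tau'\mid\tau<\theta\rangle$ is produced (one set per index), so that this sharpens the earlier weakening of Fodor's question, where only a cofinal index set was obtained. The split into the regular and singular subcases is forced only because the relevant $\zfc$ colouring theorem takes the stronger $\onto^{++}$ form for $\theta$ regular and the $\ubd^{++}$ form for $\theta$ singular; the injectivity of $h$ in the latter is exactly what compensates for the weaker conclusion and still delivers disjointness.
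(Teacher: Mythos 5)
Your proposal is correct and is essentially the paper's own argument: the paper likewise derives $\ubd^{++}(\ns_{\theta^+},\theta)$ from Corollary~\ref{prop41} (for $\theta$ regular) and Theorem~\ref{thm54} (for $\theta$ singular), applied to the subnormal, $\kappa$-complete ideal $\ns_{\theta^+}$, and then reads off the pairwise disjoint stationary refinement exactly as you do. Your write-up just makes explicit the bookkeeping (eligibility of $\ns_\kappa$ and the role of the injection $h$) that the paper leaves as ``follows immediately''.
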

\begin{proof} By Corollary~\ref{prop41} and Theorem~\ref{thm54}, in particular, $\ubd^{++}(\ns_{\theta^+},\theta)$ holds.
The conclusion now follows immediately.
\end{proof}
\begin{remark} The preceding already found an application in \cite[Footnote~3]{paper44}.
\end{remark}

In the general case, we have an answer which is affirmative only in a \emph{dense} sense:
\begin{cor} For every stationary subset $S$ of a regular uncountable cardinal $\kappa$, there exists a stationary $S'\s S$ for which the following hold.
For every sequence $\langle S_i\mid i<\theta\rangle$ of stationary subsets of $S'$ with $\theta<\kappa$,
there exists a pairwise disjoint sequence $\langle S_i'\mid i<\theta\rangle$ such that,
for every $i<\theta$, $S_i'$ is a stationary subset of $S_i$.

If $\square(\kappa,{<}\mu)$ holds for some $\mu<\kappa$, then one can moreover take $S':=S$.
\end{cor}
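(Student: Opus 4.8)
The plan is to reduce the statement to producing, on the relevant stationary set, a single colouring witnessing an instance of $\onto^{++}(\cdots,\theta^*)$ or $\ubd^{++}(\cdots,\theta^*)$ with $\theta^*\ge\theta$ colours, since either principle yields the desired refinement for free. Concretely, if $c:[\kappa]^2\to\theta^*$ witnesses $\onto^{++}(\ns_\kappa\restriction S',\theta^*)$ (resp. $\ubd^{++}(\ns_\kappa\restriction S',\theta^*)$), then applying it to the sequence $\langle B_\tau\mid\tau<\theta^*\rangle$ defined by $B_\tau:=S_\tau$ for $\tau<\theta$ and $B_\tau:=S_0$ otherwise returns an $\eta<\kappa$ (and, in the $\ubd^{++}$ case, an injection $h:\theta^*\to\theta^*$) such that, for every $i<\theta$, the set $S_i':=\{\beta\in S_i\setminus(\eta+1)\mid c(\eta,\beta)=i\}$ (resp. $c(\eta,\beta)=h(i)$) is stationary; since $c$ is a function and the prescribed colours are pairwise distinct, these sets are automatically pairwise disjoint. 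So the whole problem becomes one of locating the right colouring.

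For the unconditional assertion I would fix, via Corollary~\ref{cor71}, a stationary $S'\s S$ for which $\onto^{++}(\ns_\kappa\restriction S',\theta)$ holds for every $\theta\in\reg(\kappa)$, and then, given $\theta<\kappa$, pad the input sequence up to a regular cardinal. Whenever there is a regular $\theta^*$ with $\theta\le\theta^*<\kappa$ --- which happens for $\theta$ finite (take $\theta^*=\aleph_0$), for $\theta^+<\kappa$ (take $\theta^*=\theta^+$), and for $\theta$ infinite regular (take $\theta^*=\theta$) --- we have $\theta^*\in\reg(\kappa)$ and the mechanism of the first paragraph applies. The one residual case is $\kappa=\theta^+$ with $\theta$ singular, where no intermediate regular cardinal is available; here I would instead invoke Theorem~\ref{thm54}, noting that $\ns_\kappa\restriction S'$ is normal (hence subnormal) and lies in $\mathcal J^\kappa_\kappa$ (it is $\kappa$-complete by Fact~\ref{normalfacts}(1) and extends $J^\bd[\kappa]$), so the theorem furnishes a colouring witnessing $\ubd^{++}((\ns_\kappa\restriction S')^+,\ns_\kappa\restriction S',\theta)$; since $\kappa\in(\ns_\kappa\restriction S')^+$, this restricts to $\mathcal A=\{\kappa\}$ and the mechanism applies with $\theta^*=\theta$.

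For the final clause I would show that $\square(\kappa,{<}\mu)$ with $\mu<\kappa$ permits $S':=S$. By Corollary~\ref{square_is_amenable} it gives $\ubd(J^\bd[\kappa],\kappa)$, and since $(\ns_\kappa\restriction S)^+\s(J^\bd[\kappa])^+$ the very same colouring witnesses $\ubd(\ns_\kappa\restriction S,\kappa)$ (equivalently, $\kappa\in\sa_\kappa\s\amen_\kappa$ forces $S\in\amen_\kappa$ through Lemmas~\ref{amenableideal}(1) and \ref{lemma44}). Corollary~\ref{cor610}(1) then upgrades this to $\onto^{++}(\ns_\kappa\restriction S,\theta)$ for every $\theta\in\reg(\kappa)$, and the argument of the preceding paragraph runs verbatim with $S$ replacing $S'$ --- noting that in the singular case $\kappa=\theta^+$ Theorem~\ref{thm54} already applies to $\ns_\kappa\restriction S$ directly, independently of $\square$.

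The step I expect to cause the most trouble is precisely the singular case $\kappa=\theta^+$ with $\theta$ singular: there the uniform ``pad to a regular number of colours'' device breaks down entirely and must be replaced by the substantially deeper Theorem~\ref{thm54}, which rests on scales and Shelah's strong colourings; one must also verify carefully that the restricted nonstationary ideal is subnormal and $\kappa$-complete so that the theorem is applicable. A secondary, but still necessary, point is the ideal-monotonicity used in the $\square$ clause to carry $\ubd(\cdots,\kappa)$ from the bounded ideal to $\ns_\kappa\restriction S$.
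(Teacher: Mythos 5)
Your proposal is correct and follows essentially the same route as the paper: Corollary~\ref{cor71} for the limit case, the $\ubd^{++}(\ns_{\theta^+},\theta)$ instance coming from Theorem~\ref{thm54} (via Corollary~\ref{prop41}/Theorem~\ref{thm54}) for the successor-of-singular case, and Corollary~\ref{cor610} for the $\square(\kappa,{<}\mu)$ clause. The only differences are bookkeeping --- the paper splits on successor versus limit $\kappa$ (obtaining $S'=S$ for all successors), whereas you split on the availability of a regular $\theta^*\in[\theta,\kappa)$ --- and your first paragraph merely makes explicit the ``follows immediately'' step that the paper leaves to the reader.
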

\begin{proof} If $\kappa$ is a successor cardinal, then by the preceding corollary, one can take $S':=S$.
If $\kappa$ is limit cardinal,
then, for every $\theta<\kappa$ there exists $\theta'\in\reg(\kappa)$ above $\theta$,
hence the subset $S'\s S$ is given by Corollary~\ref{cor71}.
If $\kappa$ is a limit cardinal and $\square(\kappa,{<\mu})$ holds for some $\mu<\kappa$, then the result follows from Corollary~\ref{cor610}(3).
\end{proof}

\section{\texorpdfstring{$\onto$}{onto} with maximal colours}\label{sectionontomax}
In Sections~\ref{sectionstronglyamenable} and \ref{sectionamenable} we have studied the behaviour of the $\ubd(\ldots)$ principle for two natural ideals when the number of colours is maximal.
Here we study the $\onto(\ldots)$ principle for the maximal number of colours.
Colourings of this form, more specifically the \emph{onto mapping principle of Sierpi\'{n}ski} (see Fact~\ref{sierpinski}), 
were the starting point of this research project as we have already mentioned in the introduction.

In all of our results in this section we require guessing principles ranging in strength from $\non(\mathcal M) = \aleph_1$ to $\p^\bullet(\kappa,\kappa^+,{\sq},1)$ (see Theorem~\ref{pbullet}).
\begin{fact}\label{sierpinski} The following are equivalent:
\begin{enumerate}[(1)]
\item $\non(\mathcal M)=\aleph_1$, that is, there exists a nonmeagre set of reals of size $\aleph_1$;
\item $\onto(\{\aleph_0\}, J^\bd[\aleph_1], \aleph_1)$;
\item $\onto([\aleph_0]^{\aleph_0},J^{\bd}[\aleph_1],\aleph_1)$;
\item $\onto([\aleph_1]^{\aleph_0},J^{\bd}[\aleph_1],\aleph_1)$.
\end{enumerate}
\end{fact}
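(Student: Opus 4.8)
The plan is to run the cycle $(1)\Rightarrow(4)\Rightarrow(3)\Rightarrow(2)\Rightarrow(1)$. The two implications $(4)\Rightarrow(3)\Rightarrow(2)$ are immediate from the definitions: since $\{\aleph_0\}\s[\aleph_0]^{\aleph_0}\s[\aleph_1]^{\aleph_0}$, any colouring witnessing a principle with a larger family $\mathcal A$ also witnesses the one with a smaller family. It is convenient to record first that clause~(2) is equivalent to its $\aleph_0$-coloured version $\onto(\{\aleph_0\},J^\bd[\aleph_1],\aleph_0)$: the reduction from $\aleph_1$ to $\aleph_0$ colours is obtained by composing a witness with a surjection $\aleph_1\to\aleph_0$ (the same row $\eta\in\aleph_0$ still works), while the lift from $\aleph_0$ back up to $\aleph_1$ colours is exactly Lemma~\ref{upbyone}(3) applied with $\mu=\nu=\theta=\aleph_0$ and $J=J^\bd[\aleph_1]$. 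Thus the whole cycle reduces to relating $\non(\mathcal M)=\aleph_1$ to this $\aleph_0$-coloured onto mapping.

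For $(2)\Rightarrow(1)$ I would argue as follows. Passing to the $\aleph_0$-coloured witness $c\colon[\aleph_1]^2\to\omega$ and setting $h_\beta:=\langle c(n,\beta)\mid n<\omega\rangle\in{}^\omega\omega$ for $\omega\le\beta<\aleph_1$, the onto property says precisely that for every uncountable $B\s\aleph_1$ some coordinate $n$ is surjective over $\{h_\beta\mid\beta\in B\}$; equivalently, no uncountable subfamily of $H:=\{h_\beta\}$ lies inside an everywhere-different set $N_m:=\{x\in{}^\omega\omega\mid\forall n\,(x(n)\ne m(n))\}$. I would then invoke the standard fact that the sets $N_m$ generate the meager ideal as a $\sigma$-ideal (each eventually-different set is a countable union of $N_m$'s, and these are cofinal in $\mathcal M$). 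If $H$ were meager it would be covered by countably many $N_{m_i}$, whence $[\omega,\aleph_1)=\bigcup_i\{\beta\mid h_\beta\in N_{m_i}\}$ would be a countable union of countable sets, which is absurd. Hence $H$ is nonmeager of size $\aleph_1$, giving $\non(\mathcal M)=\aleph_1$.

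For $(1)\Rightarrow(4)$ the plan is to reverse this picture. Starting from a nonmeager set of size $\aleph_1$ one builds the colouring, using fixed bijections $e_\beta\colon\beta\leftrightarrow\omega$ to convert natural-number codes into colours below $\beta$ (this is what realises all $\aleph_1$ colours, and for clause~(2) it is exactly Lemma~\ref{upbyone}(3)), and using the chopped-real characterisation of nonmeagerness to produce, for each uncountable $B$ and each target colour $\gamma$, a point $\beta\in B$ whose attached real matches on a block a pattern coding $e_\beta(\gamma)$, so that $c(\eta,\beta)=e_\beta^{-1}(e_\beta(\gamma))=\gamma$.

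The hard part --- and the place where $\non(\mathcal M)=\aleph_1$ must be used essentially, rather than merely to supply \emph{some} nonmeager set --- is making a \emph{single} row succeed on an \emph{arbitrary} uncountable $B$ (and, for~(4), within a prescribed countable $A$). By the reformulation in the second paragraph this amounts to producing a family all of whose uncountable subfamilies have a surjective coordinate, i.e.\ no uncountable subfamily lies inside any $N_m$. This cannot be read off a generic nonmeager set, since an uncountable subset of a nonmeager set may itself be meager and may even lie inside a single $N_m$. I therefore expect the construction to proceed by a dedicated transfinite recursion of length $\aleph_1$, choosing each $h_\beta$ so as to avoid the relevant meager constraints accumulated so far; the genericity/category bookkeeping in this recursion is the genuine technical core, with the hypothesis $\non(\mathcal M)=\aleph_1$ guaranteeing that only $\aleph_1$-many constraints ever need to be met.
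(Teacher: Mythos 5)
A preliminary point on the comparison: the paper offers no proof of this statement --- it is recorded as a \emph{Fact}, and the remark immediately following it farms the pieces out to the literature ($(2)\impliedby(1)$ to Guzm\'{a}n \cite{MR3694336}, $(3)\iff(2)\implies(1)$ to Miller \cite{miller}, $(4)\iff(3)$ to Todor\v{c}evi\'{c} \cite{TodActa}). So your attempt is measured against those papers rather than an in-house argument. Your scaffolding is fine as far as it goes: $(4)\Rightarrow(3)\Rightarrow(2)$ by monotonicity of the family $\mathcal A$, and the equivalence of $(2)$ with $\onto(\{\aleph_0\},J^{\bd}[\aleph_1],\aleph_0)$ --- downwards by composing with a surjection, upwards by Lemma~\ref{upbyone}(3) --- is a correct use of the paper's machinery. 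Both substantive implications, however, fail in your write-up.

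In $(2)\Rightarrow(1)$ the ``standard fact'' you lean on is false: the sets $N_m=\{x\in{}^\omega\omega\mid\forall n\,(x(n)\neq m(n))\}$ do \emph{not} generate the meagre $\sigma$-ideal. The closed nowhere dense set $F:=\{x\in{}^\omega\omega\mid \forall n\,(x(2n+1)=0)\}$ is covered by no countable family $\{N_{m_i}\mid i<\omega\}$, as witnessed by the point $x\in F$ with $x(2i):=m_i(2i)$ for all $i$. Worse, the conclusion you want from it is itself false: the family $H=\{h_\beta\}$ read off a witness to $(2)$ need not be nonmeagre. From any witness $c$ one gets another witness $c'$ by setting $c'(2n,\beta):=c(n,\beta)$ and $c'(2n+1,\beta):=0$ (row $2n$ of $c'$ is surjective on $B$ whenever row $n$ of $c$ is surjective on $B\setminus\omega$), and the family derived from $c'$ lies inside the meagre set $F$ above. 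What clause $(2)$ actually yields is that for every $m$ only countably many $h_\beta$ lie in $N_m$, i.e.\ all but countably many $h_\beta$ agree with $m$ \emph{somewhere}; to conclude $\non(\mathcal M)\leq\aleph_1$ one needs agreement \emph{infinitely often}, so as to invoke Bartoszy\'{n}ski's characterisation of $\non(\mathcal M)$ as the least size of an infinitely-often-equal covering family, and upgrading ``somewhere'' to ``beyond every $k$'' from clause $(2)$ alone is precisely the content of Miller's $(2)\Rightarrow(3)$, which sits on the trivial side of your cycle. As for $(1)\Rightarrow(4)$: you have correctly diagnosed the obstruction (an uncountable subset of a nonmeagre set can be meagre), but what follows is a plan, not a proof --- the ``genericity/category bookkeeping'' you defer is exactly Guzm\'{a}n's theorem, compounded here with Todor\v{c}evi\'{c}'s step up to arbitrary countable $A\subseteq\omega_1$. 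Note also that a recursion meeting ``the constraints accumulated so far'' cannot be organised naively: the constraints are indexed by all $m\in{}^\omega\omega$, of which there may be more than $\aleph_1$, so one must work from a fixed nonmeagre set via the chopped-reals matching characterisation rather than diagonalise. In short, the easy implications are right, but both hard ones remain open in your proposal.
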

\begin{remark} The above equivalence is due to various authors. Sierpi\'{n}ski \cite{sierpinski1934hypothese} showed that $(2)$ follows from $2^{\aleph_0}=\aleph_1$ 
and this principle is called \emph{Sierpi\'{n}ski's onto mapping principle}. 
Todor{\v{c}}evi{\'c} \cite[pp.~290--291]{TodActa} proved the equivalence $(4)\iff (3)$.
The same argument  shows that for every infinite regular cardinal $\lambda$,
$\onto([\lambda]^{\lambda},J^{\bd}[\lambda^+],\lambda^+)$ implies the a priori stronger $\onto([\lambda^+]^{\lambda},J^{\bd}[\lambda^+],\lambda^+)$,
and Lemma~\ref{lemma89}(2) below establishes the same implication for $\lambda$ a singular strong limit.
Miller \cite{miller} showed that $(3)\iff(2)\implies(1)$. 
Finally,	Guzm\'{a}n \cite{MR3694336} showed that $(2)\impliedby(1)$ and hence that Sierpi\'{n}ski's onto mapping principle is equivalent to another classical statement in set theory. 
See \cite[\S3]{paper50} for a detailed treatment of Sierpi\'{n}ski's onto mapping principle including more equivalent versions.
\end{remark}

\begin{lemma}[implicit in {\cite[Lemma~14.1]{EHM}}]\label{sticklemma} Suppose that $\kappa=\theta^+$ is a successor cardinal.
\begin{enumerate}[(1)]
\item If $\kappa^{<\kappa}=\kappa$, then $\onto([\kappa]^\theta,J^{\bd}[\kappa],\kappa)$ holds;
\item If $\stick(\kappa)$ holds, then so does $\onto([\kappa]^\kappa,J^{\bd}[\kappa],\kappa)$.
\end{enumerate}
\end{lemma}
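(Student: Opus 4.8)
The plan is to recast both clauses through a single dual condition and then build the colourings separately. For a colouring $c\colon[\kappa]^2\to\kappa$, a \emph{bounded} set $A\subseteq\kappa$ and a function $g\colon A\to\kappa$, write $\mathrm{Bad}(A,g)$ for the set of $\beta>\sup(A)$ such that $c(\eta,\beta)\neq g(\eta)$ for every $\eta\in A$. If $c$ fails to witness the relevant principle for some admissible $A$ and some $B\in(J^\bd[\kappa])^+$, then for each $\eta\in A$ we may pick a colour $g(\eta)$ that $c$ omits on $\{\eta\}\circledast B$, and then $B\setminus(\sup(A)+1)\subseteq\mathrm{Bad}(A,g)$; since $B$ is unbounded, it suffices to arrange that $\mathrm{Bad}(A,g)$ is bounded in $\kappa$ for every admissible $A$ and every $g\colon A\to\kappa$. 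Thus both clauses reduce to defeating all ``forbidden-colour'' functions $g$.

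For Clause~(1), every $A\in[\kappa]^\theta$ is bounded (as $\theta<\kappa=\cf(\kappa)$), and under $\kappa^{<\kappa}=\kappa$ there are exactly $\kappa$ pairs $(A,g)$ with $A\in[\kappa]^\theta$ and $g\colon A\to\kappa$; fix an enumeration $\langle(A_\xi,g_\xi)\mid\xi<\kappa\rangle$. I would define $c$ by recursion on $\beta<\kappa$, calling requirement $\xi$ \emph{active at stage $\beta$} iff $\xi<\beta$ and $\sup(A_\xi)<\beta$, so that at most $|\beta|\le\theta$ requirements are active. At stage $\beta$ I would choose, for the active requirements, pairwise distinct representatives $\eta_\xi\in A_\xi$ (a system of distinct representatives, obtained greedily since fewer than $\theta$ points have been chosen before each step while $|A_\xi|=\theta$), set $c(\eta_\xi,\beta):=g_\xi(\eta_\xi)$, and set $c(\eta,\beta):=0$ for every remaining $\eta<\beta$. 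Each requirement $\xi$ is active at every $\beta>\max(\xi,\sup(A_\xi))$, whence $\mathrm{Bad}(A_\xi,g_\xi)$ is bounded; as every pair is enumerated, $c$ witnesses $\onto([\kappa]^\theta,J^\bd[\kappa],\kappa)$.

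For Clause~(2), fix a $\stick(\kappa)$-sequence $\langle x_\alpha\mid\alpha<\kappa\rangle\subseteq[\kappa]^\theta$ witnessing that every member of $[\kappa]^\kappa$ contains some $x_\alpha$. Since any $A\in[\kappa]^\kappa$ contains some $x_\alpha$, and each $x_\alpha$ is bounded, it is enough to prove $\onto(\{x_\alpha\mid\alpha<\kappa\},J^\bd[\kappa],\kappa)$, i.e.\ to bound $\mathrm{Bad}(x_\alpha,g)$ for every $\alpha<\kappa$ and every $g\colon x_\alpha\to\kappa$. The approach is to read colours off the stick sets lying inside the target $B$: applying $\stick(\kappa)$ to the tails $B\setminus\gamma$ yields, inside any unbounded $B$, stick sets of arbitrarily large minimum, and one designs $c$ from the stick sequence together with a system of surjections $f_\beta$ onto initial segments so that, for a suitable $\eta$ in the stick set $x_\alpha\subseteq A$, these sets force every target colour to appear along $\{\eta\}\circledast B$.

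The main obstacle is exactly this design. The naive device $c(\eta,\beta)=f_\beta(\mathrm{pos}(\eta))$, routing each $\eta$ through a single position $\mathrm{pos}(\eta)<\theta$, cannot work: a single row $c(\eta,\cdot)$ is surjective along \emph{every} unbounded $B$ only if all of its colour-fibres are co-bounded, which is impossible, and a domain $A$ may avoid an entire position-class, so that no $\eta\in A$ carries the position that $B$ demands. What is therefore required is a genuinely two-dimensional colouring in which, for each unbounded $B$, \emph{every} stick set $x_\alpha\subseteq A$ supplies a usable $\eta$, so that the forbidden functions $g$ — of which there may be $2^\theta>\kappa$ many on a fixed $x_\alpha$ — are all defeated at once; note this is why the stage-by-stage recursion of Clause~(1) is unavailable here. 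Coherently arranging this across the overlapping stick sets, by first thinning and structuring the $\stick(\kappa)$-sequence and only then defining $c$, is the delicate heart of the argument, and is where the construction follows Erdős–Hajnal–Milner.
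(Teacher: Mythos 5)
Your Clause~(1) is correct and is essentially the paper's own argument: enumerate the $\kappa$ many pairs $(A,g)$ with bounded domains, and at each stage $\beta$ meet the ${\le}\,\theta$ active requirements via a greedy system of distinct representatives. (The paper packages the bookkeeping through a surjection $e_\beta:\theta\rightarrow\beta$ and representatives $\eta^j_\beta$ chosen by recursion on $j<\theta$, but this is the same construction.)

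Clause~(2), however, has a genuine gap, and you have correctly diagnosed its location without filling it. The missing idea is that $\stick(\kappa)$ should be applied not to the domain $A$ but to the \emph{graph} of the forbidden-colour function $g$. Fix a bijection $\pi:\kappa\leftrightarrow\kappa\times\kappa$. Given $g:A\rightarrow\kappa$ with $A\in[\kappa]^\kappa$, the set $\pi^{-1}[g]\in[\kappa]^\kappa$ contains some stick set $x_\alpha$, and $\pi[x_\alpha]$ is then a partial function $g_\alpha\s g$ whose domain has size $\theta$ (a $\theta$-sized subset of a function graph is a function with $\theta$-sized, hence bounded, domain). Discarding those $x_\alpha$ whose image under $\pi$ is not a function and re-indexing so that $\dom(g_\alpha)\in[\alpha]^\theta$, one obtains a single sequence $\langle g_\alpha\mid\alpha<\kappa\rangle$ of partial functions with bounded $\theta$-sized domains such that \emph{every} $g:A\rightarrow\kappa$ with $A\in[\kappa]^\kappa$ extends some $g_\alpha$. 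At that point there are only $\kappa$ many requirements in total --- one per $\alpha$, not $\kappa^\theta$ per stick set --- and the stage-by-stage recursion of your Clause~(1) applies verbatim: in the verification one defines $g(\eta):=\min(\kappa\setminus c[\{\eta\}\circledast B])$ on all of $A$, finds $\alpha$ with $g_\alpha\s g$, and takes $\beta\in B$ large enough that requirement $\alpha$ is active. Your reduction instead fixes $A=x_\alpha$ first and then quantifies over all $g:x_\alpha\rightarrow\kappa$, which commits you to proving $\onto(\{x_\alpha\mid\alpha<\kappa\},J^{\bd}[\kappa],\kappa)$ --- a \emph{narrow} colouring principle in the sense of Remark~\ref{relationsremark}(2), since each $x_\alpha$ has size $\theta<\kappa$. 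That is a strictly stronger statement which is not what the lemma asserts, is not provable by the present methods from $\stick(\kappa)$ alone, and is precisely the kind of principle deferred to the sequel \cite{paper53}. So the obstacle you describe as ``the delicate heart of the argument'' is an artefact of quantifying in the wrong order; once stick is used to guess a $\theta$-sized piece of $g$ itself, Clause~(2) collapses to Clause~(1).
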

\begin{proof} If $\kappa^{<\kappa}=\kappa$, then let $\mathcal A:=[\kappa]^\theta$,
and if $\stick(\kappa)$ holds, then let $\mathcal A:=[\kappa]^\kappa$.
In both cases, we may fix a sequence of functions $\langle g_\alpha\mid \alpha<\kappa\rangle$ such that:
\begin{itemize}
\item for every $\alpha<\kappa$, $\dom(g_\alpha)\in[\alpha]^\theta$;
\item for every function $g:A\rightarrow\kappa$ with $A\in\mathcal A$, for some $\alpha<\kappa$, $g_\alpha\s g$.
\end{itemize}
	
Now, for every ordinal $\beta$ with $\theta \leq \beta<\kappa$, fix a surjection $e_\beta:\theta\rightarrow\beta$, 
and then define an injective sequence $\langle \eta^j_\beta\mid j<\theta\rangle$ of ordinals in $\beta$, by recursion on $j<\theta$:
$$\eta^j_\beta:=\min(\dom(g_{e_\beta(j)})\setminus\{ \eta_\beta^{i}\mid i<j\}).$$
Pick any colouring $c:[\kappa]^2\rightarrow\kappa$ satisfying that for all $\theta \leq \beta<\kappa$ and $j<\theta$, 
$$c(\eta^j_\beta,\beta)=g_{e_\beta(j)}(\eta_\beta^j).$$

To see that $c$ witnesses $\onto(\mathcal A,J^{\bd}[\kappa],\kappa)$, let $A\in\mathcal A$ and $B\in[\kappa]^\kappa$ be arbitrary.
Towards a contradiction, suppose that for every $\eta\in A$, $c[\{\eta\}\circledast B]\neq\kappa$.
Define $g:A\rightarrow\kappa$ via $g(\eta):=\min(\kappa\setminus c[\{\eta\}\circledast B])$.
Pick $\alpha<\kappa$ such that $g_\alpha\s g$, and then let $\beta\in B$ be above $\alpha$ and $\theta$. Let $j<\theta$ be such that $e_\beta(j)=\alpha$.
Write $\eta:=\eta_\beta^j$. Then $\eta\in\dom(g_\alpha)\s\dom(g)=A$ and $$c(\eta,\beta)=g_{e_\beta(j)}(\eta^j_\beta)=g_\alpha(\eta)=g(\eta),$$
contradicting the fact that $g(\eta)\notin c[\{\eta\}\circledast B]$.
\end{proof}

\begin{lemma}\label{lemma47}  Suppose $S$ is a stationary subset of a regular uncountable cardinal $\kappa$.
\begin{enumerate}[(1)]
\item If $\diamondsuit^*(S)$ holds, then so does $\onto(\ns_\kappa\restriction S,\kappa)$;
\item If $\diamondsuit(S)$ holds, then so does $\onto^-(S,\kappa)$.
\end{enumerate}
\end{lemma}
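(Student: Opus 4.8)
The plan is to prove both clauses by one scheme: use the relevant guessing sequence to anticipate the hypothetical ``least missing colour'' function attached to a counterexample, and then arrange that the colouring literally plays this function back below the guessed point. First I would fix a pairing bijection $p:\kappa\leftrightarrow\kappa\times\kappa$ together with the club $C:=\{\beta<\kappa\mid p[\beta]=\beta\times\beta\}$ of its closure points (all of whose members are infinite limits). Then any $g:\kappa\rightarrow\kappa$ is coded by $\hat g:=\{p^{-1}(\eta,g(\eta))\mid\eta<\kappa\}\s\kappa$, and for every $\beta$ lying in $C$ and in the club $C_g:=\{\beta<\kappa\mid g[\beta]\s\beta\}$, the set $\hat g\cap\beta$ codes precisely the graph of $g\restriction\beta$ as a total map $\beta\to\beta$.

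For Clause~(1), I would take a $\diamondsuit^*(S)$-sequence $\langle\mathcal A_\beta\mid\beta\in S\rangle$ with $|\mathcal A_\beta|\le|\beta|$ and $\mathcal A_\beta\s\mathcal P(\beta)$, decode from each $X\in\mathcal A_\beta$ the partial map whose graph is read off $X$ via $p$, and enumerate those decoded maps that happen to be total functions $\beta\to\beta$ as $\langle h^\beta_j\mid j<o_\beta\rangle$ with $o_\beta\le|\beta|$. Recursively I assign to each $j$ a fresh point $\eta^j_\beta\in\beta\setminus\{\eta^i_\beta\mid i<j\}$ and set $c(\eta^j_\beta,\beta):=h^\beta_j(\eta^j_\beta)$, colouring $0$ elsewhere. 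To verify this, suppose toward a contradiction that some stationary $B\s S$ has $c[\{\eta\}\circledast B]\ne\kappa$ for all $\eta<\kappa$, and put $g(\eta):=\min(\kappa\setminus c[\{\eta\}\circledast B])$. By $\diamondsuit^*(S)$ there is a club $E$ with $S\cap E\s\{\beta\in S\mid\hat g\cap\beta\in\mathcal A_\beta\}$; as $B$ is stationary and $E\cap C\cap C_g$ is club, I fix $\beta\in B\cap E\cap C\cap C_g$. Then $g\restriction\beta$ is total $\beta\to\beta$ and lies in the enumeration, say $g\restriction\beta=h^\beta_j$, so $c(\eta^j_\beta,\beta)=g(\eta^j_\beta)$ with $\eta^j_\beta<\beta\in B$, placing $g(\eta^j_\beta)$ into $c[\{\eta^j_\beta\}\circledast B]$ and contradicting its choice.

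For Clause~(2), I would instead use a $\diamondsuit(S)$-sequence $\langle A_\beta\mid\beta\in S\rangle$ and let $c$ read back a single guessed function: decode from $A_\beta$ the partial map $\bar g_\beta$ and set $c(\eta,\beta):=\bar g_\beta(\eta)$ where defined (and $0$ otherwise). Given a club $D$ and a regressive $f:S\cap D\rightarrow\kappa$, suppose toward a contradiction that no pair $(\eta_0,\eta_1)$ works; in particular, writing $F_\eta:=\{\beta\in S\cap D\mid f(\beta)=\eta\}$, we have $c[\{\eta\}\circledast F_\eta]\ne\kappa$ for every $\eta$, so $g(\eta):=\min(\kappa\setminus c[\{\eta\}\circledast F_\eta])$ is well defined. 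Now $\{\beta\in S\mid\hat g\cap\beta=A_\beta\}$ is stationary, hence meets the club $D\cap C\cap C_g$; for such a $\beta$ we have $\bar g_\beta=g\restriction\beta$, and taking $\eta:=f(\beta)<\beta$ gives $c(\eta,\beta)=g(\eta)$, while $\beta\in F_{f(\beta)}=F_\eta$ forces $g(\eta)\in c[\{\eta\}\circledast F_\eta]$, the desired contradiction.

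I expect two points to be the delicate ones. In Clause~(1), because $\diamondsuit^*$ guesses an entire family $\mathcal A_\beta$ rather than a single set, the colouring must realise \emph{all} of the guessed total candidates at $\beta$ simultaneously; reserving pairwise distinct points $\eta^j_\beta$ is what permits this, and the arithmetic fact that $|\beta|$ is a cardinal (so $|j|<|\beta|$ for $j<o_\beta\le|\beta|$, leaving a fresh point in $\beta$) is exactly what keeps the recursion going. The second point is the reason the two clauses need different hypotheses: the guessing set in Clause~(1) is only a club \emph{relative to $S$}, so it must be intersected with an externally given stationary $B$, which succeeds only because $\diamondsuit^*$ yields a club; in Clause~(2) the relevant large set $F_\eta$ is absorbed into the adversary's club $D$, so the merely stationary guessing of $\diamondsuit$ suffices, and — pleasantly — neither $D$ nor $f$ needs to be coded, since the verification uses only that the guessed $\beta$ lies in $S\cap D$ with $f(\beta)<\beta$.
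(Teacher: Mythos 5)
Your proof is correct and follows essentially the same route as the paper's: derive the colouring from the guessing sequence so that at each guessed $\beta$ the value $c(\eta,\beta)$ plays back the guessed function at $\eta$, then diagonalize against the least-missing-colour function $g(\eta):=\min(\kappa\setminus c[\{\eta\}\circledast\cdot])$. The only cosmetic difference is in Clause~(1), where the paper reindexes the $\diamondsuit^*$ family as a matrix $\langle g^\eta_\beta:\beta\rightarrow\beta\mid \eta<\beta\rangle$ and simply sets $c(\eta,\beta):=g^\eta_\beta(\eta)$, whereas you reserve a distinct evaluation point $\eta^j_\beta$ for each guessed function; both realisations work.
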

\begin{proof} (1) Assuming $\diamondsuit^*(S)$, we may fix a matrix $\langle g_\beta^\eta: \beta \rightarrow \beta\mid \beta\in S,\eta<\beta\rangle$
with the property that, for every function $g:\kappa\rightarrow\kappa$, there are club many $\beta\in S$ for which,
for some $\eta<\beta$, $g\restriction\beta=g^\eta_\beta$.
Pick $c:[\kappa]^2\rightarrow\kappa$ that satisfies  $c(\eta,\beta)=g^\eta_\beta(\eta)$ for all $\beta\in S$ and $\eta<\beta$.

We claim that $c$ witnesses $\onto(\ns_\kappa\restriction S,\kappa)$.
So, towards a contradiction, suppose that $B\s S$ is a stationary set such that, for every $\eta<\kappa$, $c[\{\eta\}\circledast B]\neq\kappa$.
Define $g:\kappa\rightarrow\kappa$ via $g(\eta):=\min(\kappa\setminus c[\{\eta\}\circledast B])$. Now, pick $\beta\in B$ and $\eta<\beta$ such that $g\restriction\beta=g^\eta_\beta$.
Then $c(\eta,\beta)=g^\eta_\beta(\eta)=g(\eta)$, contradicting the fact that $g(\eta)\notin c[\{\eta\}\circledast B]$.

(2) Assuming $\diamondsuit(S)$, we may fix a sequence of functions $\langle g_\beta: \beta \rightarrow\beta\mid \beta\in S\rangle$
with the property that, for every function $g:\kappa\rightarrow\kappa$, there are stationarily many $\beta \in S$ with $g_\beta=g\restriction\beta$.
Pick $c:[\kappa]^2\rightarrow\kappa$ that satisfies  $c(\eta,\beta)=g_\beta(\eta)$ for all $\beta\in S$ and $\eta\in\dom(g_\beta)$. 
We claim that $c$ witnesses a strong form of $\onto^-(S,\kappa)$ in which $\eta_0=\eta_1$.

Towards a contradiction, suppose that we are given a club $D\s\kappa$ and a regressive map $f:S\cap D\rightarrow \kappa$ 
such that, for any $\eta< \kappa$, 
$$c[\{\eta\}\circledast\{\beta\in S\cap D\mid f(\beta)=\eta\}]\neq\kappa.$$
Define a function $g:\kappa\rightarrow\kappa$ via 
$$g(\eta):=\min(\kappa\setminus c[\{\eta\}\circledast\{\beta\in S\cap D\mid f(\beta)=\eta\}]).$$
Now, pick an infinite $\beta\in S\cap D$ such that $g_\beta=g\restriction\beta$. Set $\eta:=f(\beta)$.
Then $c(\eta,\beta)=g_\beta(\eta)=g(\eta)$, contradicting the fact that 
\[g(\eta)\notin c[\{\eta\}\circledast\{\beta\in S\cap D\mid f(\beta)=\eta\}].\qedhere\]
\end{proof}

\begin{lemma} \label{prop47}
Suppose that $\kappa=\kappa^{<\kappa}$ is a limit cardinal.
For any colouring $c:[\kappa]^2\rightarrow\kappa$,
there exists a corresponding colouring $d:[\kappa]^2\rightarrow\kappa$ satisfying the following.
\begin{enumerate}[(1)]
\item For any ideal $J$ over $\kappa$, 
if $c$ witnesses	$\onto(J,\kappa)$, then $d$ witnesses $\onto([\kappa]^\kappa,\allowbreak J,\kappa)$;
\item For any $\kappa$-complete ideal $J$ over $\kappa$, 
if $c$ witnesses	$\onto^+(J,\kappa)$, then $d$ witnesses $\onto^+([\kappa]^\kappa,J,\kappa)$.
\end{enumerate}
\end{lemma}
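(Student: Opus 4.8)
The plan is to produce a single transform $c\mapsto d$ and to verify both clauses by the same argument, the only difference being that in Clause~(2) one tracks $J$-positivity of each colour-class in place of mere surjectivity. Throughout I would use that $\kappa=\kappa^{<\kappa}$ forces $\kappa$ to be regular, so that every $A\in[\kappa]^\kappa$ is in fact cofinal in $\kappa$ and has order type exactly $\kappa$; write $\langle a_\xi\mid\xi<\kappa\rangle$ for its increasing enumeration. The guiding idea is a change of variables: I want $d$ to emulate, at the element $a_\xi\in A$, the behaviour of $c$ at the abstract first coordinate $\xi$, so that any coordinate $\xi_0$ witnessing $\onto(J,\kappa)$ for $c$ (on $B$) is converted into the element $a_{\xi_0}\in A$ witnessing $\onto([\kappa]^\kappa,J,\kappa)$ for $d$. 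Since $c$ already delivers, for each $B\in J^+$, some $\xi_0$ with $c[\{\xi_0\}\circledast B]=\kappa$ (resp.\ with all $\kappa$ colour-classes $J$-positive), the entire burden is to realise this transport \emph{by a single $A$-independent colouring} $d$.

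Concretely I would fix, using $\kappa=\kappa^{<\kappa}$, a bijection $\pi\colon\kappa\leftrightarrow\kappa\times{}^{<\kappa}\kappa$, write $\pi(\eta)=(\xi_\eta,t_\eta)$, and arrange the standard coherence that on a club of $\delta<\kappa$ the map $\pi$ restricts to a bijection between $\delta$ and $\delta\times{}^{<\delta}\delta$. Here $t_\eta$ is read as a \emph{candidate initial enumeration} of $A$ below $\eta$ and $\xi_\eta$ as the abstract coordinate to feed into $c$. I would then set $d(\eta,\beta):=c(\xi_\eta,\beta)$ whenever $t_\eta$ is a strictly increasing sequence with $\sup(\im(t_\eta))<\eta$ and $\operatorname{lh}(t_\eta)$ fits $\xi_\eta$ in the bookkeeping, and $d(\eta,\beta):=0$ otherwise. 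The key structural point is that every row of $d$ is literally a row of $c$ (possibly the trivial one), so each colour-class $\{\beta\mid d(\eta,\beta)=\tau\}$ is a colour-class of $c$. This is exactly what lets Clause~(2) run verbatim once Clause~(1) is in place, with $\kappa$-completeness of $J$ used only to discard the boundedly many $\beta\le\eta$ and to pass to $J$-positive subsets, in the spirit of Proposition~\ref{prop219}.

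The substantive step is the verification, and this is where the main obstacle lies. Unwinding the definition, $d$ witnesses $\onto([\kappa]^\kappa,J,\kappa)$ exactly when, for every $B\in J^+$, the set $G_d(B):=\{\eta\mid d[\{\eta\}\circledast B]=\kappa\}$ meets every $A\in[\kappa]^\kappa$, i.e.\ $|\kappa\setminus G_d(B)|<\kappa$. \textbf{The crux is therefore to arrange that this ``good set'' is co-small for every $B$}, and a crude count shows the naive reading fails: demanding that the code $\pi(\eta)$ match a fully specified good index together with a fixed initial segment pins $\eta$ down to a single ordinal, which an arbitrary cofinal $A$ may avoid. I expect to overcome this exactly as in the closure arguments behind Lemmas~\ref{sticklemma} and \ref{lemma47}: rather than matching $\xi_0$ against a prescribed code, I would let the code built into $\eta$ \emph{compute} the position of $\eta$ inside $A$ from its guessed initial segment, pass to the club $E$ of coherent closure points $\delta$ (where $\pi$ and the enumeration of $A$ cohere and $A\cap\delta$ has order type $\delta$), and then run a pressing-down/diagonalisation on $E$ against the colour that $d$ would otherwise miss, using $\kappa=\kappa^{<\kappa}$ to guarantee enough usable codes inside $A$. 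This reduces the global, cofinal matching to a bounded one that $\kappa=\kappa^{<\kappa}$ can supply; in Clause~(2) the very same diagonalisation is carried out modulo $J$, intersecting the fewer-than-$\kappa$ relevant clubs and positive sets by $\kappa$-completeness. I anticipate that making this closure argument genuinely $A$-uniform — so that it produces a good $\eta$ for \emph{every} cofinal $A$ rather than merely for ``nice'' ones — will be the delicate point, and it is precisely the reason the hypothesis $\kappa=\kappa^{<\kappa}$ (and not merely $\onto(J,\kappa)$) is needed.
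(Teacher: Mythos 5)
There is a genuine gap, and it sits exactly where you flag ``the crux'': the architecture you chose cannot be repaired by bookkeeping. If $d(\eta,\beta)=c(\xi_\eta,\beta)$ with $\xi_\eta$ computed from $\eta$ alone (and $0$ on the exceptional rows), then for each fixed $\xi$ the set $S_\xi:=\{\eta<\kappa\mid \xi_\eta=\xi\}$ is determined in advance by $\pi$, independently of $A$ and $B$, and under your bijection $\pi:\kappa\leftrightarrow\kappa\times{}^{<\kappa}\kappa$ each $S_\xi$ has size $\kappa^{<\kappa}=\kappa$. Now take $B\in J^+$ and any $\xi$ with $c[\{\xi\}\circledast B]\cup\{0\}\neq\kappa$; the hypothesis $\onto(J,\kappa)$ only supplies \emph{one} good row per $B$, not that all rows are good, so such a $\xi$ is unavoidable for a general witness $c$. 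Setting $A:=S_\xi\in[\kappa]^\kappa$, every row $d(\eta,\cdot)$ with $\eta\in A$ has $d[\{\eta\}\circledast B]\subseteq c[\{\xi\}\circledast B]\cup\{0\}\neq\kappa$, so $d$ fails on the pair $(A,B)$. The obstruction is informational: which row of $c$ works depends on $B$, and no decoding of $\eta$ --- computing the position of $\eta$ inside $A$, closure points, pressing down --- can recover $B$-dependent data. Your observation that every colour-class of $d$ is a colour-class of $c$ is therefore not a feature but the fatal flaw, and the sketched fix does not touch it.

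The paper's proof routes the $B$-dependent information through the \emph{columns} instead. One fixes cardinals $\kappa_\eta<\kappa$ with $\sum_{\zeta<\eta}\kappa_\zeta<\kappa_\eta$, enumerates (using $\kappa^{<\kappa}=\kappa$ and $\kappa$ a limit) all functions with domain in $[\kappa]^{\kappa_\eta}$ as $\langle\phi_\eta^\tau\mid\tau<\kappa\rangle$, fixes a surjection $\sigma:\kappa\rightarrow\kappa$ with cofinal fibers, sets $\psi_\eta^\beta:=\phi_\eta^{\sigma(c(\eta,\beta))}$, and lets $d(\alpha,\beta):=\psi_\eta^\beta(\alpha)$ for the \emph{least} $\eta$ with $\alpha\in\dom(\psi_\eta^\beta)$ and $\psi_\eta^\beta(\alpha)<\beta$. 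Given $A$, $B$ and a putative avoiding function $f:A\rightarrow\kappa$, the single good row $\eta$ of $c$ yields a column $\beta\in B$ whose colour $c(\eta,\beta)$ \emph{codes} $f\restriction A'$ for a prechosen $A'\in[A]^{\kappa_\eta}$; the inequality $\sum_{\zeta<\eta}\kappa_\zeta<\kappa_\eta$ then supplies $\alpha\in A'$ avoiding $\dom(\psi_\zeta^\beta)$ for all $\zeta<\eta$, whence $d(\alpha,\beta)=f(\alpha)$, a contradiction. Note that the row index of $d$ is a genuine element $\alpha\in A$, and the Sierpi\'nski-style ``no avoiding function'' formulation replaces your attempt to show directly that the good set is co-small. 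The rest of your framing --- one transform serving both clauses, with $\kappa$-completeness used in Clause~(2) to keep the relevant colour-classes $J$-positive --- is consistent with the paper, but it needs this mechanism underneath it.
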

\begin{proof} Fix a sequence $\langle \kappa_\eta\mid \eta<\kappa\rangle$ 
of cardinals such that, for every $\eta<\kappa$, $(\sum_{\zeta<\eta}\kappa_\zeta)<\kappa_\eta<\kappa$.
For every $\eta<\kappa$, let $\Phi_\eta:=\bigcup\{{}^{A}\kappa\mid A\in [\kappa]^{\kappa_\eta}\}$,
and then fix an injective enumeration $\langle \phi_\eta^\tau\mid\tau<\kappa\rangle$ of $\Phi_\eta$.
Fix a surjection $\sigma:\kappa\rightarrow\kappa$ such that the preimage of any singleton is cofinal in $\kappa$.

Let $c:[\kappa]^2\rightarrow\kappa$ be any colouring.
For all $\eta<\beta<\kappa$, denote $\psi_\eta^\beta:=\phi_\eta^{\sigma(c(\eta,\beta))}$.
Then, pick an upper-regressive map $d:[\kappa]^2\rightarrow\kappa$ satisfying that,
for all $\alpha<\beta<\kappa$, if there exists an ordinal $\eta<\kappa$ such that $\alpha\in\dom(\psi^\beta_\eta)$ and
$\psi_\eta^\beta(\alpha)<\beta$, then $d(\alpha,\beta)=\psi_\eta^\beta(\alpha)$ for the least such $\eta$.

(1) Suppose that $J$ is an ideal over $\kappa$, and that $c$ witnesses $\onto(J,\kappa)$.
To see that $d$ witnesses $\onto([\kappa]^\kappa,J,\kappa)$,
fix arbitrary $A\in[\kappa]^\kappa$ and $B\in J^+$.
Towards a contradiction, suppose that there exists a function $f:A\rightarrow\kappa$
such that $f(\alpha)\notin d[\{\alpha\}\circledast B]$ for all $\alpha\in A$.
As $B\in J^+$, let us fix $\eta<\kappa$ with $c[\{\eta\}\circledast B]=\kappa$.
Pick $A'\in[A]^{\kappa_\eta}$ and then find $\tau<\kappa$ such that $\phi_\eta^\tau=f\restriction A'$.
Let $\epsilon:=\max\{\eta,\sup(A'),\sup(f[A'])\}+1$.
By the choice of $\eta$, $c[\{\eta\}\times (B\setminus\epsilon)]$ is co-bounded in $\kappa$,
so we may fix $\beta\in B\setminus\epsilon$ with $\sigma(c(\eta,\beta))=\tau$.

As $|\bigcup_{\zeta<\eta}\dom(\psi_\zeta^\beta)|\le(\sum_{\zeta<\eta}\kappa_\zeta)<\kappa_\eta=|A'|$,
it follows that we may pick $\alpha\in A'\setminus\bigcup_{\zeta<\eta}\dom(\psi_\zeta^\beta)$,
so that $\eta$ is the least ordinal to satisfy $\alpha\in\dom(\psi^\beta_\eta)$.
We have that $\psi_\eta^\beta(\alpha)=\phi_\eta^{\sigma(c(\eta,\beta))}(\alpha)=\phi_\eta^\tau(\alpha)=f(\alpha)<\epsilon\le\beta$,
and hence $d(\alpha,\beta)=f(\alpha)$, contradicting the choice of $f$.

(2) Suppose that $J$ is a $\kappa$-complete ideal over $\kappa$, and that $c$ witnesses $\onto^+(J,\kappa)$.
To see that $d$ witnesses $\onto^+([\kappa]^\kappa,J,\kappa)$,
fix arbitrary $A\in[\kappa]^\kappa$ and $B\in J^+$.
Towards a contradiction, suppose that there exists a function $f:A\rightarrow\kappa$
such that, for every $\alpha\in A$, $\{\beta\in B\setminus(\alpha+1)\mid d(\alpha,\beta)=f(\alpha)\}\in J$.
By the hypothesis on $c$, we may fix $\eta<\kappa$ such that $\{\beta\in B\setminus(\eta+1)\mid c(\eta,\beta)=\tau\}\in J^+$ for all $\tau<\kappa$.

Pick $A'\in[A]^{\kappa_\eta}$ and then find $\tau<\kappa$ such that $\phi_\eta^\tau=f\restriction A'$.
Let $\epsilon:=\max\{\eta,\sup(A'),\sup(f[A'])\}+1$.
By the choice of $\eta$, $B':=\{\beta\in B\setminus\epsilon\mid \sigma(c(\eta,\beta))=\tau\}$ is in $J^+$.
As in the previous case, for each $\beta\in B'$, 
we may pick $\alpha_\beta\in A'\setminus\bigcup_{\zeta<\eta}\dom(\psi_\zeta^\beta)$,
so that $\eta$ is the least ordinal to satisfy $\alpha_\beta\in\dom(\psi^\beta_\eta)$.
As $|A'|<\kappa$ and $J$ is $\kappa$-complete, there exists some $\alpha\in A'$ for which 
$B'':=\{\beta\in B'\mid \alpha_\beta=\alpha\}$ is in $J^+$.
Now, for every $\beta\in B''$, we have that $\psi_\eta^\beta(\alpha)=\phi_\eta^{\sigma(c(\eta,\beta))}(\alpha)=\phi_\eta^\tau(\alpha)=f(\alpha)<\epsilon\le\beta$,
and hence $d(\alpha,\beta)=f(\alpha)$, contradicting the choice of $f$.
\end{proof}

\begin{cor}\label{cor811} Suppose that $\kappa$ is a regular uncountable cardinal.
\begin{itemize}
\item If there exists a stationary $S\s\kappa$ that does not reflect at regulars
and $\diamondsuit(S)$ holds, then $\onto([\kappa]^\kappa,J^\bd[\kappa],\kappa)$ holds.
In particular:
\item If $\kappa$ is a non-Mahlo cardinal
and $\diamondsuit(\kappa)$ holds, then so does $\onto([\kappa]^\kappa,J^\bd[\kappa],\kappa)$.
\end{itemize}
\end{cor}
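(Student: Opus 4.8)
The plan is to reduce the statement to the plain principle $\onto(J^\bd[\kappa],\kappa)$ and then boost the family parameter from the trivial $\{\kappa\}$ up to $[\kappa]^\kappa$, exploiting that $\diamondsuit(S)$ automatically delivers enough cardinal arithmetic. The first thing I would record is the standard fact that a $\diamondsuit(S)$-sequence guesses every \emph{bounded} subset of $\kappa$ on a cofinal set of coordinates, so that $2^\lambda\le\kappa$ for all $\lambda<\kappa$; hence $2^{<\kappa}=\kappa$ and therefore $\kappa^{<\kappa}=\kappa$. With this in hand I would split according to whether $\kappa$ is a successor or a limit cardinal.

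In the successor case $\kappa=\mu^+$, the identity $\kappa^{<\kappa}=\kappa$ just proved is exactly the hypothesis of Lemma~\ref{sticklemma}(1), which yields $\onto([\kappa]^\mu,J^\bd[\kappa],\kappa)$. Since every $A\in[\kappa]^\kappa$ contains a subset of size $\mu$, the very same colouring witnesses $\onto([\kappa]^\kappa,J^\bd[\kappa],\kappa)$: this is the trivial monotonicity of $\onto(\mathcal A,\cdots)$ under passing to a family all of whose members are \emph{refined} by members of the original family. Note that the non-reflection hypothesis is not even needed here. In the limit case I would instead run the argument underlying Theorem~C(4). Because $S$ does not reflect at regulars, $\Tr(S)\cap\reg(\kappa)$ is nonstationary, so Corollary~\ref{hajnaltocolouring2} furnishes a club $C\s\kappa$ with $\ubd^*(J^\bd[\kappa],\{S\cap C\})$. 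Writing $S':=S\cap C$ (still stationary) and observing that $\{S'\}\s(\ns_\kappa\restriction S')^*$, Remark~\ref{remark25a}(2) upgrades this to $\ubd^*(J^\bd[\kappa],(\ns_\kappa\restriction S')^*)$. Meanwhile $\diamondsuit(S)$ restricts to $\diamondsuit(S')$, so Lemma~\ref{lemma47}(2) gives $\onto^-(S',\kappa)$. Feeding these two inputs into Lemma~\ref{minustoplus} (with $J=J^\bd[\kappa]$ and $\theta=\kappa$) produces $\onto(J^\bd[\kappa],\kappa)$. Finally, as $\kappa$ is a limit cardinal with $\kappa=\kappa^{<\kappa}$, Lemma~\ref{prop47}(1) boosts this to $\onto([\kappa]^\kappa,J^\bd[\kappa],\kappa)$, completing the first bullet.

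For the second bullet I would deduce it from the first by taking $S:=\kappa$. Indeed, if $\kappa$ is non-Mahlo then $\reg(\kappa)$ is nonstationary in $\kappa$, and since $\Tr(\kappa)\cap\reg(\kappa)\s\reg(\kappa)$ is then nonstationary as well, the set $S=\kappa$ does not reflect at regulars; as $\diamondsuit(\kappa)=\diamondsuit(S)$, the first bullet applies verbatim.

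The genuinely substantive step — and the one I expect to be the main obstacle to isolate cleanly — is the conversion of the diamond-given $\onto^-(S',\kappa)$ into the ideal-indexed $\onto(J^\bd[\kappa],\kappa)$, which is precisely where the non-reflection hypothesis enters, through Hajnal's machinery packaged as the availability of $\ubd^*(J^\bd[\kappa],\cdot)$ with support $S\cap C$. The secondary delicate point is the $[\kappa]^\kappa$-boost: it forces the successor/limit dichotomy, since Lemma~\ref{prop47} requires $\kappa$ to be a limit cardinal while the successor case must instead be obtained directly from $\kappa^{<\kappa}=\kappa$ via Lemma~\ref{sticklemma}.
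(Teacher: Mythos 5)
Your proof is correct and follows essentially the same route as the paper: the successor case via Lemma~\ref{sticklemma}, and the limit case by combining $\onto^-(\cdot,\kappa)$ from $\diamondsuit$ (Lemma~\ref{lemma47}(2)) with $\ubd^*(J^\bd[\kappa],\{S\cap C\})$ from non-reflection (Corollary~\ref{hajnaltocolouring2}) through Lemma~\ref{minustoplus}, then boosting to $[\kappa]^\kappa$ with Lemma~\ref{prop47}. The only cosmetic divergences are that in the successor case the paper invokes clause~(2) of Lemma~\ref{sticklemma} (since $\diamondsuit(S)$ implies $\stick(\kappa)$) rather than your clause~(1)-plus-monotonicity argument, and that the paper applies Lemma~\ref{minustoplus} directly with $S$ (using $S\cap C\in(\ns_\kappa\restriction S)^*$) instead of passing to $S'=S\cap C$; both variants are sound.
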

\begin{proof} Suppose that $S\s\kappa$ is stationary and $\diamondsuit(S)$ holds. 
In particular, if $\kappa$ is a successor cardinal,
then by Lemma~\ref{sticklemma}, $\onto([\kappa]^\kappa,J^{\bd}[\kappa],\kappa)$ holds, and we are done.
From now on, suppose that $\kappa$ is a limit cardinal,
so, by Lemma~\ref{prop47}, it suffices to prove that $\onto(J^{\bd}[\kappa],\kappa)$ holds.

By Lemma~\ref{lemma47}(2), $\onto^-(S,\kappa)$ holds.
Suppose that $S$ does not reflect at regulars.
Then, by Corollary~\ref{hajnaltocolouring2}, $\ubd^*(J^\bd[\kappa],\{ S\cap C\})$ holds for some club $C\s\kappa$.
Finally, by Lemma~\ref{minustoplus}, $\onto(J^{\bd}[\kappa],\kappa)$ holds, and we are done.
\end{proof}
\begin{remark} The preceding finding plays a key role in the proof of one clause of \cite[Theorem~B]{paper45}.
\end{remark}

For a list of sufficient conditions for the hypothesis of the following lemma to hold, 
the reader may consult \cite[Theorem 6.1]{paper23}.
\begin{lemma}\label{pbullet} Suppose that $\kappa$ is a regular uncountable cardinal.
If $\p^\bullet(\kappa,\kappa^+,{\sq},1)$ holds, 
then so does $\onto([\kappa]^\kappa,J^{\bd}[\kappa],\kappa)$.
\end{lemma}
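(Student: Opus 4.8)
The plan is to construct the witnessing colouring directly from a walks-based argument, generalising the proof of Lemma~\ref{proxyapp1}, with $\p^\bullet(\kappa,\kappa^+,{\sq},1)$ supplying exactly the extra guessing needed to pin down a single row. First I would unpack the principle to obtain a coherent sequence $\langle\mathcal C_\beta\mid\beta<\kappa\rangle$ satisfying Clauses~(i) and (ii) from the proof of Lemma~\ref{proxyapp1}, now upgraded to the \emph{simultaneous} guessing feature characteristic of the ${\bullet}$-variant: for every sequence $\langle\Omega_\tau\mid\tau<\kappa\rangle$ of cofinal subsets of $\kappa$ there is $\delta\in\acc(\kappa)$ with $\sup(\nacc(C)\cap\Omega_\tau)=\delta$ for all $C\in\mathcal C_\delta$ and all $\tau<\delta$ (see \cite[Theorem~6.1]{paper23} and \cite{TodWalks}). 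Fixing a $C$-sequence $\vec C=\langle C_\beta\mid\beta<\kappa\rangle$ with $C_\beta\in\mathcal C_\beta$ and setting up walks along $\vec C$ as in \cite[\S4.2]{paper34}, I would write $\gamma(\eta,\beta):=\min(\im(\tr(\eta,\beta)))$ for the last step of the walk, so that $\eta\in C_{\gamma(\eta,\beta)}$.

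The contrast with Lemma~\ref{proxyapp1} is that there the realised colours form only a co-bounded set witnessed by a roving $\eta$, whereas here the full range $\kappa$ must appear on one row $\{\eta\}\circledast B$ with $\eta$ lying in a prescribed $A\in[\kappa]^\kappa$ (note $A$ is cofinal, as $|A|=\kappa=\cf(\kappa)$). Accordingly I would not output a nonaccumulation point but rather decode a prescribed ordinal: fixing a bijection $\pi:\kappa\leftrightarrow\kappa\times\kappa$, I would define an upper-regressive $c:[\kappa]^2\to\kappa$ so that the value $c(\eta,\beta)$ reads, through $\pi$ and the order-type position of $\eta$ inside the last-step level $C_{\gamma(\eta,\beta)}$, a target colour that can be steered by the choice of $\beta$. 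The guiding principle, as in the diamond-style arguments of Lemmas~\ref{sticklemma} and \ref{lemma47}, is to make $c$ \emph{anticipate} an arbitrary function: I would argue by contradiction, assuming $A,B\in[\kappa]^\kappa$ satisfy $c[\{\eta\}\circledast B]\neq\kappa$ for every $\eta\in A$, and setting $g(\eta):=\min(\kappa\setminus c[\{\eta\}\circledast B])$.

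The heart of the proof is then to defeat $g$. From $g$ and $B$ I would manufacture, for each potential value $v<\kappa$, a cofinal set $\Omega_v\s B$ consisting of those $\beta$ whose walk-data is configured to emit colour $v$ from some $\eta\in A$ with $g(\eta)=v$, and I would also feed $A$ itself into the guess. Applying the ${\bullet}$-guessing to the full sequence $\langle\Omega_v\mid v<\kappa\rangle$ yields a single level $\delta$ that captures all of these requirements at once; using the coherence Clause~(ii) to guarantee that the relevant walks from points of $A\cap\delta$ to points of $B$ above $\delta$ route through $\delta$ with the prescribed last step, I would locate $\eta\in A\cap\delta$ and, for the value $v:=g(\eta)$, a $\beta\in\Omega_v$ above $\eta$ with $c(\eta,\beta)=v=g(\eta)$, contradicting $g(\eta)\notin c[\{\eta\}\circledast B]$. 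The main obstacle is precisely the coordination in this last step: a single guessed $\delta$ must simultaneously realise \emph{every} value of the counterexample function $g$ on one fixed $\eta$, which is exactly what forces the use of the sequence-guessing $\p^\bullet$ rather than the single-set guessing $\p^-$ underlying Lemma~\ref{proxyapp1}; getting the colouring's decoding and the sets $\Omega_v$ to mesh with the walk's routing so that this one level suffices is where the real work lies.
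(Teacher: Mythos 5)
There is a genuine gap, and it starts with a misreading of what $\p^\bullet(\kappa,\kappa^+,{\sq},1)$ provides. The ${\bullet}$-variant is not a simultaneous club-guessing principle for $\kappa$-many cofinal sets; it is a diamond-style principle in which each level $C\in\mathcal C_\delta$ is a \emph{function} $C:\mathring C\rightarrow H_\kappa$, and the guessing clause says that for any $g:\kappa\rightarrow\kappa$ there is a $\delta$ such that cofinally many $\tau\in\nacc(\mathring C)$ satisfy $C(\tau)=g\restriction\tau$. The paper's colouring exploits exactly this: $c(\eta,\beta):=C_\gamma(\min(\mathring C_\gamma\setminus(\eta+1)))(\eta)$, so the colour emitted is the \emph{value of the guessed function at $\eta$}, and a single application of the guessing to the counterexample function $g$ finishes the proof. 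Your colouring instead decodes its value from a bijection $\pi$ and the order-type position of $\eta$ in $C_{\gamma(\eta,\beta)}$ --- purely combinatorial data fixed in advance --- and you then try to defeat $g$ by applying the guessing to sets $\Omega_v\s B$ of ``$\beta$'s configured to emit colour $v$ from some $\eta$ with $g(\eta)=v$''. This is circular: whether such $\beta$'s exist (let alone form cofinal sets) for the fixed colouring is precisely the statement being proved, and no amount of guessing which levels $\delta$ meet the $\Omega_v$'s can make the decoded value equal $g(\eta)$ unless the level itself stores information about $g$. That storage is the content of $\p^\bullet$ that your outline discards.

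Two further points. First, even granting your framework, the walk machinery determines $\eta$ as $\sup(\mathring C_\gamma\cap\tau)$ for the chosen $\tau$; there is no mechanism to force this $\eta$ into a prescribed $A\in[\kappa]^\kappa$. The paper avoids this entirely by first disposing of successor $\kappa$ via Lemma~\ref{sticklemma} (note $\p^\bullet$ implies $\kappa^{<\kappa}=\kappa$) and then invoking Lemma~\ref{prop47} to reduce $\onto([\kappa]^\kappa,J^{\bd}[\kappa],\kappa)$ to $\onto(J^{\bd}[\kappa],\kappa)$, where $\eta$ ranges freely over $\kappa$; you should do the same rather than absorb $A$ into the walks argument. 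Second, your claimed ``simultaneous guessing'' of a $\kappa$-sequence of cofinal sets does not obviously follow from clause~(iii) of the principle (the elementary-submodel trace $\tau=\mathcal M\cap\kappa$ lands in $\acc^+(\Omega_v)$, not in $\Omega_v$, for a merely cofinal $\Omega_v\in\mathcal M$), so even the tool you plan to lean on is not available in the form you state.
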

\begin{proof} According to \cite[\S5]{paper23}, $\p^\bullet(\kappa,\kappa^+,{\sq},1)$ 
provides us with a sequence $\langle\mathcal C_\beta\mid\beta<\kappa\rangle$ satisfying the following:
\begin{enumerate}
\item for every $\beta<\kappa$, $\mathcal C_\beta$ is a nonempty collection of functions 
$C:\mathring{C}\rightarrow  H_\kappa$ such that $\mathring{C}$ is a closed subset of $\beta$ with $\sup(\mathring C)=\sup(\beta)$;
\item for all $\beta<\kappa$, $C\in\mathcal C_\beta$ and $\alpha\in\acc(\mathring C)$, $C\restriction \alpha\in\mathcal C_{\alpha}$;
\item for all $\Omega\s H_\kappa$ and $p\in H_{\kappa^+}$, there exists $\delta\in\acc(\kappa)$
such that, for all $C\in\mathcal C_\delta$ and $\epsilon<\delta$,
there exists an elementary submodel $\mathcal M\prec H_{\kappa^+}$ with $\{\epsilon,p\}\in\mathcal M$ 
such that $\tau:=\mathcal M\cap\kappa$ is in $\nacc(\mathring C)$ and $C(\tau)=\mathcal M\cap\Omega$.
\end{enumerate}
For our purposes, it suffices to consider the following special case of (iii):
\begin{enumerate}
\item[(iii')] for every function $g: \kappa\rightarrow \kappa$, there exists $\delta\in\acc(\kappa)$
such that, for all $C\in\mathcal C_\delta$, 
$$\sup\{\tau\in \nacc(\mathring C)\mid g[\tau]\s\tau\ \&\ C(\tau)=g\restriction \tau\}=\delta.$$
\end{enumerate}

In particular, $\kappa^{<\kappa}=\kappa$.
So, by Lemma~\ref{sticklemma}, we may assume that $\kappa$ is a limit cardinal,
and then, by Lemma~\ref{prop47}, it suffices to verify that $\onto(J^{\bd}[\kappa],\kappa)$ holds.

Fix a sequence $\vec C=\langle C_\beta\mid \beta<\kappa\rangle$ such that, $C_\beta\in\mathcal C_\beta$ for all $\beta<\kappa$.
Similar to the proof of Lemma~\ref{proxyapp1}, we shall conduct walks on ordinals along 
$\vec{\mathring C}:=\langle \mathring C_\beta\mid \beta<\kappa\rangle$.

Define an upper-regressive colouring $c:[\kappa]^2\rightarrow\kappa$ as follows. Given $\eta<\beta<\kappa$,
let $\gamma:=\min(\im(\tr(\eta,\beta)))$
and then let  $c(\eta,\beta):=C_\gamma(\min(\mathring C_\gamma\setminus(\eta+1)))(\eta)$
provided that the latter is a well-defined ordinal less than $\beta$; otherwise, let $c(\eta,\beta):=0$.

Now, let $B\in[\kappa]^\kappa$ be arbitrary.
Towards a contradiction, suppose that there exists a function $g:\kappa\rightarrow\kappa$ such that,
for every $\eta<\kappa$, $g(\eta)\notin c[\{\eta\}\circledast B]$.
Fix $\delta\in\acc(\kappa)$ as in Clause~(iii').
Pick $\beta\in B$ above $\delta$,
and then let $$\varepsilon:=\sup(\delta \cap \{\sup(\mathring C_{\gamma}\cap \delta)\mid \gamma\in\im(\tr(\delta,\beta))\}).$$
As in the proof of Lemma~\ref{proxyapp1}, $\varepsilon<\delta$ and there are two cases to consider:

$\br$ If $\delta\in\nacc(\mathring C_{\min(\im(\tr(\delta,\beta))})$,
then,  for every $\eta$ with $\varepsilon<\eta<\delta$,
$\tr(\eta,\beta)=\tr(\delta,\beta){}^\smallfrown\tr(\eta,\delta)$.
In this case, pick a large enough $\tau\in\nacc(\mathring C_\delta)$ with $g[\tau]\s\tau$ and $C_\delta(\tau)=g\restriction \tau$ for which $\eta:=\sup(\mathring C_\delta\cap\tau)$ is $>\varepsilon$.
Then $\min(\im(\tr(\eta,\beta)))=\delta$, $g(\eta)<\tau<\beta$, and hence 
$c(\eta,\beta)=C_\delta(\min(\mathring C_\delta\setminus(\eta+1)))(\eta)=C_\delta(\tau)(\eta)=g(\eta)$.

$\br$ Otherwise, $\delta\in\acc(\mathring C_\gamma)$, for $\gamma:=\min(\im(\tr(\delta,\beta)))$,
and, for every $\eta$ with $\varepsilon<\eta<\delta$,
$\tr(\eta,\beta)=\tr(\gamma,\beta){}^\smallfrown\tr(\eta,\gamma)$.
As $\delta\in\acc(\mathring C_\gamma)$, $C_\gamma\restriction \delta$ is in $\mathcal C_\delta$,
and hence we may pick 	$\tau\in\nacc(\mathring C_\gamma\cap\delta)$ with $g[\tau]\s\tau$ and $C_\gamma(\tau)=g\restriction\tau$ for which $\eta:=\sup(\mathring C_\gamma\cap\tau)$ is greater than $\varepsilon$.
Then $\min(\im(\tr(\eta,\beta)))=\gamma$, $g(\eta)<\tau<\beta$, and hence 
$c(\eta,\beta)=C_\gamma(\min(\mathring C_\gamma\setminus(\eta+1)))(\eta)=C_\gamma(\tau)(\eta)=g(\eta)$.
\end{proof}

By the next lemma, for every limit cardinal $\kappa$, if $\kappa$ is a strong limit or if $\aleph_\kappa>\kappa$, 
then $\onto(\{\kappa\},J^{\bd}[\kappa^+],\kappa)$ implies $\kappa^+\nrightarrow[\kappa;\kappa^+]^2_{\kappa^+}$.

\begin{lemma}\label{lemma89} Suppose that $\kappa$ is an infinite cardinal for which there exists a sequence of cardinals 
$\langle \kappa_\eta\mid \eta<\kappa\rangle$ and a family $\mathcal A\s[\kappa]^{<\kappa}$ of size $\kappa$ such that,
for every $\eta<\kappa$:
\begin{itemize}
\item $(\sum_{\zeta<\eta}\kappa_\zeta)<\kappa_\eta<\kappa$;
\item for every $A\in[\kappa]^\kappa$, there exists $A'\in\mathcal A$ with $|A'\cap A|=|A'|=\kappa_\eta$.
\end{itemize}
Then:
\begin{enumerate}[(1)]
\item If $\kappa$ is regular and $\onto(J^{\bd}[\kappa],\kappa)$ holds, then so does $\kappa\nrightarrow[\kappa;\kappa]^2_\kappa$;
\item If $\onto(\{\kappa\},J^{\bd}[\kappa^+],\kappa)$ holds, then so does $\kappa^+\nrightarrow[\kappa;\kappa^+]^2_{\kappa^+}$.
\end{enumerate}
\end{lemma}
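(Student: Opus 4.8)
The plan is to handle both clauses by one device: feed a witness to the relevant $\onto$ principle into a colouring that uses the family $\mathcal A$ to \emph{spread} the single good row $\eta$ supplied by $\onto$ across an arbitrary $A$, exactly as in Lemma~\ref{prop47}, with the hypotheses on $\langle\kappa_\eta\rangle$ and $\mathcal A$ playing the role that $\kappa=\kappa^{<\kappa}$ plays there.

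For Clause~(1), I would fix $c:[\kappa]^2\to\kappa$ witnessing $\onto(J^\bd[\kappa],\kappa)$. For each $\eta<\kappa$ let $\Phi_\eta$ be the set of constant maps whose domain is a member of $\mathcal A$ of size $\kappa_\eta$ and whose value is $<\kappa$; since $|\mathcal A|=\kappa$ we get $|\Phi_\eta|=\kappa$, so fix an enumeration $\langle\phi_\eta^\tau\mid\tau<\kappa\rangle$. Fix also a surjection $\sigma:\kappa\to\kappa$ all of whose fibres have size $\kappa$, put $\psi_\eta^\beta:=\phi_\eta^{\sigma(c(\eta,\beta))}$, and define $d:[\kappa]^2\to\kappa$ by setting $d(\alpha,\beta):=\psi_\eta^\beta(\alpha)$ for the least $\eta$ with $\alpha\in\dom(\psi_\eta^\beta)$ (and $0$ if there is none). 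This is the device of Lemma~\ref{prop47}, except that the menu $\Phi_\eta$ now ranges only over the caught domains from $\mathcal A$ instead of over all small functions.

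To verify $\kappa\nrightarrow[\kappa;\kappa]^2_\kappa$, fix $A,B\in[\kappa]^\kappa$ and a target colour $t<\kappa$. Apply $\onto(J^\bd[\kappa],\kappa)$ to $B$ to get $\eta<\kappa$ with $c[\{\eta\}\circledast B]=\kappa$, and use the hypothesis to catch $A$: pick $A'\in\mathcal A$ with $|A'\cap A|=|A'|=\kappa_\eta$. The constant map with domain $A'$ and value $t$ is some $\phi_\eta^\tau$. As $c(\eta,\cdot)\restriction B$ is onto $\kappa$ and $|\sigma^{-1}(\tau)|=\kappa$, cofinally many $\beta\in B$ satisfy $\sigma(c(\eta,\beta))=\tau$; choose one above $\max\{\sup(A'),t\}$ (legitimate since $\kappa$ is regular, so $\sup(A')<\kappa$), whence $\psi_\eta^\beta$ is exactly that constant map. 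Because $\bigcup_{\zeta<\eta}\dom(\psi_\zeta^\beta)$ has size at most $\sum_{\zeta<\eta}\kappa_\zeta<\kappa_\eta=|A'\cap A|$, I may pick $\alpha\in(A'\cap A)\setminus\bigcup_{\zeta<\eta}\dom(\psi_\zeta^\beta)$; then $\eta$ is least with $\alpha\in\dom(\psi_\eta^\beta)$, so $d(\alpha,\beta)=t$ with $\alpha\in A$, $\beta\in B$, $\alpha<\beta$. As $t$ was arbitrary, $d[A\circledast B]=\kappa$.

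For Clause~(2) the plan is to combine this spreading with a stepping-up from $\kappa$ to $\kappa^+$ colours, in the spirit of Lemma~\ref{upbyone} and of Todor\v{c}evi\'{c}'s argument for the regular case recalled after Fact~\ref{sierpinski}. I would fix bijections $e_\beta:\kappa\leftrightarrow\beta$ for $\kappa\le\beta<\kappa^+$; for $\beta>\sup(A)$ these turn $A$ into $A^\beta:=e_\beta^{-1}[A]\in[\kappa]^\kappa$ and let a target $t<\kappa^+$ be named as $t=e_\beta(v)$ once $\beta>t$. From a witness $c:[\kappa^+]^2\to\kappa$ of $\onto(\{\kappa\},J^\bd[\kappa^+],\kappa)$ one builds $d$ by decoding, from $c(\eta,\beta)$ and $j=e_\beta^{-1}(\alpha)$, a value $v<\kappa$ exactly as in Clause~(1) and outputting $e_\beta(v)$. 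Since $c$ sees only rows $<\kappa$ whereas $A\subseteq\kappa^+$, the caught domain must now catch the column-dependent set $A^\beta$; the key new idea is that, as $|\mathcal A|=\kappa<\kappa^+$, one may first stabilise on a cofinal set of columns the single member of $\mathcal A$ catching $A^\beta$, and only then invoke $\onto$ on the stabilised column-set.

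I expect the main obstacle to be exactly this coordination: the row $\eta$ returned by $\onto$, the member of $\mathcal A$ realising the catch (whose size pins the level at which the least-$\eta$ bookkeeping runs), and the colour class on which the planted value is read off must be made simultaneously compatible, and the naive order of operations fails because two cofinal subsets of $B$ of size $\kappa^+$ need not meet. To overcome this I would first prove a strengthening of $\onto(\{\kappa\},J^\bd[\kappa^+],\kappa)$ — via a pressing-down argument over the tails $B\setminus\xi$, using $\cf(\kappa^+)=\kappa^+>\kappa$ — asserting that for every cofinal $B$ there is a \emph{single} row $\eta^*$ along which every colour is attained cofinally often; this lets one argue inside the stabilised column-set rather than inside all of $B$. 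Matching the catching level to $\eta^*$ is the remaining delicate point, which I would settle by fixing the level only after $\eta^*$ is known and carrying out the stabilisation of the caught domain compatibly with the cofinal colour classes of $\eta^*$.
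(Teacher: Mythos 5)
Your Clause~(1) is correct and is essentially the argument the paper has in mind (the paper only proves Clause~(2) in detail, remarking that the proof is similar to that of Lemma~\ref{prop47}): you cut the menu $\Phi_\eta$ of Lemma~\ref{prop47} down to constant functions supported on members of $\mathcal A$ of size $\kappa_\eta$, and the verification via $|\bigcup_{\zeta<\eta}\dom(\psi^\beta_\zeta)|\le\sum_{\zeta<\eta}\kappa_\zeta<\kappa_\eta=|A'\cap A|$ is exactly right.

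Clause~(2), however, has a genuine gap, and it is precisely the one you flag. Once you transport $A$ down to $\kappa$ column-by-column through the bijections $e_\beta$, both the member of $\mathcal A$ catching $A^\beta=e_\beta^{-1}[A]$ and the local name $v_\beta=e_\beta^{-1}(\gamma)$ of the target depend on $\beta$; after stabilising them on some $B^{**}\in[B]^{\kappa^+}$ you must produce $\beta\in B^{**}$ with $\sigma(c(\eta^*,\beta))$ equal to the one index $\tau^*$ coding the stabilised pair, and the colour class $\{\beta\in B\mid \sigma(c(\eta^*,\beta))=\tau^*\}$ and $B^{**}$ are two cofinal subsets of $\kappa^+$ with no reason to meet. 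Applying your single-row strengthening to $B^{**}$ instead reintroduces the circularity you yourself describe (the catching level depends on the row, which depends on the stabilised set, which depends on the level), and ``carrying out the stabilisation compatibly with the cofinal colour classes of $\eta^*$'' is a wish, not an argument. The paper dissolves the problem before it arises, in two moves: it first pushes $\mathcal A$ forward through the $e_\beta$'s \emph{once and for all} to obtain a family of size $\kappa^+$ of subsets of $\kappa^+$ catching every $A\in[\kappa^+]^\kappa$ at every level, so that the catch of the given $A$ is a single set $A'$ independent of the column; and it invokes Lemma~\ref{upbyone}(3) as a black box to upgrade $\onto(\{\kappa\},J^{\bd}[\kappa^+],\kappa)$ to $\onto(\{\kappa\},J^{\bd}[\kappa^+],\kappa^+)$. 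The column-by-column stabilisation you are groping for does occur, but inside the proof of Lemma~\ref{upbyone}(3), where it is over data (the partition $B=\bigcup_\gamma B^i_\gamma$ and the row working for each $B^i_\gamma$) determined by $B$ alone, not by $A$ or the prescribed colour. With $\kappa^+$ colours and a column-independent catch, your Clause~(1) argument then runs verbatim against a single target $\tau^*<\kappa^+$. So the missing ingredient is not a new trick but the reduction to Lemma~\ref{upbyone}(3); without it, your plan for Clause~(2) does not close.
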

\begin{proof} The proof is similar to that of Lemma~\ref{prop47}. For conciseness, we focus on proving Clause~(2).
First, note that the hypotheses imply that we may fix
$\mathcal A\s[\kappa^+]^{<\kappa}$ of size $\kappa^+$ such that,
for every $A\in[\kappa^+]^\kappa$ and every $\eta<\kappa$, there exists $A'\in\mathcal A$ with $|A'\cap A|=|A'|=\kappa_\eta$.
For every $\eta<\kappa^+$, let $\Phi_\eta:=\{A'\times\{\gamma\}\mid A'\in{\mathcal A}\cap[\kappa^+]^{\kappa_\eta},\gamma<\kappa^+\}$ and note that these are all constant functions with domains in $\mathcal A\cap[\kappa^+]^{\kappa_\eta}$.
Fix an injective enumeration $\langle \phi_\eta^\tau\mid\tau<\kappa^+\rangle$ of $\Phi_\eta$.
Fix a surjection $\sigma:\kappa^+\rightarrow\kappa^+$ such that the preimage of any singleton is cofinal in $\kappa^+$.
Now, assuming that $\onto(\{\kappa\},J^{\bd}[\kappa^+],\kappa)$ holds,
Lemma~\ref{upbyone}(3) yields a colouring $c:\kappa\times\kappa^+\rightarrow\kappa^+$ witnessing $\onto(\{\kappa\},J^{\bd}[\kappa^+],\kappa^+)$.
For every pair $(\eta,\beta)\in\kappa\times\kappa^+$, set $\psi_\eta^\beta:=\phi_\eta^{\sigma(c(\eta,\beta))}$.
Finally, define a colouring $d:[\kappa^+]^2\rightarrow\kappa^+$ by letting, for all $\alpha<\beta<\kappa^+$:
$$d(\alpha,\beta):=\begin{cases}
0,&\text{if }\alpha\notin\bigcup_{\zeta<\kappa}\dom(\psi_\zeta^\beta);\\
\psi_\eta^\beta(\alpha),&\text{if }\eta=\min\{\zeta<\kappa\mid \alpha\in\dom(\psi_\zeta^\beta)\}.
\end{cases}$$

To see that $d$ witnesses $\kappa^+\nrightarrow[\kappa;\kappa^+]^2_{\kappa^+}$,
fix arbitrary $A\in[\kappa^+]^\kappa$ and $B\in[\kappa^+]^{\kappa^+}$ and a prescribed colour $\gamma<\kappa^+$;
we shall find $(\alpha,\beta)\in A\circledast B$ for which $d(\alpha,\beta)=\gamma$.

As $B\notin J^{\bd}[\kappa^+]$, let us fix $\eta<\kappa$ with $c[\{\eta\}\circledast B]=\kappa^+$.
Pick $A'\in\mathcal A$ with $|A'\cap A|=|A'|=\kappa_\eta$.
Find $\tau<\kappa^+$ such that $\phi_\eta^\tau=A'\times\{\gamma\}$.
Let $\epsilon:=\sup(A')+\kappa$.
By the choice of $\eta$, $c[\{\eta\}\times (B\setminus\epsilon)]$ is co-bounded in $\kappa^+$,
so we may fix $\beta\in B\setminus\epsilon$ with $\sigma(c(\eta,\beta))=\tau$.

As $|\bigcup_{\zeta<\eta}\dom(\psi_\zeta^\beta)|\le(\sum_{\zeta<\eta}\kappa_\zeta)<\kappa_\eta=|A'\cap A|$,
it follows that we may pick $\alpha\in (A'\cap A)\setminus(\bigcup_{\zeta<\eta}\dom(\psi_\zeta^\beta))$,
so that $d(\alpha,\beta)=\psi_\eta^\beta(\alpha)$.
Altogether $\alpha\in\dom(\psi_\zeta^\beta)\cap A' \cap A\cap\beta$ and $$d(\alpha,\beta)=\psi_\eta^\beta(\alpha)=\phi_\eta^{\sigma(c(\eta,\beta))}(\alpha)=\phi_\eta^\tau(\alpha)=\gamma,$$ as sought.
\end{proof}

\section{Failures and consistent failures}\label{sectionfailures}
Our focus in this section is the study of when the colouring principles do not hold. 
Some of these results require the assumption of large cardinals, and this is justified by our results from Sections~\ref{sectionstronglyamenable} and \ref{sectionamenable}. 
We finish by showing that the $\ubd^{++}$ principle with the maximal number of colours always fails.

\smallskip

In contrast to Proposition~\ref{prop50}, we have:
\begin{cor}  $\onto(J^{\bd}[\aleph_0],\aleph_0)$ fails.
\end{cor}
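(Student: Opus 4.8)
The plan is to show that \emph{no} colouring can witness $\onto(J^{\bd}[\aleph_0],\aleph_0)$. Unravelling Definition~\ref{defonto} (with $\mathcal A=\{\aleph_0\}$, and noting that $(J^{\bd}[\aleph_0])^+$ is exactly the collection of infinite subsets of $\omega$), a colouring $c:[\omega]^2\to\omega$ would witness the principle iff for every infinite $B\s\omega$ there is some $\eta<\omega$ with $c[\{\eta\}\circledast B]=\omega$. So, fixing an arbitrary $c$, I will construct an infinite set $B\s\omega$ together with a sequence $\langle\gamma_\eta\mid\eta<\omega\rangle$ of colours such that, for every $\eta<\omega$, the colour $\gamma_\eta$ is omitted by the $\eta$-th row, i.e.\ $c(\eta,\beta)\neq\gamma_\eta$ for all $\beta\in B$ with $\beta>\eta$. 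This yields $c[\{\eta\}\circledast B]\neq\omega$ for every $\eta$, refuting the principle for $c$.

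The set $B=\{b_n\mid n<\omega\}$ and the colours $\gamma_n$ will be built by recursion on $n$, maintaining an infinite \emph{pool} $C_n\s\omega$ of admissible future candidates with the invariant that every $b\in C_n$ satisfies $c(\eta,b)\neq\gamma_\eta$ for all $\eta<n$. Start with $C_0:=\omega$. Given an infinite $C_n$, set $b_n:=\min(C_n)$; this already respects all earlier constraints. I then choose $\gamma_n$ subject to two requirements: (a) $\gamma_n\neq c(n,b_m)$ for each $m\le n$ with $b_m>n$, and (b) the set $\{b\in C_n\mid b>b_n,\ c(n,b)\neq\gamma_n\}$ is infinite; finally I put $C_{n+1}$ equal to that set, preserving both the invariant and the infinitude of the pool.

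The crux is that such a $\gamma_n$ always exists. For (b), observe that the fibres $\{b\in C_n\mid b>b_n,\ c(n,b)=\gamma\}$ partition an infinite set as $\gamma$ ranges over $\omega$, and at most one of them can be cofinite in it; hence for all but at most one value of $\gamma$ the complementary set is infinite, giving \emph{infinitely many} colours satisfying (b). Since (a) forbids only finitely many colours, a legal choice of $\gamma_n$ remains. The main obstacle — and the reason condition (a) is needed — is that the principle quantifies over \emph{all} $\eta<\omega$, not merely over $\eta\in B$: even after committing to avoid $\gamma_n$ on all future elements of the pool, finitely many of the already-chosen elements $b_0,\dots,b_n$ may exceed $n$ and could otherwise realise $\gamma_n$ in the $n$-th row. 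Condition (a) rules exactly this out, while every $b_m$ with $m>n$ lies in $C_{n+1}$ and hence avoids $\gamma_n$ automatically. Once the recursion is complete, for each $\eta<\omega$ every $b\in B$ above $\eta$ avoids $\gamma_\eta$ in row $\eta$, so $\gamma_\eta\notin c[\{\eta\}\circledast B]$; as $c$ was arbitrary, $\onto(J^{\bd}[\aleph_0],\aleph_0)$ fails.
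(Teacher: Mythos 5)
Your argument is correct: the recursion maintains its invariant, the counting argument for the existence of $\gamma_n$ (at most one fibre of row $n$ can be cofinite in the pool, and condition (a) excludes only finitely many further colours) is sound, and condition (a) correctly patches the only subtle point, namely the finitely many already-chosen $b_m>n$ that are not subject to the pool constraint. This is precisely the "standard diagonalization argument" that the paper cites as its (first) proof, so you have taken essentially the same route, merely with the details written out.
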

\begin{proof} By a standard diagonalization argument. Alternatively, by Lemma~\ref{prop47} and Ramsey's theorem.
\end{proof}

\begin{fact}[{\cite[Corollary~6.8]{MR2298476}}]\label{larson} If $\zfc$ is consistent, 
then it is consistent that $\onto(\ns_{\aleph_1},\aleph_1)$ fails. Also, $\bpfa$ implies that $\onto(\ns_{\aleph_1},\aleph_1)$ fails. 
\end{fact}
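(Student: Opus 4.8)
This statement is quoted verbatim from Larson \cite{MR2298476}, so the only genuine task on our side is to confirm that the object whose (consistent, and $\bpfa$-provable) nonexistence he establishes is exactly a witness to $\onto(\ns_{\aleph_1},\aleph_1)$. Unwinding Definition~\ref{def21} with $\mathcal A=\{\aleph_1\}$ and $J=\ns_{\aleph_1}$, whose positive sets are precisely the stationary subsets of $\aleph_1$, the principle $\onto(\ns_{\aleph_1},\aleph_1)$ asserts the existence of a colouring $c:[\aleph_1]^2\rightarrow\aleph_1$ such that for every stationary $B\s\aleph_1$ there is an $\eta<\aleph_1$ with $c[\{\eta\}\circledast B]=\aleph_1$. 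Its negation reads: for every colouring $c:[\aleph_1]^2\rightarrow\aleph_1$ there is a stationary $B\s\aleph_1$ on which no column of $c$ is onto, i.e.\ for each $\eta<\aleph_1$ some colour is omitted by $\{c(\eta,\beta)\mid \eta<\beta\in B\}$. This is the form in which Larson states his result, so the translation is immediate and the Fact follows.

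For the $\bpfa$ clause I would carry out the standard reflection argument. Fix an arbitrary $c:[\aleph_1]^2\rightarrow\aleph_1$; it lies in $H_{\aleph_2}$, and the assertion ``there is a stationary $B$ all of whose $c$-columns omit a colour'' is, modulo the clause that $B$ be stationary, $\Sigma_1$ over $H_{\aleph_2}$ with parameter $c$. The plan is first to exhibit a proper poset that adds such a column-defeating set $B$ (intuitively, shooting a stationary set through the ordinals that respect a suitably chosen ``missed-colour'' function), and then, using that a proper forcing preserves stationarity so that the generic $B$ remains stationary, to invoke the $H_{\aleph_2}$-generic-absoluteness form of $\bpfa$ to reflect the existence of $B$ back to $V$. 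Since $c$ was arbitrary, $\onto(\ns_{\aleph_1},\aleph_1)$ fails; the delicate point, which is exactly where Larson's construction does the work, is handling the $\Pi_1$ stationarity requirement within the $\Sigma_1$-absoluteness template via the properness of the auxiliary forcing.

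The bare consistency clause must, by the wording of the statement, be obtained from $\mathrm{Con}(\zfc)$ alone, so it cannot go through a saturated or $\aleph_1$-dense $\ns_{\aleph_1}$ (those carry large-cardinal strength); indeed under $\V=\L$ one has $\diamondsuit^*(\aleph_1)$ and hence $\onto(\ns_{\aleph_1},\aleph_1)$ by Lemma~\ref{lemma47}(1), so a genuine forcing is needed. The plan here is an iterated forcing over an arbitrary model of $\zfc$ producing a model in which every colouring is defeated by some stationary set; the main obstacle, and the technical heart of \cite{MR2298476}, is the bookkeeping: one cannot diagonalise against a fixed list, since new colourings are introduced along the iteration, so the iteration must be arranged to catch and defeat every colouring of the \emph{final} model while preserving the defeats already secured. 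I would treat both of Larson's constructions as established and simply cite them, noting as an aside that in the presence of a saturated $\ns_{\aleph_1}$ (at higher consistency strength) the failure can alternatively be read off a generic elementary embedding, in the spirit of the Kunen-inconsistency argument of Subsection~\ref{subsectioninconsistency}.
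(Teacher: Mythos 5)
The paper offers no proof of this Fact at all --- it is stated as an external citation to Larson's Corollary~6.8 --- and your proposal ultimately does the same thing: verify that the negation of $\onto(\ns_{\aleph_1},\aleph_1)$ is exactly the statement Larson proves consistent (and derives from $\bpfa$), and then cite him. Your unwinding of Definition~\ref{def21} for $\mathcal A=\{\aleph_1\}$, $J=\ns_{\aleph_1}$ is correct, and your observation that the $\mathrm{Con}(\zfc)$ clause forces a direct iteration rather than a route through a saturated ideal is a sensible sanity check, so this matches the paper's treatment.
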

\begin{remark}
Evidently, $\onto(J^{\bd}[\aleph_1],\aleph_1)\implies \onto(\ns_{\aleph_1},\aleph_1)\implies \onto^-(\aleph_1,\aleph_1)$.
By Corollary~\ref{hajnaltocolouring2} and Lemma~\ref{minustoplus}, the last two are equivalent, 
and, by Lemma~\ref{upbyone}(1), the first two are equivalent, as well.
In contrast, in Section~\ref{sectionineffable}, we shall show that if $\kappa$ is an ineffable cardinal,
then $\onto(\ns_\kappa,2)$ fails yet $\onto^-(\kappa,\kappa)$ holds.
\end{remark}

\begin{prop}\label{prop94}$\ubd(J^{\bd}[\kappa],\theta)$ fails whenever $\cf(\kappa)<\theta\le\kappa$.

In particular, $\onto(J^{\bd}[\kappa],\kappa)$ fails for every singular cardinal $\kappa$.
\end{prop}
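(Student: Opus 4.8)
The plan is to refute the principle by a bare cardinality count, so that upper-regressivity plays no role whatsoever: our argument will apply to \emph{every} colouring $c\colon[\kappa]^2\to\theta$, a fortiori to every upper-regressive one. The single observation driving everything is that, for a cardinal $\theta$ and a set $X\s\theta$, one has $\otp(X)=\theta$ if and only if $|X|=\theta$; consequently $\otp(X)<\theta$ (in particular $\otp(X)\neq\theta$) as soon as $|X|<\theta$.

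First I would fix an arbitrary colouring $c\colon[\kappa]^2\to\theta$ and exhibit one set $B\in(J^{\bd}[\kappa])^+$ defeating it. Using the hypothesis $\cf(\kappa)<\theta$, choose $B\s\kappa$ cofinal of cardinality $\cf(\kappa)$, for instance the range of a strictly increasing cofinal map from $\cf(\kappa)$ into $\kappa$; such a $B$ is unbounded, hence lies in $(J^{\bd}[\kappa])^+$. Now for each $\eta<\kappa$ the slice $\{\eta\}\circledast B$ has at most $|B|=\cf(\kappa)$ elements, so its image satisfies $|c[\{\eta\}\circledast B]|\le\cf(\kappa)<\theta$, whence $\otp(c[\{\eta\}\circledast B])<\theta\neq\theta$ by the opening observation. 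As $\eta<\kappa$ was arbitrary, $B$ witnesses that $c$ does not witness $\ubd(J^{\bd}[\kappa],\theta)$; since $c$ was arbitrary, $\ubd(J^{\bd}[\kappa],\theta)$ fails.

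For the ``in particular'' clause, let $\kappa$ be singular, so that $\cf(\kappa)<\kappa$, and set $\theta:=\kappa$, which indeed satisfies $\cf(\kappa)<\theta\le\kappa$. Here I would route through $\ubd$: if $\onto(J^{\bd}[\kappa],\kappa)$ held, then by Proposition~\ref{remark25}(1) it could be witnessed by an upper-regressive colouring $d$, and since $d[\{\eta\}\circledast B]=\kappa$ forces $\otp(d[\{\eta\}\circledast B])=\kappa$, this $d$ would also witness $\ubd(J^{\bd}[\kappa],\kappa)$, contradicting the first part. (One may equally rerun the count directly on any $d\colon[\kappa]^2\to\kappa$: the same cofinal $B$ of size $\cf(\kappa)$ gives $|d[\{\eta\}\circledast B]|\le\cf(\kappa)<\kappa$ for every $\eta$, so $d[\{\eta\}\circledast B]\neq\kappa$.) There is no genuine obstacle in this proof; the only points deserving a word of care are the equivalence $\otp(X)=\theta\iff|X|=\theta$ for $X\s\theta$ and the existence of a cofinal subset of $\kappa$ of the minimal possible cardinality $\cf(\kappa)$, both of which are standard.
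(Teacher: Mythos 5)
Your proof is correct and is exactly the paper's argument: the paper's entire proof is the one-line observation that some $J^{\bd}[\kappa]$-positive sets have size $\cf(\kappa)$, which is precisely your cardinality count on a cofinal $B$ of size $\cf(\kappa)$. The ``in particular'' clause follows just as you say (the direct count $|c[\{\eta\}\circledast B]|\le\cf(\kappa)<\kappa$ already rules out $\onto(J^{\bd}[\kappa],\kappa)$, with no need to invoke Proposition~\ref{remark25}(1)).
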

\begin{proof} Some positive sets with respect to the ideal $J^{\bd}[\kappa]$ have size $\cf(\kappa)$.
\end{proof}

The next theorem shows that neither $\onto^-(\kappa,\kappa)$ nor $\sup\{\theta<\kappa\mid \onto(\ns_\kappa,\theta)\text{ holds}\}=\kappa$ imply $\onto(\ns_\kappa,\kappa)$.

\begin{thm}\label{kunenineff} If $\kappa$ is ineffable, then in some cofinality-preserving forcing extension:
\begin{enumerate}[(1)]
\item $\kappa$ is strongly inaccessible;
\item $\onto^{++}([\kappa]^\kappa,J^{\bd}[\kappa],\theta)$ holds for all $\theta<\kappa$;
\item $\onto^-(\kappa,\kappa)$ holds;
\item $\ubd(\ns_\kappa,\kappa)$ fails.
\end{enumerate}
\end{thm}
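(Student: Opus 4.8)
The plan is to re-examine Kunen's model from \cite[\S3]{MR495118}, exactly as it is used in the proof of Corollary~\ref{cor216}. Starting from a ground model $V$ in which $\kappa$ is ineffable, I would force with Kunen's cofinality-preserving poset $\mathbb R$, so that in $V^{\mathbb R}$ the cardinal $\kappa$ is strongly inaccessible but not weakly compact; this is Clause~(1). The crucial extra feature is that $\mathbb R$ is accompanied by a $\kappa$-cc \emph{resurrection} poset $\mathbb T$ with the property that $\kappa$ remains ineffable in $V^{\mathbb R,\mathbb T}$ whenever it is ineffable in $V$.

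Clause~(4) is then immediate from the resurrection. Working over $V^{\mathbb R}$: since $\kappa$ is ineffable in $V^{\mathbb R,\mathbb T}$, the definitions give $V^{\mathbb R,\mathbb T}\models\kappa\notin\amen_\kappa$; as $\mathbb T$ is $\kappa$-cc, Proposition~\ref{amenpullpos}(1) yields $V^{\mathbb R}\models\kappa\notin\amen_\kappa$, whence Lemma~\ref{lemma44} shows that $\ubd(\ns_\kappa,\kappa)$ fails in $V^{\mathbb R}$. For Clause~(3), I would use that Kunen's reverse-Easton preparation forces $\diamondsuit(\kappa)$ in $V^{\mathbb R}$; plugging $S:=\kappa$ into Lemma~\ref{lemma47}(2) then hands us a colouring witnessing $\onto^-(\kappa,\kappa)$.

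For Clause~(2), the first step is a clean reduction: $J^{\bd}[\kappa]$ is a subnormal, $\kappa$-complete ideal with $(J^{\bd}[\kappa])^+=[\kappa]^\kappa$, and $\theta<\kappa=\cf(\kappa)$, so Proposition~\ref{prop46} tells us that $\onto^{++}([\kappa]^\kappa,J^{\bd}[\kappa],\theta)$ is equivalent to $\kappa\nrightarrow[\kappa;\kappa]^2_\theta$. Hence the whole of Clause~(2) amounts to establishing $\kappa\nrightarrow[\kappa;\kappa]^2_\theta$ for every $\theta<\kappa$ in $V^{\mathbb R}$.

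This last point is where I expect the main difficulty to lie, precisely because Clause~(4) sabotages every routine construction. From $V^{\mathbb R}\models\kappa\notin\amen_\kappa$ we get $\kappa\notin\sa_\kappa$, so by Corollary~\ref{strongamenmahlo} full reflection $\refl(\kappa,\kappa,\reg(\kappa))$ holds and $\square(\kappa,{<}\mu)$ fails for all $\mu<\kappa$; moreover there can be neither a $\kappa$-Souslin tree nor a stationary set that fails to reflect at regulars, since either would revive $\ubd(J^{\bd}[\kappa],\kappa)$ and hence $\kappa\in\sa_\kappa$ (via Proposition~\ref{prop45} and Corollary~\ref{hajnaltocolouring2}). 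In particular $\kappa\nrightarrow[\kappa]^2_\kappa$ must fail, so I would be chasing two-sided strong colourings using strictly fewer than $\kappa$ colours at a cardinal that is, in every other respect, ineffable-like. The resolution must therefore come from a direct inspection of $\mathbb R$: one analyses the generic ($\kappa$-Aronszajn-type) object that $\mathbb R$ adds in order to destroy weak compactness, and extracts from it, for each $\theta<\kappa$, a colouring $c_\theta\colon[\kappa]^2\to\theta$ with $c_\theta[A\circledast B]=\theta$ for all $A,B\in[\kappa]^\kappa$, carefully stopping short of the value $\theta=\kappa$ that Clause~(4) prohibits. Verifying that these colourings live in $V^{\mathbb R}$ — while necessarily failing in $V^{\mathbb R,\mathbb T}$, where $\kappa$ is weakly compact again — is the heart of the argument.
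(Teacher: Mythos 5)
Your treatment of Clauses (1), (3) and (4) matches the paper's proof: Kunen's model gives strong inaccessibility and $\diamondsuit(\kappa)$, Lemma~\ref{lemma47}(2) yields $\onto^-(\kappa,\kappa)$, and the $\kappa$-cc resurrection of ineffability combined with Proposition~\ref{amenpullneg}(1) and Lemma~\ref{lemma44} kills $\ubd(\ns_\kappa,\kappa)$. The reduction of Clause~(2) to $\kappa\nrightarrow[\kappa;\kappa]^2_\theta$ via Proposition~\ref{prop46} is also the right first step.

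The gap is in how you then propose to obtain $\kappa\nrightarrow[\kappa;\kappa]^2_\theta$. You assert that $V^{\mathbb R}$ can contain no $\kappa$-Souslin tree because one would ``revive $\ubd(J^{\bd}[\kappa],\kappa)$'' via Proposition~\ref{prop45}, and you conclude that $\kappa\nrightarrow[\kappa]^2_\kappa$ must fail. This implication is false: Proposition~\ref{prop45} derives $\ubd(J^{\bd}[\kappa],\theta)$ from a $\kappa$-Souslin tree only for $\theta<\kappa$ (the stabilisation of $\eta\mapsto\sup(c[\{\eta\}\circledast B])$ on a cofinal set needs $\theta<\cf(\kappa)$ and breaks down at $\theta=\kappa$), so a $\kappa$-Souslin tree --- equivalently $\kappa\nrightarrow[\kappa;\kappa]^2_\kappa$ --- is perfectly compatible with $\kappa\notin\sa_\kappa$. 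Indeed, the resurrection poset $\mathbb T$ you invoke for Clause~(4) \emph{is} a $\kappa$-Souslin tree in $V^{\mathbb R}$ (that is why it is $\kappa$-cc and why forcing with it restores the large cardinal), and this very tree is what the paper uses for Clause~(2): it witnesses $\kappa\nrightarrow[\kappa;\kappa]^2_\kappa$, hence $\kappa\nrightarrow[\kappa;\kappa]^2_\theta$ for every $\theta<\kappa$, and Proposition~\ref{prop46} finishes. Having talked yourself out of using the Souslin tree, you leave what you call the ``heart of the argument'' --- extracting the colourings $c_\theta$ from the generic object --- unproved, so as written Clause~(2) is not established. The entire point of this theorem (and of Theorem~\ref{kunenwc}) is precisely that the Souslin-tree colourings with fewer than $\kappa$ colours coexist with the failure of the $\kappa$-colour $\ubd$ principles.
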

\begin{proof} The aforementioned Kunen's model $V^{\mathbb R}$ from \cite[\S3]{MR495118} where we start from an ineffable cardinal $\kappa$ has the following properties:
\begin{enumerate}
\item $\kappa$ is strongly inaccessible;
\item there is a $\kappa$-Souslin tree $\mathbb T$;
\item $\diamondsuit(\kappa)$ holds;
\item $V^{\mathbb R,\mathbb T}\models ``\kappa\text{ is ineffable}"$.
\end{enumerate}
As there is a $\kappa$-Souslin tree, $\kappa\nrightarrow[\kappa;\kappa]^2_\kappa$ holds, so by Proposition~\ref{prop46},
$\onto^{++}([\kappa]^\kappa,\allowbreak J^{\bd}[\kappa],\theta)$ holds for all $\theta<\kappa$.
By Lemma~\ref{lemma47}(2), $\onto^-(\kappa,\kappa)$ holds in $V^{\mathbb R}$. 
Finally, since $\mathbb T$ has the $\kappa$-cc and $V^{\mathbb R,\mathbb T}\models ``\kappa\notin \amen_\kappa"$, 
we conclude that $V^{\mathbb R}\models ``\kappa \notin \amen_\kappa"$ by Proposition~\ref{amenpullneg} and hence $\ubd(\ns_\kappa,\kappa)$ fails in $V^{\mathbb R}$ by Lemma~\ref{lemma44}. 
\end{proof}
\begin{thm}\label{kunenwc} If $\kappa$ is weakly compact, then in some cofinality-preserving forcing extension,
(1)---(3)  of Theorem~\ref{kunenineff} hold, and so does
\begin{itemize}
\item[(4)] $\ubd(J^\bd[\kappa],\kappa)$ fails.
\end{itemize}
\end{thm}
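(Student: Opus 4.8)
The plan is to re-run the argument of Theorem~\ref{kunenineff} essentially verbatim, but starting from a weakly compact cardinal and substituting the strongly-amenable machinery for the amenable one. First I would take $\kappa$ weakly compact and pass to Kunen's model $V^{\mathbb R}$ of \cite[\S3]{MR495118}. Exactly as recalled in the proof of Corollary~\ref{cor216}, this model has a $\kappa$-cc forcing $\mathbb T$ (a $\kappa$-Souslin tree) whose extension resurrects the large cardinal, so that here $V^{\mathbb R,\mathbb T}\models ``\kappa\text{ is weakly compact}"$; moreover, since the building block $\mathbb R$ is the same construction as in the ineffable case, the model $V^{\mathbb R}$ retains the features that $\kappa$ is strongly inaccessible, carries the $\kappa$-Souslin tree $\mathbb T$, and satisfies $\diamondsuit(\kappa)$.

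Next I would derive Clauses (1)--(3) precisely as in Theorem~\ref{kunenineff}, noting that these do not depend on the starting large-cardinal hypothesis beyond what $V^{\mathbb R}$ already provides: Clause~(1) is immediate from strong inaccessibility; for Clause~(2), the $\kappa$-Souslin tree yields $\kappa\nrightarrow[\kappa;\kappa]^2_\kappa$, whence Proposition~\ref{prop46} gives $\onto^{++}([\kappa]^\kappa,J^{\bd}[\kappa],\theta)$ for every $\theta<\kappa$; and for Clause~(3), $\diamondsuit(\kappa)$ gives $\onto^-(\kappa,\kappa)$ through Lemma~\ref{lemma47}(2).

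The only genuinely new point is Clause~(4), where $\amen_\kappa$ gets replaced by $\sa_\kappa$. Since $V^{\mathbb R,\mathbb T}\models ``\kappa\text{ is weakly compact}"$, Proposition~\ref{thm215} (a weakly compact cardinal never carries a strongly amenable $C$-sequence, i.e.\ $\kappa\notin\sa_\kappa$) yields $V^{\mathbb R,\mathbb T}\models ``\kappa\notin\sa_\kappa"$. As $\mathbb T$ is $\kappa$-cc, I would then invoke Proposition~\ref{strongamenpullneg}(1), applied with ground model $V^{\mathbb R}$, forcing $\mathbb T$, and $\theta=\kappa\le\kappa$, to pull this statement back and obtain $V^{\mathbb R}\models ``\kappa\notin\sa_\kappa"$. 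Finally, the equivalence $(1)\iff(4)$ of Lemma~\ref{lemma43} translates this into $V^{\mathbb R}\models ``\ubd(J^{\bd}[\kappa],\kappa)\text{ fails}"$, which is exactly Clause~(4).

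I do not expect a serious obstacle: the proof is a structural mirror of Theorem~\ref{kunenineff} with the triple $(\amen_\kappa,\text{Lemma~\ref{lemma44}},\text{Proposition~\ref{amenpullneg}})$ systematically swapped for $(\sa_\kappa,\text{Lemma~\ref{lemma43}},\text{Proposition~\ref{strongamenpullneg}})$ and ``ineffable'' replaced by ``weakly compact''. The only place that warrants a little care is confirming that it is the \emph{weakly compact} resurrection that is available in Kunen's model and that $V^{\mathbb R}$ still carries the Souslin tree and satisfies $\diamondsuit(\kappa)$; both are part of Kunen's original construction and are already quoted in the proof of Corollary~\ref{cor216}, so this amounts to a bookkeeping check rather than new work.
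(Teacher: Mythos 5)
Your proposal is correct and matches the paper's own proof essentially verbatim: both re-run the Theorem~\ref{kunenineff} argument in Kunen's model $V^{\mathbb R}$ built over a weakly compact cardinal, use the weakly-compact resurrection in $V^{\mathbb R,\mathbb T}$ together with Proposition~\ref{thm215} to get $\kappa\notin\sa_\kappa$ there, pull this back via Proposition~\ref{strongamenpullneg}(1), and conclude with Lemma~\ref{lemma43}. No gaps.
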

\begin{proof} The proof is almost exactly the same as that of Theorem~\ref{kunenineff}. 
In Kunen's model $V^{\mathbb R}$ from \cite[\S3]{MR495118} where we start from an weakly compact cardinal $\kappa$ has properties (i)---(iii), together with 
\begin{itemize}
\item[(iv)] $V^{\mathbb R,\mathbb T}\models ``\kappa\text{ is weakly compact}"$.
\end{itemize}
So as $V^{\mathbb R,\mathbb T}\models ``\kappa\notin \sa_\kappa"$, we conclude that $V^{\mathbb R}\models ``\kappa \notin \sa_\kappa"$ 
by Proposition~\ref{strongamenpullneg} and hence $\ubd(J^\bd[\kappa],\kappa)$ fails in $V ^{\mathbb R}$ by Lemma~\ref{lemma43}. 
\end{proof}

\begin{thm} Assuming the consistency of a weakly compact cardinal, the following is consistent:
\begin{itemize}
\item $\kappa$ is strongly inaccessible;
\item $\onto(\ns_\kappa, \kappa)$ holds;
\item $\ubd(J^\bd[\kappa], \omega)$ fails.
\end{itemize}
\end{thm}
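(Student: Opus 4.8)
The plan is to notice that the third requirement already fixes the large-cardinal nature of $\kappa$, and then to realise the remaining requirement in a fine-structural model. Indeed, by Clause~(3) of Theorem~A the failure of $\ubd(J^\bd[\kappa],\omega)$ is \emph{equivalent} to $\kappa$ being weakly compact; in particular it entails that $\kappa$ is strongly inaccessible, so the first bullet comes for free. Thus the whole statement reduces to producing a model possessing a weakly compact cardinal $\kappa$ at which $\onto(\ns_\kappa,\kappa)$ also holds. Since $\onto(\ns_\kappa,\kappa)$ implies $\ubd(\ns_\kappa,\kappa)$ (by Proposition~\ref{remark25}(1)) and hence $\kappa\in\amen_\kappa$ by Lemma~\ref{lemma44}, such a $\kappa$ cannot be ineffable; this steers us towards a weakly compact cardinal that is as far from ineffability as possible.

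Concretely, I would work in $\L$. Our consistency hypothesis, together with the downward absoluteness of weak compactness to $\L$, furnishes a model of $\zfc+\V=\L$ carrying a weakly compact cardinal; there I let $\kappa$ be the \emph{least} such cardinal. Being weakly compact, $\kappa$ is strongly inaccessible, and Clause~(3) of Theorem~A immediately yields that $\ubd(J^\bd[\kappa],\omega)$ fails. All that then remains is to verify $\onto(\ns_\kappa,\kappa)$, and for this I would aim to produce $\diamondsuit^*(\kappa)$ and invoke Lemma~\ref{lemma47}(1) with $S:=\kappa$ (equivalently, Clause~(3) of Theorem~C).

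The crux is obtaining $\diamondsuit^*(\kappa)$ at a weakly compact $\kappa$, which on its face looks delicate since $\diamondsuit^*$ must fail on an ineffable set. The resolution is exactly the non-ineffability of the least weakly compact cardinal: because every ineffable cardinal is a stationary limit of weakly compact cardinals, the least weakly compact cardinal is not ineffable, i.e.\ $\kappa$ is a (trivially stationary) non-ineffable subset of itself. Jensen's theorem (see \cite[Theorem~5.39]{MR3243739}, already used in the proof of Corollary~\ref{amenableremark}) asserts that under $\V=\L$, $\diamondsuit^*(S)$ holds for every stationary non-ineffable $S\s\kappa$; applying it with $S:=\kappa$ delivers $\diamondsuit^*(\kappa)$, and the desired instance of $\onto$ follows at once.

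I expect the only genuine obstacle to be the bookkeeping around non-ineffability: one must check both that the least weakly compact cardinal is non-ineffable \emph{and} that ``$\kappa$ is a non-ineffable subset of $\kappa$'' is the precise hypothesis feeding Jensen's theorem, so that $\diamondsuit^*$ becomes available on \emph{all} of $\kappa$ rather than merely on a proper stationary piece. Once this is in place, every remaining step is a direct citation, and no forcing is needed beyond passing to the inner model $\L$.
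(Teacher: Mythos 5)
Your proposal is correct and is essentially the paper's own proof: work in $\L$, take $\kappa$ to be the least weakly compact cardinal (hence non-ineffable), invoke Jensen's theorem to get $\diamondsuit^*(\kappa)$ and thus $\onto(\ns_\kappa,\kappa)$ via Lemma~\ref{lemma47}(1), and use weak compactness to kill $\ubd(J^\bd[\kappa],\omega)$. The only cosmetic difference is that the paper cites Proposition~\ref{reducetotwo} directly for the last step rather than Theorem~A(3).
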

\begin{proof} Work in $\mathsf{L}$ and suppose that $\kappa$ is a weakly compact cardinal which is not ineffable (e.g., the first weakly compact cardinal). 
As $\kappa$ is not ineffable,
a result of Jensen \cite{jensen1969some} (see \cite[Theorem~5.39]{MR3243739}) states that $\diamondsuit^*(\kappa)$ holds. 
So by Lemma~\ref{lemma47}(1) we have that $\onto(\ns_\kappa, \kappa)$ holds.
In addition, since $\kappa$ is weakly compact, by Proposition~\ref{reducetotwo}, $\ubd(J^\bd[\kappa],\omega)$ fails. 	
\end{proof}

\begin{lemma}\label{preservation}
Suppose that $\kappa$ is uncountable, $\theta\le\kappa$ is infinite and regular, $\mathbb P$ is a $\theta$-cc poset, and $S\s\kappa$ is stationary.
\begin{enumerate}[(1)]
\item If $\ubd(J^{\bd}[\kappa],\theta)$ fails, then it also fails in $V^{\mathbb P}$.	
\item If $\ubd(\ns_\kappa\restriction S,\theta)$ fails, then it also fails in $V^{\mathbb P}$.
\end{enumerate}
\end{lemma}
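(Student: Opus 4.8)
The plan is to prove the contrapositive in each clause: I assume that the relevant $\ubd(\ldots,\theta)$ principle \emph{holds} in the forcing extension $V^{\mathbb P}$, and I produce a witness back in the ground model $V$. The key mechanism is that a $\theta$-cc forcing poset cannot create new small objects that obstruct the colouring: positive sets for $J^{\bd}[\kappa]$ (i.e.\ cofinal subsets of size $\theta$ or more) and positive sets for $\ns_\kappa\restriction S$ (i.e.\ stationary subsets of $S$) are already approximated from within $V$ once the forcing has the $\theta$-cc. So the heart of the argument is a reflection/covering step that takes an arbitrary $J^+$-set in $V$, views it in $V^{\mathbb P}$, applies the hypothesized colouring there, and pulls the resulting $\eta$ and colour-spectrum back down.

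First I would observe that since $\mathbb P$ is $\theta$-cc and both principles use colourings $c:[\kappa]^2\to\theta$ with $\theta$ regular, the colouring $c$ produced in $V^{\mathbb P}$ is itself an object of $V^{\mathbb P}$, but its \emph{values} lie in the ground-model ordinal $\theta$. For Clause~(1), given a cofinal $B\subseteq\kappa$ in $V$, it is in particular a $J^{\bd}[\kappa]$-positive set in $V^{\mathbb P}$, so there is $\eta<\kappa$ with $\otp(c[\{\eta\}\circledast B])=\theta$ in $V^{\mathbb P}$. The issue is that this $\eta$ and this witnessing order-type computation live in the extension. To descend, I would instead argue on the level of the colouring itself: a $\theta$-cc poset cannot add a colouring witnessing $\ubd(J^{\bd}[\kappa],\theta)$ unless one already exists, because a name for such a colouring can be read off using a maximal antichain of size $<\theta$ below each pair, and the $\theta$-completeness of the target $\theta$ lets one amalgamate these $<\theta$ many possible values. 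Concretely, for each pair $(\alpha,\beta)\in[\kappa]^2$ one fixes a maximal antichain deciding $\dot c(\alpha,\beta)$; since $|\text{antichain}|<\theta=\cf(\theta)$, the set of decided values is a bounded subset of $\theta$, and one uses these to define a ground-model colouring.

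The cleanest route, which I expect to be the intended one, is to mirror the proofs of Proposition~\ref{strongamenpullneg}(1) and Proposition~\ref{amenpullneg}(1), which already establish exactly the preservation of $\notin\sa_\kappa$ and $\notin\amen_\kappa$ under $\theta$-cc ($\theta\le\kappa$) forcing. By Lemma~\ref{lemma43}, $\ubd(J^{\bd}[\kappa],\kappa)$ is equivalent to $\kappa\in\sa_\kappa$, and by Lemma~\ref{lemma44}, $\ubd(\ns_\kappa\restriction S,\kappa)$ is equivalent to $S\in\amen_\kappa$; for the maximal-colour case the two clauses are immediate. For the general $\theta<\kappa$ case I would argue directly: any club $D\subseteq\kappa$ in $V^{\mathbb P}$ contains a club $D'\subseteq D$ lying in $V$ (this is the standard $\kappa$-cc fact, and $\theta\le\kappa$ gives $\theta$-cc $\Rightarrow$ $\kappa$-cc). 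Likewise, in Clause~(2), stationarity is preserved downward along such clubs. Thus a ground-model colouring, reconstructed from the name via the antichain argument above, will witness the principle against every $V$-set, contradicting the assumed failure in $V$.

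The main obstacle, and the step deserving the most care, is the reconstruction of a \emph{single} ground-model colouring that simultaneously works for all positive sets, rather than merely extracting an $\eta$ for one fixed $B$. Since the forcing may choose different $\eta$'s for different $B$'s, I cannot simply pull back the witnesses one at a time; I must pull back the colouring. The antichain-reading argument handles this provided the target cardinal $\theta$ is regular and of cofinality $>$ the antichain size, which is guaranteed by $\theta$-cc together with $\theta$ regular. One must then verify that the reconstructed colouring $c^V$ retains the upper-regressive property (it does, as upper-regressiveness is decided pairwise and preserved) and that $\otp(c^V[\{\eta\}\circledast B])=\theta$ for a suitable $\eta$; here the subtle point is that the order-type computed in $V$ agrees with that in $V^{\mathbb P}$ because both are subsets of the ground-model ordinal $\theta$ and $\mathbb P$ adds no new subsets of $\theta$ of the relevant kind below $\theta$. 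Assembling these observations yields the ground-model witness and hence the contradiction, completing both clauses.
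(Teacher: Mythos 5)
Your overall architecture --- reading a ground-model colouring off the name $\dot c$ via maximal antichains of size $<\theta$, exploiting the regularity of $\theta$, and using that ground-model positive sets remain positive in $V^{\mathbb P}$ (trivially for $J^{\bd}[\kappa]$, by the $\kappa$-cc for stationary subsets of $S$) --- is exactly the mechanism of the paper's proof, merely phrased contrapositively. But the decisive step is missing. You say that at each pair the set of decided values is bounded in $\theta$ and that ``one uses these to define a ground-model colouring'', without committing to how; and the justification you then offer for $\otp(c^V[\{\eta\}\circledast B])=\theta$ --- agreement of order types between $V$ and $V^{\mathbb P}$ because ``$\mathbb P$ adds no new subsets of $\theta$ of the relevant kind'' --- does not work. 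A $\theta$-cc poset can certainly add new subsets of $\theta$ (Cohen forcing is ccc and adds reals), and, more importantly, $c^V$ and $c$ are different functions, so absoluteness of order type is beside the point: if $c^V(\alpha,\beta)$ is merely \emph{some} possible value of $\dot c(\alpha,\beta)$, then unboundedness of $c[\{\eta\}\circledast B]$ in $\theta$ tells you nothing about $c^V[\{\eta\}\circledast B]$.

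What closes the argument is pointwise \emph{domination}: set $d(\alpha,\beta):=\sup$ of the $<\theta$ many values forced along the antichain at $(\alpha,\beta)$; this is $<\theta$ by regularity, and $c(\alpha,\beta)\le d(\alpha,\beta)$ holds in $V^{\mathbb P}$ for every pair. Then $\sup d[\{\eta\}\circledast B]\ge\sup c[\{\eta\}\circledast B]$ for all $\eta$ and $B$, and since $\theta$ is regular a subset of $\theta$ has order type $\theta$ iff it is unbounded; hence if $c$ witnessed $\ubd(\ldots,\theta)$ in $V^{\mathbb P}$, the ground-model $d$ would witness it in $V$ against every $V$-positive set, the desired contradiction. (Equivalently, in the paper's direction: a $B$ witnessing that $d$ fails in $V$ is still positive in $V^{\mathbb P}$ and still defeats $c$ there, since every $c$-spectrum on $B$ is bounded whenever the corresponding $d$-spectrum is.) One further word of caution: your claim that upper-regressiveness of the reconstructed colouring is automatic is not quite right, since the supremum of $<\theta$ many ordinals below $\beta$ need not be below $\beta$; this is harmless at pairs with $\beta>\theta$ (all values lie below $\theta$) and is repaired elsewhere by recolouring with $0$, but it deserves a sentence rather than an appeal to pairwise preservation.
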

\begin{proof} Since $\mathbb P$ has the $\theta$-cc,
for every colouring $c:[\kappa]^2\rightarrow\theta$ in $V^{\mathbb P}$,
there exists a colouring $d:[\kappa]^2\rightarrow\theta$ such that $c(\alpha,\beta)\le d(\alpha,\beta)$ for every $(\alpha,\beta)\in[\kappa]^2$.

(1) If $\ubd(J^{\bd}[\kappa],\theta)$ fails, then we may fix $B\in[\kappa]^\kappa$ such that $d``[B]^2$ is bounded in $\theta$,
and then, in $V^{\mathbb P}$, $B$ is a $J^{\bd}[\kappa]$-positive such that $c``[B]^2$ is bounded in $\theta$,
so $B$ demonstrates that $c$ is not a witness to $\ubd(J^{\bd}[\kappa],\theta)$.

(2) If $\ubd(\ns_\kappa\restriction S,\theta)$ fails, then we may fix a stationary $B\s S$ such that $d``[B]^2$ is bounded in $\theta$.
As $\mathbb P$ has the $\kappa$-cc, $B$ remains stationary in $V^{\mathbb P}$,
so it demonstrates that $c$ is not a witness to $\ubd(\ns_\kappa\restriction S,\theta)$.
\end{proof}

\begin{thm}\label{thm97} If $\kappa$ is ineffable, then for every cardinal $\theta=\theta^{<\theta}<\kappa$, 
there is a cofinality-preserving forcing extension in which:
\begin{itemize}
\item $\kappa=2^\theta=2^{\theta^+}$;
\item $\onto(\{\theta\},J^{\bd}[\kappa],\theta)$ holds;
\item $\ubd(\ns_\kappa,\theta^+)$ fails.
\end{itemize}
\end{thm}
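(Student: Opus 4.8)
The plan is to force with the natural poset for adding $\kappa$ many functions $\theta\to\theta$ and to read off all three conclusions from genericity, the chain condition, and the ineffability of $\kappa$ in the ground model. Note first that $\theta=\theta^{<\theta}$ forces $\theta$ to be regular, and since $\kappa$ is ineffable it is inaccessible, so $\theta^+<\kappa$ and $\kappa^{<\kappa}=\kappa$. Let $\mathbb P$ be the poset of all partial functions $p\colon\kappa\times\theta\to\theta$ with $|p|<\theta$, ordered by reverse inclusion, and let $F:=\bigcup G$ with $f_\beta(\eta):=F(\beta,\eta)$. As $\theta^{<\theta}=\theta$, a $\Delta$-system argument gives that $\mathbb P$ is $\theta^+$-cc; it is visibly $<\theta$-closed and of size $\kappa$, so it preserves all cofinalities, and $\kappa$ remains a regular uncountable limit cardinal in $V^{\mathbb P}$. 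Since $\mathbb P$ is $\theta^+$-cc of size $\kappa$, every subset of $\theta^+$ in $V^{\mathbb P}$ has a nice name built from $\theta^+$ many antichains of size $\le\theta$; as $\kappa^{\theta^+}=\kappa$ by inaccessibility there are only $\kappa$ such names, whence $2^\theta,2^{\theta^+}\le\kappa$, while the generic adds $\kappa$ distinct subsets of $\theta$. Thus $\kappa=2^\theta=2^{\theta^+}$.

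The crux is establishing $\onto(\{\theta\},J^{\bd}[\kappa],\theta)$. I would use the colouring $c\colon[\kappa]^2\to\theta$ defined by $c(\eta,\beta):=f_\beta(\eta)$ whenever $\eta<\theta\le\beta$, and $c(\eta,\beta):=0$ otherwise. The key genericity fact is that for every $\vec\gamma=\langle\gamma_\eta\mid\eta<\theta\rangle\in{}^\theta\theta$ in $V^{\mathbb P}$, the set $\{\beta<\kappa\mid\forall\eta<\theta\,(f_\beta(\eta)\neq\gamma_\eta)\}$ has size $\le\theta$. Indeed, viewing $\mathbb P$ as the $<\theta$-supported product $\prod_{\beta<\kappa}\mathbb Q_\beta$ of copies of $\mathrm{Fn}(\theta,\theta,{<}\theta)$, the $\theta^+$-cc yields a nice name for $\vec\gamma$ supported on some $s\in[\kappa]^{\le\theta}$, so $\vec\gamma\in V[G{\restriction}s]$; for each $\beta\in\kappa\setminus s$ the function $f_\beta$ is $\mathbb Q_\beta$-generic over $V[G{\restriction}s]$, and the set $\{q\in\mathbb Q_\beta\mid\exists\eta\in\dom(q)\,(q(\eta)=\gamma_\eta)\}$ is dense there, so $f_\beta(\eta)=\gamma_\eta$ for some $\eta$, placing the exceptional set inside $s$. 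Now given any cofinal $B\subseteq\kappa$ in $V^{\mathbb P}$, were there no $\eta<\theta$ with $c[\{\eta\}\circledast B]=\theta$, choosing $\gamma_\eta\in\theta\setminus c[\{\eta\}\circledast B]$ for each $\eta$ would force every $\beta\in B\setminus\theta$ into the exceptional set, contradicting the fact since $|B\setminus\theta|=\kappa>\theta$.

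For the failure of $\ubd(\ns_\kappa,\theta^+)$ I would pull colourings back to $V$ and invoke ineffability. Fix $p\in\mathbb P$ and a name $\dot c$ with $p\Vdash$``$\dot c\colon[\kappa]^2\to\theta^+$ is upper-regressive''. For each $(\eta,\beta)\in[\kappa]^2$ put $S_{\eta\beta}:=\{\gamma<\theta^+\mid\exists q\le p\,(q\Vdash\dot c(\eta,\beta)=\gamma)\}$, which has size $\le\theta$ by $\theta^+$-cc, and set $d(\eta,\beta):=S_{\eta\beta}\in[\theta^+]^{\le\theta}$. Since $\kappa$ is a strong limit, $|[\theta^+]^{\le\theta}|=(\theta^+)^\theta\le2^{\theta^+}<\kappa$, so $d$ is, up to a fixed bijection, a colouring of $[\kappa]^2$ into fewer than $\kappa$ colours. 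Applying Fact~\ref{reducetoone}(1) to $d$ in $V$ gives a stationary $B\subseteq\kappa$ and values $S_\eta$ with $d(\eta,\beta)=S_\eta$ for all $\beta\in B\setminus(\eta+1)$; hence $p\Vdash c[\{\eta\}\circledast B]\subseteq S_\eta$, so $p\Vdash\otp(c[\{\eta\}\circledast B])\le\theta<\theta^+$ for every $\eta<\kappa$, while $p\Vdash$``$B$ is stationary'' as $\mathbb P$ is $\kappa$-cc. Thus $p$ forces that $\dot c$ is not a witness to $\ubd(\ns_\kappa,\theta^+)$, and as $p,\dot c$ were arbitrary the principle fails in $V^{\mathbb P}$.

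The hard part will be the genericity fact of the second paragraph: a hostile $\vec\gamma$ lives in the full extension and may be chosen to depend on the generic, so the whole argument hinges on reducing it to a support of size $\le\theta$ and then exploiting mutual genericity of the remaining Cohen functions. Getting the support bookkeeping and the density argument exactly right — and, in the last paragraph, confirming that the pulled-back colouring genuinely uses $<\kappa$ colours so that ineffability applies — is where the care is needed; by contrast the cardinal-arithmetic and chain-condition computations are routine.
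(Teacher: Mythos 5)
Your proof is correct and follows the same skeleton as the paper's: the same $<\theta$-support Cohen poset for adding $\kappa$ many functions $\theta\to\theta$, the same colouring $c(\eta,\beta)=f_\beta(\eta)$ for the second bullet, and the same combination of the $\theta^+$-cc with ineffability for the third. Two organisational differences are worth noting. For the $\onto(\{\theta\},J^{\bd}[\kappa],\theta)$ part, the paper outsources the genericity step to a citation (``a standard density argument''), whereas you carry it out in full via the support of a nice name and mutual genericity of the remaining coordinates; your reduction of a hostile $B$ to a hostile $\vec\gamma$ and the containment of the exceptional set in the support $s$ are exactly what that citation hides. For the failure of $\ubd(\ns_\kappa,\theta^+)$ the routes genuinely diverge: the paper first notes that the principle already fails in $V$ by Fact~\ref{reducetoone} and then invokes the general chain-condition preservation Lemma~\ref{preservation}, which replaces a colouring of the extension by a pointwise-dominating ground-model colouring; you instead pull the name $\dot c$ back to a \emph{set-valued} ground-model colouring $d(\eta,\beta)=S_{\eta\beta}\in[\theta^+]^{\le\theta}$ (legitimate, since $|[\theta^+]^{\le\theta}|\le 2^{\theta^+}<\kappa$ by inaccessibility) and apply Fact~\ref{reducetoone} to $d$ directly. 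Your fusion of the pullback with ineffability buys something concrete: stabilising the entire set of possible values on a stationary $B$ forces each row $\dot c[\{\eta\}\circledast B]$ into a single set of size $\le\theta$, hence of order type $<\theta^+$, with no need to check that a row-wise order-type bound transfers under pointwise domination. The price is that your argument is tailored to the ineffable case rather than being a general preservation statement, but for this theorem that is all that is needed.
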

\begin{proof} Let $\mathbb P$ be Cohen's notion of forcing for adding $\kappa$ many subsets of $\theta$.
So, the conditions in $\mathbb P$ are functions $p:a\rightarrow\theta$ with $a\s \theta\times \kappa$ and $|a|<\theta$.
This is a $\theta$-closed notion of forcing, and since $\theta^{<\theta}=\theta$, it has the $\theta^{+}$-cc,
so that $\mathbb P$ preserves the cardinal structure.
By Fact~\ref{reducetoone}, as $\kappa$ is ineffable, $\ubd(\ns_\kappa,\theta^+)$ fails,
and then, by Lemma~\ref{preservation}, $\ubd(\ns_\kappa,\theta^+)$ fails in the extension, as well.

Let $G$ be $\mathbb P$-generic over $V$, and work in $V[G]$. Let $c:=\bigcup G$, so that $c$ is a function from $\theta\times\kappa$ to $\theta$.
Towards a contradiction, suppose that $\onto(\{\theta\},J^{\bd}[\kappa],\theta)$ fails, as witnessed by some $B\in[\kappa]^\kappa$.
In particular, there exists a function $g:\theta\rightarrow\theta$ such that $g(\eta)\notin c[\{\eta\}\circledast B]$ for all $\eta<\theta$.
However, by a standard density argument (see the proof of \cite[Theorem~27]{strongcoloringpaper}),
in $V[G]$, for all $A\in[\theta]^{\theta}$ and $B\in[\kappa]^{\kappa}$,
for every function $g:A\rightarrow\theta$, there exist $\eta\in A$ and $\beta\in B\setminus\theta$ such that $g(\eta)=c(\eta,\beta)$.
This is a contradiction.
\end{proof}

\begin{thm} \label{almostineffablecohen} If $\kappa$ is weakly compact, then for every cardinal $\theta=\theta^{<\theta}<\kappa$, 
there is a cofinality-preserving forcing extension in which:
\begin{itemize}
\item $\kappa=2^\theta=2^{\theta^+}$;
\item $\onto(\{\theta\},J^{\bd}[\kappa],\theta)$ holds;
\item $\ubd(J^{\bd}[\kappa],\theta^+)$ fails.
\end{itemize}
\end{thm}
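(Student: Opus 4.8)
The plan is to reuse the forcing and essentially all of the argument of Theorem~\ref{thm97}, altering only the input that drives the failure clause. So I would again let $\mathbb P$ be Cohen's notion of forcing for adding $\kappa$ many subsets of $\theta$, whose conditions are the functions $p:a\rightarrow\theta$ with $a\s\theta\times\kappa$ and $|a|<\theta$. Since $\theta=\theta^{<\theta}$, this poset is $\theta$-closed and $\theta^+$-cc, hence cofinality-preserving; and the same nice-name computation as in Theorem~\ref{thm97} (using that $\kappa$ is inaccessible in $V$, so $\kappa^\theta=\kappa^{\theta^+}=\kappa$, while each antichain has size $\le\theta$) yields $\kappa=2^\theta=2^{\theta^+}$ in the extension. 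Likewise, taking $G$ generic and $c:=\bigcup G$, the identical density argument shows that in $V[G]$, for all $A\in[\theta]^\theta$, $B\in[\kappa]^\kappa$ and every $g:A\rightarrow\theta$ there are $\eta\in A$ and $\beta\in B\setminus\theta$ with $c(\eta,\beta)=g(\eta)$; this gives $\onto(\{\theta\},J^{\bd}[\kappa],\theta)$ exactly as before.

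The only genuinely new point is the failure of $\ubd(J^{\bd}[\kappa],\theta^+)$, and this is where weak compactness replaces ineffability. First I would establish the failure in the ground model. As $\kappa$ is weakly compact it is strongly inaccessible, hence a limit cardinal, so $0<\theta^+<\kappa$; thus Proposition~\ref{reducetotwo} applies to an arbitrary colouring $c:[\kappa]^2\rightarrow\theta^+$ and supplies a cofinal $B\s\kappa$ with $|c[\{\eta\}\circledast B]|\le 2$ for every $\eta<\kappa$. Since $2<\theta^+$, no $\eta$ can realise ordertype $\theta^+$ on $B$, so $c$ does not witness $\ubd(J^{\bd}[\kappa],\theta^+)$; as $c$ was arbitrary, the principle fails in $V$. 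I would then transport this failure to $V^{\mathbb P}$ by invoking Lemma~\ref{preservation}(1) with the parameter $\theta^+$ in place of $\theta$, which is legitimate because $\theta^+$ is infinite regular, $\theta^+\le\kappa$, and $\mathbb P$ is $\theta^+$-cc.

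I do not anticipate a serious obstacle, since the whole argument runs parallel to Theorem~\ref{thm97} with Fact~\ref{reducetoone} (ineffability, yielding a \emph{stationary} monochromatic set) swapped for Proposition~\ref{reducetotwo} (weak compactness, yielding only a \emph{cofinal} set with at most two colours per column). This substitution accounts precisely for the two differences in the statement: the ideal weakens from $\ns_\kappa$ to $J^{\bd}[\kappa]$ because we now control cofinality rather than stationarity, and the bound $2$ instead of $1$ is harmless since $\theta^+\ge\aleph_1>2$. The one point meriting a line of care is that $\kappa$ need \emph{not} remain weakly compact in $V^{\mathbb P}$: weak compactness is used only in $V$ to produce the failure, which is then carried into the extension purely by the chain condition through Lemma~\ref{preservation}(1).
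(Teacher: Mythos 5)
Your proposal is correct and matches the paper's argument: the paper likewise reuses the Cohen forcing from Theorem~\ref{thm97} for the first two bullets and derives the failure of $\ubd(J^{\bd}[\kappa],\theta^+)$ from Proposition~\ref{reducetotwo} together with the $\theta^+$-cc. The only cosmetic difference is that the paper inlines the domination-by-a-ground-model-colouring argument rather than quoting Lemma~\ref{preservation}(1), but that lemma's proof is exactly the same domination step, so the two routes coincide.
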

\begin{proof} As in the proof of Theorem~\ref{thm97}, we let $\mathbb P$ be Cohen's notion of forcing for adding $\kappa$ many subsets of $\theta$,
so that $\kappa=2^\theta$ and $\onto(\{\theta\},J^{\bd}[\kappa],\theta)$ holds in the extension.
Let us see now why $\ubd(J^\bd[\kappa],\theta^+)$ fails. 

Since we were using a $\theta^+$-cc forcing, given a colouring $c:[\kappa]^2\rightarrow\theta^+$ in the extension,
we may find $d:[\kappa]^2\rightarrow\theta^+$ in the ground-model such that, for every $\alpha<\beta<\kappa$, $c(\alpha,\beta)\leq  d(\alpha,\beta)$. 
Appealing to Proposition~\ref{reducetotwo} we find a cofinal set $B \s \kappa$ such that for every $\eta< \kappa$, $|d[\{\eta\}\circledast B]| \leq 2$. So, for every $\eta<\kappa$, 
$$\sup(c[\{\eta\}\circledast B])\le\sup(d[\{\eta\}\circledast B]) < \theta^+.$$
Here the last inequality follows as we take the supremum over a two element set.
\end{proof}

It follows from Proposition~\ref{prop46} that, in $\zfc$, $\onto^{++}(J^{\bd}[\kappa],\theta)$ holds for many pairs $\theta<\kappa$ of infinite regular cardinals.
The next proposition rules out the case $\theta=\kappa$.
\begin{prop}\label{prop912} Suppose that $J$ is a nontrivial ideal over $\kappa$. Then:
\begin{enumerate}[(1)]
\item $\ubd^{++}(J,\kappa)$ fails;
\item If $[\kappa]^{<\kappa}\s J$, then there is no colouring $c:[\kappa]^2\rightarrow\kappa$ with the property that,
for every sequence $\langle B_\tau\mid \tau<\kappa\rangle$ of elements of $J^+$,
there is an $\eta<\kappa$ and an injection $h:\kappa \rightarrow \kappa$ such that, 
$$\{ \tau<\kappa\mid \{ \beta\in B_\tau\mid c(\eta,\beta)=h(\tau)\}\in J^+\}\in J^*.$$
\end{enumerate}
\end{prop}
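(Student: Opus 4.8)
The plan is to prove the contrapositive in the strong form: for \emph{every} colouring $c\colon[\kappa]^2\to\kappa$ I will exhibit a single sequence $\langle B_\tau\mid\tau<\kappa\rangle$ of $J^+$-sets that defeats it. Throughout, for $\eta,\gamma<\kappa$ write $F^\eta_\gamma:=\{\beta\in(\eta,\kappa)\mid c(\eta,\beta)=\gamma\}$ for the $\gamma$-th fibre of the $\eta$-th column, $S_\eta:=\{\gamma<\kappa\mid F^\eta_\gamma\in J^+\}$, and $E:=\{\eta<\kappa\mid |S_\eta|=\kappa\}$. Two facts drive everything: distinct fibres $F^\eta_\gamma,F^\eta_{\gamma'}$ are disjoint; and any injection $h\colon\kappa\to\kappa$ has range of size $\kappa$, so $h^{-1}[S_\eta]$ cannot exhaust $\kappa$ once $|S_\eta|<\kappa$ (and is null when $[\kappa]^{<\kappa}\s J$).

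First I would dispose of the columns $\eta\notin E$, which are handled by \emph{any} sequence of positive sets. Indeed, if $|S_\eta|<\kappa$ then for every injection $h$ there is some $\tau$ with $h(\tau)\notin S_\eta$ (in the setting of Clause~(2), co-nullly many such $\tau$), and then $B_\tau\cap F^\eta_{h(\tau)}\s F^\eta_{h(\tau)}\in J$. In particular, if $E=\emptyset$ the constant sequence $B_\tau:=\kappa$ already works. The real content is to simultaneously defeat all $\eta\in E$, and the main obstacle is precisely that a constant sequence \emph{cannot} do this when $c$ witnesses $\ubd^+(J,\kappa)$: then every positive $B$ admits an $\eta$ with $\kappa$-many positive fibres inside $B$, and a greedy injection enumerating them lets the adversary win. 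So a genuinely varying sequence is unavoidable.

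The device I would use is a fibre-block. For each $\eta\in E$ set $\gamma_\eta:=\min(S_\eta)$, so $F^\eta_{\gamma_\eta}\in J^+$, and assign to $\eta$ a block $X_\eta$ of indices, the $\langle X_\eta\mid\eta\in E\rangle$ being pairwise disjoint with $|X_\eta|\ge2$; for $\tau\in X_\eta$ put $B_\tau:=F^\eta_{\gamma_\eta}$, and set $B_\tau:=\kappa$ for any leftover $\tau$. Now fix $\eta\in E$ and an injection $h$. By fibre-disjointness, $B_\tau\cap F^\eta_{h(\tau)}=F^\eta_{\gamma_\eta}\cap F^\eta_{h(\tau)}$ is empty unless $h(\tau)=\gamma_\eta$; since $h$ is injective and $|X_\eta|\ge2$, some $\tau\in X_\eta$ has $h(\tau)\neq\gamma_\eta$, whence $B_\tau\cap F^\eta_{h(\tau)}=\emptyset\in J$. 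Thus the demand of $\ubd^{++}(J,\kappa)$ — realising the colour $h(\tau)$ positively on \emph{every} $B_\tau$ — fails at this $\eta$, and since $\eta$ and $h$ were arbitrary, Clause~(1) is proved.

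For Clause~(2) the same sequence works, but I must upgrade the conclusion: the set $\{\tau\mid B_\tau\cap F^\eta_{h(\tau)}\in J\}$ must be $J$-\emph{positive} (so that its complement is not in $J^*$), not merely of size $\kappa$. The one delicate point is that this is automatic: $E\neq\emptyset$ is possible only when $J$ fails to be weakly $\kappa$-saturated, because a single $\eta\in E$ already displays the $\kappa$ pairwise disjoint positive sets $\langle F^\eta_\gamma\mid\gamma\in S_\eta\rangle$. Hence (as $|E|\le\kappa$) I may choose the blocks $X_\eta$ themselves to be $J$-positive, drawn from such a disjoint positive family. Then for $\eta\in E$ the bad set contains $X_\eta\setminus h^{-1}[\{\gamma_\eta\}]$, i.e. a positive set minus at most one (null) point, hence is positive; and for $\eta\notin E$ it contains the co-null set $\kappa\setminus h^{-1}[S_\eta]$. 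In either case the bad set is positive, so the witnessing set of Clause~(2) lies outside $J^*$, completing the proof.
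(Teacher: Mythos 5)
Your argument is correct and rests on the same device as the paper's proof: give several indices $\tau$ the \emph{same} single positive fibre $F^\eta_{\gamma_\eta}$ as their $B_\tau$, so that success at two of them would force the injection $h$ to repeat the value $\gamma_\eta$; the paper implements your blocks $X_\eta$ as the preimages of a surjection $f:\kappa\rightarrow\kappa$ with $J^+$-preimages, extracted by feeding the constant sequence $B_\tau:=\kappa$ to the hypothesized property rather than, as you do, reading the disjoint positive family off a column in $E$. The paper only writes out Clause~(2), so your explicit two-case treatment of Clause~(1) --- where blocks of size two suffice, no disjoint positive family is needed, and the columns outside $E$ are defeated by a counting argument --- is a correct rendering of exactly what is left to the reader.
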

\begin{proof} 
Due to constraints of space, we settle for proving Clause~(2).

Towards a contradiction, 
suppose that any $J^+$-set is of size $\kappa$,
and yet some colouring $c:[\kappa]^2\rightarrow\kappa$ does satisfy the conclusion.
In particular, by invoking it with the constant $\kappa$-sequence in which all the $B_\tau$ sets are equal to $\kappa$, we conclude that
there exists an $\eta^*<\kappa$ for which $|\{ \tau<\kappa \mid \{\beta<\kappa\mid c(\eta^*,\beta)=\tau\}\in J^+\}|=\kappa$.
It follows that we may fix a surjection $f:\kappa\rightarrow\kappa$ such that the preimage of any singleton is in $J^+$.

Next, for every $\eta<\kappa$,
if there exists $\tau<\kappa$ for which $\{\beta<\kappa\mid c(\eta,\beta)=\tau\}$ is in $J^+$,
then let $\xi_\eta$ be the least such $\tau$, and then let $B^\eta:=\{\beta<\kappa\mid c(\eta,\beta)=\xi_\eta\}$.
Otherwise, let $\xi_\eta:=0$ and $B^\eta:=\kappa$.
Finally, for every $\tau<\kappa$, let $B_\tau:=B^{f(\tau)}$.

Towards a contradiction, suppose that there exists $\eta<\kappa$ and an injection $h:\kappa \rightarrow \kappa$ for which 
the following set is in $J^*$:
$$T:=\{ \tau<\kappa\mid \{ \beta\in B_\tau\mid c(\eta,\beta)=h(\tau)\}\in J^+\}.$$		
In particular, $B^\eta\in J^+\setminus\{\kappa\}$.
Now, recalling the choice of $f$, we may find $\tau_0\neq\tau_1$ in $T$ such that $f(\tau_0)=\eta=f(\tau_1)$. For each $i<2$:
$$\{\beta\in B^\eta\mid c(\eta,\beta)=h(\tau_i)\}=\{\beta\in B_{\tau_i}\mid c(\eta,\beta)=h(\tau_i)\}\in J^+.$$
It follows that $h(\tau_0)=\xi_\eta=h(\tau_1)$,
contradicting the fact that $h$ is injective.
\end{proof}
\begin{remark} By weakening $J^*$ to $J^+$ at the end of Clause~(2) we arrive at a principle which is consistent
even with familiar  ideals like $J=\ns_\kappa$. See Lemma~\ref{ubdplus} and Corollary~\ref{hajnaltocolouring2}.
\end{remark}

\section{Weakly compact cardinals}\label{sectionweaklycompact}
In this section we characterise weakly compact cardinals using our principles. 
This is done in three ways: in Corollary~\ref{cor93}, in Corollary~\ref{wcubdtoonto} (for $\kappa \geq 2^{\aleph_0}$), and in Corollary~\ref{weaklycompactthreepieces}. 
An additional characterisation in $\L$ is given in Corollary~\ref{wcvl}.

\begin{lemma}\label{treelemma}Suppose $\kappa=\kappa^{\aleph_0}$.
For every colouring $c:[\kappa]^2\rightarrow2$,
there exists a corresponding colouring $d:[\kappa]^2\rightarrow\omega$ satisfying the following.
For every subnormal $J\in\mathcal J^\kappa_\omega$,
if $c``[B]^2=2$ for every $B\in J^+$, then $d$ witnesses $\onto^+(J,\omega)$.
\end{lemma}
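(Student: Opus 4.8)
The plan is to let each ordinal $\eta$ code a countable sequence of \emph{probe ordinals} and to read off $d(\eta,\beta)$ as the index of the first probe on which the column $c(\cdot,\beta)$ returns the colour $1$. Concretely, since $\kappa=\kappa^{\aleph_0}$, I would fix a bijection $\pi\colon\kappa\leftrightarrow{}^\omega\kappa$; writing $\pi(\eta)=\langle\eta_n\mid n<\omega\rangle$, define
$$d(\eta,\beta):=\min\{n<\omega\mid \eta_n<\beta\ \&\ c(\eta_n,\beta)=1\}$$
when this set is nonempty, and $d(\eta,\beta):=0$ otherwise. This is a well-defined colouring into $\omega$ that depends only on $c$ and the fixed $\pi$, hence is uniform in $J$, which is exactly what the statement demands. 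Note that it is the hypothesis $\kappa=\kappa^{\aleph_0}$ (equivalently $\kappa^{\aleph_0}\le\kappa$) that allows a single ordinal $\eta<\kappa$ to encode a whole $\omega$-sequence of probes, so that the eventual witness lives in the domain of $d$.

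The engine of the argument, and the place where both subnormality and the hypothesis on $c$ are used, is the following \emph{splitting claim}: for every $B\in J^+$ there is an $\eta<\kappa$ with both $\{\beta\in B\mid c(\eta,\beta)=0\}$ and $\{\beta\in B\mid c(\eta,\beta)=1\}$ in $J^+$. I would prove this by contradiction: if no single $\eta$ splits $B$, then for each $\eta<\kappa$ one colour $m(\eta)\in2$ dominates, i.e.\ $\{\beta\in B\mid \beta>\eta\ \&\ c(\eta,\beta)\neq m(\eta)\}\in J$, so that $E_\eta:=(\kappa\setminus B)\cup(\eta+1)\cup\{\beta\in B\mid c(\eta,\beta)=m(\eta)\}$ belongs to $J^*$. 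Feeding $\langle E_\eta\mid\eta<\kappa\rangle$ into clause~(1) of Definition~\ref{subnormalideals} produces $B'\s B$ in $J^+$ with $c(\eta,\beta)=m(\eta)$ for every $(\eta,\beta)\in[B']^2$; splitting $B'$ according to the value of $m$ and retaining a positive piece yields $B''\in J^+$ on which $c$ is constant, contradicting the assumption that $c``[B'']^2=2$. I expect this claim to be the main obstacle, since it is the only step that genuinely exploits the interaction of the ideal's subnormality with the everywhere-nonconstancy of $c$.

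With the splitting claim available, the remainder is tree/spine bookkeeping. Given $B\in J^+$, I would build a spine by setting $B^{(0)}:=B$ and recursively choosing, via the claim, $\eta_n<\kappa$ splitting $B^{(n)}$, with $B^{(n+1)}:=\{\beta\in B^{(n)}\mid \beta>\eta_n\ \&\ c(\eta_n,\beta)=0\}$ and $L_n:=\{\beta\in B^{(n)}\mid \beta>\eta_n\ \&\ c(\eta_n,\beta)=1\}$, both in $J^+$. Then set $\eta:=\pi^{-1}(\langle\eta_n\mid n<\omega\rangle)$. For each fixed $n$, every sufficiently large $\beta\in L_n$ satisfies $\eta_i<\beta$ for all $i\le n$, $c(\eta_i,\beta)=0$ for $i<n$, and $c(\eta_n,\beta)=1$, whence $d(\eta,\beta)=n$; since $J\supseteq J^\bd[\kappa]$, discarding the bounded remainder leaves $\{\beta\in L_n\mid d(\eta,\beta)=n\}$ in $J^+$. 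Thus $\{\beta\in B\setminus(\eta+1)\mid d(\eta,\beta)=n\}\in J^+$ for every $n<\omega$, which is precisely the assertion that $d$ witnesses $\onto^+(J,\omega)$ (recall $\mathcal A=\{\kappa\}$, so any ordinal $\eta<\kappa$ is an admissible witness).
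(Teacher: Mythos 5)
Your proposal is correct and follows essentially the same route as the paper: your colouring $d$ (reading off the first probe in a coded $\omega$-sequence on which $c(\cdot,\beta)$ returns $1$) is the paper's $d$, your splitting claim is the paper's key claim proved by the same subnormality-plus-stabilisation contradiction, and your spine $\langle \eta_n, B^{(n)}, L_n\rangle$ is exactly the paper's ``canonical branch'' through its tree $\tree(B)$. No gaps.
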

\begin{proof} Since $\kappa^{\aleph_0}=\kappa$, we may fix  an enumeration $\langle x_\eta \mid \eta< \kappa\rangle$ of ${}^\omega \kappa$. 
Given a colouring $c:[\kappa]^2\rightarrow2$,
derive a colouring $d: [\kappa]^2 \rightarrow \omega$ by letting $d(\eta, \beta)$ be the least $n$ such that $c(x_\eta(n), \beta) =1$ if such an $n$ exists, and if not, $d(\eta, \beta) := 0$. 

Next, suppose that we are given a subnormal $J\in\mathcal J^\kappa_\omega$ such that $c``[B]^2=2$ for every $B\in J^+$. 
We shall show that the $d$ derived from it in the above fashion witnesses $\onto^+(J,\omega)$.
To this end, fix an arbitrary $B\in J^+$.
For $n<\omega$, $x:n\rightarrow\kappa$ and $y:n\rightarrow 2$ denote:
$$B_{x,y}:=\{\beta\in B\mid \forall i<n( c(x(i),\beta)=y(i))\},$$
and also
$$\tree(B):=\{ (x, y) \in{}^n\kappa \times {}^n2\mid n<\omega \ \&\ B_{x,y}\in J^+\}.$$
Then it is clear that $\tree(B)$ is a subset of ${}^{<\omega}\kappa\times {}^{<\omega}2$ consisting of pairs of tuples of the same length and closed under initial segments. 
That is, it is a subtree of $\bigcup_{n<\omega}({}^{n}\kappa\times {}^{n}2)$ where the order is given by the end-extension relation. 
Since $B\in J^+$, it is clear that $(\emptyset,\emptyset)\in \tree(B)$, 
so in particular that $\tree(B)$ is nonempty.

\begin{claim}\label{wcontoclaim}
Let $n<\omega$ and let $x\in{}^n2$ and $y \in{}^n2$ be such that $(x,y) \in \tree(B)$.
Then there exists $\eta<\kappa$ such that, for all $i<2$, $(x{}^\smallfrown\langle\eta\rangle, y{}^\smallfrown\langle i\rangle)\in \tree(B)$.
\end{claim}
\begin{why} In fact our proof will show that there is such an $\eta$ in $B_{x, y}$.
So suppose our claim does not hold. This means that for every $\eta<\kappa$ we can pick an $i_\eta <2$ along with an $E_\eta\in J^*$
such that $$(B_{x{}^\smallfrown\langle\eta\rangle, y{}^\smallfrown\langle i_\eta\rangle})\cap E_\eta=\emptyset.$$

As $J$ is subnormal, we may now find $B' \s B_{x,y}$ in $J^+$ such that,
for every $(\eta,\beta)\in [B']^2$, 	$\beta\in E_\eta$.
As $J$ is an ideal, we may also fix an $i^*< 2$ for which $B'':=\{\eta\in B'\mid i_{\eta} = i^*\}$ is in $J^+$.
By the hypothesis on $J$, we may find a pair $(\eta,\beta)\in[B'']^2$ such that $c(\eta,\beta) =i^*$. 
As $(\eta,\beta)\in [B'']^2$, 	$\beta\in E_\eta$,
so that $\beta\in B_{x,y}\setminus B_{x{}^\smallfrown\langle\eta\rangle, y{}^\smallfrown\langle i_\eta\rangle}$.
So $c(\eta,\beta)\neq i_\eta$, contradicting the fact that $i_\eta=i^*$.
\end{why}
For $n< \omega$ let $y_n :n \rightarrow \{0\}$ denote the constant function with value $0$. 
Now using the claim we can recursively find an  $x\in{}^\omega\kappa$ such that, for every $n<\omega$, 
$(x\restriction (n+1),y_n{}^\smallfrown\langle1\rangle)\in \tree(B)$. 
Note that since $\tree(B)$ is closed under initial segments this implies that for every $n<\omega$ we also have that $(x\restriction n,y_n)\in \tree(B)$. 
We shall call such an $x\in{}^\omega\kappa$ a \emph{canonical branch for $B$}. Let us see the details of its construction.

Set $(x_0,y_0):=(\emptyset,\emptyset)$ which we know is in $\tree(B)$. 
Suppose that $n<\omega$ and we have already found an $x_n \in{}^n\kappa $ such that $(x_n, y_n) \in\tree(B)$, so in particular such that $B_{x_n, y_n}\in J^+$.
By the preceding claim, we now find $\eta<\kappa$ such that $B_{x_n{}^\smallfrown\langle\eta\rangle, y_n{}^\smallfrown\langle0\rangle}$
and $B_{x_n{}^\smallfrown\langle\eta\rangle, y_n{}^\smallfrown\langle1\rangle}$ are both in $J^+$.
Then, let $x_{n+1}:=x_n{}^\smallfrown\langle\eta\rangle$. In this way we can find a canonical branch by discovering its initial segments. 

Now let $x\in{}^\omega\kappa$ be a canonical branch for $B$. Pick $\eta < \kappa$ such that $x = x_\eta$. 
To see that $d[\{\eta\}\circledast B] = \omega$, let $n< \omega$ be arbitrary.
Since $x_\eta$ is a canonical branch for $B$, the set $B_{{x_\eta \restriction(n+1), y_n{}^\smallfrown\langle1\rangle}}$ is in $J^+$.
For every $\beta$ in this set, $c(x_\eta(n) , \beta) =1$ and for every $i< n$ it is the case that $d(x_\eta(i) , \beta) =0$. 
So $\{\beta\in B\setminus(\eta+1)\mid d(\eta,\beta) = n\}$
covers the positive set $B_{{x_\eta \restriction (n+1), y_n{}^\smallfrown\langle1\rangle}}\setminus(\eta+1)$.
\end{proof}

\begin{thm}\label{wconto} Suppose $\kappa=\kappa^{\aleph_0}$ is a regular cardinal which is not weakly compact. 
Then $\onto^+(J^\bd[\kappa],\omega)$ holds.
\end{thm}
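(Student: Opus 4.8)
The plan is to obtain the desired colouring by feeding a single well-chosen colouring into $2$ through Lemma~\ref{treelemma}. First I would record that $\kappa=\kappa^{\aleph_0}\ge 2^{\aleph_0}>\aleph_0$, so $\kappa$ is uncountable; hence, by Definition~\ref{defnwc}, the hypothesis that $\kappa$ is not weakly compact is equivalent to the assertion that $\kappa\nrightarrow[\kappa]^2_2$ holds. Fix a colouring $c:[\kappa]^2\rightarrow2$ witnessing this partition relation, so that $c``[B]^2=2$ for every $B\in[\kappa]^\kappa$.

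Next I would apply Lemma~\ref{treelemma} to $c$. Since $\kappa=\kappa^{\aleph_0}$, that lemma produces a companion colouring $d:[\kappa]^2\rightarrow\omega$ with the property that $d$ witnesses $\onto^+(J,\omega)$ for every subnormal $J\in\mathcal J^\kappa_\omega$ all of whose positive sets $B$ satisfy $c``[B]^2=2$. It therefore remains only to check that $J:=J^{\bd}[\kappa]$ is an admissible instance. Indeed, $J^{\bd}[\kappa]$ lies in $\mathcal J^\kappa_\omega$ (it extends itself and is closed under finite unions), and it is subnormal, as noted immediately before Definition~\ref{subnormalideals}. Finally, since $\kappa$ is regular, the $J^{\bd}[\kappa]$-positive sets are precisely the cofinal, equivalently the size-$\kappa$, subsets of $\kappa$; that is, $(J^{\bd}[\kappa])^+=[\kappa]^\kappa$. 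So the choice of $c$ guarantees $c``[B]^2=2$ for every $B\in(J^{\bd}[\kappa])^+$, and Lemma~\ref{treelemma} then yields that $d$ witnesses $\onto^+(J^{\bd}[\kappa],\omega)$, as desired.

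The only genuine content of the argument is packaged inside Lemma~\ref{treelemma} itself — the recursive construction of a canonical branch through the tree $\tree(B)$, where subnormality is invoked at each node to split a positive set into two positive sets realising both colours — so no real obstacle remains at this level; the work here is simply the bookkeeping of verifying that $J^{\bd}[\kappa]$ meets the hypotheses of that lemma. The one point worth double-checking is that the identity $(J^{\bd}[\kappa])^+=[\kappa]^\kappa$ genuinely uses the regularity of $\kappa$ (for a singular $\kappa$ some positive sets would have size $\cf(\kappa)<\kappa$, and the partition relation $\kappa\nrightarrow[\kappa]^2_2$ would no longer apply to them), which is exactly why regularity is assumed in the statement.
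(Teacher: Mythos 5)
Your proposal is correct and follows exactly the paper's own proof: fix a witness $c$ to $\kappa\nrightarrow[\kappa]^2_2$ (available since $\kappa$ is uncountable and not weakly compact) and feed it to Lemma~\ref{treelemma} with $J:=J^{\bd}[\kappa]$. The extra bookkeeping you supply — that $J^{\bd}[\kappa]$ is subnormal, lies in $\mathcal J^\kappa_\omega$, and that regularity gives $(J^{\bd}[\kappa])^+=[\kappa]^\kappa$ — is exactly what the paper leaves implicit, and is verified correctly.
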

\begin{proof} Since $\kappa$ is not weakly compact, we may fix a colouring $c:[\kappa]^2\rightarrow 2$ such that $c``[B]^2=2$ for every $B\in[\kappa]^\kappa$.
Then, by Lemma~\ref{treelemma}, there exists a corresponding colouring $d$ witnessing $\onto^+(J^{\bd}[\kappa],\omega)$.
\end{proof}

\begin{cor}\label{cor93} The following are equivalent:
\begin{enumerate}[(1)]
\item $\kappa$ is not weakly compact;
\item $\ubd(J^\bd[\kappa],\omega)$ holds.
\end{enumerate}
\end{cor}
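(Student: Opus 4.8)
The plan is to prove the two implications separately, with the forward implication $(1)\Rightarrow(2)$ requiring a case analysis on $\kappa$ and the reverse implication $(2)\Rightarrow(1)$ following quickly from Proposition~\ref{reducetotwo}.

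For $(2)\Rightarrow(1)$ I would argue by contraposition: assuming $\kappa$ is weakly compact, $\kappa$ is in particular an uncountable inaccessible, so $0<\omega<\kappa$. Given any colouring $c:[\kappa]^2\to\omega$, Proposition~\ref{reducetotwo} furnishes a cofinal (hence $J^{\bd}[\kappa]$-positive) set $B\subseteq\kappa$ with $|c[\{\eta\}\circledast B]|\le 2$ for every $\eta<\kappa$. Then $\otp(c[\{\eta\}\circledast B])\le 2<\omega$ for all $\eta$, so $c$ cannot witness $\ubd(J^{\bd}[\kappa],\omega)$. As $c$ was arbitrary, the principle fails.

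For $(1)\Rightarrow(2)$, assume $\kappa$ is not weakly compact and split into cases. If $\kappa=\aleph_0$, then $\omega=\kappa$ and Proposition~\ref{prop50} gives $\ubd([\aleph_0]^1,J^{\bd}[\aleph_0],\aleph_0)$, from which $\ubd(J^{\bd}[\aleph_0],\aleph_0)$ follows since, for $A=\aleph_0$, applying the singleton version to any $\{\eta_0\}$ shows that $\eta_0\in A$ works. If $\kappa$ is singular, then $\omega\le\cf(\kappa)$ and Theorem~\ref{prop51} (with $\theta:=\omega$) yields $\ubd(J^{\bd}[\kappa],\omega)$: its clause~(1) when $\cf(\kappa)=\omega$ and clause~(2) otherwise, each reducing to the family $\{\kappa\}$ by the same observation (any $\eta$ found inside $\cf(\kappa)\subseteq\kappa$ lies in $\kappa$). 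The remaining case is $\kappa$ regular uncountable and not weakly compact, which I would split according to whether $\kappa=\kappa^{\aleph_0}$. If $\kappa<\kappa^{\aleph_0}$, then $\aleph_0<\cf(\kappa)=\kappa<\kappa^{\aleph_0}$, so Proposition~\ref{prop52}(iii) already gives $\ubd(J^{\bd}[\kappa],\aleph_0)$. If $\kappa=\kappa^{\aleph_0}$, then Theorem~\ref{wconto} gives $\onto^+(J^{\bd}[\kappa],\omega)$, hence $\onto(J^{\bd}[\kappa],\omega)$; to pass to $\ubd$ I would note that any colouring into $\omega$ is automatically upper-regressive on all pairs $(\eta,\beta)$ with $\beta\ge\omega$, and that only cofinal sets $B$ (which are eventually above $\omega$) are $J^{\bd}[\kappa]$-positive. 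Thus, after trivially redefining the colour to $0$ on the finitely many pairs with $\beta<\omega$, the witness becomes upper-regressive, and a full range $\omega$ collapses to an infinite, hence order-type-$\omega$, subset of $\omega$ on the relevant tail.

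The genuine content — and hence the main obstacle — is concentrated in the case $\kappa=\kappa^{\aleph_0}$ regular and not weakly compact, where one must bootstrap the two-valued colouring supplied by $\kappa\nrightarrow[\kappa]^2_2$ into a colouring realising all of $\omega$ on every positive set. This is precisely the finitely-splitting-tree argument (via K\"onig's lemma) carried out in Lemma~\ref{treelemma} and packaged as Theorem~\ref{wconto}; everything else in the proof is bookkeeping that assembles previously established instances together with the elementary reductions above.
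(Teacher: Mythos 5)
Your proposal is correct and follows essentially the same route as the paper: the implication $(2)\Rightarrow(1)$ via Proposition~\ref{reducetotwo}, and the same four-way case split for $(1)\Rightarrow(2)$ using Proposition~\ref{prop50}, Theorem~\ref{prop51}, Theorem~\ref{wconto}, and Proposition~\ref{prop52}(iii). One small inaccuracy in your bookkeeping for the $\kappa=\kappa^{\aleph_0}$ case: the pairs $(\eta,\beta)$ with $\beta<\omega$ are countably many rather than finitely many, so rather than arguing that the full range ``collapses to an infinite set on the tail'' (which can fail for an arbitrary colouring), one should simply apply the $\onto$ witness property to the positive set $B\setminus\omega$, on which the modified colouring agrees with the original --- a one-line fix for a reduction the paper itself leaves implicit.
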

\begin{proof} The implication $(2)\implies(1)$ follows from Proposition~\ref{reducetotwo},
so we assume that $\kappa$ is not weakly compact, and prove that $\ubd(J^\bd[\kappa],\omega)$ holds:
\begin{itemize}	
\item[$\br$] If $\kappa=\omega$, then this follows from Proposition~\ref{prop50}.

\item[$\br$] If $\kappa$ is a singular cardinal, then this follows from Theorem~\ref{prop51}, Clauses (1) and (2).

\item[$\br$] If $\kappa$ is regular and $\kappa^{\aleph_0}=\kappa$, then by Theorem~\ref{wconto} we even have that $\onto^+(J^{\bd}[\kappa],\omega)$ holds.

\item[$\br$] Otherwise, this follows from Proposition~\ref{prop52}(iii).\qedhere
\end{itemize}
\end{proof} 

\begin{cor}\label{wcubdtoonto}For every cardinal $\kappa\ge  2^{\aleph_0}$, the following are equivalent:
\begin{enumerate}[(1)]
\item $\kappa$ is not weakly compact;
\item $\onto(J^\bd[\kappa],\omega)$ holds.
\end{enumerate}
\end{cor}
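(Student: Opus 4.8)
The plan is to prove the two implications separately, with essentially all of the difficulty concentrated in a single boundary case.

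For $(2)\implies(1)$ I would argue contrapositively. If $\kappa$ is weakly compact then it is strongly inaccessible, so $0<\omega<\kappa$, and Proposition~\ref{reducetotwo} supplies, for each colouring $c:[\kappa]^2\rightarrow\omega$, a cofinal (hence $J^\bd[\kappa]$-positive) set $B\s\kappa$ with $|c[\{\eta\}\circledast B]|\le 2$ for every $\eta<\kappa$. For such a $B$ there is no $\eta$ with $c[\{\eta\}\circledast B]=\omega$, so $c$ does not witness $\onto(J^\bd[\kappa],\omega)$; as $c$ was arbitrary, $\onto(J^\bd[\kappa],\omega)$ fails. Hence $(2)$ forces $\kappa$ to be non-weakly-compact.

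For $(1)\implies(2)$, assume $\kappa\ge 2^{\aleph_0}$ is not weakly compact. The routine case is $\kappa>2^{\aleph_0}$: Corollary~\ref{cor93} gives $\ubd(J^\bd[\kappa],\omega)$, and since $\kappa>2^{\aleph_0}=2^\omega$, Corollary~\ref{abovecont} (applied with $\theta=\omega$) upgrades this to $\onto(J^\bd[\kappa],\omega)$, exactly as desired.

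The main obstacle is the boundary case $\kappa=2^{\aleph_0}$, which Corollary~\ref{abovecont} does not reach because it requires the strict inequality $\kappa>2^\theta$. The key observation I would use is that König's theorem forces $\cf(\kappa)=\cf(2^{\aleph_0})>\aleph_0$; I would then split on the regularity of $\kappa$. If $\kappa$ is regular, then $\kappa^{\aleph_0}=(2^{\aleph_0})^{\aleph_0}=2^{\aleph_0}=\kappa$, so Theorem~\ref{wconto} applies and in fact yields the stronger $\onto^+(J^\bd[\kappa],\omega)$, whence $\onto(J^\bd[\kappa],\omega)$ by the monotonicity noted in Remark~\ref{relationsremark}(1). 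If instead $\kappa$ is singular, then $\omega$ is a regular cardinal strictly below $\cf(\kappa)$, so Theorem~\ref{prop51}(3) gives $\onto(\{\cf(\kappa)\},J^\bd[\kappa],\omega)$; since $\cf(\kappa)\s\kappa$ as sets of ordinals, any witnessing colouring locates its distinguished $\eta$ inside $\cf(\kappa)\s\kappa$ and therefore also witnesses $\onto(\{\kappa\},J^\bd[\kappa],\omega)=\onto(J^\bd[\kappa],\omega)$. Here I would also remark that $\kappa=2^{\aleph_0}$ is automatically not weakly compact, since weak compactness entails being a strong limit, so Theorem~\ref{wconto}'s non-weak-compactness hypothesis is met for free and the assumption in $(1)$ plays no role in this case. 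I expect the only genuinely delicate point to be recognising that the $\mathcal A$-narrowing $\onto(\{\cf(\kappa)\},\ldots)\Rightarrow\onto(\{\kappa\},\ldots)$ goes in the useful direction because a smaller index set is a stronger requirement.
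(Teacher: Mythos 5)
Your proof is correct. The direction $(2)\implies(1)$ and the case $\kappa>2^{\aleph_0}$ (via Corollaries~\ref{cor93} and \ref{abovecont}) coincide with the paper's argument, but at the boundary $\kappa=2^{\aleph_0}$ you take a genuinely different route. The paper handles that case uniformly by citing the Galvin--Shelah theorem that $\cf(2^{\aleph_0})\nrightarrow[\cf(2^{\aleph_0})]^2_{\aleph_0}$ holds, then feeding this classical negative partition relation through Proposition~\ref{prop45}(3) and Proposition~\ref{singularprojection}; no split on the cofinality type of $\kappa$ is needed, since $\kappa=2^{\aleph_0}$ is never weakly compact. You instead split on whether $\kappa=2^{\aleph_0}$ is regular or singular: in the regular case you observe $\kappa^{\aleph_0}=\kappa$ and invoke the tree argument of Theorem~\ref{wconto}, which in fact yields the stronger $\onto^+(J^\bd[\kappa],\omega)$; in the singular case you use K\"onig's theorem to get $\omega<\cf(\kappa)$ and apply Theorem~\ref{prop51}(3), correctly noting that narrowing the family $\mathcal A$ from $\{\kappa\}$ to $\{\cf(\kappa)\}$ only strengthens the conclusion. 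The paper's argument is shorter and isolates the single classical ingredient needed at the continuum, whereas yours stays entirely within the machinery already developed in the paper (the tree lemma and the Ulam-matrix-based results behind Theorem~\ref{prop51}) and extracts a marginally stronger conclusion in the regular subcase. Both are complete.
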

\begin{proof}
Galvin and Shelah \cite{MR0329900} proved that $\cf(2^{\aleph_0})\nrightarrow[\cf(2^{\aleph_0})]^2_{\aleph_0}$ holds. So, appealing to Propositions \ref{singularprojection} and \ref{prop45},
this settles the case $\kappa = 2^{\aleph_0}$.
As for $\kappa>2^{\aleph_0}$, this follows from Corollaries \ref{cor93} and \ref{abovecont}.
\end{proof}

\begin{remark}\label{remark105} 
\begin{enumerate}
\item The hypothesis of the preceding cannot be waived:
by \cite{paper53}, $\onto(J^\bd[\aleph_\omega], \omega)$ fails if $\mathfrak t>\aleph_\omega$.
\item In contrast to the fact that $\onto(J^{\bd}[2^{\aleph_0}],\aleph_0)$ holds, 
$\onto(J^{\bd}[2^{\aleph_1}],\aleph_1)$ and even $\ubd(J^{\bd}[2^{\aleph_1}],\aleph_1)$ may consistently fail.
To see this, appeal to Theorem~\ref{almostineffablecohen} with $\theta:=\aleph_0$. 
\end{enumerate}
\end{remark}

\begin{cor}\label{ontojbdomegafailure}
Suppose that $\kappa$ is a non-weakly-compact uncountable cardinal such that $\onto(J^\bd[\kappa], \omega)$ fails.
Then $\kappa<2^{\aleph_0}$ and one of the following must be true:
\begin{enumerate}[(1)]
\item $\kappa$ is a singular cardinal of countable cofinality;
\item $\kappa$ is a greatly Mahlo cardinal which is weakly compact in $\L$.
\end{enumerate}
\end{cor}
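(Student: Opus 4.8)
The plan is to first pin down $\kappa<2^{\aleph_0}$ and then run a dichotomy according to whether $\kappa$ is singular or regular. For the size bound, Corollary~\ref{wcubdtoonto} asserts that for $\kappa\ge 2^{\aleph_0}$ the non-weak-compactness of $\kappa$ is \emph{equivalent} to $\onto(J^\bd[\kappa],\omega)$; since we are assuming the latter fails while $\kappa$ is not weakly compact, we must have $\kappa<2^{\aleph_0}$.

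For the singular case I would show $\cf(\kappa)=\omega$ by contraposition. If $\cf(\kappa)>\omega$, then $\omega$ is a regular cardinal strictly below $\cf(\kappa)$, so Theorem~\ref{prop51}(3) yields $\onto(\{\cf(\kappa)\},J^\bd[\kappa],\omega)$. As $\cf(\kappa)<\kappa$, any colouring witnessing this narrow principle locates its $\eta$ below $\cf(\kappa)<\kappa$ and hence also witnesses $\onto(J^\bd[\kappa],\omega)$, contradicting the hypothesis. So $\cf(\kappa)=\omega$, placing us in clause~(1).

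For the regular case, first note that $\kappa$ cannot be a successor: by Corollary~\ref{cor65}(1) a successor $\kappa$ (taking $\theta=\omega$, which is regular) would give $\onto(J^\bd[\kappa],\omega)$. Hence $\kappa$ is a regular limit cardinal, i.e.\ weakly inaccessible. The crux is then a reflection lemma, call it $(\star)$: \emph{if some stationary $T\s\kappa$ has $\Tr(T)\cap\reg(\kappa)$ nonstationary, then $\onto(J^\bd[\kappa],\omega)$ holds.} I would prove $(\star)$ through the chain: such a $T$ gives, by Corollary~\ref{hajnaltocolouring2}, a club $C$ with $\ubd^*(J^\bd[\kappa],\{T\cap C\})$; since $J^\bd[\kappa]$ is $\kappa$-complete and $T\cap C$ is stationary, Lemma~\ref{ubdplus} upgrades this to $\ubd^+(J^\bd[\kappa],\kappa)$ (feeding in a constant sequence $B_\tau=B$ shows that the set of realized colours is stationary, hence of order type $\kappa$); finally Remark~\ref{thm58b} converts $\ubd^+(J^\bd[\kappa],\kappa)$ into $\onto^+(J^\bd[\kappa],\omega)$, a fortiori $\onto(J^\bd[\kappa],\omega)$.

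Granting $(\star)$, the failure of $\onto(J^\bd[\kappa],\omega)$ forces every stationary $T\s\kappa$ to satisfy that $\Tr(T)\cap\reg(\kappa)$ is stationary. Applying this to $T=E^\kappa_\omega$ shows $\reg(\kappa)$ is stationary, and then an induction along Definition~\ref{iteratedtrace} (the successor steps being immediate from this reflection property, the limit and diagonal steps from the folklore well-definedness of the iterated traces modulo $\ns_\kappa$ recorded after that definition) yields that $\Tr^\alpha(\reg(\kappa))$ is stationary for every $\alpha<\kappa^+$, i.e.\ $\kappa$ is greatly Mahlo. For weak compactness in $\L$, I would argue that were $\kappa$ \emph{not} weakly compact in $\L$, Jensen's theorem would supply a $\square(\kappa)$-sequence (exactly the ingredient invoked in Corollary~\ref{square_is_amenable}), whose canonical non-reflecting stationary set $T$ has $\Tr(T)=\emptyset$ and therefore $\Tr(T)\cap\reg(\kappa)$ nonstationary; by $(\star)$ this would give $\onto(J^\bd[\kappa],\omega)$, a contradiction. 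This places us in clause~(2). The main obstacle is $(\star)$ itself: the point is that the seemingly weak Hajnal-type hypothesis of non-reflection of a \emph{single} stationary set already pumps all the way up to $\onto^+$ with $\omega$ colours, and it is this that simultaneously delivers greatly-Mahloness (via reflection) and weak compactness in $\L$ (via the absence of $\square(\kappa)$); by contrast the greatly-Mahlo induction is routine once the single-step reflection is established.
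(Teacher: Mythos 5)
Your bound $\kappa<2^{\aleph_0}$ and your singular case are correct and coincide with the paper's proof, and your lemma $(\star)$ is itself a valid consequence of Corollary~\ref{hajnaltocolouring2}, Lemma~\ref{ubdplus} and Remark~\ref{thm58b}. The trouble is what you do with it. The contrapositive of $(\star)$ yields only \emph{single} stationary reflection: every stationary $T\s\kappa$ has $\Tr(T)\cap\reg(\kappa)$ stationary. That is not enough for the greatly-Mahlo induction. At the limit and diagonal stages of Definition~\ref{iteratedtrace} one must show that $\bigcap_{\alpha'<\alpha}\Tr^{\alpha'}(\reg(\kappa))$ (resp.\ a diagonal intersection) is stationary, and a $\delta$ lies in this intersection exactly when $\delta$ is a \emph{simultaneous} reflection point of all the earlier traces; an intersection of countably many stationary sets need not be stationary, and the fact that the traces are $\s$-decreasing does not help (a decreasing $\omega$-chain of stationary sets can have empty intersection). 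Simultaneous reflection is a genuinely stronger principle than single reflection, and it is precisely the simultaneous, diagonal principle $\refl(\kappa,\kappa,\reg(\kappa))$ that the paper imports, via Corollary~\ref{strongamenmahlo} and \cite[Lemma~2.12]{paper35}, to run this induction. The ``folklore well-definedness of the iterated traces modulo $\ns_\kappa$'' concerns independence of the bijection chosen at the diagonal stages; it says nothing about stationarity surviving limits, so your appeal to it is a non sequitur.

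The $\L$-part of your argument fails for the same underlying reason. If $\kappa$ is not weakly compact in $\L$, what one gets is a $\square(\kappa,{<}\mu)$-sequence, and for inaccessible $\kappa$ such a sequence does \emph{not} come with a ``canonical non-reflecting stationary set'': by \cite[Theorem~2.13]{MR3730566} (the result quoted in Corollary~\ref{square_is_amenable}) it yields only a family of fewer than $\mu$ stationary sets with no \emph{common} reflection point, and indeed $\square(\kappa)$ is consistent with every single stationary subset of $\kappa$ reflecting. So there need be no stationary $T$ with $\Tr(T)\cap\reg(\kappa)$ nonstationary, and $(\star)$ cannot be invoked. Both gaps have the same root: the Hajnal/Ulam route through non-reflecting stationary sets is too weak, because the failure of $\onto(J^\bd[\kappa],\omega)$ kills far more than such sets. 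The paper's proof instead notes (Proposition~\ref{prop45}(3)) that the failure of $\onto(J^\bd[\kappa],\omega)$ implies the failure of $\kappa\nrightarrow[\kappa]^2_\omega$, invokes Todorcevic's theorem \cite[Theorem~8.1.11]{TodWalks} to conclude that $\kappa$ carries no nontrivial $C$-sequence whatsoever, i.e.\ (Remark~\ref{trivialamenable} and Lemma~\ref{lemma43}) that $\ubd(J^\bd[\kappa],\kappa)$ fails, and only then applies Corollaries~\ref{strongamenmahlo} and~\ref{square_is_amenable}, which package the simultaneous-reflection content needed for great Mahloness and for weak compactness in $\L$. Your architecture can be kept only by replacing $(\star)$ with this $C$-sequence argument; it cannot be patched from within.
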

\begin{proof} By Corollary~\ref{wcubdtoonto}, if $\onto(J^\bd[\kappa], \omega)$ fails, then $\kappa<2^{\aleph_0}$. 
\begin{enumerate}[(1)]
\item By Theorem~\ref{prop51}(3), $\onto(J^\bd[\kappa], \omega)$ holds for every singular cardinal $\kappa$ of uncountable cofinality.
\item If $\kappa$ is a regular uncountable cardinal, then by Proposition~\ref{prop45}(3),
$\kappa\nrightarrow[\kappa]^2_\omega$ fails.
Then by \cite[Theorem~8.1.11]{TodWalks}, $\kappa$ does not carry a nontrivial $C$-sequence.
So by Remark~\ref{trivialamenable}, $\kappa\notin\sa_\kappa$,
which, by Lemma~\ref{lemma43}, means that $\ubd(J^{\bd}[\kappa],\kappa)$ fails.
Then, by Corollary~\ref{strongamenmahlo}, $\kappa$ is greatly Mahlo,
and, by Corollary~\ref{square_is_amenable}, $\kappa$ is weakly compact in $L$.\qedhere
\end{enumerate}
\end{proof}

\begin{cor}\label{wcvl} Assuming $\V=\L$, for every regular uncountable cardinal 
$\kappa$, the following are equivalent:
\begin{enumerate}[(1)]
\item $\kappa$ is not weakly compact;
\item $\ubd^*(J^\bd[\kappa], \{T\})$ holds for some stationary $T\s\kappa$;
\item $\ubd([\kappa]^\kappa,J^\bd[\kappa], \kappa)$ holds;
\item $\onto([\kappa]^\kappa,J^\bd[\kappa], \kappa)$ holds;
\item $\kappa\in\sa_\kappa$.
\end{enumerate}
\end{cor}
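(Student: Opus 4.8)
The plan is to establish $(1)\Leftrightarrow(5)$ first, since this rests only on the analysis of $\sa_\kappa$, and then to route clauses $(2)$, $(3)$, $(4)$ into two cycles through $(5)$. Recall from Lemma~\ref{lemma43} (applied with $S:=\kappa$) that $(5)$ is literally equivalent to $\ubd(J^\bd[\kappa],\kappa)$. For $(5)\Rightarrow(1)$ I would argue contrapositively: if $\kappa$ were weakly compact, then given any strongly amenable $C$-sequence $\langle C_\beta\mid\beta<\kappa\rangle$, Proposition~\ref{thm215} would supply a club $D$ with $\{\beta<\kappa\mid D\cap\beta\s C_\beta\}$ cofinal in $\kappa$, flatly contradicting strong amenability; hence $\kappa\notin\sa_\kappa$. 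For $(1)\Rightarrow(5)$ I would invoke the ``in particular'' clause of Corollary~\ref{square_is_amenable}: if $\ubd(J^\bd[\kappa],\kappa)$ failed, $\kappa$ would be weakly compact in $\L$, and since $\V=\L$ this means weakly compact outright; so a non-weakly-compact $\kappa$ satisfies $\ubd(J^\bd[\kappa],\kappa)$, i.e.\ $\kappa\in\sa_\kappa$.

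The engine for the remaining clauses is a classical theorem of Jensen: in $\L$, a non-weakly-compact regular uncountable cardinal carries a non-reflecting stationary set, and $\diamondsuit(S)$ holds for every stationary $S\s\kappa$. Assuming $(1)$, I would therefore fix a stationary $T\s\kappa$ with $\Tr(T)=\emptyset$, so that in particular $\Tr(T)\cap\reg(\kappa)$ is nonstationary, and with $\diamondsuit(T)$ available. Then $(1)\Rightarrow(2)$ follows from Corollary~\ref{hajnaltocolouring2}, which returns a club $C$ with $\ubd^*(J^\bd[\kappa],\{T\cap C\})$; here $T\cap C$ is stationary, so this is an instance as required by $(2)$. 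And $(1)\Rightarrow(4)$ follows from the first bullet of Corollary~\ref{cor811}, which converts the combination of $\diamondsuit(T)$ and non-reflection of $T$ at regulars into $\onto([\kappa]^\kappa,J^\bd[\kappa],\kappa)$.

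To close the cycles I would feed $(2)$, $(3)$, $(4)$ back into $(5)$ by pure monotonicity. For $(2)\Rightarrow(5)$: as $T$ is stationary, $\{T\}\s(\ns_\kappa)^+$, so Remark~\ref{remark25a}(2) promotes $\ubd^*(J^\bd[\kappa],\{T\})$ to $\ubd^*(J^\bd[\kappa],(\ns_\kappa)^+)$, whereupon Remark~\ref{remark25a}(3) yields $\ubd(J^\bd[\kappa],\kappa)$, which is $(5)$. For $(4)\Rightarrow(3)$: by Proposition~\ref{remark25}(1) the principle $(4)$ has an upper-regressive witness $c$, and since the witnessing property gives $c[\{\eta\}\circledast B]=\kappa$, an ordinal of order type $\kappa$, the very same $c$ witnesses $\ubd([\kappa]^\kappa,J^\bd[\kappa],\kappa)$, which is $(3)$. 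For $(3)\Rightarrow(5)$: instantiating the witnessing family $[\kappa]^\kappa$ at its member $\kappa$ shows that $(3)$ implies $\ubd(J^\bd[\kappa],\kappa)$. This produces $(1)\Rightarrow(2)\Rightarrow(5)\Rightarrow(1)$ and $(1)\Rightarrow(4)\Rightarrow(3)\Rightarrow(5)\Rightarrow(1)$, hence the full equivalence.

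The one genuinely non-elementary step, and the crux of the whole argument, is the existence of the non-reflecting stationary set $T$: this is exactly where $\V=\L$ is indispensable, since consistently a non-weakly-compact $\kappa$ may reflect all of its stationary sets, so the step is not provable in \zfc\ and must be imported from Jensen's fine-structural analysis of reflection in $\L$ --- the same circle of ideas that underlies the $\diamondsuit^*$ input used in Corollary~\ref{amenableremark} and the ``$\square(\kappa,{<}\mu)$ holds in $\L$'' fact implicit in Corollary~\ref{square_is_amenable}. Once $T$ is in hand, everything else is bookkeeping with the pump-up corollaries and the monotonicity relations of Remark~\ref{remark25a} sketched above.
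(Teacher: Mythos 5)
Your proposal is correct and follows essentially the same route as the paper: the same external inputs (Jensen's theorems in $\L$ on non-reflecting stationary sets and $\diamondsuit$) and the same internal lemmas (Corollary~\ref{hajnaltocolouring2}, Corollary~\ref{cor811}, Lemma~\ref{lemma43}, Proposition~\ref{thm215}) carry the argument. The only differences are cosmetic routing choices — closing $(2)$ back to $(5)$ via the monotonicity of Remark~\ref{remark25a} rather than via the reverse direction of Corollary~\ref{hajnaltocolouring2}, and adding a (redundant) direct $(1)\Rightarrow(5)$ through Corollary~\ref{square_is_amenable}.
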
 
\begin{proof} As proved by Jensen  \cite{jensen}, assuming $\V=\L$, 
a regular uncountable cardinal $\kappa$ admits a nonreflecting stationary set iff it is not weakly compact. 
So $(1)\iff(2)$ by Corollary~\ref{hajnaltocolouring2}. By another theorem of Jensen \cite{jensen}, assuming $\V=\L$, 
$\diamondsuit$ holds over any stationary subset of any regular uncountable cardinal.
Putting this together with the previous fact, we get $(1)\implies(4)$ from Corollary~\ref{cor811}.
It is clear that $(4)\implies(3)$, and $(3)\iff(5)$ is given by Lemma~\ref{lemma43}. 
Finally, the implication $(5)\implies(1)$ is given by Proposition~\ref{thm215}.
\end{proof}

By Remark~\ref{almosteffabletwopieces} below,
$\onto(J^{\bd}[\kappa],2)$ holds iff $\kappa$ is not almost ineffable.
Moving from $2$ to $3$, we arrive at the following.

\begin{cor} \label{weaklycompactthreepieces}The following are equivalent for every uncountable cardinal $\kappa$:
\begin{enumerate}[(1)]
\item $\onto(J^\bd[\kappa], 3)$ holds;
\item $\kappa$ is not weakly compact.
\end{enumerate}
\end{cor}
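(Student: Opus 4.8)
The plan is to prove the two implications separately, dispatching $(1)\implies(2)$ by contraposition and $(2)\implies(1)$ by a split on the size of $\kappa$ relative to the continuum. For the contrapositive of $(1)\implies(2)$, I would assume $\kappa$ is weakly compact, so that $\kappa$ is strongly inaccessible and in particular $3<\kappa$. Given an arbitrary colouring $c:[\kappa]^2\to3$, Proposition~\ref{reducetotwo} (applied with $\theta:=3$) supplies a cofinal set $B\s\kappa$ with $|c[\{\eta\}\circledast B]|\le2$ for every $\eta<\kappa$. Such a $B$ is $J^\bd[\kappa]$-positive, and since $2<3$ no $\eta$ can satisfy $c[\{\eta\}\circledast B]=3$; thus $c$ does not witness $\onto(J^\bd[\kappa],3)$. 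As $c$ was arbitrary, $\onto(J^\bd[\kappa],3)$ fails, which is exactly the contrapositive of $(1)\implies(2)$.

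For $(2)\implies(1)$, I would assume $\kappa$ is uncountable and not weakly compact and aim to produce a witness to $\onto(J^\bd[\kappa],3)$, relying on two already-proved colourings and two elementary monotonicity observations. The first observation is that any witness to $\onto(\{\aleph_0\},J^\bd[\kappa],3)$ is automatically a witness to $\onto(J^\bd[\kappa],3)$, since an $\eta\in\aleph_0$ returned by the former already lies in $\kappa$. The second is that if $c$ witnesses $\onto(J^\bd[\kappa],\omega)$ and $\pi:\omega\to3$ is any surjection, then $\pi\circ c$ witnesses $\onto(J^\bd[\kappa],3)$: at an $\eta$ with $c[\{\eta\}\circledast B]=\omega$ one has $(\pi\circ c)[\{\eta\}\circledast B]=\pi[\omega]=3$. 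Both are immediate from the definitions in Definition~\ref{def21}.

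With these in hand I would split into two (overlapping, jointly exhaustive) regimes. If $\kappa\le2^{\aleph_0}$, then since $\kappa$ is uncountable we have $\aleph_0<\kappa\le2^{\aleph_0}$, so Proposition~\ref{ontofinite} applies with $n:=3$ to give $\onto(\{\aleph_0\},J^\bd[\kappa],3)$, and the first observation concludes this case. If instead $\kappa\ge2^{\aleph_0}$, then Corollary~\ref{wcubdtoonto}, together with the hypothesis that $\kappa$ is not weakly compact, yields $\onto(J^\bd[\kappa],\omega)$, and the second observation concludes this case. Since every cardinal satisfies $\kappa\le2^{\aleph_0}$ or $\kappa\ge2^{\aleph_0}$, this finishes $(2)\implies(1)$. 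The argument involves no essential new difficulty beyond the theory developed earlier; the only points genuinely requiring care are checking that the two regimes are jointly exhaustive and that each cited result is invoked within its stated range. It is reassuring to note that weak compactness forces strong inaccessibility and hence $\kappa>2^{\aleph_0}$, so the entire band $\aleph_0<\kappa\le2^{\aleph_0}$ consists of non-weakly-compact cardinals, matching the fact that Proposition~\ref{ontofinite} holds there unconditionally.
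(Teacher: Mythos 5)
Your proposal is correct and follows essentially the same route as the paper: $(1)\implies(2)$ via Proposition~\ref{reducetotwo}, and $(2)\implies(1)$ by splitting into $\kappa\ge 2^{\aleph_0}$ (Corollary~\ref{wcubdtoonto}) and $\aleph_0<\kappa\le 2^{\aleph_0}$ (Proposition~\ref{ontofinite}). The two monotonicity observations you spell out (shrinking $\mathcal A$ and composing with a surjection $\omega\to 3$) are exactly the steps the paper leaves implicit, and both are valid.
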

\begin{proof} $(1) \implies (2)$ This follows from Proposition~\ref{reducetotwo}.

$(2) \implies (1)$ In case $\kappa \geq 2^{\aleph_0}$ this follows from Corollary~\ref{wcubdtoonto}. 
In case $\aleph_0<\kappa \le2^{\aleph_0}$ this follows from Proposition~\ref{ontofinite}.
\end{proof}
When put together with Proposition~\ref{reducetotwo}, the preceding shows that for every infinite cardinal $\kappa$:
$$\onto(J^\bd[\kappa], 3) \iff \kappa\nrightarrow[\kappa]^2_2\iff \bigwedge_{n=1}^\infty\onto(J^\bd[\kappa], n).$$

\section{Ineffable cardinals}\label{sectionineffable}
In this section, $\kappa$ denotes a regular uncountable cardinal. 
We characterise here stationary sets $S \s \kappa$ which are not ineffable using our principles. 
This is done in three ways: in Lemma~\ref{effabletwopieces}, in Corollary~\ref{cor115}, and in Corollary~\ref{cor117} (for $\kappa \geq 2^{\aleph_0}$). 
We finish by proving some related results related to consistency results, both positive and negative, and with a reminder of Conjecture~\ref{conj412}.

\begin{lemma} \label{effabletwopieces} For every stationary $ S\s\kappa$, the following are equivalent:
\begin{enumerate}[(1)] 
\item $\onto(\ns_\kappa\restriction S,2)$ holds;
\item $ S$ is non-ineffable;
\item there exists a sequence of functions $\langle g_\beta:\beta\rightarrow2\mid \beta\in S\rangle$
with the property that, for every $g:\kappa\rightarrow2$, 
the following set is nonstationary:
$$\{ \beta\in S\mid \forall \eta<\beta(g(\eta)\neq g_\beta(\eta))\}.$$
\end{enumerate}
\end{lemma}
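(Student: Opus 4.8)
The plan is to prove the cycle of implications $(2)\implies(3)\implies(1)\implies(2)$, which is the most economical route given the definitions at hand. I would begin with $(2)\implies(3)$, unwinding the definition of ineffability. Since $S$ is non-ineffable, there is a sequence $\langle A_\beta\mid\beta\in S\rangle$ with $A_\beta\s\beta$ such that for no $A\s\kappa$ is the set $\{\beta\in S\mid A_\beta\cap\beta=A\cap\beta\}$ stationary. The natural move is to set $g_\beta:=\chi_{A_\beta}$, the characteristic function of $A_\beta$ as a map $\beta\to 2$. Given any $g:\kappa\to 2$, letting $A:=\{\eta<\kappa\mid g(\eta)=1\}$, the set $\{\beta\in S\mid A_\beta\cap\beta=A\cap\beta\}=\{\beta\in S\mid \forall\eta<\beta\,(g(\eta)=g_\beta(\eta))\}$ is nonstationary by hypothesis. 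This is \emph{almost} what Clause~(3) demands, except that (3) asks for the nonstationarity of the set where $g$ and $g_\beta$ disagree \emph{everywhere} below $\beta$, not where they agree everywhere. So I would need to run the argument with both $g$ and its pointwise complement $1-g$, or more cleanly observe that the witnessing sequence for non-ineffability handed to us can be rechosen; in fact the cleanest fix is to apply non-ineffability to the family $\langle A_\beta\mid\beta\in S\rangle$ and note that the disagreement set $\{\beta\in S\mid\forall\eta<\beta(g(\eta)\neq g_\beta(\eta))\}$ equals the agreement set for the complementary guess $1-g$ against $g_\beta$, so nonstationarity transfers directly.

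Next I would handle $(3)\implies(1)$, which is a diagonalisation entirely parallel to the proof of Lemma~\ref{lemma47}(1) in the excerpt. Given the sequence $\langle g_\beta:\beta\to 2\mid\beta\in S\rangle$ from (3), define a colouring $c:[\kappa]^2\to 2$ by $c(\eta,\beta):=g_\beta(\eta)$ for $\beta\in S$ and $\eta<\beta$ (and arbitrarily, say $0$, elsewhere). To check $c$ witnesses $\onto(\ns_\kappa\restriction S,2)$, fix a stationary $B\s S$ and suppose toward a contradiction that for every $\eta<\kappa$ we have $c[\{\eta\}\circledast B]\neq 2$, i.e.\ $c(\eta,\cdot)$ is constant on $B$ above $\eta$. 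Then for each $\eta$ there is a value $g(\eta)\in 2$ that is \emph{missed}, and I would define $g:\kappa\to 2$ by letting $g(\eta)$ be the missed colour (when some colour is missed) so that no $\beta\in B$ with $\beta>\eta$ satisfies $c(\eta,\beta)=g(\eta)$; this forces $g(\eta)\neq g_\beta(\eta)=c(\eta,\beta)$ for all such $\beta$. Consequently every $\beta\in B$ lies in the disagreement set $\{\beta\in S\mid\forall\eta<\beta(g(\eta)\neq g_\beta(\eta))\}$, since for $\beta\in B$ and every $\eta<\beta$ we have $c(\eta,\beta)\neq g(\eta)$. But $B$ is stationary while (3) says that disagreement set is nonstationary, a contradiction. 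Care is needed to ensure $g$ is well-defined at every $\eta$ (with only two colours, if some colour is realised on $\{\eta\}\circledast B$ but not both, the other is the missed one; define $g(\eta)$ arbitrarily when the relevant slice is empty), but this is routine.

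Finally $(1)\implies(2)$ should follow by directly contraposing the definition: if $S$ were ineffable, then by Fact~\ref{reducetoone}(1) applied to any colouring $c:[\kappa]^2\to 2$ there would be a stationary $B\s S$ with $|c[\{\eta\}\circledast B]|=1$ for every $\eta<\kappa$, so no colouring can realise both colours on the slices over such a $B$, and $\onto(\ns_\kappa\restriction S,2)$ fails. This reuses the machinery already recorded in the excerpt, so it is immediate. The main obstacle I anticipate is purely bookkeeping in $(2)\Leftrightarrow(3)$: reconciling the ``disagree everywhere'' formulation of Clause~(3) with the ``agree on an initial segment'' formulation built into ineffability, where one must be scrupulous about passing to complements and about the two-colour edge cases where a slice over $B$ is empty below $\eta$. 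The diagonalisation in $(3)\implies(1)$ is otherwise a verbatim adaptation of the existing $\diamondsuit^*$ argument, and $(1)\implies(2)$ is essentially free from Fact~\ref{reducetoone}.
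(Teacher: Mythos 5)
Your proposal is correct and follows essentially the same route as the paper: the same cycle of implications, the same colouring $c(\eta,\beta):=g_\beta(\eta)$ and diagonalisation via $g(\eta):=$ a missed colour for $(3)\implies(1)$, the same complementation trick (the paper takes $A:=\{\eta\mid g(\eta)=0\}$, which is exactly your "agreement set for $1-g$") to reconcile the disagree-everywhere formulation of (3) with ineffability in $(2)\implies(3)$, and for $(1)\implies(2)$ the paper inlines the same ineffability argument that you instead delegate to Fact~\ref{reducetoone}. No gaps.
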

\begin{proof} $(1)\implies(2)$ Given a colouring $c:[\kappa]^2\rightarrow2$ and an ineffable set $ S$, 
we can consider $c(\cdot,\beta) : \beta \rightarrow 2$ as the indicator function of a subset of $\beta$. Appealing then to the ineffability of $ S$, we see that
there must exist a function $f:\kappa \rightarrow 2$ such that the set $B:=\{\beta \in S\mid c(\cdot,\beta)= f\restriction \beta\}$ is stationary.
So, for every $\eta<\kappa$, $c[\{\eta\}\times (B\setminus(\eta+1))]=\{f(\eta)\}$. So $c$ cannot be a witness to $\onto(\ns_\kappa\restriction S, 2)$.

$(2)\implies(3)$ Assuming $ S$ is non-ineffable, we may find a sequence $\langle A_\beta\mid \beta\in S\rangle$ such that,
for every $\beta\in S$, $A_\beta\s \beta$ and such that, for every $A\s\kappa$,
the set $\{\beta\in S\mid A_\beta=A\cap\beta\}$ is nonstationary.
For each $\beta\in S$, let $g_\beta:\beta\rightarrow2$ denote the characteristic function of $A_\beta$.
Now, given $g:\kappa\rightarrow2$, let $A:=\{\eta<\kappa\mid g(\eta)=0\}$.
Fix a club $C\s\kappa$ such that $C\cap S\s \{\beta\in S\mid A_\beta\neq A\cap\beta\}$.
Let $\beta\in C\cap S$. Now, pick $\eta\in(A\cap\beta)\symdiff A_\beta$.

$\br$ If $\eta\in A\cap\beta$, then $\eta\notin A_\beta$, so that $g(\eta)=0=g_\beta(\eta)$.

$\br$ If $\eta\in A_\beta$, then $\eta\notin A\cap\beta$, so that $g(\eta)=1=g_\beta(\eta)$.

$(3)\implies(1)$ Pick a colouring $c:[\kappa]^2\rightarrow 2$ that satisfies  $c(\eta,\beta)=g_\beta(\eta)$ for all $\eta<\beta<\kappa$ with $\beta\in S$.
Towards a contradiction, suppose that $B\s S$ is a stationary set such that, for every $\eta<\kappa$, $c[\{\eta\}\circledast B]\neq 2$.
Define $g:\kappa\rightarrow 2$ via $g(\eta):=\min(2\setminus c[\{\eta\}\circledast B])$. Now, pick $\beta\in B$ and $\eta<\beta$ such that $g(\eta)=g_\beta(\eta)$.
Then $c(\eta,\beta)=g_\beta(\eta)=g(\eta)$, contradicting the fact that $g(\eta)\notin c[\{\eta\}\circledast B]$.
\end{proof}

\begin{remark}\label{almosteffabletwopieces} A similar argument shows that $\onto(J^{\bd}[\kappa],2)$ holds iff $\kappa$ is not almost ineffable.
\end{remark}

In contrast, $\onto^-(S,2)$ is not refuted by $ S$ being ineffable.
\begin{defn}[\cite{jensen1969some}] A subset $S \s \kappa$ is called a \emph{subtle} subset of $\kappa$
if for every sequence $\langle A_\beta\mid \beta \in  S\rangle$ and $D \s \kappa$ a club, there are $\alpha< \beta$ in $D\cap S$ such that $A_\alpha\cap\alpha = A_\beta \cap \alpha$.
\end{defn}
It is easily seen that an inffable set is subtle. In fact, any almost ineffable set is subtle. The following fact by Kunen together with Lemma~\ref{lemma47}(2) shows that $\onto^-(S,\kappa)$ holds for any ineffable set $ S$. 
\begin{fact}[\cite{jensen1969some}] For every subtle $S\s \kappa$, $\diamondsuit(S)$ holds. 
\end{fact}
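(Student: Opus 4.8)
The plan is to prove the contrapositive in the sharp form that $\neg\diamondsuit(S)$ produces a failure of subtlety, by manufacturing from a hypothetical counterexample a single sequence of \emph{local counterexamples} and feeding it to the subtlety of $S$.

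First I would build, by recursion on $\nu\in S$, a sequence of pairs $\langle (A_\nu,C_\nu)\mid \nu\in S\rangle$ with $A_\nu\s\nu$ and $C_\nu$ a club in $\nu$, as follows. At stage $\nu$ the values $A_\xi$ for $\xi\in S\cap\nu$ are already fixed, so I ask whether there exist $A\s\nu$ and a club $C\s\nu$ such that $A_\xi\neq A\cap\xi$ for every $\xi\in S\cap C$; if so, I let $(A_\nu,C_\nu)$ be such a pair (call this property $(\star)$), and otherwise set $(A_\nu,C_\nu):=(\emptyset,\nu)$. The claim is then that $\langle A_\nu\mid\nu\in S\rangle$ already witnesses $\diamondsuit(S)$. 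Suppose not; then I may fix $A\s\kappa$ and a club $C\s\kappa$ with $A\cap\nu\neq A_\nu$ for all $\nu\in S\cap C$. The first key observation is that for every $\nu\in S\cap\acc(C)$ the pair $(A\cap\nu,\,C\cap\nu)$ satisfies $(\star)$: indeed $C\cap\nu$ is a club in $\nu$, and for $\xi\in S\cap C\cap\nu$ one has $(A\cap\nu)\cap\xi=A\cap\xi\neq A_\xi$. Hence at every such $\nu$ the recursion was in the successful case, so the pair $(A_\nu,C_\nu)$ it actually chose also satisfies $(\star)$, i.e.\ $A_\xi\neq A_\nu\cap\xi$ for all $\xi\in S\cap C_\nu$.

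Next I would code each pair into a single subset $X_\nu\s\nu$ using a fixed bijection $\pi\colon\kappa\leftrightarrow\kappa\times2$ whose set of closure points $E:=\{\alpha<\kappa\mid\pi``(\alpha\times2)=\alpha\}$ is a club; setting $X_\nu:=\pi``((A_\nu\times\{0\})\cup(C_\nu\times\{1\}))$, for every $\alpha\in E$ the trace $X_\beta\cap\alpha$ decodes exactly to the pair $(A_\beta\cap\alpha,\,C_\beta\cap\alpha)$. Now I apply subtlety of $S$ to the sequence $\langle X_\nu\mid\nu\in S\rangle$ together with the club $D:=E\cap\acc(C)$, obtaining $\alpha<\beta$ in $D\cap S$ with $X_\alpha=X_\beta\cap\alpha$. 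Decoding (using $\alpha\in E$) gives $A_\alpha=A_\beta\cap\alpha$ and $C_\alpha=C_\beta\cap\alpha$; since $C_\alpha$ is unbounded in $\alpha$ while $C_\beta$ is closed, this forces $\alpha\in C_\beta$, and of course $\alpha\in S$. As $\beta\in\acc(C)$, the pair $(A_\beta,C_\beta)$ satisfies $(\star)$, so membership $\alpha\in S\cap C_\beta$ yields $A_\alpha\neq A_\beta\cap\alpha$, contradicting $A_\alpha=A_\beta\cap\alpha$. This contradiction shows the constructed sequence is a $\diamondsuit(S)$-sequence.

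I expect the main obstacle to be the bookkeeping of the coding, so that the \emph{single} subtlety match $X_\alpha=X_\beta\cap\alpha$ simultaneously recovers the coherence of \emph{both} coordinates $A_\nu$ and $C_\nu$ along a club; in particular the short closure argument that $C_\alpha=C_\beta\cap\alpha$ pushes $\alpha$ into $C_\beta$ is what lets me instantiate $(\star)$ at $\xi:=\alpha$. Everything else — the recursion, the availability of $(\star)$ on $S\cap\acc(C)$, and the club-many closure points of $\pi$ — is routine, and the whole argument uses the definition of subtlety exactly once.
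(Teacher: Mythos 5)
Your argument is correct and is precisely the classical Jensen--Kunen proof; the paper itself states this as a Fact with a citation to \cite{jensen1969some} and reproduces no proof, so there is nothing to diverge from. The only blemish is cosmetic: with $\pi\colon\kappa\leftrightarrow\kappa\times2$ you want $X_\nu:=\pi^{-1}[(A_\nu\times\{0\})\cup(C_\nu\times\{1\})]$ rather than a forward image, and one should note (as your use of $E$ and $\acc(C)$ implicitly guarantees) that $X_\alpha\s\alpha$ and that $C_\alpha$ is a genuine club at the relevant $\alpha$, so that the subtlety conclusion $X_\alpha\cap\alpha=X_\beta\cap\alpha$ really decodes to $A_\alpha=A_\beta\cap\alpha$ and $C_\alpha=C_\beta\cap\alpha$.
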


\begin{thm} \label{ineffonto} Suppose $\kappa=\kappa^{\aleph_0}$ and $ S\s\kappa$ is a stationary set that is not ineffable.
Then $\onto^+(\ns_\kappa\restriction S,\omega)$ holds. 
\end{thm}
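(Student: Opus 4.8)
The plan is to reduce, via Lemma~\ref{effabletwopieces}, to a $2$-colouring and then run a tree argument in the spirit of Lemma~\ref{treelemma}, but powered by the weaker $\onto(\ldots)$ hypothesis in place of the ``$c``[B]^2=2$ on every positive set'' hypothesis. First I would invoke Lemma~\ref{effabletwopieces}: since $S$ is stationary and non-ineffable, fix a colouring $c:[\kappa]^2\rightarrow2$ witnessing $\onto(\ns_\kappa\restriction S,2)$. Using $\kappa=\kappa^{\aleph_0}$, fix an enumeration $\langle x_\eta\mid\eta<\kappa\rangle$ of ${}^\omega\kappa$ and define $d:[\kappa]^2\rightarrow\omega$ exactly as in Lemma~\ref{treelemma}, by letting $d(\eta,\beta)$ be the least $n<\omega$ with $c(x_\eta(n),\beta)=1$ if one exists, and $d(\eta,\beta):=0$ otherwise. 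The claim is that this $d$ witnesses $\onto^+(\ns_\kappa\restriction S,\omega)$.

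Fix a stationary $B\s S$. For $n<\omega$, $x\in{}^n\kappa$ and $y\in{}^n2$ set $B_{x,y}:=\{\beta\in B\mid \forall i<n\,(c(x(i),\beta)=y(i))\}$ and let $\tree(B)$ be the tree of all pairs $(x,y)$ of equal-length sequences for which $B_{x,y}$ is stationary; this is a nonempty subtree of $\bigcup_{n<\omega}({}^n\kappa\times{}^n2)$ under end-extension. The heart of the argument is the splitting claim: whenever $(x,y)\in\tree(B)$ there is an $\eta<\kappa$ such that $(x{}^\smallfrown\langle\eta\rangle,y{}^\smallfrown\langle i\rangle)\in\tree(B)$ for both $i<2$. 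Granting this, exactly as in Lemma~\ref{treelemma} I would recursively build a canonical branch $x\in{}^\omega\kappa$ with $(x\restriction(n+1),y_n{}^\smallfrown\langle1\rangle)\in\tree(B)$ for all $n$, where $y_n:n\rightarrow\{0\}$; picking $\eta$ with $x_\eta=x$, each set $\{\beta\in B\setminus(\eta+1)\mid d(\eta,\beta)=n\}$ contains the stationary set $B_{x_\eta\restriction(n+1),\,y_n{}^\smallfrown\langle1\rangle}$ minus a bounded set, hence is stationary, which is precisely $\onto^+(\ns_\kappa\restriction S,\omega)$ for $B$.

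The main obstacle---and the only real departure from Lemma~\ref{treelemma}---is proving the splitting claim from the $\onto$ hypothesis, since the witness $\eta$ delivered by $\onto(\ns_\kappa\restriction S,2)$ need not belong to $B$, whereas the proof of Lemma~\ref{treelemma} crucially used a pair $(\eta,\beta)\in[B'']^2$ obtained from subnormality together with $c``[B'']^2=2$. Here I would instead use normality of $\ns_\kappa\restriction S$. Suppose the claim fails for some $(x,y)\in\tree(B)$; then for each $\eta<\kappa$ some $i_\eta<2$ makes $\{\beta\in B_{x,y}\mid c(\eta,\beta)=i_\eta\}$ nonstationary, so fix a club $E_\eta$ disjoint from it. Setting $E:=\diagonal_{\eta<\kappa}E_\eta$ (a club) and $B^*:=B_{x,y}\cap E$ (stationary, and $\s S$), the diagonal intersection guarantees $c(\eta,\beta)=1-i_\eta$ for every $\beta\in B^*$ and every $\eta<\beta$. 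Define $g:\kappa\rightarrow2$ by $g(\eta):=1-i_\eta$; then $c(\eta,\beta)=g(\eta)$ for all $\eta<\beta$ with $\beta\in B^*$. Applying $\onto(\ns_\kappa\restriction S,2)$ to the stationary set $B^*$ yields $\eta_0<\kappa$ with $c[\{\eta_0\}\circledast B^*]=2$, i.e.\ ordinals $\beta_0,\beta_1\in B^*$ above $\eta_0$ with $c(\eta_0,\beta_j)=j$; but $c(\eta_0,\beta_0)=g(\eta_0)=c(\eta_0,\beta_1)$, a contradiction. Thus normality does the work that subnormality and the stronger colouring hypothesis did in Lemma~\ref{treelemma}, and the rest of the construction (canonical branch and final verification) transfers essentially verbatim.
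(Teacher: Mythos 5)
Your proof is correct, and it reaches the same destination by a mildly different road. The paper's own proof is two lines: from non-ineffability of $S$ it extracts (via Kunen's square-bracket characterisation, the Fact following the definition of ineffability) a colouring $c:[\kappa]^2\rightarrow2$ with $c``[B]^2=2$ for \emph{every} stationary $B\s S$, and then cites Lemma~\ref{treelemma} verbatim with $J:=\ns_\kappa\restriction S$. You instead start from the $\onto(\ns_\kappa\restriction S,2)$ characterisation of Lemma~\ref{effabletwopieces}, which is a genuinely weaker hypothesis on $c$ (one witnessing $\eta$, not necessarily in $B$, rather than full realisation on every square), so Lemma~\ref{treelemma} cannot be quoted as a black box and you must re-derive its splitting claim. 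Your replacement argument is sound: the diagonal intersection $E:=\diagonal_{\eta<\kappa}E_\eta$ forces $c(\eta,\beta)=1-i_\eta$ for all $\eta<\beta$ with $\beta\in B^*:=B_{x,y}\cap E$, so $c[\{\eta_0\}\circledast B^*]$ is a singleton for every $\eta_0$, contradicting $\onto(\ns_\kappa\restriction S,2)$; and the canonical-branch construction and final verification indeed transfer verbatim since they never need the splitting ordinal to lie in $B_{x,y}$. What each approach buys: the paper's version isolates the tree argument once, for arbitrary subnormal $J\in\mathcal J^\kappa_\omega$, so the same Lemma~\ref{treelemma} also powers Theorem~\ref{wconto} for $J^{\bd}[\kappa]$; your version shows that for \emph{normal} ideals the weaker input $\onto(J,2)$ already suffices to run the tree, which is a small but genuine strengthening in that special case, at the cost of repeating the construction and of not covering the merely subnormal ideals (your use of $\diagonal_{\eta<\kappa}E_\eta$ is exactly where normality, rather than subnormality, is consumed).
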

\begin{proof} By the choice of $S$, we may fix a colouring $c:[\kappa]^2\rightarrow 2$ such that $c``[B]^2=2$ for every stationary $B\s S$.
Then, by Lemma~\ref{treelemma}, there exists a corresponding colouring $d$ witnessing $\onto^+(\ns_\kappa\restriction S,\omega)$.
\end{proof}

\begin{cor}\label{cor115} For every regular uncountable cardinal $\kappa$, and every stationary $ S\s\kappa$, the following are equivalent:
\begin{enumerate}[(1)] 
\item $ S$ is not ineffable;
\item $\ubd(\ns_\kappa\restriction S,\omega)$ holds.
\end{enumerate}
\end{cor}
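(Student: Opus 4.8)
The plan is to follow the template of Corollary~\ref{cor93}, treating the two implications separately and splitting the harder direction into two cases according to the value of $\kappa^{\aleph_0}$.

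For $(2)\implies(1)$ I would argue by contraposition: assuming $S$ is ineffable, I show that no colouring witnesses $\ubd(\ns_\kappa\restriction S,\omega)$. Given an arbitrary upper-regressive $c:[\kappa]^2\rightarrow\omega$, note that $0<\omega<\kappa$ since $\kappa$ is regular uncountable, so Fact~\ref{reducetoone}(1) supplies a stationary $B\s S$ with $|c[\{\eta\}\circledast B]|=1$ for every $\eta<\kappa$. Then $B\in(\ns_\kappa\restriction S)^+$, yet $\otp(c[\{\eta\}\circledast B])\le1<\omega$ for all $\eta$, so $B$ refutes that $c$ is a witness. As $c$ was arbitrary, $\ubd(\ns_\kappa\restriction S,\omega)$ fails.

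For $(1)\implies(2)$, assume $S$ is not ineffable. If $\kappa=\kappa^{\aleph_0}$, then Theorem~\ref{ineffonto} yields $\onto^+(\ns_\kappa\restriction S,\omega)$, hence $\onto(\ns_\kappa\restriction S,\omega)$ by Remark~\ref{relationsremark}(1). I would then observe that a witness $c:[\kappa]^2\rightarrow\omega$ to $\onto(\ns_\kappa\restriction S,\omega)$ is automatically upper-regressive on the relevant pairs: for every $\beta\ge\omega$ one has $c(\alpha,\beta)<\omega\le\beta$, so after redefining $c(\alpha,\beta):=0$ on the finitely many pairs with $\beta<\omega$ the colouring becomes genuinely upper-regressive without disturbing any stationary (hence cofinal) positive set. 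Since the range of $c$ on $\{\eta\}\circledast B$ equals $\omega$, its order type is $\omega$, so the same $c$ witnesses $\ubd(\ns_\kappa\restriction S,\omega)$. If instead $\kappa<\kappa^{\aleph_0}$, then regularity gives $\aleph_0<\cf(\kappa)=\kappa$, so the hypotheses of Proposition~\ref{prop52}(iii) are satisfied and $\ubd(J^\bd[\kappa],\omega)$ holds; as every stationary $B\s S$ is unbounded we have $(\ns_\kappa\restriction S)^+\s(J^\bd[\kappa])^+$, whence any witness to $\ubd(J^\bd[\kappa],\omega)$ is \emph{a fortiori} a witness to $\ubd(\ns_\kappa\restriction S,\omega)$. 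These two cases are exhaustive.

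The only genuine inputs are Fact~\ref{reducetoone}(1) for the forward direction and Theorem~\ref{ineffonto} together with Proposition~\ref{prop52}(iii) for the two cases of the reverse direction. The remaining steps---the passage from $\onto$ to $\ubd$ at $\theta=\omega$ (via the automatic upper-regressivity) and the monotonicity of $\ubd$ under enlarging the ideal---are routine, so I do not anticipate a real obstacle beyond correctly assembling these ingredients and checking that the case split on $\kappa^{\aleph_0}$ is complete.
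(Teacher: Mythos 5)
Your proposal is correct and takes essentially the same approach as the paper: the same case split on $\kappa^{\aleph_0}$ for $(1)\implies(2)$, using Theorem~\ref{ineffonto} when $\kappa^{\aleph_0}=\kappa$ and Proposition~\ref{prop52}(iii) when $\kappa^{\aleph_0}>\kappa$ (which is exactly what the paper's citation of Corollary~\ref{cor93} unwinds to in that case), while for $(2)\implies(1)$ the monochromatic-column argument you import from Fact~\ref{reducetoone}(1) is precisely what the paper reproves inline via a coding bijection $\pi:\kappa\times\omega\leftrightarrow\kappa$. Your explicit repair of upper-regressivity when passing from $\onto(\ns_\kappa\restriction S,\omega)$ to $\ubd(\ns_\kappa\restriction S,\omega)$ is a minor point the paper leaves implicit, and you handle it correctly.
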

\begin{proof} $(1)\implies(2)$ Suppose that $ S\s\kappa$ is a non-ineffable set.

$\br$ If $\kappa^{\aleph_0}=\kappa$, then by Theorem~\ref{ineffonto} we even have that $\onto^+(\ns_\kappa\restriction S,\omega)$ holds.

$\br$ If $\kappa^{\aleph_0}>\kappa$,
then $\kappa$ is not weakly compact, and so by Corollary~\ref{cor93} we even have that $\ubd(J^\bd[\kappa],\omega)$ holds.

$(2)\implies(1)$ Suppose that $ S$ is ineffable. Let $c:[\kappa]^2\rightarrow\omega$ be an arbitrary colouring. 
Fix a bijection $\pi:\kappa\times\omega\leftrightarrow\kappa$,
and consider the club $D:=\{\delta<\kappa\mid \pi[\delta\times\omega]=\delta\}$.
By the ineffability of $S$ there is a set $A\s\kappa$ with the property that $B:=\{\beta \in S\cap D \mid \pi[c(\cdot, \beta)]\cap\beta=A\cap\beta\}$ is stationary.
Consequently, $\{ c(\cdot,\beta)\mid \beta\in B\}$ is a chain converging to some function $f:\kappa \rightarrow \omega$,
and hence $c[\{\eta\}\times (B\setminus(\eta+1))]=\{f(\eta)\}$ for every $\eta<\kappa$. So $c$ cannot be a witness to $\ubd(\ns_\kappa\restriction S, \omega)$. 
\end{proof}

\begin{cor}\label{cor117} For every regular cardinal $\kappa \ge 2^{\aleph_0}$, and every $ S\s\kappa$, the following are equivalent:
\begin{enumerate}[(1)]
\item $ S$ is not ineffable;
\item $\onto(\ns_\kappa\restriction S,\omega)$ holds.
\end{enumerate}
\end{cor}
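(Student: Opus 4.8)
The plan is to prove the two implications separately, mirroring Corollary~\ref{cor115} (which characterises non-ineffability via $\ubd$) exactly as Corollary~\ref{wcubdtoonto} mirrors Corollary~\ref{cor93}; the sole upgrade, from $\ubd$ to $\onto$, is precisely where the hypothesis $\kappa\ge 2^{\aleph_0}$ will be used. A preliminary reduction handles the trivial case: if $S$ is nonstationary, then $S$ is automatically non-ineffable and $\ns_\kappa\restriction S$ is a trivial ideal, so $\onto(\ns_\kappa\restriction S,\omega)$ holds vacuously and both clauses are true. Hence I may assume from now on that $S$ is stationary.

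For $(2)\implies(1)$ I would argue the contrapositive, reusing verbatim the computation from the proof of Corollary~\ref{cor115}. Assuming $S$ is ineffable, let $c:[\kappa]^2\rightarrow\omega$ be an arbitrary colouring. Fixing a bijection $\pi:\kappa\times\omega\leftrightarrow\kappa$ and the club $D:=\{\delta<\kappa\mid \pi[\delta\times\omega]=\delta\}$, ineffability of $S$ yields a set $A\s\kappa$ for which $B:=\{\beta\in S\cap D\mid \pi[c(\cdot,\beta)]\cap\beta=A\cap\beta\}$ is stationary; along $B$ the maps $c(\cdot,\beta)$ form a chain converging to a single $f:\kappa\rightarrow\omega$, so that $c[\{\eta\}\circledast B]=\{f(\eta)\}$ for every $\eta<\kappa$. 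As a singleton has size $1\neq\omega$, this $c$ is not a witness to $\onto(\ns_\kappa\restriction S,\omega)$; since $c$ was arbitrary, the principle fails.

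For $(1)\implies(2)$, assume $S$ is stationary and not ineffable, and split on the value of $\kappa^{\aleph_0}$. If $\kappa^{\aleph_0}=\kappa$ (which in particular forces $\kappa\ge 2^{\aleph_0}$), then Theorem~\ref{ineffonto} applies directly and gives $\onto^+(\ns_\kappa\restriction S,\omega)$, whence $\onto(\ns_\kappa\restriction S,\omega)$ by the monotonicity recorded in Remark~\ref{relationsremark}(1). If instead $\kappa^{\aleph_0}>\kappa$, then $\kappa$ is not a strong limit and hence not weakly compact; since $\kappa\ge 2^{\aleph_0}$, Corollary~\ref{wcubdtoonto} supplies $\onto(J^\bd[\kappa],\omega)$. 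The observation closing this case is that a witness to $\onto(J^\bd[\kappa],\omega)$ realises all $\omega$ colours on some row over every $B\in(J^\bd[\kappa])^+$; as $J^\bd[\kappa]\s\ns_\kappa\restriction S$ (so that $(\ns_\kappa\restriction S)^+\s(J^\bd[\kappa])^+$), the very same colouring witnesses $\onto(\ns_\kappa\restriction S,\omega)$.

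The main obstacle here is conceptual rather than computational: it is the recognition that the passage from the $\ubd$-characterisation of Corollary~\ref{cor115} to the $\onto$-characterisation cannot be carried out uniformly, but only under $\kappa\ge 2^{\aleph_0}$, where Corollary~\ref{wcubdtoonto} is available to produce the stronger $\onto$ principle in the non-weakly-compact regime $\kappa^{\aleph_0}>\kappa$. Each individual verification is then routine --- vacuous, a citation of Theorem~\ref{ineffonto}, or the elementary monotonicity of $\onto(\cdot\,,\omega)$ under enlarging the ideal --- so I anticipate no genuine difficulty beyond organising the case analysis.
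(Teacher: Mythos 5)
Your proposal is correct. It differs from the paper's proof only in the forward direction $(1)\implies(2)$: the paper splits on $\kappa>2^{\aleph_0}$ versus $\kappa=2^{\aleph_0}$, handling the first case by feeding $\ubd(\ns_\kappa\restriction S,\omega)$ (from Corollary~\ref{cor115}) into the pumping-up Theorem~\ref{ubdtoonto}(2) via the covering number $\mathcal C(\omega,\omega)\le2^{\aleph_0}<\kappa$, and the second by Corollary~\ref{wcubdtoonto}; you instead split on $\kappa^{\aleph_0}=\kappa$ versus $\kappa^{\aleph_0}>\kappa$, invoking the tree argument of Theorem~\ref{ineffonto} directly in the first case and Corollary~\ref{wcubdtoonto} plus monotonicity of $\onto(\cdot,\omega)$ in ideals in the second --- exactly the decomposition the paper itself uses for Corollary~\ref{cor115}. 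Both routes are valid; yours bypasses the $\mathcal C(\varkappa,\theta)$ machinery entirely and even yields the stronger $\onto^+(\ns_\kappa\restriction S,\omega)$ when $\kappa^{\aleph_0}=\kappa$, while the paper's route is more uniform with the pumping-up theme of Section~\ref{sectionpumping}. You are also more explicit than the paper about the two points it leaves implicit: the vacuous case of nonstationary $S$ (the statement, unlike Corollary~\ref{cor115}, quantifies over all $S\s\kappa$) and the $(2)\implies(1)$ direction, which you correctly reduce to the ineffability computation from Corollary~\ref{cor115}.
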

\begin{proof} 
$\br$ For $\kappa>2^{\aleph_0}$, this follows from Theorem~\ref{ubdtoonto}(2).

$\br$ For $\kappa = 2^{\aleph_0}$, by Corollary~\ref{wcubdtoonto} we even have that $\onto(J^\bd[\kappa], \omega)$ holds.
\end{proof}
The preceding is optimal.

\begin{prop} \label{ineffablecohen} Assuming the consistency of an ineffable cardinal,
it is consistent that $\kappa= 2^{\aleph_0}=2^{\aleph_1}$ is a non-ineffable cardinal, 
and hence $\onto(J^\bd[\kappa],\aleph_0)$ holds, yet $\ubd(\ns_\kappa,\aleph_1)$ fails.
\end{prop}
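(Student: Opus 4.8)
The plan is to recognise the statement as the instance $\theta:=\aleph_0$ of Theorem~\ref{thm97}. First I would verify that the hypothesis of that theorem holds for $\theta=\aleph_0$: since $\aleph_0^{<\aleph_0}=\sup_{n<\omega}\aleph_0^n=\aleph_0$, we have $\theta=\theta^{<\theta}$, and $\theta=\aleph_0<\kappa$ because the ineffable cardinal $\kappa$ is strongly inaccessible. Applying Theorem~\ref{thm97} with this $\theta$, I obtain a cofinality-preserving forcing extension (by Cohen's forcing for adding $\kappa$ subsets of $\aleph_0$, whose conditions are finite partial functions and which is therefore $\aleph_1$-cc) in which $\kappa=2^{\aleph_0}=2^{\aleph_1}$, the principle $\onto(\{\aleph_0\},J^{\bd}[\kappa],\aleph_0)$ holds, and $\ubd(\ns_\kappa,\aleph_1)$ fails.

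Working in this extension, I would then read off the three assertions of the proposition. Since $\kappa=2^{\aleph_0}$, the cardinal $\kappa$ is not a strong limit, hence not strongly inaccessible, and in particular neither weakly compact nor ineffable; this gives the non-ineffability of $\kappa$. For the positive clause, observe that $\aleph_0\subseteq\kappa$, so the $\eta\in\aleph_0$ provided by a witness to the narrow principle $\onto(\{\aleph_0\},J^{\bd}[\kappa],\aleph_0)$ already lies in $\kappa$; hence the very same colouring witnesses $\onto(\{\kappa\},J^{\bd}[\kappa],\aleph_0)=\onto(J^{\bd}[\kappa],\aleph_0)$. Alternatively, since $\kappa\ge 2^{\aleph_0}$ is not weakly compact, $\onto(J^{\bd}[\kappa],\omega)$ holds directly by Corollary~\ref{wcubdtoonto}. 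The failure of $\ubd(\ns_\kappa,\aleph_1)$ is exactly the third conclusion of Theorem~\ref{thm97}.

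The substantive content is entirely inherited from Theorem~\ref{thm97}, so there is no genuine obstacle beyond the bookkeeping above; the only point worth flagging is that the whole construction hinges on the coincidence $\aleph_0=\aleph_0^{<\aleph_0}$, which lets the $\theta$-closed, $\theta^{+}$-cc forcing of that theorem degenerate into plain ccc Cohen forcing while still delivering $2^{\aleph_0}=2^{\aleph_1}=\kappa$. For completeness I would recall, as in the proof of Theorem~\ref{thm97}, that the failure of $\ubd(\ns_\kappa,\aleph_1)$ originates in the ground model from the ineffability of $\kappa$ via Fact~\ref{reducetoone} (which, for any colouring into $\aleph_1<\kappa$ many colours, yields a stationary set over which every row is constant, precluding a witness), and is then preserved into the $\aleph_1$-cc extension by Lemma~\ref{preservation}(2).
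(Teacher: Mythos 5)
Your proposal is correct and is essentially the paper's own proof, which consists of the single line ``By invoking Theorem~\ref{thm97} with $\theta:=\aleph_0$''; your additional bookkeeping (checking $\aleph_0=\aleph_0^{<\aleph_0}$, deducing non-ineffability from $\kappa=2^{\aleph_0}$ not being a strong limit, and passing from the narrow principle $\onto(\{\aleph_0\},J^{\bd}[\kappa],\aleph_0)$ to $\onto(J^{\bd}[\kappa],\aleph_0)$, or alternatively citing Corollary~\ref{wcubdtoonto}) is all accurate.
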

\begin{proof} By invoking Theorem~\ref{thm97} with $\theta:=\aleph_0$.
\end{proof}

\begin{cor}\label{ontonsomegafailure} Suppose that $\kappa$ is a non-ineffable regular uncountable cardinal such that $\onto(\ns_\kappa, \omega)$ fails. 
Then $\kappa < 2^{\aleph_0}$ and $\kappa$ is weakly compact in $\L$.
\end{cor}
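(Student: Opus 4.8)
The plan is to reduce the statement to the already-established Corollary~\ref{ontojbdomegafailure}, whose hypotheses demand that $\kappa$ be non-weakly-compact and that $\onto(J^\bd[\kappa],\omega)$ fail. Accordingly, the first and only substantive task is to upgrade the standing assumptions into exactly these two conditions.

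First I would show that $\kappa$ is not weakly compact. Suppose towards a contradiction that it is. Then $\kappa$ is strongly inaccessible, and since every countable subset of the regular cardinal $\kappa$ is bounded while $\kappa$ is a strong limit, we get $\kappa^{\aleph_0}=\kappa$. As $\kappa$ is non-ineffable by hypothesis, Theorem~\ref{ineffonto} applied with $S:=\kappa$ yields that $\onto^+(\ns_\kappa,\omega)$ holds. By Remark~\ref{relationsremark}(1) the principles are decreasing in strength, so any witness to $\onto^+(\ns_\kappa,\omega)$ also witnesses $\onto(\ns_\kappa,\omega)$; this contradicts our assumption that $\onto(\ns_\kappa,\omega)$ fails. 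Hence $\kappa$ is not weakly compact.

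Next I would transfer the failure from $\ns_\kappa$ to $J^\bd[\kappa]$. Since $J^{\bd}[\kappa]\s\ns_\kappa$, every stationary set is $J^{\bd}[\kappa]$-positive, and therefore any colouring witnessing $\onto(J^{\bd}[\kappa],\omega)$ would a fortiori witness $\onto(\ns_\kappa,\omega)$. Consequently the assumed failure of $\onto(\ns_\kappa,\omega)$ forces the failure of $\onto(J^{\bd}[\kappa],\omega)$. At this point $\kappa$ is a non-weakly-compact uncountable cardinal for which $\onto(J^{\bd}[\kappa],\omega)$ fails, so I would invoke Corollary~\ref{ontojbdomegafailure}: it delivers $\kappa<2^{\aleph_0}$ together with the dichotomy that either $\kappa$ is a singular cardinal of countable cofinality, or $\kappa$ is greatly Mahlo and weakly compact in $\L$. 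Because $\kappa$ is regular uncountable, the first alternative is vacuous, leaving the second; in particular $\kappa$ is weakly compact in $\L$, which together with $\kappa<2^{\aleph_0}$ is the desired conclusion.

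The step I expect to require the most care is establishing non-weak-compactness of $\kappa$, as everything downstream is a direct appeal to Corollary~\ref{ontojbdomegafailure}. The crux there is recognising that weak compactness of $\kappa$ secures $\kappa^{\aleph_0}=\kappa$, which is precisely the hypothesis that lets Theorem~\ref{ineffonto} produce the stronger positive principle $\onto^+(\ns_\kappa,\omega)$ in conflict with the standing assumption.
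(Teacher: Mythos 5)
Your proposal is correct, but it routes the argument differently from the paper. The paper's proof is a two-line derivation staying on the $\ns_\kappa$ side: it gets $\kappa<2^{\aleph_0}$ immediately from Corollary~\ref{cor117} (for $\kappa\ge 2^{\aleph_0}$, non-ineffability is \emph{equivalent} to $\onto(\ns_\kappa,\omega)$), and then applies Theorem~\ref{thm42}(2) in contrapositive to conclude that $\ubd(\ns_\kappa,\kappa)$, hence $\ubd(J^{\bd}[\kappa],\kappa)$, fails, at which point Corollary~\ref{square_is_amenable} gives weak compactness in $\L$. You instead first rule out weak compactness of $\kappa$ in $V$ (via strong inaccessibility $\Rightarrow\kappa^{\aleph_0}=\kappa$ and Theorem~\ref{ineffonto}), push the failure down to $J^{\bd}[\kappa]$ by ideal-monotonicity, and then invoke Corollary~\ref{ontojbdomegafailure} wholesale; all of these steps check out, including the observation that a witness to $\onto(J^{\bd}[\kappa],\omega)$ is automatically a witness to $\onto(\ns_\kappa,\omega)$ since $(\ns_\kappa)^+\subseteq(J^{\bd}[\kappa])^+$, and that the singular alternative of Corollary~\ref{ontojbdomegafailure} is void for regular $\kappa$. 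The two routes are cousins rather than strangers --- Corollary~\ref{ontojbdomegafailure} itself funnels through Corollary~\ref{wcubdtoonto} and Corollary~\ref{square_is_amenable}, and both arguments ultimately rest on the $\kappa^{\aleph_0}=\kappa$ tree construction --- but yours buys a uniform reduction to the $J^{\bd}[\kappa]$-statement at the cost of the extra preliminary step establishing non-weak-compactness, whereas the paper's exploits the $\ns_\kappa$-specific equivalence and the pumping-up theorem to avoid that detour entirely.
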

\begin{proof} By Corollary~\ref{cor117} we know that $\kappa < 2^{\aleph_0}$. 
By Theorem~\ref{thm42}(2), it must be the case that $\ubd(\ns_\kappa,\kappa)$ fails.
In particular, $\ubd(J^{\bd}[\kappa],\allowbreak\kappa)$ fails, 
which by Corollary~\ref{square_is_amenable} implies that $\kappa$ is weakly compact in $\L$.
\end{proof}
The above would be improved in case Conjecture~\ref{conj412} has a positive answer.
	
\section*{Acknowledgments}

We are grateful to the anonymous referee for a thorough reading of this paper and for providing a long list of significant corrections and suggestions.

The first author is supported by the Israel Science Foundation (grant agreement 2066/18).
The second author is partially supported by the European Research Council (grant agreement ERC-2018-StG 802756) and by the Israel Science Foundation (grant agreement 2066/18). 

Some of the results of this paper were announced by the first author at the \emph{Israel Mathematical Union Annual Meeting} special session in set theory and logic in July 2021. 
He would like to thank the organisers for this opportunity. 

\newpage

\newpage

\section*{Appendix: Diagram of Implications}\label{sectiondiagrams}
In the diagram below, $\kappa$ is a regular uncountable cardinal and all unidirectional arrows are irreversible and we have indicated next to each arrow where to find the relevant proofs except for most of the arrows from left to right which follow by the definitions. The diagram shows how the well-known analogy that weakly compact cardinals are to $J^\bd[\kappa]$ as ineffable cardinals are to $\ns_\kappa$ manifests itself in our principles as well as the importance of Conjecture~\ref{conj412} 
which is one of two points of asymmetry. The other is the fact that $\onto(J^\bd[\kappa],2)$ characterises not the weakly compact cardinals, but rather the almost ineffable cardinals by Remark~\ref{almosteffabletwopieces}.
	\begin{center}
	\begin{figure}[H]
		\begin{tikzcd}[every arrow/.append style={-latex}, sep=huge]
			\kappa \text{ is not weakly compact in }\L
			\arrow{d}{\text{Cor. }\ref{square_is_amenable}, \text{ Rem. }\ref{forcingsquare}} 
			\arrow{r}
			&
			\kappa \text{ is not ineffable in }\L
			\arrow[dotted]{d}{\text{Conj. }\ref{conj412},\text{ Rem. }\ref{forcingsquare}}
			\\
			\kappa \in \sa_\kappa
			\arrow{d}{\text{Lem. }\ref{lemma43}}
			\arrow{r}{\text{Lem. }\ref{amenableideal}} 
			&
			\kappa \in \amen_\kappa
			\arrow{d}{\text{Lem. }\ref{lemma44}} 
			\\
			\ubd(J^\bd[\kappa], \kappa) 
			\arrow{d}{\text{Cor. }\ref{cor65}, \text{ Thm. }\ref{kunenwc}}
			\arrow{u}\arrow{r}  
			&
			\ubd(\ns_\kappa, \kappa)
			\arrow{d}{\text{Cor. }\ref{cor65}, \text{ Thm. }\ref{kunenineff}}
			\arrow{u}
			\\
			\forall \theta< \kappa~ \ubd(J^\bd[\kappa], \theta)
			\arrow{d}{\text{Obvious, Prop. }\ref{almostineffablecohen}}
			\arrow{r} 
			&
			\forall \theta< \kappa~\ubd(\ns_\kappa, \theta)
			\arrow{d}{\text{Obvious, Prop. }\ref{ineffablecohen}}
			\\
			\ubd(J^\bd[\kappa], \aleph_0)
			\arrow{d}{\text{Cor. }\ref{cor93}}
			\arrow{r}  
			&
			\ubd(\ns_\kappa, \aleph_0)
			\arrow{d}{\text{Cor. }\ref{cor115}}
			\\
			\kappa\text{ is not weakly compact }
			\arrow{u}\arrow{d}{\text{Cor. }\ref{weaklycompactthreepieces}}
			\arrow{r}  
			&
			\kappa\text{ is not ineffable}
			\arrow{u}\arrow{d}{\text{Lem. }\ref{effabletwopieces}}
			\\
			\onto(J^\bd[\kappa],3)\arrow{r}\arrow{u}
			&
			\onto(\ns_\kappa, 2)\arrow{u}
		\end{tikzcd}
		\end{figure}
	\end{center}
\end{document}